\setlist[itemize]{leftmargin=*}
\setlist[enumerate]{leftmargin=*,label=(\arabic*),ref=(\arabic*)}
\newlist{subenumerate}{enumerate}{2}
\setlist[subenumerate]{leftmargin=*,label=\alph*),ref=\alph*)}
\definecolor{darkblue}{rgb}{0,0,0.6} 
\DeclareFontFamily{T1}{cbgreek}{}
\DeclareFontShape{T1}{cbgreek}{m}{n}{<-6>  grmn0500 <6-7> grmn0600 <7-8> grmn0700 <8-9> grmn0800 <9-10> grmn0900 <10-12> grmn1000 <12-17> grmn1200 <17-> grmn1728}{}
\DeclareSymbolFont{quadratics}{T1}{cbgreek}{m}{n}
\DeclareMathSymbol{\qoppa}{\mathord}{quadratics}{19}
\DeclareMathSymbol{\Qoppa}{\mathord}{quadratics}{21}
\newtheorem{theorem}{Theorem}[subsection]
\newtheorem{proposition}[theorem]{Proposition}
\newtheorem{lemma}[theorem]{Lemma}
\newtheorem{corollary}[theorem]{Corollary}
\newtheorem{assumption}[theorem]{Assumption}
\theoremstyle{definition}
\newtheorem{definition}[theorem]{Definition}
\newtheorem{notation}[theorem]{Notation}
\newtheorem{construction}[theorem]{Construction}
\theoremstyle{remark}
\newtheorem{remark}[theorem]{Remark}
\newtheorem{example}[theorem]{Example}
\numberwithin{equation}{subsection}
\newcommand{\defi}[1]{\emph{#1}}                 
\newcommand{\spacefont}{\mathcal}                
\newcommand{\alp}{\alpha}
\newcommand{\bet}{\beta}
\newcommand{\Del}{\Delta}
\newcommand{\Lam}{\Lambda}
\newcommand{\Om}{\Omega}
\newcommand{\Sig}{\Sigma}
\newcommand{\sig}{\sigma}
\newcommand{\vphi}{\varphi}
\newcommand{\eps}{\epsilon}
\newcommand{\hrar}{\hookrightarrow}
\newcommand{\wtl}{\widetilde}
\newcommand{\ovl}{\overline}
\newcommand{\tosimeq}{\xrightarrow{\,\smash{\raisebox{-0.65ex}{\ensuremath{\scriptstyle\simeq}}}\,}}
\newcommand{\adj}{\mathbin{
\begin{tikzpicture}[baseline,thick] 
\coordinate (source) at (0ex,.5ex);
\coordinate (target) at (3ex,.5ex);
\draw[->] ([yshift=1ex]source) -- ([yshift=1ex]target); 
\draw[->] ([yshift=-.5ex]target) -- ([yshift=-.5ex]source);
\node at (1.5ex,.8ex) {$\scriptscriptstyle \perp$};
\end{tikzpicture}\ %
}}
\renewcommand{\>}{\rangle}
\newcommand{\cocolon}{\nobreak \mskip6mu plus1mu \mathpunct{}\nonscript\mkern-\thinmuskip {:}\mskip2mu \relax}
\newcommand{\NN}{\mathbb{N}}   	            
\newcommand{\RR}{\mathbb{R}}               
\newcommand{\CC}{\mathbb{C}}               
\newcommand{\ZZ}{\mathbb{Z}}               
\newcommand{\FF}{\mathbb{F}}               
\newcommand{\im}{\mathrm{im}}              
\newcommand{\Psh}{\operatorname{Psh}}           
\newcommand{\fm}{\mathfrak{m}}                  
\newcommand{\id}{\mathrm{id}}                	
\newcommand{\op}{^\mathrm{op}}                  
\newcommand{\mop}{\mathrm{op}}                  
\newcommand{\Hom}{\operatorname{Hom}}           
\newcommand{\Map}{\operatorname{Map}}           
\newcommand{\fib}{\operatorname{fib}}           
\newcommand{\cof}{\operatorname{cof}}           
\newcommand{\colim}{\mathop{\mathrm{colim}}}    
\newcommand{\core}{\mathrm{Cr}}                 
\newcommand{\LKan}{\operatorname{Lan}}          
\newcommand{\Pro}{\operatorname{Pro}}           
\newcommand{\Ind}{\operatorname{Ind}}           
\newcommand{\Ar}{\mathrm{Ar}}			
\newcommand{\lax}{\mathrm{lax}}			
\newcommand{\I}{\mathcal{I}}               	
\newcommand{\Spa}{{\mathcal{S}\mathrm{p}}}	
\newcommand{\PSpa}{{\mathcal {PS}\mathrm{p}}}	
\newcommand{\Sps}{\mathcal S}			
\newcommand{\map}{\mathrm{map}}        	       	
\newcommand{\Ominfty}{\Omega^{\infty}}         	
\renewcommand{\SS}{\mathbb{S}}                 	
\newcommand{\GEM}{\mathrm{H}}                   
\newcommand{\cosk}{\mathrm{cosk}}		
\newcommand{\B}{\mathrm{B}}                     
\newcommand{\h}{\mathrm{h}}                    	
\newcommand{\Ct}{\mathrm{C_2}}                 	
\newcommand{\BC}{\B \Ct}                      	
\newcommand{\hC}{{\h\Ct}}                      	
\newcommand{\tC}{{\mathrm{t}\Ct}}              	
\newcommand{\N}{\mathrm{N}}                    	
\newcommand{\cP}{\mathcal{P}}			
\newcommand{\Fin}{\mathrm{Fin}}			
\newcommand{\Inj}{\mathrm{Inj}}			
\newcommand{\Ass}{\mathrm{Ass}}			
\newcommand{\Com}{\mathrm{Com}}       		
\newcommand{\Env}{\mathrm{Env}}			
\newcommand{\EE}{\mathbb{E}}			
\newcommand{\Einf}{\EE_\infty}         		
\newcommand{\Alg}{\mathrm{Alg}}                 
\newcommand{\CAlg}{\operatorname{CAlg}}	 	
\newcommand{\Mon}{\operatorname{Mon}}           
\newcommand{\Grp}{\operatorname{Grp}}           
\newcommand{\MCom}{\mathrm{MCom}}		
\newcommand{\E}{\mathcal{E}}               	
\newcommand{\C}{\mathcal{C}}                	
\newcommand{\D}{\mathcal{D}}               	
\newcommand{\stable}{\mathrm{st}} 		
\newcommand{\semi}{\mathrm{ss}} 		
\newcommand{\sh}{\mathrm{sh}}			
\newcommand{\one}{\mathbb{1}} 			
\newcommand{\Zotimes}{\mathcal{Z}^\otimes}	
\newcommand{\A}{\mathcal{A}}               	
\newcommand{\cB}{\mathcal{B}}              	
\newcommand{\st}{\mathrm{st}}			
\newcommand{\ModCat}{\mathrm{ModCat}}		
\newcommand{\Cof}{\mathcal{C}\mathrm{of}}	
\newcommand{\Fib}{\mathcal{F}\mathrm{ib}} 	
\newcommand{\QIso}{\mathrm{QIso}} 		
\newcommand{\M}{\mathcal{M}}               	
\newcommand{\Cat}{\mathrm{Cat}}               	
\newcommand{\PrL}{\mathrm{Pr}^\mathrm{L}}	
\newcommand{\PrR}{\mathrm{Pr}^\mathrm{R}}	
\newcommand{\Inv}{\mathrm{Inv}}                 
\newcommand{\CAT}{\mathrm{CAT}}                 
\newcommand{\ex}{\mathrm{ex}}                   
\newcommand{\Catx}{\Cat^\ex}     		
\newcommand{\tstruc}{\mathrm{t}}		
\newcommand{\Catxt}{\Cat^\ex_\tstruc}     	
\newcommand{\Catxi}{\Cat^\ex_{\natural}}     
\newcommand{\Catr}{\mathrm{Rig}}
\newcommand{\Catexr}{\mathrm{Rig}^{\ex}}   	
\newcommand{\Catexri}{\mathrm{Rig}^{\ex}_{\natural}}   	
\newcommand{\SMGro}{\mathrm{SMGro}}		
\newcommand{\SMModCat}{\mathrm{SMModCat}} 	
\newcommand{\Cath}{\Cat^\mathrm{h}}     	
\newcommand{\Catps}{\Cat^\mathrm{ps}}   	
\newcommand{\Catpst}{\Cat^{\mathrm{ps}}_{\tstruc}}
\newcommand{\Catpsi}{\Cat^\mathrm{ps}_{\natural}}   
\newcommand{\Catp}{\Cat^{\mathrm p}}    	
\newcommand{\Catpt}{\Catp_\tstruc}    		
\newcommand{\Catsb}{\mathrm{Cat}^{\mathrm{sb}}} 
\newcommand{\Catb}{\Cat^{\mathrm{b}}} 		
\newcommand{\Catpb}{\Cat^{\mathrm{pb}}}		
\newcommand{\Fun}{\operatorname{Fun}}           
\newcommand{\bloc}{\mathrm{bloc}} 		
\newcommand{\oplax}{\mathrm{oplax}}		
\newcommand{\seq}{\mathrm{seq}}			
\newcommand{\Funx}{\operatorname{Fun^{ex}}}     
\newcommand{\Funq}{\operatorname{Fun^q}}        
\newcommand{\Funs}{\operatorname{Fun^s}}        
\newcommand{\kloc}{\mathrm{kloc}}               
\newcommand{\jbloc}{\mathrm{j}_{\mathrm{bloc}}} 
\newcommand{\lin}{\mathrm{lin}} 		%
\newcommand{\Llin}{\mathrm{L}^\lin}		
\newcommand{\bil}{\mathrm{bil}}
\newcommand{\Funbil}{\Fun^{\bil}}		
\newcommand{\Lbil}{\mathrm{L}^\bil}		
\newcommand{\Lqdr}{\mathrm{L}^\qdr}		
\newcommand{\QF}{\Qoppa}                   	
\newcommand{\QFD}{\Phi}                    	
\newcommand{\QFE}{\Psi}                    	
\newcommand{\Bil}{\mathrm{B}}			
\newcommand{\Psubone}{\mathrm{P}_1}             
\newcommand{\LT}{\mathcal{E}}			
\newcommand{\Dual}{\mathrm{D}}              	
\newcommand{\qshift}[1]{^{[#1]}}                
\newcommand{\sym}{\mathrm{s}}                   
\newcommand{\qdr}{\mathrm{q}}                   
\newcommand{\gs}{\mathrm{gs}}                   
\newcommand{\gq}{\mathrm{gq}}                   
\newcommand{\gev}{\mathrm{ge}}                  
\newcommand{\hyp}{\mathrm{hyp}}                 
\newcommand{\Sym}{\mathrm{Sym}}			
\newcommand{\Hyp}{\operatorname{Hyp}}           
\newcommand{\Met}{\operatorname{Met}}           
\newcommand{\Lag}{\mathcal{L}}             	
\newcommand{\Poinc}{\mathrm{Pn}}                
\newcommand{\cob}{w}	                       	
\newcommand{\T}{\mathcal{T}}               	
\newcommand{\F}{\mathcal{F}}               	
\newcommand{\G}{\mathcal{G}}               	
\newcommand{\Pic}{\mathrm{Pic}}			
\newcommand{\Picspace}{{\spacefont Pic}}        
\newcommand{\K}{\operatorname K}           	
\newcommand{\KK}{\mathbb K}                     
\newcommand{\Q}{\mathrm{Q}}   			
\renewcommand{\L}{\operatorname L}              
\newcommand{\KL}{\mathbb{L}}                    
\newcommand{\W}{\operatorname W}                
\newcommand{\GWspace}{\operatorname{\spacefont{GW}}} 
\newcommand{\GW}{\operatorname{GW}}             
\newcommand{\KGW}{\mathbb{GW}}                  
\newcommand{\fgt}{\mathrm{f\hspace{-.2ex}gt}}                	
\newcommand{\cl}{\mathrm{cl}}			
\newcommand{\cK}{\mathcal{K}} 			
\newcommand{\HGW}{\mathbb{HGW}}			
\newcommand{\HL}{\mathbb{HL}}			
\newcommand{\HK}{\mathbb{HK}}			
\newcommand{\Mod}{\operatorname{Mod}}           
\newcommand{\Vect}{\mathrm{Vect}}               
\newcommand{\Proj}{\operatorname{Proj}}         
\newcommand{\inflation}{\mathrm{inf}}		
\newcommand{\spec}{\mathrm{spec}}              	
\newcommand{\cO}{\mathcal{O}}              	
\newcommand{\Sch}{\mathrm{Sch}}			
\newcommand{\sm}{\mathrm{sm}}			
\newcommand{\Sm}{\mathrm{Sm}}                   
\newcommand{\Aff}{\mathrm{Aff}}			
\newcommand{\aff}{\mathrm{aff}}			
\newcommand{\qcs}{\mathrm{qcs}}			
\newcommand{\qc}{\mathrm{qc}}			
\newcommand{\qSch}{\mathrm{qSch}}		
\newcommand{\qSchPic}{\mathrm{qSchPic}}		
\newcommand{\qcsSch}{\Sch^\qcs}			
\newcommand{\Pone}{\mathbb{P}^1}		
\newcommand{\Aa}{\mathbb{A}}    		
\newcommand{\Aone}{\Aa^1}    			
\newcommand{\PP}{\mathbb{P}}                     
\newcommand{\Gm}{\mathbb{G}_m}                   
\newcommand{\GL}{\mathrm{GL}}                    
\newcommand{\Orth}{\mathrm{O}}			 
\newcommand{\Sh}{\mathrm{Sh}}			
\newcommand{\Coh}{\mathrm{Coh}}			
\newcommand{\rk}{\mathrm{rk}}        		
\newcommand{\et}{\text{ét}}                     
\newcommand{\Nis}{\mathrm{nis}} 		
\newcommand{\Tot}{\mathrm{Tot}}			
\newcommand{\X}{\mathcal{X}}               	
\newcommand{\Ext}{\operatorname{Ext}}		
\newcommand{\Ch}{\operatorname{Ch}}            	
\newcommand{\Der}{\mathcal D}                   
\newcommand{\perf}{\mathrm{p}}                  
\newcommand{\Dperf}{\Der^\perf}         	
\newcommand{\Derqc}{\Der^{\mathrm{qc}}}    	
\newcommand{\cA}{\mathcal{A}}                  	
\newcommand{\Loc}{\mathrm{Loc}}			
\newcommand{\SH}{\operatorname{SH}}             
\newcommand{\LAone}{\Loc_{\Aone}} 		
\newcommand{\Lbloc}{\Loc_{\mathrm{b}}} 		
\newcommand{\Sone}{\mathrm{S}^1}		
\newcommand{\OmPone}{\Omega_{\mathbb{P}^1}}	
\newcommand{\Hinv}{\mathrm{H}}			
\newcommand{\Th}{\mathrm{Th}}		        
\newcommand{\Y}{\mathcal{Y}} 			
\newcommand{\Z}{\mathcal{Z}}			
\newcommand{\R}{\mathcal{R}}		   	
\newcommand{\KQ}{\operatorname{KQ}}             
\newcommand{\KW}{\operatorname{KW}}             
\newcommand{\KGL}{\operatorname{KGL}}           
\renewcommand{\tocsection}[3]{%
\indentlabel{\@ifnotempty{#2}{\parbox[b]{3ex}{\bfseries\ignorespaces#1 #2}}}\bfseries#3} 
\renewcommand{\tocsubsection}[3]{%
\indentlabel{\@ifnotempty{#2}{\hspace{1.6em}\parbox[b]{5ex}{\ignorespaces#1 #2}}}#3}
\renewcommand{\tocsubsubsection}[3]{%
\indentlabel{\@ifnotempty{#2}{\hspace{3.9em}\parbox[b]{5ex}{\ignorespaces#1 #2}}}#3}
\DeclareRobustCommand{\SkipTocEntry}[5]{}
\title{A motivic spectrum representing hermitian K-theory}
\author[Calmès]{Baptiste Calmès}
\address{Université d'Artois, Laboratoire de Mathématiques de Lens (LML), UR 2462, Lens, France}
\email{baptiste.calmes@univ-artois.fr}
\author[Harpaz]{Yonatan Harpaz}
\address{Université de Paris 13; Institut Galilée; Villetaneuse, France}
\email{harpaz@math.univ-paris13.fr}
\author[Nardin]{Denis Nardin}
\address{Universität Regensburg; Mathematisches Institut; Regensburg, Germany}
\email{Denis.Nardin@mathematik.uni-regensburg.de}
\begin{document}

\begin{abstract}
We establish fundamental motivic results about hermitian K-theory without assuming that 2 is invertible on the base scheme. In particular, we prove that both quadratic and symmetric Grothendieck-Witt theory satisfy Nisnevich descent, and that symmetric Grothendieck-Witt theory further satisfies a projective bundle formula, as well as dévissage and $\Aone$-invariance over a regular Noetherian base of finite Krull dimension. We use this to show that over a regular Noetherian base, symmetric Grothendieck-Witt theory is represented by a motivic $\Einf$-ring spectrum, which we then show is an absolutely pure spectrum, answering a question of Déglise. As with algebraic K-theory, we show that over a general base, one can also construct a hermitian K-theory motivic spectrum, representing this time a suitable homotopy invariant and Karoubi-localising version of Grothendieck-Witt theory. 
\end{abstract}

\maketitle

\tableofcontents

\section{Introduction}

Originally justified by Voevodsky's proof of the Milnor conjecture, motivic homotopy theory is now generally recognized as a deep and efficient framework to study cohomological invariants of rings and schemes. A key insight of this approach is that such invariants can be studied in a manner closely resembling that taken up by classical algebraic topology in the study of cohomological invariants of topological spaces. More precisely, the latter can be axiomatised via the notion of generalized cohomology theories, which in turn can be classified by the notion of a spectrum. Similarly, cohomological invariants of schemes can be classified by a suitable notion of a motivic spectrum. For example, motivic cohomology, algebraic K-theory, and algebraic cobordism, which are algebraic analogues of singular cohomology, complex K-theory, and cobordism, are each represented by a corresponding motivic spectrum.

On the side of topological spaces, another generalized cohomology theory which plays an important role in the study of smooth manifolds, is real K-theory. Its analogue on the side of schemes is hermitian K-theory, also called Grothendieck-Witt theory, which is the K-theory of unimodular quadratic forms. This somewhat less obvious analogy is due to the inclusions $\Orth_n(\CC) \supseteq \Orth_n(\RR) \subseteq \GL_n(\RR)$ both being homotopy equivalences, so that the $\K$-theory of real vector bundles coincides with that of complex vector bundles equipped with non-degenerate quadratic forms.
 
The behaviour of hermitian $\K$-theory is notoriously sensitive to the prime 2. In particular, its behaviour simplifies considerably when considering schemes over which 2 is invertible. As a result, until recently a significant part of the relevant theory was developed under this assumption, and in particular, the realization of hermitian K-theory as a motivic spectrum, as was first done by Hornbostel~\cite{hornbostel-motivic}, with geometric models later constructed by Schlichting and Tripathi~\cite{Schlichting-Tripathi}, was limited to $\ZZ[\tfrac{1}{2}]$-schemes.

In a series of papers~\cites{9-authors-I,9-authors-II,9-authors-III,9-authors-IV,9-authors-V} by a group of authors including the authors of the present paper, the foundations of hermitian K-theory were revisited using the powerful recent technology of higher category theory, in a manner which enables one to dispense with the invertibility of 2 assumption. The goal of the present paper is to harness these results in the service of the motivic aspects of hermitian K-theory, allowing a base scheme $S$ in which 2 is not assumed invertible. The first and primary outcome of our work is the following result, which is itself a prerequisite to any further investigations in the motivic direction:

\begin{theorem}
\label{theorem:main-intro}%
Let $S$ be a regular Noetherian scheme of finite Krull dimension. Then symmetric Grothendieck-Witt theory is represented by a motivic spectrum over $S$.
\end{theorem}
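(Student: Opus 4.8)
\emph{Proof strategy.} The plan is to build the representing motivic spectrum by hand, assembling it from the shifted symmetric Grothendieck-Witt presheaves and feeding in the three structural inputs recalled above: Nisnevich descent, the projective bundle formula, and dévissage together with $\Aone$-invariance. Recall that $\SH(S)$ can be modelled as the $\infty$-category of $\PP^1$-spectrum objects in $\Aone$-invariant Nisnevich sheaves of spectra on $\Sm_S$, and that, since the cyclic permutation of $(\PP^1)^{\wedge 3}$ is motivically homotopic to the identity, there is no need for a symmetric-spectrum model. Concretely, to produce a motivic spectrum over $S$ representing a presheaf of spectra $F$ on $\Sm_S$ it suffices to exhibit presheaves $F = E_0, E_1, E_2, \dots$, each a Nisnevich sheaf and $\Aone$-invariant, together with equivalences $E_n \simeq \Omega_{\PP^1} E_{n+1}$ coherently assembling into a $\PP^1$-$\Omega$-spectrum; such an object then lies in $\SH(S)$ and satisfies $\Map_{\SH(S)}(\Sigma^\infty_{\PP^1} X_+, E) \simeq F(X)$ naturally in $X \in \Sm_S$.

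For the levels I would take $E_n := \GW\qshift{n}$, where $\GW\qshift{n}(X)$ denotes the $n$-shifted symmetric Grothendieck-Witt spectrum of the perfect derived category of $X \in \Sm_S$, so that $\GW\qshift{0}$ is the symmetric Grothendieck-Witt presheaf of the theorem; these organise into a single $\Sm_S\op$-presheaf valued in $\Fun(\ZZ, \Spa)$, since the shifted symmetric Poincaré structures on $\Dperf(X)$ vary functorially in $X$. That each $\GW\qshift{n}$ is a Nisnevich sheaf of spectra is exactly the Nisnevich descent theorem, and that each is $\Aone$-invariant is the dévissage and $\Aone$-invariance theorem --- and it is precisely here that regularity, Noetherianity, and finiteness of the Krull dimension of $S$ enter. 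Over such a base no $\Aone$-localization is required, which is also what will make the resulting spectrum represent $\GW\qshift{0}$ on the nose rather than its $\Aone$-localization.

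For the bonding equivalences I would use the projective bundle formula. Applied to the trivial $\Pone$-bundle over $X \in \Sm_S$, it gives a natural decomposition $\GW\qshift{n}(\Pone_X) \simeq \GW\qshift{n}(X) \oplus \GW\qshift{n-1}(X)$, the unit shift in the second summand reflecting the twist by the relative dualising line $\cO(-2)$ of $\Pone$; taking the fibre of restriction along the section at infinity then identifies the reduced $\Pone$-suspension $\widetilde{\GW}\qshift{n}(\Pone \wedge X_+)$ with $\GW\qshift{n-1}(X)$, that is, $\Omega_{\PP^1}\GW\qshift{n} \simeq \GW\qshift{n-1}$ naturally in $X$. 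Granting that these levelwise equivalences can be organised into a genuine $\PP^1$-$\Omega$-spectrum, the sequence $\{\GW\qshift{n}\}_{n \ge 0}$ defines an object $\KQ_S \in \SH(S)$ with $\Map_{\SH(S)}(\Sigma^\infty_{\PP^1} X_+, \KQ_S) \simeq \GW\qshift{0}(X)$ for all $X \in \Sm_S$, which is the assertion of the theorem.

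I expect that last point to be the main obstacle: the projective bundle formula must be made functorial and multiplicative enough for the bonding maps to assemble coherently into a single spectrum object, rather than a mere sequence of equivalences patched together by hand --- equivalently, one must control the interaction of the Poincaré-structure shift $[n] \mapsto [n+1]$ with $\PP^1$-suspension coherently. The cleanest route is to exhibit $\bigoplus_n \GW\qshift{n}$ as an $\Einf$-ring in presheaves of spectra on $\Sm_S$ and to realise each bonding equivalence as multiplication by a single symmetric Bott class $\beta \in \widetilde{\GW}\qshift{1}(\Pone)$; this simultaneously delivers the $\Einf$-ring refinement of $\KQ_S$ alluded to in the abstract. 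Everything else --- notably that a coherent $\PP^1$-$\Omega$-spectrum of $\Aone$-invariant Nisnevich sheaves of spectra is exactly an object of $\SH(S)$ --- is then formal.
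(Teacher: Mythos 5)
Your strategy is sound and essentially mirrors the paper's proof, correctly identifying the three inputs (Nisnevich descent for $\GW^{\sym}$, the projective line formula producing $\Omega_{\PP^1}\GW\qshift{n}\simeq\GW\qshift{n-1}$, and $\Aone$-invariance over a regular Noetherian base of finite Krull dimension, which in turn rests on dévissage) and correctly locating the real work in the coherence of the bonding maps. The paper, however, implements that coherence step differently from what you sketch. Rather than producing a graded $\Einf$-ring $\bigoplus_n\GW\qshift{n}$ and a Bott class $\beta$, the paper builds a lax symmetric monoidal \emph{pre-motivic realization} functor $\R^{\sym}_S$ from Karoubi-localising functors on $\Catp$ to symmetric $\Pone$-spectrum objects in $\Sh^{\Nis}(S,\Spa)$: the key point (Proposition~\ref{proposition:projective-line}) is that the symmetric monoidal functor $\T_S$ from Nisnevich sheaves to bounded localising functors on $S$-linear $\infty$-categories with duality sends $\E[\PP^1_S,\infty]$ to an \emph{invertible} object, namely $\jbloc(\Dperf(S),\Sigma^{-1}\Dual_S)$, so that the universal property of $\PP^1$-stabilization automatically organises all the shifted $\GW$ sheaves into a coherent $\PP^1$-spectrum (its levels are precisely your $\GW\qshift{n}$, cf.\ Remark~\ref{remark:underlying-sym}). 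Over a regular base this pre-motivic spectrum is already $\Aone$-invariant by Theorem~\ref{theorem:A1-invariance}, so it \emph{is} the motivic spectrum $\KQ_S$ (Remark~\ref{remark:regular-base}). Your observation that the sequential model suffices because $[\PP^1,\infty]$ becomes a symmetric object in $\Hinv(S,\Sps_*)$ is correct; the paper instead must use symmetric $\PP^1$-spectra because it constructs the deloopings \emph{before} $\Aone$-localization, where $[\PP^1,\infty]$ is \emph{not} symmetric, and this extra generality is what lets the same machine produce $\KQ_S$ over an arbitrary qcqs base. Your heuristic that the shift in the complementary summand of $\GW\qshift{n}(\PP^1_X)$ comes from $\cO(-2)$ is slightly off: in Proposition~\ref{proposition:proj-bundle-I} with $V=\cO^{\oplus 2}$ the twist is by $\det V^\vee[-1]=\cO_X[-1]$, the $\cO(-1)$-twist entering through the split Poincaré-Verdier projection $q_*(-\otimes\cO(-1))$; the net result, a single degree shift $[-1]$, is as you state.
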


Let us take a moment to clarify what the term ``symmetric'' means in this context. First, note that when 2 is invertible, the notions of quadratic form and symmetric bilinear form are essentially equivalent: every symmetric bilinear form admits a unique quadratic refinement. This is no longer true when 2 is not invertible, so in general, one needs to specify with respect to which flavour of forms Grothendieck-Witt theory is taken. For example, in the above theorem, we make reference to symmetric bilinear forms.

There are, however, more then one variant of those as well. Recall that by the Gillet-Waldhausen theorem, the algebraic K-theory of a ring can be calculated either by considering finitely generated projective modules and then taking group completion, or by considering all perfect complexes, in which case one needs to enforce relations for every exact sequence of such. Passing to the hermitian setting, projective modules can be endowed with symmetric bilinear forms in the usual manner, while perfect complexes can be endowed with a homotopical analogue of symmetric forms, where the symmetry property is interpreted as a homotopy fixed point structure. 
When 2 is not invertible, these two constructions lead to different Grothendieck-Witt groups (or spectra) in general.

In the paper series mentioned above, we use the term symmetric Grothendieck-Witt to refer to the latter construction using homotopy fixed points, and introduce the term \emph{genuine symmetric} to refer to the former. While the genuine symmetric is the one more directly related to classical constructions in hermitian K-theory, the (non-genuine) symmetric admits better formal properties. Most notably from the present viewpoint, it is $\Aone$-invariant over regular Noetherian schemes of finite Krull dimension, which is the main reason why it is this flavour of Grothendieck-Witt theory that features in Theorem~\ref{theorem:main-intro}. It also underlines a possible reason for why the motivic hermitian K-theory spectrum was previously only constructed when 2 is invertible: without this assumption, no flavour of Grothendieck-Witt groups defined using a classical notion of forms is $\Aone$-invariant (even over fields); see Example~\ref{example:non-homotopy-invariance}.

While Theorem~\ref{theorem:main-intro} is a necessary first step, it still leaves several obvious questions unanswered:
\begin{enumerate}
\item
What happens if $S$ is not regular Noetherian?
\item
What about Witt theory, obtained from Grothendieck-Witt theory in principle by quotienting out the hyperbolic forms?
\item
What about the multiplicative structure of Grothendieck-Witt theory?
\item
What about all the natural maps relating Grothendieck-Witt theory, Witt theory, and algebraic K-theory?
\item
What happens to some of the expected properties of motivic Grothendieck-Witt theory, such as purity or the projective bundle formula, when 2 is not invertible?
\end{enumerate}
In the present paper, we proceed beyond Theorem~\ref{theorem:main-intro} by giving complete answers to all the above questions. To describe how this is done, let us further recall the theoretical framework of~\cite{9-authors-I} and its sequels. The main point is that Grothendieck-Witt theory should be considered as an invariant defined on \emph{Poincaré $\infty$-categories}. These are stable $\infty$-categories equipped with a spectrum valued functor $\QF\colon \C\op \to \Spa$ satisfying suitable axioms. These axioms guarantee in particular the existence of a uniquely determined duality $\Dual\colon \C\op \to \C$ such that $\QF(x \oplus y) = \QF(x) \oplus \QF(y) \oplus \map(x,\Dual y)$. The functor $\QF$ is then called a \emph{Poincaré structure} on $\C$. For an object $x \in \C$, the underlying infinite loop space $\Om^{\infty}\QF(x)$ should be thought of as the space of forms on $x$. In particular, we call a point $\bet \in \Om^{\infty}\QF(x)$ a \emph{form} on $x$. Such a form determines a map $\bet_{\sharp}\colon x \to \Dual x$, and we say that $\bet$ is a \emph{Poincaré form} if $\bet_{\sharp}$ is an equivalence. Poincaré forms are what corresponds in this setting to non-degenerate forms. A pair $(x,\bet)$ of an object equipped with a Poincaré form $\bet \in \Om^{\infty}\QF(x)$ is called a Poincaré object.

The fact that $\QF$ is an extra piece of structure (and not an inherent property of $\C$) is what allows one to specify the flavour of forms one is interested in. 
For example, if $R$ is a commutative ring, then we may consider its perfect derived $\infty$-category $\Dperf(R)$, whose objects are the perfect $R$-complexes, and consider the functor
\[
\QF^{\sym}_R(M) := \map(M \otimes_R M, R)^{\hC} ,
\]
where $(-)^{\hC}$ denotes homotopy fixed points with respect to the $\Ct$-action induced by swapping the two $M$ factors. Then $\QF^{\sym}$ sends $M$ to the spectrum of what can be considered as the homotopy theoretical analogue of symmetric bilinear forms on $M$. Alternatively, we may take the Poincaré structure $\QF^{\qdr}_R(M) := \map(M \otimes_R M,R)_{\hC}$ obtained by using homotopy orbits instead of fixed points. This is the homotopy theoretical analogue of quadratic forms. As mentioned above, these are not exactly the same as working with symmetric and quadratic forms on projective modules. The latter notions can also be encoded via suitable Poincaré structures $\QF^{\gs}$ and $\QF^{\gq}$, called the genuine symmetric and genuine quadratic structures. All these Poincaré structures are generally different from each other, and are related by a sequence of natural transformations
\[
\QF_R^{\qdr} \Rightarrow \QF_R^{\gq} \Rightarrow \QF_R^{\gs} \Rightarrow \QF_R^{\sym} .
\]
When $2$ is invertible in $R$, these maps are all equivalences.

To a Poincaré $\infty$-category $(\C,\QF)$, one may associate an invariant $\GW(\C,\QF)$, called the Grothendieck-Witt spectrum, whose homotopy groups are the Grothendieck-Witt groups of $(\C,\QF)$. By~\cite{Hebestreit-Steimle}, in the case of $(\Dperf(R),\QF^{\gs})$ and $(\Dperf(R),\QF^{\gq})$, these reproduce the classical symmetric and quadratic Grothendieck-Witt groups of $R$, though, as mentioned earlier, the Grothendieck-Witt groups of the Poincaré $\infty$-category $(\Dperf(R),\QF^{\sym}_R)$ have better formal properties, and play a more important role in the current paper. More generally, one may replace the commutative ring $R$ by a scheme $X$, and consider the perfect derived $\infty$-category $\Dperf(X)$ of perfect complexes on $X$. As above, one can define the symmetric Poincaré structure $\QF^{\sym}_X$ using the same formulas. 
More generally, given a line bundle $L$ on $X$, we may consider the Poincaré structure $\QF^{\sym}_L$ defined by $\QF^{\sym}_L(M) = \map(M \otimes_{\cO_X} M, L)^{\hC}$. It is also useful to consider the slightly more general case where, on the one hand, $L$ is endowed with an involution, and, on the other hand, is allowed to be a shift of a line bundle (or more generally, any tensor invertible perfect complex). The Grothendieck-Witt groups of $(\Dperf(X),\QF^{\sym}_L)$ are the main invariants of schemes we consider in this paper. To keep the introduction easily readable, we mention only line bundles with constant involution when describing our main results here, though we provide references to the body of the paper where the reader can find the full version of each result.

The collection of Poincaré $\infty$-categories can be organized into a (large) category $\Catp$, which is presentable and compactly generated (see~\cite{9-authors-V}). The functor $\GW(-)$ is Verdier localising, that is, it sends fibre-cofibre sequences of Poincaré $\infty$-categories to exact sequences of spectra, and is furthermore the universal such functor equipped with a natural transformation $\Poinc \Rightarrow \Om^{\infty}\GW$, where $\Poinc$ is the functor which associates to a Poincaré $\infty$-category its space of Poincaré objects. It turns out that the property that is more relevant from the motivic point of view is not being Verdier localising, but being Karoubi-localising, which is equivalent to being Verdier localising and invariant under idempotent completion. While the functor $\GW$ is not Karoubi-localising, it can be turned into one in a universal manner, yielding a functor $\KGW$ we call the \emph{Karoubi-Grothendieck-Witt} functor. It comes equipped with a canonical natural transformation $\GW \Rightarrow \KGW$, which is generally not equivalence. This transformation is closely related to the transformation $\K \Rightarrow \KK$ between algebraic $\K$-theory and Bass $\K$-theory (also known as non-connective $\K$-theory). In particular, if $\C$ is such that $\K(\C) \to \KK(\C)$ is an equivalence (e.g., $\C=\Dperf(X)$ for $X$ a regular Noetherian scheme) then $\GW(\C,\QF) \to \KGW(\C,\QF)$ is an equivalence, see~\cite[\S 1]{9-authors-IV}. 

The importance of the Karoubi-localising property in the process of passing from invariants of Poincaré $\infty$-categories to invariants of schemes is expressed by the following key result of the present paper (see Corollary~\ref{corollary:nisnevich-descent} below):

\begin{proposition}[Nisnevich descent]
\label{proposition:nisnevich-descent-intro}%
Let $S$ be a quasi-compact quasi-separated base scheme and $\F\colon \Catp \to \Spa$ a Karoubi-localising spectrum valued functor. Then, the functor
\[
\F^\sym_S\colon \qSch_{/S} \to \Spa \quad\quad X \mapsto \F(X,\QF^\sym_X)
\]
is a Nisnevich sheaf, where $\qSch_{/S}$ is the category of quasi-compact quasi-separated $S$-schemes.
\end{proposition}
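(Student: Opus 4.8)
The plan is to reduce Nisnevich descent for $\F^\sym_S$ to the already-known fact that Karoubi-localising functors send Verdier sequences of Poincaré $\infty$-categories to fibre sequences of spectra, together with the good behaviour of $\Dperf(-)$ and the symmetric Poincaré structure under Nisnevich squares. Recall that a presheaf on $\qSch_{/S}$ is a Nisnevich sheaf if and only if it sends the empty scheme to the terminal object and takes every elementary Nisnevich square (an étale morphism $X' \to X$ and an open immersion $U \hookrightarrow X$ such that $X' \setminus U' \to X \setminus U$ is an isomorphism on underlying reduced schemes, with $U' = U \times_X X'$) to a pullback square of spectra. The first, easy point is that $\F^\sym_\emptyset = \F(0, \QF^\sym_0) \simeq 0$, since $\Dperf(\emptyset) \simeq 0$ and $\F$, being in particular reduced, kills the zero category.

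The heart of the argument is the elementary Nisnevich square. First I would recall the well-known fact (Thomason--Trobaugh, Morel--Voevodsky) that for such a square the diagram of stable $\infty$-categories
\[
\begin{tikzcd}
\Dperf(X) \ar[r] \ar[d] & \Dperf(X') \ar[d] \\
\Dperf(U) \ar[r] & \Dperf(U')
\end{tikzcd}
\]
is both a pullback and a pushout in $\Catx$; equivalently, the fibre of $\Dperf(X) \to \Dperf(U)$ (the subcategory of complexes supported on the closed complement $Z$) maps by an equivalence to the fibre of $\Dperf(X') \to \Dperf(U')$. The key step is then to upgrade this to a statement in $\Catp$: equipping each corner with its symmetric Poincaré structure $\QF^\sym$, the restriction functors are all Poincaré (they are exact, and symmetric-monoidal pullback along the structure maps strictly preserves the formula $\map(M \otimes M, -)^{\hC}$, hence are duality-preserving), and the resulting square in $\Catp$ is a pullback-and-pushout of Poincaré $\infty$-categories — i.e. a Verdier square. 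For this one uses that $\Catp \to \Catx$ detects and creates such squares once the Poincaré structures are compatible, which here they are because $\QF^\sym_L$ is pulled back from $\QF^\sym$ on the common base via the (symmetric-monoidal, exact) restriction functors, and the localisation sequence for $\Dperf$ along $Z$ lifts to a Poincaré-Verdier sequence with the $\QF^\sym$ restricted to the supported subcategory. Applying the Karoubi-localising functor $\F$ to this Verdier square of Poincaré $\infty$-categories then yields a fibre sequence, and symmetrically (using the pushout) the desired square
\[
\begin{tikzcd}
\F^\sym_S(X) \ar[r] \ar[d] & \F^\sym_S(X') \ar[d] \\
\F^\sym_S(U) \ar[r] & \F^\sym_S(U')
\end{tikzcd}
\]
is a pullback of spectra; equivalently, $\F$ applied to the common fibre of the vertical Poincaré localisations agrees on both sides.

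There is one subtlety worth isolating, which I expect to be the main obstacle: the localisation sequence $\Dperf_Z(X) \to \Dperf(X) \to \Dperf(U)$ is a Verdier sequence of \emph{stable} $\infty$-categories, but $\Dperf(U)$ is in general only the \emph{idempotent completion} of the Verdier quotient $\Dperf(X)/\Dperf_Z(X)$ — and likewise on the $X'$ side. This is precisely where being Karoubi-localising, rather than merely Verdier-localising, enters: $\F$ is insensitive to the difference between a Poincaré $\infty$-category and its idempotent completion, so it still sends the (a priori only Karoubi-, not Verdier-) sequence to a fibre sequence. One must check that the relevant Poincaré structures on the Verdier quotients and their completions are the restrictions of $\QF^\sym$ and that the comparison is compatible with the two sides of the Nisnevich square; granting the structural results on $\Catp$ from \cite{9-authors-V} and the identification of $\QF^\sym$ under these operations, this goes through. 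Finally, for the general (quasi-compact quasi-separated) base $S$ one notes that the argument above is entirely local and natural in $S$, so no further hypotheses on $S$ are needed; the statement is really a statement about the presheaf $X \mapsto (\Dperf(X), \QF^\sym_X)$ on $\qSch_{/S}$ valued in $\Catp$, which takes elementary Nisnevich squares to Verdier squares, post-composed with any Karoubi-localising $\F$.
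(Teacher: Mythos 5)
Your proposal is correct, but it follows a genuinely different route from the paper's own proof of this statement. The paper proves Proposition~\ref{proposition:nisnevich-descent-intro} as Corollary~\ref{corollary:nisnevich-descent}, where the strategy is: establish Zariski descent first, reduce to the affine Nisnevich site via Proposition~\ref{proposition:reduce-affine}, invoke Lurie's affine Nisnevich excision criterion (SAG, Theorem~B.5.0.3), and then verify the resulting Poincaré--Karoubi square for a distinguished Nisnevich square of affines directly by checking the hypotheses of \cite[Proposition~4.4.20]{9-authors-II} — a concrete computation involving vanishing of boundary maps in Tate cohomology. This machinery is heavier, but it covers every truncated structure $\QF^{\ge m}_L$ simultaneously, not just $\QF^{\sym}$. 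Your argument — use the Morel--Voevodsky characterisation via elementary Nisnevich squares, observe that $\Dperf(-)$ turns these into Karoubi squares of stable $\infty$-categories, and then lift to $\Catp$ with the symmetric Poincaré structure — is in fact essentially the paper's Proposition~\ref{proposition:nisnevich-descent-variant}, which appears later and is formulated at the level of $S$-linear $\infty$-categories with duality (bounded localising functors on $\Mod^{\flat,\kappa}_S(\Catpsi)$). It is more direct for the symmetric case, which is precisely what is used in setting up the multiplicative motivic realization, but it does not generalize to the genuine structures, for which the cartesianness of the Poincaré square cannot be deduced from the stable square alone. Two points where your write-up glosses over genuine content: (1) the justification ``$\Catp \to \Catx$ detects and creates such squares'' is not how the lifting actually works — the correct mechanism is that $\Catps \simeq (\Catx)^{\hC}$ so that the square is cartesian in $\Catps$, and then the inclusion $\Catps \hookrightarrow \Catp$ sending $(\C,\Dual) \mapsto (\C,\QF^{\sym}_{\Dual})$ is a fully-faithful right adjoint (Corollary~\ref{corollary:adj-catp-catps}), hence limit-preserving; and (2) the fact that open restriction $(\Dperf(X),\QF^{\sym}_L) \to (\Dperf(U),\QF^{\sym}_{L|_U})$ is a Poincaré--\emph{Karoubi} projection (the subtlety you correctly flag at the end) is Corollary~\ref{corollary:open-is-karoubi}, whose proof requires the bounded Karoubi projection technology of Proposition~\ref{proposition:open-is-bounded} and Proposition~\ref{proposition:flat-is-karoubi-sym}; it is not merely a formal consequence of the underlying stable Karoubi sequence.
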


In particular, the functor $\KGW^\sym_S(X) = \KGW(X,\QF^{\sym}_X)$ is a Nisnevich sheaf on $\qSch_{/S}$. We may also restrict attention to the full subcategory $\Sm_S \subseteq \Sch_{/S}$ spanned by the (quasi-compact quasi-separated) smooth $S$-schemes, and consider $\KGW^\sym_S$ as a Nisnevich sheaf on that site. If the base scheme $S$ itself is regular, then every smooth $S$-scheme is regular as well, and, as mentioned above, for a regular Noetherian scheme $X$, the map $\GW(X,\QF^\sym_X) \to \KGW(X,\QF^\sym_X)$ is an equivalence. This means that for a regular Noetherian $S$, the functor $\GW^\sym_S(X) = \GW(X,\QF^\sym)$ is a Nisnevich sheaf on $\Sm_S$. Alternatively, one may replace $\GW$ by the functor $\L$ which associates to a Poincaré $\infty$-category its $\L$-spectrum (whose homotopy groups correspond to higher Witt groups in algebraic geometry), and obtain using similar arguments that the symmetric $\L$-theory functor $\L^\sym_S$ is a Nisnevich sheaf on $\Sm_S$ for any regular Noetherian $S$.

Now a motivic spectrum over $S$ is not just a Nisnevich sheaf on $\Sm_S$: it possess more structure (a tower of $\Pone$-deloopings) and satisfies more axioms ($\Aone$-invariance). In order to incorporate the former into the construction of the motivic Grothendieck-Witt spectrum, we need to understand the behaviour of $\GW$ under taking products with $\Pone$. More generally, we consider projective line bundles which are not necessarily constant (and eventually also higher rank projective bundles; see Proposition~\ref{proposition:proj-bundle-I}, Theorem~\ref{theorem:projective-bundle-formula} and Lemma~\ref{lemma:pbf-boundary} in the body of the paper):

\begin{proposition}[Projective line formula]
Let $X$ be a quasi-compact and quasi-separated scheme equipped with a line bundle $L$ 
and let $V$ be a vector bundle over $X$ of rank $2$. Let $\F\colon \Catp\to \Spa$ be a Karoubi-localising functor. 
Then we have a split fibre sequence
\begin{equation*}
\F\big(\Dperf(X),\QF^\sym_L\big)\to \F\big(\Dperf(\PP_XV),\QF^\sym_{p^*L}\big) \to \F\big(\Dperf(X),\QF^\sym_{L \otimes \det V^\vee[-1]}\big) \ ,
\end{equation*}
where $p\colon\PP_X V=\Proj(\Sym^\bullet V)\to X$ is the corresponding projective bundle over $X$ and the first arrow is induced by pullback along $p$.
\end{proposition}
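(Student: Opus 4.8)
The plan is to realise the asserted split fibre sequence as the image under $\F$ of a \emph{split Poincaré--Verdier sequence} of Poincaré $\infty$-categories
\[
\big(\Dperf(X),\QF^\sym_L\big)\xrightarrow{\ p^*\ }\big(\Dperf(\PP_XV),\QF^\sym_{p^*L}\big)\longrightarrow \big(\mathcal{Q},\QF_{\mathcal{Q}}\big),
\]
and then to identify the cofibre $(\mathcal{Q},\QF_{\mathcal{Q}})$ with $(\Dperf(X),\QF^\sym_{L\otimes\det V^\vee[-1]})$. Since $\F$ is Karoubi-localising it is in particular Verdier-localising, hence sends such a split Poincaré--Verdier sequence to a split fibre sequence of spectra, which is the assertion. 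The input on the level of stable $\infty$-categories is the relative Beilinson--Orlov theorem: as $p\colon \PP_XV\to X$ is smooth and proper with $p_*\cO_{\PP_XV}=\cO_X$ and $p_*\cO_{\PP_XV}(-1)=0$, the functor $p^*\colon\Dperf(X)\to\Dperf(\PP_XV)$ is fully faithful (one has $p_*p^*\simeq\id$), its essential image $\cB$ together with $\cB(1):=\cB\otimes\cO_{\PP_XV}(1)$ generates $\Dperf(\PP_XV)$ as a thick subcategory, and $\map_{\PP_XV}(M(1),N)\simeq 0$ for $M,N\in\cB$ (projection formula and $p_*\cO_{\PP_XV}(-1)=0$); since $V$ has rank $2$ these two pieces exhaust the decomposition, so $\cB$ is an admissible stable subcategory fitting into the split Verdier sequence $\cB\to\Dperf(\PP_XV)\to\mathcal{Q}$ with $\mathcal{Q}=\Dperf(\PP_XV)/\cB\simeq\cB(1)$.

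Next I would check that this decomposition is compatible with the symmetric Poincaré structures. The projection formula gives $\Dual_{p^*L}(p^*E)=\CHom_{\cO_{\PP_XV}}(p^*E,p^*L)\simeq p^*\CHom_{\cO_X}(E,L)=p^*(\Dual_L E)$, so $\cB$ is stable under the duality $\Dual_{p^*L}$ of $\QF^\sym_{p^*L}$; and $\QF^\sym_{p^*L}(p^*E)=\map_{\PP_XV}(p^*(E\otimes E),p^*L)^{\hC}\simeq \map_X(E\otimes E,L)^{\hC}=\QF^\sym_L(E)$, again using $p_*\cO_{\PP_XV}=\cO_X$. Thus $p^*$ underlies a Poincaré functor exhibiting $(\Dperf(X),\QF^\sym_L)$ as a duality-stable, admissible Poincaré subcategory of $(\Dperf(\PP_XV),\QF^\sym_{p^*L})$, and by the standard theory of Poincaré--Verdier sequences this produces the split Poincaré--Verdier sequence displayed above, $\QF_{\mathcal{Q}}$ being the induced Poincaré structure on $\mathcal{Q}$.

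It then remains to identify $(\mathcal{Q},\QF_{\mathcal{Q}})$. The semiorthogonality already used shows that $\cB(1)\hookrightarrow\Dperf(\PP_XV)\to\mathcal{Q}$ is an equivalence, so $\mathcal{Q}\simeq\cB(1)\simeq\Dperf(X)$ via $[p^*F\otimes\cO_{\PP_XV}(1)]\leftrightarrow F$. Under this identification the induced duality is obtained from $\Dual_{p^*L}(p^*F\otimes\cO_{\PP_XV}(1))\simeq p^*(\Dual_L F)\otimes\cO_{\PP_XV}(-1)$ by rewriting $\cO_{\PP_XV}(-1)$ inside $\mathcal{Q}$: the tautological short exact sequence $0\to p^*\det V\otimes\cO_{\PP_XV}(-1)\to p^*V\to\cO_{\PP_XV}(1)\to 0$ on $\PP_XV$ has middle term in $\cB$, so it becomes an equivalence $p^*\det V\otimes\cO_{\PP_XV}(-1)\simeq\cO_{\PP_XV}(1)[-1]$ in $\mathcal{Q}$, that is $\cO_{\PP_XV}(-1)\simeq p^*\det V^\vee\otimes\cO_{\PP_XV}(1)[-1]$ there. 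Hence $\Dual_{\mathcal{Q}}(F)\simeq\det V^\vee\otimes\Dual_L F[-1]=\CHom_{\cO_X}(F,L\otimes\det V^\vee[-1])$, i.e. $\Dual_{\mathcal{Q}}=\Dual_{L\otimes\det V^\vee[-1]}$. To upgrade this to an equivalence of Poincaré categories $(\mathcal{Q},\QF_{\mathcal{Q}})\simeq(\Dperf(X),\QF^\sym_{L\otimes\det V^\vee[-1]})$ one checks that $\QF_{\mathcal{Q}}$ is again of symmetric type: its bilinear part is the Verdier quotient of $\map_{\PP_XV}(-,\Dual_{p^*L}-)$, which by duality-stability of $\cB$ remains the full mapping-spectrum bilinear functor $\map_{\mathcal{Q}}(-,\Dual_{\mathcal{Q}}-)$, and the homotopy-fixed-point (rather than genuine) nature of $\QF^\sym$ passes to Poincaré--Verdier quotients; together these pin down $\QF_{\mathcal{Q}}$ as $\QF^\sym$ for the dualizing object $L\otimes\det V^\vee[-1]$.

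The one step carrying content beyond formal semiorthogonal-decomposition bookkeeping and the classical projective bundle theorem is this last point: identifying the induced quadratic functor $\QF_{\mathcal{Q}}$, and not merely its underlying duality, as symmetric. The duality computation is a routine mutation, but controlling the whole Poincaré structure on the quotient requires knowing that the construction $\QF\mapsto\map(-,\Dual-)^{\hC}$ is compatible with Poincaré--Verdier localisation, which I would isolate as a short lemma resting on the explicit homotopy-fixed-point description of $\QF^\sym$ and the exactness of the limits involved. The remaining points are routine: the underlying Verdier sequence is split because $\cB$ is admissible, so a Verdier-localising $\F$ does produce a genuinely split fibre sequence; and the shifted, twisted dualizing object $L\otimes\det V^\vee[-1]$ is accommodated by the conventions already set up for Poincaré structures with respect to shifts of line bundles equipped with an involution.
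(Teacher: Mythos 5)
Your proof is correct, but it is organised quite differently from the paper's. Both arguments begin with the same Beilinson semiorthogonal decomposition $\Dperf(\PP_XV)=\langle\cA(1),\cA(0)\rangle$ and the observation that $p^*$ exhibits $(\Dperf(X),\QF^\sym_L)$ as a duality-stable split Verdier subcategory; but from there the routes diverge. The paper never passes to an abstract Verdier quotient: it directly writes down the candidate split Poincaré--Verdier projection $p_*((-)\otimes\cO(-1))\colon(\Dperf(\PP_XV),\QF^\sym_{p^*L})\to(\Dperf(X),\QF^\sym_{L\otimes\det V^\vee[-1]})$ as an instance of Construction~\ref{construction:pushforward-hermitian}, with the hermitian structure supplied by the identification $p^!\cO_X\simeq\det\mathcal V(-r-1)[r]$ from the Koszul resolution (equation~\eqref{equation:tau}), and then invokes Lemma~\ref{lemma:pushforward-projection} (which in turn rests on~\cite[Corollary 1.2.3]{9-authors-II}) to verify it is a split Poincaré--Verdier projection. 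You instead identify the quotient $\mathcal Q$ with $\cA(1)\simeq\Dperf(X)$, compute the induced duality via a mutation of $\cO(-1)$ through the Euler sequence, and reduce to the statement that the symmetric construction $\QF\mapsto\map(-,\Dual-)^{\hC}$ passes to \emph{split} Poincaré--Verdier quotients. That last reduction is where you correctly flag a gap; let me note it is indeed true and not hard, but the key word is ``split'': the Poincaré structure on a quotient is in general a left Kan extension, which need not commute with the homotopy-fixed-point limit; what saves you is that for a split projection $q$ with fully faithful left adjoint $\iota$ the left Kan extension along $q\op$ is simply restriction along $\iota\op$, so $\QF_{\mathcal Q}(F)=\QF^\sym_{p^*L}(\iota F)=\Bil_{\Dual_{p^*L}}(\iota F,\iota F)^{\hC}=\Bil_{\Dual_{\mathcal Q}}(F,F)^{\hC}=\QF^\sym_{\Dual_{\mathcal Q}}(F)$, after which your mutation computation of $\Dual_{\mathcal Q}$ closes the argument. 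Each approach buys something: the paper's explicit pushforward functor feeds directly into the boundary-map computation of Lemma~\ref{lemma:pbf-boundary} and the dévissage results, while your quotient-side argument is shorter if one only wants the statement as stated. One small simplification available to you: for this proposition the Karoubi-localising hypothesis can be weakened to additivity, since a \emph{split} Poincaré--Verdier sequence is sent to a fibre sequence by any additive functor; Karoubi-localising (via Verdier-localising) is overkill here, though of course harmless.
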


The last two propositions combined can be leveraged to obtain the following:
\begin{proposition}[Bott periodicity]
\label{proposition:P1-spectra-intro}%
Let $S$ be a quasi-compact quasi-separated base scheme. Then the association $\F \mapsto \F^\sym_S$ described in Proposition~\ref{proposition:nisnevich-descent-intro} refines to a functor
\[
\Fun^{\kloc}(\Catp,\Spa) \to \Spa_{\Pone}^{\Sig}(\Sh^{\Nis}(\Sm_S,\Spa)) \quad\quad \F \mapsto \R^{\sym}_S(\F), 
\]
where the left hand side is the $\infty$-category of (spectrum valued) Karoubi-localising functors on $\Catp$ and the right hand side the $\infty$-category of symmetric $\Pone$-spectrum objects in Nisnevich (spectrum valued) sheaves on $\Sm_S$. Furthermore, the $\Pone$-deloopings of $\R^{\sym}_S(\F)$ are governed by the rule 
\[
\Sig^n_{\Pone}\R^{\sym}_S(\F) \simeq \R^{\sym}_S(\F\qshift{n})
\]
where $\F\qshift{n}$ is the functor $\F\qshift{n}(\C,\QF) = \F(\C,\QF\qshift{n}) = \F(\C,\Sig^n\QF)$.
\end{proposition}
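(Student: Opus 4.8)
The plan is to assemble $\R^\sym_S(\F)$ as a symmetric $\Pone$-spectrum by exhibiting compatible structure maps whose colimits are controlled by the shift formula, and to derive the $\Aone$-invariance and sheaf conditions from the two preceding propositions. First I would recall that a symmetric $\Pone$-spectrum object in $\Sh^{\Nis}(\Sm_S,\Spa)$ is, by definition, a sequence of Nisnevich sheaves $E_n$ together with equivalences $E_n \simeq \OmPone E_{n+1}$, equivariantly for the symmetric group actions coming from permuting $\Pone$-coordinates; equivalently it is a module over the free symmetric $\Pone$-spectrum on the sphere. So the task reduces to (i) producing the levels $E_n := \R^\sym_S(\F\qshift{n})$ as Nisnevich sheaves on $\Sm_S$, which is exactly Proposition~\ref{proposition:nisnevich-descent-intro} applied to the Karoubi-localising functor $\F\qshift{n}$ (the functor $(\C,\QF)\mapsto \F(\C,\Sig^n\QF)$ is Karoubi-localising because $\Sig^n$ is an exact auto-equivalence of $\Catp$ preserving fibre-cofibre sequences and idempotent completions, hence precomposition preserves the Karoubi-localising property); and (ii) producing the structure maps and checking the $\OmPone$-relation.

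The heart of the argument is step (ii), and here the projective line formula is the engine. Applied to $X$ smooth over $S$ with $L$ the shifted-twisted line bundle controlling $\F\qshift{n}$, and with $V$ the trivial rank-$2$ bundle so that $\PP_X V = \Pone_X$ and $\det V^\vee[-1]$ contributes the shift by $-1$, the formula gives a split fibre sequence whose cofibre is $\F(\Dperf(X),\QF^\sym_{L\otimes\det V^\vee[-1]})$. Tracking the twist through the identification $\QF^\sym_{L[{-1}]} \simeq (\QF^\sym_L)\qshift{-1}$ (which is the statement $\Sig^n\QF^\sym = \QF^\sym_{L[n]}$ at the level of Poincaré structures, so that $\F(\Dperf(X),\QF^\sym_{L[-1]}) = \F(\Dperf(X),(\QF^\sym_L)\qshift{-1}) = \F\qshift{-1}(\Dperf(X),\QF^\sym_L)$), the split cofibre of the pullback map $E_{n+1}(X) \to E_{n+1}(\Pone_X)$ is precisely $E_n(X)$. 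Taking fibres of the collapse map $\Pone_X\to X$ at the point at infinity (or rather, since everything is split, extracting the reduced summand), the cofibre sequence rearranges to an equivalence between $E_n$ and the fibre of $E_{n+1}(\Pone\wedge(-)) \to E_{n+1}$, i.e.\ to $\OmPone E_{n+1}$. The splitting in the projective line formula is what makes this an honest equivalence rather than merely a sequence, and it is what lets the deloopings iterate cleanly to give $\Sig^n_{\Pone}\R^\sym_S(\F)\simeq \R^\sym_S(\F\qshift{n})$.

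For the symmetric-group equivariance one must upgrade the above to the higher projective bundle formula (Theorem~\ref{theorem:projective-bundle-formula}), which handles $\PP_X V$ for $V$ of rank $n+1$ and whose associated-graded decomposition carries the $\Sigma_n$-action induced by permuting the coordinates of $\PP^n = \PP(\Aone^{\oplus n+1})$; matching this action with the one on $\OmPone^{\wedge n}$ is the main bookkeeping obstacle. I expect the genuine difficulty to lie here: not in any single computation, but in organizing the projective bundle formula into a coherent, functorial-in-$\F$ construction of a symmetric spectrum — i.e.\ producing the structure maps not just levelwise but as a map of functors on $\Fun^{\kloc}(\Catp,\Spa)$, compatibly with the $\Sigma_n$-actions, and verifying that the resulting object lands in symmetric $\Pone$-spectra rather than merely prespectra. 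One clean way to do this is to observe that $\F\mapsto (E_n)_n$ with the above structure maps defines a lax symmetric monoidal–type assignment out of the free symmetric sequence generated by $\QF^\sym_{\cO[1]}$, and to invoke the universal property of $\Sh^{\Nis}(\Sm_S,\Spa)[\Pone^{-1}]$ together with the fact that $\QF^\sym\qshift{n}$ is, functorially, the $n$-fold $[1]$-shift. The $\Aone$-invariance needed to actually land in motivic spectra is \emph{not} claimed at this stage of the proposition (the target is $\Pone$-spectra in Nisnevich sheaves, not $\SH(S)$), so no homotopy-invariance input is required here; that is deferred to the regular-Noetherian case where $\Aone$-invariance of $\GW^\sym$ is available.
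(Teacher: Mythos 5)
Your proposal correctly identifies the three ingredients (Nisnevich descent for the levels, the projective line formula for the structure maps, the fact that $\OmPone$ should correspond to a $[-1]$-shift of the Poincaré structure), and you correctly flag the genuine difficulty: making the $\Sigma_n$-equivariance and the functoriality in $\F$ coherent. However, your proposed resolution — building the symmetric $\Pone$-spectrum levelwise and then matching the $\Sigma_n$-actions on the deloopings against those on the associated-graded of the rank-$n$ projective bundle formula — is not the route the paper takes, and I do not believe it can be made to work as stated. The higher-rank projective bundle formula decomposes $\F(\Dperf(\PP_X V),\QF^{\sym}_{p^*L})$ into pieces, but those pieces do not assemble into the $n$-fold smash cotensor $\F(-)^{(\Pone)^{\wedge n}}$ in any simple way, and the $\Sigma_n$-action on $\PP^n$ is unrelated to the coordinate-permutation action on $(\Pone)^{\wedge n}$; so this direction of attack runs into exactly the bookkeeping wall you anticipate.

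The paper instead avoids ever constructing the spectrum levelwise. The engine is a symmetric monoidal universal property. First (in \S\ref{subsection:nisnevich-invariants}), one shows that the Day-convolution symmetric monoidal $\infty$-category $\Fun^{\bloc}(\Mod_S^{\flat,\kappa}(\Catpsi),\E)$ of bounded localising functors on flat $S$-linear $\infty$-categories with duality is a multiplicative accessible localisation, and constructs a symmetric monoidal left adjoint $\T_S^{\otimes}\colon \Sh^{\Nis}(S,\E)^{\otimes}\to\Fun^{\bloc}(\Mod_S^{\flat,\kappa}(\Catpsi),\E)^{\otimes}$. Then (Proposition~\ref{proposition:projective-line}) one proves that $\T_S^{\otimes}$ sends the pointed motivic sheaf $\E[\Pone_S,\infty]$ to the localised co-Yoneda image $\jbloc(\Dperf(S),\Sig^{-1}\Dual_S)$, which is tensor-invertible — this uses precisely the split bounded Karoubi sequence of Construction~\ref{construction:projective-line}, that is, the rank-2 projective line formula at the level of dualities (not Poincaré structures). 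By the universal property of symmetric monoidal $\Pone$-stabilization, $\T_S^{\otimes}$ therefore factors uniquely through $\Spa_{\Pone}^{\Sig}(S,\E)^{\otimes}$ via a symmetric monoidal left adjoint $\wtl{\T}_S^{\otimes}$, and $\R_S$ is defined simply as the right adjoint of that factorization. The symmetric spectrum structure, the $\Sigma_n$-equivariance, and the lax symmetric monoidality of $\R_S$ all come for free. The shift formula $\Sig^n_{\Pone}\R^{\sym}_S(\F)\simeq\R^{\sym}_S(\F\qshift{n})$ then drops out (Example~\ref{example:bott}) from the fact that cotensoring with $\Sig^{\infty}_{\Pone}\E[\Pone_S,\infty]$ on the spectrum side corresponds, under the adjunction, to cotensoring with $\jbloc(\Dperf(S),\Sig^{-1}\Dual_S)$ on the functor side, i.e.\ to precomposition with the $[-1]$-shift. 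Finally, the passage from Karoubi-localising functors on $\Catp$ to bounded localising functors on $\Mod_S^{\flat,\kappa}(\Catpsi)$ — a step your proposal does not address — is handled (Construction~\ref{construction:R-from-kloc}) by restriction along $(\C,\Dual)\mapsto(\C,\QF^{\sym}_{\Dual})$, using Proposition~\ref{proposition:flat-is-karoubi-sym} to verify that this restriction sends Karoubi-localising to bounded localising.

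So the short diagnosis is: your gesture at ``invoke the universal property of $\Sh^{\Nis}(\Sm_S,\Spa)[\Pone^{-1}]$'' is the right instinct, but it must be applied on the \emph{source} side — one needs a symmetric monoidal left adjoint out of Nisnevich sheaves that inverts $\Pone$, and then $\R^{\sym}_S$ is its right adjoint — rather than on the target side, and the higher-rank projective bundle formula plays no role in this proposition.
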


Applying the last result to $\KGW$ yields a $\Pone$-spectrum object 
$\R^{\sym}_S(\KGW)$ 
(infinitely) delooping the Nisnevich sheaf $\KGW^\sym_S$. In general 
$\R^{\sym}_S(\KGW)$
can fail to be a motivic spectrum, as it might not be $\Aone$-invariant. However, if $S$ is regular Noetherian of finite Krull dimension then the $\Aone$-invariance of 
$\R^{\sym}_S(\KGW)$
is assured, as we establish in the following key result (see Theorem~\ref{theorem:A1-invariance}):

\begin{theorem}
\label{theorem:homotopy-invariance-intro}%
Let $S$ be a regular Noetherian scheme of finite Krull dimension. Then the Nisnevich sheaf $\KGW^{\sym}_S$ is $\Aone$-invariant and $\R^{\sym}_S(\KGW)$ is a motivic spectrum. 
\end{theorem}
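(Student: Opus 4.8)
The plan is to deduce the theorem from the $\Aone$-invariance, over a regular Noetherian base of finite Krull dimension, of symmetric Karoubi-Grothendieck-Witt theory valued in each shifted line bundle $\cO_X[n]$, and to establish the latter by the hermitian analogue of Quillen's argument for homotopy invariance of algebraic $\K$-theory. Concretely, by Proposition~\ref{proposition:P1-spectra-intro}, $\R^\sym_S(\KGW)$ is a $\Pone$-prespectrum in $\Sh^\Nis(\Sm_S,\Spa)$ whose $n$-th term is the Nisnevich sheaf $X\mapsto\KGW(\Dperf(X),\Sig^n\QF^\sym_{\cO_X})$; since $\Sig^n$ commutes with homotopy fixed points and with dualising, $\Sig^n\QF^\sym_{\cO_X}=\QF^\sym_{\cO_X[n]}$, so this term is $X\mapsto\KGW(X,\QF^\sym_{\cO_X[n]})$. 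Each is already a Nisnevich sheaf by Proposition~\ref{proposition:nisnevich-descent-intro}, so $\R^\sym_S(\KGW)$ is a motivic spectrum precisely when each of these terms is in addition $\Aone$-invariant; the case $n=0$ is the first assertion. It therefore suffices to prove: for $S$ regular Noetherian of finite Krull dimension, all $n\in\ZZ$, and all $X\in\Sm_S$, the pullback map $\KGW(X,\QF^\sym_{\cO_X[n]})\to\KGW(\Aone_X,\QF^\sym_{\cO_{\Aone_X}[n]})$ is an equivalence. Note $X$, hence also $\Pone_X$ and $\Aone_X$, is regular Noetherian; write $\Lambda:=\cO_X[n]$.

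I would then consider the open immersion $j\colon\Aone_X\hookrightarrow\Pone_X$, with complementary closed immersion $i\colon\{\infty\}_X\cong X\hookrightarrow\Pone_X$, a regular immersion of codimension $1$ with trivial normal bundle (being a coordinate section of $\PP_X(\cO_X^{\oplus 2})=\Pone_X$). Writing $p\colon\Pone_X\to X$ for the projection, the symmetric Poincaré structure $\QF^\sym_{p^*\Lambda}$ on $\Dperf(\Pone_X)$ restricts to $\QF^\sym_\Lambda$ on $\Dperf(\Aone_X)$ and induces one on the full subcategory $\Dperf_{\{\infty\}_X}(\Pone_X)$ of objects supported at $\{\infty\}_X$; the associated Verdier sequence of Poincaré $\infty$-categories --- which, as one must (Thomason--Trobaugh), is taken after idempotent completion, this being precisely where Karoubi-localising rather than merely Verdier-localising functors are needed --- is carried by $\KGW$ to a fibre sequence of spectra. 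By the dévissage theorem, for the regular codimension-$1$ immersion $i$ with trivial normal bundle the $\{\infty\}$-supported term is $\KGW(\Dperf_{\{\infty\}_X}(\Pone_X),\QF^\sym_{p^*\Lambda})\simeq\KGW(X,\QF^\sym_{\Lambda[-1]})$ (codimension shift $[-1]$, trivial twist). This gives a fibre sequence
\begin{equation*}
\KGW(X,\QF^\sym_{\Lambda[-1]})\xrightarrow{\,i_*\,}\KGW(\Pone_X,\QF^\sym_{p^*\Lambda})\xrightarrow{\,j^*\,}\KGW(\Aone_X,\QF^\sym_\Lambda).
\end{equation*}
On the other hand, the projective line formula (Proposition~\ref{proposition:proj-bundle-I}) applied to $V=\cO_X^{\oplus 2}$, for which $\det V^\vee=\cO_X$, gives a \emph{split} fibre sequence
\begin{equation*}
\KGW(X,\QF^\sym_\Lambda)\xrightarrow{\,p^*\,}\KGW(\Pone_X,\QF^\sym_{p^*\Lambda})\xrightarrow{\,\partial\,}\KGW(X,\QF^\sym_{\Lambda[-1]}).
\end{equation*}

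These two fibre sequences share the middle term $M:=\KGW(\Pone_X,\QF^\sym_{p^*\Lambda})$, and the second identifies $M\simeq\KGW(X,\QF^\sym_\Lambda)\oplus\KGW(X,\QF^\sym_{\Lambda[-1]})$ with $p^*$ the inclusion of the first summand and $\partial$ the projection onto the second. The crux is to show that $\partial\circ i_*\colon\KGW(X,\QF^\sym_{\Lambda[-1]})\to\KGW(X,\QF^\sym_{\Lambda[-1]})$ is an equivalence; this is the boundary-map computation of Lemma~\ref{lemma:pbf-boundary}, obtained by resolving $i_*\cO_X\in\Dperf(\Pone_X)$ by the Koszul complex $[\cO_{\Pone_X}(-1)\to\cO_{\Pone_X}]$ of the divisor $\{\infty\}_X$ and following it, with its Poincaré datum, through the semiorthogonal decomposition $\langle\cO_{\Pone_X},\cO_{\Pone_X}(-1)\rangle$ underlying the projective line formula. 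Granting this, $i_*$ is a split monomorphism whose cofibre is the complementary summand $\KGW(X,\QF^\sym_\Lambda)$, and the resulting equivalence $\KGW(\Aone_X,\QF^\sym_\Lambda)\simeq\KGW(X,\QF^\sym_\Lambda)$ is $j^*p^*$, namely the pullback along $\Aone_X\to X$. Running the argument for $\Lambda=\cO_X[n]$ over all $n\in\ZZ$ --- the $[-1]$-shifted term $\QF^\sym_{\Lambda[-1]}=\QF^\sym_{\cO_X[n-1]}$ occurring in both sequences is again of this form --- shows that every term of $\R^\sym_S(\KGW)$ is $\Aone$-invariant, hence $\R^\sym_S(\KGW)$ is a motivic spectrum, and (the case $n=0$) $\KGW^\sym_S$ is $\Aone$-invariant.

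The substantive inputs here are the dévissage theorem identifying the $\{\infty\}$-supported term --- a hermitian analogue of Quillen's dévissage, valid over a regular Noetherian base and demanding genuine care with the symmetric Poincaré structure since $2$ need not be invertible --- together with the boundary-map computation of Lemma~\ref{lemma:pbf-boundary}; once these are in hand, the localization, the splicing, and the passage from levelwise $\Aone$-invariance to a motivic spectrum are formal. I could alternatively bypass dévissage by decomposing $\GW^\sym$ through its fundamental fibre square into algebraic $\K$-theory --- together with its $\Ct$-homotopy fixed points, orbits, and Tate construction --- and symmetric $\L$-theory, and then invoke the classical homotopy invariance of $\K$ on regular schemes together with the $\Aone$-invariance of symmetric $\L$-theory; but the route above stays internal to the calculus of Poincaré $\infty$-categories.
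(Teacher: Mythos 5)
Your proposal follows the same strategy as the paper: set up the localization fibre sequence along $\{\infty\}_X \hookrightarrow \Pone_X \hookleftarrow \Aone_X$ and the split projective line fibre sequence for $V=\cO_X^{\oplus 2}$, identify the supported term via dévissage, and conclude $\Aone$-invariance by showing the composite of $i_*$ with the second map of the projective line formula is an equivalence. This is precisely the diagram the paper manipulates in the proof of Theorem~\ref{theorem:A1-invariance}.

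The one concrete misstep is your citation for the crux. You attribute the fact that $\partial\circ i_* = p_*((-)\otimes\cO(-1))\circ i_*$ is an equivalence to ``the boundary-map computation of Lemma~\ref{lemma:pbf-boundary}.'' That lemma proves something different: it computes the connecting homomorphism $\partial\colon\L_0^\sym(X)\to\L_{-r-1}^\sym(X,\det V)$ of the fibre sequence and identifies $\partial(1)$ with the Euler class $e(V)$, which is identically zero here since $V=\cO_X^{\oplus 2}$ is trivial. That tells you the sequence splits (a fact you already get from Proposition~\ref{proposition:proj-bundle-I}), but it does not establish that $\partial\circ i_*$ is an equivalence. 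What the paper actually invokes is Lemma~\ref{lemma:composite-pf}, the compatibility of Poincaré pushforwards with composition: since $q\circ i_\infty=\id_X$ and $i_\infty^*\cO(-1)\simeq\cO_X$, the composite Poincaré functor $p_*((-)\otimes\cO(-1))\circ(i_\infty)_*$ is naturally homotopic to the identity Poincaré functor on $(\Dperf(X),\QF^\sym_\Lambda)$, before ever applying $\KGW$. Your Koszul-complex heuristic (resolving $i_*\cO_X$ by $[\cO(-1)\to\cO]$ and pushing forward) is indeed the underlying geometric reason this works, but the formal reference you want is Lemma~\ref{lemma:composite-pf}, not Lemma~\ref{lemma:pbf-boundary}.

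One other small point worth flagging in your favour: you are explicit that the terms of the $\Pone$-spectrum involve $\cO_X[n]$ for all $n$, so the $\Aone$-invariance statement being used must hold for shifts of the trivial line bundle, not just for line bundles. The statement of Theorem~\ref{theorem:A1-invariance} is phrased for a line bundle $L$, but its ingredients --- the projective line formula (Proposition~\ref{proposition:proj-bundle-I}) is stated for invertible perfect complexes, and dévissage (Theorem~\ref{theorem:global-devissage}) is stated for $L[n]$ with arbitrary $n$ --- already accommodate the shifted case, so the proof goes through unchanged.
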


This means that for a regular Noetherian $S$ of finite Krull dimension symmetric Grothendieck-Witt theory is represented by a motivic spectrum, which we denote by $\KQ_S \in \SH(S)$. This establishes Theorem~\ref{theorem:main-intro} above. The proof of Theorem~\ref{theorem:homotopy-invariance-intro} is primarily based on another key property of independent interest of symmetric Grothendieck-Witt theory - dévissage - whose proof occupies \S\ref{section:devissage} in its entirety (see Theorem~\ref{theorem:global-devissage} for the full version of the result):

\begin{theorem}[Devissage]
Let $i\colon Z\hrar X$ be a closed embedding of finite dimensional regular Noetherian schemes 
and $L$ a line bundle 
on $X$. 
Then the map
\[
\GW\big(\Dperf(Z),\QF^\sym_{i^!L}\big)\to \GW\big(\Dperf_Z(X),\QF^\sym_L|_Z\big) \ .
\]
induced by push-forward along $i$, is an equivalence. 
\end{theorem}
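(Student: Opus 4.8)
The plan is to reduce to the known dévissage statement for Grothendieck-Witt theory of Poincaré $\infty$-categories from \cite{9-authors-III}, applied to the Poincaré structures coming from algebraic geometry. The first step is to identify the relevant Poincaré $\infty$-categories. For a closed embedding $i\colon Z \hrar X$ with open complement $j\colon U \hrar X$, recall the Verdier sequence $\Dperf_Z(X) \to \Dperf(X) \to \Dperf(U)$, where $\Dperf_Z(X)$ denotes the perfect complexes supported on $Z$. The symmetric Poincaré structure $\QF^\sym_L$ on $\Dperf(X)$ restricts to a Poincaré structure $\QF^\sym_L|_Z$ on $\Dperf_Z(X)$ (using that $\Dperf_Z(X)$ is stable under the duality $\Dual_L = \map(-,L)$, since the support condition is self-dual), and the push-forward functor $i_*\colon \Dperf(Z) \to \Dperf_Z(X)$ is fully faithful with essential image the complexes with coherent cohomology supported set-theoretically on $Z$ but scheme-theoretically on the reduced structure --- or rather, one must pass to the appropriate thick subcategory. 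The identification $i^! \Dual_L i_* \simeq \Dual_{i^!L}$ via Grothendieck duality for the regular closed embedding $i$ is what upgrades $i_*$ to a duality-preserving, in fact Poincaré, functor $(\Dperf(Z), \QF^\sym_{i^!L}) \to (\Dperf_Z(X), \QF^\sym_L|_Z)$.

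Next I would invoke the general dévissage theorem for Poincaré $\infty$-categories. The $\infty$-categorical dévissage statement says that if $(\C, \QF)$ is a Poincaré $\infty$-category equipped with a bounded weight structure (or more precisely a suitable filtration/stratification) whose heart is an abelian category, and if a Poincaré functor induces an equivalence on hearts in a way compatible with the quadratic functor, then it induces an equivalence on $\GW$. The key input is that for a regular Noetherian scheme, $\Dperf$ carries a canonical bounded weight structure or $t$-structure whose heart (coherent sheaves, or vector bundles) controls the whole category, and that the symmetric Poincaré structure is compatible with this in the sense required --- concretely, that $\QF^\sym_L$ restricted to the heart recovers the classical symmetric bilinear forms and that the higher homotopy of $\QF^\sym_L$ is governed by $\Ext$-groups in the abelian heart. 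One must check that the push-forward $i_*$ sends the relevant filtration on $\Dperf(Z)$ to the filtration on $\Dperf_Z(X)$ by order of vanishing along $Z$, i.e.\ the filtration by powers of the ideal sheaf $\I_Z$, whose associated graded pieces are $i_*$ of coherent sheaves on $Z$. This is the geometric incarnation of classical dévissage (Quillen's for $\K$-theory, Balmer--Walter and others in the Witt-theoretic setting).

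The main obstacle, I expect, is precisely handling the interaction of the \emph{symmetric} (homotopy fixed point) Poincaré structure with the dévissage filtration when $2$ is not invertible. The naive filtration of $\Dperf_Z(X)$ by powers of $\I_Z$ is not duality-compatible on the nose --- the duality $\Dual_L$ does not preserve the stages $\Dperf_{Z^{(n)}}(X)$ of thickenings --- so one cannot simply run a spectral sequence of Poincaré $\infty$-categories. The resolution, following the strategy of \S\ref{section:devissage}, should be to instead use the ``metabolic/Lagrangian'' or $\Q$-construction formulation of $\GW$: one produces an explicit filtration of the Poincaré object side (Poincaré objects in $(\Dperf_Z(X),\QF^\sym_L|_Z)$ versus in $(\Dperf(Z),\QF^\sym_{i^!L})$) and shows the comparison map is an equivalence on associated graded pieces by a local computation, reducing via Nisnevich descent (Proposition~\ref{proposition:nisnevich-descent-intro}) and the regularity hypothesis to the case where $i$ is a coordinate hyperplane in affine space, where everything can be made explicit. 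A further subtlety is the twist: $i^! L \simeq i^* L \otimes \omega_{Z/X}$ with $\omega_{Z/X} = \det(\I_Z/\I_Z^2)^\vee[\mathrm{codim}]$, so one must track the line bundle twist and shift carefully through the filtration --- each graded piece $\I_Z^n/\I_Z^{n+1}$ contributes its own twist, and these must assemble compatibly with Grothendieck duality. Once the graded comparison is established, the conclusion follows since $\GW$ (equivalently $\KGW$, as $X$ and $Z$ are regular Noetherian) commutes with the relevant (finite, in the Noetherian finite-dimensional setting) filtered colimits.
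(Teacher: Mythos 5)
Your proposed filtration is the wrong one, and this is the central gap. You propose to filter $\Dperf_Z(X)$ by powers of the ideal sheaf $\I_Z$ (the Quillen-style dévissage filtration for $\K$-theory) and then observe, correctly, that this filtration is not duality-stable. But your proposed fix -- filtering Poincaré objects and reducing to coordinate hyperplanes via Nisnevich descent -- is vague and is not what saves the argument. The filtration that actually works is the \emph{coniveau} filtration: filter $\Dperf_Z(X)$ by codimension of support, $\Dperf_Z(X)^{\geq c}$. Since the duality $\Dual_L$ preserves the support of a complex, each stage \emph{is} duality-stable, so one gets genuine Poincaré-Verdier sequences whose quotients localise to sums of $\Dperf_x(\cO_{X,x})$ over points $x \in Z^{(c)}$ (this is Proposition~\ref{proposition:coniveau-sequence}). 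The global dévissage theorem is then proved by descending induction on codimension, reducing to the case of a regular local ring $R$ and its residue field.

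A second missing ingredient is the reduction of $\GW$-dévissage to $\L$-dévissage. Rather than trying to filter $\GW$ or Poincaré objects directly, the paper first observes that $\K$-theoretic dévissage is Quillen's theorem (via the theorem of the heart for $\K$-theory, since $\Dperf(Z)$ and $\Dperf_Z(X)$ carry bounded $t$-structures with Noetherian abelian hearts $\Coh(Z)$ and $\Coh_Z(X)$), and then uses the fundamental fibre sequence $\K_{\hC} \to \GW \to \L$ to conclude that $\GW$-dévissage is equivalent to $\L$-dévissage. The local case of $\L$-dévissage is then handled by Proposition~\ref{proposition:key}, which (using~\cite{9-authors-III}) identifies $\L$-groups of a Poincaré $\infty$-category with a $t$-structure whose duality swaps connective/coconnective objects with classical Witt groups of the heart; the local computation ultimately rests on dévissage for Witt groups of finite-length modules over a regular local ring (a classical result of Quebbemann--Scharlau--Schulte), not on reduction to coordinate hyperplanes. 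So while your high-level intuition that one should run a filtration and invoke Grothendieck duality to track the twist $i^!L$ is sound, the specific filtration, the absence of the $\GW \to \L$ reduction, and the choice of local input all need to change.
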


If $S$ is only assumed quasi-compact and quasi-separated then the $\Pone$-spectrum $\R^{\sym}_S(\KGW)$
is not necessarily a motivic spectrum. We can however turn it into a motivic spectrum in a universal manner by applying to it the $\Aone$-localisation functor
\[
\Loc_{\Aone}\colon \Spa_{\Pone}^{\Sig}(\Sh^{\Nis}(S,\Spa)) \to \SH(S) ,
\]
yielding what we call the motivic realization 
\[
\M^{\sym}_S(\F) := \Loc_{\Aone}\R^{\sym}(\F)
\]
of $\F$. We hence obtain a Grothendieck-Witt motivic spectrum $\KQ_S := \M^{\sym}_S(\KGW) \in \SH(S)$ for an arbitrary quasi-compact quasi-separated $S$. These motivic spectra depend functorially on $S$, in the sense that if $f\colon T \to S$ is a map of quasi-compact quasi-separated schemes then we have an induced map
\[
\eta_f\colon f^*\KQ_S \to \KQ_T .
\]
We then verify the following property, which is of technical importance to future applications:

\begin{proposition}[Base change invariance]
The map $\eta_f$ is an equivalence. In particular, the motivic spectra $\KQ_S$ are all pulled back from the absolute motivic spectrum $\KQ := \KQ_{\spec(\ZZ)} \in \SH(\spec(\ZZ))$.
\end{proposition}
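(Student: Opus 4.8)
The plan is to prove the stronger statement that the motivic realisation functor is base-change compatible: for every morphism $f\colon T\to S$ of quasi-compact quasi-separated schemes and every Karoubi-localising $\F\colon\Catp\to\Spa$ the natural comparison $f^*\M^\sym_S(\F)\to\M^\sym_T(\F)$ is an equivalence in $\SH(T)$. Applying this to $\F=\KGW$ shows that $\eta_f$ is an equivalence, and applying it to the structure morphism $S\to\spec(\ZZ)$ together with the coherent functoriality of $S\mapsto\KQ_S$ yields the ``in particular''. Since $\F\mapsto\M^\sym_\bullet(\F)$ preserves colimits, one could first cut down to a set of generators of $\Fun^{\kloc}(\Catp,\Spa)$, but this is not needed.

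Recall $\M^\sym_S(\F)=\Loc_{\Aone}\R^\sym_S(\F)$. Since $f^*$ carries the generating $\Aone$-equivalences $U\times\Aone\to U$ to equivalences of the same shape, it preserves $\Aone$-local equivalences and so commutes with $\Loc_{\Aone}$; it also commutes with $\Sig_{\Pone}$ because $f^*\Pone_S\simeq\Pone_T$. Using the periodicity rule $\Sig^n_{\Pone}\R^\sym_S(\F)\simeq\R^\sym_S(\F\qshift n)$ of Proposition~\ref{proposition:P1-spectra-intro}, we are reduced to showing, for each Karoubi-localising $\F$, that the natural map of Nisnevich sheaves of spectra
\[
f^*\big(\F(-,\QF^\sym_-)|_{\Sm_S}\big)\longrightarrow\F(-,\QF^\sym_-)|_{\Sm_T}
\]
is an $\Aone$-local equivalence. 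This map exists because, by Proposition~\ref{proposition:nisnevich-descent-intro}, the assignment $X\mapsto\F(X,\QF^\sym_X)$ is an absolutely defined Nisnevich sheaf on all of $\qSch$, so it restricts simultaneously to $\Sm_S$ and to $\Sm_T$, and the projections $V\times_S T\to V$ for $V\in\Sm_S$ induce the transformation.

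Now come the reductions. All functors in sight are Zariski sheaves in the base, so we may assume $S$ and $T$ affine. The sheaf $X\mapsto\F(X,\QF^\sym_X)$ is moreover continuous — it carries a cofiltered limit of qcqs schemes along affine transition maps to the corresponding filtered colimit of spectra — since $\GW$, and hence its Karoubi-localisation $\F$, commutes with filtered colimits in $\Catp$, since $\Dperf(-)$ and $\QF^\sym_-$ commute with filtered colimits of commutative rings, and since $\SH(-)$ with its pullbacks is continuous; writing $f$ as a cofiltered limit of finite-presentation affine morphisms $T_\alpha\to S$ we thus reduce to $f$ of finite presentation. Such an $f$ factors as a closed immersion $i\colon T\hrar\Aa^n_S$ followed by the projection $p\colon\Aa^n_S\to S$. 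For $p$, and more generally for any smooth $f$ (in particular for open immersions), the claim is immediate: then $\Sm_T\subseteq\Sm_S$, the functor $f^*$ is restriction along this full inclusion, and the absolutely defined sheaf $\F(-,\QF^\sym_-)|_{\Sm_S}$ restricts on the nose to $\F(-,\QF^\sym_-)|_{\Sm_T}$. It therefore remains to treat a closed immersion $i\colon Z\hrar X$ of finite presentation, with complementary open immersion $j\colon U=X\setminus Z\hrar X$.

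For the closed-immersion case I would use the recollement $(\SH(U),\SH(X),\SH(Z))$. We already know $j^*\M^\sym_X(\F)\simeq\M^\sym_U(\F)$, so the gluing triangle $j_!j^*\to\id\to i_*i^*$ exhibits $i_*i^*\M^\sym_X(\F)$ as $\cofib\big(j_!\M^\sym_U(\F)\to\M^\sym_X(\F)\big)$. On the other hand, for each $V\in\Sm_X$ the sequence $\Dperf_{V\times_X Z}(V)\to\Dperf(V)\to\Dperf(V\times_X U)$ is a fibre–cofibre sequence of Poincaré $\infty$-categories for the symmetric structures, so, $\F$ being Verdier-localising, we obtain natural fibre sequences $\F(\Dperf_{V\times_X Z}(V),\QF^\sym_V|)\to\F(V,\QF^\sym_V)\to\F(V\times_X U,\QF^\sym)$; sheafifying, stabilising and $\Aone$-localising — and noting that the right-hand term realises $j_*j^*\M^\sym_X(\F)$ — identifies the ``supports'' sheaf $V\mapsto\F(\Dperf_{V\times_X Z}(V),\QF^\sym_V|)$ with $i_*i^!\M^\sym_X(\F)$. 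One then invokes the dévissage equivalence of Theorem~\ref{theorem:global-devissage} to match this supports sheaf with $i_*\M^\sym_Z(\F)$ up to the $i^!$-twist already present in that statement, and combines this with the purity identification of $i^!$ with a twisted $i^*$ in $\SH$ so that the twists cancel and $i^*\M^\sym_X(\F)\simeq\M^\sym_Z(\F)$. The main obstacle is precisely this last step: the dévissage theorem is stated for a closed immersion of regular Noetherian schemes, while the reductions only produce Noetherian $X$ and $Z$ of finite Krull dimension, and for a general finite-presentation closed immersion neither purity in $\SH$ nor the $i^!$-twist is available in its naive form; the resolution is that after $\Aone$-localisation one may replace $\KGW^\sym$ and $\L^\sym$ by their $\Aone$-localisations, which carry enough descent to reduce the localisation-plus-dévissage comparison to the regular, regularly embedded case covered by Theorem~\ref{theorem:global-devissage}, and establishing this homotopy-invariant form of dévissage for possibly singular schemes is the technical heart of the proposition.
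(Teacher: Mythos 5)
Your proposal sets out to prove a stronger statement—base-change invariance of $\M^\sym_S(\F)$ for \emph{every} Karoubi-localising $\F$—but the argument breaks down in ways that make even the $\F=\KGW$ case unrecoverable along your route, and the paper's actual proof is structured entirely differently.

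The decisive gap is your treatment of the closed-immersion case. You reduce (via Zariski localisation, continuity, and factoring a finite-presentation affine morphism through $\Aa^n_S$) to a finite-presentation closed immersion $i\colon Z\hrar X$, and propose to conclude by recollement plus the dévissage theorem. But Theorem~\ref{theorem:global-devissage} is proven only for closed embeddings of \emph{regular} Noetherian schemes of finite Krull dimension, and the factorisation $T\hrar\Aa^n_S\to S$ you produce gives no regularity at all: $\Aa^n_S$ is regular only if $S$ is, and even when $S=\spec(\ZZ)$ the closed subscheme $Z=T$ is an arbitrary finite-type affine scheme, possibly quite singular. You acknowledge this explicitly and wave at a ``homotopy-invariant form of dévissage for possibly singular schemes'' as the technical heart—but that purported theorem is not available in the paper (nor, to my knowledge, elsewhere in the required form), and in its absence your argument is circular: to identify the supports sheaf with $i_*i^!\M^\sym_X(\F)$ via purity, one needs a form of base change already in hand. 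There is also a secondary issue: your continuity step requires $\F$ to commute with filtered colimits, which is automatic for $\GW$ and $\KGW$ but not a property of a general Karoubi-localising $\F$; the generality you announce in the first paragraph is not supported by the argument.

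The paper's proof (Proposition~\ref{proposition:KQ-stable-under-basechange}) avoids closed immersions and dévissage altogether. After reducing to $S$ and $T$ affine (2-out-of-3 plus the easy smooth case), it invokes the free delooping lemma (Lemma~\ref{lemma:unique-delooping}) to pass to $\Grp_{\Einf}$-valued sheaves and analyses the sequential spectrification directly. The key computation replaces $\GWspace^{\sym,[n]}$ by its truncated approximations using genuine Bott periodicity (Proposition~\ref{proposition:bott-periodicity}) together with a cofinality argument over a poset $Q$, reducing to a single Nisnevich-sheaf comparison $f^*\GWspace^{\geq 0}_S\to\GWspace^{\geq 0}_T$. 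That comparison is settled not by any Poincaré-categorical descent statement but by the concrete observation that $\GWspace^{\geq 0}$ is the group completion of the $\Mon_{\Einf}$-sheaf $\Vect^{\sym}$ of vector bundles with perfect symmetric bilinear forms, which is representable by a smooth algebraic stack with affine diagonal and hence (by the cited result from the `deloop3' reference) left Kan extended from smooth affine schemes—making $f^*\Vect^{\sym}_S\to\Vect^{\sym}_T$ already a Zariski-local equivalence. This argument is genuinely specific to $\GW$-theory and is what lets the proof go through without any dévissage or purity input for singular schemes. Your proposal as written does not reach a proof.
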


In addition to Grothendieck-Witt theory, we also apply our motivic realization functor $\M^{\sym}_S$ to Karoubi $\L$-theory $\KL$ (the Karoubi-localising approximation of $\L$-theory) and to Bass $\K$-theory $\KK$, thus yielding motivic spectra $\KW_S$ and $\KGL_S$ representing Karoubi Witt theory and Bass $\K$-theory, respectively. Though the latter was previously constructed in the literature, reproducing the construction in the present setting allows one to use all the resulting functoriality. 
In particular, all the structural maps relating Grothendieck-Witt theory, $\K$-theory and $\L$-theory induce corresponding maps on the level of motivic spectra, and we obtain for example, the Tate fibre sequence
\[
(\KGL_S)_{\hC} \to \KQ_S \to \KW_S
\]
and the Wood sequence
\[
\Om_{\Pone}\KQ_S \to \KGL_S \to \KW_S .
\]

Finally, we also consider the question of multiplicative structures on $\KQ, \KW$ and the maps relating them to each other and to $\KGL$. In fact, this accounts for most of the more technical parts of the paper, whose goal is to prove the following:

\begin{proposition}
Let $S$ be a quasi-compact quasi-separated base scheme. Then, the association $\F \mapsto \R^{\sym}_S(\F)$
described in Proposition~\ref{proposition:P1-spectra-intro}
refines to a lax symmetric monoidal functor. In particular, if $\F$ is a Karoubi-localising lax symmetric monoidal functor then $\M^{\sym}_S(\F)$ is a motivic commutative ring spectrum.
\end{proposition}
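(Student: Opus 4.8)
The plan is to equip each of the three stages in the construction of $\R^\sym_S$ with a lax symmetric monoidal structure and compose them; the ``in particular'' then follows from the general principle that lax symmetric monoidal functors preserve commutative algebra objects, together with the symmetric monoidality of the localisation $\LAone$. First I would fix the monoidal structures. The source $\Fun^{\kloc}(\Catp,\Spa)$ carries the Day convolution induced by the symmetric monoidal structure on $\Catp$ from \cite{9-authors-I}, and the Karoubi-localising functors form a symmetric monoidal localisation of all functors because the Karoubi-local equivalences — generated by Verdier quotients and idempotent completions — are closed under $\otimes$. On the target, $\Sh^{\Nis}(\Sm_S,\Spa)$ carries the Day convolution of the Cartesian product on $\Sm_S$ (normalised so that $\Sigma^\infty_+ X\otimes\Sigma^\infty_+ Y\simeq\Sigma^\infty_+(X\times_S Y)$), which descends from presheaves since Nisnevich-local equivalences, generated by \v{C}ech nerves of distinguished squares, are $\otimes$-stable; then $\Spa_{\Pone}^{\Sig}(\Sh^{\Nis}(\Sm_S,\Spa))$ is the symmetric monoidal $\otimes$-inversion of the Tate twist $T$, and $\LAone$ realises $\SH(S)$ as a further symmetric monoidal Bousfield localisation.

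The second stage is the multiplicative refinement of the level-$0$ functor $\F\mapsto\F^\sym_S$ of Proposition~\ref{proposition:nisnevich-descent-intro}. The external tensor product of perfect complexes $\Dperf(X)\otimes\Dperf(Y)\to\Dperf(X\times_S Y)$, together with the external pairing of symmetric Poincaré structures (the form $\vphi\boxtimes\psi$ on $M\boxtimes N$, cf.\ \cite{9-authors-I}), upgrades $X\mapsto(\Dperf(X),\QF^\sym_X)$ to a functor $\Sm_S\op\to\CAlg(\Catp)$ — equivalently a commutative algebra in $\Fun(\Sm_S\op,\Catp)$ — with multiplication given by pullback along the diagonal. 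Composing with a functor $\F$ on $\Catp$ and unwinding the Day convolution then produces, for $\F_1,\F_2$, a natural transformation $(\F_1)^\sym_S\otimes(\F_2)^\sym_S\to(\F_1\otimes\F_2)^\sym_S$ which, evaluated at $X$, is the map of diagrams sending the term indexed by $X\to X_1\times_S X_2$ to the one carrying the external-product map $(\Dperf X_1,\QF^\sym_{X_1})\otimes(\Dperf X_2,\QF^\sym_{X_2})\to(\Dperf X,\QF^\sym_X)$ in $\Catp$. I would check that these maps, with the evident unit, cohere into a lax symmetric monoidal structure on $\F\mapsto\F^\sym_S$ valued in $\Sh^{\Nis}(\Sm_S,\Spa)$, compatibly with the localisations of the first stage.

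Third I would propagate this through the $\Pone$-stabilisation. The multiplicative form of the projective bundle formula — the monoidal enhancement of Proposition~\ref{proposition:proj-bundle-I}, Theorem~\ref{theorem:projective-bundle-formula} and Lemma~\ref{lemma:pbf-boundary} — identifies $(\F\qshift{-1})^\sym_S$ with the internal mapping spectrum $\OmPone\F^\sym_S$ out of the pointed representable $(\Pone_S,\infty)$, and does so $\F^\sym_S$-linearly; iterating, $\big(\F^\sym_S,(\F\qshift 1)^\sym_S,(\F\qshift 2)^\sym_S,\dots\big)$ is an $\Omega$-$\Pone$-spectrum, namely $\R^\sym_S(\F)$. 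On the source side the $q$-shift $\F\mapsto\F\qshift 1$ is precomposition with the autoequivalence $-\otimes\one_{\Catp}\qshift 1$ of $\Catp$, where $\one_{\Catp}\qshift 1$ is $\otimes$-invertible, so the $q$-shift is already invertible, and the identification above exhibits it as intertwining $\SigPone$. Invoking the universal property of the symmetric monoidal $\otimes$-inversion of $T$ — using that $T$ is sufficiently symmetric (Voevodsky's lemma) — together with this invertibility, the lax symmetric monoidal level-$0$ functor and its intertwining datum extend over the stabilisation to a lax symmetric monoidal $\F\mapsto\R^\sym_S(\F)$; concretely $\R^\sym_S(\F)\simeq\colim_n\Sigma^{-n}_{\Pone}\Sigma^\infty_{\Pone}\big((\F\qshift n)^\sym_S\big)$, a filtered colimit of $\otimes$-invertible twists applied to the strong symmetric monoidal functor $\Sigma^\infty_{\Pone}$, all compatible with $\otimes$. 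Since $\LAone$ is symmetric monoidal, $\F\mapsto\M^\sym_S(\F)=\LAone\R^\sym_S(\F)$ is then lax symmetric monoidal, and, as lax symmetric monoidal functors carry commutative algebras to commutative algebras, any Karoubi-localising lax symmetric monoidal $\F$ produces a commutative algebra $\M^\sym_S(\F)$ in $\SH(S)$, i.e.\ a motivic commutative ring spectrum; applied to the (lax symmetric monoidal) functor $\KGW$, this exhibits $\KQ_S$ as a motivic $\Einf$-ring spectrum.

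The hard part will be the coherences in the third stage. One must match the $\Sigma_n$-equivariant structure on the $n$-th term $(\F\qshift n)^\sym_S$ of the symmetric $\Pone$-spectrum — which originates in the permutation action on $(\one_{\Catp}\qshift 1)^{\otimes n}$ — with the permutation action on $(\Pone)^{\wedge n}$ in the motivic category; in particular the transposition on $\Pone\wedge\Pone$ acts by the nontrivial unit $\epsilon$, and this must be reconciled with the sign on $\one_{\Catp}\qshift 2$ coming from the double shift of the mapping spectrum. Equivalently, the multiplicative Bott periodicity equivalence must be promoted from $\EE_1$ to $\Einf$, which is precisely what forces the use of symmetric (rather than naive) $\Pone$-spectra and, as the introduction signals, absorbs most of the paper's technical effort. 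A secondary obstacle, already present in the second stage, is producing the functor $\Sm_S\op\to\CAlg(\Catp)$ at full coherence — the compatibility of external products and the diagonal multiplication with arbitrary pullbacks — rather than a mere levelwise or $\EE_1$-multiplication, and likewise upgrading the projective bundle formula to a statement about $\F^\sym_S$-modules.
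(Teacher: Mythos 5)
Your proposal captures the paper's overall strategy: build the lax symmetric monoidal structure on $\R^\sym_S$ by (i) fixing Day convolution on both source and target, (ii) exhibiting $X\mapsto(\Dperf(X),\QF^\sym_X)$ with external products as the source of a lax symmetric monoidal restriction functor, and (iii) using the projective line formula to show the $\Pone$-stabilisation carries the structure through. The adjunction mechanism you gesture at — constructing a symmetric monoidal left adjoint out of $\Sh^{\Nis}(S,\Spa)$ and letting its right adjoint $\R_S$ inherit the lax structure for free — is precisely what the paper does (it constructs $\wtl{\T}_S^\otimes$ via Proposition~\ref{proposition:projective-line} and the universal property of symmetric spectra, then sets $\R_S$ to be its right adjoint). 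Your closing diagnosis of the hard parts is also accurate.

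However, there are genuine gaps, and they are concentrated exactly where the paper spends its technical effort. First, the functor you want valued in $\CAlg(\Catp)$ does not exist at the level you propose: the external product $\Dperf(X)\otimes_{\Dperf(S)}\Dperf(Y)\to\Dperf(X\times_S Y)$ is an equivalence only after idempotent completion and only when one leg is flat. The paper handles this in \S\ref{subsection:more-derived} by passing to $\Catxi$ (the Karoubi localisation of $\Catx$), restricting to smooth $S$-schemes so that all pushouts have one flat leg, and — crucially — routing through $\Mod_S(\Catpsi)$ (stable $\infty$-categories with duality) rather than $\Catp$ directly, because the passage $(\C,\Dual)\mapsto(\C,\QF^\sym_\Dual)$ is only \emph{lax} symmetric monoidal (Corollary~\ref{corollary:adj-catp-catps}), so one cannot hope to build a strong symmetric monoidal functor $\Sm_S\op\to\Catp$; the strong part must live over $\Catps$. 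Also, you need the \emph{relative} tensor product over $(\Dperf(S),\Dual_S)$, not the absolute one, or you produce $\Dperf(X\times Y)$ rather than $\Dperf(X\times_S Y)$.

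Second, the multiplicativity of the relevant localisation on the functor side is not a formality. Your assertion that Karoubi-local equivalences are ``closed under $\otimes$'' elides the real issue: one must show that (bounded) Karoubi squares of $S$-linear $\infty$-categories with duality remain Karoubi squares after tensoring with a fixed module, and this fails in general. The paper's notion of \emph{flat} $S$-linear $\infty$-categories and \emph{bounded} Karoubi projections (\S\ref{subsection:flat-linear}, \S\ref{subsection:bounded-karoubi}) exists precisely to make Proposition~\ref{proposition:flat-multi} and Proposition~\ref{proposition:lnis-is-mult} true; it is the load-bearing technical input, and it is entirely absent from your proposal.

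Third, the invocation of Voevodsky's lemma is misplaced. That lemma asserts $[\Pone,\infty]$ is a symmetric object in $\Hinv(S,\Sps_*)$ — i.e.\ \emph{after} $\Aone$-localisation — and lets one pass between symmetric and sequential spectra in $\SH(S)$. But $\R^\sym_S$ is valued in $\Spa^\Sig_{\Pone}(\Sh^{\Nis}(S,\Spa))$, \emph{before} $\Aone$-localisation, where $[\Pone,\infty]$ is not symmetric; the paper explicitly warns against this. The correct mechanism is the universal property of the symmetric monoidal $\Pone$-stabilisation applied to the left adjoint $\T_S^\otimes$ once Proposition~\ref{proposition:projective-line} shows $\T_S$ inverts $\E[\Pone_S,\infty]$, with no appeal to sequential spectra or symmetry of $T$. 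The sequential colimit formula you write down is valid only for semi-stable prespectra and is not itself the source of the monoidal structure.
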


This means in particular that $\KQ_S$ and $\KW_S$ are motivic commutative ring spectra, and that the canonical map $\KQ_S \to \KW_S$ is a commutative ring map. It also implies that the forgetful map $\KQ_S \to \KGL_S$ is one of motivic commutative ring spectra.

\subsection*{Relation to other work}
In recent work~\cite{Arun-Kumar-arxiv}, based on his PhD thesis, Arun Kumar constructs a cellular motivic spectrum over $\spec(\ZZ)$ whose base change to $\spec(\ZZ[1/2])$ is the motivic hermitian K-theory spectrum of Hornbostel. Over $\spec(\ZZ)$, the invariant represented by this motivic spectrum is however not compared to any known one. This is remedied in an even more recent paper~\cite{Arun-Kumar-Roendigs}, in which Arun Kumar and Röndigs prove that Aron Kumar's construction coincides with the one of the present paper. In particular, this shows that the hermitian K-theory spectrum we construct here is cellular.

\subsection*{Acknowledgements}

We thank Aravind Asok, Jens Hornbostel, Marco Schlichting and Mura Yakerson for fruitful discussions. We wish to extend special thanks to Frédéric Déglise for explaining to us his ideas on purity and to Marc Hoyois for his invaluable insights and for pointing out a mistake in a previous version of the manuscript.

\subsection*{Financial Acknowledgements}

The first author was partially supported by the french ANR project HQDiag (ANR-21-CE40-0015).
The second author was supported by the European Research Council as part of the project ``Foundations of Motivic Real K-Theory'' (ERC grant no. 949583).

\section{Recollections on hermitian K-theory}

We use the framework developed in \cites{9-authors-I,9-authors-II}: hermitian K-theory is a functor sending a Poincaré $\infty$-category, i.e.\ a stable $\infty$-category endowed with a quadratic functor satisfying non-degeneracy conditions, to a spectrum. Let us walk the reader through some of the constructions and properties of this formalism.
 	
\subsection{Poincaré $\infty$-categories} 	
\label{subsection:poincare-categories}%
 	
In the framework developed in~\cite{9-authors-I} and~\cite{9-authors-II}, we view Grothendieck-Witt theory as an invariant of what we call a \emph{Poincaré} $\infty$-\emph{category}, a notion first introduced by Lurie~\cite{Lurie-L-theory} as a novel framework for Ranicki's L-theory. By definition, a Poincaré $\infty$-category is a pair $(\C,\QF)$ consisting of a (small) stable $\infty$-category $\C$ equipped with a \emph{Poincaré structure} $\QF$, that is, a functor $\QF\colon \C\op \to \Spa$ which is quadratic, that is, reduced and $2$-excisive in the sense of Goodwillie's functor calculus, and whose symmetric cross-effect $\Bil_{\QF}\colon \C\op\times \C\op\to\Spa$ is of the form $\Bil_{\QF}(x,y)=\map_{\C}(x,\Dual y)$ for some equivalence 
$\Dual\colon\C\op \tosimeq \C$.
In this case $\Dual$ is uniquely determined and endows $\C$ with the structure of a \emph{perfect duality}, that is, a lift of $\C$ to a $\Ct$-homotopy fixed point in $\Catx$ with respect to the $\mop$-action. 
The notion of a morphism of Poincaré $\infty$-categories $(\C,\QF)\to (\C',\QF')$ is that of a \emph{Poincaré functor}, 
which consists of a pair $(f,\eta)$ where $f\colon \C\to\C'$ is an exact functor and $\eta\colon \QF\Rightarrow \QF'\circ f\op$ is a natural transformation for which the induced arrow 
$
f\Dual\Rightarrow \Dual f\op
$
is an equivalence. In particular, Poincaré functors can be considered as a refinement of duality preserving functors. The collection of Poincaré $\infty$-categories and Poincaré functors between then assembles to form an $\infty$-category we denote by $\Catp$. 

We think of a Poincaré structure as providing an abstract notion of forms on the objects of $\C$. More precisely, we call points $q\in \Omega^{\infty}\QF(x)$ hermitian forms on $x$, and we say that such a form is Poincaré if the map $q_{\sharp}\colon x\to \Dual x$ determined by $q$ is an equivalence.

\begin{example}
\label{example:r-basic}%
\ 
\begin{enumerate}
\item
\label{item:sym-qdr}%
Let $(\C,\Dual)$ be a stable $\infty$-category with perfect duality.
Then the functors
\[
\QF^{\sym}(x)=\map_\C(x,\Dual x)^\hC\qquad \QF^\qdr(x)=\map_\C(x,\Dual x)_\hC
\]
are Poincaré structures with underlying duality $\Dual$. We call them the symmetric and quadratic Poincaré structures associated to the duality $\D$, respectively. The corresponding notions of hermitian forms they encode can be considered as the homotopy coherent avatars of the classical notions of symmetric bilinear and quadratic forms, respectively.
\item
Let $X$ be a scheme and $L$ a line bundle. Then $L$ determines a duality on the perfect derived $\infty$-category $\Dperf(X)$ of $X$, whose underlying duality $\Dperf(X) \to \Dperf(X)\op$ sends $M$ to the internal mapping object $\Dual_L(M) := \uline{\hom}_X(M,L)$. The symmetric and quadratic Poincaré structures associated to this duality are then denoted by $\QF^{\sym}_L$ and $\QF^{\qdr}_L$. More generally, we can introduce a $\Ct$-action on $L$, which affects the structure of $\Dual_L$ as a duality (though not the underlying equivalence). 
The Poincaré $\infty$-categories of the form $(\Dperf(X),\QF^{\sym}_L)$ constitute the main examples of interest for us in the present paper. 
\item
For $\C$ a stable $\infty$-category consider $\ovl{\Hyp}(\C):=\C\times\C\op$ equipped with the Poincaré structure
\[
\QF_\hyp(X,Y):=\map_\C(X,Y)\,.
\]
Then $\Hyp(\C) := (\ovl{\Hyp},\QF_{\hyp})$ is a Poincaré $\infty$-category with underlying duality $\Dual_\hyp(X,Y)=(Y,X)$. We refer to it as the hyperbolic category of $\C$. We note that $\QF_{\hyp}$ is both the symmetric and the quadratic Poincaré structure associated to the duality $\Dual_{\hyp}$ in the sense of~\eqref{item:sym-qdr}.
\item If $(\C,\QF)$ is a Poincaré $\infty$-category with underlying duality $\Dual\colon\C\op \to \C$, then the pair $(\C,\QF\qshift n) := (\C,\Sigma^n\QF)$ is a Poincaré $\infty$-category with underlying duality $\Sigma^n\Dual$. 
\item Let $R$ be a discrete commutative ring. Then, there exists a unique Poincaré structure $\QF^\gs$ on the perfect derived $\infty$-category $\Dperf(R)$ of $R$ whose value on any finitely generated projective module $P$ is the (Eilenberg-Mac Lane spectrum of the) abelian group of symmetric bilinear forms on $P$.
Similarly, there exists a unique Poincaré structure $\QF^\gq$ such that $\QF^\gq(P)$ is the abelian group of quadratic forms on $P$ for such $P$. 
We call these the \emph{genuine symmetric} and \emph{genuine quadratic} Poincaré structures, respectively. Their underlying duality is the usual duality $\Dual_R := \underline{\hom}(-,R)$, but $\QF^{\gs}$ and $\QF^{\gq}$ are generally \emph{not} the associated symmetric and quadratic Poincaré structures in the sense of~\ref{item:sym-qdr}.
\end{enumerate}
\end{example}

\subsection{Hermitian K-theory}
\label{subsection:recall-hermitian}%

Let $(\C,\QF)$ be a Poincaré $\infty$-category.

A \defi{Poincaré object} in $(\C,\QF)$ is a pair $(x,q)$ where $x$ is an object of $\C$ and $q\in \Omega^\infty\QF(x)$ is a Poincaré form on $x$, that is, a hermitian form such that
the map
$q_\sharp\colon X\to \Dual X$
determined by the image of $q$ under $\QF(x)\to \Bil(x,x) = \map_\C(x,\Dual x)$, is an equivalence. 
We then write 
\[
\Poinc(\C,\QF) \subseteq \int_{x \in \core\C}\Omega^{\infty}\QF(x)
\]
for the full subgroupoid spanned by the Poincaré objects, where $\core\C$ denotes the core $\infty$-groupoid of $\C$.
The association $(\C,\QF) \mapsto \Poinc(\C,\QF)$ assembles to a functor $\Poinc\colon \Catp \to \Sps$ by \cite{9-authors-I}*{Lemma~2.1.5}, analogous to the groupoid core functor $\core\colon \Catx \to \Sps$ yielding classes in K-theory, where $\Catx$ is the $\infty$-category of stable $\infty$-categories.
\medskip

In order to describe the context in which hermitian K-theory naturally lives, we need to recall the notions of \emph{additive}, \emph{localising} and \emph{Karoubi-localising} functors.

Both the $\infty$-categories $\Catx$ and $\Catp$ are pointed (in fact, semi-additive), and we can consider fibre and cofibre sequences in them. A \defi{Verdier} or \defi{Poincaré-Verdier} sequence is a sequence
\[
\C \to \D \to \E
\qquad
\text{or}
\qquad
(\C,\QF) \to (\D,\QFD) \to (\E,\QFE)
\]
that is both a fibre and a cofibre sequence in $\Catx$ or $\Catp$ respectively. A Verdier sequence is furthermore \defi{split} when both maps in the sequence have adjoints on both sides. Split Verdier sequences are equivalently described as stable recollements (see \cite{9-authors-II}*{Definition~A.2.9 and Proposition~A.2.10}). A Poincaré-Verdier sequence is \defi{split} when its underlying Verdier sequence is (see \cite{9-authors-II}*{Definition~1.1.1}). 

A functor $\Catp \to \Spa$ is \defi{additive} or \defi{localising} if it sends split Poincaré-Verdier or Poincaré-Verdier respectively to fibre sequences of spectra. Analogous definitions apply to a functor $\Catx \to \Spa$, removing ``Poincaré'' everywhere (see \cite{9-authors-II}*{Definition~1.5.4 and Proposition~1.5.5}).

Hermitian K-theory, also known as the \defi{Grothendieck-Witt spectrum} functor
\[
\GW\colon \Catp \to \Spa 
\]
is the universal (initial) \emph{additive} functor equipped with a transformation of functors $\Poinc \to \Ominfty \GW$ (see \cite{9-authors-II}*{Definition~4.2.1 and Corollary~4.2.2}). Analogously, the K-theory spectrum functor is the universal additive functor $\K\colon \Catx \to \Spa$ equipped with a map $\core \to \Ominfty\K$. A notable consequence of additivity is the Bott-Genauer sequence
\[
\GW(\C,\QF) \to \K(\C) \to \GW(\C,\QF\qshift{1})
\]
which exhibits K-theory as an extension of GW-theory by GW-theory of a shifted Poincaré structure.

Another important invariant of a Poincaré $\infty$-category is its L-theory spectrum $\L(\C,\QF)$, which, in the setting of Poincaré $\infty$-categories, was first defined in~\cite{Lurie-L-theory}, but was extensively studied in more classical contexts by Ranicki. For $n \in \ZZ$ the group $\L_n(\C,\QF) := \pi_n(\C,\QF)$ is the quotient of the monoid of $\pi_0\Poinc(\C,\QF\qshift{n})$ of equivalences classes of $n$-shifted Poincaré objects by the submonoid consisting of the \emph{metabolic Poincaré objects}, that is, those admitting a Lagrangian. As established in~\cite{9-authors-II}, The Grothendieck-Witt, L-theory and K-theory spectra are related via the \emph{fundamental fibre sequence} 
\[
\K_{\hC} \to \GW \to \L .
\]
One can show that both $\K$ and $\L$ are in fact Verdier-localising functors, and hence so is $\GW$
(see \cite{9-authors-II}*{Corollary~4.4.15}). In particular, $\GW$ is also universal as a Verdier-localising invariant.

\subsection{Karoubi-Grothendieck-Witt theory}
\label{subsection:KGW-recall}%

In this article, we actually mostly use the Karoubi-Grothendieck-Witt functor, i.e.\ a modified version of the above that is insensitive to idempotent completion of the underlying stable $\infty$-category. Let us now describe it.

A \defi{Karoubi} or a \defi{Poincaré-Karoubi} sequence is a sequence in $\Catx$ or $\Catp$, respectively, that becomes a fibre-cofibre sequence in idempotent complete stable $\infty$-categories or idempotent complete Poincaré categories, respectively, after idempotent completion of its terms (see \cite{9-authors-II}*{Definition~1.3.6}). 
A functor $\Catx \to \Spa$ or $\Catp \to \Spa$ is called \defi{Karoubi-localising} if it sends Karoubi or Poincaré-Karoubi sequences, respectively, to fibre sequences (see \cite{9-authors-II}*{Definition~2.7.1 and Proposition~1.5.5}). In~\cite[\S 1]{9-authors-IV}, we construct universal Karoubi-localising approximations
\[
\GW \Rightarrow \KGW \quad\text{and}\quad \L \Rightarrow \KL
\]
of Grothendieck-Witt- and L-theory, respectively. These are the analogues of the map $\K \Rightarrow \KK$ from algebraic to Bass K-theory, which is the universal Karoubi-localising approximation of K-theory, see~\cite{BGT}. These Karoubi-localising variants fit to form analogues of the Bott-Genauer sequence
\[
\KGW \Rightarrow \KK \Rightarrow \KGW((-)\qshift{1})
\]
and fundamental fibre sequence
\[
\KK_{\hC} \Rightarrow \KGW \Rightarrow \KL .
\]
In addition, we prove in~\cite[\S 1]{9-authors-IV} that the squares
\begin{equation}
\label{equation:cofinality}%
\begin{tikzcd}
\GW(\C,\QF)\ar[r]\ar[d] & \KGW(\C,\QF)\ar[d] \\
\K(\C)^\hC\ar[r] & \KK(\C)^\hC 
\end{tikzcd}
\quad\text{and}\quad
\begin{tikzcd}
\L(\C) \ar[r] \ar[d] & \KL(\C,\QF) \ar[d] \\
\K(\C)^\tC \ar[r] & \KK(\C)^{\tC} \ .
\end{tikzcd}
\end{equation}
are exact, a result we call \emph{Karoubi cofinality}.
This means, in particular, that the map $\GW(\C,\QF) \to \KGW(\C,\QF)$ is an equivalence on positive homotopy groups, and also on $\pi_0$ if $\C$ is idempotent complete. In addition, if $\C$ is such that the map $\K(\C) \to \KK(\C)$ is an equivalence then the maps $\GW(\C,\QF) \to \KGW(\C,\QF)$ and 
$\L(\C,\QF) \to \KL(\C,\QF)$ are equivalences as well.

For a scheme $X$ with a line bundle $L$, the spectra 
\[
\KGW^{\sym}(X,L) := \KGW(\Dperf(X),\QF^{\sym}_L) \quad\text{and}\quad \KL^{\sym}(X,L) := \KL(\Dperf(X),\QF^{\sym}_L)
\]
will be our principal object of interest in the present paper. When $X$ is regular the map $\K(X) \to \KK(X)$ is an equivalence and hence by the above these coincide with 
\[
\GW^{\sym}(X,L) = \GW(\Dperf(X),\QF^{\sym}_L) \quad\text{and}\quad \L^{\sym}(X,L) := \L(\Dperf(X),\QF^{\sym}_L),
\]
respectively.
The specific role played by Poincaré-Karoubi sequences is due to the fact that the localisation sequence of derived categories
\[
\Dperf_Z(X) \to \Dperf(X) \to \Dperf(U)
\]
associated to a closed subscheme $Z \subset X$ with open complement $U$ is only a Karoubi sequence and not a Verdier sequence in general. As was emphasized in \cite{thomason-trobaugh}, although all categories are idempotent complete, the map $\Dperf(X) \to \Dperf(U)$ is not essentially surjective in general: an object in the target is generally only a direct factor of an object in the image.

\subsection{Variants of Poincaré structures}
\label{subsection:variants}%

The notion of a Poincaré $\infty$-category admits several closely related variants. In this subsection we recall from~\cite{9-authors-I} a few of these that we will need in later parts of the paper. Some of these serve a primarily technical role, while others also a conceptual one, notably that of a stable $\infty$-category with just a duality (as opposed to a full Poincaré structure).

As recalled in \S\ref{subsection:poincare-categories}, a Poincaré $\infty$-category is a pair $(\C,\QF)$ consisting of a stable $\infty$-category and a quadratic functor $\QF$ whose associated symmetric cross effect $\Bil_{\QF}\colon \C\op \times \C\op \to \Spa$ is perfect, that is, is of the form $\Bil_{\QF}(x,y)=\map_{\C}(x,\Dual y)$ for some equivalence $\Dual\colon\C\op \tosimeq \C$. To allow for more flexibility in manipulating these objects, it is convenient to also consider the weaker notion where we just require $\QF$ to be quadratic, but do not enforce any further conditions on $\Bil_{\QF}$. We will refer to such a pair $(\C,\QF)$ as a \defi{hermitian $\infty$-category}. The notion of a morphism of hermitian $\infty$-categories is then just that of a pair $(f,\eta)$ where $f\colon \C\to\C'$ is an exact functor and $\eta\colon \QF\Rightarrow \QF'\circ f\op$ is a natural transformation.
We refer to these as hermitian functors. In particular, a Poincaré $\infty$-category is a special case of a hermitian $\infty$-category, and a Poincaré functor is a special case of a hermitian functor. Formally, if we write $\Funq(\C) \subseteq \Fun(\C\op,\Spa)$ for the full subcategory spanned by the quadratic functors, then the collection of hermitian $\infty$-categories can be organized into an $\infty$-category $\Cath$ arising as the unstraightening of the functor $\C \mapsto \Funq(\C)$, and $\Catp$ sits inside $\Cath$ as a certain non-full subcategory. 

One may go further and endow a stable $\infty$-category $\C$ with an even weaker structure, just that of a symmetric bilinear functor $\Bil\colon \C\op \times \C\op \to \Spa$, that is, a functor which is exact in each variable separately and that carries a $\Ct$-equivariant structure with respect to the swap action on the domain and the trivial action on the target. A pair $(\C,\Bil)$ consisting of a stable $\infty$-category and a symmetric bilinear functor is called a \defi{symmetric bilinear $\infty$-category}, see~\cite[\S 7.2]{9-authors-I}. Their collection can similarly be organized into an $\infty$-category $\Catsb$ where the morphisms are the pairs $(f,\beta)\colon (\C,\Bil) \to (\C',\Bil')$ where $f\colon \C \to \C'$ is an exact functor and $\beta\colon \Bil \Rightarrow \Bil \circ (f\op \times f\op)$ is a natural transformation. Formally, $\Catsb$ is the unstraightening of the functor which associates to $\C$ the full subcategory $\Funs(\C) \subseteq \Fun(\C\op \times \C\op,\Spa)^{\hC}$ spanned by the bilinear $\Ct$-equivariant functors. Extracting symmetric cross effects then assembles to give a functor
\[
\Cath \to \Catsb \quad\quad (\C,\QF) \mapsto (\C,\Bil_{\QF})
\]
which preserves the forgetful functor to $\Catx$ on both sides. In fact, these forgetful functors are both cartesian and cocartesian fibrations, and the above functor preserves both cartesian and cocartesian arrows.

Recall from~\cite[Definition 7.2.13]{9-authors-I} that a symmetric bilinear $\infty$-category $(\C,\Bil)$ is said to be non-degenerate if $\Bil$ is non-degenerate, that is, the functor $\C\op \to \Ind(\C)$ determines by $\Bil$ takes values in $\C \subseteq \Ind(\C)$, and hence determines a well-defined functor $\Dual_{\Bil}\colon \C\op \to \C$ equipped with an equivalence $\Bil(x,y) \simeq \map(x,\Dual_{\Bil}(y))$. Note that $\Dual_{\Bil}$ is generally not an equivalence. If $(f,\beta)\colon (\C,\Bil) \to (\C',\Bil')$ is morphism of symmetric bilinear $\infty$-categories which are both non-degenerate then $\beta\colon \Bil \Rightarrow f^*\Bil'$ determines a natural transformation $f\Dual_{\Bil} \Rightarrow \Dual_{\Bil'}f\op$, and we say that $(f,\eta)$ is duality preserving if this natural transformation is an equivalence. Finally, we say that a non-degenerate $(\C,\Bil)$ (or just $\Bil$) is perfect if $\Dual_{\Bil}$ is an equivalence. We also note that if $\Bil$ is non-degenerate then $\Dual_{\Bil}$ is canonically right adjoint to $\Dual_{\Bil}\op$, with the unit and counit both determined by the same double dual natural transformation $\id \Rightarrow \Dual_{\Bil}\Dual_{\Bil}\op$, so that $(\C,\Bil)$ is perfect if and only if this double dual transformation is an equivalence. We then write $\Catps \subseteq \Catsb$ for the (non-full) subcategory spanned by the perfect symmetric bilinear $\infty$-categories and duality preserving morphisms between them. 

The $\infty$-categories presented thus far fit in a cartesian square
\[
\begin{tikzcd}
\Catp \ar[r]\ar[d] & \Catps \ar[d] \\
\Cath \ar[r] & \Catsb
\end{tikzcd}
\]
whose vertical arrows are subcategory inclusions. As shown in~\cite[Corollary 7.2.16]{9-authors-I}, the forgetful functor $\Catps \to \Catx$ lifts to an equivalence $\Catps \simeq (\Catx)^{\hC}$ where the $\Ct$-homotopy fixed points are taken with respect to the $\mop$-action on $\Catx$. Put differently, the data of a perfect symmetric bilinear functor on $\C$ is equivalent to that of a duality on $\C$. 
The identification of $\Catps$ with $(\Catx)^{\hC}$ is done via the intermediary $\infty$-category $\Catb$, which is the unstraightening of the functor $\Catx \times \Catx \to \CAT$ sending $(\cA,\cB)$ to the $\infty$-category $\Funbil(\cA,\cB;\Spa)$ of bilinear functors $\cA\op \times \cB \to \Spa$. There is a natural $\Ct$-action on $\Catb$ sending $(\cA,\cB,\Bil)$ to $(\cB\op,\cA\op,\Bil)$, and one can identify $\Catsb$ with the $\Ct$-homotopy fixed points of this action. Similarly, $\Catps$ identifies with the $\Ct$-homotopy fixed points of a certain subcategory $\Catpb \subseteq \Catb$ spanned by those $(\cA,\cB,\Bil)$ such that $\Bil(a,b) = \map_{\cB}(f(a),b)$ for some equivalence 
$f\colon \cA \xrightarrow{\simeq} \cB$, and a suitable class of morphisms between them. At the same time, one can show that the projection $(\cA,\cB,\Bil) \mapsto \cA$ gives an equivalence $\Catpb \simeq \Catx$ under which the $\Ct$-action on $\Catpb$ becomes the $\mop$-action on $\Catx$, so that we get an equivalence $\Catps \simeq (\Catx)^{\hC}$. In the present paper we will generally not distinguish between $\Catps$ and $(\Catx)^{\hC}$, and consistently use the notation $(\C,\Dual)$ to describe objects in $\Catps$, thus making reference to the duality on $\C$ rather than to the associated perfect bilinear functor $\Bil_{\Dual} = \map_{\C}(-,\Dual(-))$. 

\section{Poincaré structures on derived categories of schemes}
\label{section:poinc-schemes}%

For a scheme $X$, we write $\Picspace(X) \subseteq \core\Dperf(X)$ for the full subgroupoid spanned by the tensor invertible objects, and $\Picspace(X)^{\BC} := \Fun(\BC,\Picspace)$ for the corresponding $\infty$-category of such objects equipped with a $\Ct$-action.
To an $L \in \Picspace(X)^{\BC}$, one may associate a Poincaré structure $\QF^{\sym}_L$ on the $\infty$-category $\Dperf(X)$ of perfect complexes as follows:
\begin{definition}
\label{definition:linear-part}%
Given $X$ and $L \in \Picspace(X)^{\BC}$ as above, we define $\QF^{\sym}_L\colon \Dperf(X)\op \to \Spa$ to be the Poincaré structure given by
\[
\QF^{\sym}_L(M) = \map_{\Dperf(X)}(M \otimes M,L)^{\hC} .
\]
\end{definition}

The fact that this is a Poincaré structure will be proven below, see Proposition~\ref{proposition:construction-gives-poincare}.
We refer to $\QF^{\sym}_L$ as the associated \defi{symmetric} Poincaré structure. We then write $\LT_L$ for the quasi-coherent complex characterized by the existence of a natural equivalence
\[
\map_X(P\otimes P,L)^\tC\cong \map_X(P,\LT_L)
\]
for $P \in \Dperf(X)$. In other words, $\LT_L$ is the object of $\Der^{\qc}(X) = \Ind(\Dperf(X))$ representing the linear part of $\QF^{\sym}_L$. Similarly, for every $m$, we may consider the truncated Poincaré structure $\QF^{\geq m}_L$ on $\Dperf(X)$, given as the fibre product $\QF^{\geq m}_L(M) = \QF^{\sym}_L(M) \times_{\map_X(P,\LT_L)} \map_X(P,\tau_{\geq m}\LT_L)$

\begin{remark}
\label{remark:shift-of-line-bundle}%
The difference between the notion of an invertible perfect complex and that of a line bundle is not huge.
By the main result of~\cite{fausk}, for any invertible perfect complex $L$ on $X$, there exist a disjoint union decomposition $X = \coprod_{n \in \ZZ} X_n$ (where each $X_n$ is potentially empty) such that $L|_{X_n}$ is a degree $n$ shift of a line bundle. In particular, if $X$ is connected then any invertible perfect complex $L$ is globally a shift of a line bundle. 
\end{remark}

While the definition itself is simple, some effort is required in order to formally set up all the functoriality and multiplicativity properties of this construction in $X$ and $L$ (see, e.g., the commutative diagram~\eqref{equation:functoriality-schemes-1} below and subsequent discussion), 
and the present section is dedicated to these somewhat technical considerations. We begin in \S\ref{subsection:mult-hermitian-bilinear} by recalling and elaborating on the symmetric monoidal structure on $\Catp$ and some of its variants recalled in \S\ref{subsection:variants}. 
In \S\ref{subsection:rigid-to-poinc}, we analyse the formation of the symmetric Poincaré structure associated to a rigid stable symmetric monoidal $\infty$-category, in \S\ref{subsection:scheme-to-rigid} we focus on the dependency on $X$ and $L$ and in \S\ref{subsection:genuine} we consider the genuine variants. 
We also note that while the present discussion is sufficient for our needs in \S\ref{section:nisnevich}-\ref{section:projective-bundle}, in \S\ref{section:motivic-realization}, we eventually need a stronger multiplicativity property, which we discuss in \S\ref{subsection:more-derived}.

\subsection{Multiplicative structures on hermitian and bilinear categories}
\label{subsection:mult-hermitian-bilinear}%

In what follows, it will be important to track the various multiplicative structures attached to the passage from a scheme equipped with a line bundle to the associated derived Poincaré $\infty$-category $(\Dperf(X),\QF^{\sym}_L)$. In view of this, in this subsection we recall and further pursue the construction of the symmetric monoidal structure on $\Catp$ and several of its variants. This content can be considered as supplementary material to~\cite[\S 5, \S 7.2]{9-authors-I}.

To begin, recall that the $\infty$-category $\Catx$ of stable $\infty$-categories and exact functors between them admits a symmetric monoidal structure, where for $\cA,\cB \in \Catx$, their tensor product $\cA \otimes \cB$ is the initial recipient of a functor $\cA \times \cB \to \cA \otimes \cB$ which is exact in each variable.
The mapping spectra in $\cA \otimes \cB$ between pure tensors can be described as follows. For any pair of tuples $(a,b),(a',b') \in \cA \times \cB$, the map 
\[
\map_{\cA}(a,a') \otimes \map_{\cB}(b,b') \to \map_{\cA \otimes \cB}(a \otimes b,a' \otimes b')
\]
induced by the exact functors $(-) \times b\colon \cA \to \cA \otimes \cB$ and $a'\otimes (-) \otimes \cB \to \cA \otimes \cB$, is an equivalence.

Recall from~\cite{HA}*{Definition 2.1.2.13} that a map of $\infty$-operads $q\colon \cP^{\otimes} \to \cO^{\otimes}$ is said to be a cocartesian fibration if it is a cocartesian fibration on the level of total categories (that is, when forgetting the structure map to $\Fin_*$). Such a map is also said to exhibit $\cP^{\otimes}$ as an $\cO$-monoidal $\infty$-category. In particular, $\cP^{\otimes}$ is a symmetric monoidal $\infty$-category as soon as $\cO^{\otimes}$ is such. In this case, we will say that $q$ is a \defi{symmetric monoidal cocartesian fibration}. In addition, recall from~\cite{HA}*{Remark 2.4.2.6} that for a symmetric monoidal $\infty$-category $\cB^{\otimes}$ there is an unstraightening equivalence between the $\infty$-category $\Alg_{\cB}(\Cat^{\times})$ of lax symmetric monoidal functors $\cB^{\otimes} \to \Cat^{\times}$ and the subcategory of $\mathrm{Op}_{/\cB^{\otimes}}$ spanned by the symmetric monoidal cocartesian fibrations $\E^{\otimes} \to \cB^{\otimes}$ and cocartesian edge preserving (hence in particular symmetric monoidal) functors between them.

This unstraightening equivalence can be used to construct the symmetric monoidal structure on $\Cath$ and $\Catp$. Since we will need to employ this procedure at several points, let us take the reader through some of the details. To begin, we note that by~\cite[Remark 4.8.1.8]{HA} applied for the collection of all small diagrams we have that the functor
\[
\Cat \to \PrL \quad\quad \C \mapsto \cP(\C) := \Fun(\C\op,\Sps)
\]
is symmetric monoidal. To avoid confusion, we point out that the functoriality in question is covariant in $\C$ via left Kan extension. Tensoring with the idempotent object $\Spa \in \PrL$, we similarly obtain a symmetric monoidal functor
\[
\Cat \to \PrL \quad\quad \C \mapsto \cP(\C)\otimes \Spa := \Fun(\C\op,\Spa).
\]
Pre-composing with the lax symmetric monoidal functor $(\Catx)^{\otimes} \to \Cat^{\times}$ extracting underlying $\infty$-categories and post-composing with the lax symmetric monoidal functor $\PrL \to \CAT$ extracting underlying large $\infty$-categories we may view the functor $\C \mapsto \Fun(\C\op,\Spa)$ as a lax symmetric monoidal functor $(\Catx)^{\otimes} \to \CAT^{\times}$. Now for $\C \in \Catx$, the full subcategory $\Funq(\C) \subseteq \Fun(\C\op,\Spa)$ spanned by the quadratic functors is reflective, with associated localisation functor 
\[
\Lqdr_{\C}\colon \Fun(\C\op,\Spa) \to \Funq(\C)
\]
given by reduced 2-excisive approximation. We refer to the maps sent to equivalences by $\Lqdr_{\C}$ as quadratic equivalences, and denote their collection by $W_{\qdr}$. Now for any exact $\C \to \D$ the induced restriction functor $\Fun(\D\op,\Spa) \to \Fun(\C\op,\Spa)$ sends $\Funq(\D)$ to $\Funq(\C)$ and hence the associated left Kan extension functor $\Fun(\C\op,\Spa) \to \Fun(\D\op,\Spa)$ preserves quadratic equivalences.
A similar argument shows that the structure maps of the lax symmetric monoidal structure on $\Fun(-\op,\Spa)$ preserves quadratic equivalences.
Since forming localisations is product preserving (\cite[Proposition 4.1.7.2]{HA}) we may consequently apply it to obtain a lax symmetric monoidal refinement of the functor $\Funq(-) = \Fun(\C\op,\Spa)[W_{\qdr}^{-1}]$, together with a refinement of $\Lqdr_{(-)}$ to a lax symmetric monoidal natural transformation
\[
\Lqdr_{(-)}\colon \Fun((-)\op,\Spa) \to \Funq(-)
\]
which exhibits $\Funq(-)$ as the initial lax symmetric monoidal recipient of a lax symmetric monoidal natural transformation whose components invert quadratic equivalences. Applying the symmetric monoidal unstraightening procedure above to $\Fun((-)\op,\Spa), \Funq(-)$, and $\Lqdr_{(-)}$ we now obtain a two symmetric monoidal cocartesian fibrations over $\Catx$ and a cocartesian edge preserving map between them:
\[
\begin{tikzcd}
(\Catx_{\mop//\Spa})^{\otimes} \ar[rr]\ar[dr] && (\Cath)^{\otimes} \ar[dl] \\
& (\Catx)^{\otimes} &
\end{tikzcd}
\]
yielding in particular the desired symmetric monoidal structure on $\Cath$. One may then verify that the collection of Poincaré $\infty$-categories is closed under tensor products (including the empty one) in $\Cath$, and similarly for the collection of Poincaré functors, so that the symmetric monoidal structure on $(\Cath)^{\otimes}$ restricts to one on $(\Catp)^{\otimes}$, see \cite[Theorem 5.2.7]{9-authors-I}.

\begin{remark}
By (the dual of) \cite[Proposition 7.3.2.6]{HA} the above map of cocartesian fibrations admits a relative right adjoint
\[
\begin{tikzcd}
(\Catx_{\mop//\Spa})^{\otimes} \ar[dr] && (\Cath)^{\otimes} \ar[ll]\ar[dl] \\
& (\Catx)^{\otimes} &
\end{tikzcd}
\]
which is given fibrewise by the inclusion of quadratic functors inside of all functors. This right adjoint is then lax symmetric monoidal, and is more precisely a full inclusion of $\infty$-operads. In particular, we could have also constructed the symmetric monoidal structure on $\Cath$ as a suitable full suboperad of $(\Catx_{\op//\Spa})^{\otimes}$, and then show that this full suboperad is in fact a symmetric monoidal $\infty$-category. This is essentially the approach taken up in~\cite[\S 5]{9-authors-I}.
\end{remark}

We now employ a similar approach in order to endow $\Catsb$ with a symmetric monoidal structure. For this variant, we compose the lax symmetric monoidal functor $\Fun((-)\op,\Spa) \colon \Cat^{\times} \to \CAT^{\times}$ we used above with the product preserving endofunctor $\C \mapsto \C \times \C$ on $\Cat$ to obtain a lax symmetric monoidal functor $\C \mapsto \Fun(\C\op \times \C\op,\Spa)$, which we then restrict along $(\Catx)^{\otimes} \to \Cat^{\times}$ to obtain a lax symmetric monoidal functor on $\Catx$.
Now for every $\C \in \Catx$, we have the full subcategory $\Funbil(\C) \subseteq \Fun(\C\op \times \C\op,\Spa)$ spanned by the bilinear functors, that is, the functors which are exact in each entry. This inclusion again admits a left adjoint
\[
\Lbil_{\C}\colon \Fun(\C\op \times \C\op,\Spa) \to \Funbil(\C) ,
\]
that is, $\Funbil(\C)$ is a left Bousfield localisation of $\Fun(\C\op \times \C\op,\Spa)$ by the collection of maps $W_{\bil}$ inverted by $\Lbil_{\C}$, which we call bilinear equivalences. As in the quadratic case, the collection of bilinear equivalences is preserved under left Kan extension along exact functors and under the lax symmetric monoidal structure maps, so that $\Funbil(\C) = \Fun(\C\op \times \C\op,\Spa)[W_{\bil}^{-1}]$ inherits from $\Fun((-)\op \times (-)\op,\Spa)$ a lax symmetric monoidal structure and the functors $\Lbil_{\C}$ assemble to give a lax symmetric monoidal natural transformation
\[
\Lbil_{(-)}\colon \Fun((-)\op,\Spa) \to \Funx((-)\op,\Spa)
\]
which exhibits its target as the initial recipient of a lax symmetric monoidal natural transformation inverting bilinear equivalences. Now note that the lax symmetric monoidal functor $\C \mapsto \C \times \C$ carries a natural $\Ct$-action given by swapping the terms. This induces a $\Ct$-action on the lax symmetric monoidal functor $\Fun((-)\op \times (-)\op,\Spa)$ which pointwise preserves bilinear equivalences, and hence descends to a $\Ct$-action on the lax symmetric monoidal functor $\Funbil(-)$. We then write $\Funs(-) := \Funbil(-)^{\hC}$ for the corresponding $\Ct$-homotopy fixed points, so that $\Funs(-)$ is the lax symmetric monoidal functor $(\Catx)^{\otimes} \to \CAT^{\times}$ associating to $\C \in \Catx$ the $\infty$-category of symmetric bilinear functors $\C\op \times \C\op \to \Spa$. Applying the symmetric monoidal unstraightening procedure above, we then obtain a symmetric monoidal cocartesian fibration
\[
(\Catsb)^{\otimes} \to (\Catx)^{\otimes}
\]
giving a symmetric monoidal refinement of the $\infty$-category $\Catsb$ of symmetric bilinear $\infty$-categories studied in~\cite[\S 7.2]{9-authors-I}. One may then verify that the collection of perfect symmetric bilinear $\infty$-categories is closed under tensor products (including the empty one) in $\Catsb$, and similarly for the collection of duality preserving functors, so that the symmetric monoidal structure on $(\Catsb)^{\otimes}$ restricts to one on $(\Catps)^{\otimes}$.

\begin{remark}
\label{remark:perfect-duality}%
The identification of $\Catps$ with $(\Catx)^{\hC}$ described in \S\ref{subsection:variants}
extends to the symmetric monoidal setting in a straightforward manner. In particular, identifying $\Funbil(-,-;\Spa)$ with a multiplicative localisation of the lax symmetric monoidal functor $(\cA,\cB) \mapsto \Fun(\cA\op \times \cB,\Spa)$ and using symmetric monoidal unstraightening as above, one can endow $\Catb$ with a symmetric monoidal structure whose $\Ct$-homotopy fixed points is $(\Catsb)^{\otimes}$, and which restricts to a symmetric monoidal structure on $\Catpb$ whose $\Ct$-homotopy fixed points is $(\Catps)^{\otimes}$. By construction, the projection $\Catpb \to \Catx$ is symmetric monoidal, and is hence an equivalence of symmetric monoidal $\infty$-categories, so that one obtains the equivalence $\Catps \simeq (\Catx)^{\hC}$ multiplicatively. 
\end{remark}

The $\infty$-categories $\Cath$ and $\Catsb$ are related by extracting from a hermitian structure its symmetric bilinear part. To write this in a multiplicative manner, note that the diagonal natural transformation $\C \to \C \times \C$, canonically viewed as a lax symmetric monoidal transformation of functors $\Cat^{\times} \to \Cat^{\times}$, induces a symmetric monoidal natural transformation
\[
\Del_!\colon \Fun((-)\op,\Spa) \to \Fun((-)\op \times (-)\op,\Spa)
\]
whose component at $\C$ is given pointwise by left Kan extension along the diagonal $\C \to \C \times \C$. In addition, using the swap $\Ct$-action on the functor $\C \mapsto \C \times \C$ and the corresponding $\Ct$-invariance of the diagonal, we may view the above transformation as a $\Ct$-equivariant symmetric monoidal natural transformation between two symmetric monoidal functors with $\Ct$-action, the latter being trivial on the domain. Pre-composing with the lax symmetric monoidal forgetful functor $(\Catx)^{\otimes} \to \Cat^{\times}$, we may view the above transformation as a $\Ct$-equivariant lax symmetric monoidal one between two lax symmetric monoidal functors $(\Catx)^{\otimes} \to \CAT^{\times}$. Now for $\C \in \Catx$, restriction along $\Del_{\C}\colon \C \to \C \times \C$ sends bilinear functors to quadratic functors, and so left Kan extension along $\Del_{\C}$ sends quadratic equivalences to bilinear equivalences. The natural transformation $\Del_!$ then descends to a $\Ct$-equivariant lax symmetric monoidal transformation
\[
\Bil_{(-)}\colon \Funq(-) \Rightarrow \Funbil(-) ,
\]
where the $\Ct$-action on the domain is again trivial. We hence obtain a lax symmetric monoidal transformation
\[
\Bil_{(-)}\colon \Funq(-) \Rightarrow \Funs(-) ,
\]
which we denote identically. Unwinding the definitions, the component $\Bil_{\C}$ sends a quadratic functor $\QF$ to its underlying bilinear part, equipped with its induced symmetric structure.
Passing to symmetric monoidal unstraightenings, we then obtain a symmetric monoidal functor
\[
\begin{tikzcd}
(\Cath)^{\otimes}\ar[dr]\ar[rr] && (\Catsb)^{\otimes}\ar[dl] \\
& (\Catx)^{\otimes} &
\end{tikzcd}
\]
preserving cocartesian edges over $(\Catx)^{\otimes}$, thus yielding a multiplicative refinement of the extraction of underlying symmetric bilinear $\infty$-categories as constructed in~\cite[\S 7.2]{9-authors-I}. We also note that by the dual of \cite[Proposition 7.3.2.6]{HA}, the above map of cocartesian fibrations admits a right adjoint
\[
\begin{tikzcd}
(\Cath)^{\otimes} \ar[dr] && (\Catsb)^{\otimes} \ar[ll]\ar[dl] \\
& (\Catx)^{\otimes} &
\end{tikzcd}
\]
relative to $(\Catx)^{\otimes}$. On the level of underlying $\infty$-categories, this right adjoint is given by $(\C,\Bil) \mapsto (\C,\QF^{\sym}_{\Bil})$, that is, it associates to a given symmetric bilinear form on $\C$ the corresponding symmetric Poincaré structure. This right adjoint then inherits a lax symmetric monoidal structure. We also show in~\cite[Proposition 7.2.17]{9-authors-I} that it is fully-faithful. Noting that by definition, the symmetric monoidal functor $(\Cath)^{\otimes} \to (\Catsb)^{\otimes}$ sends Poincaré $\infty$-categories to perfect symmetric bilinear $\infty$-categories and Poincaré functors to duality preserving functors, and conversely for the right adjoint $(\Catsb)^{\otimes} \to (\Cath)^{\otimes}$, we consequently obtain:

\begin{corollary}
\label{corollary:adj-catp-catps}%
The above adjunction descends to an adjunction
\[
(\Catp)^{\otimes} \adj (\Catps)^{\otimes}
\]
in which the left adjoint $(\C,\QF) \mapsto (\C,\Bil_{\QF})$ is symmetric monoidal and the right adjoint $(\C,\Dual) \mapsto (\C,\QF^{\sym}_{\Dual})$ is lax symmetric monoidal and fully-faithful.
\end{corollary}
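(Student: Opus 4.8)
\emph{Proof plan.} The plan is to obtain the corollary by restricting the adjunction $L^{\otimes} \dashv R^{\otimes}$ just constructed, where $L^{\otimes}\colon (\Cath)^{\otimes} \to (\Catsb)^{\otimes}$ is the symmetric monoidal functor $(\C,\QF)\mapsto(\C,\Bil_{\QF})$ and $R^{\otimes}\colon (\Catsb)^{\otimes}\to(\Cath)^{\otimes}$, $(\C,\Dual)\mapsto(\C,\QF^{\sym}_{\Dual})$, is its lax symmetric monoidal, fully faithful relative right adjoint. The key observation is that the cartesian square recalled in \S\ref{subsection:variants} — which upgrades arity-wise to a cartesian square of the associated $\infty$-operads — identifies $(\Catp)^{\otimes}$ with the preimage $(L^{\otimes})^{-1}\bigl((\Catps)^{\otimes}\bigr)$ of the (non-full) subcategory $(\Catps)^{\otimes}\subseteq(\Catsb)^{\otimes}$. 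One is then in position to invoke the elementary fact that an adjunction $L\dashv R$ with $R$ fully faithful restricts to an adjunction $L^{-1}(\mathcal{B})\adj\mathcal{B}$ for any subcategory $\mathcal{B}$ of the target which is closed under equivalences, the restricted right adjoint again being fully faithful. Indeed, for such $\mathcal{B}$ the counit $LR\Rightarrow\id$ is a natural equivalence with all components in $\mathcal{B}$; the functor $R$ lands in $L^{-1}(\mathcal{B})$ since $LR(b)\simeq b$; and for $X\in L^{-1}(\mathcal{B})$ the unit component $X\to RL(X)$ lies in $L^{-1}(\mathcal{B})$ because its image under $L$ is, by the triangle identity, inverse to a counit component, hence an equivalence between objects of $\mathcal{B}$. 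The triangle identities restrict automatically since subcategory inclusions are faithful.

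To apply this I would check two points, both arity-wise over $\Fin_{*}$, which reduces them to statements about underlying $\infty$-categories and functors. First, $R^{\otimes}$ is fully faithful — this was already recorded above, from~\cite[Proposition 7.2.17]{9-authors-I}. Second, $(\Catps)^{\otimes}$ is closed under equivalences inside $(\Catsb)^{\otimes}$: an equivalence of symmetric bilinear $\infty$-categories induces an equivalence on the associated (a priori merely non-degenerate) dualities, hence is duality preserving, and therefore lies in $\Catps$ whenever its source and target do. Combined with the identification $(\Catp)^{\otimes} = (L^{\otimes})^{-1}\bigl((\Catps)^{\otimes}\bigr)$ — which is just the definition of a Poincaré $\infty$-category and of a Poincaré functor, repackaged by the cartesian square — this produces the desired adjunction $(\Catp)^{\otimes}\adj(\Catps)^{\otimes}$. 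As a by-product one recovers that $\QF^{\sym}_{\Dual}$ is a genuine Poincaré structure whenever $\Dual$ is perfect and that $R^{\otimes}$ carries duality preserving functors to Poincaré functors, since these are precisely the assertions that $R^{\otimes}$ lands in $(\Catp)^{\otimes}$.

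It then only remains to read off the monoidal statements: the left adjoint of the restricted adjunction is the restriction of the symmetric monoidal functor $L^{\otimes}$ to the symmetric monoidal subcategory $(\Catp)^{\otimes}\subseteq(\Cath)^{\otimes}$, hence symmetric monoidal, while the right adjoint is the restriction of the lax symmetric monoidal, fully faithful $R^{\otimes}$, hence lax symmetric monoidal and fully faithful. I do not expect a substantive obstacle here — the corollary is bookkeeping sitting on top of the construction of $L^{\otimes}$ and $R^{\otimes}$. The only point requiring genuine care is that, because $\Catp\hookrightarrow\Cath$ and $\Catps\hookrightarrow\Catsb$ are \emph{not} full subcategories, one cannot argue purely on mapping spaces but must verify the restriction through the unit and counit, and do so at the level of $\infty$-operads rather than of underlying $\infty$-categories.
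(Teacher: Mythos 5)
Your proof is correct and gives a genuinely different (and somewhat more systematic) decomposition of the argument than the paper's. The paper proves the corollary in a single sentence: it observes directly that both the left adjoint and the right adjoint restrict — the left sends Poincaré $\infty$-categories to perfect symmetric bilinear $\infty$-categories and Poincaré functors to duality-preserving functors, and conversely for the right — and declares the restricted adjunction to follow "consequently." The nontrivial step that this phrasing leaves implicit is precisely the one you isolate: because neither $\Catp \subseteq \Cath$ nor $\Catps \subseteq \Catsb$ is a full subcategory, one cannot deduce adjointness from a bijection on mapping spaces alone but must check that the unit and counit land in the subcategories. You package this into a clean general lemma (adjunction with fully faithful right adjoint restricts along $L^{-1}$ of any subcategory of the target closed under equivalences), identify $(\Catp)^{\otimes}$ with $(L^{\otimes})^{-1}((\Catps)^{\otimes})$ via the cartesian square from \S\ref{subsection:variants}, and read everything off. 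What the paper's route buys is brevity, trading on the concrete observations that the unit $\QF \Rightarrow \QF^{\sym}_{\Bil_\QF}$ is the identity on underlying categories (hence trivially duality preserving) and the counit is literally an identity; what your route buys is that only the full faithfulness of $R$ plus the cartesianness of the square need to be fed in, with the restriction of the unit and counit then coming for free rather than requiring inspection. The one place you are slightly informal is the claim that the cartesian square of \S\ref{subsection:variants} and the subsequent checks "upgrade arity-wise to the $\infty$-operads"; this is correct but is doing real work, since the operad structures were constructed independently and one must observe (as the paper does when constructing $(\Catp)^{\otimes}$ and $(\Catps)^{\otimes}$ as symmetric monoidal restrictions of $(\Cath)^{\otimes}$ and $(\Catsb)^{\otimes}$) that the Poincaré and perfect-duality conditions are stable under tensor products, so that the square remains cartesian in $\infty$-operads. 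Since that closure is already established in \S\ref{subsection:mult-hermitian-bilinear}, your argument goes through.
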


Finally, let us explain another instance of the above construction, where instead of quadratic or bilinear functors we consider linear (that is, exact) functors.
More precisely, for a stable $\infty$-category $\C$ the full subcategory $\Funx(\C\op,\Spa) \subseteq \Fun(\C\op,\Spa)$ spanned by the exact functors is again reflective, with corresponding localisation functor 
\[
\Llin_{\C}\colon \Fun(\C\op,\Spa) \to \Funx(\C\op,\Spa)
\]
given by reduced excisive (or exact) approximation functor. We will refer to the collection $W_{\lin}$ of maps it inverts as linear equivalences. As with quadratic equivalences, the collection of linear equivalences is preserved under left Kan extension along exact functors and under the structure maps of the lax symmetric monoidal structure on $\Fun((-)\op,\Spa)$, so that $\Funx(\C\op,\Spa) = \Fun(\C\op,\Spa)[W_{\lin}^{-1}]$ inherits from $\Fun((-)\op,\Spa)$ a lax symmetric monoidal structure and the functors $\Llin_{\C}$ assemble to give a lax symmetric monoidal natural transformation
\[
\Llin_{(-)}\colon \Fun((-)\op,\Spa) \to \Funx((-)\op,\Spa)
\]
which exhibits its target as the initial recipient of a lax symmetric monoidal natural transformation inverting linear equivalences.

\begin{remark}
\label{remark:two-approach}%
The lax symmetric monoidal forgetful functor $(\Catx)^{\otimes} \to \Cat^{\times}$ determines an oplax square
\[
\begin{tikzcd}
(\Catx)^{\otimes} \ar[d]\ar[rr,"\C \mapsto \C \otimes \C"] && (\Catx)^{\otimes} \ar[d] \\
\Cat^{\times}\ar[urr,Rightarrow, shorten <= 3em, shorten >= 2.2em, pos=0.45] \ar[rr,"\C \mapsto \C \times \C"] && \Cat^{\times}
\end{tikzcd}
\]
in which the horizontal arrows are symmetric monoidal and the vertical arrows are lax symmetric monoidal. The non-invertible 2-cell in this square then encodes the lax symmetric monoidal transformation
\[
(-) \times (-) \Rightarrow (-) \otimes (-)
\]
where both sides are viewed as lax symmetric monoidal functors $\Catx \to \Cat$. This natural transformation then determined a lax symmetric monoidal natural transformation
\[
\Fun((-)\op \times (-)\op) \Rightarrow \Fun((-)\op \otimes (-)\op,\Spa)
\]
whose component at $\C$ are given by left Kan extension along $\C\op \times \C\op \to \C\op \otimes \C\op$. Since the last functor is bilinear the left Kan extension in question sends bilinear equivalences to linear equivalences and hence the above transformation descends to a lax symmetric monoidal natural transformation 
\[
\Funbil(-) \Rightarrow \Funx((-)\op \otimes (-)\op,\Spa) ,
\]
which is an equivalence by the defining property of the tensor product on $\Catx$. In particular, we could have equally well define $\Funbil(-)$ as the composite of the symmetric monoidal endofunctor $\C \mapsto \C \otimes \C$ of $\Catx$ and the lax symmetric monoidal functor $\C \mapsto \Funx(\C\op,\Spa) =\Ind(\C)$ from $\Catx$ to $\CAT$.
\end{remark}

\subsection{Poincaré categories associated to rigid symmetric monoidal structures}
\label{subsection:rigid-to-poinc}%

Recall that a symmetric monoidal $\infty$-category is said to be \defi{rigid} if every object $x \in \C$ is dualisable, that is, admits a dual $x^{\vee} \in \C$ equipped with a unit $1_{\C} \to x \otimes x^{\vee}$ and a counit $x^{\vee} \otimes x \to 1_{\C}$ satisfying the triangle inequalities.
If $\C$ is a rigid $\infty$-category then the association $x \mapsto x^{\vee}$ assembles to form a duality on $\C$ (that is, a $\Ct$-fixed structure on $\C$ with respect to the $\mop$-action of $\Ct$ on $\Cat$) which can be encoded via the perfect (space valued) pairing $(x,y) \mapsto \Map(x \otimes y,1_{\C})$. Furthermore, this duality is symmetric monoidal, that is, it is a $\Ct$-fixed structure also with respect to the $\mop$-action of $\Ct$ on $\CAlg(\Cat)$, and is natural in $\C \in \CAlg(\Cat)$, so that symmetric monoidal functors are canonically duality preserving. A proof of these statements can be found in~\cite{HLAS}*{Theorem 5.11}; more precisely, in loc.\ cit.\ the authors show that associating to each rigid $\infty$-category its duality determines a section of the forgetful functor $\Catr^{\hC} \to \Catr$, yielding a trivialization of the $\mop$-action on the full subcategory $\Catr \subseteq \CAlg(\Cat)$ spanned by the rigid $\infty$-categories. 
In fact, within the proof of~\cite{HLAS}*{Theorem 5.11}, a 
slightly more general construction is considered: given a tensor invertible object $L \in \C$, one may consider the twisted duality $\Dual_L(x) = x^{\vee}\otimes L$, which is associated to the perfect pairing $(x,y) \mapsto \map(x \otimes y,L)$. This construction can be organized into a commutative square of product preserving functors
\begin{equation}
\label{equation:naturality-L}%
\begin{tikzcd}[row sep = 5pt, column sep = 5pt]
 & (\C,L) \ar[rr,mapsto] \ar[d,phantom,"\rotatebox{270}{$\in$}"] && (\C,\Dual_L)\ar[d,phantom,"\rotatebox{270}{$\in$}"] \\
\Catr \times_{\Cat} \Cat_{*/\lax} & \Inv \ar[rr]\ar[dd]\ar[l,phantom,"\supseteq"] && \Cat^{\hC} \ar[dd] \\
& && \\
& \Catr \ar[rr] && \Cat \ ,
\end{tikzcd}
\end{equation}
where $\Cat_{*/\lax} \to \Cat$ is the cocartesian fibration classified by the identity functor $\Cat \to \Cat$ (also called the universal cocartesian fibration, see \cite{HTT}*{\S 3.3.2}), and $\Inv \subseteq \Catr \times_{\Cat} \Cat_{*/\lax}$ is the full subcategory spanned by those $(\C,L)$ such that $L$ is tensor invertible. 
Passing to commutative monoid objects one obtains a functor
\[
\CAlg(\Inv) \simeq \Catr \to \Catr \times_{\CAlg(\Cat)}\CAlg(\Cat)^{\hC} = \Catr^{\hC},
\]
yielding a section of the forgetful functor $\Catr^{\hC} \to \Catr$.

In the present subsection we adapt the above constructions to the setting of stable $\infty$-categories and Poincaré structures, along the way also extending the scope to allow for a $\Ct$-action on $L$. To begin, recall that the symmetric monoidal structure on $\Catx$ induces a "pointwise" symmetric monoidal structure on $\CAlg(\Catx)$ by means of internal mapping objects in $\infty$-operads (see~\cite[Example 3.2.4.4]{HA}), such that the forgetful functor 
\[
\CAlg(\Catx)^{\otimes} \to (\Catx)^{\otimes}
\]
is symmetric monoidal. 
We now construct a lax symmetric monoidal natural transformation
\[
\Funs(-)|_{\CAlg(\Catx)} \Rightarrow \Funx((-)\op,\Spa)^{\BC}|_{\CAlg(\Catx)}
\]
as follows. First, note that the monoidal product functor $m_{\C}\colon \C \times \C \to \C$ for $\C \in \CAlg(\Cat)$ is natural with respect to symmetric monoidal functors, and can hence be viewed as a natural transformation between two product preserving functors $\CAlg(\Cat) \to \Cat$. Restricting along the lax symmetric monoidal forgetful functor $\CAlg(\Catx)^{\otimes} \to \CAlg(\Cat)^{\times}$ we may view it as a lax symmetric monoidal natural transformation between two lax symmetric monoidal functors $\CAlg(\Catx) \to \Cat$. Post-composing with the lax symmetric monoidal functor $\Fun((-)\op,\Spa)$ we hence obtain a lax symmetric monoidal natural transformation
\[
m_!\colon \Fun((-)\op \times (-)\op,\Spa)|_{\CAlg(\Catx)} \to \Fun((-)\op,\Spa)|_{\CAlg(\Catx)} ,
\]
whose component at a given $\C \in \CAlg(\Catx)$ is given by left Kan extension along the monoidal product functor $m_{\C}\colon \C \times \C \to \C$. Now for $\C \in \CAlg(\Catx)$ the monoidal product $m_{\C}$ is bilinear and hence restriction along $m_{\C}$ sends exact functors to bilinear functors. It thus follows that left Kan extension along $m_{\C}$ sends bilinear equivalences to linear equivalences, and hence $m_!$ descends to a lax symmetric monoidal natural transformation
\[
\ovl{m}_!\colon \Funbil(-)|_{\CAlg(\Catx)} \Rightarrow \Funx((-)\op,\Spa)|_{\CAlg(\Catx)}.
\]
In addition, since $m_{\C}\colon \C \times \C \to \C$ is $\Ct$-equivariant with respect to the swap action on the source and the trivial action on $\C$ we have that $m_{\C}$ and $\ovl{m}_{\C}$ inherits the same structure,
and we may consequently consider the induced lax symmetric monoidal natural transformation  
\[
\ovl{m}^{\sym}_!\colon \Funs(-)|_{\CAlg(\Catx)} \Rightarrow \Funx((-)\op,\Spa)^{\BC}|_{\CAlg(\Catx)}
\]
on the level of $\Ct$-homotopy fixed points. Writing $\Zotimes_{\ex} \to \CAlg(\Catx)^{\otimes}$ for the symmetric monoidal unstraightening of the right hand side, the above lax symmetric monoidal natural transformation determines a symmetric monoidal functor
\[
\CAlg(\Catx)^{\otimes} \times_{(\Catx)^{\otimes}}(\Catsb)^{\otimes} \to \Zotimes_{\ex},
\]
which preserves cocartesian edges over $\CAlg(\Catx)$.
We note that this functor admits right adjoints fibrewise over $\CAlg(\Catx)$: indeed, on fibres over $\C^{\otimes}$ the functor in question $\Funs(\C) \to \Funx(\C\op,\Spa)^{\BC}$ admits a right adjoint given by restriction along $m^{\sym}_{\C}\colon \C\op \times \C\op \to \C\op$. By~\cite[Proposition 7.3.2.6]{HA} we conclude that the above functor admits a relative right adjoint
\[
\begin{tikzcd}
\Zotimes_{\ex} \ar[rr]\ar[dr] && \CAlg(\Catx)^{\otimes} \times_{(\Catx)^{\otimes}} (\Catsb)^{\otimes} \ar[dl] \\
&\CAlg(\Catx)^{\otimes} &
\end{tikzcd}
\]
which then inherits a lax symmetric monoidal structure. 

\begin{lemma}
\label{lemma:right-is-monoidal}%
The above right adjoint is symmetric monoidal (not only lax). 
\end{lemma}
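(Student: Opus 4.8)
The plan is to upgrade the lax symmetric monoidal structure on the right adjoint $G\colon \Zotimes_{\ex} \to \CAlg(\Catx)^{\otimes} \times_{(\Catx)^{\otimes}} (\Catsb)^{\otimes}$ to a strong one by checking that its structure maps are equivalences, organising the check fibrewise over $\CAlg(\Catx)^{\otimes}$. Both $\Zotimes_{\ex}$ and $\CAlg(\Catx)^{\otimes} \times_{(\Catx)^{\otimes}} (\Catsb)^{\otimes}$ are symmetric monoidal cocartesian fibrations over $\CAlg(\Catx)^{\otimes}$, and $G$, being a relative right adjoint, commutes with the projections. Since the cocartesian edges of the composite $\pi\colon \Zotimes_{\ex} \to \CAlg(\Catx)^{\otimes} \to \Fin_*$ are exactly the edges that are cocartesian over $\CAlg(\Catx)^{\otimes}$ and whose image is cocartesian over $\Fin_*$, and since $G$ — being lax symmetric monoidal — already preserves cocartesian edges over inert morphisms, it follows that $G$ is strong symmetric monoidal as soon as the external structure maps
\[
G_{\C_1}(F_1) \boxtimes G_{\C_2}(F_2) \longrightarrow G_{\C_1 \otimes \C_2}(F_1 \boxtimes F_2)
\qquad\text{and}\qquad
\one \longrightarrow G_{\one}(\one)
\]
are equivalences, where $G_{\C}$ is the fibre of $G$ over $\C \in \CAlg(\Catx)$ and $\boxtimes$ denotes the external products provided by the two symmetric monoidal cocartesian fibrations.

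Next I would unwind the fibrewise picture. Over $\C \in \CAlg(\Catx)$ the functor $L_{\C}$ whose right adjoint is $G_{\C}$ is the $\Ct$-homotopy-fixed-point refinement of left Kan extension along the multiplication $m_{\C}$; under the identification $\Funbil(\C) \simeq \Ind(\C \otimes \C)$ of Remark~\ref{remark:two-approach} this is the multiplication functor $\Ind(\C) \otimes \Ind(\C) \simeq \Ind(\C \otimes \C) \to \Ind(\C)$ of the commutative algebra $\Ind(\C)$, passed to $\Ct$-fixed points, and correspondingly $G_{\C}$ is restriction along $m_{\C}$. The external structure maps above then become the canonical comparison maps
\[
(m_{\C_1})^{*}F_1 \boxtimes (m_{\C_2})^{*}F_2 \longrightarrow (m_{\C_1 \otimes \C_2})^{*}(F_1 \boxtimes F_2),
\]
together with the analogous comparison at the unit, where $m_{\one} = \id$.

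To see these are equivalences I would invoke two formal facts. First, the multiplication of the tensor-product algebra $\C_1 \otimes \C_2$ is $m_{\C_1} \otimes m_{\C_2}$, up to reshuffling the tensor factors — this is exactly the input that makes $m_{(-)}$ a (lax) symmetric monoidal natural transformation in the construction preceding the lemma. Second, $\Fun((-)\op,\Spa)$, equivalently $\Ind$, is symmetric monoidal, so restriction along a tensor product $f_1 \otimes f_2$ of functors is the external product of the restrictions along $f_1$ and $f_2$. Combining the two identifies $(m_{\C_1 \otimes \C_2})^{*}(F_1 \boxtimes F_2)$ with $(m_{\C_1})^{*}F_1 \boxtimes (m_{\C_2})^{*}F_2$; the passage to $\Ct$-homotopy fixed points changes nothing, since external products and restriction functors commute with the limits computing homotopy fixed points, and the unit statement is immediate. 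Equivalently, one may route the whole argument through the projection formula $L_{\C}(\Bil) \otimes F \simeq L_{\C}(\Bil \otimes m_{\C}^{*}F)$, which holds because the Day-convolution symmetric monoidal structure on $\Ind(\C)^{\BC}$ is closed so that tensoring preserves the colimits computing $L_{\C}$, together with the fact that the right adjoint of a symmetric monoidal functor is symmetric monoidal precisely when the projection formula holds — but this still has to be propagated through the parametrisation over $\CAlg(\Catx)^{\otimes}$.

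The main obstacle is not any one nontrivial computation — the two facts above are formal and the projection formula is standard — but the simultaneous bookkeeping of the three layers of structure at play: the parametrisation over $\CAlg(\Catx)^{\otimes}$, the closed Day-convolution monoidal structures on the fibres, and the $\Ct$-homotopy-fixed-point layer. The conceptual heart of the argument is the observation that strong monoidality of $G$ over $\Fin_*$ can be detected after projecting to $\CAlg(\Catx)^{\otimes}$ and then only on the active structure maps of the fibres; once that reduction is in place, the rest is routine verification.
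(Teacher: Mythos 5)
Your reduction is essentially the paper's: both pass to a fibrewise check that the lax structure maps of the right adjoint are equivalences, which the paper packages as the square $\Funs(\C) \times \Funs(\D) \to \Funs(\C \otimes \D)$ over $\Funx(\C\op,\Spa)^{\BC} \times \Funx(\D\op,\Spa)^{\BC} \to \Funx(\C\op \otimes \D\op,\Spa)^{\BC}$ being vertically right adjointable, drops the $\Ct$-fixed points by conservativity, and then identifies everything with $\Ind$ via $\Funbil \simeq \Ind((-)\otimes(-))$. So far the two arguments track each other.

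The genuine gap is in the step you label as the second of your ``two formal facts'': that restriction along $f_1 \otimes f_2$ commutes with external products because $\Ind$ is symmetric monoidal. What symmetric monoidality of $\Ind$ gives you is that the square of \emph{left Kan extensions}
\[
\begin{tikzcd}
\Ind(\C') \times \Ind(\D') \ar[r,"\boxtimes"]\ar[d] & \Ind(\C' \otimes \D') \ar[d] \\
\Ind(\C) \times \Ind(\D) \ar[r,"\boxtimes"] & \Ind(\C \otimes \D)
\end{tikzcd}
\]
commutes; it says nothing about the Beck--Chevalley mate obtained by replacing the vertical arrows by their right adjoints (restriction). That this mate is an equivalence is precisely the nontrivial content of the lemma. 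Note also that the external product $\boxtimes$ is only bilinear, not colimit-preserving, so one cannot simply invoke the two-functoriality of $\otimes$ on $\PrL$ to transport adjoints. The paper proves the claim by reducing to representable generators (both the horizontal functors and the right adjoints preserve filtered colimits, and equivalences in $\Ind(\C'\otimes\D')$ are detected on mapping spectra out of pure tensors $c'\otimes d'$), rewriting the resulting comparison map via the pointwise formula for left Kan extension as $(f\times g)_!\map_{\C'\otimes\D'}(c'\otimes d',(-)\otimes(-)) \Rightarrow \map_{\C\otimes\D}(f(c')\otimes g(d'),(-)\otimes(-))$, and then concluding from the tensor formula for mapping spectra among pure tensors together with the standard fact that $f_!$ sends $\map_{\C'}(c',-)$ to $\map_{\C}(f(c'),-)$. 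Your informal identification of the two sides ``on objects'' is the correct heuristic, but without checking that the canonical mate is this identification, the step remains unproven. The projection-formula alternative you sketch is also off-target: the projection formula $L(\Bil)\otimes F\simeq L(\Bil\otimes m^*F)$ concerns the internal $\Ind(\C)$-linearity of the adjunction in a single fibre, not the compatibility of the right adjoint with external products across different fibres, and the stated equivalence between ``$R$ symmetric monoidal'' and ``projection formula holds'' is not correct in general.
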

\begin{proof}
Unwinding the definitions, this amounts to saying that for every pair of symmetric monoidal $\infty$-categories $\C,\D$, the square
\[
\begin{tikzcd}
\Funs(\C) \times \Funs(\D) \ar[r]\ar[d] & \Funs(\C \otimes \D) \ar[d] \\
\Funx(\C\op,\Spa)^{\BC} \times \Funx(\D\op)^{\BC} \ar[r] & \Funx(\C\op \otimes \D\op,\Spa)^{\BC}
\end{tikzcd}
\]
is vertically right adjointable. Now the right adjoints of the vertical functors exists also before taking homotopy fixed points, and are hence induced by the latter. Since forgetting homotopy fixed point structure is conservative it will suffice to show that the square
\[
\begin{tikzcd}
\Funbil(\C) \times \Funbil(\D) \ar[r]\ar[d] & \Funbil(\C \otimes \D) \ar[d] \\
\Funx(\C\op,\Spa) \times \Funx(\D\op) \ar[r] & \Funx(\C\op \otimes \D\op,\Spa)
\end{tikzcd}
\]
is vertically right adjointable. Identifying $\Funbil(-)$ with $\Funx((-)\op\otimes(-)\op,\Spa) = \Ind((-) \otimes (-))$ (see Remark~\ref{remark:two-approach}), the desired result now follows from the following claim: for every pair of exact functors $f\colon \C' \to \C$ and $g\colon \D' \to  \D$, the associated square
\[
\begin{tikzcd}
\Ind(\C') \times \Ind(\D') \ar[r]\ar[d] & \Ind(\C' \otimes \D') \ar[d] \\
\Ind(\C) \times \Ind(\D) \ar[r] & \Ind(\C \otimes \D)
\end{tikzcd}
\]
is vertically right adjointable. To prove the claim, note that by combining the facts that the horizontal functors and the vertical right adjoints preserve filtered colimits, that equivalences in $\Ind(\C' \otimes \D')$ are jointly detected by mapping spectra out of objects in $\C' \otimes \D'$, and that $\C' \otimes \D'$ is generated under finite colimits by the image of the bilinear functor $\C' \times \D' \to \C' \otimes \D'$, the claim in question amounts to showing that for every $(c,d) \in \C \times \D$ and $(c',d') \in \C'\times \D'$, the map
\[
\colim_{(x',f(x') \to c) \in \C'_{/c}, (y',g(y') \to d) \in \D'_{/d})}\map_{\C'\otimes \D'}(c'\otimes d',x'\otimes y') \to \map_{\C \otimes \D}(f(c')\otimes g(d'),c \otimes d)
\]
is an equivalence of spectra. Now by the pointwise formula for left Kan extension we can also write this map as 
\[
(f \times g)_!\map_{\C'\otimes \D'}(c'\otimes d',(-) \otimes (-)) \Rightarrow \map_{\C\otimes \D}(f(c') \otimes g(d'),(-)\otimes (-))
\]
where $(f \times g)_!$ denotes left Kan extension along $f \times g\colon \C'\times \D'\to \C \times \D$.
In light of the tensor formula for mapping spectra among pure tensors (see \S\ref{subsection:mult-hermitian-bilinear}) the desired result is now a direct consequence of the fact that the maps $f_!\map_{\C'}(c',-) \Rightarrow \map_{\C}(f(c'),-)$ and $g_!\map_{\D'}(d',-) \Rightarrow \map_{\D}(g(d'),-)$ are equivalences.
\end{proof}

With Lemma~\ref{lemma:right-is-monoidal} at hand, from the above relative right adjoint we now obtain a commutative square
\begin{equation}
\label{equation:functoriality}%
\begin{tikzcd}
\Zotimes_{\ex} \ar[r]\ar[d] & (\Catsb)^{\otimes} \ar[d] \\
\CAlg(\Catx)^{\otimes} \ar[r] & (\Catx)^{\otimes}
\end{tikzcd}
\end{equation}
of symmetric monoidal $\infty$-categories and symmetric monoidal functors between them. Concretely, on the level of underlying $\infty$-categories, the objects of the top left corners are pairs $(\C,L)$ where $\C$ is a stable $\infty$-category and $L \colon \C\op \to \Spa$ is a $\Ct$-equivariant functor (with respect to the trivial $\Ct$-action on both sides). 
The top horizontal map sends $(\C,L)$ to $(\C,\Bil_{L})$, where $\Bil_{L}$ is the symmetric bilinear functor obtained as the composite of $\Ct$-equivariant functors
\[
\C\op \times \C\op \xrightarrow{m_{\C}} \C\op \xrightarrow{L}\Spa .
\]
Equivalently, identifying $\Fun(\C\op,\Spa)$ with $\Ind(\C)$, we may think of $L \in \Ind(\C)^{\BC}$ as an Ind-object equipped with a $\Ct$-action, in which case we can write $\Bil_L$ via the formula 
\[
\Bil_L(x,y) = \map_{\Ind(\C)}(x \otimes y,L) ,
\]
where we have identified $x$ and $y$ with their images in $\Ind(\C)$ via the Yoneda embedding.

\begin{proposition}
\label{proposition:construction-gives-poincare}%
Suppose that $\C$ is a rigid stably symmetric monoidal $\infty$-category and $L \in \Ind(\C)^{\BC}$ a $\Ct$-equivariant Ind-object.
Then $(\C,\Bil_L)$ is non-degenerate if and only if $L$ belongs to $\C^{\BC} \subseteq \Ind(\C)^{\BC}$. It is furthermore perfect if and only if the underlying object of $L$ is tensor invertible in $\C$.
In addition, if $g\colon \C\to \C'$ is an exact functor, $L \in \C^{\BC}$ and $L'\in (\C')^{\BC}$ tensor invertible and $g(L)\to L'$ an equivalence, then the induced morphism $(\C,\QF^{\sym}_L)\to (\C',\QF^{\sym}_{L'})$ is duality preserving.
\end{proposition}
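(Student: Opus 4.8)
The plan is to reduce all three assertions to one computation: an identification of the functor $\C\op \to \Ind(\C)$ classified by the bilinear functor $\Bil_L$. Since $\C$ is rigid and stably symmetric monoidal, its tensor product is bi-exact and extends to a colimit-preserving tensor product on $\Ind(\C)$, and every $y \in \C$, being dualizable, remains dualizable in $\Ind(\C)$ with the same dual $y^\vee$; hence $(-) \otimes y$ is left adjoint to $(-) \otimes y^\vee$ on $\Ind(\C)$. Combined with the formula $\Bil_L(x,y) = \map_{\Ind(\C)}(x \otimes y, L)$ from \S\ref{subsection:rigid-to-poinc}, this yields a natural equivalence $\Bil_L(x,y) \simeq \map_{\Ind(\C)}(x, y^\vee \otimes L)$, which through the fully faithful restricted Yoneda embedding $\Ind(\C) \hookrightarrow \Fun(\C\op,\Spa)$ exhibits the functor classified by $\Bil_L$ as $y \mapsto y^\vee \otimes L$, that is, as the composite of the dualization $(-)^\vee\colon \C\op \to \C$ with $(-) \otimes L\colon \Ind(\C) \to \Ind(\C)$.

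Given this, the first two assertions follow quickly. The pair $(\C,\Bil_L)$ is non-degenerate exactly when $y^\vee \otimes L$ lies in $\C$ for every $y$; as $\C$ is closed under tensor products and under duals, this holds as soon as the underlying object of $L$ lies in $\C$, and conversely it fails otherwise, by taking $y = \one$. Since the inclusion $\C^{\BC} \subseteq \Ind(\C)^{\BC}$ is the restriction of the full embedding $\Fun(\BC,\C) \hookrightarrow \Fun(\BC,\Ind(\C))$, membership is detected on underlying objects, and the non-degeneracy statement follows. For the perfectness statement, we may assume $L \in \C^{\BC}$ (which both sides of the asserted equivalence entail, by the previous point), so that $\Dual_{\Bil_L} \simeq (-)^\vee \otimes L\colon \C\op \to \C$. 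The dualization $(-)^\vee$ is an equivalence as $\C$ is rigid, so $\Dual_{\Bil_L}$ is an equivalence precisely when $(-) \otimes L\colon \C \to \C$ is one; and $(-) \otimes L$ is an equivalence if and only if the underlying object of $L$ is tensor invertible, since an inverse $G$ of $(-) \otimes L$ yields $G(\one) \otimes L \simeq \one$, while the converse is immediate.

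For the last assertion, note first that $g$ carries a symmetric monoidal structure, being a morphism of $\Zotimes_{\ex}$ lying over $\CAlg(\Catx)$ in the functoriality square~\eqref{equation:functoriality}; consequently $g$ preserves dualizable objects together with a canonical equivalence $g(y^\vee) \simeq g(y)^\vee$. By the previous parts $(\C,\Bil_L)$ and $(\C',\Bil_{L'})$ are both perfect, with dualities $(-)^\vee \otimes L$ and $(-)^\vee \otimes L'$, so the induced hermitian functor $(\C,\QF^{\sym}_L) \to (\C',\QF^{\sym}_{L'})$ is duality preserving if and only if the comparison transformation $g \circ \Dual_{\Bil_L} \Rightarrow \Dual_{\Bil_{L'}} \circ g\op$, i.e.\ the adjunction mate of the structure natural transformation $\Bil_L \Rightarrow \Bil_{L'} \circ (g\op \times g\op)$, is an equivalence. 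Its source and target are identified by the chain $g(y^\vee \otimes L) \simeq g(y^\vee) \otimes g(L) \simeq g(y)^\vee \otimes L'$ coming from the monoidality of $g$ and the hypothesis $g(L) \simeq L'$, so it remains only to check that the mate recovers this identification. I would do so by recognizing that this is the stable, $\Ct$-equivariant enhancement of the statement that symmetric monoidal functors are canonically duality preserving for the twisted rigid dualities $\Dual_L$, i.e.\ of the commutativity of~\eqref{equation:naturality-L} together with \cite{HLAS}*{Theorem~5.11}, and invoking it; alternatively one performs the corresponding diagram chase by hand, using the units and counits of the adjunctions $(-) \otimes y \dashv (-) \otimes y^\vee$ in $\Ind(\C)$ and $\Ind(\C')$.

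The main obstacle I anticipate is this last identification: the source and target of the comparison $2$-cell are visibly equivalent functors, but verifying that the abstractly defined mate is itself that equivalence, rather than some a priori non-invertible map between equivalent functors, requires a careful handling of the coherences and is the one point in the argument that is not a purely formal manipulation.
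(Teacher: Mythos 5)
Your proof follows essentially the same route as the paper's: identify $\Dual_L(y) = y^\vee \otimes L$ via rigidity, observe that non-degeneracy amounts to $L \in \C$ (take $y = \one$), reduce perfection to invertibility of $L$, and read off duality preservation from the form of the comparison map. The only small departure is that you check perfection by asking directly whether $\Dual_{\Bil_L} = (-)^\vee \otimes L$ is an equivalence, whereas the paper inspects the double-duality map $\one_\C \to \Dual_L\Dual_L$; these are equivalent characterizations per \S\ref{subsection:variants}, so either is fine, and your flagging of the mate-coherence check in the last part is reasonable caution — the paper likewise asserts the form of that transformation without spelling out the verification.
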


\begin{proof}
If $\Bil_L$ is represented by $\Dual\colon \C\op \to \C$ then $\Dual(1_{\C}) = L$, so that $L$ belongs to $\C$. On the other hand, if $L$ belongs to $\C$ then 
\[
\map_{\Ind(\C)}(x\otimes y,L) = \map_{\C}(x \otimes y,L) = \map_{\C}(x, y^{\vee} \otimes L)
\]
by the rigidity of $\C$, so that $\Bil_L$ is represented by 
$\Dual_L(y)=y^\vee \otimes L$.
Moreover, the double duality map $1_\C\to \Dual_L\Dual_L$ is
\[
y\to L\otimes L^\vee\otimes y
\]
induced by the adjoint of the $\Ct$-action $L\to L$, and so it is an equivalence if and only if $L$ is  tensor invertible.
 
Finally, if $L$ and $L'$ are tensor invertible objects of $\C$ and $\C'$ respectively, the natural transformation $g\Dual_L\to \Dual_{L'} g$ is given by
\[
g(L)\otimes fx^\vee\to L'\otimes fx^\vee
\]
and so it is an equivalence if and only if the map $g(L)\to L'$ is an equivalence.
\end{proof}

\begin{definition}
Let $\Inv^{\otimes} \subseteq \Zotimes_{\ex}$ denotes the suboperad spanned by those $(\C^{\otimes},L)$ such that $\C^{\otimes}$ is rigid and $L$ is (the Yoneda image of) a tensor invertible $\Ct$-equivariant object of $\C$, and those multi-operations $((\C_1,L_1),\ldots,(\C_n,L_n)) \to (\C,L)$ corresponding to a multi-exact functor $g\colon \C_1 \times \cdots \times \C_n \to \C$ and an equivalence $g(L_1,\ldots,L_n) \tosimeq L$. 
\end{definition}

By construction, an exact functor $\C_1 \otimes \cdots \otimes \C_n \to \C$ induced by a multi-operation $((\C_1,L_1),\ldots,(\C_n,L_n)) \to (\C,L)$ in $\Inv^{\otimes}$ sends the tensor invertible $\Ct$-equivariant object $L_1 \otimes \cdots \otimes L_n$ to $L$, and hence by Proposition~\ref{proposition:construction-gives-poincare} the associated morphism 
\[
(\C_1,\Bil_{L_1}) \otimes \cdots \otimes (\C_n,\Bil_{L_n}) \to (\C,\Bil_L)
\]
is duality preserving with perfect domain and target. 
As a consequence, the square~\eqref{equation:functoriality} restricts to a square of $\infty$-operads
\begin{equation}
\label{equation:functoriality-ps}%
\begin{tikzcd}[row sep = 5pt]
(\C,L) \ar[r,mapsto] \ar[d,phantom,"\rotatebox{270}{$\in$}"] & (\C,\Bil_L)\ar[d,phantom,"\rotatebox{270}{$\in$}"] \\
\Inv^{\otimes}\ar[dd]\ar[r] & (\Catps)^{\otimes} \ar[dd] \\
&\\
(\Catexr)^{\otimes} \ar[r]\ar[uur,dashed] & (\Catx)^{\otimes} \ .
\end{tikzcd}
\end{equation}
equipped with a dotted lift which we construct below. We now claim that all $\infty$-operads in this square are symmetric monoidal $\infty$-categories and all arrows are symmetric monoidal functors. Indeed, as we discussed in~\ref{subsection:mult-hermitian-bilinear}, the non-full suboperad $(\Catps)^{\otimes}$ of $(\Catsb)^{\otimes}$ is a symmetric monoidal $\infty$-category and the projection $(\Catps)^{\otimes} \to (\Catx)^{\otimes}$ is a symmetric monoidal functor.
Similarly, the full suboperad $(\Catexr)^{\otimes} \subseteq \CAlg(\Catx)^{\otimes}$ is a symmetric monoidal $\infty$-category and the forgetful functor $(\Catexr)^{\otimes} \to (\Catx)^{\otimes}$ is symmetric monoidal; indeed this follows from the fact that 
if $\C,\D$ are rigid then $\C \otimes \D$ is again rigid, since it receives a symmetric monoidal (bilinear) functor $\C \times \D \to \C \otimes \D$ whose image generates $\C \otimes \D$ under finite colimits, and all objects in $\C \times \D$ are dualisable.
In addition, an arrow in $\Zotimes_{\ex}$ whose domain is in $\Inv^{\otimes}$ lies entirely in $\Inv^{\otimes}$ if and only if it is cocartesian over $\CAlg(\Catx)$, so that 
\[
\Inv^{\otimes} \to \Catr^{\otimes}
\]
is a left fibration of $\infty$-operads. In particular, since $\Catr^{\otimes}$ is a symmetric monoidal $\infty$-category we conclude that $\Inv^{\otimes}$ is a symmetric monoidal $\infty$-category and the left vertical map is symmetric monoidal. Finally, since the right vertical functor is conservative by~\cite[Corollary 7.2.16]{9-authors-I} we have the top horizontal arrow must also be a symmetric monoidal functor.

The square~\eqref{equation:functoriality-ps} is the version of~\eqref{equation:naturality-L} relevant for our purposes. 
It admits a symmetric monoidal lift (indicated by a dotted arrow) induced by the symmetric monoidal section of the left vertical arrow sending $\C^{\otimes}$ to $(\C^{\otimes},\one_{\C})$. To see this, note that since the unit $\one_{\C}$ is always tensor invertible it will suffice to construct such a section for the pulled back left fibration 
\[
\CAlg(\Catx)^{\otimes} \times_{(\Catx)^{\otimes}} \Alg_{\EE_0}(\Catx)^{\otimes} \to \CAlg(\Catx)^{\otimes}
\]
where $\EE_0$ is the $\infty$-operad of pointed objects (that is, objects equipped with a map from the unit), and we consider the $\infty$-category of algebras as endowed with the pointwise symmetric monoidal structure (that is, as the corresponding internal mapping object in $\infty$-operads, see~\cite[Construction 3.2.4.1]{HA}).
The symmetric monoidal section $\C^{\otimes} \mapsto (\C^{\otimes},\one_{\C})$ can hence be constructed as the composite
\[
\CAlg(\Catx)^{\otimes} \simeq \Alg_{\EE_0}(\CAlg(\Catx))^{\otimes} \to \CAlg(\Catx)^{\otimes} \times_{(\Catx)^{\otimes}} \Alg_{\EE_0}(\Catx)^{\otimes}
\]
where the first equivalence is a chosen inverse for the projection 
$\Alg_{\EE_0}(\CAlg(\Catx)) \xrightarrow{\simeq} \CAlg(\Catx)$, which is well-defined 
since the unit is initial in $\CAlg(\Catx)^{\otimes}$. In particular, the dotted lift in the above square is given by the formula $\C^{\otimes} \mapsto (\C,\Dual_{\C})$, where $\Dual_{\C} = \Dual_{\one}$ is the canonical duality associated to $\C$ as a rigid $\infty$-category.

\begin{proposition}
\label{proposition:sm-sifted}%
In the square~\eqref{equation:functoriality-ps}, the underlying $\infty$-categories all admit sifted colimits and these are preserved by all the functors in the square (including the dotted lift). 
\end{proposition}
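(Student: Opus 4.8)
The plan is to show that every forgetful functor appearing along the edges of~\eqref{equation:functoriality-ps} \emph{creates} sifted colimits, so that both the existence of sifted colimits and their preservation follow formally. Two elementary facts about a weakly contractible (in particular, sifted) $\infty$-category $I$ will be used throughout: a functor from $I$ to an $\infty$-groupoid is essentially constant, and the colimit (resp.\ limit) of a constant $I$-indexed diagram is its value.

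First, I would treat the bottom and right edges. Recall that $\Catx$ admits all small colimits. Under the identification $\Catps\simeq(\Catx)^{\hC}$ of~\S\ref{subsection:variants}, taken with respect to the $\mop$-action --- which is implemented by an equivalence of $\Catx$ and hence preserves all colimits --- the forgetful functor $\Catps\to\Catx$ creates sifted colimits; more generally, for any action of a finite group $G$ on an $\infty$-category $\X$ by sifted-colimit-preserving functors the forgetful functor $\X^{hG}\to\X$ creates sifted colimits, since the colimit in $\X$ of a sifted diagram of homotopy fixed points acquires an essentially unique homotopy-fixed-point refinement (this is where weak contractibility of the index category enters). For the bottom edge, one factors $\Catexr\to\Catx$ as $\Catexr\hookrightarrow\CAlg(\Catx)\to\Catx$: the functor $\CAlg(\Catx)\to\Catx$ creates sifted colimits by~\cite{HA}*{\S 3.2.3}, because the tensor product of $\Catx$ preserves sifted colimits separately in each variable, and $\Catexr\subseteq\CAlg(\Catx)$ is closed under sifted colimits, as a sifted colimit of rigid symmetric monoidal $\infty$-categories is again rigid --- every object of the colimit is the image of a dualisable object under one of the structure functors, and symmetric monoidal functors preserve dualisable objects (cf.~\cite{HLAS}). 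Hence $\Catexr$ admits sifted colimits and $\Catexr\to\Catx$ preserves them.

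The main step is the category $\Inv$. As observed above, $\Inv\to\Catexr$ is a left fibration, classified by the functor sending $\C$ to the $\infty$-groupoid of tensor-invertible $\Ct$-equivariant objects of $\C$. Here I would invoke the general fact that for a left fibration $p\colon\mathcal{E}\to\mathcal{B}$ and a weakly contractible $\infty$-category $I$ such that $\mathcal{B}$ admits $I$-indexed colimits, $\mathcal{E}$ admits $I$-indexed colimits and $p$ creates them: given $q\colon I\to\mathcal{E}$, extend $pq$ to a colimit diagram $I^{\triangleright}\to\mathcal{B}$ with cone object $b_{\infty}$, transport the objects $q(i)$ into the fibre $p^{-1}(b_{\infty})$ along the cocone edges to obtain an essentially constant $I$-diagram in an $\infty$-groupoid, and take its value $x_{\infty}$ as cone object; unwinding the description of a morphism of $\mathcal{E}$ over a fixed morphism of $\mathcal{B}$ as a pair consisting of that morphism and a path in the relevant fibre, one checks that the resulting cocone $I^{\triangleright}\to\mathcal{E}$ is a colimit. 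Since $\Catexr$ has sifted colimits and sifted $\infty$-categories are weakly contractible, this shows that $\Inv\to\Catexr$ creates sifted colimits, so in particular $\Inv$ admits sifted colimits. I expect this to be the principal obstacle: the remaining steps are routine manipulations with sifted colimits, whereas here one has to argue carefully via the left fibration --- implicitly, that forming the Picard $\infty$-groupoid of a rigid category along a sifted colimit introduces no new ambiguity.

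Finally, I would assemble the pieces. All four corners of~\eqref{equation:functoriality-ps} admit sifted colimits, and the bottom and right functors preserve them. For the top functor $\Inv\to\Catps$, post-composing with $\Catps\to\Catx$ yields, by commutativity of the square, the functor $\Inv\to\Catexr\to\Catx$, which preserves sifted colimits; since $\Catps\to\Catx$ creates them, $\Inv\to\Catps$ preserves them. For the dotted lift $\Catexr\to\Inv$, post-composing with $\Inv\to\Catexr$ gives the identity functor, which preserves sifted colimits; since $\Inv\to\Catexr$ creates them, the dotted lift preserves them as well. This accounts for every functor in the square.
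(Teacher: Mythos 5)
Your proof follows the same route as the paper's: establish sifted cocompleteness of each corner via the right-edge factorisation through $(\Catx)^{\hC}$, the bottom factorisation $\Catexr \hookrightarrow \CAlg(\Catx) \to \Catx$, and the left-fibration structure of $\Inv \to \Catexr$, then deduce preservation along the top and dotted edges by conservativity of the forgetful functors. One step is stated imprecisely: you claim that in a sifted colimit $\C = \colim_i \chi(i)$ in $\Catx$ ``every object of the colimit is the image of a dualisable object under one of the structure functors.'' This is true for filtered colimits but not for sifted colimits in general (e.g.\ geometric realizations), where the images of the $\chi(i)$'s merely \emph{generate} $\C$ under finite colimits. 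The argument the paper gives, and the one you should substitute, is that the essential images generate $\C$ as a stable subcategory and that the dualisable objects form a stable subcategory of any stably symmetric monoidal $\infty$-category; since the generators are dualisable, so is every object. The conclusion is unaffected, but the intermediate claim as written would fail.
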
 
\begin{proof}
First recall that $\Catx$ has all colimits~\cite[Proposition 6.1.1]{9-authors-I} and so $\Catps \simeq (\Catx)^{\hC}$ has all colimits as well, and these are preserved by the forgetful functor $\Catps \to \Catx$. Now since the tensor product on $\Catx$ preserves colimits in each variable (indeed, the operation $\C \otimes (-)$ admits a right adjoint $\Funx(\C,-)$ by its defining mapping property) we have that $\CAlg(\Catx)$ has all colimits as well and that the forgetful functor $\CAlg(\Catx) \to \Catx$ preserves sifted colimits. We now claim that the full subcategory $\Catexr \subseteq \CAlg(\Catx)$ is closed under sifted colimits. For this, note that since the forgetful functor $\CAlg(\Catx) \to \Catx$ is conservative, it also detects sifted colimits, that is, sifted colimits in $\CAlg(\Catx)$ are calculated in $\Catx$. This means that if $\chi\colon \I \to \CAlg(\Catx)$ is a sifted diagram with colimit $\C^{\otimes} = \colim_\I \chi$, then $\C$ is the colimit of the underlying $\Catx$-valued diagram, and so the collection of essential images $\im[\chi(i) \to \C]$ generates $\C$ as a stable $\infty$-category. Now if we assume that each $\chi(i)$ is rigid then each of the objects in $\im[\chi(i) \to \C]$ is dualisable, and hence any object in $\C$ is dualisable since in any stable symmetric monoidal $\infty$-category the collection of dualisable objects forms a stable subcategory. We hence conclude that $\Catexr$ admits sifted colimits and that these are preserved by the forgetful functor $\Catexr \to \Catx$.
Finally, the left vertical arrow $\Inv \to \Catexr$ is the left fibration classified by the functor 
\[
\Catexr \to \Sps
\]
sending a rigid stably symmetric monoidal $\infty$-category $\C^{\otimes}$ to the $\infty$-groupoid of tensor invertible objects in $\C$. Since any sifted category is weakly contractible and groupoids admit all colimits indexed by weakly contractible $\infty$-categories we conclude that $\infty$-groupoids admit sifted colimits, and hence the domain $\Inv$ of the left fibration $\Inv \to \Catexr$ admit sifted colimits and these are preserved by the projection to $\Catexr$. Since the forgetful functor $\Catps \to \Catx$ is conservative it now follows that the functor $\Inv \to \Catps$ preserves sifted colimits as well. A similar argument applies to the dotted lift.
\end{proof}

\subsection{Derived Poincaré categories of schemes}
\label{subsection:scheme-to-rigid}%

Let $\qSch$ denote the category of quasi-compact quasi-separated (qcqs for short) schemes.
As recalled in Appendix~\ref{subsection:perfect}, the operation which associates to each qcqs scheme $X$ its (symmetric monoidal) perfect derived $\infty$-category $\Dperf(X)^{\otimes}$ and to each map of schemes $f\colon X \to Y$ the corresponding (symmetric monoidal) pullback functor $f^*$ assembles to give a functor
\[
\qSch\op \to \CAlg(\Catx) \quad\quad X \mapsto \Dperf(X)^{\otimes}. 
\]
Endowing both sides with the cocartesian symmetric monoidal structure, which on the left hand side corresponds to cartesian products of schemes and on the right hand side to the pointwise tensor product of stably symmetric monoidal $\infty$-categories \cite[Proposition 3.2.4.10]{HA}, the functor $\Dperf(-)^{\otimes}$ carries an essentially unique lax symmetric monoidal structure, see~\cite[Proposition 2.4.3.8]{HA}. In addition, since each $\Dperf(X)$ is rigid as a symmetric monoidal $\infty$-category the functor above factors through the full subcategory $\Catexr \subseteq \CAlg(\Catx)$ spanned by the rigid $\infty$-categories (which, as explained in \S\ref{subsection:rigid-to-poinc}, is closed in $\CAlg(\Catx)$ under the pointwise tensor product).

\begin{construction}
Consider the fibre product 
\[
(\qSch\op)^{\amalg} \times_{(\Catexr)^{\otimes}}\Inv^{\otimes}
\]
realized in the $\infty$-category of $\infty$-operads. Since $\Inv^{\otimes} \to (\Catexr)^{\otimes}$ is a symmetric monoidal cocartesian fibration this fibre product is actually a symmetric monoidal $\infty$-category, and we define $\qSchPic^{\otimes}$ to be its opposite symmetric monoidal $\infty$-category. In particular, the objects of $\qSchPic$ are pairs $(X,L)$ where $X$ is a qcqs scheme and $L \in \Picspace(X)^{\BC}$ is an invertible perfect complex with $\Ct$-action, and the symmetric monoidal product is given by $(X,L) \otimes (X',L') = (X \times X',L \boxtimes L')$, where $L \boxtimes L'$ is the tensor product of the pullbacks of $L$ and $L'$ to $X \times X'$. 
\end{construction}

Combining the above with the square of symmetric monoidal $\infty$-categories~\eqref{equation:functoriality-ps}
we now obtain a diagram of symmetric monoidal $\infty$-categories
\begin{equation}
\label{equation:functoriality-schemes-1}%
\begin{tikzcd}[row sep = 5pt]
(X,L) \ar[d,phantom,"\rotatebox{270}{$\in$}"]\ar[r,mapsto] & (\Dperf(X),L) \ar[r,mapsto] \ar[d,phantom,"\rotatebox{270}{$\in$}"] & (\Dperf(X),\Dual_L)\ar[d,phantom,"\rotatebox{270}{$\in$}"] \\
(\qSchPic\op)^{\otimes} \ar[r]\ar[dd] & \Inv^{\otimes}\ar[dd]\ar[r] & (\Catps)^{\otimes} \ar[dd] \\
&&\\
(\qSch\op)^{\amalg} \ar[r] & (\Catexr)^{\otimes} \ar[r] & (\Catx)^{\otimes} \ .
\end{tikzcd}
\end{equation}
where the left square is cartesian and all the functors are symmetric monoidal except the two horizontal functors in the left square, which are only lax symmetric monoidal. 

Let us now fix a base qcqs scheme $S \in \qSch$. The projection
\[
\Ar(\qSchPic\op)^{\otimes} \to (\qSch\op)^{\amalg} \quad\quad [(X,L') \to (S,L)] \mapsto S
\]
is then a symmetric monoidal cocartesian fibration, where the arrow category is endowed with the pointwise symmetric monoidal structure. Since (any) $S$ is canonically an algebra object in $(\qSch\op)^{\amalg}$, the fibre over $S$ inherits a symmetric monoidal structure such that the fibre inclusion
\[
\Ar(\qSchPic\op)^{\otimes} \times_{(\qSch\op)^{\amalg}} \{S\} \to \Ar(\qSchPic\op)^{\otimes}
\]
is lax symmetric monoidal. At the same time, since $\qSchPic \to \qSch$ is a right fibration the forgetful  functor 
\[
\Ar(\qSchPic\op) \times_{\qSch\op} \{S\} \to \qSch_{S} \times \Picspace(S)^{\BC}
\]
is an equivalence, where $\qSch_S := \Ar(\qSch) \times_{\qSch} \{S\}$ is the category of qcqs $S$-schemes. Choosing an inverse, we hence obtain a composed lax symmetric monoidal functor
\[
\begin{tikzcd}[row sep=1ex]
(\qSch_{S}\op)^{\amalg} \times (\Picspace(S)^{\BC})^{\otimes} \ar[r] & \Ar(\qSchPic\op)^{\otimes} \ar[r] & (\qSchPic\op)^{\otimes} \\
(p\colon X \to S,L) \ar[r,mapsto] & \left[(X,p^*L) \to (S,L)\right] \ar[r,mapsto] & (X,p^*L) ,
\end{tikzcd}
\]
which we can compose with the formation of Poincaré derived $\infty$-categories as above to obtain a lax symmetric monoidal functor
\[
(\qSch_S\op)^{\amalg} \times (\Picspace(S)^{\BC})^{\otimes} \to (\Catps)^{\otimes} \quad\quad (p\colon X \to S,L) \mapsto (\Dperf(X),\Dual_{p^*L}).
\]
We note that since every object is canonically a module over the unit (which in the case of the left hand side, is given by $(S,\cO_S)$), the above functor canonically refines to a lax symmetric monoidal functor
\[
(\qSch_S\op)^{\amalg} \times (\Picspace(S)^{\BC})^{\otimes} \to \Mod_S(\Catps)^{\otimes} := \Mod_{(\Dperf(S),\Dual_S)}(\Catps)^{\otimes}
\]
where the symmetric monoidal product on the right hand side is given by relative tensor product over $(\Dperf(S),\Dual_S)$.
In particular, restricting to the symmetric monoidal subcategory $(\Sch_S\op)^{\amalg} \times \{\cO_S\}$ we obtain a lax symmetric monoidal functor
\[
\begin{tikzcd}[row sep = 5pt]
X  \ar[r,mapsto]\ar[d,phantom,"\rotatebox{270}{$\in$}"] & (\Dperf(X),\Dual_X)\ar[d,phantom,"\rotatebox{270}{$\in$}"] \\
(\qSch_S\op)^{\otimes} \ar[r] & \Mod_{S}(\Catps)^{\otimes} \ ,
\end{tikzcd}
\]
and similarly if we restrict to $(\Sch_S\op)^{\amalg} \times \{L\}$ for some $L \in \Picspace(S)^{\BC}$ we obtain a functor 
\[
\begin{tikzcd}[row sep = 5pt]
[p\colon X \to S]  \ar[r,mapsto]\ar[d,phantom,"\rotatebox{270}{$\in$}"] & (\Dperf(X),\Dual_{p^*L})\ar[d,phantom,"\rotatebox{270}{$\in$}"] \\
\qSch_S\op \ar[r] & \Mod_{S}(\Catps) 
\end{tikzcd}
\]
which is a module over the one above it with respect to Day convolution.

\begin{remark}
While the functor $[X \to S] \mapsto (\Dperf(X),\Dual_X) \in \Mod_S(\Catps)$ above is only lax symmetric monoidal, it does become strongly symmetric monoidal when restricting its domain to smooth $S$-schemes and localising its target by Karoubi equivalences. 
Since this stronger property will only be used in \S\ref{section:motivic-realization} and \S\ref{section:KQ}, we postpone the discussion of this point to \S\ref{subsection:more-derived}.
\end{remark}

\subsection{Genuine structures}%
\label{subsection:genuine}%

Recall that a t-structure on a stable $\infty$-category $\D$ is a pair of full subcategories $(\D_{\geq 0},\D_{\leq 0})$ of $\D$ satisfying the following properties 
\begin{enumerate}
\item
$\D_{\geq 0}$ is closed under suspensions and $\D_{\leq 0}$ under desuspensions.
\item
For every $x \in \D_{\geq 0}$ and $y \in \D_{\leq 0}$ we have $\Om\Map_{\D}(x,y) \simeq \ast$.
\item
For every object $y \in \D$ there exists an exact sequence $x \to y \to z$ such that $x \in \D_{\geq 0}$ and $\Sig y \in \D_{\leq 0}$. 
\end{enumerate}
Given a t-structure $(\D_{\geq 0},\D_{\leq 0})$ and $n \in \ZZ$ one writes $\D_{\geq n}$ and $\D_{\leq n}$ for the images of $\D_{\geq 0}$ and $\D_{\leq 0}$ under the $n$-fold suspension functor $\Sig^n$. 
The objects of $\D_{\geq n}$ are then called $n$-connective and those of $\D_{\leq n}$ are said to be $n$-coconnective. 
One can then show (see, e.g.,~\cite[Proposition 1.2.1.5]{HA}) that each of $\D_{\leq n}$ is a reflective subcategory of $\D$ with reflection functor $\tau_{\leq n}\colon \D \to \D_{\leq n}$ (called $n$-truncation), each $\D_{\geq n}$ is coreflective with coreflection functor $\tau_{\geq n}\colon \D \to \D_{\geq n}$ (called $n$-connective cover), and for every $x \in \D$ the unit and counit form an exact sequence
\[
\tau_{\geq n}(x) \to x \to \tau_{\leq (n-1)}(x) .
\]
In particular $\D_{\geq 0}$ and $\D_{\leq 0}$ determine each other via the identifications $\D_{\leq 0} = \ker(\tau_{\geq 1})$ and $\D_{\geq 0} = \ker(\tau_{\leq -1})$, which also means that $\D_{\geq 0}$ is closed under extensions and all colimits which exist in $\D$ and $\D_{\leq 0}$ is closed under extensions and all limits which exist in $\D$. We refer the reader to~\cite[\S 1.2.1]{HA} for a thorough discussion.

If $\D$ is presentable then we say that the t-structure is presentable if $\D_{\geq 0}$ is presentable.
The data of a presentable t-structure on $\D$ is then equivalent to the data of a colimit preserving fully-faithful functor $\D_{\geq 0} \to \D$ from a presentable $\infty$-category $\D_{\geq 0}$ whose image is closed under extensions, see~\cite[Proposition 1.4.4.11(1)]{HA}. This description can be used to define the $\infty$-category 
\[
\PrL_{\st,\tstruc} \subseteq \Ar(\PrL)
\]
of stable presentable $\infty$-categories equipped with presentable t-structures as the full $\infty$-category of $\Ar(\PrL)$ spanned by the fully-faithful arrows whose target is stable and whose image is closed under extensions. This is a reflective subcategory of $\Ar(\PrL)$ with reflection functor sending $\C \to \D$ to $\ovl{\im}(\C) \to \Spa(\D)$, where $\ovl{\im}(\C) \subseteq \Spa(\D)$ is the closure under colimits and extensions of the essential image of $\C \to \D \to \Spa(\D)$ (where we note that $\ovl{\im}(\C)$ is again presentable by~\cite[Proposition 1.4.4.11(2)]{HA}). In particular, $\PrL_{\st,\tstruc}$ is a localisation of $\Ar(\PrL)$. In addition, this localisation is multiplicative with respect to the pointwise tensor product on $\Ar(\PrL)$, so that $\PrL_{\st,\tstruc}$ inherits an induced symmetric monoidal structure. Concretely, the tensor product of $\D_{\geq 0} \subseteq \D$ and $\D'_{\geq 0} \subseteq \D'$ is given by $\ovl{\im}(\D_{\geq 0} \otimes \D'_{\geq 0}) \subseteq \D \otimes \D'$, where as above $\ovl{\im}(\D_{\geq 0} \otimes \D'_{\geq 0})$ is the closure under colimits and extensions of the essential image of $\D_{\geq 0} \otimes \D'_{\geq 0} \to \D \otimes \D'$.
We also point out that, since the target projection sends local equivalences to equivalences, the induced lax symmetric monoidal structure on the forgetful functor
\[
\PrL_{\st,\tstruc} \to \PrL_{\st}
\]
is in fact symmetric monoidal.

\begin{definition}
\label{definition:catext}%
Let $\C$ be a stable $\infty$-category.
By an \emph{orientation} on $\C$ we will mean a t-structure $\tstruc$ on $\Ind(\C)$, in which case we will call the pair $(\C,\tstruc)$ an \emph{oriented} $\infty$-category. We then write
$(\Catxt)^{\otimes}$ for the symmetric monoidal $\infty$-category of oriented $\infty$-categories, that is, $(\Catxt)^{\otimes}$ is given by the fibre product of symmetric monoidal functors
\[
\begin{tikzcd}
(\Catxt)^{\otimes} \ar[r]\ar[d] & (\PrL_{\st,\tstruc})^{\otimes} \ar[d] \\
(\Catx)^{\otimes} \ar[r, "\Ind"] & (\PrL_{\st})^{\otimes} \ .
\end{tikzcd}
\]
Given two oriented $\infty$-categories $(\C,\tstruc)$ and $(\C',\tstruc')$, we then write $\tstruc \boxtimes \tstruc'$ for the associated orientation on $\C \otimes \C'$, that is, the associated $\tstruc$-structure on $\Ind(\C \otimes \C') = \Ind(\C) \otimes \Ind(\C')$.

In a similar manner, we define the symmetric monoidal $\infty$-categories 
\[
(\Catpt)^{\otimes} := (\Catp)^{\otimes} \times_{(\Catp)^{\otimes}} (\Catxt)^{\otimes} \quad\text{and}\quad (\Catpst)^{\otimes} := (\Catps)^{\otimes} \times_{(\Catx)^{\otimes}} (\Catxt)^{\otimes} ,
\]
and use the term oriented Poincaré and oriented $\infty$-category with duality to designate their objects. Note that we do not enforce any compatibility constraints between the orientation and Poincaré/duality structure.
\end{definition}

Let $(\C,\QF, \tstruc)$ be an oriented Poincaré $\infty$-category. Then the first excisive approximation 
\[
\Psubone\QF=\colim_n \Om^n \QF \Sigma^n
\]
is an exact functor $\C\op\to\Spa$ and can thus be identified with an element of $\Ind\C$. 
We say that $\QF$ is \defi{$m$-connective} if $\Psubone\QF$ is $m$-connective with respect to the t-structure $\tstruc$. In this case we also say that $(\C,\QF,\tstruc)$ is an $m$-connective oriented Poincaré $\infty$-category. We then write $\Funq_{\ge m}(\C)\subseteq \Funq(\C)$ for the full subcategory spanned by the $m$-connective Poincaré structures and $\Catp_{\tstruc\ge m} \subseteq \Catpt$ for the full subcategory spanned by the $m$-connective Poincaré $\infty$-categories. For consistency of notation, let us also write $\Fun_{\geq -\infty}(\C) = \Funq(\C)$ and $\Funq_{\geq \infty}(\C) \subseteq \Funq(\C)$ for the full subcategory spanned by the quadratic functors whose linear part is trivial, and similarly $\Catp_{\tstruc \ge -\infty}$ and $\Catp_{\tstruc \ge \infty}$.

\begin{lemma}
\label{lemma:connected-cover-quadratic-functors}%
Let $(\C,\tstruc)$ be an oriented $\infty$-category.
For $m \in \ZZ \cup \{-\infty,\infty\}$ the inclusion of the subcategory $\Funq_{\ge m}(\C)\subseteq \Funq(\C)$ 
has a right adjoint $\QF \mapsto \QF^{\geq m}$, given by 
\[
\QF^{\ge m} = 
\begin{cases}
\QF\times_{\Psubone\QF}(\tau_{\ge m}\Psubone\QF) & \text{for $m \in \ZZ$,} \\
\QF & \text{for $m=-\infty$,} \\ 
\fib[\QF \to \Psubone\QF] & \text{for $m=\infty$.} 
\end{cases}
\]
\end{lemma}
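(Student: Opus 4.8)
The plan is to produce, for each $\QF \in \Funq(\C)$, an explicit candidate $\QF^{\geq m} \in \Funq_{\geq m}(\C)$ together with a natural map $\QF^{\geq m} \to \QF$, and then to check directly that for every $\QFE \in \Funq_{\geq m}(\C)$ this map induces an equivalence $\map_{\Funq(\C)}(\QFE, \QF^{\geq m}) \xrightarrow{\simeq} \map_{\Funq(\C)}(\QFE, \QF)$. Since $\Funq_{\geq m}(\C) \subseteq \Funq(\C)$ is a full subcategory, this simultaneously produces the right adjoint and identifies it with the displayed formula. I would use three standard inputs: (i) $\Funq(\C)$ is stable and closed under finite limits in $\Fun(\C\op,\Spa)$, being the localisation onto reduced $2$-excisive functors; (ii) $\Psubone$ preserves finite limits and is left adjoint to the full inclusion $\Ind(\C) \simeq \Funx(\C\op,\Spa) \hookrightarrow \Funq(\C)$, so that $\map_{\Funq(\C)}(\QFE, E) \simeq \map_{\Ind(\C)}(\Psubone\QFE, E)$ for any exact $E$; and (iii) in a t-structure $\map(X,Y) \simeq 0$ whenever $X$ is $k$-connective and $Y$ is $(k-1)$-coconnective.

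For $m \in \ZZ$ I would first check that $\QF^{\geq m} = \QF \times_{\Psubone\QF} \tau_{\geq m}\Psubone\QF$ lies in $\Funq_{\geq m}(\C)$: it is a pullback of quadratic functors, hence quadratic by (i), and applying the finite-limit-preserving functor $\Psubone$ — which fixes the already-exact functors $\Psubone\QF$ and $\tau_{\geq m}\Psubone\QF$ — yields $\Psubone\QF^{\geq m} \simeq \tau_{\geq m}\Psubone\QF$, which is $m$-connective. The pullback square also supplies the projection $\QF^{\geq m} \to \QF$, and the construction is manifestly functorial in $\QF$. For the universal property, $\map_{\Funq(\C)}(\QFE,-)$ is exact, so the fibre of $\map_{\Funq(\C)}(\QFE,\QF^{\geq m}) \to \map_{\Funq(\C)}(\QFE,\QF)$ is $\map_{\Funq(\C)}(\QFE,\fib[\QF^{\geq m} \to \QF])$. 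In the defining pullback square $\fib[\QF^{\geq m} \to \QF]$ agrees with $\fib[\tau_{\geq m}\Psubone\QF \to \Psubone\QF] \simeq \Sigma^{-1}\tau_{\leq m-1}\Psubone\QF$ (by the fibre sequence defining the truncation), an exact functor; so by (ii) this fibre equals $\Sigma^{-1}\map_{\Ind(\C)}(\Psubone\QFE, \tau_{\leq m-1}\Psubone\QF)$, which vanishes by (iii) since $\Psubone\QFE$ is $m$-connective and $\tau_{\leq m-1}\Psubone\QF$ is $(m-1)$-coconnective. Hence the comparison map is an equivalence.

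The cases $m = \pm\infty$ I would treat as degenerate instances of the same argument, under the conventions $\tau_{\geq -\infty} = \id$ (so that $\Funq_{\geq -\infty}(\C) = \Funq(\C)$ and $\QF^{\geq -\infty} = \QF$, with nothing to prove) and $\tau_{\geq \infty} = 0$, $\tau_{\leq \infty-1} = \id$ (so that $\QF^{\geq \infty} = \fib[\QF \to \Psubone\QF]$, whose linear part is $\fib[\Psubone\QF \xrightarrow{\id} \Psubone\QF] \simeq 0$ since $\Psubone$ preserves finite limits and sends the linearisation unit to the identity; the universal-property computation is identical, with $\fib[\QF^{\geq\infty} \to \QF] \simeq \Sigma^{-1}\Psubone\QF$ and $\Psubone\QFE \simeq 0$ for $\QFE \in \Funq_{\geq\infty}(\C)$). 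I do not expect a genuinely hard step: the content lies entirely in the interaction between the reflection $\Psubone$ and the t-structure, and the only real care needed is the bookkeeping of the fibre-sequence manipulations and not conflating the truncations, which are computed in $\Ind(\C)$, with operations in $\Funq(\C)$.
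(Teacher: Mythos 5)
Your overall strategy matches the paper's: verify that $\QF^{\geq m}$, given by the displayed pullback formula, lands in $\Funq_{\geq m}(\C)$, and then check the universal property by combining the adjunction for $\Psubone$ with t-structure orthogonality. (For $m = \pm\infty$ the paper simply cites earlier results, but your direct treatment of the degenerate cases is also fine.)

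However, your ingredient (iii) is false as stated, and the step ``which vanishes by (iii)'' does not hold. For $X$ $k$-connective and $Y$ $(k-1)$-coconnective, the mapping \emph{spectrum} $\map(X,Y)$ need not vanish; the t-structure axioms only force it to be $(-1)$-truncated. A concrete counterexample in $\Spa$ with its standard t-structure: $X = \SS$ is connective, $Y = \Sig^{-1}\GEM\ZZ$ is $(-1)$-coconnective, yet $\map(X,Y) = Y \neq 0$. Consequently the fibre $\Sig^{-1}\map_{\Ind(\C)}(\Psubone\QFE, \tau_{\leq m-1}\Psubone\QF)$ is nonzero in general, and the map of mapping \emph{spectra} $\map_{\Funq(\C)}(\QFE, \QF^{\geq m}) \to \map_{\Funq(\C)}(\QFE, \QF)$ is not in general an equivalence.

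The defect is easily repaired, because right-adjointness only requires an equivalence of mapping \emph{spaces}; and since $\Funq_{\geq m}(\C) \subseteq \Funq(\C)$ is a full subcategory that is not closed under loops, the adjunction genuinely cannot be tested at the level of spectra. The correct coconnectivity bound, namely that the fibre is $(-1)$-truncated (in fact $(-2)$-truncated here), gives isomorphisms on $\pi_n$ for $n \geq 0$, and applying $\Om^{\infty}$ yields the needed equivalence of mapping spaces. More directly still, you can bypass the orthogonality claim and simply invoke the coreflection adjunction for $\tau_{\geq m}$: for $\Psubone\QFE$ $m$-connective, the map $\Map(\Psubone\QFE, \tau_{\geq m}\Psubone\QF) \to \Map(\Psubone\QFE, \Psubone\QF)$ is an equivalence by the universal property of the connective cover, which is precisely how the paper concludes.
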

\begin{proof}
For $m = -\infty$ the statement is vacuous and for $m=\infty$ it is contained in~\cite[Corollary 1.3.6]{9-authors-I}.
For $m \in \ZZ$, it suffices to show that the map $\QF^{\ge m}\to \QF$ induces an equivalence
\[
\Map(\QF',\QF^{\ge m})\to \Map(\QF',\QF)
\]
for every $m$-connective quadratic functor $\QF'$. But
\begin{align*}
\Map(\QF',\QF^{\ge m})&\simeq \Map(\QF',\QF)\times_{\Map(\QF',\Psubone\QF)} \Map(\QF',\tau_{\ge m}\Psubone\QF)\\
 & \simeq\Map(\QF',\QF)\times_{\Map(\Psubone\QF',\Psubone\QF)}\Map(\Psubone\QF',\tau_{\ge m}\Psubone\QF)
\end{align*}
and the map
\[
\Map(\Psubone\QF',\tau_{\ge m}\Psubone\QF)\to \Map(\Psubone\QF',\QF)
\]
is an equivalence since $\Psubone\QF'$ is $m$-connective. 
\end{proof}

\begin{proposition}
\label{proposition:genuine}%
For every $m$ the inclusion $\Catp_{\tstruc\ge m}\subseteq \Catpt$ has a right adjoint sending $(\C,\QF,\tstruc)$ to the triple $(\C,\QF^{\ge m},\tstruc)$, where $\C$ and $t$ are the same and $\QF^{\ge m}$ is the functor of Lemma~\ref{lemma:connected-cover-quadratic-functors}.
\end{proposition}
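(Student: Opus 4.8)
The plan is to promote the pointwise right adjoint of Lemma~\ref{lemma:connected-cover-quadratic-functors} to the claimed adjunction by writing down a counit and checking its universal property directly. For $(\C,\QF,\tstruc)\in\Catpt$, take as candidate value $(\C,\QF^{\ge m},\tstruc)$ and as candidate counit $(\id_\C,\epsilon)\colon(\C,\QF^{\ge m},\tstruc)\to(\C,\QF,\tstruc)$, where $\epsilon\colon\QF^{\ge m}\Rightarrow\QF$ is the counit of Lemma~\ref{lemma:connected-cover-quadratic-functors}. Since $\Catp_{\tstruc\ge m}\subseteq\Catpt$ is a full subcategory, it then suffices to establish: (i) $(\C,\QF^{\ge m},\tstruc)$ lies in $\Catp_{\tstruc\ge m}$; (ii) $(\id_\C,\epsilon)$ is a morphism of $\Catpt$; and (iii) for every $(\D,\QFD,\tstruc')\in\Catp_{\tstruc\ge m}$, postcomposition with $(\id_\C,\epsilon)$ induces an equivalence on $\Catpt$-mapping spaces out of $(\D,\QFD,\tstruc')$.

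For (i) and (ii) the key point is that $\epsilon$ has \emph{linear fibre}: from the pullback formula one reads off $\fib(\epsilon)\simeq\fib\big(\tau_{\ge m}\Psubone\QF\to\Psubone\QF\big)$, a desuspension of $\tau_{\le m-1}\Psubone\QF$, which is linear (it lies in $\Ind(\C)$); for $m=\pm\infty$ this is immediate from the corresponding formulas in Lemma~\ref{lemma:connected-cover-quadratic-functors}. Applying the exact functor $\Bil_{(-)}$ and using that a linear functor has vanishing bilinear part, the induced map $\Bil_{\QF^{\ge m}}\to\Bil_\QF$ is an equivalence; hence $\QF^{\ge m}$ is again a Poincaré structure with the \emph{same} underlying duality as $\QF$, and $(\id_\C,\epsilon)$ — whose underlying exact functor is $\id_\C$ and whose induced duality comparison $\Dual_{\QF^{\ge m}}\Rightarrow\Dual_\QF$ is therefore an equivalence — is a Poincaré functor. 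As $\Ind(\id_\C)$ is trivially t-exact, $(\id_\C,\epsilon)$ is a morphism of the fibre product $\Catpt=\Catp\times_{\Catx}\Catxt$. Finally, $\Psubone$ preserves the defining pullback, so $\Psubone\QF^{\ge m}\simeq\tau_{\ge m}\Psubone\QF$ is $m$-connective, i.e.\ $(\C,\QF^{\ge m},\tstruc)$ is $m$-connective.

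For (iii), unwind the cartesian square $\Catpt=\Catp\times_{\Catx}\Catxt$. Postcomposition with $(\id_\C,\epsilon)$ leaves untouched the $\Catx$- and $\Catxt$-components of a morphism, so the comparison of $\Catpt$-mapping spaces is the base change, along $\Map_{\Catxt}((\D,\tstruc'),(\C,\tstruc))\to\Map_{\Catx}(\D,\C)$, of the map $\Map_{\Catp}((\D,\QFD),(\C,\QF^{\ge m}))\to\Map_{\Catp}((\D,\QFD),(\C,\QF))$. It therefore suffices to treat, for each exact $f\colon\D\to\C$ in the image of $\Map_{\Catxt}((\D,\tstruc'),(\C,\tstruc))\to\Map_{\Catx}(\D,\C)$ — equivalently, each exact $f$ with $\Ind(f)$ right t-exact — the induced map on fibres over $f$, which is the restriction to Poincaré refinements of $g_f\colon\Map_{\Funq(\D)}(\QFD,f^*\QF^{\ge m})\to\Map_{\Funq(\D)}(\QFD,f^*\QF)$. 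Since being a Poincaré refinement of $f$ is a condition on the induced map of bilinear parts, and $f^*\epsilon$ is a bilinear-part equivalence by (i), the map $g_f$ matches the Poincaré-refinement subspaces; so it is enough to show $g_f$ itself is an equivalence of spaces. Now $f^*$ commutes with $\Psubone$ and with pullbacks, giving $f^*\QF^{\ge m}=f^*\QF\times_{\Psubone f^*\QF}f^*(\tau_{\ge m}\Psubone\QF)$, while maps from the quadratic $\QFD$ into a linear functor factor through $\Psubone\QFD$; hence $g_f$ is the base change of
\[
\Map_{\Ind(\D)}\big(\Psubone\QFD,\,f^*(\tau_{\ge m}\Psubone\QF)\big)\lrar\Map_{\Ind(\D)}\big(\Psubone\QFD,\,f^*\Psubone\QF\big),
\]
which, under the adjunction $\Ind(f)\dashv f^*$ on Ind-objects, becomes $\Map_{\Ind(\C)}\big(\Ind(f)\Psubone\QFD,\,\tau_{\ge m}\Psubone\QF\big)\to\Map_{\Ind(\C)}\big(\Ind(f)\Psubone\QFD,\,\Psubone\QF\big)$. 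Since $\QFD$ is $m$-connective we have $\Psubone\QFD\in\Ind(\D)_{\ge m}$, and since $\Ind(f)$ is right t-exact, $\Ind(f)\Psubone\QFD\in\Ind(\C)_{\ge m}$; mapping this object into the fibre sequence $\tau_{\ge m}\Psubone\QF\to\Psubone\QF\to\tau_{\le m-1}\Psubone\QF$ gives an equivalence because the last term is $(m-1)$-coconnective.

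The portion requiring the most care is the reduction in (iii): one must verify, at the level of the cartesian square defining $\Catpt$, that the comparison of mapping spaces really is the base change of the $\Catp$-level comparison, and that the Poincaré-refinement subspaces on the two sides are matched by the bilinear-part equivalence induced by $\epsilon$. Everything else is formal bookkeeping with $\Psubone$, truncations and the adjunction $\Ind(f)\dashv f^*$; the one genuinely load-bearing input is elementary, namely that a morphism of $\Catxt$ is precisely an exact functor $f$ with $\Ind(f)$ right t-exact, so that $\Ind(f)$ preserves $m$-connective objects — which is exactly what renders $\tau_{\ge m}$ invisible to maps out of the $m$-connective object $\Psubone\QFD$.
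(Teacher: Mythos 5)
Your proof is correct and follows essentially the same route as the paper's: establish that $\QF^{\ge m}$ has the same bilinear part (hence the same duality) as $\QF$ so that $(\C,\QF^{\ge m},\tstruc)$ lands in $\Catp_{\tstruc\ge m}$ and $(\id_\C,\epsilon)$ is a morphism of $\Catpt$; then unwind the cartesian square $\Catpt = \Catp\times_{\Catx}\Catxt$ to reduce the mapping-space comparison to fibres over each exact $f$ with $\Ind(f)$ right t-exact; observe the Poincaré-refinement conditions are matched because $\epsilon$ is a bilinear equivalence; and finish by exploiting the right t-exactness of $\Ind(f)$ together with the $m$-connectivity of $\Psubone\QFD$. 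The only cosmetic differences are that you re-derive the final fibrewise equivalence directly from the defining pullback of $\QF^{\ge m}$ and the adjunction $\Ind(f)\dashv f^*$ rather than passing to $f^{\op}_!\QFD$ and citing Lemma~\ref{lemma:connected-cover-quadratic-functors} as a black box, and that you treat $m=\pm\infty$ within the same framework rather than disposing of them up front; neither change affects the substance.
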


It follows directly from the description of the tensor product of Poincaré $\infty$-categories in~\cite[5.1.3. Proposition]{9-authors-I} that of $(\C,\QF,\tstruc)$ is an $m$-connective oriented Poincaré $\infty$-category and $(\C',\QF',\tstruc')$ is an $m'$-connective oriented Poincaré $\infty$-category then their tensor product $(\C \otimes \C',\QF \boxtimes \QF',\tstruc\boxtimes \tstruc')$ is $(m+m')$-connective. In particular, $\Catp_{\tstruc\ge 0}$ is closed under tensor products in $\Catp_{\tstruc}$ and hence inherits from it a symmetric monoidal structure under which the inclusion $\Catp_{\tstruc \geq 0} \to \Catp_{\tstruc}$ is symmetric monoidal and its right adjoint $\Cat_{\tstruc} \to \Cat_{\tstruc\geq 0}$ furnished by Proposition~\ref{proposition:genuine} is lax symmetric monoidal. Similarly, for every $m \in \ZZ \cup \{-\infty,\infty\}$ we have that $\Catp_{\tstruc \geq m}$ is a module over $\Catp_{\tstruc \geq 0}$ in a manner such that the inclusion $\Catp_{\tstruc\geq m} \to \Cat_{\tstruc}$ is $\Cat_{\tstruc\geq 0}$-linear and its right adjoint $\Catp_{\tstruc} \to \Catp_{\tstruc\geq m}$ is lax $\Catp_{\tstruc\geq 0}$-linear.

\begin{proof}[Proof of Proposition~\ref{proposition:genuine}]
For $m = -\infty$ the statement is vacuous and for $m=\infty$ it is contained in~\cite[Proposition 7.2.17]{9-authors-I}. We now consider the case $m \in \ZZ$.
Note that $\QF^{\ge m}$ has the same cross-effect as $\QF$, so it is a perfect quadratic functor on $\C$ with the same underlying duality. In particular $(\C,\QF^{\ge m})$ is a Poincaré $\infty$-category. Moreover the excisive approximation of $\QF^{\ge m}$ is $m$-connective by definition. So it suffices to show that if $(\C',\QF',t')$ is a Poincaré $\infty$-category such that $\QF'$ is $m$-connective, the map
\[
\Map_{\Catpt}((\C',\QF',t'),(\C,\QF^{\ge m},t))\to \Map_{\Catpt}((\C',\QF',t'),(\C,\QF,t))
\]
is an equivalence. Since both of these spaces lie above the space $\Map_{\Catxt}((\C',t'),(\C,t))$ of exact functors $\C' \to \C$ such that $f\op_!\colon \Ind(\C') \to \Ind(\C)$ is right t-exact, it suffices to show that the above map induces an equivalence on fibres over every such $f \colon \C'\to\C$. Now the map on fibres is the restriction of the map
\[
\Map_{\Funq(\C')}\left(\QF',\QF^{\ge m}\circ f\op\right)\to \Map_{\Funq(\C')}\left(\QF',\QF\circ f\op\right)
\]
to the subspaces of those maps that induce an equivalence $f\circ\Dual_{\QF'} \to \Dual_{\QF} \circ f\op$. Since the underlying duality of $\QF$ and of $\QF^{\ge m}$ is the same, it suffices to show that the above map is an equivalence. Finally we can rewrite it as the map
\[
\Map_{\Funq(\C)}\left(f\op_!\QF',\QF^{\ge m}\right)\to \Map_{\Funq(\C)}\left(f\op_!\QF',\QF\right)\,
\]
so by Lemma~\ref{lemma:connected-cover-quadratic-functors} it suffices to show that $f\op_!\QF'$ is $m$-connective. But $\Psubone(f\op_!\QF')=f\op_!(\Psubone\QF')$ and $f\op_!$ is right t-exact by assumption. So, since $\Psubone\QF'$ is $m$-connective, so is $f\op_!\QF'$.
\end{proof}

Specializing to derived categories of schemes, recall the square of lax symmetric monoidal functors
\begin{equation}
\label{equation:forming-derived}%
\begin{tikzcd}[row sep=7pt]
(X,L) \ar[r,mapsto]\ar[d,phantom,"\rotatebox{270}{$\in$}"] &(\Dperf(X),\Dual_{L}) \ar[d,phantom,"\rotatebox{270}{$\in$}"] \\
(\qSchPic\op)^{\otimes}\ar[dd]
 \ar[r] &(\Catps)^{\otimes}\ar[dd] \\
 &\\
(\qSch\op)^{\amalg} \ar[r] & (\Catx)^{\otimes} \ .
\end{tikzcd}
\end{equation}
obtained as the external rectangle in Diagram~\ref{equation:functoriality-schemes-1}.
As explained in \S\ref{section:appendix}, for a scheme $X$ the presentable $\infty$-category $\Derqc(X) = \Ind(\Dperf(X))$ of quasi-coherent complexes carries a canonical t-structure, sometimes called the standard t-structure, such that the associated $X \mapsto \Derqc(X)$ refines to a functor
\[
\qSch\op \to \CAlg(\PrL_{\st,\tstruc}) \quad\quad X \mapsto (\Derqc(X),\Derqc(X)_{\geq 0},\Derqc(X)_{\leq 0}).
\]
Now by the equivalence of~\cite[Theorem 2.4.3.18]{HA} this functor determines a lax symmetric monoidal functor
\[
(\qSch\op)^{\amalg} \to \PrL_{\st,\tstruc} ,
\]
and consequently a lax symmetric monoidal lift 
\[
(\qSchPic\op)^{\otimes} \to (\Catpst)^{\otimes}
\]
of the top horizontal arrow in~\eqref{equation:forming-derived}.
We may then further compose with the lax symmetric monoidal functor $\Catpst \to \Catpt$ induced by the right adjoint 
\[
\Catps \to \Catp \quad\quad (\C,\Dual) \mapsto (\C,\QF^{\sym}_{\Dual})
\]
of the forgetful functor (see Corollary~\ref{corollary:adj-catp-catps}), and subsequently with the $m$-truncation functor of Proposition~\ref{proposition:genuine} 
for various $m \in \ZZ \cup \{-\infty,\infty\}$ to obtain a collection of functors
\[
\qSchPic\op \to \Catpt \quad\quad (p\colon X \to S, L) \mapsto (\Dperf(X),\QF^{\ge m}_{L}),
\]
fitting together in a sequence 
\[
(\Dperf(X),\QF^{\qdr}_{L})=(\Dperf(X),\QF^{\ge\infty}_{L})\to \cdots \to (\Dperf(X),\QF^{\ge m}_{L})\to \cdots \to (\Dperf(X),\QF^{\ge-\infty}_{L})=(\Dperf(X),\QF^{\sym}_{L})\,.
\]
Here the functor at $m=0$ is lax symmetric monoidal and all the others are modules over it with respect to Day convolution, see the discussion following Proposition~\ref{proposition:genuine}. 
Similarly, as in \S\ref{subsection:rigid-to-poinc} we may restrict our domain along the lax symmetric monoidal functor $(\Sch_S\op)^{\amalg} \times (\Picspace(S)^{\BC})^{\otimes} \to (\qSchPic\op)^{\otimes}$ and obtain for every $m \in \ZZ \cup \{-\infty,\infty\}$ a functor
\begin{equation}
\label{equation:functoriality-schemes-Lt}%
\qSch_S\op \times \Picspace(S)^{\BC} \to \Catpt \quad\quad (p\colon X \to S, L) \mapsto (\Dperf(X),\QF^{\ge m}_{p^*L}),
\end{equation}
where again the one for $m=0$ is lax symmetric monoidal and all the others are modules over it with respect to Day convolution. 
Finally, since every object is canonically a module over the unit the above functors canonically refine to a sequence of functors 
\[
\qSch_S\op \times \Picspace(S)^{\BC} \to \Mod_{(\Dperf(S),\QF^{\geq 0}_S)}(\Catpt) \quad\quad (p\colon X \to S, L) \mapsto (\Dperf(X),\QF^{\ge m}_{p^*L})
\]
enjoying the same multiplicativity properties.

\begin{definition}
\label{definition:genuine}%
Given a scheme $X$ and $L \in \Picspace(X)^{\BC}$, we call the Poincaré structures $\QF^{\geq 0}_L$, $\QF^{\geq 1}_L$ and $\QF^{\geq 2}_L$ on $\Dperf(X)$ the \defi{genuine symmetric}, \defi{genuine even} and \defi{genuine quadratic} Poincaré structures, respectively, and use the notation $\QF^{\gs}_L$, $\QF^{\gev}_L$ and $\QF^{\gq}_L$ to refer to them.
\end{definition}

By~\cite{Hebestreit-Steimle}*{Theorem~A}, when $X=\spec R$ is affine the Grothendieck-Witt space of $(\Dperf(R),\QF^{r}_L)$ for $r=\gs,\gev,\gq$ coincides with the group completion of the 
symmetric monoidal groupoid of projective $R$-modules equipped with a perfect $L$-valued symmetric bilinear/even/quadratic form, respectively. In other words, the non-negative genuine symmetric/even/quadratic $\GW$-groups of $\spec(R)$ coincide with the classical symmetric/even/quadratic Grothendieck-Witt groups of $R$, as defined by Karoubi-Villamayor~\cite{karoubi-villamayor}. For the genuine symmetric case we will extend this comparison to all divisorial schemes in \S\ref{subsection:genuine-sym}.

\begin{notation}
\label{notation:functor-to-sheaf}%
For a functor $\F\colon \Catp \to \E$, a scheme $X$ and an invertible perfect complex with $\Ct$-action $L \in \Picspace(X)^{\BC}$ we write
\[
\F^{\geq m}(X,L) := \F(\Dperf(X),\QF^{\geq m}_L)
\]
for every $m \in \ZZ \cup \{-\infty,\infty\}$. For shorthand we will also write
\[
\F^{\geq m}(X) := \F^{\geq m}(X,\cO_X)
\]
and if $p\colon X \to S$ is a $S$-scheme and $L \in \Picspace(X)^{\BC}$ is an invertible perfect complex with $\Ct$-action on $S$ then we write 
\[
\F^{\geq m}_L(X) := \F^{\geq m}(X,p^*L) .
\]
More generally, for $n \in \ZZ$ and $m \in \ZZ \cup \{-\infty,\infty\}$ we define
\[
\F^{\ge m,[n]}_L(X) = \F\left(\Dperf(X),(\QF^{\geq m}_L)\qshift{n}\right).
\]
When $n=0$, we drop the shift in the notation, which becomes $\F^{\ge m}_L$. Similarly, for $m=\pm\infty$ we replace the superscripts $(-)^{\geq -\infty}$ and $(-)^{\geq \infty}$ with $(-)^{\sym}$ and $(-)^{\qdr}$, respectively, and for $L=\cO_S$ we use the subscript $S$ instead of $\cO_S$.
\end{notation}

Using the functoriality of~\eqref{equation:functoriality-schemes-Lt} we may view 
$\F^{\geq m}_L$ as a functor on $\qSch_S\op$. In particular, by the multiplicativity properties of~\eqref{equation:functoriality-schemes-Lt} we get that if $\E$ is symmetric monoidal and $\F\colon (\Catp)^{\otimes} \to \E^{\otimes}$ is lax symmetric monoidal structure then $\F^{\geq 0}_S\colon \qSch_S\op \to \E$ carries an induced lax symmetric monoidal structure with respect to the cocartesian monoidal structure $(\qSch_S\op)^{\amalg}$, and similarly $\F^{\geq m}_L\colon (\qSch_S)\op \to \E$ is a module over $\F^{\geq 0}_S$ with respect to Day convolution for every $m \in \ZZ \cup \{-\infty,\infty\}$ and $L \in \Picspace(S)^{\BC}$. Concretely, since Day convolution with respect to the cocartesian monoidal structure coincides with the pointwise product, this means that $\F^{\geq 0}_S$ lifts to a functor valued $\CAlg(\E)$ which acts pointwise on $\F^{\geq m}_L$ for every $m \in \ZZ \cup \{-\infty,\infty\}$ and $L \in \Picspace(S)^{\BC}$.

\section{Nisnevich descent}
\label{section:nisnevich}%

Our main goal in this section is to show that for a qcqs scheme $S$, a perfect invertible complex with $\Ct$-action $L \in \Picspace(S)^{\BC}$, and a Karoubi-localising functor $\F\colon \Catp \to \E$, the functor 
\[
\F^{\geq m}_L\colon (\qSch_S)\op \to \E \quad\quad (X,L) \mapsto \F(\Dperf(X),\QF^{\geq m}_X)
\]
of Construction~\ref{notation:functor-to-sheaf} satisfies Nisnevich descent, that is, it is a sheaf with respect to the Nisnevich topology for every $m \in \ZZ \cup \{\pm \infty\}$. To arrive to this, we first dedicate \S\ref{subsection:flat-linear} and~\ref{subsection:bounded-karoubi} to introduce and study the notions of flat $S$-linear $\infty$-categories (with duality) and bounded Karoubi sequences between them. The main reason for this setting is that it will eventually allow us in \S\ref{section:motivic-realization} to bootstrap Nisnevich descent into the construction of motivic realization functors with suitable multiplicative properties. In \S\ref{subsection:zariski-descent} we then prove Zariski descent for $\F^{\geq m}_L$, followed by Nisnevich descent in \S\ref{subsection:nisnevich-descent}. Finally, \S\ref{subsection:coniveau} and \S\ref{subsection:genuine-sym} contain applications of these descent results, the former to establishing a version of the coniveau filtration in the present setting, and the second for the study of genuine symmetric $\GW$-theory of schemes.

\subsection{Flat linear categories}
\label{subsection:flat-linear}%

As discussed in \S\ref{section:poinc-schemes} and in Appendix \S\ref{section:appendix}, for every scheme $X$, the stable $\infty$-category $\Dperf(X)$ comes equipped with a t-structure on its Ind-completion $\Derqc(X)$. To facilitate terminology, let us refer to the data of a t-structure on the Ind-completion of a given stable $\infty$-category $\A$ as an \defi{orientation} on $\A$. If $\A$ carries a symmetric monoidal structure (e.g., if $\A=\Dperf(X)$) then $\Ind(\A)$ acquires an induced symmetric monoidal structure, in which case we use the term \defi{multiplicative orientation} to mean a t-structure $(\Ind(\A)_{\geq 0}, \Ind(\A)_{\leq 0})$ on $\Ind(\A)$ which is compatible with the induced symmetric monoidal structure, that is, $\Ind(\A)_{\geq 0}$ contains $\one_{\A}$ and is closed under tensor product.

\begin{definition}
Let $\A$ be a stably symmetric monoidal $\infty$-category equipped with a multiplicative orientation.
Given $x \in \Ind(\A)$, we say that $x$ \defi{has Tor amplitude $\leq n$} for a given $n \in \ZZ$ if the operation $x \otimes (-)$ sends coconnective objects to $n$-coconnective objects. We say that $x$ has \defi{bounded Tor amplitude} if it has Tor amplitude $\leq n$ for some $n$. 
\end{definition}

\begin{remark}
\label{remark:tor-amp-tensor}%
If $x \in \Ind(\A)$ has Tor amplitude $\leq n$ and $\Ind(y) \in \A$ has Tor amplitude $\leq m$ then $x \otimes y$ has Tor amplitude $\leq n+m$.
\end{remark}

\begin{lemma}
\label{lemma:dual-amplitude}%
Let $\A$ be a rigid stably symmetric monoidal $\infty$-category equipped with a multiplicative orientation. Then for a given $n \in \ZZ$, an object $x \in \A$ has Tor amplitude $\leq n$ if and only if $\Dual x$ is $(-n)$-connective.
\end{lemma}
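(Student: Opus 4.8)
The plan is to exploit the rigidity of $\A$ to convert both the Tor-amplitude condition and the connectivity condition into the same vanishing statement for mapping spaces, tested against the generating (co)connective objects.

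First I would record two consequences of rigidity. Since every object of $\A$ is dualisable, $x$ has a dual $\Dual x = x^\vee$, and under the canonical identification of $\Dual$ with the duality of the rigid structure this $x^\vee$ is the monoidal dual; moreover the endofunctor $x \otimes (-)$ of $\Ind(\A)$ admits both a left and a right adjoint, each given by $x^\vee \otimes (-)$ (equivalently $x \otimes y \simeq \CHom(x^\vee,y)$, using $(x^\vee)^\vee \simeq x$). This yields natural equivalences $\Map_{\Ind(\A)}(z, x \otimes y) \simeq \Map_{\Ind(\A)}(x^\vee \otimes z, y)$ and $\Map_{\Ind(\A)}(\Dual x, w) \simeq \Map_{\Ind(\A)}(\one, x \otimes w)$. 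Second, I would note that the multiplicative orientation satisfies $\Ind(\A)_{\geq a} \otimes \Ind(\A)_{\geq b} \subseteq \Ind(\A)_{\geq a+b}$ for all $a,b \in \ZZ$: this holds for $a=b=0$ by definition, and the general case follows since tensoring with a fixed object preserves colimits, hence commutes with suspension, in the presentably symmetric monoidal category $\Ind(\A)$.

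With these in hand the equivalence is immediate. For the ``if'' direction, suppose $\Dual x$ is $(-n)$-connective and let $y$ be coconnective; to see $x \otimes y \in \Ind(\A)_{\leq n}$ it suffices to check that $\Map(z, x \otimes y) \simeq \Map(x^\vee \otimes z, y)$ vanishes for every $z \in \Ind(\A)_{\geq n+1}$, which holds because $x^\vee \otimes z \in \Ind(\A)_{\geq 1}$ by the additivity observation, while $y$ is coconnective. For the ``only if'' direction, suppose $x$ has Tor amplitude $\leq n$; to see $\Dual x \in \Ind(\A)_{\geq -n}$ it suffices to check that $\Map(\Dual x, w) \simeq \Map(\one, x \otimes w)$ vanishes for every $w \in \Ind(\A)_{\leq -n-1}$. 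Shifting $w$ into $\Ind(\A)_{\leq 0}$ and applying the Tor-amplitude hypothesis (with the degree shift) gives $x \otimes w \in \Ind(\A)_{\leq -1}$, so the mapping space out of the connective object $\one$ vanishes.

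There is no serious obstacle here; the only points requiring (minor) care are the additivity $\Ind(\A)_{\geq a}\otimes \Ind(\A)_{\geq b}\subseteq \Ind(\A)_{\geq a+b}$ in the shifts, which is exactly what makes both implications land in the correct degree, and keeping track of which adjoint of $x \otimes (-)$ underlies each of the two mapping-space identities used above.
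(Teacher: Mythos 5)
Your argument is correct and is essentially the one the paper gives: both proofs reduce the two conditions to the vanishing of the same mapping space by passing through the adjunction between $x\otimes(-)$ and $\Dual x\otimes(-)$, together with the fact that the multiplicative orientation respects tensor degrees and that the connective/coconnective subcategories are each other's (shifted) orthogonal complements. The paper phrases the equivalence as a single biconditional chain via the $\perp$-characterisation and then evaluates at $\one_\A$ using multiplicativity, while you run the two implications separately, testing against the perpendicular class and plugging in $\one$ directly in the "only if" direction; this is a presentational rather than a mathematical difference.
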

\begin{proof}
The dual $\Dual x$ has in particular the property that the functor $\Dual x \otimes (-)$ is left adjoint to $x \otimes (-)$. In addition, as in any t-structure, we have for every $i$ the equalities 
\[
\Ind(\A)_{\leq i} = \Ind(\A)_{\geq i+1}^{\perp} = \{z \in \A\; |\; \map(y,z) = 0, \;\forall y \in \Ind(\A)_{\geq i+1}\}
\]
and 
\[
\Ind(\A)_{\geq i+1} = {}^{\perp}\Ind(\A)_{\leq i} = \{x \in \A\; |\; \map(x,y) = 0, \;\forall y \in \Ind(\A)_{\leq i}\}.
\]
We conclude that $x \otimes (-)$ sends $\Ind(\A)_{\leq 0}$ to $\Ind(\A)_{\leq n}$ if and only if $\Dual x \otimes (-)$ sends $\Ind(\A)_{\geq n+1}$ to $\Ind(\A)_{\geq 1}$, hence if and only if $\Dual x \otimes (-)$ sends $\Ind(\A)_{\geq 0}$ to $\Ind(\A)_{\geq -n}$. Since the t-structure is multiplicative, this is equivalent to $\Dual x$ belonging to $\Ind(\A)_{\geq -n}$.
\end{proof}

Let $\A$ be a stably symmetric monoidal $\infty$-category. By an $\A$-linear $\infty$-category $\C$, we mean an $\A$-module object in $\Catx$. Such an $\A$-module structure determines a refinement of the mapping spectra in $\C$ to $\Ind(\A)$-valued mapping objects, which we denote by $\uline{\map}_{\C}(x,y) \in \Ind(\A)$. They are uniquely characterized by the property that
\[
\map_{\Ind(\A)}(j(a),\uline{\map}_{\C}(x,y)) = \map_{\C}(a \otimes x,y)
\]
for any $a \in \A$, where $j\colon \A \to \Ind(\A)$ is the Yoneda embedding. We refer to morphisms of $\A$-modules in $\Catx$ as $\A$-linear functors, and write $\Fun^{\A}(\C,\D)$ for the $\infty$-category of $\A$-linear functors from $\C$ to $\D$. Note that our terminology is such that an $\A$-linear functor is by definition exact.

\begin{definition}
\label{definition:flat}%
Let $\A$ be a stably symmetric monoidal $\infty$-category equipped with a multiplicative orientation.
We say that an $\A$-linear $\infty$-category $\E$ is \defi{flat} over $\A$ if the enriched mapping object $\uline{\map}(x,y) \in \Ind(\A)$ has bounded Tor amplitude for every $x,y \in \E$. 
\end{definition}

\begin{example}
\label{example:flat}%
Let $X \to S$ be a flat morphism of qcqs schemes. Then $\Dperf(X)$ is flat over $\A=\Dperf(S)$.
\end{example}
\begin{proof}
The assumption that $f$ is flat implies that $f^*\Derqc(S) \to \Derqc(X)$ is exact and hence its right adjoint $f_*\colon \Derqc(X) \to \Derqc(S)$ preserves objects with bounded Tor amplitude $\leq n$ by the projection formula. We may hence assume without loss of generality that $f$ is the identity. In this case the result follows directly from the characterization of Lemma~\ref{lemma:dual-amplitude}.
\end{proof}

Recall that the tensor product $(-) \otimes_\A (-)$ in $\Mod_\A(\Catx)$ is determined by the following universal property: there exists a natural map $i\colon \C \times \D \to \C \otimes_\A \D$ which is $\A$-linear in each variable separately and such that for every $\A$-linear $\infty$-category $\E$ restriction along $i$ determines an equivalence between $\A$-linear functors $\C \otimes_\A \D \to \E$ and functors $\C \times \D \to \E$ which are $\A$-linear in each variable separately. In particular, we have a canonical equivalence
\[
\Fun^{\A}(\C \otimes_\A \D,\E) = \Fun^{\A}(\C,\Fun^{\A}(\D,\E)),
\]
so that the functor $(-) \otimes_\A \D\colon \Mod_\A(\Catx) \to \Mod_\A(\Catx)$ admits a right adjoint given by $\E \mapsto \Fun^{\A}(\D,\E)$.
To obtain a more explicit description of the underlying stable $\infty$-category of $\C \otimes_\A \D$, we may ignore for the moment its $\A$-action and view the association $\C \mapsto \C \otimes_\A \D$ as a functor $\Mod_\A(\Catx) \to \Catx$. As such, it also admits a right adjoint, given by the formula $\E \mapsto \Funx(\D,\E)$, where $\Funx(\D,\E)$ is considered as an $\A$-module via the pre-composition action determined by its action on $\D$. We may hence describe $\C \otimes_\A \D$ by embedding it in its Ind-completion, which by the last adjunction can be described as
\begin{align}
\label{equation:formula-tensor}%
\Ind(\C \otimes_\A \D) &= \Funx((\C \otimes_\A \D)\op,\Spa) \\
\nonumber &= \Funx(\C\op \otimes_{\A\op} \D\op,\Spa) \\
\nonumber &= \Fun^{\A\op}(\C\op,\Funx(\D,\Spa)) \\
\nonumber &= \Fun^{\A\op}(\C\op,\Ind(\D)) \ ,
\end{align}
where the $\A\op$-action on $\Ind(\D) = \Funx(\D\op,\Spa)$ is the pre-composition action induced by the $\A\op$-action on $\D\op$. To identify the full subcategory of $\Fun^{\A\op}(\C\op,\Ind(\D))$ corresponding to $\C \otimes_\A \D$, we need to track the composite functor
\[
\C \times \D \to \C \otimes_\A \D \to \Ind(\C \otimes_\A \D) = \Fun^{\A\op}(\C\op,\Ind(\D)) .
\]
For this, one needs to embed $\Fun^{\A\op}(\C\op,\Ind(\D))$ as a reflective subcategory (and accessible localisation) of the $\infty$-category $\Fun^{\A\op-\oplax}(\C\op,\Ind(\D))$ of oplax $\A\op$-linear functors $\C\op \to \Ind(\D)$. Then, to every $(c,d) \in \C \times \D$ we may associate the oplax $\A\op$-linear functor $f_{c,d}\colon \C\op \to \Ind(\D)$ given by the formula 
\[
f_{c,d}(x) = \uline{\map}(x,c) \otimes d .
\]
The image of $(c,d)$ in $\Fun^{\A\op}(\C\op,\Ind(\D))$ is then given by applying to $f_{c,d}$ the reflection functor
\[
T\colon \Fun^{\A\op-\oplax}(\C\op,\Ind(\D)) \to \Fun^{\A\op}(\C\op,\Ind(\D)) ,
\]
and $\C \otimes_\A \D$ is the smallest stable subcategory of $\Fun^{\A\op}(\C,\Ind(\D))$ containing the objects $T(f_{c,d})$ for every $c,d \in \C \times \D$.
In general, this reflection functor can be hard to describe. However, under the assumption that $\A$ is rigid, we have that $f_{c,d}$ is already itself $\A\op$-linear (as opposed to just oplax) for every $c,d \in \C \times \D$. In fact:

\begin{lemma}
\label{lemma:rune-magic-lemma}%
Let $\I$ be a rigid symmetric monoidal $\infty$-category (not necessarily stable) and $\C,\D$ two $\I$-linear $\infty$-categories (that is, $\I$-module objects in $\Cat$). Then any functor $f\colon \C \to \D$ which is either lax or oplax $\I$-linear is already $\I$-linear.
\end{lemma}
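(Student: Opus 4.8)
The plan is to reduce the oplax case to the lax one and then, in the lax case, to exhibit an explicit inverse to the structure maps using rigidity. Since $(-)\op$ is a symmetric monoidal endofunctor of $(\Cat,\times)$, it carries the $\I$-modules $\C,\D$ to $\I\op$-module $\infty$-categories $\C\op,\D\op$, keeps $\I\op$ rigid, and identifies an oplax $\I$-linear structure on $f$ with a lax $\I\op$-linear structure on $f\op$; as $f$ is $\I$-linear precisely when $f\op$ is $\I\op$-linear, it suffices to treat the lax case. So suppose $f$ carries a lax $\I$-linear structure, which amounts to a natural transformation $\mu_{a,c}\colon a\otimes f(c)\to f(a\otimes c)$, natural in $(a,c)\in\I\times\C$, subject to the usual unitality coherence ($\mu_{\one,c}$ is the canonical equivalence) and associativity coherence ($\mu_{a,b\otimes c}\circ(\id_a\otimes\mu_{b,c})\simeq\mu_{a\otimes b,c}$). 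We must show that every $\mu_{a,c}$ is an equivalence; since $\I$ is rigid it is enough to do this for $a$ dualisable, fixed from now on, with dual $a^\vee$, evaluation $\mathrm{ev}\colon a^\vee\otimes a\to\one$ and coevaluation $\mathrm{coev}\colon\one\to a\otimes a^\vee$.

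Next I would write down the candidate inverse $\nu_{a,c}\colon f(a\otimes c)\to a\otimes f(c)$ as the composite
\[
f(a\otimes c)\xrightarrow{\mathrm{coev}\otimes\id} a\otimes a^\vee\otimes f(a\otimes c)\xrightarrow{\id_a\otimes\mu_{a^\vee,a\otimes c}}a\otimes f(a^\vee\otimes a\otimes c)\xrightarrow{\id_a\otimes f(\mathrm{ev}\otimes\id_c)}a\otimes f(c);
\]
conceptually, this is nothing but the mate of $\mu_a\colon(a\otimes-)\circ f\Rightarrow f\circ(a\otimes-)$ under the adjunction $a\otimes(-)\dashv a^\vee\otimes(-)$, which is available on any $\I$-module with unit and counit induced from $\mathrm{coev}$ and $\mathrm{ev}$ via the module action. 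To check $\nu_{a,c}\circ\mu_{a,c}\simeq\id$, I would use bifunctoriality of $\otimes$ to move the leading $\mathrm{coev}$ all the way to the front, invoke the associativity coherence to merge $\mu_{a^\vee,a\otimes c}\circ(\id_{a^\vee}\otimes\mu_{a,c})$ into $\mu_{a^\vee\otimes a,c}$, then use naturality of $\mu_{-,c}$ in the $\I$-variable along $\mathrm{ev}\colon a^\vee\otimes a\to\one$ together with the unitality coherence to replace $f(\mathrm{ev}\otimes\id_c)\circ\mu_{a^\vee\otimes a,c}$ by $\mathrm{ev}\otimes\id_{f(c)}$; what is left is $\bigl((\id_a\otimes\mathrm{ev})\circ(\mathrm{coev}\otimes\id_a)\bigr)\otimes\id_{f(c)}$, which is the identity by one of the triangle identities. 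The identity $\mu_{a,c}\circ\nu_{a,c}\simeq\id$ is obtained symmetrically, this time using naturality of $\mu$ in the $\C$-variable and the other triangle identity. This proves that $\mu_{a,c}$ is an equivalence for every dualisable $a$, hence for every $a\in\I$, so $f$ is $\I$-linear.

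The only genuine work is to carry out these manipulations homotopy-coherently: one must produce $\nu$ as an honest natural transformation and the two triangle computations as coherent homotopies rather than mere equalities of natural isomorphisms. I would organise this through the calculus of mates for adjunctions in $\infty$-categories --- or, since all that is ultimately needed is the invertibility of $\mu_{a,c}$, by performing the diagram chases in the homotopy $2$-category of $\Cat$, where the associativity and unitality coherences of the lax $\I$-linear structure and the triangle identities of the duality are exactly the available inputs. I do not expect a conceptual obstruction here; the coherence bookkeeping is the main, and only mildly technical, point.
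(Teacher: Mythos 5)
Your proof is correct, but it takes a genuinely different route from the paper's. The paper packages the whole argument into a single appeal to a black-box result: a lax $\I$-linear functor is viewed as a lax natural transformation between the $(\infty,2)$-functors $\B\I\to\Cat$ classifying $\C$ and $\D$, and then \cite[Corollary 4.8]{haugseng-lax} is invoked to say that lax (resp.\ oplax) natural transformations restrict to strict ones on the sub-$(\infty,2)$-category generated by right (resp.\ left) adjoint $1$-morphisms --- which, by rigidity, is everything. Your argument, by contrast, constructs the would-be inverse $\nu_{a,c}$ by hand as the mate of $\mu_{a,c}$ under the $a\otimes(-)\dashv a^\vee\otimes(-)$ adjunction and then verifies the two composites directly, using only the unit/associativity coherences of the lax structure, naturality of $\mu$ in each variable, and the two triangle identities. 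The two computations you sketch (moving $\mathrm{coev}$ past $\mu_{a,c}$, merging via associativity into $\mu_{a^\vee\otimes a,c}$ or $\mu_{a\otimes a^\vee,a\otimes c}$, applying naturality along $\mathrm{ev}$ or $\mathrm{coev}$ together with unitality, then invoking a triangle identity) do go through, and since the goal is only to recognise that each $\mu_{a,c}$ is an equivalence, working in the homotopy $2$-category of $\Cat$ suffices --- there is no further coherence to extract. What the paper's route buys is brevity and a conceptual one-liner (the Haugseng result subsumes the mate calculus and the coherence bookkeeping in one stroke); what your route buys is self-containment, no reliance on the external $(\infty,2)$-categorical machinery, and an argument that makes the role of rigidity entirely transparent. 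Your reduction of the oplax case to the lax case via $(-)\op$ is also fine and parallels, in spirit, the paper's symmetric treatment of left vs.\ right adjoint $1$-morphisms.
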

\begin{proof}
Let $\B\I$ be the $(\infty,2)$-category with one object whose endomorphism $\infty$-category is $\I$. Then the $\I$-module objects $\C$ and $\D$ correspond to $(\infty,2)$-functors $F_{\C},F_{\D}\colon \B\I \to \Cat$. In this setup, an (op)lax $\I$-linear functor corresponds to an (op)lax natural transformation $F_{\C} \Rightarrow F_{\D}$. 
By~\cite[Corollary 4.8]{haugseng-lax} 
any lax (resp.\ oplax) natural transformation restricts to an honest natural transformation on the subcategory spanned by the right (resp.\ left) adjoint 1-morphism, that is, on the subcategory $\B\I_0 \subseteq \B\I$ where $\I_0 \subseteq \I$ is the full subcategory spanned by the right (resp.\ left) dualisable objects. But if $\I$ is rigid, any object is both left and right dualisable, so the desired result follows.
\end{proof}

In particular, when $\A$ is rigid, we may then simply view $f_{c,d}$ as objects of $\Fun^{\A\op}(\C,\Ind(\D))$, and $\C \otimes_\A \D$ is simply the smallest stable subcategory of $\Fun^{\A\op}(\C,\Ind(\D))$ containing the ``pure tensor'' objects $f_{c,d}$ for every $c,d \in \C \times \D$. 
We then have that
\begin{equation}
\label{equation:pure-tensors}%
\uline{\map}(f_{a,b},f_{c,d}) = \uline{\map}_{\D}(b, \uline{\map}_{\C}(a,c) \otimes d) = \uline{\map}_{\C}(a,c) \otimes \uline{\map}_{\D}(b,d) ,
\end{equation}
where the second equivalence is due to the assumption that $\A$ is rigid. Our discussion thus leads to the following conclusion:

\begin{corollary}
\label{corollary:flat-closed}%
Let $\A$ be a stably symmetric monoidal $\infty$-category equipped with a multiplicative orientation. If $\A$ is rigid, the collection of flat $\A$-linear $\infty$-categories 
is closed under the operation of taking tensor products over $\A$.
\end{corollary}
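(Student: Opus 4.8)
The plan is to reduce flatness of $\C \otimes_\A \D$ to the behaviour of the $\Ind(\A)$-enriched mapping objects on the pure-tensor generators $f_{c,d}$, for which the explicit formula \eqref{equation:pure-tensors} is already available. Concretely: given flat $\A$-linear $\infty$-categories $\C$ and $\D$, I want to show that $\uline{\map}(M,N) \in \Ind(\A)$ has bounded Tor amplitude for every $M,N \in \C \otimes_\A \D$. I will use that, $\A$ being rigid, $\C \otimes_\A \D$ is the smallest stable subcategory of $\Fun^{\A\op}(\C,\Ind(\D)) \simeq \Ind(\C \otimes_\A \D)$ containing the objects $f_{c,d}$ (as recorded in the discussion around \eqref{equation:formula-tensor} and \eqref{equation:pure-tensors}), and that on these generators the enriched mapping spectra are given by $\uline{\map}(f_{a,b},f_{c,d}) \simeq \uline{\map}_{\C}(a,c) \otimes \uline{\map}_{\D}(b,d)$.

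First I would record the closure properties of the class $\mathcal{T} \subseteq \Ind(\A)$ of objects of bounded Tor amplitude that the argument needs: it is stable under shifts (a shift changes the amplitude by $1$), under finite colimits (in a cofibre sequence the amplitude of the middle term is bounded by the maximum of those of the two outer terms), and under retracts (each $\Ind(\A)_{\leq n}$ is closed under limits). I would also note that $\mathcal{T}$ is closed under $\otimes$: this is essentially Remark~\ref{remark:tor-amp-tensor}, whose short proof in fact works for any two objects of $\Ind(\A)$ with bounded Tor amplitude, not only when one factor lies in $\A$. Granting these, the base case is immediate: for pure tensors, \eqref{equation:pure-tensors} exhibits $\uline{\map}(f_{a,b},f_{c,d})$ as a tensor product of two objects of $\mathcal{T}$, using flatness of $\C$ and of $\D$, hence as an object of $\mathcal{T}$.

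The core of the argument is then a two-step dévissage along the generators. Fix a pure tensor $f_{a,b}$; the full subcategory of $\C \otimes_\A \D$ on those $N$ with $\uline{\map}(f_{a,b},N) \in \mathcal{T}$ is a retract-closed stable subcategory, by exactness of $\uline{\map}(f_{a,b},-)$ together with the closure properties of $\mathcal{T}$, and it contains every pure tensor, hence it is all of $\C \otimes_\A \D$. Now fix an arbitrary $N \in \C \otimes_\A \D$; by the same reasoning applied to the exact functor $\uline{\map}(-,N)$, the full subcategory on those $M$ with $\uline{\map}(M,N) \in \mathcal{T}$ is a retract-closed stable subcategory containing every pure tensor, hence equals $\C \otimes_\A \D$. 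This yields $\uline{\map}(M,N) \in \mathcal{T}$ for all $M,N$, which is exactly the flatness of $\C \otimes_\A \D$ over $\A$ in the sense of Definition~\ref{definition:flat}.

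The argument is essentially bookkeeping, so I do not anticipate a serious obstacle; the one point worth a little care is making sure the $\Ind(\A)$-enriched mapping objects computed inside the stable subcategory $\C \otimes_\A \D$ genuinely agree with those in the ambient functor category, so that the formula \eqref{equation:pure-tensors} applies on the nose — this is guaranteed by the description preceding that formula, which relies on the rigidity of $\A$ (via Lemma~\ref{lemma:rune-magic-lemma}, which makes the pure tensors $f_{c,d}$ honestly $\A\op$-linear rather than merely oplax). Everything downstream is then a routine generation-and-exactness argument.
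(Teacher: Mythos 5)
Your proof is correct and follows essentially the same approach as the paper's, which compresses the argument into a single sentence: the paper simply invokes formula~\eqref{equation:pure-tensors} together with closure of the bounded-Tor-amplitude class under finite colimits, desuspensions and tensor products, leaving the two-step dévissage along the pure-tensor generators implicit. You spell that dévissage out; the only superfluous ingredient is retract-closedness, which is not needed because $\C \otimes_\A \D$ is defined as the \emph{smallest stable subcategory} (not idempotent-completed) containing the $f_{c,d}$.
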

\begin{proof}
Since the collection of objects with bounded Tor amplitude in $\Ind(\A)$ is closed under finite colimits, desuspensions, and tensor products (see Remark~\ref{remark:tor-amp-tensor}), it follows from the formula~\eqref{equation:pure-tensors} that if $\C$ and $\D$ are flat over $\A$, the same holds for $\C \otimes_\A \D$.
\end{proof}

\subsection{Bounded Karoubi projections}
\label{subsection:bounded-karoubi}%

We now fix a stably symmetric monoidal $\infty$-category $\A$ equipped with a multiplicative orientation $(\Ind(\A)_{\geq 0},\Ind(\A)_{\leq 0})$ satisfying the following assumptions:
\begin{assumption}\
\label{assumption:rigid}%
\begin{enumerate}
\item
$\A$ is rigid. 
\item
$\one_\A$ is $m$-coconnective for some $m$.
\end{enumerate}
\end{assumption}
We write $\one_{\A} \in \A$ for the unit object.

\begin{remark}
\label{remark:tor-is-truncated}%
Since $\one_{\A}$ is assumed to be $m$-coconnective we have that 
if $x \in \Ind(\A)$ has Tor amplitude $\leq n$ then $x$ is $(n+m)$-coconnective.
\end{remark}

\begin{example}
If $S$ is any qcqs scheme then $\A = \Dperf(S)$ with the standard t-structure on $\Ind(\A) = \Derqc(S)$ satisfies the above assumptions. This is the main example we have in mind.
\end{example}

\begin{example}
If $R$ is an $\Einf$-ring spectrum which is both connective and $n$-coconnective for some $n \geq 0$ then the $\infty$-category $\A = \Mod(R)^\omega$ of compact $R$-module spectra, together with the standard t-structure on $\Ind(\A) = \Mod(R)$, satisfies the above assumptions.
\end{example}

Recall from \S\ref{subsection:rigid-to-poinc} that since $\A$ is rigid, it carries a canonical duality given by $\Dual_{\A}(x) = \uline{\map}_{\A}(x,\one_{\A})$, which is furthermore a symmetric monoidal duality, that is, $(\A,\Dual_{\A})$ is a commutative algebra object in $\Catps$. The assumption that the orientation on $A$ is multiplicative means that we can also consider it as an algebra object in $\Catpst$. We can consequently consider $(\A,\Dual_\A)$-modules in either $\Catps$ or $\Catpst$. 
In concrete terms, a module in $\Catps$ is an $\A$-linear $\infty$-category equipped with a duality $\Dual_\C\colon \C\op \to \C$ which is also $\A$-linear. The morphisms of such a given by duality preserving $\A$-linear functors
\[
(\C,\Dual_\C) \to (\D,\Dual_\D) .
\]
Similarly, $(\A,\Dual_A)$-modules in $\Catpst$ are such $\A$-linear $\infty$-categories with duality further equipped with an orientation which is compatible with the $A$-action (that is, the induced action of $\Ind(\A)_{\geq 0}$ on $\Ind(\C)$ preserves $\Ind(\C)_{\geq 0}$). 
Morphisms of such are then further required to induce a right t-exact functor on Ind completions.

\begin{definition}
We say that a duality preserving $\A$-linear functor $(\C,\Dual_\C) \to (\D,\Dual_\D)$ is an $\A$-linear Karoubi inclusion/projection if it is so on the level of underlying stable $\infty$-categories. 
Similarly, we will say that a sequence of $\A$-linear $\infty$-categories with duality is a Karoubi sequence if it is such on the level of underlying stable $\infty$-categories.
\end{definition}

Let $p\colon (\C,\Dual_{\C}) \to (\D,\Dual_{\D})$ be an $\A$-linear Karoubi projection 
and $g\colon \D \to \Pro(\C)$ the associated fully-faithful Pro-left adjoint (which is not necessarily duality preserving). We say that $p$ is \defi{bounded at $x \in \D$} if there exists an $n \in \ZZ$ and a cofiltered diagram $\{y_i\}_{i \in \I}$ in $\C$ with limit $g(x) \in \Pro(\C)$ such that $\uline{\map}(y_i,\Dual_{\C} y_i) \in \Ind(\A)$ has Tor amplitude $\leq n$ for every $i \in \I$.

\begin{example}
\label{example:split-is-flat}%
If $(\C,\Dual_\C)$ is a flat $\A$-linear $\infty$-category with duality, and $p\colon (\C,\Dual_\C) \to (\D,\Dual_\D)$ is a duality preserving $\A$-linear functor whose underlying functor is a split Verdier projection then $p$ is a bounded Karoubi projection in the sense of Definition~\ref{definition:bounded}.
\end{example}

\begin{definition}
\label{definition:bounded}%
We say that a square
\[
\begin{tikzcd}
(\C,\Dual_\C) \ar[r]\ar[d] & (\D,\Dual_\D) \ar[d] \\
(\C',\Dual_{\C'}) \ar[r] & (\D',\Dual_{\D'})
\end{tikzcd}
\]
of $\A$-linear $\infty$-categories with duality is a \defi{bounded Karoubi square} if it is cartesian and the vertical arrows are bounded $\A$-linear Karoubi projections. Such a bounded Karoubi square whose lower left corner is zero is also called a \defi{bounded Karoubi sequence}.
\end{definition}

Our main interest in the notion of bounded Karoubi projection is due to the following property:

\begin{proposition}
\label{proposition:flat-is-karoubi-sym}%
Let $p\colon (\C,\Dual) \to (\C',\Dual')$ be a bounded 
Karoubi projection and $m \in \ZZ \cup \{-\infty,\infty\}$. 
Then for $r \in \{\qdr,\sym\}$ the associated Poincaré functor
\[
p^{r}\colon (\C,\QF^{r}_{\Dual}) \to (\C',\QF^{r}_{\Dual'})
\]
is a Poincaré-Karoubi projection. Suppose in addition that $p$ refines to a morphism in $\Mod_S(\Catpst)$, that is, $(\C,\Dual)$ and $(\C',\Dual')$ are equipped with orientations which are compatible with the $\A$-action and the induced functor $\Ind(\C) \to \Ind(\C')$ is t-exact. Then for every $m \in \ZZ$ the Poincaré functor
\[
p^{\geq m}\colon (\C,\QF^{\geq m}_{\Dual}) \to (\C',\QF^{\geq m}_{\Dual'})
\]
is a Poincaré-Karoubi projection.
\end{proposition}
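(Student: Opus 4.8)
The plan is to pass to idempotent completions and reduce everything to an identification of Poincaré structures on the Verdier quotient. By definition a hermitian functor is a Poincaré--Karoubi projection exactly when its idempotent completion is a Poincaré--Verdier projection; since the underlying functor $\C \to \C'$ is a Karoubi projection, its completion is a Verdier projection, with some kernel $\D$, and by the theory of Poincaré--Verdier sequences from \cite{9-authors-I} and \cite{9-authors-II} the completed hermitian functor is a Poincaré--Verdier projection if and only if the Poincaré structure on the target agrees with the one induced (``pushed forward'') along the projection from $\QF^{r}_{\Dual}$ (resp.\ $\QF^{\ge m}_{\Dual}$), so that $(\D,\QF|_{\D}) \to (\C,\QF) \to (\C',\QF')$ becomes a Poincaré--Verdier sequence. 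Since $p$ is duality preserving, $\Dual'$ is the duality induced on the quotient, so this induced structure has the same symmetric cross-effect $\map_{\C'}(-,\Dual'(-))$ as $\QF^{r}_{\Dual'}$ (resp.\ $\QF^{\ge m}_{\Dual'}$); hence the comparison map between them is automatically an equivalence on bilinear cross-effects, and it remains only to compare the linear parts $\Psubone(-)$ (the gluing between linear and quadratic parts then being formal). For $r=\qdr$ the linear part vanishes on both sides and there is nothing further to check.

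The linear part of the induced structure is computed through the fully faithful Pro-left adjoint $g\colon\C'\to\Pro(\C)$. For $x'\in\C'$ one chooses, using boundedness, a cofiltered presentation $g(x')\simeq\lim_i y_i$ with $y_i\in\C$ such that all $\uline{\map}_{\C}(y_i,\Dual_{\C}y_i)\in\Ind(\A)$ have Tor amplitude $\le n$ for a fixed $n$, hence are uniformly coconnective by Remark~\ref{remark:tor-is-truncated}. Running the whole computation through the $\Ind(\A)$-enriched mapping objects is harmless because $\one_{\A}$ is compact in $\Ind(\A)$, so recovering spectra by $\map_{\Ind(\A)}(\one_{\A},-)$ commutes with all the relevant limits and filtered colimits. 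In these terms $\LT_{\Dual'}$ is the image of $\LT_{\Dual}$ along $\Ind(\C)\to\Ind(\C')$, and what must be shown is that the Tate construction $(-)^{\tC}$ occurring in $\Psubone\QF^{\sym}$ commutes with the filtered colimit $\colim_i$ coming from the Pro-presentation. This is precisely where boundedness enters: via the homotopy fixed point spectral sequence, each homotopy group of a Tate (or homotopy fixed point) construction of a uniformly coconnective object receives contributions from only finitely many group-cohomology terms, so such constructions commute with filtered colimits of uniformly coconnective objects. This gives the equivalence on linear parts and proves the first part for $r=\sym$; the case $r=\qdr$ is handled by the cross-effect comparison alone, since homotopy orbits commute with all colimits.

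For the second part, the hypothesis that $p$ refines to a morphism in $\Mod_S(\Catpst)$ means the orientations are compatible with the $\A$-action and $\Ind(\C)\to\Ind(\C')$ is t-exact, so $\tau_{\ge m}\LT_{\Dual'}$ is the image of $\tau_{\ge m}\LT_{\Dual}$. Combining this with the pullback description $\QF^{\ge m}_{\Dual}=\QF^{\sym}_{\Dual}\times_{\Psubone\QF^{\sym}_{\Dual}}\tau_{\ge m}\Psubone\QF^{\sym}_{\Dual}$ of Lemma~\ref{lemma:connected-cover-quadratic-functors} and the exactness of filtered colimits in stable $\infty$-categories, the structure induced on the quotient by $\QF^{\ge m}_{\Dual}$ is the pullback of the structures induced by $\QF^{\sym}_{\Dual}$, $\Psubone\QF^{\sym}_{\Dual}$ and $\tau_{\ge m}\Psubone\QF^{\sym}_{\Dual}$, which the first part has already identified with $\QF^{\sym}_{\Dual'}$, $\Psubone\QF^{\sym}_{\Dual'}$ and $\tau_{\ge m}\Psubone\QF^{\sym}_{\Dual'}$; their pullback is $\QF^{\ge m}_{\Dual'}$, and the second part follows.

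The main obstacle is the content of the second paragraph: setting up the precise description of the induced quotient Poincaré structure through the Pro-left adjoint, and making rigorous the interchange of $(-)^{\hC}$, $(-)^{\tC}$ and the truncations $\tau_{\ge m}$ with the relevant filtered colimits. This is the only place where the bounded-Tor-amplitude hypothesis is used, and it is exactly what fails for general, non-bounded, Karoubi projections.
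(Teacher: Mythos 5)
Your proof is essentially correct and follows the same strategy as the paper's. In both cases the substance is: pass to the fully faithful Pro-left adjoint $g\colon\C'\to\Pro(\C)$, use boundedness to get a cofiltered presentation $g(x')\simeq\lim_iy_i$ with $\uline{\map}_{\C}(y_i,\Dual y_i)$ uniformly coconnective, and show the resulting filtered colimit of spectra commutes with the homotopy-fixed-point/Tate construction. The paper's proof organizes this slightly differently: it handles the symmetric and quadratic extremes as a single display $\colim_i[\map_\C(y_i,\Dual y_i)^{\hC}]\to[\colim_i\map_\C(y_i,\Dual y_i)]^{\hC}$ without first splitting into linear and bilinear parts, and justifies the interchange via an explicit skeletal-filtration argument (Lemma~\ref{lemma:standard} and Corollary~\ref{corollary:standard}, ``finite-type limits commute with uniformly bounded-above filtered colimits''), reserving the linear/bilinear decomposition for the finite-$m$ case. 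You split into linear and bilinear parts from the outset, observe that the bilinear comparison is automatic (which is true, since $p$ being a Karoubi projection gives $\colim_{i,j}\map_\C(y_i,\Dual y_j)\simeq\map_{\C'}(x,\Dual'x)$, though you might want to spell that out rather than wave at it), and justify the Tate-colimit interchange by a spectral-sequence argument. The two routes are logically equivalent — given that $(-)_{\hC}$ commutes with all colimits, the norm sequence converts one interchange statement into the other — and both rest on the same use of boundedness. The finite-$m$ case is treated the same way in both (pull back along $\tau_{\geq m}$ of the linear part and invoke t-exactness, observing that filtered colimits of spectra commute with the fibre products appearing in Lemma~\ref{lemma:connected-cover-quadratic-functors}). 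So: correct proof, same approach, with a mildly different choice of where to place the linear/bilinear decomposition and a spectral-sequence rather than skeletal-filtration justification of the key colimit interchange.
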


The proof of Proposition~\ref{proposition:flat-is-karoubi-sym} will require a fairly standard lemma concerning stable $\infty$-categories, which we include here since we could not locate a reference.

\begin{lemma}
\label{lemma:standard}%
Let $f\colon \C \to \D$ be an exact functor between stable $\infty$-categories. Suppose that both $\C$ and $\D$ admit t-structures such that the t-structure on $\D$ is right separated, and that there exists a $d \geq 0$ such that $f(\C_{\leq 0}) \subseteq \C_{\leq d}$ (e.g., $f$ is left t-exact). Let $m \in \ZZ$ be an integer, $K$ a finite type simplicial set (that is, $K$ has finitely many non-degenerate simplices in each degree) and $\vphi\colon K \to \C_{\leq m}$ a diagram which extends to a limit diagram $\ovl{\vphi}\colon K^{\triangleleft} \to \C$ in $\C$. Then $f(\ovl{\vphi})$ is a limit diagram in $\D$.  
\end{lemma}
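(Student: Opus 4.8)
I would reduce, via the skeletal filtration of $K$, to a statement about a single sequential (tower) limit, and then control that limit using the coconnectivity hypothesis on $\vphi$ together with right-separatedness of $\D$. Concretely, write $K=\colim_p\mathrm{sk}_pK$; each $\mathrm{sk}_pK$ is a finite simplicial set, so $c_p:=\lim_{\mathrm{sk}_pK}\vphi$ exists in the stable category $\C$, and restricting $\ovl\vphi$ gives compatible maps $c\to c_p$ (where $c=\ovl\vphi$ of the cone point) exhibiting $c=\lim_p c_p$. Since $f$ is exact it preserves the finite limits $c_p$, i.e.\ $f(c_p)=\lim_{\mathrm{sk}_pK}f\vphi$; and for any $y\in\D$ one has $\lim_K\map_\D(y,f\vphi)=\lim_p\lim_{\mathrm{sk}_pK}\map_\D(y,f\vphi)=\lim_p\map_\D(y,f(c_p))$. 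Hence it suffices to show that the cone $f(c)\to\{f(c_p)\}_p$ is a limit cone, i.e.\ that $f$ preserves the tower limit $\lim_p c_p$.

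\textbf{Connectivity estimate.} The inclusion $\mathrm{sk}_pK\hookrightarrow\mathrm{sk}_{p+1}K$ is a pushout of $\coprod_\sigma\partial\Delta^{p+1}\hookrightarrow\coprod_\sigma\Delta^{p+1}$ over the (finite) set of non-degenerate $(p{+}1)$-simplices $\sigma$, so, as $\Delta^{p+1}$ has an initial vertex, $c_{p+1}=c_p\times_{\prod_\sigma\lim_{\partial\Delta^{p+1}}\vphi_\sigma}\prod_\sigma\vphi(\sigma(0))$ and therefore $\fib(c_{p+1}\to c_p)\simeq\prod_\sigma\fib\bigl(\lim_{\Delta^{p+1}}\vphi_\sigma\to\lim_{\partial\Delta^{p+1}}\vphi_\sigma\bigr)$. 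A short induction on $n$ identifies $\fib(\lim_{\Delta^n}\psi\to\lim_{\partial\Delta^n}\psi)\simeq\Sigma^{-n}\psi(n)$ (value at the last vertex): using $\partial\Delta^n=\Lambda^n_0\cup_{\partial\Delta^{n-1}}\Delta^{\{1,\dots,n\}}$ and that $\Lambda^n_0$ has initial vertex $0$, the map in question is a retract of the projection $\lim_{\partial\Delta^n}\psi=\psi(0)\times_{\lim_{\partial\Delta^{n-1}}\psi}\psi(1)\to\psi(0)$, so its fibre is the loops of the fibre of that projection, which base-changes to the corresponding fibre one dimension down. Since $\vphi$ takes values in $\C_{\leq m}$ it follows that $\fib(c_{p+1}\to c_p)\in\C_{\leq m-p-1}$; as $\C_{\leq\bullet}$ is closed under limits and extensions, the tail $e_p:=\fib(c\to c_p)=\lim_{q\geq p}\fib(c_q\to c_p)$ also lies in $\C_{\leq m-p-1}$.

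\textbf{Killing the tail.} Applying $f$ and using $f(\C_{\leq 0})\subseteq\D_{\leq d}$ gives $\fib(f(c)\to f(c_p))=f(e_p)\in\D_{\leq m-p-1+d}$, so the fibres of the cone $f(c)\to\{f(c_p)\}$ become arbitrarily coconnective. Consequently the fibre of the comparison map $f(c)\to\lim_p f(c_p)$ is $\lim_p f(e_p)$, which lies in $\bigcap_k\D_{\leq k}$ and hence vanishes because the t-structure on $\D$ is right-separated. Therefore $f(c)\simeq\lim_p f(c_p)=\lim_K f\vphi$, i.e.\ $f(\ovl\vphi)$ is a limit diagram.

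\textbf{Main obstacle.} The delicate point is the last step: it presupposes that the sequential limit $\lim_p f(e_p)$ (equivalently $\lim_p f(c_p)$) actually exists in $\D$, for only then does right-separatedness force it to be $0$; an arbitrary stable $\infty$-category need not admit countable limits. In the situations where the lemma is applied this is harmless — there the relevant t-structure lives on the Ind-completion (the paper's ``orientation''), so all such limits exist. In general one runs the argument inside $\Ind(\D)$: there $L:=\lim_p f(c_p)$ exists, $f(e_p)\in\Ind(\D)_{\leq m-p-1+d}$, so $\fib(f(c)\to L)\in\bigcap_k\Ind(\D)_{\leq k}=0$ by right-separatedness, whence $f(c)\simeq L$ lies back in $\D$ and the cone, being a limit cone in $\Ind(\D)$ with all objects in the full subcategory $\D$, is a limit cone in $\D$. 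The only remaining ingredients are elementary consequences of right-separatedness (an object in $\D_{\leq k}$ for all $k$ is zero) which require no further comment.
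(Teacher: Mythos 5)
Your proof is correct and takes essentially the same route as the paper's: both pass through the skeletal filtration of $K$, establish that the fibre of $\lim_K\vphi\to\lim_{\mathrm{sk}_pK}\vphi$ is $(m-p-1)$-coconnective, apply $f$ to land in $\D_{\leq m+d-p-1}$, and invoke right-separatedness. The paper simply cites the Bousfield--Kan formula for the coconnectivity estimate rather than deriving it by hand as you do, and you make explicit a point the paper leaves tacit (that the comparison target $\lim_p f(c_p)$ may need to be formed in $\Ind(\D)$ before one concludes it lies in $\D$); both are reasonable refinements that do not change the substance of the argument.
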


We will eventually need the following special case of the above standard lemma:
\begin{corollary}
\label{corollary:standard}%
Let $\C$ be a stable $\infty$-category equipped with a right separated t-structure and suppose that $\C$ admits filtered colimits such that $\C_{\leq 0}$ is closed under filtered colimits. 
Then filtered colimits of uniformly bounded above diagrams commute with finite type limits.
\end{corollary}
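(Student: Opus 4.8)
The plan is to deduce Corollary~\ref{corollary:standard} directly from Lemma~\ref{lemma:standard}, applied to the colimit functor along a filtered index category. Fix a filtered $\infty$-category $\I$ and a finite type simplicial set $K$. First I would endow the stable $\infty$-category $\Fun(\I,\C)$ with the pointwise t-structure, so that $\Fun(\I,\C)_{\leq n} = \Fun(\I,\C_{\leq n})$; since coconnectivity is detected objectwise and $\C$ is right separated, so is $\Fun(\I,\C)$ --- although for the application of Lemma~\ref{lemma:standard} only right separatedness of the \emph{target} is needed. The functor to feed into the lemma is $\colim_{\I}\colon \Fun(\I,\C) \to \C$. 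It is left adjoint to the constant-diagram functor, hence preserves finite colimits; as it also preserves zero objects and $\C$ is stable, it is exact. Crucially, $\colim_{\I}$ carries $\Fun(\I,\C)_{\leq 0} = \Fun(\I,\C_{\leq 0})$ into $\C_{\leq 0}$, because $\I$ is filtered and $\C_{\leq 0}$ is closed under filtered colimits by hypothesis; so the left t-exactness assumption of Lemma~\ref{lemma:standard} holds with $d = 0$.

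With these checks in hand I would unwind the conclusion. A $K$-shaped diagram in $\C$ that is uniformly bounded above, say with all values in $\C_{\leq m}$, together with an $\I$-indexing, is the same datum as a diagram $G\colon K \to \Fun(\I,\C)_{\leq m}$. Suppose it extends to a limit diagram $\overline{G}\colon K^{\triangleleft} \to \Fun(\I,\C)$ (as limits in $\Fun(\I,\C)$ are pointwise, this just says the objectwise $K$-limits exist in $\C$). Lemma~\ref{lemma:standard}, with $f = \colim_{\I}$, then gives that $\colim_{\I}\circ\,\overline{G}\colon K^{\triangleleft} \to \C$ is again a limit diagram. Reading off the apex and the restriction to $K$, and using that $\colim_{\I}$ commutes with evaluation at objects of $\I$ while $\lim_K$ in $\Fun(\I,\C)$ is pointwise, this says precisely that the canonical comparison map
\[
\colim_{i \in \I}\ \lim_{k \in K} G(k)(i)\ \xrightarrow{\ \simeq\ }\ \lim_{k \in K}\ \colim_{i \in \I} G(k)(i)
\]
is an equivalence; in particular the right-hand limit exists. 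This is exactly the asserted commutation of filtered colimits of uniformly bounded above diagrams with finite type limits.

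Since Lemma~\ref{lemma:standard} is available, there is no serious obstacle, and the argument is essentially formal. The points that need a moment's care are: (i) that $\colim_{\I}$ is exact, which uses stability of $\C$; (ii) that the pointwise t-structure makes $\colim_{\I}$ left t-exact, which is exactly where the hypothesis ``$\C_{\leq 0}$ closed under filtered colimits'' enters; and (iii) the bookkeeping translating the ``is a limit diagram'' conclusion of Lemma~\ref{lemma:standard} into the interchange of $\colim$ and $\lim$. Of these, (iii) is the one where I would be careful about the direction of the comparison map and about not needing to assume in advance that the right-hand limit exists.
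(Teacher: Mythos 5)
Your proof is correct and is the natural way to make precise the paper's remark that the corollary is a ``special case'' of Lemma~\ref{lemma:standard} (the paper omits the derivation): apply the lemma to the exact functor $\colim_{\I}\colon \Fun(\I,\C) \to \C$, which is left t-exact ($d=0$) for the pointwise t-structure precisely because $\C_{\leq 0}$ is closed under filtered colimits, and then read off the interchange equivalence from the fact that $\colim_{\I}\circ\overline{G}$ is a limit cone. The only cosmetic wrinkle is the phrase ``$\colim_{\I}$ commutes with evaluation at objects of $\I$,'' which is garbled (you presumably meant evaluation at objects of $K$, which is just the definition of composition), but this does not affect the argument.
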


\begin{proof}[Proof of Lemma~\ref{lemma:standard}]
Let $K_n$ be the $n$-skeleton of $K$, so that $K$ is a finite simplicial set. Since $\C$ is stable $K_n$-indexed limits exist in $\C$ and in $\D$. Furthermore, for any $r$ and any diagram $\psi\colon K \to \C_{\leq r}$ which admits a limit in $\C$, the fibre of
\[
\lim_{k \in K}\psi(k) \to \lim_{k \in K_n} \psi(k)
\]
is in $\C_{\leq r-n-1}$ by the Bousfield-Kan formula, and similarly for any diagram $\psi\colon K \to \D_{\leq r}$ admitting a limit in $\D$.
Now let $\vphi\colon K \to \C_{\leq m}$ be a diagram admitting a limit in $\C$. Consider the commutative square
\[
\begin{tikzcd}
f(\lim_{K} \vphi) \ar[r]\ar[d] & f(\lim_{K_n}\vphi) \ar[d, "\simeq"]\\
\lim_{K}(f\circ \vphi) \ar[r] & \lim_{K_n}(f \circ \vphi)
\end{tikzcd}
\]
where the right vertical map is an equivalence since $K_n$ is finite. 
Since $\vphi$ takes values in $\C_{\leq m}$ and $f$ raises truncatedness by at most $d$, the fibres of the horizontal maps lie in $\D_{\leq (m+d-n-1)}$. Since the right vertical map is an equivalence the fibre of the left vertical map is in $\D_{\leq (m+d-n-1)}$. Since the t-structure is right separated we conclude that the left vertical map is an equivalence.
\end{proof}

\begin{proof}[Proof of Proposition~\ref{proposition:flat-is-karoubi-sym}]
We prove all statements simultaneously by taking $m \in \ZZ \cup \{-\infty,\infty\}$, so that 
$\QF^{\geq \infty} = \QF^{\qdr}$ and $\QF^{\geq -\infty}=\QF^{\sym}$.
The Poincaré functor $p^{\geq m}$ determines a natural transformation
\[
\LKan_p(\QF^{\geq m}_{\Dual}) \Rightarrow \QF^{\geq m}_{\Dual'}
\]
from the left Kan extension of $\QF^{\geq m}_{\Dual}$ along $p\op\colon \C\op \to (\C')\op$ to $\QF^{\geq m}_{\Dual'}$, and since we already know that $p$ is a Karoubi projection on the level of underlying stable $\infty$-categories, we only need to show that this transformation is an equivalence. Since $\D$ is generated as a stable $\infty$-category by the objects $x \in \C'$ at which $p$ is bounded, it will suffice to show that the components of this natural transformation at such $x$ is an equivalence. Choose a cofiltered diagram $\{y_i\}_{i \in \I}$ in $\C$ with limit $g(x)$, where $g\colon \C'\to \Pro(\C)$ is the fully-faithful Pro-left adjoint of $p$. Then the left Kan extension can be computed via this presentation of $g(x)$, and we are reduced to showing that the induced map
\[
\colim_{i \in \I\op} \QF^{\geq m}_{\Dual}(y_i) \to \QF^{\geq m}_{\Dual'}(x)
\]
is an equivalence. If $m = \infty$ then 
\[
\QF^{\geq m}_{\Dual}(y_i) = \QF^{\qdr}_{\Dual}(y_i) = \map_{\C}(y_i,\Dual y_i)_{\hC},
\]
and since $(-)_{\hC}$ commutes with colimits the statement follows from the fact that the induced map 
\[
\colim_{i \in \I\op} \map_{\C}(y_i,\Dual y_i) = \colim_{i,j \in \I\op}\map_{\C}(y_i,\Dual y_j)  \to \map_{\C'}(x,\Dual x)
\]
is an equivalence by virtue of $p\colon \C \to \C'$ being a Karoubi projection.
On the other extremity, if $m=-\infty$ then  
\[
\QF^{\geq m}_{\Dual}(y_i) = \QF^{\sym}_{\Dual}(y_i) = \map_{\C}(y_i,\Dual y_i)^{\hC},
\]
and we can rewrite the above map is the limit-colimit interchange map
\[
\colim_{i \in \I} [\map_{\C}(y_i,\Dual y_i)^{\hC}] \to \Big[\colim_{i \in \I} \map_{\C}(y_i,\Dual y_i)\Big]^{\hC}.
\]
Now, by assumption, there exists an $n$ such that 
$\uline{\map}(y_i,\Dual y_i) \in \Ind(\A)$ has Tor amplitude $\leq n$ for every $i$, and is hence $(n+k)$-coconnective for every $i$, where $k$ is such that $\one_\A$ is $k$-coconnective, see Remark~\ref{remark:tor-is-truncated}. Since $\one_\A$ is also connective (by the assumption that the t-structure is multiplicative) we conclude that the family of spectra $\{\map_{\C}(y_i,\Dual y_i)\}$ is uniformly bounded above, and so the desired result 
follows from the fact that limits of finite type diagrams of spectra commute with uniformly bounded above colimits (see Corollary~\ref{corollary:standard}). 
Finally, given that the case $m=\pm\infty$ is established, to show the case $m \neq \pm \infty$ it will suffice to prove that the induced map
\[
\LKan_p \Lam^{\geq m}_{\Dual} \to \Lam^{\geq m}_{\Dual'}
\]
is an equivalence, where $\Lam^{\geq m}_{\Dual}$ and $\Lam^{\geq m}_{\Dual}$ are the linear parts of $\QF^{\geq m}_{\Dual}$ and $\QF^{\geq m}_{\Dual'}$, respectively. Indeed, by definition these are the $m$-connective covers of the linear parts of $\QF^{\sym}_{\Dual}$ and $\QF^{\sym}_{\Dual'}$, and $\LKan_p = \Ind(p)\colon \Ind(\C) \to \Ind(\C')$ is assumed t-exact when $m \neq \pm \infty$.
\end{proof}

Our principal examples of bounded Karoubi projections are the following:

\begin{proposition}
\label{proposition:open-is-bounded}%
Let $S$ be a qcqs scheme and $j\colon U \to X$ an open embedding of qcqs smooth $S$-schemes. Let $L\in\Picspace(X)^{\BC}$
be an invertible perfect complex with $\Ct$-action. Then, the duality preserving functor 
\[
j^*\colon (\Dperf(X),\Dual_L) \to (\Dperf(U),\Dual_{j^*L})
\]
is a bounded Karoubi projection (over $\A = \Dperf(S)$). 
\end{proposition}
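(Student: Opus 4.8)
The plan is to verify the two conditions defining an $\A$-linear bounded Karoubi projection (where $\A = \Dperf(S)$). That $j^*$ is $\A$-linear is formal: $j^*$ is symmetric monoidal and commutes with pullback from $S$, hence underlies a map of $\Dperf(S)$-modules in $\Catx$, and its duality-preservingness (already asserted in the statement) is Proposition~\ref{proposition:construction-gives-poincare}. That the underlying exact functor is a Karoubi projection is the Thomason--Trobaugh localisation theorem: with $Z := X \setminus U$ (endowed with any finite-type closed subscheme structure), the sequence $\Dperf_Z(X) \to \Dperf(X) \to \Dperf(U)$ exhibits $\Dperf(U)$ as the idempotent completion of the Verdier quotient $\Dperf(X)/\Dperf_Z(X)$.

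The real content is boundedness. I would use that the fully faithful Pro-left adjoint $g\colon \Dperf(U) \to \Pro(\Dperf(X))$ is given on objects in the image of $j^*$ by the cofiltered limit $g(j^*M) = \lim_{(E \to M)} E$ over the (filtered) category of maps $E \to M$ in $\Dperf(X)$ whose cofibre lies in $\Dperf_Z(X)$. Since $X \to S$ is smooth, hence flat, $\Dperf(X)$ is flat over $\A$ by Example~\ref{example:flat}, so \emph{every single} $\uline{\map}(y,\Dual_L y)$ already has bounded Tor amplitude; the only issue is to pick a cofinal pro-presentation of $g(j^*M)$ along which the bound is \emph{uniform}. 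The model case is $Z = V(f)$ for a global function $f$, where $j^*M = M[1/f] = \colim(M \xrightarrow{f} M \xrightarrow{f} \cdots)$: a direct computation of mapping spectra identifies $g(j^*M)$ with the pro-object $(\cdots \xrightarrow{f} M \xrightarrow{f} M)$ \emph{all of whose terms are $M$ itself}, so uniformity is automatic.

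In general I would produce, after reducing to affine $X$ with $Z = V(f_1,\dots,f_r)$ and then accounting for a Čech gluing over an affine cover of $X$, a cofinal pro-presentation of $g(j^*M)$ whose terms are retracts of objects $M \otimes \mathcal{K}_\alpha$ with $\mathcal{K}_\alpha$ a perfect complex of Tor amplitude $\le d$ for a \emph{fixed} $d$ --- in the multi-variable affine case the $\mathcal{K}_\alpha$ are built from the Koszul complexes $\mathrm{Kos}(f_1^{n_1},\dots,f_r^{n_r})$, which are complexes of vector bundles of amplitude $\le r$ independently of $\vec n$. Using rigidity of $\Dperf(X)$ one then has $\Dual_L(M \otimes \mathcal{K}_\alpha) \simeq M^\vee \otimes \mathcal{K}_\alpha^\vee \otimes L$, whence
\[
\uline{\map}\bigl(M \otimes \mathcal{K}_\alpha,\, \Dual_L(M \otimes \mathcal{K}_\beta)\bigr) \simeq \uline{\map}(M \otimes M, L) \otimes \mathcal{K}_\alpha^\vee \otimes \mathcal{K}_\beta^\vee .
\]
The first factor has some fixed Tor amplitude $\le a$ by flatness, and $\mathcal{K}_\alpha^\vee,\mathcal{K}_\beta^\vee$ have Tor amplitude $\le d$; by Remark~\ref{remark:tor-amp-tensor} the whole object has Tor amplitude $\le a + 2d$, uniformly, a bound inherited by retracts. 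Thus $j^*$ is bounded at $j^*M$; realising the splitting idempotent levelwise on the pro-system $\{M \otimes \mathcal{K}_\alpha\}$ (using that $\Dperf(X)$ is idempotent complete) extends this to every object of $\Dperf(U)$, since each such object is a retract of some $j^*M$.

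The step I expect to be the main obstacle is precisely the construction in the previous paragraph: affine-locally the Koszul complexes give a completely explicit cofinal pro-system with uniformly bounded Tor amplitude, but assembling such a system over a general qcqs $X$ --- gluing the local constructions into one coherent cofinal diagram, or equivalently extracting it from a single perfect generator of $\Dperf_Z(X)$ while keeping amplitudes controlled --- is the one point that does not follow formally from flatness, rigidity, and Thomason--Trobaugh.
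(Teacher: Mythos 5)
You have correctly isolated the genuine obstacle — producing a \emph{single} cofiltered presentation of $g(R)$ with uniformly bounded Tor amplitude over a non-affine $X$ — but the affine-local Koszul construction does not glue in any straightforward way, and this is precisely where the paper takes a different turn. The paper avoids constructing an explicit pro-system altogether. The key observation you are missing is that the duality $\Dual_X$ induces an equivalence $\wtl{\Dual}_X\colon \Derqc(X) = \Ind\Dperf(X) \tosimeq \Pro(\Dperf(X))\op$ which intertwines the Pro-\emph{left} adjoint $g$ of $j^*$ with the Ind-\emph{right} adjoint $j_*\colon \Dperf(U) \to \Derqc(X)$; concretely $g(R) \simeq \wtl{\Dual}_X j_*(\Dual_U R)$. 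This converts the Pro-object problem into a question about a single quasi-coherent complex $j_*R'$ (where $R' = \Dual_U R \in \Dperf(U)$): one needs to exhibit it as a filtered colimit of perfect complexes with uniformly bounded Tor amplitude. Since $j$ is flat (hence $j_*$ preserves Tor amplitude by the projection formula) and $R'$ is perfect, $j_*R'$ already has Tor amplitude $\leq n$ for some $n$; the rest is supplied by Lemma~\ref{lemma:non-positive-tor-amplitude}, a general statement that any Ind-object of Tor amplitude $\leq n$ is a filtered colimit of Tor-amplitude-$\leq n$ objects of $\A$ (whose proof is a Quillen's Theorem A cofinality argument, with no explicit Koszul complexes). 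Finally, $p_*$ also preserves Tor amplitude because $p$ is flat (this is where smoothness is used, as in your argument).

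One further small point: the displayed isomorphism $\uline{\map}(M \otimes \mathcal{K}_\alpha, \Dual_L(M\otimes \mathcal{K}_\beta)) \simeq \uline{\map}(M\otimes M, L) \otimes \mathcal{K}_\alpha^\vee \otimes \mathcal{K}_\beta^\vee$ is not literally correct — the left side is $p_*(\mathcal{K}_\alpha^\vee \otimes M^\vee \otimes M^\vee \otimes \mathcal{K}_\beta^\vee \otimes L) \in \Derqc(S)$, while on the right you are tensoring an object of $\Derqc(S)$ with objects of $\Dperf(X)$, which is not sensible; the push-forward does not factor through the separate factors. What is true, and all you need for the Tor estimate, is that $p_*$ applied to a perfect complex of uniformly bounded Tor amplitude on $X$ gives an object of uniformly bounded Tor amplitude on $S$ (again by flatness of $p$). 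So the slip is cosmetic, but the real gap — assembling a uniformly bounded cofinal diagram over general qcqs $X$ — is circumvented entirely in the paper by the $\wtl{\Dual}_X$ trick rather than solved head-on.
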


\begin{corollary}
\label{corollary:open-is-karoubi}%
Let $j\colon U \to X$ an open embedding of qcqs schemes and let $L\in\Picspace(X)^{\BC}$ be an invertible perfect complex with $\Ct$-action. Then for every $m \in \ZZ \cup \{-\infty,\infty\}$ the Poincaré functor
\[
(j^*,\eta) \colon (\Dperf(X),\QF^{\geq m}_L) \to (\Dperf(U),\QF^{\geq m}_{j^*L})
\]
is a Poincaré-Karoubi projection. In particular, the sequence
\[
(\Dperf_Z(X),\QF_L^{\ge m})\to (\Dperf(X),\QF_L^{\ge m})\to (\Dperf(U),\QF^{\ge m}_{L|_U})\,.
\]
is a Poincaré-Karoubi sequence.
\end{corollary}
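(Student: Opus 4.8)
The plan is to deduce the corollary from Propositions~\ref{proposition:open-is-bounded} and~\ref{proposition:flat-is-karoubi-sym}. The key observation is that the smoothness hypothesis in Proposition~\ref{proposition:open-is-bounded} costs nothing here: one applies it with the base scheme $S$ taken to be $X$ itself. The identity $\id\colon X \to X$ and the quasi-compact open immersion $j\colon U \to X$ are both smooth morphisms, so $X$ and $U$ are qcqs smooth $X$-schemes, and Proposition~\ref{proposition:open-is-bounded} shows that
\[
j^*\colon (\Dperf(X),\Dual_L) \to (\Dperf(U),\Dual_{j^*L})
\]
is a bounded Karoubi projection over $\A := \Dperf(X)$. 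Before applying Proposition~\ref{proposition:flat-is-karoubi-sym} I would record that $\A = \Dperf(X)$, with the standard t-structure on $\Ind(\A) = \Derqc(X)$, satisfies Assumption~\ref{assumption:rigid}: $\Dperf(X)$ is rigid for any qcqs scheme, and $\cO_X$ lies in the heart of the standard t-structure, hence is in particular $0$-coconnective.

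Next I would feed this into Proposition~\ref{proposition:flat-is-karoubi-sym}. Its first half immediately yields that $(j^*,\eta)\colon (\Dperf(X),\QF^{r}_L) \to (\Dperf(U),\QF^{r}_{j^*L})$ is a Poincaré-Karoubi projection for $r \in \{\qdr,\sym\}$, which covers $m = \pm\infty$. For finite $m \in \ZZ$ I would use its second half, for which I must check that $j^*$ refines to a morphism in $\Mod_X(\Catpst)$. This amounts to two points: both $\Dperf(X)$ and $\Dperf(U)$ carry orientations (the standard t-structures on $\Derqc(X)$ and $\Derqc(U)$) compatible with the $\Dperf(X)$-module structure, which is routine; and the induced functor on Ind-completions, namely $j^*\colon \Derqc(X) \to \Derqc(U)$, must be t-exact, which holds because $j$ is flat. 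This establishes the first assertion for all $m \in \ZZ \cup \{-\infty,\infty\}$.

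For the final sentence I would invoke the Thomason–Trobaugh localisation theorem~\cite{thomason-trobaugh}: $\Dperf_Z(X) = \ker(j^*)$ and $\Dperf_Z(X) \to \Dperf(X) \to \Dperf(U)$ is a Karoubi sequence of stable $\infty$-categories. Since $j^*$ is exact and symmetric monoidal it commutes with dualisation, so $j^*M \simeq 0$ forces $j^*(\Dual_L M) \simeq (j^*M)^{\vee} \otimes j^*L \simeq 0$; hence $\Dual_L$ restricts to a perfect duality on $\Dperf_Z(X)$ and $\QF^{\ge m}_L$ restricts to a Poincaré structure $\QF^{\ge m}_L|_Z$. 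The Poincaré $\infty$-category $(\Dperf_Z(X),\QF^{\ge m}_L|_Z)$ is then the fibre of the Poincaré-Karoubi projection just established, and, since its underlying sequence is a Karoubi sequence and the Poincaré structures involved are all restrictions of $\QF^{\ge m}_L$ (so the inclusion and $j^*$ are Poincaré functors), the displayed sequence is a Poincaré-Karoubi sequence by definition.

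I expect the only genuinely delicate point to be the reduction of the general qcqs case to the ``smooth over a fixed base'' setting of Proposition~\ref{proposition:open-is-bounded} via the self-base trick $S = X$; once that is in place, everything else is bookkeeping — verifying Assumption~\ref{assumption:rigid} for $\Dperf(X)$, the t-exactness of pullback along an open immersion, and the compatibility of $\Dual_L$ with restriction to the open.
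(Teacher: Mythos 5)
Your proof is correct and matches the route the paper implicitly takes: the corollary is obtained by applying Proposition~\ref{proposition:open-is-bounded} with $S = X$ (the "self-base trick," which is precisely how the smoothness hypothesis of that proposition is rendered vacuous), then feeding the resulting bounded Karoubi projection over $\A = \Dperf(X)$ — which satisfies Assumption~\ref{assumption:rigid} — into Proposition~\ref{proposition:flat-is-karoubi-sym}, with the t-exactness of $j^*$ on Ind-completions supplied by flatness of the open immersion. The identification of the kernel with $(\Dperf_Z(X),\QF^{\geq m}_L|_Z)$ and the invocation of Thomason--Trobaugh (equivalently, Theorem~\ref{theorem:perfect-generators}) for the last clause is also the intended argument.
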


\begin{corollary}[Localisation sequence]
Let $S$ be a qcqs scheme and $\F\colon \Catp \to \E$ be a Karoubi-localising functor valued in some stable presentable $\infty$-category $\E$ (e.g., $\F=\KGW$ or $\F=\KL$).
Let $X$ be a qcqs scheme equipped with a invertible perfect complex with $\Ct$-action $L\in\Picspace(X)^{\BC}$. Then for every qc open subset $U$ with $Z=X\smallsetminus U$ and every $m \in \ZZ \cup \{-\infty,\infty\}$ we have a fibre sequence 
\[
\F(\Dperf_Z(X),\QF_L^{\ge m})\to \F(\Dperf(X),\QF_L^{\ge m})\to \F(\Dperf(U),\QF^{\ge m}_{L|_U})\,.
\]
\end{corollary}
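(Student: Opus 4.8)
The plan is to deduce this statement directly from Corollary~\ref{corollary:open-is-karoubi} together with the defining property of a Karoubi-localising functor. Recall that $\F\colon \Catp \to \E$ being Karoubi-localising means precisely that it sends every Poincaré-Karoubi sequence in $\Catp$ to a fibre sequence in $\E$; since $\E$ is stable, such a fibre sequence is automatically also a cofibre sequence, so one in particular obtains the associated long exact sequence of homotopy groups. The instances $\F = \KGW$ and $\F = \KL$ are Karoubi-localising by construction, as recalled in \S\ref{subsection:KGW-recall}.

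First I would invoke Corollary~\ref{corollary:open-is-karoubi}: for the open immersion $j\colon U \to X$ of qcqs schemes with closed complement $Z = X \smallsetminus U$, for any invertible perfect complex with $\Ct$-action $L \in \Picspace(X)^{\BC}$, and for every $m \in \ZZ \cup \{-\infty,\infty\}$, the sequence
\[
(\Dperf_Z(X),\QF_L^{\ge m}) \to (\Dperf(X),\QF_L^{\ge m}) \to (\Dperf(U),\QF^{\ge m}_{L|_U})
\]
is a Poincaré-Karoubi sequence, the first map being induced by the inclusion $\Dperf_Z(X) \subseteq \Dperf(X)$ and the second by restriction $j^*$. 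Applying the functor $\F$ then yields the claimed fibre sequence, by the very definition of a Karoubi-localising functor.

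There is no real obstacle at this stage: all the genuine content — namely that the Thomason--Trobaugh localisation sequence of perfect derived categories is a Karoubi (rather than Verdier) sequence, and that its symmetric, quadratic and genuine refinements inherit the Poincaré-Karoubi property via boundedness — has already been packaged into Proposition~\ref{proposition:open-is-bounded}, Proposition~\ref{proposition:flat-is-karoubi-sym} and hence Corollary~\ref{corollary:open-is-karoubi}. The only mild point is that ``Karoubi-localising'' is used here for a functor valued in a general stable presentable $\infty$-category rather than in $\Spa$; this is the evident extension of the definition given in \S\ref{subsection:KGW-recall}, and requires nothing new.
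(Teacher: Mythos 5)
Your proof is correct and takes exactly the route the paper intends: the corollary follows immediately from Corollary~\ref{corollary:open-is-karoubi} by applying the defining property of a Karoubi-localising functor, which is why the paper states it without proof.
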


The proof of Proposition~\ref{proposition:open-is-bounded} uses the following lemma:

\begin{lemma}
\label{lemma:non-positive-tor-amplitude}%
Let $\A$ be a stably symmetric monoidal $\infty$-category equipped with a multiplicative orientation $(\Ind(\A)_{\geq 0},\Ind(\A)_{\leq 0})$. Suppose that every object in $\Ind(\A)_{\geq 0}$ is a filtered colimit of objects in $\A \cap \Ind(\A)_{\geq 0}$. Then every object $x \in \Ind(\A)$ which has Tor amplitude $\leq n$ is a filtered colimits of objects with Tor amplitude $\leq n$ which furthermore belong to $\A \subseteq \Ind(\A)$. 
\end{lemma}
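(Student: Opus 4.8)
The plan is to recast the lemma as a generation statement in $\Ind(\A)$. Write $\A^{(n)} \subseteq \A$ for the full subcategory spanned by the objects of Tor amplitude $\le n$. Since (as recalled in the proof of Corollary~\ref{corollary:flat-closed}) objects of bounded Tor amplitude are stable under finite colimits, desuspension and retracts, $\A^{(n)}$ is closed in $\A$ under finite colimits, suspension and retracts, so the canonical functor $\Ind(\A^{(n)}) \to \Ind(\A)$ is fully faithful with essential image closed under all colimits and under suspension. The lemma is exactly the assertion that every $x$ of Tor amplitude $\le n$ lies in this essential image, i.e.\ is a filtered colimit of objects of $\A^{(n)}$. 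By a standard cofinality argument — the tautological filtered diagram $\A_{/x} \to \Ind(\A)$ has colimit $x$, and a full subcategory of a filtered category into which every object admits a map is cofinal (and automatically filtered) — it suffices to prove the following: \emph{for each $a \in \A$ and each map $a \to x$ there is a factorisation $a \to a' \to x$ with $a' \in \A^{(n)}$.}

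To produce such a factorisation, set $F := \fib(a \to x) \in \Ind(\A)$. For any $b \in \A$ with a map $b \to F$, let $a' := \cof(b \to a)$ (via the composite $b \to F \to a$); then the map $a \to x$ factors through $a \to a'$, and the octahedron attached to $b \to F \to a$ identifies $\fib(a' \to x)$ with $\cof(b \to F)$. Since the Tor amplitude of the middle term of a fibre sequence is bounded by the maximum of the amplitudes of the outer two, and $x$ has Tor amplitude $\le n$, it follows that $a'$ has Tor amplitude $\le \max\{\,n,\ \textrm{Tor amplitude of } \cof(b\to F)\,\}$. Now $a$ has some finite Tor amplitude $N$ — as does every compact object in the situation at hand, e.g.\ for $\A = \Dperf(S)$ — and hence so does $F$; so we are reduced to the following one-step reduction: \emph{if $F \in \Ind(\A)$ has Tor amplitude $\le N$ with $N > n$, there is a compact $b$ of Tor amplitude $\le N$ and a map $b \to F$ such that $\cof(b \to F)$ has Tor amplitude $\le N-1$}. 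Iterating this finitely many times brings the amplitude down to $n$ and produces the required $a' \in \A^{(n)}$; the cofinality step of the first paragraph then concludes.

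The one-step reduction is the heart of the argument, and the only place the hypothesis on $\Ind(\A)_{\ge 0}$ enters. Concretely, for $b$ of Tor amplitude $\le N$ the object $\cof(b \to F)$ has Tor amplitude $\le N-1$ precisely when the map $b \to F$ is surjective on $\pi_N((-)\otimes c)$ and injective on $\pi_{N-1}((-)\otimes c)$ for every coconnective $c$; in particular it is enough to "hit the top Tor layer" $\tau_{\ge N}(F \otimes (-))$ of $F$, which — because $F$ has Tor amplitude $\le N$ — is a right-exact, filtered-colimit-preserving functor from the heart of $\Ind(\A)$ to the heart shifted by $N$, i.e.\ behaves like tensoring with a flat module. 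The hypothesis that $\Ind(\A)_{\ge 0}$ is the Ind-completion of $\A \cap \Ind(\A)_{\ge 0}$ is exactly the Lazard-type input that lets such a "flat" layer be written as a filtered colimit of "free" layers coming from compact objects of $\A$, and hence be resolved one step by a single compact $b \to F$ with the required surjectivity and injectivity; compactness is what permits stopping at a finite stage. I expect the delicate part — and the main obstacle — to be making this Lazard-type resolution precise at the $\infty$-categorical level: verifying that the top Tor layer of $F$ really is governed by a single compact $b$, with all higher coherences, which is false without the filtered-colimit hypothesis on the connective part of $\Ind(\A)$.
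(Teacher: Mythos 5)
Your cofinality reduction in the first paragraph is correct and matches the paper's: one reduces to producing, for each $a \to x$ with $a$ compact, a factorisation $a \to a' \to x$ through some $a'$ in $\A$ of Tor amplitude $\le n$. From that point on, however, you take a genuinely different route from the paper, and the route has a real gap.

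The heart of your argument is the ``one-step reduction'': given $F = \fib(a \to x)$ of Tor amplitude $\le N$ with $N > n$, produce a \emph{single} compact $b$ of Tor amplitude $\le N$ and a map $b \to F$ so that $\cof(b \to F)$ has Tor amplitude $\le N-1$, then iterate. You do not prove this step; you explicitly flag it as the ``delicate part'' and describe it only as a hoped-for Lazard-type resolution. Worse, as you have stated it (for a general $F$ of amplitude $\le N$), the claim is simply false. Take $\A = \Dperf(\ZZ)$, $F = \QQ$ placed in degree $0$, $N = 0$: any perfect complex $b$ has finitely generated $\pi_0$, so $\pi_0 b \to \QQ$ can never be surjective, and hence $\cof(b \to \QQ) \otimes \ZZ$ can never be $(-1)$-coconnective. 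To rescue the step you would have to use the special shape $F = \fib(a \to x)$ — indeed one can check that for $N > n$ the natural map $\pi_N(F \otimes c) \to \pi_N(a \otimes c)$ is an isomorphism for every coconnective $c$, so the ``top Tor layer'' of $F$ is finitely generated — but none of this appears in your sketch, and even with that observation you would still need to convert it into an honest map $b \to F$ hitting that layer for all test objects $c$ simultaneously. In short, the crucial step is unproved and its stated form is wrong.

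The paper avoids this entirely by a dualisation trick that uses the rigidity of $\A$ (the lemma as stated omits the rigidity hypothesis, but it is used in the proof via Lemma~\ref{lemma:dual-amplitude}, and it holds in all the intended applications). Given $f\colon z \to x$, one passes to the adjoint $\tilde f\colon \one_\A \to x \otimes \Dual z$. Because $x$ has Tor amplitude $\le n$, the object $x \otimes \tau_{\le -n-1}(\Dual z)$ is $(-1)$-coconnective, so the map from the connective unit factors through $x \otimes \tau_{\ge -n}(\Dual z)$. Now the hypothesis on $\Ind(\A)_{\ge 0}$ applies directly: $\tau_{\ge -n}(\Dual z)$ is a filtered colimit of $(-n)$-connective objects of $\A$, and compactness of $\one_\A$ lets $\tilde f$ factor through a single such $y$. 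Dualising back, $f$ factors through $\Dual y$, which has Tor amplitude $\le n$ by Lemma~\ref{lemma:dual-amplitude}. This is a one-shot factorisation with no iteration and no lowering of Tor amplitude step by step. If you want to pursue a duality-free argument along your lines, the missing work is precisely to establish the one-step reduction in the constrained setting $F = \fib(a \to x)$ using the hypothesis on $\Ind(\A)_{\ge 0}$; as it stands your proposal does not do so.
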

\begin{proof}
Let us fix $x \in \Ind(\A)$ with Tor amplitude $\leq n$. 
Then $x$ is the colimit of the canonical filtered diagram $\A_{/x}\to\Ind(\A)$. 
Let $\I \subseteq \A_{/x}$ be the full subcategory spanned by those $z \in x$ where $z$ has Tor amplitude $\leq n$. We claim that the inclusion $\I \subseteq \A_{/x}$ is cofinal. 
By Quillen's theorem A what we need to verify is that if $f\colon z\to x$ is a map with $z \in \A$, then the $\infty$-category $\A_{z//x} \times_{\A} \I$ of factorizations of $z \to w \to x$ such that $w $ is in $\A$ and has Tor amplitude $\leq n$, is weakly contractible.
 
First let us prove that $\A_{z//x} \times_{\A} \I$ is non empty. 
Since $x$ has Tor amplitude $\leq n$ the object $x\otimes \tau_{\le-n-1}(\Dual z)$ is $(-1)$-coconnective, and since the unit $\one_\A$ is connective (by the assumption that the orientation is multiplicative) we get that the dual map 
\[
\tilde f\colon \one_\A\to x\otimes \Dual z
\]
lifts to $x\otimes t_{\ge -n}(\Dual z)$. Now, by our assumption, we have that $t_{\ge -n}(\Dual z)$ is a filtered colimit of $(-n)$-connective objects which belong to $\A$, and so we can find a $(-n)$-connective object $y \in \A$ such that $\tilde f$ factors as
\[
\one_\A \to x \otimes y \to x \otimes \Dual z .
\]
It then follows that $f\colon z \to x$ factors as
\[
z \to \Dual y \to z ,
\]
where $\Dual y$ has Tor amplitude $\leq n$ by Lemma~\ref{lemma:dual-amplitude}, and so $\A_{z//x} \times_{\A} \I$ is non-empty.
\end{proof}

\begin{proof}[Proof of Proposition~\ref{proposition:open-is-bounded}]
We first note that the underlying exact functor $j^*\colon \Dperf(X) \to \Dperf(U)$ is indeed a Karoubi projection by~\ref{theorem:perfect-generators} \ref{item:karoubi-sequence-intersection-closed}.
Let $g\colon \Dperf(U) \to \Pro\Dperf(X)$ be its Pro-left adjoint. We need to show that for every $R \in \Dperf(U)$ the object $g(R) \in \Pro\Dperf(X)$ can be written as the limit of a cofiltered diagram $\{P_\alp\}_{\alp \in \I}$ such that 
\[
\uline{\map}(P_{\alp},\Dual_L P_{\alp}) = \uline{\map}(P_{\alp},\Dual_X P_{\alp} \otimes L) = p_*(\Dual_X P_{\alp} \otimes \Dual_X P_{\alp} \otimes L) \in \Dperf(S)
\]
has bounded Tor amplitude uniformly in $\alp$, where $p\colon X \to S$ is the structure map of $X$. Now the duality $\Dual_X\colon \Dperf(X) \tosimeq \Dperf(X)\op$ induces an equivalence 
\[
\wtl{\Dual}_X\colon \Der^{\qc}(X) = \Ind\Dperf(X) \tosimeq \Ind\Dperf(X)\op = \Pro(\Dperf(X))\op
\]
which intertwines the Pro-left and Ind-right adjoints of $j^*$. More precisely, we have
\[
g(R) = \wtl{\Dual}_X j_*(\Dual_U(R)),
\]
where $j_*\colon \Dperf(X) \to \Der^{\qc}(U)$ is the push-forward functor, which is the Ind-right adjoint of $j^*$. Replacing $R$ with $\Dual_UR$, it will hence suffice to show that for every $R \in \Dperf(U)$, the quasi-coherent complex $j_*U \in \Der^{\qc}$ can be written as a filtered colimit of perfect complexes $\{Q_{\bet}\}$ such that $p_*(Q_{\bet} \otimes Q_{\bet} \otimes L)$ has bounded Tor amplitude uniformly in $\bet$. 

Since $p\colon X \to S$ is smooth it is in particular flat, and hence the pullback functor $p^*\colon \Der^{\qc}(S) \to \Der^{\qc}(X)$ preserves coconnective objects. By the projection formula, it follows that for any given $n$ the push-forward functor $p_*\colon \Der^{\qc}(X) \to \Derqc(S)$ preserves the property of having bounded Tor amplitude $\leq n$. It will hence suffice to find $\{Q_{\bet}\}$ such that $\{Q_{\bet} \otimes Q_{\bet} \otimes L\}$ has uniformly bounded Tor amplitude in $\Derqc(X)$. Now since $L$ is a perfect complex it has Tor amplitude $\leq n'$ for some $n'$, and so it will suffice to show that $Q_{\bet}$ can be chosen to have a uniformally bounded Tor amplitude. Indeed, since $j\colon U \to X$ is flat we have that $j^*$ preserves coconnective objects and hence by the projection formula $j_*$ preserves Tor amplitudes. In particular, since $R$ is perfect it has Tor amplitude $\leq n$ for some $n$ by Lemma~\ref{lemma:dual-amplitude}, which means that $j_*R$ has Tor amplitude $\leq n$, and so by Lemma~\ref{lemma:non-positive-tor-amplitude} it can be written as a filtered colimit of objects with Tor amplitude $\leq n$, as desired.
\end{proof}

Finally, let us prove that bounded Karoubi sequences are stable under tensoring with flat $\infty$-categories with duality. This property is what makes the notion of bounded Karoubi sequence useful to set up a multiplicative framework for motivic realization in \S\ref{subsection:nisnevich-invariants}.

\begin{proposition}
\label{proposition:flat-multi}%
Let $\A$ be stably symmetric monoidal $\infty$-category equipped with a multiplicative orientation and satisfying Assumption~\ref{assumption:rigid}.
Let $(\C,\Dual)$ be an $\A$-linear $\infty$-category with duality. Then the operation of tensoring with $(\C,\Dual)$ 
\[
(\C,\Dual) \otimes_A (-)\colon \Mod_{(\A,\Dual_\A)}(\Catps) \to \Mod_{(\A,\Dual_\A)}(\Catps)
\]
preserves Karoubi sequences of $\A$-linear $\infty$-categories with duality. Furthermore, if $(\C,\Dual)$ is flat then $(\C,\Dual) \otimes_A (-)$ preserves bounded Karoubi sequences. 
\end{proposition}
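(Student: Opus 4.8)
The plan is to reduce the statement to the level of underlying $\A$-linear stable $\infty$-categories and then combine the description~\eqref{equation:pure-tensors} of enriched mapping objects in relative tensor products with the uniform Tor-amplitude bound built into the notion of boundedness. Since the forgetful functor $\Catps\to\Catx$ is symmetric monoidal (Remark~\ref{remark:perfect-duality}), the underlying $\A$-linear $\infty$-category of $(\C,\Dual)\otimes_{(\A,\Dual_\A)}(\E,\Dual_\E)$ is $\C\otimes_\A\E$, with the tensor-product duality acting factorwise on pure tensors; and since $\Catps\simeq(\Catx)^{\hC}$ and homotopy fixed points detect the relevant limits and colimits, a (bounded) Karoubi square in $\Mod_{(\A,\Dual_\A)}(\Catps)$ is precisely one whose underlying $\A$-linear square is a (bounded) Karoubi square. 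So it suffices to prove that the endofunctor $\C\otimes_\A(-)$ of $\Mod_\A(\Catx)$ preserves Karoubi squares, and, when $\C$ is flat over $\A$, bounded ones.

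For the first assertion I would use that $\C\otimes_\A(-)$ is a left adjoint (right adjoint $\E\mapsto\Fun^\A(\C,\E)$), hence preserves pushouts, together with the fact that it preserves Karoubi sequences of $\A$-linear stable $\infty$-categories. The latter is recorded in \cite{9-authors-II}; alternatively one passes to idempotent completions — harmless, since by~\eqref{equation:formula-tensor} $\Ind(\C\otimes_\A\E)$ depends only on $\Ind(\E)$, so $\C\otimes_\A\E\to\C\otimes_\A\E^\natural$ is a Karoubi equivalence — and applies $\Ind$ to turn a Verdier sequence into a localisation sequence of compactly generated $\Ind(\A)$-modules in $\PrL$, which the colimit-preserving functor $\M\mapsto\Fun^{\A\op}(\C\op,\M)\simeq\Ind(\C)\otimes_{\Ind(\A)}\M$ carries to another localisation sequence (it preserves the cofibre sequence and, by~\eqref{equation:formula-tensor}, the fully faithful colimit-preserving inclusion), whose restriction to compact objects is again a Verdier sequence. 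Since a bounded or ordinary Karoubi square, being cartesian with vertical Karoubi projections, is also cocartesian and is assembled from its two vertical Karoubi sequences, it follows that $\C\otimes_\A(-)$ preserves Karoubi squares and, in particular, sends Karoubi projections to Karoubi projections.

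For the second assertion it then suffices to show that if $p\colon(\D,\Dual_\D)\to(\D',\Dual_{\D'})$ is a bounded Karoubi projection and $\C$ is flat over $\A$, then $\C\otimes_\A p$ is bounded. It is a Karoubi projection by the previous paragraph; and, exactly as in the proof of Proposition~\ref{proposition:flat-is-karoubi-sym}, the objects at which a Karoubi projection is bounded form a stable subcategory, so — as $\C\otimes_\A\D'$ is generated as a stable subcategory by the pure tensors $c\otimes x$ — it is enough to check boundedness of $\C\otimes_\A p$ at each $c\otimes x$. Let $g\colon\D'\to\Pro(\D)$ be the fully faithful Pro-left adjoint of $p$ and pick a presentation $g(x)=\lim_i y_i$ with $\uline{\map}_\D(y_i,\Dual_\D y_i)$ of Tor amplitude $\leq n$ uniformly in $i$. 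Using that Pro-completion is compatible with $\otimes_\A$ — the opposite of~\eqref{equation:formula-tensor} identifies $\Pro(\C\otimes_\A\D)\simeq\Fun^{\A\op}(\C\op,\Pro(\D))$, under which $\C\otimes_\A p$ is postcomposition with $\Pro(p)$ and its Pro-left adjoint postcomposition with $g$ — the Pro-left adjoint of $\C\otimes_\A p$ sends $c\otimes x$ to $\lim_i(c\otimes y_i)$, and by~\eqref{equation:pure-tensors}
\[
\uline{\map}_{\C\otimes_\A\D}\bigl(c\otimes y_i,\,(\Dual\otimes\Dual_\D)(c\otimes y_i)\bigr)\simeq\uline{\map}_{\C}(c,\Dual c)\otimes\uline{\map}_\D(y_i,\Dual_\D y_i).
\]
Flatness of $\C$ gives a fixed $n'$ with $\uline{\map}_{\C}(c,\Dual c)$ of Tor amplitude $\leq n'$, so by Remark~\ref{remark:tor-amp-tensor} the displayed spectrum has Tor amplitude $\leq n+n'$ uniformly in $i$; hence $\C\otimes_\A p$ is bounded at $c\otimes x$, and therefore at every object.

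The step I expect to be the main obstacle is the identification, in the last paragraph, of the Pro-left adjoint of $\C\otimes_\A p$ on pure tensors with $\lim_i(c\otimes y_i)$: making precise the compatibility of Pro-completion with the relative tensor product, and the accompanying statement on Pro-left adjoints, requires a little care. Once that is in place, the remainder is a purely formal consequence of~\eqref{equation:pure-tensors}, flatness, and Remark~\ref{remark:tor-amp-tensor}, in direct parallel with the symmetric and quadratic cases in Proposition~\ref{proposition:flat-is-karoubi-sym}. A secondary technical point, relevant only if one does not wish to quote \cite{9-authors-II}, is the preservation of Verdier and Karoubi sequences by $\otimes_\A$ together with the bicartesianness of (bounded) Karoubi squares.
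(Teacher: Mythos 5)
Your overall architecture matches the paper's proof closely: reduce to underlying $\A$-linear stable $\infty$-categories via the symmetric monoidal forgetful functor $\Catps\to\Catx$, prove the first claim by controlling $\Ind(\C\otimes_\A -)$, and prove the second claim by restricting to pure tensors, using~\eqref{equation:pure-tensors}, the flatness of $\C$, and Remark~\ref{remark:tor-amp-tensor}. The step you worried about at the end — that the Pro-left adjoint of $\C\otimes_\A p$ sends $c\otimes x$ to $\lim_i(c\otimes y_i)$ — is indeed what is needed, and the paper asserts it without further ado; it drops out of the Ind/Pro description in~\eqref{equation:formula-tensor}, as you suggest.

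There is, however, one genuine gap in your treatment of the first claim. You write that the functor $\M\mapsto\Fun^{\A\op}(\C\op,\M)$ "preserves the fully faithful colimit-preserving inclusion, by~\eqref{equation:formula-tensor}," but~\eqref{equation:formula-tensor} alone does not give this: the functoriality of $\Fun^{\A\op}(\C\op,\Ind(\D))$ in $\D$ is contravariant via restriction along $f$, and its left adjoint (the covariant functoriality you need) is \emph{a priori} not post-composition with $f_!\colon\Ind(\D)\to\Ind(\D')$, because $f_!$ is only oplax $\A\op$-linear in general. It becomes honestly $\A\op$-linear — so that post-composition is well-defined and gives the desired fully faithful left adjoint — precisely because $\A$ is rigid (Assumption~\ref{assumption:rigid}), via Lemma~\ref{lemma:rune-magic-lemma}. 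This is exactly where the paper invokes rigidity in the proof, and without it the claim that $\Ind(\C\otimes_\A\D)\to\Ind(\C\otimes_\A\D')$ is fully faithful does not follow. Separately, and more mildly: your assertion that "the objects at which a Karoubi projection is bounded form a stable subcategory" is not established in the paper and is not obviously true (the cross-terms $\uline{\map}(y_i,\Dual z_j)$ that would arise for a cofiber are not controlled by the defining bound on $\uline{\map}(y_i,\Dual y_i)$); luckily it is also unnecessary, since the definition of a bounded Karoubi projection only asks that the bounded-at objects \emph{generate} the target, which your reduction to pure tensors already uses correctly.
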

\begin{proof}
For the first claim, note that since the operation $\C \otimes_\A (-)$ preserves colimits, it suffices to show that it preserves Karoubi inclusions, that is, fully-faithful embeddings. For this, it will suffice to show that if $\D \to \D'$ is fully-faithful then the induced functor
\[
\Ind(\C \otimes_\A \D) \to \Ind(\C \otimes_\A \D')
\]
is fully-faithful.
The formula~\eqref{equation:formula-tensor} gives us canonical equivalences
\begin{align*} 
\Ind(\C \otimes_\A \D) &= \Funx((\C \otimes_A \D)\op,\Spa) \\
&= \Fun^{\A\op}(\C\op,\Ind(\D)) \ .
\end{align*}
To avoid confusion, let us point out that the functoriality of $\Fun^{\A\op}(\C\op,\Ind(\D))$ in $\D$ is slightly delicate: given an $\A$-linear functor $f\colon \D \to \D'$, one has an associated $\A\op$-linear functor $f^*\colon \Ind(\D') \to \Ind(\D)$ induced by restriction along $f$, and hence an induced $\A\op$-linear functor 
\[
\Fun^{\A\op}(\C\op,\Ind(\D')) \to \Fun^{\A\op}(\C\op,\Ind(\D)),
\]
whose left adjoint is the $\A$-linear functor encoding the covariant dependence in $\D$. This left adjoint is generally \emph{not} induced by post-composition with $f_!\colon \Ind(\D) \to \Ind(\D')$, since the latter is generally not $\A\op$-linear (only oplax $\A\op$-linear). However, if the oplax $\A\op$-linear structure on $f_!$ happens to be strict, then post composition with $f_!$ is well-defined and hence yields a left adjoint to post-composition with $f^*\colon \Ind(\D') \to \Ind(\D)$. This phenomenon happens for example when $\A$ is assumed to be rigid, since then by adjunction the $\A\op$-action on $\Ind(\D)$ is given by pre-composing the $\A$-action on it with the duality $\Dual_\A\colon \A\op \to \A$, and hence any $\A$-linear functor $\Ind(\D) \to \Ind(\D')$ is also $\A\op$-linear. In particular, if $f\colon \D \to \D'$ is fully-faithful then $f_!\colon \Ind(\D) \to \Ind(\D')$ is fully-faithful and so the post-composition functor
\[
f_! \circ\colon \Fun^{\A\op}(\C\op,\Ind(\D)) \to \Fun^{\A\op}(\C\op,\Ind(\D'))
\]
is fully-faithful, so that 
\[
\Ind(\C \otimes_A \D) \to \Ind(\C \otimes_\A \D')
\]
is fully-faithful, as desired.

To prove the second claim, let us assume that $(\C,\Dual)$ is flat and let $p\colon (\D,\Dual_\D) \to (\E,\Dual_\E)$ be a bounded Karoubi projection of $\A$-linear $\infty$-categories with duality. By the first part of the proposition we have that the induced functor
\[
\C \otimes_\A p \colon (\C,\Dual_\C) \otimes_{\A} (\D,\Dual_\D) \to (\C,\Dual_\C) \otimes_{\A}(\E,\Dual_\E)
\]
is also a Karoubi projection. We now show that $\C \otimes_\A p$ is bounded. Since $\C \otimes_\A \E$ is generated as a stable $\infty$-category by the pure tensors $z \otimes x \in \C \otimes_\A \E$, and by assumption $\E$ is generated as a stable $\infty$-category by the collection of $x \in \E$ at which $p$ is bounded, it will suffice to show that if $p$ is bounded at $x$ then $\C \otimes_\A p$ is bounded at each $z \otimes x$ for every $z \in \E$. Let $\{y_i\}_{i \in \I}$ be a cofiltered in $\D$ family with limit $g(x)$, where $g\colon \E \to \Pro(\D)$ is a Pro-left adjoint of $p$. Then $\{z \otimes y_i\}_{i \in \I}$ is a cofiltered family in $\C \otimes \D$ with limit $z \otimes g(x)$ and by~\eqref{equation:pure-tensors} we get that
\[
\{\uline{\map}_{\C}(z \otimes y_i, \Dual_{\C}(z) \otimes \Dual_{\C}(y_i))\}_{i \in \I}= \{\uline{\map}_{\C}(z,\Dual_{\C}(z)) \otimes \uline{\map}_{\C}(y_i,\Dual(y_i))\}_{i \in \I} ,
\]
and so $\C \otimes_\A p$ is bounded at $z \otimes x$ by Remark~\ref{remark:tor-amp-tensor}.
\end{proof}

\subsection{Zariski descent}
\label{subsection:zariski-descent}%

In this section, we assemble the results of the previous two sections into various forms of Zariski descent.

\begin{proposition}
\label{proposition:zariski-descent-poinc}%
Let $X$ be a qcqs scheme and $j_0\colon U_0 \hrar X \hookleftarrow U_1\cocolon j_1$ a pair of qcqs open subschemes and write $j_{01}\colon U_0 \cap U_1 \hrar X$ for the inclusion of the intersection.
Then for any invertible perfect complex with $\Ct$-action $L \in \Picspace(X)^{\BC}$ on $X$ and 
for every $-\infty \leq m \leq \infty$ the corresponding square of Poincaré $\infty$-categories
\begin{equation}
\label{equation:zariski-square}%
\begin{tikzcd}
(\Dperf(X),\QF^{\ge m}_L)\ar[r] \ar[d] & \ar[d] (\Dperf(U_1),\QF^{\ge m}_{j_1^*L})\\
(\Dperf(U_0),\QF^{\ge m}_{j_0^*L})\ar[r] & (\Dperf(U_0 \cap U_1),\QF^{\ge m}_{j_{01}^*L})
\end{tikzcd}
\end{equation}
is a Poincaré-Karoubi square.
\end{proposition}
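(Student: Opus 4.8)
The plan is to reduce the statement to the localisation sequences of Corollary~\ref{corollary:open-is-karoubi} together with Zariski excision for perfect complexes with support. First observe that, by construction, the square~\eqref{equation:zariski-square} is the image under the functor $(Y,M)\mapsto (\Dperf(Y),\QF^{\ge m}_M)$ of~\eqref{equation:functoriality-schemes-Lt} of the evident commutative square of pairs; in particular each of its four arrows is restriction along an open immersion, hence a Poincaré--Karoubi projection by Corollary~\ref{corollary:open-is-karoubi}. It therefore remains to prove that the square is cartesian in $\Catp$. Since $\Dperf$ is already idempotent complete, this is in fact stronger than asking the square to be cartesian after idempotent completion, so whichever meaning of ``Poincaré--Karoubi square'' one uses, we will be done.

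To verify cartesianness I would compare the fibres of the two vertical arrows. Write $Z := X\smallsetminus U_0$ for the closed complement. The fibre in $\Catx$ of $\Dperf(X)\to\Dperf(U_0)$ is the full subcategory $\Dperf_Z(X)$ of complexes acyclic on $U_0$, carrying the restricted Poincaré structure $\QF^{\ge m}_L|_Z$; likewise the fibre of $\Dperf(U_1)\to\Dperf(U_0\cap U_1)$ is $\Dperf_{Z\cap U_1}(U_1)$ with $\QF^{\ge m}_{j_1^*L}|_{Z\cap U_1}$, and by Corollary~\ref{corollary:open-is-karoubi} these are, respectively, the $\Catp$-fibres of the two columns. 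This is the one point where the hypothesis that $\{U_0,U_1\}$ covers $X$ enters: it gives $Z\subseteq U_1$, so that $Z\cap U_1 = Z$ and both fibres are Poincaré $\infty$-categories of complexes supported on the \emph{same} closed set $Z$. Hence~\eqref{equation:zariski-square} is cartesian in $\Catp$ if and only if the induced comparison Poincaré functor
\[
j_1^*\colon \big(\Dperf_Z(X),\QF^{\ge m}_L|_Z\big)\longrightarrow \big(\Dperf_Z(U_1),\QF^{\ge m}_{j_1^*L}|_Z\big)
\]
is an equivalence of Poincaré $\infty$-categories.

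On underlying stable categories this is exactly Zariski excision: $\Dperf_Z(-)$ depends only on an open neighbourhood of $Z$, since perfectness is a Zariski-local property (Appendix~\ref{section:appendix}, via~\ref{theorem:perfect-generators}), so $j_1^*\colon\Dperf_Z(X)\to\Dperf_Z(U_1)$ is an equivalence. It then remains to see that the structure transformation $\eta$ of this Poincaré functor is an equivalence. Since $\QF^{\ge m}_L$ is built by finite limits out of $\QF^{\sym}_L(M)=\map_X(M\otimes M,L)^{\hC}$ and the linear terms $\map_X(M,\LT_L)$ and $\map_X(M,\tau_{\ge m}\LT_L)$, it suffices to check that for $M\in\Dperf_Z(X)$ the maps $\map_X(M\otimes M,L)\to\map_{U_1}(j_1^*M\otimes j_1^*M,j_1^*L)$ and $\map_X(M,\LT_L)\to\map_{U_1}(j_1^*M,\LT_{j_1^*L})$, and the variant with $\tau_{\ge m}\LT_L$, are equivalences; homotopy fixed points, truncations and limits then come along. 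By adjunction the targets are $\map_X(M\otimes M,(j_1)_*j_1^*L)$ and $\map_X(M,(j_1)_*j_1^*\LT_L)$ (using that $j_1^*$ is symmetric monoidal, that $\LT_{j_1^*L}\simeq j_1^*\LT_L$, and that $j_1^*$ is t-exact so commutes with $\tau_{\ge m}$). Now the cofibre of $F\to (j_1)_*j_1^*F$ restricts to zero on $U_1$, hence is supported on $X\smallsetminus U_1$, which is disjoint from $Z$; a Mayer--Vietoris computation for the cover $X=U_0\cup U_1$ shows $\map_X(N,G)\simeq 0$ whenever $N|_{U_0}=0$ and $G|_{U_1}=0$, which gives the required equivalence.

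I expect the last paragraph to carry the genuine content: upgrading the excision equivalence of stable categories to an equivalence of Poincaré $\infty$-categories for every $m$, that is, tracking the linear part $\LT_L$ — which is only quasi-coherent, not perfect — together with its connective covers and the $\Ct$-action on $L$ under restriction to an open subscheme. The symmetric case $m=-\infty$ is immediate from the disjoint-support vanishing; the genuine and quadratic cases $m\in\ZZ\cup\{\infty\}$ hinge on the compatibilities $\LT_{j_1^*L}\simeq j_1^*\LT_L$ and t-exactness of $j_1^*$, after which the same vanishing applies verbatim. Everything else is formal: that the four arrows are Poincaré--Karoubi projections is Corollary~\ref{corollary:open-is-karoubi}, and reducing cartesianness of the square to the comparison map on the fibres of its columns is standard.
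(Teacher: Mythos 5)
Your strategy differs from the paper's: they verify directly that the square of quadratic functors over $\Dperf(X)$ is cartesian (by observing the bilinear parts are controlled by the duality and then reducing to Zariski descent for the linear part $\tau_{\geq m}\LT_L$ in $\Derqc(X)$, via Corollary~\ref{corollary:descent-qc}), whereas you try to reduce everything to the comparison between the fibres of the two columns. The pivot of your argument is the assertion that the square is cartesian in $\Catp$ \emph{if and only if} the fibre comparison $(\Dperf_Z(X),\QF^{\ge m}_L|_Z)\to(\Dperf_Z(U_1),\QF^{\ge m}_{j_1^*L}|_Z)$ is an equivalence of Poincaré $\infty$-categories. The ``only if'' direction is formal, but the ``if'' direction that you actually use is not: two Poincaré-Karoubi sequences can have equivalent kernels and equivalent bases without having equivalent total Poincaré $\infty$-categories (the degenerate square with both columns the identity of a nonzero $(\D,\QFE)$, all fibres zero, already shows that ``fibres agree'' does not force cartesianness). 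One cannot invoke the split-Verdier lemma from~\cite{9-authors-II} directly here, because $j^*_0\colon\Dperf(X)\to\Dperf(U_0)$ is only a Karoubi projection, $j_{0*}$ does not preserve perfect complexes, and so there is no right adjoint at the $\Dperf$ level to make the square right-adjointable.

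What actually closes the gap — and is not in your write-up — goes as follows. First one must know the underlying square of stable $\infty$-categories is cartesian, which is Remark~\ref{remark:inherited-perfect}, itself a consequence of quasi-coherent descent and the Zariski-locality of perfectness; fibre-comparison alone does not yield this (essential surjectivity of $\Dperf(X)\to P$ onto the pullback is precisely the locality of perfectness). Granting that, both Poincaré structures $\QF^{\ge m}_L$ and the pullback structure $\wtl\QF$ live on $\Dperf(X)$ with the same bilinear part, so the defect is a cofibre $G$ of linear parts in $\Derqc(X)$. Commutativity of the square shows $j_0^*G=0$, so $G\in\Derqc_Z(X)$; the fibre comparison shows $\map(M,G)\simeq 0$ for all $M\in\Dperf_Z(X)$. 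Concluding $G=0$ from these two facts requires that $\Dperf_Z(X)$ compactly generate $\Derqc_Z(X)$, i.e.\ $\Derqc_Z(X)\simeq\Ind\Dperf_Z(X)$, which is exactly Theorem~\ref{theorem:perfect-generators}~\ref{item:perfect-generation-with-support} — a substantial theorem your argument never cites. So your disjoint-support computation (which is correct, and is where the implicit hypothesis $X=U_0\cup U_1$ enters, a hypothesis you rightly flagged as necessary) verifies the fibre comparison, but the reduction to that comparison is the genuine content and remains unjustified. The paper's proof, by checking the linear-part pullback on all of $\Dperf(X)$ via Corollary~\ref{corollary:descent-qc}, sidesteps this bootstrapping entirely.
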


\begin{corollary}[Excision]
\label{corollary:poinc-excision}%
Let $X$ be a qcqs scheme, $U_0 \subseteq X$ a qcqs open subscheme with closed complement $Z = X \setminus U_0$ and $U_1$ an open subscheme which contains $Z$. Then, for every invertible perfect complex with $\Ct$-action $L$ and every $m \in \{-\infty,\infty\} \cup \ZZ$ the functor
\[
(\Dperf_Z(X),\QF^{\geq m}_L) \to (\Dperf_Z(U_1), \QF^{\geq m}_{L|_{U_1}})
\]
is an equivalence of Poincaré $\infty$-categories. 
\end{corollary}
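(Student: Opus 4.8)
The plan is to obtain this as a direct, essentially formal consequence of Proposition~\ref{proposition:zariski-descent-poinc}. Apply that proposition to the pair of qcqs open subschemes $U_0, U_1 \subseteq X$ (note that $U_0 \cup U_1 = X$ since $Z = X\setminus U_0 \subseteq U_1$, although this is not really needed): it tells us that the square~\eqref{equation:zariski-square} is a Poincaré-Karoubi square, in particular a cartesian square in $\Catp$. Here one uses that all four corners are idempotent complete, so that being cartesian after idempotent completion coincides with being cartesian on the nose, together with the fact that $\Catp$ is pointed and admits all small limits. Now in any cartesian square in a pointed $\infty$-category with finite limits, the fibres of the two vertical arrows are canonically identified, and under this identification the comparison map between them is the one induced by the top horizontal arrow. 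So it suffices to compute the two vertical fibres of~\eqref{equation:zariski-square} and to recognize the induced comparison functor.

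First I would record that the fibre in $\Catp$ of a Poincaré functor $(f,\eta)\colon (\C,\QF)\to(\D,\QFD)$ has underlying stable $\infty$-category the kernel of $f$ — which is preserved by the duality of $\C$, since Poincaré functors commute with dualities — equipped with the Poincaré structure obtained by restricting $\QF$. The left vertical arrow of~\eqref{equation:zariski-square} has underlying functor the restriction $\Dperf(X)\to\Dperf(U_0)$ along $U_0\hookrightarrow X$, whose kernel is $\Dperf_Z(X)$; hence its fibre is $(\Dperf_Z(X),\QF^{\ge m}_L)$. The right vertical arrow has underlying functor the restriction $\Dperf(U_1)\to\Dperf(U_0\cap U_1)$ along $U_0\cap U_1\hookrightarrow U_1$, whose kernel is the $\infty$-category of perfect complexes on $U_1$ supported on the closed complement $U_1\setminus(U_0\cap U_1)$; and since $Z\subseteq U_1$ one computes $U_1\setminus(U_0\cap U_1) = U_1\cap(X\setminus U_0) = Z$, so that this fibre is $(\Dperf_Z(U_1),\QF^{\ge m}_{L|_{U_1}})$. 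Finally, the top horizontal arrow of~\eqref{equation:zariski-square} is restriction along $U_1\hookrightarrow X$, so the functor it induces on fibres is precisely the one appearing in the statement, which is therefore an equivalence of Poincaré $\infty$-categories.

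The argument is essentially formal once Proposition~\ref{proposition:zariski-descent-poinc} is available, which is where all the content sits; I do not anticipate a genuine obstacle. The only points requiring a little care are the identification of the fibre of a Poincaré(-Karoubi) functor with its kernel equipped with the restricted Poincaré structure, and the elementary set-theoretic computation $U_1\setminus(U_0\cap U_1)=Z$. Alternatively, one could argue directly from Thomason--Trobaugh excision $\Dperf_Z(X)\simeq\Dperf_Z(U_1)$ together with the fact that restriction along $U_1\hookrightarrow X$ is symmetric monoidal and duality preserving, hence matches $\QF^{\ge m}_L$ with $\QF^{\ge m}_{L|_{U_1}}$ on $Z$-supported objects; but the route through~\eqref{equation:zariski-square} is cleaner and reuses work already done.
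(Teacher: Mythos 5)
Your proof is correct and takes essentially the same approach as the paper: the paper's own proof is precisely "this is the induced Poincaré functor on vertical fibres in the square~\eqref{equation:zariski-square}," which you have spelled out carefully. One small correction to a parenthetical remark: the observation that $U_0 \cup U_1 = X$ (which follows from $Z \subseteq U_1$) is in fact genuinely needed, not dispensable. The proof of Proposition~\ref{proposition:zariski-descent-poinc} appeals to Remark~\ref{remark:inherited-perfect} to establish cartesianness of the underlying square of stable $\infty$-categories, and that remark assumes the two opens cover $X$; without this the square need not be cartesian (take $U_0 = U_1 \subsetneq X$: the left vertical fibre is $\Dperf_{X\setminus U_0}(X) \neq 0$ but the right vertical fibre is $0$). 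Since you do verify the covering condition, your argument goes through.
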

\begin{proof}
This is the induced Poincaré functor on vertical fibres in the square~\eqref{equation:zariski-square}.
\end{proof}

\begin{proof}[Proof of Proposition~\ref{proposition:zariski-descent-poinc}]
By Corollary~\ref{corollary:open-is-karoubi} all the Poincaré functors in the square~\eqref{equation:zariski-square} are Poincaré-Karoubi projections. To finish the proof we now show that the square is cartesian in $\Catp$. For this, note that the underlying square of stable $\infty$-categories is cartesian by Remark~\ref{remark:inherited-perfect}, and hence to prove that the square is cartesian in $\Catp$ it will suffice to show that the square of Poincaré structures
\[
\begin{tikzcd}
\QF^{\ge m}_{L}(-) \ar[r]\ar[d] & \QF^{\ge m}_{j_1^*L}(j_1^*(-))\ar[d] \\
\QF^{\ge m}_{j_0^*L}(j_0^*(-))\ar[r] & \QF^{\ge m}_{j_{01}^*L}(j^*_{01}(-))
\end{tikzcd}
\]
is exact. Now since the square~\eqref{equation:zariski-square} is cartesian on the level of underlying stable $\infty$-categories it is also cartesian on the level of underlying stable $\infty$-categories with duality, and hence the above square of Poincaré structures is cartesian for $m=-\infty,\infty$. For a general $m$ it will hence suffice to show that this square becomes cartesian after passing to linear parts. Now, by definition, the linear part of $\QF^{\sym}_L$ is represented by the object $\tau_{\geq m}\LT_L \in \Derqc(X)$. Since the Poincaré functors in the square~\eqref{equation:zariski-square} are Karoubi projections we have that for $\eps\in \{0,1,01\}$ the natural transformation 
\[
\QF^{\ge m}_{L}(-) \Rightarrow \QF^{\ge m}_{j_{\eps}^*L}(j_{\eps}^*(-))
\]
exhibits $\QF^{\ge m}_{j_{\eps}^*L}(-)$ as the left Kan extension of $\QF^{\ge m}_L$ along $j_{\eps}^*$. To finish the proof it will hence suffice to show that the square
\[
\begin{tikzcd}
\tau_{\geq m}\E_L \ar[r]\ar[d] & (j_1)_*(j_1)^*(\tau_{\geq m}\E_{L})\ar[d] \\
(j_0)_*(j_0)^*(\tau_{\geq m}\E_{L}) \ar[r] & (j_{01})_*(j_{01})^*(\tau_{\geq m}\E_{L})
\end{tikzcd}
\]
is exact in $\Derqc(X)$. Indeed, this follows from the fact that the square
\[
\begin{tikzcd}
\Derqc(X)\ar[r] \ar[d] & \ar[d] \Derqc(U_1) \\
\Derqc(U_0) \ar[r] & \Derqc(U_0 \cap U_1)
\end{tikzcd}
\]
is cartesian, see Corollary~\ref{corollary:descent-qc}.	
\end{proof}

\begin{corollary}
\label{proposition:poincare-descent}%
Let $X$ be a scheme equipped with a finite open covering $X = \cup_{i=1}^n U_i$ by opens. For every non-empty subsets $S \subseteq \{1,\ldots,n\}$ let us denote by $U_S = \cap_{i \in S} U_i$ and $j_S\colon U_S \hrar X$ the associated inclusion.
Then for any invertible perfect complex with $\Ct$-action $L \in \Picspace(X)^{\BC}$ on $X$ and 
for every $-\infty \leq m \leq \infty$ the Poincaré functor 
\begin{equation}
\label{equation:descent-poincare}%
(\Dperf(X),\QF^{\geq m}_L) \to \lim_{\emptyset \neq S \subseteq \{1,\ldots,n\}} (\Dperf(U_S),\QF^{\geq m}_{j_S^*L}) 
\end{equation}
is an equivalence of Poincaré $\infty$-categories.
\end{corollary}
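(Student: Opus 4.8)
The plan is to argue by induction on $n$, bootstrapping from the two-open case established in Proposition~\ref{proposition:zariski-descent-poinc}. For brevity write $F(S) := (\Dperf(U_S),\QF^{\geq m}_{j_S^*L})$ for $\emptyset \neq S \subseteq \{1,\dots,n\}$, so that the assertion is that the canonical restriction functor $F(\{1,\dots,n\})\to\lim_{\emptyset\neq S\subseteq\{1,\dots,n\}}F(S)$ is an equivalence in $\Catp$ (which admits all small limits). The case $n = 1$ is trivial, and the case $n = 2$ is exactly the statement that the square~\eqref{equation:zariski-square} is cartesian in $\Catp$, which is part of the content of Proposition~\ref{proposition:zariski-descent-poinc}.

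For the inductive step, fix $n \geq 3$ and set $U' := U_1 \cup \dots \cup U_{n-1}$; being a finite union of quasi-compact opens of the qcqs scheme $X$, it is again qcqs. Applying Proposition~\ref{proposition:zariski-descent-poinc} to the pair of opens $U' \hookrightarrow X \hookleftarrow U_n$, whose intersection is $U' \cap U_n$, exhibits $F(\{1,\dots,n\}) = (\Dperf(X),\QF^{\geq m}_L)$ as the pullback in $\Catp$ of the cospan
\[
(\Dperf(U'),\QF^{\geq m}_{L|_{U'}}) \longrightarrow (\Dperf(U'\cap U_n),\QF^{\geq m}_{L|_{U'\cap U_n}}) \longleftarrow (\Dperf(U_n),\QF^{\geq m}_{L|_{U_n}}) ,
\]
the maps being restriction along the open immersions $U'\cap U_n \hookrightarrow U'$ and $U'\cap U_n\hookrightarrow U_n$. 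Now $U' = \bigcup_{i=1}^{n-1}U_i$ is covered by $n-1$ opens, and $U'\cap U_n = \bigcup_{i=1}^{n-1}(U_i\cap U_n)$ is covered by the $n-1$ opens $U_i\cap U_n = U_{\{i,n\}}$, with $\bigcap_{i\in S}(U_i\cap U_n) = U_{S\cup\{n\}}$ for $\emptyset\neq S\subseteq\{1,\dots,n-1\}$. Invoking the inductive hypothesis for these two coverings (and the relevant restrictions of $L$) identifies the first two terms of the cospan with $\lim_{\emptyset\neq S\subseteq\{1,\dots,n-1\}}F(S)$ and $\lim_{\emptyset\neq S\subseteq\{1,\dots,n-1\}}F(S\cup\{n\})$ respectively, compatibly with the restriction maps, while the third term is $F(\{n\})$.

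It then remains to match this iterated pullback with $\lim_{\emptyset\neq S\subseteq\{1,\dots,n\}}F(S)$. This is a formal fact about limits over the poset $P$ of non-empty subsets of $\{1,\dots,n\}$ ordered by inclusion, of a diagram valued in any $\infty$-category $\mathcal C$ with limits: the subposet $P_0 := \{S\in P : n\notin S\}$ is a sieve, the subposet $P_1 := \{S\in P: n\in S\}$ is a cosieve with initial object $\{n\}$, and the resulting decomposition of the limit gives, for any $F\colon P\to\mathcal C$,
\[
\lim_{P}F \;\simeq\; \Big(\lim_{S\in P_0}F(S)\Big)\times_{\lim_{S\in P_0}F(S\cup\{n\})} F(\{n\}) ,
\]
where the two legs of the pullback are induced by the maps $F(S)\to F(S\cup\{n\})$ and $F(\{n\})\to F(S\cup\{n\})$. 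Identifying $P_0$ with the poset of non-empty subsets of $\{1,\dots,n-1\}$, the right-hand side is precisely the cospan of the previous paragraph, so composing the two equivalences yields the claim.

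I expect the only real work to lie in this last bookkeeping step — verifying that the leg $\lim_{S\in P_0}F(S)\to\lim_{S\in P_0}F(S\cup\{n\})$ of the formal pullback corresponds, under the inductive identifications, to the restriction functor $(\Dperf(U'),\QF^{\geq m}_{L|_{U'}})\to(\Dperf(U'\cap U_n),\QF^{\geq m}_{L|_{U'\cap U_n}})$ (and similarly for the other leg), so that the composite equivalence agrees with the canonical restriction functor in the statement. This is a routine chase through the functoriality of $(X,L)\mapsto(\Dperf(X),\QF^{\geq m}_L)$ set up in \S\ref{section:poinc-schemes}; the Poincaré-categorical content is entirely absorbed into Proposition~\ref{proposition:zariski-descent-poinc} and the inductive hypothesis, since $\Catp$ has all limits and every functor and equivalence appearing above is a morphism in $\Catp$. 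One can also avoid the explicit sieve/cosieve formula and instead induct purely through two-term Zariski squares $U_1\cup\dots\cup U_k$ against $U_{k+1}$, but the combinatorics is the same.
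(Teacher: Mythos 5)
Your proposal is correct and follows the intended route: the paper states this as a corollary of Proposition~\ref{proposition:zariski-descent-poinc} without further argument, and the expected proof is precisely the induction you carry out, iterating the two-open case together with the standard decomposition of the limit over the punctured $n$-cube into the pullback you display (which is indeed formal — $P$ is the cograph of $S\mapsto S\cup\{n\}\colon P_0 \to P_1$, i.e.\ a cocartesian fibration over $\Del^1$, and $P_1$ has initial object $\{n\}$). One small notational slip worth fixing: by your own definition, $F(\{1,\dots,n\}) = (\Dperf(U_{\{1,\dots,n\}}),\dots)$ with $U_{\{1,\dots,n\}} = \bigcap_i U_i$, not $X$; the source of the canonical functor in the statement is $(\Dperf(X),\QF^{\geq m}_L)$, which is not of the form $F(S)$ for any non-empty $S$, so it deserves its own symbol.
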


\begin{corollary}
\label{corollary:zariski-descent-pn}%
Let $S$ be a qcqs scheme and $L \in \Picspace(S)^{\BC}$ a invertible perfect complex with $\Ct$-action. Then for every $m \in \ZZ \cup \{-\infty,+\infty\}$, the functor 
\[
\qSch_{/S} \to \Catp \quad\quad [p\colon X \to S] \mapsto (\Dperf(X),\QF^{\geq m}_{p^*L})
\]
is a $\Catp$-valued sheaf with respect to the Zariski topology.
\end{corollary}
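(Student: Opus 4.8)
The plan is to deduce this directly from the finite-cover statement already proved in Corollary~\ref{proposition:poincare-descent}, using only the standard reductions of Zariski-descent theory; no new geometric input is required. Since $\Catp$ is presentable it admits all small limits, so $\Catp$-valued Zariski sheaves make sense. Write $\F^{\geq m}_L$ for the functor $[p\colon X\to S]\mapsto (\Dperf(X),\QF^{\geq m}_{p^*L})$, i.e. the instance of Notation~\ref{notation:functor-to-sheaf} with $\F=\id_{\Catp}$, whose functoriality in $X$ was set up in~\eqref{equation:functoriality-schemes-Lt}. I would verify that $\F^{\geq m}_L$ preserves finite products and satisfies \v Cech descent along a pretopology generating the Zariski topology; together these imply that it is a sheaf.

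First I would check that $\F^{\geq m}_L$ preserves finite products. The empty $S$-scheme is sent to $(\Dperf(\emptyset),\QF^{\geq m})=(0,0)$, the zero object of the pointed $\infty$-category $\Catp$, which in particular is terminal; and for a finite disjoint union $X=X_0\sqcup X_1$ of qcqs $S$-schemes one has $\Dperf(X)\simeq \Dperf(X_0)\times\Dperf(X_1)$ compatibly with the duality, the linear part, and the standard t-structures on the Ind-completions, so that $\F^{\geq m}_L(X)\simeq \F^{\geq m}_L(X_0)\times\F^{\geq m}_L(X_1)$ in $\Catp$.

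Next comes the descent condition. Every Zariski covering family of a qcqs $S$-scheme $X$ admits, by quasi-compactness of $X$ and the fact that any open is a union of affines, a refinement by a finite family $\{U_i\hookrightarrow X\}_{i=1}^n$ of qcqs open subschemes, so it suffices to check \v Cech descent for such families. Since the $U_i\hookrightarrow X$ are monomorphisms, their iterated fibre products over $X$ are the intersections $U_T:=\bigcap_{i\in T}U_i$ for nonempty $T\subseteq\{1,\dots,n\}$, which are again qcqs opens because $X$ is quasi-separated; combining this with the fact, established above, that $\F^{\geq m}_L$ turns finite disjoint unions into products, the totalisation of the \v Cech nerve of $\{U_i\hookrightarrow X\}$ is computed by the limit over the poset of nonempty subsets of $\{1,\dots,n\}$. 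Thus the \v Cech descent condition for this cover is exactly the assertion that
\[
(\Dperf(X),\QF^{\geq m}_{p^*L})\;\longrightarrow\;\lim_{\emptyset\neq T\subseteq\{1,\dots,n\}}\bigl(\Dperf(U_T),\QF^{\geq m}_{j_T^*p^*L}\bigr)
\]
is an equivalence in $\Catp$, which is precisely Corollary~\ref{proposition:poincare-descent} applied on $X$ with the invertible perfect complex $p^*L$ (and it covers uniformly all $m\in\ZZ\cup\{\pm\infty\}$).

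I do not expect a genuine obstacle: all the substance of the statement already sits in Corollary~\ref{proposition:poincare-descent}, and the only points demanding a little care are the bookkeeping reductions of the previous paragraph — refining an arbitrary cover to a finite one, and recognising the \v Cech totalisation of a cover by monomorphisms as a limit over the intersection poset — both of which are entirely standard and contain no new content.
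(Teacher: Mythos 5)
Your proof is correct and takes essentially the same route as the paper's intended argument: verify preservation of finite coproducts (in fact a formal consequence of the finite-cover condition together with $\F(\emptyset)=0$), reduce to finite covers by qcqs opens, identify the \v{C}ech totalisation over a monic cover with the limit over the intersection poset, and then invoke Corollary~\ref{proposition:poincare-descent}. The only economy you missed is that the bookkeeping reductions in your second and third paragraphs are already packaged in the paper's Corollary~\ref{corollary:zariski-sheaf} (conditions~\ref{item:finite-cover} and~\ref{item:two-cover} there), which is valid for presheaves valued in any presentable $\infty$-category and hence applies directly to $\Catp$; citing it would have let you go from Corollary~\ref{proposition:poincare-descent} to the statement in one line.
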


\begin{corollary}[Zariski descent]
\label{corollary:zariski-descent}%
Let $S$ be a qcqs scheme equipped with a invertible perfect complex with $\Ct$-action $L\in\Picspace(X)^{\BC}$. Let $\F\colon \Catp \to \A$ be a Karoubi-localising functor valued in some presentable $\infty$-category $\A$ (e.g., $\A=\Spa$ and $\F$ is $\KGW$ or $\KL$, or, say, $\A=\Sps$ and $\F=\GWspace(-^{\natural})$). 
Then for every $m \in \ZZ \cup \{-\infty,+\infty\}$, the functor
\[
\F^{\geq m}_L\colon \qSch_{/S} \to \A \qquad \F^{\geq m}_L(p\colon X\to S) = \F(\Dperf(X),\QF^{\ge m}_{p^*L})
\]
is a Zariski sheaf. 
\end{corollary}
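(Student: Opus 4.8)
The plan is to reduce the statement to two inputs already in place: Proposition~\ref{proposition:zariski-descent-poinc}, which exhibits the Mayer--Vietoris square of Poincaré $\infty$-categories attached to a cover by two opens as a Poincaré-Karoubi square, and the hypothesis that $\F$ is Karoubi-localising. First I would recall that a presheaf on the category $\qSch_{/S}$ of qcqs $S$-schemes is a Zariski sheaf if and only if it sends the empty scheme to a terminal object and sends every elementary Zariski square --- the square with corners $U_0 \cap U_1$, $U_0$, $U_1$, $X$ associated to a decomposition $X = U_0 \cup U_1$ into two qcqs opens --- to a cartesian square in the target. Indeed, by quasi-compactness every Zariski cover of an object of $\qSch_{/S}$ refines to a finite one, and the descent condition for a finite cover $X = U_1 \cup \dots \cup U_n$ follows by induction on $n$ from the two-open cover $X = (U_1 \cup \dots \cup U_{n-1}) \cup U_n$ together with the induced $(n-1)$-open cover of $U_n$. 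So it will suffice to verify these two conditions for $\F^{\geq m}_L$.

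For the empty scheme, $\Dperf(\emptyset) = 0$ is the zero Poincaré $\infty$-category, and $\F(0)$ is terminal in $\A$: applying $\F$ to the (trivially Poincaré-Karoubi) sequence $0 \to 0 \to 0$ gives a fibre sequence $\F(0) \xrightarrow{\id} \F(0) \xrightarrow{\id} \F(0)$, which forces $\F(0) \simeq \ast$. For an elementary Zariski square, I would apply Proposition~\ref{proposition:zariski-descent-poinc} to the invertible complex $p^*L$ (with $p\colon X \to S$ the structure map), which shows that the square~\eqref{equation:zariski-square} of Poincaré $\infty$-categories is a Poincaré-Karoubi square, i.e.\ cartesian in $\Catp$ with vertical arrows Poincaré-Karoubi projections. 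It then remains to argue that a Karoubi-localising functor carries Poincaré-Karoubi squares to cartesian squares: the two vertical Poincaré-Karoubi projections of such a square have equivalent fibres in $\Catp$ (this is the excision statement, cf.\ Corollary~\ref{corollary:poinc-excision}), each such projection fits into a Poincaré-Karoubi sequence together with its fibre, and $\F$ sends these sequences to fibre sequences over $\F(0) = \ast$; hence the two vertical maps of the image square in $\A$ have equivalent fibres, so the image square is cartesian. Applying this to the square above produces the required Mayer--Vietoris square for $\F^{\geq m}_L$.

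I expect no genuine obstacle here: essentially all the content is already packaged in Proposition~\ref{proposition:zariski-descent-poinc} --- which itself rests on Zariski descent for $\Derqc$ (Corollary~\ref{corollary:descent-qc}) and on the fact that open immersions induce Karoubi projections (Corollary~\ref{corollary:open-is-karoubi}) --- together with the definition of a Karoubi-localising functor. The two mildly delicate points are the combinatorial reduction of the Zariski sheaf condition to elementary squares and the passage between the ``Poincaré-Karoubi sequences to fibre sequences'' and ``Poincaré-Karoubi squares to cartesian squares'' formulations of Karoubi-localisingness; when $\A$ is stable (e.g.\ $\A = \Spa$, $\F = \KGW$ or $\KL$) the latter is immediate, and in the space-valued case it is handled by the same fibre-sequence criterion. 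One could equivalently deduce the result from the $\Catp$-valued Corollary~\ref{corollary:zariski-descent-pn} by observing that $\F$ preserves the particular finite limits witnessing the sheaf condition, since those decompose into Poincaré-Karoubi squares.
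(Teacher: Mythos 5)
Your argument is essentially the paper's: the proof in the text is the one-liner ``Combine Proposition~\ref{proposition:zariski-descent-poinc} and Corollary~\ref{corollary:zariski-sheaf},'' and what you have done is spell out the content of those two references --- the reduction of the Zariski sheaf condition (over qcqs schemes) to the empty scheme plus elementary two-open Mayer--Vietoris squares, and the fact that Proposition~\ref{proposition:zariski-descent-poinc} exhibits those squares as Poincaré-Karoubi squares, which a Karoubi-localising functor carries to cartesian squares. The reduction to two-open covers that you re-derive by induction is precisely Corollary~\ref{corollary:zariski-sheaf} in the appendix, so there is no divergence in strategy, only in how much detail is displayed.
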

\begin{proof}
Combine Proposition~\ref{proposition:zariski-descent-poinc} and Corollary~\ref{corollary:zariski-sheaf}.
\end{proof}

\begin{corollary}
\label{corollary:open-is-karoubi-2}%
Let $X$ be qcqs scheme and $Z,W \subseteq X$ two closed subschemes with quasi-compact complements $V,U$, respectively. Let $L \in \Picspace(X)^{\BC}$ be an invertible perfect complex with $\Ct$-action. Then for every $m \in \ZZ \cup \{-\infty,+\infty\}$, the Poincaré functor
\[
(\Dperf_Z(X),\QF^{\geq m}_L) \to (\Dperf_{U \cap Z}(U),\QF^{\geq m}_{L|_U})
\]
is a Poincaré-Karoubi projection. In particular, the sequence
\[
(\Dperf_{Z \cap W}(X),\QF_L^{\ge m})\to (\Dperf_Z(X),\QF_L^{\ge m})\to (\Dperf_{U \cap Z}(U),\QF^{\ge m}_{L|_U})\,.
\]
is a Poincaré-Karoubi sequence.
\end{corollary}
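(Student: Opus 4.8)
The plan is to obtain the functor in question by tensoring the bounded Karoubi sequence attached to the single open immersion $j\colon U \hrar X$ with a flat $\infty$-category with duality, and then to appeal to Proposition~\ref{proposition:flat-is-karoubi-sym}. Set $\A := \Dperf(X)$ and regard $X$ and $U$ as smooth $X$-schemes (via the identity and via $j$); recall $W = X \smallsetminus U$ and $Z = X \smallsetminus V$. First I would invoke Proposition~\ref{proposition:open-is-bounded} with base $S = X$ to see that $j^*\colon (\Dperf(X),\Dual_L) \to (\Dperf(U),\Dual_{L|_U})$ is a bounded Karoubi projection over $\A$, and combine it with~\ref{theorem:perfect-generators} \ref{item:karoubi-sequence-intersection-closed}, which identifies $\ker j^* = \Dperf_W(X)$, to exhibit
\[
(\Dperf_W(X),\Dual_L) \to (\Dperf(X),\Dual_L) \to (\Dperf(U),\Dual_{L|_U})
\]
as a bounded Karoubi sequence of $\A$-linear $\infty$-categories with duality in the sense of Definition~\ref{definition:bounded}.

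Next, $(\Dperf_Z(X),\Dual_{\cO_X})$ is an $\A$-linear $\infty$-category with duality that is flat over $\A$, since its enriched mapping objects $\uline{\map}_{\Dperf_Z(X)}(P,Q) = \uline{\hom}_X(P,Q)$ are perfect, hence of bounded Tor amplitude. Tensoring the previous sequence over $(\Dperf(X),\Dual_{\cO_X})$ with $(\Dperf_Z(X),\Dual_{\cO_X})$ produces, by Proposition~\ref{proposition:flat-multi}, another bounded Karoubi sequence, whose middle term is $(\Dperf_Z(X),\Dual_L)$ and whose outer terms carry the induced dualities. The step I expect to require the most care is identifying those outer terms: one must check that the natural comparison functors $\Dperf_Z(X)\otimes_{\Dperf(X)}\Dperf_W(X) \to \Dperf_{Z\cap W}(X)$ and $\Dperf_Z(X)\otimes_{\Dperf(X)}\Dperf(U) \to \Dperf_{U\cap Z}(U)$, sending pure tensors $P\otimes Q$ to $P\otimes_{\cO_X}Q$, resp.\ $j^*P\otimes_{\cO_U}Q$, are equivalences after idempotent completion. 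Full faithfulness follows from the pure-tensor mapping formula~\eqref{equation:pure-tensors} together with the projection formula for $j$, while essential surjectivity up to retracts uses the Thomason-style generation of these support categories (see~\ref{theorem:perfect-generators}): $\Dperf_{U\cap Z}(U)$ is the thick subcategory generated by $j^*$ of a $Z$-supported perfect generator of $\Derqc_Z(X)$, and $\Dperf_{Z\cap W}(X)$ by the tensor product of a $Z$-supported and a $W$-supported one. Since bounded Karoubi sequences are insensitive to idempotent completion, this realises
\[
(\Dperf_{Z\cap W}(X),\Dual_L) \to (\Dperf_Z(X),\Dual_L) \xrightarrow{j^*} (\Dperf_{U\cap Z}(U),\Dual_{L|_U})
\]
as a bounded Karoubi sequence of $\Dperf(X)$-linear $\infty$-categories with duality.

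Finally, I would equip $\Dperf_Z(X)$ and $\Dperf_{U\cap Z}(U)$ with the orientations restricted from the standard $t$-structures on $\Derqc_Z(X)$ and $\Derqc_{U\cap Z}(U)$ — these are compatible with the $\Dperf(X)$-action, and $j^*$ is $t$-exact on Ind-completions — so that the bounded Karoubi projection $j^*$ refines to a morphism in $\Mod_{(\Dperf(X),\Dual_{\cO_X})}(\Catpst)$. Proposition~\ref{proposition:flat-is-karoubi-sym} then yields directly that $(j^*,\eta)\colon (\Dperf_Z(X),\QF^{\ge m}_L) \to (\Dperf_{U\cap Z}(U),\QF^{\ge m}_{L|_U})$ is a Poincaré-Karoubi projection for every $m \in \ZZ \cup \{-\infty,\infty\}$. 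Taking its fibre in $\Catp$ — whose underlying stable $\infty$-category is the kernel $\Dperf_{Z\cap W}(X)$ of $j^*$, equipped with the restricted Poincaré structure $\QF^{\ge m}_L$ — produces the asserted Poincaré-Karoubi sequence, exactly as the Poincaré-Karoubi sequence in Corollary~\ref{corollary:open-is-karoubi} is deduced from its projection. Apart from the base-change identification of the tensor products in the second step, everything here is formal given the results already in place.
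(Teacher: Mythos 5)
Your proof is correct, but it follows a genuinely different route from the paper's. The paper's argument is much shorter: it places the functor in question in a cartesian square
\[
\begin{tikzcd}[column sep = 5pt]
(\Dperf_Z(X),\QF^{\geq m}_L) \ar[rr]\ar[d] && (\Dperf_{U \cap Z}(U \cup V),\QF^{\geq m}_{L|_{U \cup V}}) \ar[d] \\
(\Dperf(X),\QF^{\geq m}_L) \ar[rr] && (\Dperf(U \cup V),\QF^{\geq m}_{L|_{U \cup V}})
\end{tikzcd}
\]
whose bottom arrow is a Poincaré-Karoubi projection by Corollary~\ref{corollary:open-is-karoubi}, identifies the top-right corner with $(\Dperf_{U\cap Z}(U),\QF^{\geq m}_{L|_U})$ by excision (Corollary~\ref{corollary:poinc-excision}), and invokes the stability of Poincaré-Karoubi projections under base change. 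You instead upgrade $j^*\colon \Dperf_Z(X)\to\Dperf_{U\cap Z}(U)$ to a \emph{bounded} Karoubi projection by tensoring the bounded Karoubi sequence of Proposition~\ref{proposition:open-is-bounded} (for $S=X$) with the flat coefficient $(\Dperf_Z(X),\Dual_{\cO_X})$ over $(\Dperf(X),\Dual_X)$ and applying Propositions~\ref{proposition:flat-multi} and~\ref{proposition:flat-is-karoubi-sym}. This is a legitimate and somewhat more conceptual route, squarely within the multiplicative framework of \S\ref{subsection:bounded-karoubi}; it actually buys a slightly stronger conclusion (boundedness of the underlying Karoubi projection, not just Poincaré-Karoubi-ness). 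The cost, which you flag honestly, is the identification $\bigl(\Dperf_Z(X)\otimes_{\Dperf(X)}\Dperf(U)\bigr)^{\natural}\simeq\Dperf_{U\cap Z}(U)$: this is true and follows from the smashing-localisation description of $\Derqc(U)$ and $\Derqc_Z(X)$ inside $\Derqc(X)$ together with the compatibility of $\Ind$ with the tensor product, but it is not established explicitly in the paper and does need a proof or a citation. You do not, however, need the companion identification of $\Dperf_Z(X)\otimes_{\Dperf(X)}\Dperf_W(X)$: once $j^*$ is shown to be a Poincaré-Karoubi projection, the kernel of the underlying exact functor is already identified with $\Dperf_{Z\cap W}(X)$ by Theorem~\ref{theorem:perfect-generators}~\ref{item:karoubi-sequence-intersection-closed}, which suffices for the ``in particular'' clause.
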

\begin{proof}
By excision (Corollary~\ref{corollary:poinc-excision}) the Poincaré functor in question fits in a cartesian square of Poincaré $\infty$-categories
\[
\begin{tikzcd}[column sep = 5pt]
(\Dperf_Z(X),\QF^{\geq m}_L) \ar[rr]\ar[d] && (\Dperf_{U \cap Z}(U \cup V),\QF^{\geq m}_{L|_{U \cup V}}) \ar[r,phantom, "\simeq"] \ar[d] & (\Dperf_{U \cap Z}(U),\QF^{\geq m}_{L|_{U}})  \\
(\Dperf(X),\QF^{\geq m}_L) \ar[rr] && (\Dperf(U \cup V),\QF^{\geq m}_{L|_{U \cup V}})
\end{tikzcd}
\]
By~\cite[Proposition 8.1.9]{9-authors-II} Poincaré-Verdier projections are preserved under base change. Since the bottom horizontal arrow is a Poincaré-Verdier projection by Corollary~\ref{corollary:open-is-karoubi-2} we conclude that the top horizontal arrow is a Poincaré-Verdier projection as well.
\end{proof}

\subsection{Nisnevich descent}
\label{subsection:nisnevich-descent}%

In this section, we upgrade the Zariski descent result of the previous section to Nisnevich descent.

\begin{proposition}
\label{proposition:affine-exicision}%
Let $f\colon A \to B$ be a flat homomorphism of commutative rings and $S \subseteq A$ a set of elements such that for every $s \in S$, the map $A/sA \to B/sB$ is an isomorphism of $A$-modules. Let $M$ be an invertible $A$-module with $A$-linear involution and $N = B \otimes_A M$ the induced invertible $B$-module (with $B$-linear involution). 
Then for every $m \in \ZZ \cup \{-\infty,+\infty\}$, the square
\[
\begin{tikzcd}
(\Dperf(A),\QF^{\geq m}_M) \ar[r]\ar[d] & (\Dperf(B),\QF^{\geq m}_M) \ar[d] \\
(\Dperf(A[S^{-1}]),\QF^{\geq m}_{M[S^{-1}]}) \ar[r] & (\Dperf(B[f(S)^{-1}]),\QF^{\geq m}_{N[f(S)^{-1}]}) 
\end{tikzcd}
\]
is a Poincaré-Karoubi square.
\end{proposition}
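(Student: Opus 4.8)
The plan is to verify the two defining properties of a Poincaré–Karoubi square: that the diagram is cartesian in $\Catp$, and that its vertical arrows are Poincaré–Karoubi projections. The verticals are pullback along the localisations $A \to A[S^{-1}]$ and $B \to B[f(S)^{-1}]$. When $S$ is finite, $A[S^{-1}] = A[(\prod_{s\in S}s)^{-1}]$, so $\spec A[S^{-1}] \hookrightarrow \spec A$ is a basic open immersion (and likewise over $B$), and Corollary~\ref{corollary:open-is-karoubi} applies verbatim. For general $S$ one writes $A[S^{-1}] = \colim_T A[T^{-1}]$ over the finite subsets $T \subseteq S$, whence $\Dperf(A[S^{-1}]) = \colim_T \Dperf(A[T^{-1}])$; since being a Poincaré–Karoubi projection out of a fixed source is detected on underlying stable categories together with the linear part, and both Karoubi projections and left Kan extensions of linear parts are stable under filtered colimits, the verticals are Poincaré–Karoubi projections in general. (Alternatively, one may invoke Proposition~\ref{proposition:open-is-bounded} with base scheme $\spec A$ together with Proposition~\ref{proposition:flat-is-karoubi-sym}.)

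For cartesianness I would first treat the underlying square of stable $\infty$-categories with duality. Writing $I=(S)$, the vertical functors are Karoubi projections with fibres $\Dperf_{V(I)}(A) \to \Dperf(A) \to \Dperf(A[S^{-1}])$ and $\Dperf_{V(IB)}(B) \to \Dperf(B) \to \Dperf(B[f(S)^{-1}])$, so it suffices to show that $f^*$ induces an equivalence $\Dperf_{V(I)}(A) \xrightarrow{\sim} \Dperf_{V(IB)}(B)$. This is affine Nisnevich excision for perfect complexes. One reduces to $S=\{s\}$, since $\Dperf_{V(I)}(A) = \bigcap_{s\in S}\Dperf_{V(s)}(A)$ and the restriction of $f^*$ to any single $\Dperf_{V(s)}(A)$ being an equivalence is easily seen to imply the statement for the intersection; for $S=\{s\}$ both sides are the thick subcategory generated by the Koszul object $A/^{\mathrm L}s = \operatorname{cofib}(A\xrightarrow{s}A)$, respectively $B/^{\mathrm L}s = f^*(A/^{\mathrm L}s)$. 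Essential surjectivity is then formal, and full faithfulness reduces, via the projection formula, to the vanishing of $N \otimes_A \operatorname{cofib}(A\to B)$ for every $N \in \Dperf_{V(s)}(A)$; this holds because the hypothesis that $A/sA \to B/sB$ is an isomorphism, together with flatness of $f$, forces $\operatorname{cofib}(A\to B)$ to be local with respect to inverting $s$, hence $\otimes_A$-acyclic against $s$-torsion objects. (If a Nisnevich-descent statement for $\Derqc$ or $\Dperf$ is already available — e.g. in \S\ref{section:appendix} — one may instead cite it; combined with Remark~\ref{remark:inherited-perfect} this settles the underlying square at once.)

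With the underlying square settled, cartesianness in $\Catp$ amounts, exactly as in the proof of Proposition~\ref{proposition:zariski-descent-poinc}, to cartesianness of the associated square of Poincaré structures. For $m=\pm\infty$ this unwinds: since $(-)^{\hC}$ preserves limits and $\map_A(P\otimes P, L)$ commutes with base change along $f$ and along the localisations when $P$ is perfect, the square of values at a given $P \in \Dperf(A)$ is cartesian as soon as $N \simeq N \otimes_A \bigl(B\times^{\h}_{B[f(S)^{-1}]}A[S^{-1}]\bigr)$ for $N = \map_A(P\otimes P, M)$ perfect — that is, as soon as the square of $\Einf$-rings is a derived pullback, $A \xrightarrow{\sim} B\times^{\h}_{B[f(S)^{-1}]}A[S^{-1}]$. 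This again encodes the hypothesis: taking fibres of the horizontal localisations and using flatness of $f$, it is equivalent to $\operatorname{cofib}(A\to B)\otimes_A \operatorname{fib}(A\to A[S^{-1}]) = 0$, which follows from the $s$-locality of $\operatorname{cofib}(A\to B)$ established above. For finite $m$, one reduces to the square of linear parts; the linear part of $\QF^{\ge m}_L$ is represented by $\tau_{\ge m}\LT_L$ (Definition~\ref{definition:linear-part}), and the formation of $\LT_L$ and its truncations is compatible with the flat pullbacks at hand, so the square of linear parts is cartesian precisely because $\tau_{\ge m}\LT_M \in \Derqc(A)$ is recovered from its restrictions — an instance of Nisnevich descent for quasi-coherent complexes (cf. Corollary~\ref{corollary:descent-qc} for the Zariski case).

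The crux — and the only place where the specific shape of the hypothesis on $f$ is genuinely used — is the affine Nisnevich excision of the second paragraph, equivalently the identification $A \xrightarrow{\sim} B\times^{\h}_{B[f(S)^{-1}]}A[S^{-1}]$ of the third; everything else is the descent bookkeeping already carried out for Zariski covers in Proposition~\ref{proposition:zariski-descent-poinc}. I also expect some extra care to be needed in the infinite-$S$ case and in tracking that the various identifications are compatible with the genuine/symmetric Poincaré structures, so that the conclusion is a genuine Poincaré–Karoubi square and not merely a bounded Karoubi square of categories with duality in the sense of Definition~\ref{definition:bounded}.
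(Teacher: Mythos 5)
Your proposal is correct in outline but takes a genuinely different route from the paper's. The paper dispatches the proposition almost immediately by invoking a general algebraic criterion for hermitian $\K$-theory of ring localisations, namely \cite[Proposition 4.4.20]{9-authors-II}, and then verifying its five hypotheses; the only one requiring work is a condition asking that the boundary maps $\hat{\mathrm{H}}^n(\Ct,N[f(S)^{-1}]) \to \hat{\mathrm{H}}^n(\Ct,M)$ in Tate cohomology vanish, which the paper deduces by tensoring the injective map $A \to B \oplus A[S^{-1}]$ (with flat cokernel $B[f(S)^{-1}]$) against $M_{\Ct}$ and $M(-1)_{\Ct}$. You instead run the same direct verification the paper uses for the Zariski case in Proposition~\ref{proposition:zariski-descent-poinc}: prove the verticals are Poincaré-Karoubi projections, prove the underlying square of stable $\infty$-categories with duality is cartesian via a Koszul/Thomason--Trobaugh excision argument, and then reduce cartesianness of the Poincaré structures to cartesianness of bilinear and linear parts. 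Your approach is more self-contained and more geometric; what the paper's approach buys is that the hard part is packaged into a verification that lives entirely at the level of discrete modules and ordinary Tate cohomology, rather than at the level of $\infty$-categories.

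Two places in your write-up are doing more work than you acknowledge. First, the step ``the formation of $\LT_L$ and its truncations is compatible with the flat pullbacks at hand'' is exactly where the Tate-cohomological content of the paper's condition (v) lives: the linear part is a Tate construction, which is a limit, and its compatibility with $B \otimes_A -$ and with the Mayer--Vietoris square is not automatic. It does hold here -- the Tate construction commutes with $\otimes_A B$ on uniformly bounded objects by the argument of Lemma~\ref{lemma:standard}/Corollary~\ref{corollary:standard}, and the four Tate objects then assemble into a cartesian square once the $\Derqc$-square is known cartesian -- but this is the crux of the proof, not a routine compatibility, and should be spelled out. Relatedly, note that you must first establish that the verticals are Poincaré-Karoubi projections before the linear-part reduction is even available (so that the target Poincaré structure is the left Kan extension of the source one); your write-up does this in the right order, but the dependence is worth flagging. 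Second, your reduction ``to $S=\{s\}$'' works cleanly for full faithfulness but not immediately for essential surjectivity: you should instead use that for finite $T$ the thick subcategory $\Dperf_{V(I_T B)}(B)$ is generated by the single Koszul object $f^*K(t_1,\dots,t_{|T|})$, and then pass to the filtered union over finite $T \subseteq S$; the finitely-presented-closed-subscheme hypothesis in Proposition~\ref{proposition:flat-square} means that result does not directly cover infinite $S$. Neither point is a fatal gap, but both need to be filled in for the argument to stand on its own.
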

\begin{proof}
We apply~\cite[Proposition 4.4.20]{9-authors-II}, and thus need to verify that its assumptions (i)-(v) hold in the present case. For (i)-(iv) this is clear: assumption (i) is by construction, (ii) holds in the commutative case whenever the involution on $M$ is $A$-linear (see~\cite[Example 1.4.4]{9-authors-II}), (iii) is assumed directly and (iv) (the Ore condition) is automatic in the commutative case. To prove that (v) holds, we need to show that the boundary maps $\hat{\mathrm{H}}^n(\Ct,N[f(S)^{-1}]) \to \hat{\mathrm{H}}^n(\Ct,M)$ associated to the short exact sequence of abelian groups
\[
0 \to M \to N \oplus M[S^{-1}] \to N[f(S)^{-1}] \to 0
\]
vanish. Equivalently, we need to show that the map 
\[
0 \to \hat{\mathrm{H}}^n(\Ct,M) \to \hat{\mathrm{H}}^n(\Ct,N) \oplus \hat{\mathrm{H}}^n(\Ct,M[S^{-1}])
\]
is injective for every $n$. Now, for odd $n$, the Tate cohomology group $\hat{\mathrm{H}}^n(\Ct,M)$ is given by the kernel of the norm map $M_{\Ct} \to M^{\Ct}$, while for even $n$ by the cokernel of this map, or, equivalently, by the kernel of the norm map $M(-1)_{\Ct} \to M(-1)^{\Ct}$, where $M(-1)$ denotes $M$ with its $\Ct$-action sign twisted. It will hence suffice to show that the maps
\[
M_{\Ct} \to  N_{\Ct} \oplus M[S^{-1}]_{\Ct}
\]
and
\[
M(-1)_{\Ct} \to  N(-1)_{\Ct} \oplus M(-1)[S^{-1}]_{\Ct}
\]
are injective. Indeed, since the $\Ct$-action on $M$ is $A$-linear we can identify these maps with the result of applying the functors $M_{\Ct} \otimes_A (-)$ and $M(-1)_{\Ct} \otimes_A (-)$ to the injective $A$-module map
\[
A \to B \oplus A[S^{-1}],
\]
and so the desired result follows from the fact that the cokernel of the last map is $B[p(S)^{-1}]$, which is flat as an $A$-module by assumption.
\end{proof}

\begin{corollary}[Nisnevich descent]
\label{corollary:nisnevich-descent}%
Let $S$ be a qcqs scheme equipped with a invertible perfect complex with $\Ct$-action 
$L\in\Picspace(X)^{\BC}$.
Let $\F\colon \Catp \to \A$ be a Karoubi-localising functor valued in a presentable $\infty$-category $\A$ (e.g., $\A=\Spa$ and $\F$ is $\KGW$ or $\KL$, or, say, $\A=\Sps$ and $\F=\GWspace(-^{\natural})$).
Then for every $m \in \{-\infty,+\infty\} \cup \ZZ$, 
the functor
\[
\F_L^{\geq m}\colon \qSch_{/S} \to \A \quad\quad \F_L^{\geq m}(p\colon X\to S) =  \F(\Dperf(X),\QF^{\ge m}_{p^*L})
\]
is a Nisnevich sheaf. 
\end{corollary}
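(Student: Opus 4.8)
The plan is to combine the Zariski descent already established in Corollary~\ref{corollary:zariski-descent} with the affine excision statement of Proposition~\ref{proposition:affine-exicision}, organized through the standard theory of the Nisnevich topology. Recall that the Nisnevich topology on $\qSch_{/S}$ is generated by the Nisnevich cd-structure, whose distinguished squares are the cartesian squares
\[
\begin{tikzcd}
W \ar[r]\ar[d] & V \ar[d,"p"] \\
U \ar[r,"j"] & X
\end{tikzcd}
\]
with $j$ a quasi-compact open immersion, $p$ étale, and $p$ inducing an isomorphism on reduced structures over the closed complement $Z = X \setminus U$. Since this cd-structure is complete, regular and bounded, a presheaf $\qSch_{/S}\op \to \A$ with values in a presentable $\infty$-category is a Nisnevich sheaf if and only if it sends the empty scheme to a terminal object and every distinguished square to a pullback square. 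As $\F$ is Karoubi-localising it sends the zero Poincaré $\infty$-category to a zero object, so $\F_L^{\geq m}(\emptyset)$ is terminal, and by Corollary~\ref{corollary:zariski-descent} it is a Zariski sheaf; so it remains to check excision for distinguished squares.

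Next I would reduce any distinguished square to the algebraic situation of Proposition~\ref{proposition:affine-exicision}. Using Zariski descent repeatedly — first to replace $X$ by an affine open $\spec(A)$, then to replace $V$ by an affine open $\spec(B)$ with $A \to B$ étale (hence flat), and correspondingly on $U$ and $W$ — together with the standard inductive analysis of distinguished squares (peeling off, with the help of Zariski descent, one generator of the ideal cutting out $Z$ at a time, an induction whose termination is guaranteed by the boundedness of the Nisnevich cd-structure), the excision statement is reduced to the case of a flat homomorphism $A \to B$ and a finite set $S \subseteq A$ such that $A/sA \to B/sB$ is an isomorphism for each $s \in S$, with $U = \spec(A[S^{-1}])$ and $W = \spec(B[f(S)^{-1}])$. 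In this case Proposition~\ref{proposition:affine-exicision} asserts precisely that the associated square of Poincaré $\infty$-categories is a Poincaré-Karoubi square.

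Finally I would invoke the fact that a Karoubi-localising functor carries Poincaré-Karoubi squares to pullback squares. Indeed, in such a square the two vertical maps are Poincaré-Karoubi projections and the square is cartesian, so the fibre of the left vertical map agrees with the fibre of the right vertical map; each of these fibres forms a Poincaré-Karoubi sequence together with its column, and $\F$ turns both of these into fibre sequences sharing the same fibre term, whence the square of $\F$-values is cartesian. Applying this to the square produced by Proposition~\ref{proposition:affine-exicision} shows that $\F_L^{\geq m}$ sends the original distinguished square to a pullback square, which completes the proof.

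The main obstacle is the reduction carried out in the second step: closing the gap between an abstract distinguished Nisnevich square — in which $Z$ is an arbitrary closed subset with quasi-compact complement and $p$ an arbitrary étale map that is an isomorphism over $Z$ — and the rigid ``Milnor square'' shape required by Proposition~\ref{proposition:affine-exicision}, where $U = \spec(A[S^{-1}])$ is affine and $Z$ is the vanishing locus of $S$ with $A \to B$ an isomorphism modulo each element of $S$. Executing this requires the (standard but somewhat delicate) reduction of distinguished squares to the affine case and then the dévissage on the generators of the ideal of $Z$ powered by the boundedness of the Nisnevich cd-structure, with Zariski descent used throughout to split opens into principal pieces and glue the resulting pullback squares back together.
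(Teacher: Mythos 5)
Your overall strategy—Zariski descent plus excision for distinguished Nisnevich squares, fed through the Poincaré-Karoubi machinery—is the right idea and the final step (a Karoubi-localising functor turns Poincaré-Karoubi squares into cartesian squares) is handled correctly. But there is a genuine gap at the very start, and a looser hand-wave in the middle that you should tighten.

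\textbf{The gap.} You treat $L$ as an arbitrary invertible perfect complex with $\Ct$-action throughout, but Proposition~\ref{proposition:affine-exicision} is stated only for an invertible $A$-\emph{module} $M$ with $A$-linear involution, i.e.\ an (involuted) line bundle. Its proof relies on concrete Tate cohomology computations $\hat{\mathrm{H}}^n(\Ct,M)$ in abelian groups, which are meaningless for a general invertible complex. So before any reduction to the affine case can land on Proposition~\ref{proposition:affine-exicision}, one must reduce to the case where $L$ is a shift of a line bundle. The paper does this as its first step: by Remark~\ref{remark:shift-of-line-bundle} there is a disjoint-union decomposition $S = \coprod_n S_n$ with $L|_{S_n}$ an $n$-shift of a line bundle; quasi-compactness of $S$ makes this decomposition finite, extensivity of $\qSch_{/S}$ splits the presheaf accordingly, and Zariski descent (which forces the presheaf to take finite coproducts to products) lets one treat each summand separately, where $L$ is now a global shift of a line bundle and the shift is absorbed into the shift $\qshift{n}$ of the Poincaré structure. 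Your proposal never addresses this, and without it the application of Proposition~\ref{proposition:affine-exicision} is not licensed.

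\textbf{The hand-wave.} Your reduction of an abstract distinguished square to the shape of Proposition~\ref{proposition:affine-exicision} via ``peeling off one generator of the ideal at a time, an induction whose termination is guaranteed by the boundedness of the Nisnevich cd-structure'' conflates two things. The characterization of Nisnevich sheaves via distinguished squares needs only completeness and regularity of the cd-structure—boundedness is not what terminates a generator induction (it concerns the interaction with hypercompleteness/cohomological dimension). Moreover, on non-Noetherian qcqs schemes the cd-structure formalism requires care; the paper sidesteps this by invoking Proposition~\ref{proposition:reduce-affine} to reduce Nisnevich descent for a Zariski sheaf to Nisnevich descent on affine schemes, and then citing \cite[Theorem B.5.0.3]{SAG}, which states directly that Nisnevich sheaves of $\A$-valued presheaves on affine $R$-schemes are precisely those vanishing on $\emptyset$ and satisfying ``affine Nisnevich excision,'' i.e.\ excision for exactly the single-element squares that Proposition~\ref{proposition:affine-exicision} addresses. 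That is cleaner and avoids your induction altogether. Your route could probably be made to work, but you would need to either invoke the affine criterion as the paper does or substantiate the generator dévissage properly (and note that, since inverting a finite set $\{f_1,\dots,f_k\}$ is the same as inverting the single element $f_1\cdots f_k$, the ``finite set'' case is not really more general than the single-element case, so the induction is doing less than you imply).
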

\begin{proof}
We first reduce to the case where $L$ is a shift of a line bundle. For this, recall from Remark~\ref{remark:shift-of-line-bundle} that there exists a disjoint union decomposition $S = \coprod_n S_n$, and for each $n$ a line bundle $L_n$ on $S_n$, such that $L|_{S_n} = L_n[n]$. Now, since $S$ is quasi-compact, the $S_n$'s must be empty for all but finitely many $n$. In particular, we may write $S = S_{n_1} \amalg \cdots \amalg S_{n_r}$ for some $r \geq 1$. We now note that the category of qcqs schemes is extensive, so that, in particular, we have an equivalence of categories
\begin{equation}
\label{equation:extensive-equiv}%
\prod_{i=1}^{r}\qSch_{/S_{n_i}} \tosimeq \qSch_{/S} \quad\quad \{[X_i \to S_{n_i}]\}_{i=1}^{r} \mapsto \Big[\coprod_{i=1}^{r} X_i \to S\Big] .
\end{equation}
Let $\F^{\geq m}_i$ be the restriction of $\F_L^{\geq m}$ along the functor $(\qSch_{/S_{n_i}})\op \to (\qSch_{/S})\op$ induced by post-composition with $S_{n_i} \hrar S$. Since $\F_L^{\geq m}$ is a Zariski sheaf by Corollary~\ref{corollary:zariski-descent} it sends finite disjoint unions of schemes to products in $\E$. We may hence identify $\F_L^{\geq m}$ with the composite
\[
\prod_{i=1}^{r}\qSch_{/S_{n_i}} \xrightarrow{\prod \F^{\geq m}_i} \prod_{i=1}^{r} \E \xrightarrow{\times} \E .
\]
In addition, by Lemma~\ref{lemma:zariski-nisnevich-coverage}, the equivalence of categories~\eqref{equation:extensive-equiv} identifies Nisnevich epis on the right hand side with tuples of Nisnevich epis on the right hand side. It will hence suffice to show that each of the presheafs $\F^{\geq m}_i \colon (\qSch_{/S_{n_i}})\op \to \E$ is a Nisnevich sheaf. Replacing $S$ with $S_{n_i}$ we may consequently assume without loss of generality that $L$ is a shift of a line bundle. 

Now since we already know that $\F_L^{\geq m}$ is a Zariski sheaf it will suffice by Proposition~\ref{proposition:reduce-affine} to show the restriction $\F^{\geq m}_{\aff} := (\F^{\geq m}_L)|_{\Aff_{/S}}$ is a Nisnevich sheaf. We note that since all coverings in the Nisnevich topology consist of étale maps we have that $\F^{\geq m}_{\aff}$ is a Nisnevich sheaf if and only if for very affine scheme $\spec(R) \to S$ the restriction of $\F^{\geq m}_{\aff}$ to the category of affine étale $R$-schemes is a Nisnevich sheaf. By~\cite[Theorem B.5.0.3]{SAG} the latter property is equivalent to $\F^{\geq m}_{\aff}$ vanishing on the empty scheme and satisfying Nisnevich excision for affine schemes in the sense of~\cite[Definition B.5.0.1]{SAG}, that is, for every map $f\colon A \to B$ of étale $R$-algebras, and every $s \in A$ such that the map $A/sA \to B/f(s)B$ is an isomorphism of $A$-modules, the square
\[
\begin{tikzcd}
\F^{\geq m}_{\aff}(\spec(A)) \ar[r]\ar[d] & \F^{\geq m}_{\aff}(\spec(B)) \ar[d] \\
\F^{\geq m}_{\aff}(\spec(A[s^{-1}]) \ar[r] & \F^{\geq m}_{\aff}(\spec(B[f(s)^{-1}]) 
\end{tikzcd}
\]
is cartesian.
To finish the proof, we now point out that $\F^{\geq m}_{\aff}$ indeed vanishes on the empty scheme since it is a Zariski sheaf, and satisfies Nisnevich excision for affine schemes 
since $\F$ is Karoubi-localising and for every invertible $A$-module with $A$-linear involution $M$ and any $n \in \ZZ$ and the square
\[
\begin{tikzcd}
\big(\Dperf(A),(\QF^{\geq m}_M\big)\qshift{n}) \ar[r]\ar[d] & (\Dperf(B),\big(\QF^{\geq m}_M)\qshift{n}\big) \ar[d] \\
\big(\Dperf(A[s^{-1}]),(\QF^{\geq m}_{M[s^{-1}]})\qshift{n}\big) \ar[r] & \big(\Dperf(B[f(s)^{-1}]),(\QF^{\geq m}_{N[f(s)^{-1}]})\qshift{n}\big) 
\end{tikzcd}
\]
is a Poincaré-Karoubi square by Proposition~\ref{proposition:affine-exicision}, where $N=M \otimes_A B$. 
\end{proof}

\subsection{The coniveau filtration}
\label{subsection:coniveau}%

Our goal in this subsection is to prove the following Poincaré analogue of the coniveau filtration along the lines of \cite{Balmer-filtrations}. Let $Z\subseteq X$ a closed subscheme with quasi-compact complement. For $0\le c\le \infty$ we denote by $Z^{(c)}$ the set of points of $Z$ of codimension $c$, that is, the set of points for which $\cO_{Z,x}$ is $\geq c$. 
We write $\Dperf_Z(X)^{\ge c} \subseteq \Dperf_Z(X)$ for the full subcategory of $\Dperf_Z(X)$ spanned by those perfect complexes such that any point in the support of $P$ belongs to $Z^{(c)}$. 
In particular $\Dperf_Z(X)^{\ge 0} = \Dperf_Z(X)$, and if $Z$ has Krull dimension $e$ then $\Dperf_Z(X)^{\geq e+1} = 0$. Since the duality preserves the support, the subcategory $\Dperf_Z(X)^{\ge c}$ inherits from $\Dperf_Z(X)$ its Poincaré structure.

\begin{proposition}
\label{proposition:coniveau-sequence}%
Let $X$ be a Noetherian scheme and $Z\subseteq X$ a closed subset with quasi-compact complement. Then the restriction functors $(\Dperf_Z(X)^{\ge c},\QF^{\ge m}_L) \to (\Dperf_x(\cO_{X,x}),\QF^{\ge m}_L)
$ for $x \in Z^{(c)}$ fit in a Poincaré-Verdier sequence 
\[
(\Dperf_Z(X)^{\ge c+1},\QF^{\ge m}_L)\to (\Dperf_Z(X)^{\ge c},\QF^{\ge m}_L)\to \bigoplus_{x\in Z^{(c)}} (\Dperf_x(\cO_{X,x}),\QF^{\ge m}_L) \ .
\]
\end{proposition}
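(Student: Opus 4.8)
The plan is to deduce the statement from the corresponding sequence of underlying stable $\infty$-categories with duality — which is the coniveau localisation of \cite{Balmer-filtrations} transported to the present $\infty$-categorical setting — and then to pin down the Poincaré structures on the two outer terms.

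First I would record that
\[
\Dperf_Z(X)^{\ge c+1} \to \Dperf_Z(X)^{\ge c} \to \bigoplus_{x\in Z^{(c)}}\Dperf_x(\cO_{X,x})
\]
is a Verdier sequence of stable $\infty$-categories. The first functor is the inclusion of a thick subcategory, hence fully faithful, and a point-wise check — a codimension-$c$ point of $Z$ has no nontrivial generisation inside $Z$ of codimension $\ge c$, and the support of a perfect complex on a Noetherian scheme meets $Z^{(c)}$ in a finite set — identifies $\Dperf_Z(X)^{\ge c+1}$ with the kernel of the restriction functor $P\mapsto (j_x^*P)_{x}$, where $j_x\colon\spec\cO_{X,x}\to X$. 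That this restriction functor is a Verdier projection whose quotient is \emph{exactly} the finitely supported direct sum (and not merely so after idempotent completion) is the content of the coniveau localisation theorem: writing $\Dperf_Z(X)^{\ge c}$ as the filtered union of the $\Dperf_W(X)$ over closed $W\subseteq Z$ of codimension $\ge c$ with quasi-compact complement, one applies Thomason--Neeman localisation at each stage, passes to the colimit, and checks essential surjectivity on the nose by the clearing-of-denominators argument over the local rings $\cO_{X,x}$ as in \cite{Balmer-filtrations}.

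Next, all three terms carry the $L$-twisted duality obtained by restricting $\Dual_L$ along the respective (pro-)open immersions, and by the naturality of this construction recalled in \S\ref{subsection:scheme-to-rigid} (see Proposition~\ref{proposition:construction-gives-poincare}) the restriction functors are canonically duality preserving; since the forgetful functor $\Catps\to\Catx$ preserves limits and colimits and is conservative, the displayed sequence is also a Verdier sequence in $\Catps$. To upgrade it to a Poincaré-Verdier sequence I would invoke the criterion of \cite{9-authors-II}: a sequence $(\C_0,\QF_0)\to(\C_1,\QF_1)\to(\C_2,\QF_2)$ in $\Catp$ is Poincaré-Verdier as soon as the underlying sequence is Verdier, $\QF_0$ is the restriction of $\QF_1$, and $\QF_2$ is the left Kan extension of $\QF_1$ along $\C_1\to\C_2$. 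The first condition is immediate here, since by construction $\QF^{\ge m}_L$ on $\Dperf_Z(X)^{\ge c+1}$ is the structure inherited by restriction along the inclusion of the thick subcategory. For the third, additivity of Poincaré structures over the direct sum reduces the claim to showing, for each $x\in Z^{(c)}$, that $\QF^{\ge m}_{j_x^*L}$ on $\Dperf_x(\cO_{X,x})$ is the left Kan extension of $\QF^{\ge m}_L$ along $j_x^*$. This I would prove by a continuity argument: $\spec\cO_{X,x}$ is the cofiltered limit of the affine open neighbourhoods $U\ni x$, the restrictions $\Dperf_W(X)\to\Dperf_{W\cap U}(U)$ are Poincaré-Karoubi projections by Corollary~\ref{corollary:open-is-karoubi-2} (so already realise the relevant left Kan extensions), and along the resulting filtered system the pertinent mapping spectra are uniformly bounded above by Remark~\ref{remark:tor-is-truncated}, so that — exactly as in the proof of Proposition~\ref{proposition:flat-is-karoubi-sym}, via Corollary~\ref{corollary:standard} — $(-)^{\hC}$ and $(-)_{\hC}$ commute with the colimit in the cases $m=\pm\infty$; the remaining values of $m$ then follow from the $m$-connective-cover description of Lemma~\ref{lemma:connected-cover-quadratic-functors} together with the t-exactness of $\Ind(j_x^*)$.

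The main obstacle I anticipate is this last identification — the boundedness and continuity argument pinning down the quotient Poincaré structure as $\bigoplus_{x}\QF^{\ge m}_{j_x^*L}$ — rather than the underlying triangulated coniveau sequence, which is essentially \cite{Balmer-filtrations}; a subsidiary delicate point is the clearing-of-denominators step, i.e.\ ensuring that the quotient of $\Dperf_Z(X)^{\ge c}$ by $\Dperf_Z(X)^{\ge c+1}$ is the direct sum \emph{on the nose} and not merely up to idempotent completion.
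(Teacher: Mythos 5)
Your argument is correct in substance but takes a genuinely different route from the paper's. You structure the proof in two layers: first establish the underlying Verdier sequence of stable $\infty$-categories (by transporting Balmer's coniveau localisation, together with a Thomason--Neeman clearing-of-denominators step to get essential surjectivity of the quotient on the nose), and then upgrade it to a Poincaré-Verdier sequence by checking the restriction/left-Kan-extension criterion for the Poincaré structures, with a continuity-and-boundedness argument for the last point. The paper instead never leaves $\Catp$: it introduces the filtered posets $\A_Z(c)$ of closed subsets of codimension $\geq c$ and $\cB_Z(c)$ of nested pairs $(W', W)$, observes that for each such pair Corollary~\ref{corollary:open-is-karoubi-2} yields a Poincaré-Karoubi sequence
\[
(\Dperf_{W'}(X),\QF^{\geq m}_L) \to (\Dperf_W(X),\QF^{\geq m}_L) \to (\Dperf_{W\smallsetminus W'}(X\smallsetminus W'),\QF^{\geq m}_L),
\]
and takes the filtered colimit over $\cB_Z(c)$; the three colimit terms are then identified via Lemma~\ref{lemma:complexes-with-disjoint-support} (which handles the finitely-supported-direct-sum decomposition you flag by hand) and Lemma~\ref{lemma:stalk-of-D_Z} (which is exactly your continuity step, proved there via Lemma~\ref{lemma:inverse-system} and Corollary~\ref{corollary:limit-open-karoubi}). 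This approach buys something concrete: because the sequence is produced directly as a colimit of known Poincaré-Karoubi sequences, one never needs to separately verify that the underlying stable sequence is Verdier or that the Poincaré structure on the quotient is a left Kan extension --- the two subsidiary obstacles you rightly flag at the end of your proposal are absorbed into the colimit identification by construction. Your route would work if filled in, but requires more bookkeeping: the clearing-of-denominators step and the left-Kan-extension identification each need their own argument, whereas the paper delegates both to the single colimit computation.

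One small caveat worth noting if you carry your version through: in your continuity step you implicitly vary two parameters at once (the closed support $W$ shrinking and the open neighbourhood $U \ni x$ shrinking), and you should be explicit that the relevant diagram is cofinal in the system indexing the Karoubi quotient --- this is exactly the cofinality point the paper handles by observing that both projections $\A_Z(c+1) \leftarrow \cB_Z(c) \rightarrow \A_Z(c)$ have nonempty (hence filtered) comma categories, and in Lemma~\ref{lemma:stalk-of-D_Z} by replacing general neighbourhoods by those containing $X\setminus Z$.
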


The main ingredient in constructing the Poincaré-Verdier sequence of Proposition~\ref{proposition:coniveau-sequence} is the Poincaré-Verdier sequence of Corollary~\ref{corollary:open-is-karoubi-2}. The proof will require however a couple of additional lemmas.

\begin{lemma}
\label{lemma:complexes-with-disjoint-support}%
Let $X$ be a quasi-compact and quasi-separated scheme and let $Z_1,Z_2\subseteq X$ two disjoint closed subsets such that $X\smallsetminus Z_i$ is quasi-compact. Then the canonical map of Poincaré $\infty$-categories
\[
(\Dperf_{Z_1}(X),\QF^{\ge m}_L)\oplus (\Dperf_{Z_2}(X),\QF^{\ge m}_L)\to (\Dperf_{Z_1\cup Z_2}(X),\QF^{\ge m}_L)\qquad (P_1,P_2)\mapsto P_1\oplus P_2
\]
is an equivalence.
\end{lemma}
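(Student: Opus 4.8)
The plan is to reduce the statement to a purely scheme-theoretic fact about the derived category $\Dperf(X)$ together with its duality, and then observe that the symmetric (and truncated) Poincaré structures are determined in a suitably local way.

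\medskip

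\noindent\textbf{Step 1: the underlying stable equivalence.} First I would recall that for disjoint closed subsets $Z_1, Z_2 \subseteq X$ with quasi-compact complements, the canonical functor
\[
\Dperf_{Z_1}(X) \oplus \Dperf_{Z_2}(X) \to \Dperf_{Z_1 \cup Z_2}(X) \qquad (P_1, P_2) \mapsto P_1 \oplus P_2
\]
is an equivalence of stable $\infty$-categories. This is standard: since $Z_1$ and $Z_2$ are disjoint and closed, one can find (Zariski-locally, then glue) opens $V_1, V_2$ with $Z_i \subseteq V_i$ and $V_1 \cap V_2 \cap (Z_1 \cup Z_2) = \emptyset$, so that any perfect complex supported on $Z_1 \cup Z_2$ decomposes uniquely as a direct sum of its restrictions to the two clopen pieces of $Z_1 \cup Z_2$; equivalently, $\Dperf_{Z_1 \cup Z_2}(X)$ splits as the product of $\Dperf_{Z_1 \cup Z_2}$-categories supported near each $Z_i$, each of which is $\Dperf_{Z_i}(X)$ by excision (Corollary~\ref{corollary:poinc-excision}). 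The inverse sends $P$ to $(P|_{\text{near }Z_1}, P|_{\text{near }Z_2})$ via the idempotent decomposition.

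\medskip

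\noindent\textbf{Step 2: the functor is Poincaré and an equivalence of Poincaré $\infty$-categories.} Now I must check that this equivalence of stable $\infty$-categories promotes to an equivalence of Poincaré $\infty$-categories. The duality $\Dual_L$ preserves supports (since $\Dual_L(P) = \underline{\hom}(P,L)$ has the same support as $P$, being the internal hom into an invertible object), so each $\Dperf_{Z_i}(X)$ inherits the duality and the Poincaré structure $\QF^{\ge m}_L$ by restriction, as noted in the text just before Proposition~\ref{proposition:coniveau-sequence}. The point is that on the decomposed side, $\QF^{\ge m}_L(P_1 \oplus P_2) \simeq \QF^{\ge m}_L(P_1) \oplus \QF^{\ge m}_L(P_2)$: indeed, since $P_1$ and $P_2$ have disjoint supports, $\map_X(P_1 \otimes P_2, L) \simeq 0$ and likewise $\map_X(P_2 \otimes P_1, L) \simeq 0$, so the cross term in the bilinear part vanishes. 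More precisely, for $\QF^{\sym}_L$ one has $\QF^{\sym}_L(P_1 \oplus P_2) = \map_X((P_1 \oplus P_2)^{\otimes 2}, L)^{\hC}$, and the $\Ct$-equivariant decomposition of $(P_1 \oplus P_2)^{\otimes 2}$ into $P_1^{\otimes 2} \oplus P_2^{\otimes 2} \oplus (P_1 \otimes P_2 \oplus P_2 \otimes P_1)$ has vanishing last summand by disjointness of supports; taking $\hC$ is additive, giving $\QF^{\sym}_L(P_1) \oplus \QF^{\sym}_L(P_2)$. The same argument handles the linear part $\LT_L$ and hence the truncations $\QF^{\ge m}_L = \QF^{\sym}_L \times_{\map(-,\LT_L)} \map(-,\tau_{\ge m}\LT_L)$, which are therefore also additive on summands with disjoint support. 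This exhibits the functor as a hermitian equivalence whose underlying exact functor is the equivalence of Step 1, hence an equivalence in $\Catp$.

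\medskip

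\noindent\textbf{Expected main obstacle.} The conceptually routine part is Step 1; the only genuinely careful point is verifying the support-disjointness vanishing $\map_X(P_1 \otimes P_2, L) \simeq 0$ in Step 2 and checking it is $\Ct$-equivariantly compatible with the splitting, so that the Poincaré structure really does decompose and no $\Ct$-homotopy fixed-point correction term survives. In principle this is immediate from the fact that the derived tensor product of two perfect complexes with disjoint supports vanishes (their supports being closed and disjoint, any point lies in the complement of at least one), but one should phrase it in a way that also covers the $\Ct$-action cleanly — the safest route is to use the clopen decomposition of $Z_1 \cup Z_2$ and excision, so that the splitting of Poincaré structures is literally the splitting of a Poincaré category into a product of Poincaré categories, where additivity of $\QF$ on orthogonal summands is part of the definition of a Poincaré structure.
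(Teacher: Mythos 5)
Your proof follows essentially the same strategy as the paper's, and your Step~2 is exactly the key point: for $P_i\in\Dperf_{Z_i}(X)$ with disjoint supports, the cross terms $\map_X(P_1\otimes P_2,L)$ (and hence also the mixed bilinear terms) vanish, which is what makes the decomposition compatible with $\QF^{\geq m}_L$. The paper is slightly more economical here, observing in one stroke that since both $P_1\otimes P_2$ and $P_1\otimes P_2^\vee$ are supported on $Z_1\cap Z_2=\varnothing$, the map is automatically a fully faithful Poincaré inclusion; it then only needs to establish essential surjectivity.

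Where you are thinner is precisely in essential surjectivity (your Step~1), and your phrasing is imprecise enough to count as a gap in detail, though not in idea. Clopen pieces of the subspace $Z_1\cup Z_2$ do not by themselves yield a splitting of $\Dperf_{Z_1\cup Z_2}(X)$; that decomposition needs an actual argument on $X$. The rigorous version of what you are gesturing at is either the Zariski descent statement for supported perfect complexes (Remark~\ref{remark:zariski-descent-support}, applied to the cover $X=U_1\cup U_2$ with $U_i=X\smallsetminus Z_i$, noting that the base of the fibre square is the zero category since no nonzero complex supported in $Z_1\cup Z_2$ lives on $U_1\cap U_2$) together with excision, or else the paper's direct idempotent construction: given $P\in\Dperf_{Z_1\cup Z_2}(X)$, set $P_1=P\otimes_{\cO_X}\cO_{U_2}$ and $P_2=P\otimes_{\cO_X}\cO_{U_1}$, observe $P\otimes\cO_{U_1}\otimes\cO_{U_2}=0$, and check that $P\to P_1\oplus P_2$ (tensoring $\cO_X\to\cO_{U_1}\oplus\cO_{U_2}$ with $P$) restricts to an equivalence on each $U_i$ and hence on $X=U_1\cup U_2$; the summands $P_i$ are perfect as retracts of $P$. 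I would recommend either adopting the paper's explicit construction or spelling out the descent argument, as your current appeal to a ``clopen decomposition'' and ``restrictions near each $Z_i$'' does not by itself constitute a proof.
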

\begin{proof}
For every $P_i\in \Dperf_{Z_i}(X)$ both $P_1\otimes P_2$ and $P_1\otimes P_2^\vee$ are supported on $Z_1\cap Z_2=\varnothing$ and so are 0. In particular, the map in the statement is a fully faithful Poincaré inclusion. It suffices to show it is essentially surjective, that is that every $P\in \Dperf_{Z_1\cup Z_2}(X)$ can be written as $P\simeq P_1\oplus P_2$ with $P_i$ supported on $Z_i$.
    
Write $U_i=X\smallsetminus Z_i$ for the open complements and consider the $\cO_X$-algebras $\cO_{U_i}$. Since $U_1\cap U_2=X\smallsetminus (Z_1\cup Z_2)$ we have that $P\otimes_{\cO_X}\cO_{U_1}\otimes_{\cO_X}\cO_{U_2}=0$. We can write $P_1=P\otimes_{\cO_X}\cO_{U_2}$ and $P_2=P\otimes_{\cO_X}\cO_{U_1}$, so that $P_i$ is supported on $Z_i$. Consider now the map
\[
P\to P_1\oplus P_2\,,
\]
obtained by tensoring $\cO_X\to \cO_{U_1}\oplus \cO_{U_2}$ with $P$. This is an equivalence after restricting to $U_i$, and so it is an equivalence on $U_1\cup U_2=X$. Finally $P_1$ and $P_2$ are perfect because summand of a perfect complex.
\end{proof}

\begin{lemma}
\label{lemma:stalk-of-D_Z}%
Let $X$ be a quasi-compact quasi-separated scheme and $Z\subseteq X$ a closed subset with quasi-compact complement. Let $x \in Z$ be a point.
Then there is a natural equivalence of Poincaré $\infty$-categories
\[
\colim_{V\ni x} (\Dperf_{Z\cap V}(V),\QF^{\ge m}_L)\simeq (\Dperf_{x}(\cO_{X,x}),\QF^{\ge m}_L)\,,
\]
where the colimit runs over all open neighborhoods of $x$ which contain $X \setminus Z$.
\end{lemma}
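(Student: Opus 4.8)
The plan is to reduce the statement to its analogue for the underlying stable $\infty$-categories with duality, and to obtain that in turn from the excision results of \S\ref{section:nisnevich} together with the continuity of perfect complexes under filtered colimits of rings. Recall that filtered colimits in $\Catp$ are computed on underlying stable $\infty$-categories with duality, the Poincaré structure on the colimit being the colimit of the left Kan extensions of the Poincaré structures of the terms (see \cite{9-authors-I}); moreover $\Derqc(-)$, the symmetric Poincaré structure $\QF^{\sym}_L$ of Definition~\ref{definition:linear-part}, its linear part $\LT_L$, and the connective covers $\QF^{\geq m}_L$ of Lemma~\ref{lemma:connected-cover-quadratic-functors} are all built functorially out of the pair $(X,L)$ as in \S\ref{section:poinc-schemes}. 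It therefore suffices to (i) produce an equivalence of stable $\infty$-categories with duality between $\colim_{V\ni x}(\Dperf_{Z\cap V}(V),\Dual_{j_V^*L})$ and $(\Dperf_x(\cO_{X,x}),\Dual_{L_x})$ — where $j_V\colon V\hrar X$ is the inclusion and $L_x$ denotes the restriction of $L$ to $\spec\cO_{X,x}$ — compatibly with the restriction functors, and then (ii) check that under this equivalence the colimit Poincaré structure is identified with $\QF^{\geq m}_{L_x}$ for every $m\in\ZZ\cup\{\pm\infty\}$.

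For (i): the transition functors $j^*\colon(\Dperf_{Z\cap V}(V),\Dual_{j_V^*L})\to(\Dperf_{Z\cap V'}(V'),\Dual_{j_{V'}^*L})$, for $V'\subseteq V$ in the indexing poset, are restrictions along open immersions, hence duality preserving by Proposition~\ref{proposition:construction-gives-poincare} and Karoubi projections by Corollary~\ref{corollary:open-is-karoubi-2}; restriction along $\spec\cO_{X,x}\to V$ assembles these into a comparison functor $\Phi$ out of the colimit. To see that $\Phi$ is an equivalence I would argue as follows. Every $V$ in the poset contains both $x$ and $W:=X\setminus Z$, so by excision (Corollary~\ref{corollary:poinc-excision}) the opens of the form $W\cup V'$, with $V'$ an affine open neighbourhood of $x$, are cofinal, and on such an open $Z\cap(W\cup V')=Z\cap V'$ and $(\Dperf_{Z\cap(W\cup V')}(W\cup V'),\QF^{\geq m}_L)\simeq(\Dperf_{Z\cap V'}(V'),\QF^{\geq m}_{L|_{V'}})$. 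This reduces the computation to $\colim_{V'}\Dperf_{Z\cap V'}(V')$ over affine opens $V'\ni x$, for which $\spec\cO_{X,x}=\lim_{V'}V'$ along affine transition maps, so that continuity of $\Dperf$ (Thomason--Trobaugh \cite{thomason-trobaugh}; see \S\ref{section:appendix}) gives $\colim_{V'}\Dperf(V')\simeq\Dperf(\cO_{X,x})$; passing to the full subcategories cut out by the support conditions — which are detected by vanishing after base change to residue fields, and these all occur in the system — identifies the colimit of the $\Dperf_{Z\cap V'}(V')$ with $\Dperf_x(\cO_{X,x})$. The induced duality is $\Dual_{L_x}$ because $\Dual_L=\uline{\hom}(-,L)$ commutes with the (flat) restriction functors, as recorded in the functoriality of \S\ref{subsection:scheme-to-rigid}.

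For (ii), the cases $m=\pm\infty$ are automatic once (i) is known, since $\QF^{\sym}$ and $\QF^{\qdr}$ are produced from the duality via the lax symmetric monoidal right adjoint of Corollary~\ref{corollary:adj-catp-catps}, and $(-)^{\hC}$ and $(-)_{\hC}$ commute with the relevant filtered colimits; for $(-)^{\hC}$ this uses that all the mapping spectra $\map(P,\Dual_L P)$ involved are perfect, hence uniformly bounded above over the diagram, so that the limit--colimit interchange of the kind appearing in the proof of Proposition~\ref{proposition:flat-is-karoubi-sym} applies. For finite $m$ one invokes Lemma~\ref{lemma:connected-cover-quadratic-functors}: $\QF^{\geq m}_{L}$ is the pullback of $\QF^{\sym}_{L}$ along $\tau_{\geq m}\LT_L\to\LT_L$, restriction along the flat maps $\spec\cO_{X,x}\to V$ is t-exact on $\Derqc$ and so sends $\tau_{\geq m}\LT_L$ to $\tau_{\geq m}\LT_{L_x}$, and filtered colimits are t-exact, so the colimit of the $\QF^{\geq m}_{j_V^*L}$ is $\QF^{\geq m}_{L_x}$. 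Naturality in $x$ (and in $L$) is inherited from the functorial nature of all the constructions above.

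The step I expect to be the main obstacle is (i), specifically the bookkeeping of support conditions under the filtered colimit: after the excision reduction one must check carefully that the colimit of the $\Dperf_{Z\cap V'}(V')$ really is $\Dperf_x(\cO_{X,x})$ and that $\Phi$ is an equivalence rather than merely fully faithful onto a Karoubi-dense subcategory. Everything else — passage to underlying data, the behaviour of $\Dual_L$, and the genuine refinements $\QF^{\geq m}_L$ — is then formal given the results already established in \S\ref{section:poinc-schemes} and \S\ref{section:nisnevich}.
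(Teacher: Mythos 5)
You correctly assemble the right ingredients — excision and cofinality to reduce to affine $V'$, continuity of $\Dperf$ via Lemma~\ref{lemma:inverse-system}, and uniform boundedness to push $(-)^{\hC}$ through filtered colimits — and you are also right to flag the support bookkeeping in (i) as the obstacle. What you are missing is the device the paper uses to dissolve it. Rather than identifying $\colim_V\Dperf_{Z\cap V}(V)$ directly (and then the colimit Poincaré structure on it), the paper places each term inside the Poincaré-Verdier sequence
\[
\bigl(\Dperf_{Z\cap V}(V),\QF^{\geq m}_{L}\bigr)\longrightarrow \bigl(\Dperf(V),\QF^{\geq m}_{L}\bigr)\longrightarrow \bigl(\Dperf(V\setminus Z),\QF^{\geq m}_{L}\bigr),
\]
compares it to the analogous sequence over $\spec\cO_{X,x}$, and takes the filtered colimit of the whole diagram. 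Since $\Catp$ is compactly generated, filtered colimits commute with finite limits, so it suffices to check that the middle and right vertical comparison functors become equivalences after passing to the colimit. For those, continuity gives the equivalence on underlying $\Catx$, and then one uses that a Poincaré-Karoubi projection which is an equivalence on underlying stable $\infty$-categories is automatically an equivalence of Poincaré $\infty$-categories. The left vertical comparison — which is exactly the map you want — then becomes an equivalence for free. This sidesteps both the need to argue that an arbitrary $z\in\Dperf_x(\cO_{X,x})$ spreads out to a $Z$-supported lift on some $V$, and the need to unwind the colimit Poincaré structure at all.

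Two further points in your sketch need tightening even if one wants to push the direct route through. First, ``detected by vanishing after base change to residue fields'' does not by itself give essential surjectivity of your comparison functor $\Phi$ (as opposed to Karoubi-density): the fibre-sequence argument above is exactly what replaces this. Second, in (ii) the claim that $m=\pm\infty$ is ``automatic'' once (i) is known overreaches: the colimit of a filtered diagram in $\Catp$ is computed by left Kan extending the quadratic functors along the transition maps and taking the colimit of those, which need not coincide with the symmetric structure of the colimit duality obtained from Corollary~\ref{corollary:adj-catp-catps}. Your boundedness argument is doing genuine work there and is not a formality — it is essentially the content of the comparison that, in the paper's route, is discharged by the Poincaré-Karoubi projection property. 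Both points are repairable, but they are precisely what the fibre-sequence approach renders unnecessary.
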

\begin{proof}
Write $V_x = \spec(\cO_{X,x})$.
Given an open neighborhood $V$ of $x$, write $U = V \setminus (V \cap Z)$ and consider the commutative diagram
\[
\begin{tikzcd}
(\Dperf_{V \cap Z}(V),\QF^{\geq m}_{L|_V}) \ar[d]\ar[r] & (\Dperf(V),\QF^{\geq m}_{L|_V})\ar[r] \ar[d] & (\Dperf(U),\QF^{\geq m}_{L|_U}) \ar[d] \\
(\Dperf_x(V_x),\QF^{\geq m}_{L|_{V_x}}) \ar[r] & (\Dperf(V_x),\QF^{\geq m}_{L|_{V_x}}) \ar[r] & (\Dperf(V_x \setminus x), \QF^{\geq m}_{L|_{V_x \setminus x}})
\end{tikzcd}
\]
whose rows are Poincaré-Verdier sequences by Corollary~\ref{corollary:open-is-karoubi}. Since $x$ is in $Z$ we may identify $V_x \setminus x$ with $V_x \cap U$. We claim that the middle and right vertical arrows become equivalences when one takes the colimit over all open neighborhoods $V$ of $x$. Equivalently, since the family of neighborhoods is a filtered poset, this is the same as saying that the middle and right vertical arrows exhibit their targets as the colimit of their domains when $V$ ranges over all neighborhoods of $x$. Indeed, this holds on the level of underlying stable $\infty$-categories by Lemma~\ref{lemma:inverse-system}. At the same time, for every $V$ the middle and right most vertical maps are Poincaré-Karoubi projections by Corollary~\ref{corollary:limit-open-karoubi}, and so their colimit is again a Poincaré-Karoubi projection. But any Poincaré-Karoubi projection whose underlying exact functor is an equivalence is itself an equivalence of Poincaré $\infty$-categories.

Now since $\Catp$ is compactly generated filtered colimits commute with finite limits, and hence we conclude that the family of left most vertical Poincaré functors also exhibit their target as the colimit of their domains when $V$ ranges over all neighborhoods of $x$. It is left to explain why one may as well consider only neighborhoods which contain $X \setminus Z$. For this, note that if $V \subseteq V'$ is a pair of neighborhoods of $x$ such that $V \cap Z = V' \cap Z$ then the induced functor
\[
(\Dperf_{V \cap Z}(V),\QF^{\geq m}_{L|_V}) \to (\Dperf_{V' \cap Z}(V'),\QF^{\geq m}_{L|_{V'}})
\]
is an equivalence of Poincaré $\infty$-categories by excision (Corollary~\ref{corollary:poinc-excision}). It follows that the diagram $V \mapsto (\Dperf_{V \cap Z}(V),\QF^{\geq m}_{L|_V})$ factors through the poset map $V \mapsto V \cup (X \setminus Z)$, considered as a map from the poset of open neighborhoods of $x$ (with reverse inclusion) to the poset of open neighborhoods of $x$ which contain $X \setminus Z$. This last map is cofinal and hence the desired result follows.
\end{proof}

\begin{proof}[Proof of Proposition~\ref{proposition:coniveau-sequence}]
For ease of readability we will suppress the Poincaré structures from the proof.
    
For every $c\ge0$, write $\A_Z(c)$ for the poset consisting of those closed subsets $W\subseteq Z$ such that all points of $W$ have codimension $\geq c$ in $Z$, where the partial order is given by inclusion. Let $\cB_Z(c) \subseteq \A_Z(c+1) \times \A_Z(c)$ be the full subposet spanned by those pairs $(W',W)$ such that $W'\subseteq W$. Since the collection of subsets $\A_Z(c)$ is non-empty (it always contains the empty set) and closed under disjoint union we see that $\A_Z(c)$ is filtered. Similarly, $\cB_Z(c)$ is filtered. For every $(W',W) \subseteq \cB_Z(c)$ we have by Corollary~\ref{corollary:open-is-karoubi-2} a Poincaré-Karoubi sequence
\[
(\Dperf_{W'}(X),\QF^{\geq m}_{L}) \to (\Dperf_W(X),\QF^{\geq m}_L) \to (\Dperf_{W\smallsetminus W'}(X\smallsetminus W'),\QF^{\geq m}_L) \,.
\]
Now since $\Catp$ is compactly generated filtered colimits there compute with finite limits, and hence the collection of Poincaré-Verdier sequences is closed under filtered colimits. Taking the colimit of the above Poincaré-Verdier sequences for $(W',W) \in \cB_Z(c)$ and using that both projections $\A_Z(c+1) \leftarrow \cB_Z(c) \rightarrow \A_Z(c)$ are cofinal (their corresponding comma categories are non-empty and hence automatically filtered) we thus obtain a Poincaré-Verdier sequence
\[
\colim_{W'\in \A_Z(c+1)}(\Dperf_{W'}(X),\QF^{\geq m}_{L}) \to \colim_{W\in \A_Z(c)}(\Dperf_W(X),\QF^{\geq m}_L) \to \colim_{(W',W)\in \cB_Z(c)}(\Dperf_{W\smallsetminus W'}(X\smallsetminus W'),\QF^{\geq m}_L) \,.
\]
Now for $W \in \A_Z(c)$ we have that $(\Dperf_{W}(X),\QF^{\geq m}_{L})$ is by construction a full Poincaré subcategory of $\Dperf(X)$, and so their colimit in $\Catp$ coincides with their union, considered as a full subcategory of $\Dperf(X)$. This union is nothing but $\Dperf(X)^{\geq c}$: indeed, by definition a perfect complex $P \in \Dperf_Z(X)$ belongs to $\Dperf_Z(X)^{\geq 0}$ if and only if its support, which is a closed subset of $Z$, contains only points of codimension $\geq c$, that is, if and only if $\mathrm{supp}(P) \in \A_Z(c)$. We hence conclude that the Poincaré-Verdier inclusion $(\Dperf_Z(X)^{\geq c+1},\QF^{\geq m}_L) \subseteq (\Dperf_Z(X)^{\geq c},\QF^{\geq m}_L)$ fits in a Poincaré-Verdier sequence of the form
\[
(\Dperf_{Z}(X)^{\geq c+1},\QF^{\geq m}_{L}) \to (\Dperf_Z(X)^{\geq c},\QF^{\geq m}_L) \to \colim_{(W',W)\in \cB_Z(c)}(\Dperf_{W\smallsetminus W'}(X\smallsetminus W'),\QF^{\geq m}_L) \,.
\]
To conclude the proof it will now suffice to show that the last term in the above sequence is naturally equivalent to $\bigoplus_{x\in Z^{(c)}} (\Dperf_x(\cO_{X,x}),\QF^{\geq m}_L)$. Indeed, the projection $\cB_Z(c) \to \A_Z(c)$ is a cocartesian fibration and hence we may calculate the colimit in question as
\[
\colim_{(W',W)\in \cB_Z(c)}(\Dperf_{W\smallsetminus W'}(X\smallsetminus W'),\QF^{\geq m}_L) = \colim_{W \in \A_Z(c)} \colim_{\stackrel{W' \in \A_Z(c+1)}{W'\subseteq W}}(\Dperf_{W\smallsetminus W'}(X\smallsetminus W'),\QF^{\geq m}_L).
\]
Let us fix a $W\in \A_Z(c)$ and assume first that it is irreducible of codimension $c$ in $Z$. In particular, it contains a single point $w$ of $Z^{(c)}$.
But then $\A_W(c+1)$ is exactly the poset of proper closed subsets of $W$. That is, it is the opposite of the poset of open neighborhoods of the generic point $w$ of $W$ containing $X\smallsetminus W$. Therefore, by Lemma~\ref{lemma:stalk-of-D_Z} we have that
\[
\colim_{\stackrel[W'\subseteq W]{W' \in \A_Z(c+1)}{}}(\Dperf_{W\smallsetminus W'}(X\smallsetminus W'),\QF^{\geq m}_L) \simeq (\Dperf_w(\cO_{X,w}),\QF^{\geq m}_L) 
\]
Now for a general $W\in\A_W(c)$, as $X$ is noetherian, we can write $W=W_1\cup\cdots \cup W_n\cup \tilde W$, where the $W_i$ are the irreducible components of codimension $c$ and $\tilde W\in\A_W(c+1)$ is the union of all the irreducible components of codimension $\ge c+1$. Set 
\[
W_0=\tilde W \cup \bigcup_{i\neq j} W_i\cap W_j\in \A_W(c+1)\,,
\]
so that for every $W'$ containing $W_0$ we have $W\smallsetminus W'= (W_1\smallsetminus W')\amalg\cdots \amalg (W_n\smallsetminus W')$, where the unions are disjoint. Therefore by Lemma~\ref{lemma:complexes-with-disjoint-support} we have
\[
\Dperf_{W\smallsetminus W'}(X\smallsetminus W')\simeq\bigoplus_{i=1}^n\Dperf_{W_i\smallsetminus W'}(X\smallsetminus W')\,.
\]
Taking the colimit in $W'$ we obtain, by the previous case
\[
\colim_{\stackrel[W'\subseteq W]{W' \in \A_Z(c+1)}{}} (\Dperf_{W\smallsetminus W'}(X\smallsetminus W'), \QF^{\geq m}_L) \simeq \bigoplus_{x\in Z^{(c)}\cap W} (\Dperf_x(\cO_{X,x}),\QF^{\geq m}_L)\,.
\]
Finally taking the colimit for $W\in \A_Z(c)$ we get the desired result. 
\end{proof}

\subsection{Genuine symmetric GW-theory of schemes}
\label{subsection:genuine-sym}%

For a scheme $X$ equipped with a line bundle $L$, its classical $\GW$-groups were defined and studied in Hornbostel's thesis~\cite{hornbostel-thesis} under the assumption that 2 is invertible, and later in the works of Schlichting~\cite{schlichting-exact, schlichting-mv} without this assumption, by considering the collection of vector bundles on $X$ as an exact category with duality $\Dual_L = \underline{\Hom}_X(-,L)$, to which one can associate a symmetric $\GW$-space using the hermitian $\Q$-construction. More general flavours of forms were later considered in~\cite{schlichting-higher} (see in particular~\cite[Theorem 1.1]{schlichting-higher} for a proof that this approach indeed generalizes classical Grothendieck-Witt groups of rings).

Using vector bundles gives a well-behaved theory when the scheme $X$ is divisorial, a condition which insures that $X$ has a sufficiently supply of such bundles. Our goal in this subsection is to show that for a divisorial scheme $X$, its classical symmetric $\GW$-space in the above sense coincides with the $\GW$-space of the Poincaré $\infty$-category $(\Dperf(X),\QF^{\gs}_L)$, where $\QF^{\gs}_L = \QF^{\geq 0}_L$ is the genuine symmetric Poincaré structure, see~\ref{definition:genuine}. The idea is to use the Zariski descent result of Corollary~\ref{corollary:zariski-descent-pn} in order to reduce the comparison to the case where $X$ is affine, which in turn follows from the main result of~\cite{Hebestreit-Steimle}. 

Despite its relation with classical symmetric $\GW$-theory, our main focus in the present paper is not genuine symmetric $\GW$-theory, but rather the homotopy symmetric one, that is, the one associated to the Poincaré structure $\QF^{\sym}_L$. These are not completely unrelated to each other: for regular Noetherian $X$ of finite Krull dimension $d$, the genuine symmetric and homotopy symmetric $\GW$-groups coincide in degrees $\geq d-1$, see Proposition~\ref{proposition:gen-sym-compare} below.

Let us now fix a qcqs scheme $X$ and a line bundle $L$ on $X$.
Recall that by the Bott-Genauer sequence (see~\cite[\S 4.3]{9-authors-II}), the genuine symmetric $\GW$-space $\GWspace^{\gs}(X,L) := \GWspace(\Dperf(X),\QF^{\gs}_L)$ fits into a fibre sequence of $\Einf$-groups
\[
\GWspace^{\gs}(X,L) \to |\core \Q_\bullet(\Dperf(X))| \to |\Poinc\Q_\bullet(\Dperf(X),\QF^{\gs}_L)|.
\]
To compare this with the classical symmetric $\GW$-space of $X$ defined using vector bundles note that by definition, the cofibre of the natural map $\QF^{\gs}_L \Rightarrow \QF^{\sym}_L$ is an exact functor on $\Dperf(X)$ which is represented by the $(-1)$-truncated quasi-coherent complex $\tau_{\leq -1}\LT_L \in \Derqc(X) = \Ind(\Dperf(X))$. For any vector bundle $V$ on $X$ the cofibre of $\QF^{\gs}_L(V) \to \QF^{\sym}(V)$ is hence $(-1)$-truncated, and so the map
\[
\Om^{\infty}\QF^{\gs}_L(V) \to \Om^{\infty}\QF^{\sym}_L(V) = \Map(V \otimes V,L)^{\hC}
\]
is an equivalence of spaces. In particular, $\Om^{\infty}\QF^{\gs}_L(V)$ is a discrete space whose $\pi_0$ is the abelian group $\Hom(V \otimes V,L)^{\Ct} = \Hom_X(V,\Dual_L(V))^{\Ct}$.
We may consequently consider the commutative square
\[
\begin{tikzcd}
{|\core\Q_\bullet(\Vect(X))|} \ar[r]\ar[d] & {|\Poinc\Q_\bullet(\Vect(X),\Dual_L)|} \ar[d] \\
{|\core\Q_\bullet(\Dperf(X))|} \ar[r] & {|\Poinc\Q_\bullet(\Dperf(X),\QF^{\gs}_L)|} \ ,
\end{tikzcd}
\]
where on the top left corner we have the $\Q$-construction of the exact category $\Vect(X)$ of vector bundles on $X$, and the top right corner the hermitian $\Q$-construction of the exact category $\Vect$ equipped with the duality $\Dual_L = \underline{\Hom}_X(-,L)$, see~\cite{schlichting-exact}. Taking horizontal fibres yields a map of $\Einf$-groups
\[
\vphi_{X,L}\colon \GWspace^{\cl}(X,L)\to \GWspace^{\gs}(X,L) ,
\]
from the classical symmetric $\GW$-space $\GWspace^{\cl, \sym}(X,L) := \GWspace^{\sym}(\Vect(X),\Dual_L)$ of $X$ to the associated genuine symmetric $\GW$-space. 

\begin{proposition}
\label{proposition:genuine-is-classical}%
If $X$ is divisorial then the map $\vphi_{X,L}$ is an equivalence.
\end{proposition}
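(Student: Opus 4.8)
The plan is to show that both the source and the target of $\vphi_{X,L}$ satisfy Zariski descent, reduce the statement to the case of an affine scheme, and there invoke the main theorem of~\cite{Hebestreit-Steimle}. To begin, I would record that, as functors on the small Zariski site of $X$ (equivalently, on the poset of open subschemes of $X$), both $U \mapsto \GWspace^{\cl}(U,L|_U)$ and $U \mapsto \GWspace^{\gs}(U,L|_U)$ are Zariski sheaves. For the target, note that $\Dperf(U)$ is idempotent complete for every open $U \subseteq X$, so that $\GWspace^{\gs}(U,L|_U) \simeq \GWspace\big((\Dperf(U),\QF^{\gs}_{L|_U})^{\natural}\big)$; this exhibits $U \mapsto \GWspace^{\gs}(U,L|_U)$ as the composite of the $\Catp$-valued Zariski sheaf of Corollary~\ref{corollary:zariski-descent-pn} (for $m=0$) with the Karoubi-localising functor $\GWspace(-^{\natural})$, and it is therefore a Zariski sheaf by Corollary~\ref{corollary:zariski-descent}. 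For the source, $U \mapsto \GWspace^{\cl}(U,L|_U)$ is a Zariski sheaf by the Mayer--Vietoris principle for classical Grothendieck--Witt groups of schemes, which holds without assuming $2$ invertible by~\cite{schlichting-mv} (building on~\cite{schlichting-exact}). One then checks, by unwinding the dependence of the square defining $\vphi_{X,L}$ on the scheme, that $\vphi$ is compatible with restriction along open immersions, so that it is a morphism of Zariski sheaves on $X$.

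Granting this, it suffices to prove the proposition when $X = \spec(R)$ is affine. Indeed, a morphism of Zariski sheaves on $X$ is an equivalence once it is an equivalence on all stalks; the stalk at a point $x \in X$ is the filtered colimit over affine open neighbourhoods $U \ni x$, and since both $\GWspace^{\gs}$ and $\GWspace^{\cl}$ commute with the corresponding filtered colimit of rings, this stalk is $\vphi_{\spec(\cO_{X,x}),L_x}$, with $\spec(\cO_{X,x})$ affine. (Alternatively one may induct on the size of a finite affine cover of the qcqs scheme $X$, the higher intersections being again qcqs and divisorial.) In the affine case $\GWspace^{\cl}(\spec R,L)$ is by construction the classical symmetric Grothendieck--Witt space of the exact category of finitely generated projective $R$-modules equipped with the $L$-twisted duality, while $\GWspace^{\gs}(\spec R,L) = \GWspace(\Dperf(R),\QF^{\gs}_L)$ with $\QF^{\gs}_L = \QF^{\geq 0}_L$ as in~\ref{definition:genuine}. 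By~\cite{Hebestreit-Steimle} the latter is the group completion of the symmetric monoidal groupoid of such modules-with-form, which is exactly the former, and it remains to check that this identification is the one induced by $\vphi_{\spec R,L}$.

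The descent bookkeeping and the reduction to affines are routine. The genuinely delicate point, and the main obstacle, is the very last step: verifying that the equivalence of~\cite{Hebestreit-Steimle} agrees with the concretely defined comparison map $\vphi_{\spec R, L}$, rather than just abstractly matching source and target. This amounts to identifying both $\GWspace^{\cl}(\spec R,L)$ and $\GWspace^{\gs}(\spec R,L)$ as horizontal fibres of realizations of (hermitian) $\Q$-constructions, and then tracing through that the functor of $\Q$-categories induced by $\Vect(R) \hookrightarrow \Dperf(R)$ is the one producing $\vphi$ --- using, as recorded in the construction of $\vphi$, that $\Om^{\infty}\QF^{\gs}_L(V)$ is discrete and equal to the set of $L$-valued symmetric bilinear forms on each vector bundle $V$.
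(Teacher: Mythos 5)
Your proposal follows essentially the same route as the paper's proof: exhibit both $U \mapsto \GWspace^{\cl}(U,L|_U)$ and $U \mapsto \GWspace^{\gs}(U,L|_U)$ as Zariski sheaves on the site of qcqs opens of $X$ (citing \cite{schlichting-mv} for the former and the paper's Zariski descent result for the latter), observe that $\vphi$ assembles to a morphism of sheaves, reduce to affine opens by a stalk/local argument, and conclude via \cite{Hebestreit-Steimle}. The paper's write-up is more compressed but makes the same moves, so the proposal is correct in both substance and strategy.

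The one place you diverge in emphasis rather than substance is the closing caveat about compatibility of the Hebestreit--Steimle equivalence with $\vphi$. The paper treats this as already contained in the cited result (that is, it reads \cite{Hebestreit-Steimle} as proving that the comparison map itself, constructed via the hermitian $\Q$-construction, is an equivalence for affine schemes), whereas you flag it as the remaining delicate point. This is a reasonable thing to highlight, but it is not treated as a gap in the paper; if one accepts the reading of the reference that the paper takes, the argument closes without further work. Your remark that one should check $\Om^\infty\QF^{\gs}_L(V)$ is discrete on vector bundles is indeed part of the set-up of $\vphi$ and is recorded in the paragraph preceding the proposition, so you are not introducing anything the paper omits.
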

\begin{proof}
Let us consider the associations $U \mapsto \GWspace^{\cl}(U,L|_U)$ and $U \mapsto \GWspace^{\gs}(X,L|_{U})$ as presheaves of spaces on the site of qcqs open subschemes of $X$. By~\cite[Theorem 16]{schlichting-mv} the former is a sheaf for the Zariski topology, and by Corollary~\ref{corollary:zariski-descent} the latter is one as well. The collection of maps $\vphi_{U,L|_{U}}$ determines a map between these two sheaves, and the claim we wish to prove is that this map is an equivalence on global sections. But by the main result of~\cite{Hebestreit-Steimle} the map $\vphi_{U,L|_{U}}$ is an equivalence whenever $U$ is affine. It is hence an equivalence on stalks, and so an equivalence of sheaves, and in particular an equivalence on global sections.
\end{proof}

Proposition~\ref{proposition:genuine-is-classical} tells us that, when $X$ is a divisorial qcqs scheme, its classical symmetric Grothendieck-Witt groups $\GW^{\cl,\sym}_n(X,L)$ with coefficient in a line bundle $L$ are recovered in the present setting by the genuine symmetric Grothendieck-Witt groups $\GW^{\gs}_n(X,L)$ for $n \geq 0$. If $X$ is not divisorial we don't expect this to be always true. At the same time, for non-divisorial schemes we don't expect the $\GW$-theory of vector bundles to be a well-behaved invariant, and in particular do not expect it to satisfy Zariski descent. We may hence consider genuine symmetric $\GW$-theory as the natural extension of classical symmetric $\GW$-theory from rings to schemes, in the sense that it agrees with the latter on affine schemes, while also satisfying Zariski, and even Nisnevich (Corollary~\ref{corollary:nisnevich-descent}), descent. In fact, by~\cite[Proposition 3.7.4.5]{SAG} it is the unique such extension.

In the present paper we are mostly interested in homotopy symmetric $\GW$-theory, which, as we prove in \S\ref{subsection:Aone-invariance} has the additional advantage of being $\Aone$-invariant on regular Noetherian schemes of finite Krull dimension, and that constitutes our principal input in order to construct the hermitian $\K$-theory spectrum in \S\ref{section:KQ}. In this context let us mention that the genuine symmetric $\GW$-spectrum does not satisfy such $\Aone$-invariance. It is hence natural to try and compare genuine and homotopy symmetric $\GW$-theories. The proposition below uses the Zariski descent and the results of~\cite{9-authors-III} in order to obtain such a comparison in sufficiently high degrees for regular Noetherian schemes of finite Krull dimension.

In what follows, recall that a space $\Z$ is said to be $(-1)$-truncated if it is either empty or contractable, and $n$-truncated for $n \geq 0$ if the homotopy groups of each of its connected components have vanishing homotopy groups vanish in degrees $> n$. For $n < -1$ we take being $n$-truncated as meaning contractible.

\begin{proposition}
\label{proposition:gen-sym-compare}%
Let $X$ be a regular Noetherian scheme of finite Krull dimension $d$ and $L$ a line bundle with $\Ct$-action on $X$. Then the homotopy fibres of the map
\[
\GWspace^{\gs}(X,L) \to \GWspace^{\sym}(X,L)
\]
are all $(d-3)$-truncated. In particular, if $d=0,1$ then this map is an equivalence.
\end{proposition}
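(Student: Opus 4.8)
The plan is to reduce the assertion to a coconnectivity estimate in $\L$-theory, establish that estimate locally using the computations of~\cite{9-authors-III}, and then globalise it by Zariski descent together with a cohomological-dimension count.

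First, since $\GWspace^{\gs}(X,L)\to\GWspace^{\sym}(X,L)$ is a map of grouplike $\Einf$-spaces, all of its homotopy fibres are equivalent, so it is enough to bound the fibre over the basepoint. As $X$ is regular Noetherian, $\K(\Dperf X)\simeq\KK(\Dperf X)$, so the canonical maps $\GW(\Dperf X,\QF)\to\KGW(\Dperf X,\QF)$ and $\L(\Dperf X,\QF)\to\KL(\Dperf X,\QF)$ are equivalences for $\QF=\QF^{\gs}_L$ and $\QF=\QF^{\sym}_L$, and $\GWspace$ is the infinite loop space of the $\GW$-spectrum up to a connective cover, which does not affect truncatedness. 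Plugging both Poincaré structures into the fundamental fibre sequence $\K(\Dperf X)_{\hC}\to\GW(\Dperf X,\QF)\to\L(\Dperf X,\QF)$ and using that the first term is independent of $\QF$, one gets an equivalence of spectra
\[
\fib\big(\GW^{\gs}(X,L)\to\GW^{\sym}(X,L)\big)\;\simeq\;\fib\big(\L^{\gs}(X,L)\to\L^{\sym}(X,L)\big),
\]
so it suffices to show that this fibre spectrum has homotopy groups concentrated in degrees $\le d-3$.

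Next I would note that $U\mapsto\fib\big(\L^{\gs}(U,L|_U)\to\L^{\sym}(U,L|_U)\big)$ is a sheaf of spectra on the small Zariski site of $X$: each open $U\subseteq X$ is again regular Noetherian, so over $U$ the functors $\L^{\gs},\L^{\sym}$ agree with the Karoubi-localising functors $\KL^{\gs},\KL^{\sym}$, which are Nisnevich — hence Zariski — sheaves by Corollary~\ref{corollary:nisnevich-descent}, and fibres of maps of sheaves are sheaves. These functors are finitary and insensitive to idempotent completion, and on regular rings $\L=\KL$, so the stalk at $x\in X$ is $\fib\big(\L^{\gs}(\cO_{X,x},L_x)\to\L^{\sym}(\cO_{X,x},L_x)\big)$, with $\cO_{X,x}$ a regular local ring of Krull dimension $\le d$ and $L_x$ a free rank-one module with involution. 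The key local input, which I extract from~\cite{9-authors-III}, is that for every regular local ring $R$ the spectrum $\fib\big(\L^{\gs}(R)\to\L^{\sym}(R)\big)$ has homotopy concentrated in degrees $\le\dim R-3$, hence in degrees $\le d-3$. Granting this, the Zariski topos of the Noetherian $d$-dimensional space $X$ has cohomological dimension $\le d$, so in the descent spectral sequence $H^p_{\mathrm{Zar}}(X,\underline{\pi}_q)\Rightarrow\pi_{q-p}\fib\big(\L^{\gs}(X,L)\to\L^{\sym}(X,L)\big)$ one has $\underline{\pi}_q=0$ for $q>d-3$ and $H^p=0$ for $p\notin[0,d]$; thus every nonzero contribution satisfies $q-p\le q\le d-3$, so the abutment vanishes in degrees $>d-3$. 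When $d\le 1$ this forces a contractible fibre, i.e.\ an equivalence.

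It remains to explain the local input, which is the heart of the matter. The relevant structural fact is that the cofibre of $\QF^{\gs}_R=\QF^{\geq 0}_R\Rightarrow\QF^{\sym}_R$ is the linear functor represented by $\tau_{\le -1}\LT_R\in\Der^{\qc}(\spec R)$, a coconnective complex all of whose homotopy groups are killed by $2$ (reflecting that the two structures coincide once $2$ is inverted). Filtering this linear part along its Postnikov tower realises $\QF^{\sym}_R$ as an iterated extension of $\QF^{\gs}_R$ by the Poincaré structures $\QF^{\geq m}_R$, $m\le 0$, where the layer from $\QF^{\geq m}_R$ to $\QF^{\geq m-1}_R$ has linear cofibre $\mathrm{H}\pi_{m-1}(\LT_R)[m-1]$, valued in $\FF_2$-modules. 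By~\cite{9-authors-III}, the contribution of such a layer to $\fib\big(\L^{\gs}(R)\to\L^{\sym}(R)\big)$ is a Tate-type term built from $\pi_{m-1}(\LT_R)$, and combining the degree shifts in the linear parts with the bound $\mathrm{pd}_R\le\dim R$ coming from regularity pins all of these, hence their assembly, into degrees $\le\dim R-3$. The main obstacle is precisely this quantitative step: assembling from the results of~\cite{9-authors-III} the sharp range $\dim R-3$ rather than a cruder bound, and controlling the a priori infinite Postnikov filtration uniformly in each degree.
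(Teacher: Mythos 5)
Your global strategy diverges from the paper's but is sound in outline; the trouble is in the local input, which you correctly identify as the crux but do not actually establish.

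\textbf{Comparison of routes.} The paper works directly with the $\GW$-spaces. It views $U\mapsto\GWspace^{\gs}(U,L|_U)$ and $U\mapsto\GWspace^{\sym}(U,L|_U)$ as Zariski sheaves on the small site of $X$ (Corollary~\ref{corollary:zariski-descent}), forms for each global section $s$ the fibre sheaf $\cK_s$, and observes that since the small Zariski topos has enough points and global sections is a right adjoint, it suffices to know the stalks of $\cK_s$ are $(d-3)$-truncated; this is then read off from \cite{9-authors-III}*{Corollary~1.3.10} for affines. Your route instead reduces to the basepoint fibre (legitimate for a map of $\Einf$-groups, and for $d\le 1$ the extra surjectivity on $\pi_0$ you need is captured because the fibre spectrum is $(-1)$-connective — $\GW$ of a regular scheme is connective — and you are killing all its positive and all relevant non-positive homotopy), passes to $\L$-theory by the fundamental fibre sequence (which cancels $\K_\hC$), and then globalises with a descent spectral sequence and the cohomological-dimension bound for a Noetherian topological space. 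This is heavier machinery than the paper uses — you invoke Grothendieck's bound and spectral sequence convergence where the paper's argument needs only that truncated objects are closed under limits and detected on stalks — but it buys nothing more here; both arguments funnel into exactly the same affine bound.

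\textbf{The genuine gap.} The ``key local input'' you state — that $\fib(\L^{\gs}(R)\to\L^{\sym}(R))$ is concentrated in degrees $\le\dim R-3$ for a regular local $R$ — is precisely \cite{9-authors-III}*{Corollary~1.3.10} (phrased there for the $\GW$-space fibre, equivalently via the fundamental sequence). You do not cite it; instead you sketch a filtration of $\QF^{\sym}$ by the $\QF^{\geq m}$ and a Tate-layer analysis of the graded pieces, and you flag yourself that pinning the sharp range $\dim R-3$ and controlling the infinite filtration is ``the main obstacle.'' That is exactly where your proof is incomplete. The filtration idea is not wrong in spirit — this is more or less how the bound is established in~\cite{9-authors-III} — but as written it is a plan, not a proof, and the sharp constant $-3$ is the entire content of the proposition. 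If you replace your last paragraph with a citation of Corollary~1.3.10 of~\cite{9-authors-III}, your argument closes; as it stands, the local estimate you need is asserted, not proved.
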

\begin{proof}
As in the proof of Proposition~\ref{proposition:genuine-is-classical}, let us consider the associations $U \mapsto \GWspace^{\gs}(U,L|_{U})$ and $U \mapsto \GWspace^{\sym}(U,L|_{U})$ as presheaves of spaces on the site of quasi-compact open subsets on $X$, which by Corollary~\ref{corollary:zariski-descent} are actually Zariski sheaves.
A point $s \in \GWspace^{\sym}(X,L)$ is hence by definition a global section of the sheaf $\GWspace^{\sym}(-,L|_{-})$, which we can equally consider as a map $s\colon \ast \to \GWspace^{\sym}(-,L|_{-})$ from the terminal sheaf $\ast$. 
Let
\[
\cK_s := \GWspace^{\gs}(-,L|_{-})\times_{\GWspace^{\sym}(-,L|_{-})} \ast
\]
be the associated fibre product sheaf, which we can equally view as the fibre of the map $\GWspace^{\gs}(-,L|_{-})\to \GWspace^{\sym}(-,L|_{-})$ over $s$.
The desired claim can be reformulated as saying that for every $s$ the spectrum of global sections of $\cK_s$ is $(d-3)$-truncated. Since the collection of $(d-3)$-truncated spectra is closed under limits, it will suffice to show that the stalks of $\cK_s$ are $(d-3)$-truncated. This now follows from the fact that $\cK_s(U)$ is $(d-3)$-truncated for any affine $U \subseteq X$ by~\cite{9-authors-III}*{Corollary~1.3.10}.
\end{proof}

\section{Dévissage}
\label{section:devissage}%

Our goal in this section is to establish a dévissage result for symmetric $\GW$- and $\L$-theory of schemes, see Theorem~\ref{theorem:global-devissage} below. Technical details aside, this theorem says that if $X$ is a regular Noetherian scheme of finite Krull dimension, $L \in \Picspace(X)^{\BC}$, and $i\colon Z \subseteq X$ is a regular embedding, then the $Z$-supported symmetric $\GW$-theory of $X$ with coefficients in $L$ identifies with symmetric $\GW$-theory of $Z$ with coefficients in a suitable invertible complex $i^!L$ (and similarly for symmetric $\L$-theory). To properly formulate this result we first investigate in \S\ref{subsection:pushforward} the conditions under which the push-forward functor refines to a Poincaré functor. It turns out that this holds in particular for regular embeddings, which allows us to formulate Theorem~\ref{theorem:global-devissage} below. After reducing the claim to the level of $\L$-theory, our strategy consists of first addressing the case where $X$ is the spectrum of a regular local ring in \S\ref{subsection:local-devissage}, and then extending to the general case in \S\ref{subsection:global-devissage} using the coniveau filtration of \S\ref{subsection:coniveau}.

\subsection{Push-forward as a Poincaré functor}
\label{subsection:pushforward}%

Recall that a map $f\colon X \to Y$ of qcqs schemes is said to be \defi{quasi-perfect} if $f_*\colon \Derqc(X) \to \Derqc(Y)$ preserves perfect complexes (equivalently, if its right adjoint $f^!$ preserves colimits, see Definition~\ref{definition:quasi-perfect} in Appendix~\ref{section:appendix} and the surrounding discussion).
In particular, when $f$ is quasi-perfect we may view $f_*$ as an exact functor $\Dperf(X) \to \Dperf(Y)$.

\begin{definition}
\label{definition:poincare}%
Let $f\colon X \to Y$ be a map of qcqs schemes. It is called \defi{quasi-Gorenstein} if $f^!\cO_Y$ is an invertible perfect complex on $X$. We say that $f$ is
\defi{qGqp} if it is quasi-Gorenstein and quasi-perfect. Equivalently (by Lemma~\ref{lemma:eta-equivalence}), $f$ is qGqp if it is quasi-perfect and $f^!$ preserves tensor invertible objects. 
\end{definition}

\begin{remark}
\label{remark:invertible-closed-compositoin}%
By its second characterization we see that the property of being a qGqp map is closed under composition.
\end{remark}

\begin{construction}
\label{construction:pushforward-hermitian}%
Let $X,Y$ be qcqs schemes, let $N,M \in \Picspace(X)^{\BC}$ be invertible complexes with $\Ct$-action on $X$ and let $N \in \Picspace(Y)^{\BC}$ be an invertible complex with $\Ct$-action on $Y$. Let $f\colon X \to Y$ be a qGqp map and $\tau\colon N \otimes M^{\otimes 2} \tosimeq f^!L$ a natural equivalence.
We construct a natural transformation $\eta_{f,\tau}\colon \QF^{\sym}_{N} \Rightarrow \QF^{\sym}_L(f_*(-\otimes M))$ as the composite 
\begin{align}
\nonumber\eta_{f,\tau}\colon \QF^{\sym}_{N}(P)&=\map_X(P\otimes P,N)^\hC \\
\label{align:arrow-pf-1}%
 & \tosimeq \map_X(P \otimes P \otimes M^{\otimes 2}, N \otimes M^{\otimes 2})^\hC\\
\label{align:arrow-pf-2}%
 & \tosimeq \map_X((P \otimes M) \otimes (P \otimes M),f^!L)^\hC\\
\nonumber & = \map_Y(f_*(P \otimes M\otimes P \otimes M), L)^\hC\\
\label{align:arrow-pf-3}%
 & \to \map_Y(f_*(P \otimes M)\otimes f_*(P \otimes M),L)^{\hC} \\
\nonumber & =\QF^{\sym}_L(f_*(P \otimes M)) \ .
\end{align}
where~\eqref{align:arrow-pf-1} is an equivalence since $M$ is invertible, the arrow~\eqref{align:arrow-pf-2} is induced by post-composing with $\tau$ (and rearranging the terms in the domain) and is hence an equivalence, and~\eqref{align:arrow-pf-3} is induced from the lax symmetric monoidal structure of $f_*$ (which in turn is induced by the symmetric monoidal structure on its left adjoint $f^*$, see~\cite[Corollary 7.3.2.7]{HA}).
We then consider the pair $(f_*(-\otimes M),\eta_{f,\tau})$ as a hermitian functor
\[
(f_*(-\otimes M),\eta_{f,\tau})\colon (\Dperf(X),\QF^{\sym}_{N})\to (\Dperf(Y),\QF^{\sym}_L)\ .
\]
When $f$ and $\tau$ are implied, we denote $\eta_{f,\tau}$ simply by $\eta$.
\end{construction}

Our next goal is to show that the hermitian functor of Construction~\ref{construction:pushforward-hermitian} is always Poincaré. For this, we first show that the construction $(f_*(-\otimes M),\eta_{f,\tau})$ is compatible with composition in the following sense. Suppose given a composable pair
\[
X \xrightarrow{f} X \xrightarrow{f'} Z
\]
and invertible perfect complexes with $\Ct$-action $N,M \in \Picspace(X)^{\BC}$, $L,M' \in \Picspace(X')^{\BC}$ and $P \in \Picspace(Z)$ together with natural equivalences 
\[
\tau\colon N \otimes M^{\otimes 2} \tosimeq f^!L \quad\text{and}\quad \tau'\colon L \otimes (M')^{\otimes 2} \tosimeq (f')^!P .
\]
Applying Construction~\ref{construction:pushforward-hermitian} to $(f,\tau)$ and $(f',\tau')$, we then obtain a composable pair of hermitian functors
\[
(\Dperf(X),\QF^{\sym}_{N})\xrightarrow{(f_*(- \otimes M),\eta_{f,\tau})} (\Dperf(Y),\QF^{\sym}_L)
\xrightarrow{(f'_*(-\otimes M'),\eta_{f',\tau'})} (\Dperf(Z),\QF^{\sym}_P)
\ .
\]
On the other hand, we can also apply Construction~\ref{construction:pushforward-hermitian} to the map $f'' := f'\circ f$ and the data of $N, M'' := M\otimes f^*M',P$ and the composite
\[
\tau''\colon N \otimes M^{\otimes 2} \otimes f^*(M')^{\otimes 2} \xrightarrow{\tau \otimes f^*(M')^{\otimes 2}} f^!L \otimes f^*(M')^{\otimes 2} \tosimeq f^!(L \otimes (M')^{\otimes 2}) \xrightarrow{f^!\tau'} f^!(f')^!P ,
\]
yielding a hermitian functor
\[
(\Dperf(X),\QF^{\sym}_{N})\xrightarrow{(f''_*(-\otimes M''),\eta_{f'',\tau''})} (\Dperf(Z),\QF^{\sym}_P)
\ .
\]

\begin{lemma}
\label{lemma:composite-pf}%
The three hermitian functors just constructed fit into a commutative triangle
\[
\begin{tikzcd}
& (\Dperf(Y),\QF^{\sym}_L) \ar[dr, "{(f'_*(-\otimes M'),\eta_{f',\tau'})}"] & \\
(\Dperf(X),\QF^{\sym}_{N}) \ar[ur, "{(f_*(- \otimes M),\eta_{f,\tau})}"]\ar[rr, "{(f''_*(-\otimes M''),\eta_{f'',\tau''})}"'] && (\Dperf(Z),\QF^{\sym}_P) \ .
\end{tikzcd}
\]
\end{lemma}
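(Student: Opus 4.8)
The plan is to verify the commutativity of the triangle by unwinding the definition of each hermitian functor into its constituent pieces and checking that the composite of the pieces for $(f,\tau)$ followed by the pieces for $(f',\tau')$ agrees, stage by stage, with the pieces for $(f'',\tau'')$. Concretely, a hermitian functor is a pair (underlying exact functor, natural transformation of quadratic functors), and these compose componentwise; so the statement splits into two independent checks.

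First I would dispose of the underlying exact functors. Here one must produce a canonical equivalence $f'_*((f_*(-\otimes M))\otimes M') \simeq f''_*(-\otimes M'')$ where $f''=f'\circ f$ and $M''=M\otimes f^*M'$. This is exactly the composite of the projection-formula equivalence $f_*(-)\otimes M'\simeq f_*(-\otimes f^*M')$ (valid since $M'$ is perfect, hence dualizable, so $f_*$ is $\Derqc(Y)$-linear in the relevant sense), the rearrangement $(-\otimes M)\otimes f^*M'\simeq -\otimes(M\otimes f^*M')$, and the composition-of-pushforwards equivalence $f'_*f_*\simeq (f'\circ f)_*$. The coherence of these three classical equivalences — that is, that the resulting identification is the ``expected'' one — is where one invokes the functoriality already recorded in the excerpt: the functor $\Dperf(-)^\otimes$ out of $\qSch\op$ into $\CAlg(\Catx)$ (Appendix, and \S\ref{subsection:scheme-to-rigid}) packages precisely the coherent compatibility of $(-)^*$ with composition, and the lax symmetric monoidal structure on $f_*$ is by construction the mate of the symmetric monoidal structure on $f^*$ (as in~\cite[Corollary 7.3.2.7]{HA}), so the projection formula maps are the coherent ones. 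I would phrase this step by saying it suffices to check commutativity after forgetting the hermitian structures, where it is the standard compatibility of pushforward, pullback, and the projection formula under composition, together with the observation that $M''$ was defined precisely so that $-\otimes M''\simeq(-\otimes M)\otimes f^*M'$.

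Second, and this is the step I expect to be the real (if still largely formal) work, I would verify that the natural transformation $\eta_{f'',\tau''}$ equals the pasting $\eta_{f',\tau'}\circ (f'_*(-\otimes M')\text{ applied to }\eta_{f,\tau})$ under the identification of underlying functors from the first step. The cleanest way is to expand $\eta_{f,\tau}$ as the five-term composite~\eqref{align:arrow-pf-1}--\eqref{align:arrow-pf-3} displayed in Construction~\ref{construction:pushforward-hermitian}, do the same for $\eta_{f',\tau'}$, and then observe that $\tau''$ was \emph{defined} as the concatenation $f^!\tau'\circ(\text{base change}\circ(\tau\otimes f^*(M')^{\otimes 2}))$; so after expanding, the ``twist by $\tau$'' stage of the composite functor becomes literally the ``twist by $\tau''$'' stage of $\eta_{f'',\tau''}$, provided one knows that $f^!$ is (lax) symmetric monoidal on invertible objects and that the equivalence $f^!L\otimes f^*M'^{\otimes 2}\simeq f^!(L\otimes M'^{\otimes 2})$ used in defining $\tau''$ is the projection-formula mate — which is Lemma~\ref{lemma:eta-equivalence} in the appendix. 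The remaining stages to match are the two occurrences of the laxness-of-$f_*$ map~\eqref{align:arrow-pf-3}; here one needs the compatibility of the lax symmetric monoidal structures on $f_*$, $f'_*$ and $f''_*$ with the equivalence $f'_*f_*\simeq f''_*$, which again is part of the coherence data of the functor $\Dperf(-)^\otimes$, now on its lax-monoidal refinement (\cite[Proposition 2.4.3.8]{HA}). The main obstacle is therefore bookkeeping: ensuring that all the ``obvious'' coherence isomorphisms between $(-)^*$, $(-)_*$, $(-)^!$ and the tensor products compose the way the formulas demand. There is no new geometric input; everything reduces to the functoriality packaged in \S\ref{section:poinc-schemes} and the appendix, together with $\Ct$-equivariance of each piece (each arrow in~\eqref{align:arrow-pf-1}--\eqref{align:arrow-pf-3} is a map of spectra-with-$\Ct$-action before taking $(-)^{\hC}$, and the identifications above are all $\Ct$-equivariant since $M, M', N, L, P$ carry the $\Ct$-actions and $\tau,\tau'$ are $\Ct$-equivariant). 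Hence the triangle commutes.
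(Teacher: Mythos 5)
Your plan is correct in outline, and it does eventually touch all the relevant ingredients (projection formula, the lax monoidal structure on push-forwards, $\Ct$-equivariance of each stage, Lemma~\ref{lemma:eta-equivalence}), but the organization is genuinely different from the paper's, and the difference matters in practice. The paper begins with a reduction step that you skip: it observes that the statement is immediate from the definitions when $M'=\cO_Y$ (no twist at $Y$, so only composition of push-forwards is involved) or when $f=\id$ (no push-forward from $X$, so only composition of twists is involved), and then uses these two easy cases to reduce the general statement to the single ``interchange'' case $M=\cO_X$, $f'=\id$, i.e.\ push-forward along $f$ followed by tensoring with $M'$ at $Y$, versus tensoring with $f^*M'$ at $X$ followed by push-forward. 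This isolates exactly the projection-formula compatibility, and the remaining verification is a single explicit diagram (the octagon~\eqref{equation:big-diagram}) whose faces are filled one by one. Your approach, by contrast, proposes to expand all six stages of $\eta_{f,\tau}$ and $\eta_{f',\tau'}$ simultaneously and match them against the six stages of $\eta_{f'',\tau''}$. That is sound in principle, and it buys you a proof that never names special cases, but it also means the diagram you'd have to commute is substantially larger, with several interchanges happening at once. Where you write that the matching ``again is part of the coherence data of the functor $\Dperf(-)^{\otimes}$'' you are gesturing at something that requires real verification --- in particular, the compatibility of the projection-formula transformation $f_*(-)\otimes(-)\Rightarrow f_*((-)\otimes f^*(-))$ with the lax monoidal structure maps on $f_*$ and $f'_*$, and the fact that the counit of $f_*\dashv f^!$ is $\Dperf(Y)$-linear (which uses Lemma~\ref{lemma:eta-equivalence} in an essential way, as the paper explains). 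These are exactly the two sub-faces the paper has to fill by hand, so the burden doesn't disappear in your approach, it just arrives without the preparatory reduction. Nothing you propose is wrong; the paper's two-step reduction is simply a device for shrinking the octagon you eventually chase.
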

\begin{proof}
We first note that the claim directly follows from the relevant definitions if either $M'= \cO_{Y}$ or $f=\id$. It hence suffices to prove the claim in the case where $M=\cO_X$ and $f'=\id$. We may then also assume without loss of generality that $N=f^!L$, $P = L \otimes (M')^{\otimes 2}$ and both $\tau$ and $\tau'$ are the respective identities. Now the projection formula gives an equivalence
\[
f_*(-) \otimes M' \tosimeq f_*(- \otimes f^*M')
\]
which traces a commuting homotopy in the underlying triangle of exact functors. To lift this to a commuting homotopy on the level of hermitian functors we then need to construct, for $L,M \in \Picspace(Y)^{\BC}$ and $f\colon X \to Y$ a qGqp map, a commuting homotopy in the diagram
\[
\begin{tikzcd}
\QF^{\sym}_{f^!L}(-) \ar[r]\ar[d] &\QF^{\sym}_{f^!L \otimes (f^*M')^{\otimes 2}}(-\otimes f^*M') \ar[r] & \QF^{\sym}_{f^!(L \otimes (M')^{\otimes 2})}(-\otimes f^*M') \ar[d]\\
\QF^{\sym}_L(f_*(-))  \ar[r] & \QF^{\sym}_{L \otimes (M')^{\otimes 2}}(f_*(-) \otimes M') & \ar[l] \QF^{\sym}_{L \otimes (M')^{\otimes 2}}(f_*(-\otimes f^*M'))
\end{tikzcd}
\]
where the top right horizontal map is induced by the equivalence $f^!L \otimes (f^*M')^{\otimes 2} \tosimeq f^!(L \otimes (M')^{\otimes 2})$ of Lemma~\ref{lemma:eta-equivalence}, the bottom right horizontal map is induced by the equivalence $f_*(-) \otimes M' \tosimeq f_*(-\otimes f^*M')$ of the projection formula, and the rest of the maps are instances of the natural transformation described in Construction~\ref{construction:pushforward-hermitian}. 
Unwinding the definitions further, it will suffice to construct, naturally in $F \in \Dperf(X)$, a commuting homotopy in the external octagon of the diagram of spectra with $\Ct$-action
\begin{equation}
\label{equation:big-diagram}%
\begin{tikzcd}[column sep=-5pt]
& \map_X(F\otimes F \otimes f^*(M')^{\otimes 2},f^!L \otimes f^*(M')^{\otimes 2}) \ar[dr] &\\
\map_X(F\otimes F,f^!L)\ar[ur]\ar[d] &  & \map_X(F \otimes F \otimes f^*(M')^{\otimes 2}, f^!(L \otimes (M')^{\otimes 2}))\ar[d] \\
\map_Y(f_*(F \otimes F),L)\ar[dd]\ar[dr] && \map_X(f_*(F \otimes F \otimes f^*(M')^{\otimes 2}),L \otimes (M')^{\otimes 2}) \ar[dl]\ar[dd]\\
& \map_Y(f_*(F \otimes F) \otimes (M')^{\otimes 2},L \otimes (M')^{\otimes 2})\ar[dd] & \\
\map_Y(f_*F \otimes f_*F,L)\ar[dr] && \map_Y(f_*(F \otimes f^*M') \otimes f_*(F\otimes f^*M'),L\otimes (M')^{\otimes 2})\ar[dl] \\
& \map_Y(f_*F \otimes f_*F\otimes (M')^{\otimes 2},L\otimes (M')^{\otimes 2}) & 
\end{tikzcd}
\end{equation}
To construct this homotopy, it will suffice to fill in the two bottom squares and the top hexagon. 
Now the bottom left square is easily filled: its horizontal arrows are induced on mapping spectra by the functor $(-) \otimes (M')^{\otimes 2}$, its left vertical map is induced by pre-composition with $\sig\colon f_*F \otimes f_*F \to f_*(F \otimes F)$ and its right vertical map is induced by pre-composition with the map $\sig \otimes \id\colon f_*F \otimes f_*F \otimes (M')^{\otimes} \to f_*(F \otimes F) \otimes (M')^{\otimes 2}$. The right bottom square, in turn, can be filled by observing that the counit map $f^*f_* \Rightarrow \id$ is a lax symmetric monoidal natural transformation, and hence the same holds for the projection formula transformation $f_*(-) \otimes (-) \Rightarrow f_*(-\otimes f^*(-))$ (considered as a transformation between functors in two arguments). 

We now proceed to construct a commuting homotopy in the top hexagon of~\eqref{equation:big-diagram}. Consider, for $G \in \Dperf(X)$ and $L,Q \in \Dperf(Y)$, the natural commutative diagram
\[
\begin{tikzcd}[column sep=10pt]
 & \map_Y(f_*G,f_*f^!L)\ar[dr] & \\
\map_X(G,f^!L) \ar[ur]\ar[d] && \map_Y(f_*G \otimes Q,f_*(f^!L) \otimes Q) \ar[d, "\simeq"] \\
\map_X(G \otimes f^*Q,f^!L \otimes f^*Q) \ar[dd, "\simeq"]\ar[dr] && \map_Y(f_*G \otimes Q,f_*(f^!L \otimes f^*Q)) \ar[dd, "\simeq"]\\
& \map_Y(f_*(G \otimes f^*Q),f_*(f^!L \otimes f^*Q)) \ar[ur,"\simeq"]\ar[d, "\simeq"] & \\
\map_X(G \otimes f^*Q, f^!(L \otimes Q))  \ar[dr]\ar[r] & 
\map_X(f_*(G \otimes f^*Q),f_*f^!(L \otimes Q)) \ar[r, "\simeq"]\ar[d]&
\map_X(f_*G \otimes Q,f_*f^!(L \otimes Q)) \ar[d] \\
&\map_X(f_*(G \otimes f^*Q),L \otimes Q) \ar[r,"\simeq"]&
\map_X(f_*G \otimes Q,L \otimes Q) \ .
\end{tikzcd}
\]
Here, the top commuting hexagon is induced on mapping spectra by the natural transformation
\[
f_*(-) \otimes Q \Rightarrow f_*(-\otimes f^*Q)
\]
underlying the projection formula, the three vertical maps in the middle row are induced by post-composition with $f_*(f^!L \otimes f^*Q) \tosimeq f_*f^!(L \otimes Q)$, the two vertical maps in the bottom row are induced by post-composition with $f_*f^!(L \otimes Q) \to L \otimes Q$, and the top right vertical map is induced by post-composition with $f_*(f^!L) \otimes Q \to f_*(f^!L \otimes f^*Q)$. In addition, the three horizontal maps on the right side which are marked as equivalences are induced by pre-composition with $f_*G \otimes Q \tosimeq f_*(G \otimes f^*Q)$, and all the other maps not marked as equivalences are induced on mapping spaces either by $f_*$, by $(-) \otimes Q$, or by $(-) \otimes f^!Q$. In particular, the right vertical total composite is induced by post-composition with the total composite
\[
f_*(f^!L) \otimes Q \to f_*(f^!L \otimes f^*Q) \to f_*f^!(L \otimes Q) \to L \otimes Q .
\]
We claim that this composite is homotopic to the map induced by the counit $f_*f^!L \to L$ after tensoring with $Q$. Indeed, the functor $f_*$ is $\Dperf(Y)$-linear by the projection formula, and this structure induces a lax $\Dperf(Y)$-linear structure on the right adjoint $f^!$, which is an strong $\Dperf(Y)$-structure in the case at hand by Lemma~\ref{lemma:eta-equivalence}. In this situation the counit of $f_* \dashv f^!$ is automatically a $\Dperf(Y)$-linear transformation, and so the claim follows. We hence obtain a commutative diagram of the following form, natural in $G,Q$ and $L$:
\[
\begin{tikzcd}
& \map_X(G \otimes f^*Q,f^!L \otimes f^*Q)\ar[dr] & \\
\map_X(G,f^!L)\ar[ur]\ar[d] && \map_X(G \otimes f^*Q, f^!(L \otimes Q))\ar[d] \\
\map_Y(f_*G,L)\ar[dr] && \map_X(f_*(G \otimes f^*Q),L \otimes Q)\ar[dl] \\
& \map_Y(f_*G \otimes Q,L \otimes Q) &
\end{tikzcd}
\]
Substituting $G = F \otimes F$ and $Q = M' \otimes M'$ and reflecting we hence obtain the top hexagon in~\eqref{equation:big-diagram}, as desired.
\end{proof}

\begin{lemma}
\label{lemma:devissage-functor}%
Let $f\colon X\to Y$ be a qGqp map (see Definition~\ref{definition:poincare}). 
Then the hermitian functor 
\[
(f_*(-\otimes M),\eta_{f,\tau})\colon (\Dperf(X),\QF^{\sym}_{N})\to (\Dperf(Y),\QF^{\sym}_L)\ .
\]
of Construction~\ref{construction:pushforward-hermitian} is Poincaré for any choice of $M,N,L$ and $\tau\colon N \otimes M^{\otimes 2} \tosimeq f^!L$.
\end{lemma}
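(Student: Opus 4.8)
Being a Poincaré functor is a condition only on the underlying duality: the hermitian functor $(f_*(-\otimes M),\eta_{f,\tau})$ is Poincaré exactly when the map it induces between the underlying dualities,
\[
f_*\big(\Dual_{N}(P)\otimes M\big)=f_*\big(P^\vee\otimes N\otimes M\big)\longrightarrow \Dual_L\big(f_*(P\otimes M)\big)=(f_*(P\otimes M))^\vee\otimes L ,
\]
extracted from the symmetric cross-effect of $\eta_{f,\tau}$, is an equivalence. The plan is to first use Lemma~\ref{lemma:composite-pf} to strip away the twist by $M$ and the transport along $\tau$, reducing to the cleanest case, and then to recognise the remaining transformation as the Grothendieck relative duality isomorphism, whose invertibility is a standard consequence of the projection formula. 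Concretely, for $f=\id_X$ the functor of Construction~\ref{construction:pushforward-hermitian} is simply the composite of the twist equivalence $P\mapsto P\otimes M$ with the transport equivalence along $\tau$ (the arrow~\eqref{align:arrow-pf-3} is the identity for $\id_*$), hence an equivalence of Poincaré $\infty$-categories; applying Lemma~\ref{lemma:composite-pf} to $X\xrightarrow{\id}X\xrightarrow{f}Y$ with the data $(M,N,\tau)$ carried by the first leg and trivial data by the second, we may write $(f_*(-\otimes M),\eta_{f,\tau})$ as this equivalence followed by the hermitian functor
\[
(f_*,\eta_{f,\id})\colon \big(\Dperf(X),\QF^{\sym}_{f^!L}\big)\longrightarrow \big(\Dperf(Y),\QF^{\sym}_L\big) .
\]
Since Poincaré functors are closed under composition, it suffices to treat $(f_*,\eta_{f,\id})$. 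Here quasi-perfectness of $f$ is used so that $f_*$ restricts to an exact functor $\Dperf(X)\to\Dperf(Y)$, and the quasi-Gorenstein hypothesis so that $f^!L$ is again an invertible perfect complex with $\Ct$-action, making $\QF^{\sym}_{f^!L}$ a Poincaré structure by Proposition~\ref{proposition:construction-gives-poincare}.

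In this basic case $\eta_{f,\id}$ is the composite of the adjunction identification $\map_X(P\otimes P,f^!L)^{\hC}\simeq\map_Y(f_*(P\otimes P),L)^{\hC}$ (valid because $f^!$ is right adjoint to $f_*$) with the map induced by the lax symmetric monoidal structure map $f_*P\otimes f_*P\to f_*(P\otimes P)$. Passing to symmetric cross-effects, the induced duality transformation becomes the map
\[
f_*\big(P^\vee\otimes f^!L\big)\longrightarrow (f_*P)^\vee\otimes L=\underline{\hom}_Y(f_*P,L) ,
\]
which is adjoint under $f_*\dashv f^!$ to a map $P^\vee\otimes f^!L\to f^!\big((f_*P)^\vee\otimes L\big)$. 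To verify it is an equivalence, test against an arbitrary $T\in\Dperf(Y)$: using the adjunctions $f^*\dashv f_*$ and $f_*\dashv f^!$, the internal-hom adjunctions on $X$ and on $Y$, and the equivalence $\tau$, the induced map on $\map_Y(T,-)$ unwinds to the projection-formula map
\[
\map_Y\big(f_*(f^*T\otimes P),L\big)\longrightarrow \map_Y\big(T\otimes f_*P,L\big) ,
\]
which is an equivalence since the projection formula $f_*(f^*T\otimes P)\simeq T\otimes f_*P$ holds for the qcqs morphism $f$.

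The main obstacle is the bookkeeping in the previous paragraph: one must confirm that the transformation on symmetric cross-effects coming out of $\eta_{f,\id}$ — built from the lax symmetric monoidal structure of $f_*$, the counit of $f_*\dashv f^!$, and their compatibility with the $\Ct$-homotopy fixed point functor — genuinely is the relative duality map, and not merely a map with the same source and target. This is a diagram chase of the same type as, and partly subsumed by, the verification in the proof of Lemma~\ref{lemma:composite-pf}. An alternative organisation is to run the argument inside $\Catps\simeq\Catx^{\hC}$: there the statement becomes that $f_*\colon\Dperf(X)\to\Dperf(Y)$ upgrades to a $\Ct$-equivariant functor for the duality $\Ct$-actions (twisted by the relevant line bundles), with the equivariance datum provided precisely by relative duality, which avoids manipulating $\QF^{\sym}$ directly.
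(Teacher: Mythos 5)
Your proposal is correct and follows essentially the same route as the paper's proof: reduce via Lemma~\ref{lemma:composite-pf} to the two cases $f=\id$ (where $\eta_{f,\tau}$ is outright a natural equivalence because~\eqref{align:arrow-pf-3} is invertible) and $M=\cO_X$, $\tau=\id$, and in the latter case test the induced duality transformation $f_*\Dual_{f^!L}\Rightarrow\Dual_L f_*$ against an arbitrary $T\in\Dperf(Y)$ and identify the resulting map with the projection formula. The only difference is cosmetic — you package the $f=\id$ case as post-composition with a Poincaré equivalence before invoking the composition lemma, while the paper states the reduction symmetrically — so the two arguments are the same.
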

\begin{proof}
By Lemma~\ref{lemma:composite-pf} it will suffice to prove separately the case where $M=\cO_X$ and the case where $f=\id$. Now when $f=\id$, or, more generally, when $f$ is an equivalence, we have that the underlying exact functor $f_*(- \otimes M)$ is an equivalence and also that the only possibly non-invertible natural transformation~\eqref{align:arrow-pf-3} in the definition of $\eta_{f,\tau}$ is an equivalence, so that $\eta_{f,\tau}$ is a natural equivalence. The hermitian functor $(f_*(- \otimes M),\eta_{f,\tau})$ is then an equivalence, and in particular a Poincaré functor. 

We now treat the case where $M=\id$. Here, we may as well suppose that $N=f^!L$ and $\tau$ is the identity. We need to show that the induced map $f_*\Dual_{f^!L}(M) \to \Dual_{L}(f_*M)$ is an equivalence for every $M \in \Dperf(X)$. Mapping a test object $N \in \Dperf(Y)$ into this map and using the adjunction $f^* \dashv f_*$, it will suffice to verify that the induced map
\[
\Bil_{f^!L}(f^*N,M) = \map(N,f_*\Dual_{f^!L}(M)) \to \map(N,\Dual_{L}(f_*M)) = \Bil_L(N,f_*M)
\]
is an equivalence. Unwinding the definitions (see~\cite[Proof of Lemma~1.2.4]{9-authors-I}), this last map is 
is simply the composite 
\[
\Bil_{f^!L}(f^*N,M) \xrightarrow{(\eta_{f,\id})_*} \Bil_{L}(f_*f^*N,f_*M) \to \Bil_L(N,f_*M) .
\]
Finally, by the very definition of $\eta_{f,\id}$, this composite is given by the composite
\[
\map(f^*N \otimes M, f^!L) \simeq \map(f_*(f^*N \otimes M),L) \to \map(f_*f^*N \otimes f_*M,L)  \to \map(N \otimes f_*M, L) .
\]
It will hence suffice to show that for every $M \in \Dperf(X)$ and $N \in \Dperf(Y)$, the composite
\[
N \otimes f_*M \to f_*f^*N \otimes f_*M \to f_*(f^*N \otimes M)
\]
is an equivalence. Indeed, this is exactly what the projection formula says.
\end{proof}

For later purposes, let us also record the following observation:
\begin{lemma}
\label{lemma:pushforward-projection}%
Let $f\colon X\to Y$ be a qGqp map and suppose in addition that $f_*\colon \Dperf(X) \to \Dperf(Y)$ is a split Verdier projection. Then for every choice of $M,N,L$ and $\tau\colon N \otimes M^{\otimes 2} \tosimeq f^!L$ as in Construction~\ref{construction:pushforward-hermitian}, the Poincaré functor 
\[
(f_*(-\otimes M),\eta_{f,\tau})\colon (\Dperf(X),\QF^{\sym}_{N})\to (\Dperf(Y),\QF^{\sym}_L)\ .
\]
established in Lemma~\ref{lemma:devissage-functor} is a split Poincaré-Verdier projection.
\end{lemma}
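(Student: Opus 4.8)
The plan is to reduce to the untwisted case and then recognise the resulting Poincaré functor as a split Poincaré–Verdier projection by verifying that it induces the target Poincaré structure via a left Kan extension. First, by the twisting equivalence~\eqref{align:arrow-pf-1} the functor $(-\otimes M)$ is a Poincaré equivalence $(\Dperf(X),\QF^\sym_{N})\xrightarrow{\simeq}(\Dperf(X),\QF^\sym_{N\otimes M^{\otimes 2}})$, and $\tau$ induces a further Poincaré equivalence onto $(\Dperf(X),\QF^\sym_{f^!L})$; inspecting Construction~\ref{construction:pushforward-hermitian} (or invoking Lemma~\ref{lemma:composite-pf}) shows that $(f_*(-\otimes M),\eta_{f,\tau})$ is the composite of these two equivalences followed by $(f_*,\eta_{f,\id})\colon(\Dperf(X),\QF^\sym_{f^!L})\to(\Dperf(Y),\QF^\sym_L)$. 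Since precomposition with Poincaré equivalences preserves split Poincaré–Verdier projections, it suffices to show that $(f_*,\eta_{f,\id})$ is one.

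By Lemma~\ref{lemma:devissage-functor} the pair $(f_*,\eta_{f,\id})$ is a Poincaré functor, and by hypothesis its underlying functor $f_*$ is a split Verdier projection. By the characterisation of split Poincaré–Verdier projections among Poincaré functors with split Verdier underlying functor (\cite[\S 1.1]{9-authors-II}; this is the Verdier analogue of the criterion used in the proof of Proposition~\ref{proposition:flat-is-karoubi-sym}), it then remains to check that the natural transformation $\LKan_{f_*\op}\QF^\sym_{f^!L}\Rightarrow\QF^\sym_L$ induced by $\eta_{f,\id}$ is an equivalence. Being a split Verdier projection, $f_*$ has a fully faithful left adjoint, namely $f^*$, and a fully faithful right adjoint, which necessarily agrees with the restriction of $f^!$ to perfect complexes; thus $f_*f^*\simeq\id$ and $f_*f^!\simeq\id$. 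Since a left Kan extension along a functor possessing a right adjoint is computed by precomposition with that adjoint, $\LKan_{f_*\op}\QF^\sym_{f^!L}\simeq\QF^\sym_{f^!L}\circ(f^*)\op$. Using the equivalence $\Dual_{f^!L}(f^*Q)\simeq f^!(\Dual_L Q)$ supplied by Lemma~\ref{lemma:eta-equivalence} (available since $f$ is qGqp), the adjunction $f^*\dashv f_*$, and $f_*f^!\simeq\id$, one gets for every $Q\in\Dperf(Y)$
\[
\QF^\sym_{f^!L}(f^*Q)=\map_X(f^*Q,\Dual_{f^!L}f^*Q)^\hC\simeq\map_X(f^*Q,f^!\Dual_L Q)^\hC\simeq\map_Y(Q,f_*f^!\Dual_L Q)^\hC\simeq\map_Y(Q,\Dual_L Q)^\hC=\QF^\sym_L(Q),
\]
which exhibits $\LKan_{f_*\op}\QF^\sym_{f^!L}$ and $\QF^\sym_L$ as identified; what must still be checked is that this identification is the map induced by $\eta_{f,\id}$.

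The main obstacle is precisely this last compatibility: matching the chain of equivalences above with the transformation produced by $\eta_{f,\id}$ requires a coherence diagram relating the adjunctions $f^*\dashv f_*\dashv f^!$, the projection formula, the lax symmetric monoidal structure on $f_*$, the $\Ct$-equivariant structures, and the Grothendieck-duality equivalence of Lemma~\ref{lemma:eta-equivalence}. An alternative that avoids producing this identification by hand is to apply Proposition~\ref{proposition:flat-is-karoubi-sym}: the duality preserving $\Dperf(Y)$-linear functor $f_*\colon(\Dperf(X),\Dual_{f^!L})\to(\Dperf(Y),\Dual_L)$ is a bounded Karoubi projection over $\A=\Dperf(Y)$ — it is a Karoubi projection since it is a split Verdier projection, and boundedness is automatic because the relevant enriched mapping objects are pushforwards along $f$ of perfect complexes, hence perfect over $Y$ and thus of bounded Tor amplitude — so its associated symmetric Poincaré functor, which unwinds to $(f_*,\eta_{f,\id})$, is a Poincaré–Karoubi projection; since its underlying functor is moreover a split Verdier projection, this upgrades to a split Poincaré–Verdier projection. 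Either way, $(f_*(-\otimes M),\eta_{f,\tau})$ is a split Poincaré–Verdier projection.
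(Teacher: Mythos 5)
Your proposal is correct, but the core of your argument (the second alternative, via Proposition~\ref{proposition:flat-is-karoubi-sym}) follows a genuinely different route from the paper's proof, and your first attempt, while on the right track, stalls precisely where the paper's main trick lives.

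The paper's proof invokes \cite[Corollary 1.2.3]{9-authors-II} to reduce to showing that $\eta_{f,\tau}$ is an equivalence on objects of the form $P = f^*(Q)\otimes M^{-1}$, exactly as your first approach does (and your computation of the left Kan extension along $(f_*)^{\op}$ via precomposition with $(f^*)^{\op}$ is correct). The step you flag as ``the main obstacle'' — matching your chain of abstract equivalences with the specific transformation $\eta_{f,\tau}$ — is avoided in the paper by the observation that $\eta_{f,\tau}$, by its construction, is a composite of natural transformations that are \emph{all} invertible except for a single one, namely the lax symmetric monoidal structure map \eqref{align:arrow-pf-3} of $f_*$. So the coherence you were worried about never has to be produced: one only needs to check that $f_*(f^*Q\otimes f^*Q)\to f_*(f^*Q)\otimes f_*(f^*Q)$ is an equivalence, which follows because $f^*\dashv f_*$ is a symmetric monoidal adjunction whose unit $\id\Rightarrow f_*f^*$ is an equivalence (this is where the split hypothesis enters), so $f_*$ is strongly monoidal on the image of $f^*$. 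Had you isolated the single non-invertible step before trying to verify the criterion, your first approach would have gone through cleanly.

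Your second approach, via Proposition~\ref{proposition:flat-is-karoubi-sym} applied to $f_*\colon(\Dperf(X),\Dual_{f^!L})\to(\Dperf(Y),\Dual_L)$ regarded as an object of $\Mod_{\Dperf(Y)}(\Catps)$, also works, and is a legitimately different argument. Since $f$ need not be flat you cannot invoke Example~\ref{example:split-is-flat} off the shelf, and you correctly sidestep it by checking boundedness directly: for the split projection the Pro-left adjoint is just $f^*$, and $\uline{\map}(f^*Q,\Dual_{f^!L}f^*Q) = f_*\,\uline{\hom}_X(f^*Q\otimes f^*Q,f^!L)$ is perfect over $Y$ because $f$ is quasi-perfect, hence of bounded Tor amplitude. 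The upgrade from a Poincaré-Karoubi projection to a split Poincaré-Verdier projection is legitimate here because $\Dperf(X)$ and $\Dperf(Y)$ are idempotent complete, so the Karoubi sequence is already a Verdier sequence, and the split condition on the underlying exact functors is assumed. Two small points worth tightening: the identification of ``the associated symmetric Poincaré functor'' of the duality-preserving $f_*$ with the explicit hermitian functor $(f_*,\eta_{f,\id})$ is not a pure unwinding — it uses that the right adjoint $\Catps\hrar\Catp$ of Corollary~\ref{corollary:adj-catp-catps} is fully faithful, so that morphisms between symmetric Poincaré categories are uniquely determined by their underlying duality-preserving data; and your assertion that $\Dual_{f^!L}$ makes $\Dperf(X)$ a $(\Dperf(Y),\Dual_Y)$-module in $\Catps$ should be stated and at least gestured at (it follows from rigidity via $\Dual_{f^!L}(f^*Q\otimes P)\simeq f^*\Dual_Y(Q)\otimes\Dual_{f^!L}(P)$). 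In exchange for these checks, your argument buys a slightly more conceptual reason for the conclusion and reuses machinery already established; the paper's argument is shorter and self-contained.
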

\begin{proof}
We note that a left adjoint to $f_*(- \otimes M)$ is given by $f^*(-) \otimes M^{-1}$, and the assumption that $f_*$ is a split Verdier projection implies that $f^*$, and hence also $f^*(-) \otimes M^{-1}$, is fully-faithful. By~\cite[Corollary 1.2.3]{9-authors-II} it will now suffice to check that the natural transformation 
\[
\eta_{f,\tau}\colon \QF^{\sym}_N(-) \Rightarrow \QF^{\sym}_L(f_*(-\otimes M))
\]
is an equivalence when evaluated on perfect complexes of the form $P = f^*(Q) \otimes M^{-1}$ for $Q \in \Dperf(Y)$. By definition, $\eta_{f,\tau}$ is given by a composite of natural transformations, all invertible except one, namely, the map~\eqref{align:arrow-pf-3} in Construction~\ref{construction:pushforward-hermitian}. Plugging in $P = f^*(Q) \otimes M^{-1}$ in~\eqref{align:arrow-pf-3} what we hence need to show is that the map
\[
f_*(f^*(Q) \otimes f^*(Q)) \to f_*(f^*(Q)) \otimes f_*(f^*(Q))
\]
determines by the lax symmetric monoidal structure of $f_*$, is an equivalence. For this, note that the adjunction $f^* \dashv f_*$ is a symmetric monoidal adjunction (that is, $f^*$ is symmetric monoidal and $f_*$ carries an induced lax symmetric monoidal structure), so that, in particular, the unit and counit are lax symmetric monoidal natural transformations. Since we assume in addition that $f_*$ is a split Verdier projection we have that the unit $\id \Rightarrow f_*\circ f^*$ is an equivalence. The composite $f_* \circ f^*$ is hence equivalent to the identity as a lax symmetric monoidal functor, and is in particular symmetric monoidal. It then follows that lax symmetric monoidal structure maps of $f_*$ are equivalences on objects in the image of $f^*$, as desired.
\end{proof}

\begin{proposition}
\label{proposition:proper-lci-poincare}%
Any proper local complete intersection map $f\colon X \to Y$ is qGqp.
\end{proposition}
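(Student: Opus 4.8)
The plan is to verify separately the two conditions defining a qGqp map: that $f$ is quasi-perfect, i.e.\ that $f_*$ preserves perfect complexes, and that $f$ is quasi-Gorenstein, i.e.\ that $f^!\cO_Y$ is an invertible perfect complex. In both cases I would reduce to the two local building blocks of an lci map, using that any lci morphism is, Zariski-locally on its source, a composition of a regular (Koszul-regular) closed immersion into an affine space over $Y$ followed by the smooth projection to $Y$.

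For quasi-perfectness, first observe that every lci morphism is a \emph{perfect} morphism of schemes, that is, pseudo-coherent and of locally bounded Tor-dimension: a smooth morphism is flat and finitely presented; a regular closed immersion is pseudo-coherent and of finite Tor-dimension, its structure sheaf being resolved by a finite Koszul complex; and perfect morphisms are stable under composition. Since being a perfect morphism is a local property on source and target, the local factorization shows $f$ is perfect. I would then invoke the classical coherence theorem (SGA~6, Exp.~III; see also the work of Lipman--Neeman on quasi-perfect scheme-maps, and the discussion in Appendix~\ref{section:appendix}) according to which for a proper perfect morphism the pushforward preserves perfect complexes; hence $f$ is quasi-perfect.

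For quasi-Gorensteinness, I would use that being an invertible perfect complex is a Zariski-local condition on $X$, and that the formation of $f^!$ on perfect complexes is compatible with restriction along open immersions (for which $(-)^!$ agrees with $(-)^*$) and with composition. It then suffices to compute $f^!\cO_Y$ along the two local factors: for the smooth projection $p\colon \mathbb{A}^n_Y \to Y$ one has $p^!\cO_Y\simeq\omega_p[n]$ with $\omega_p = \Lambda^n\Omega_p$ a line bundle, and for a regular closed immersion $i$ of codimension $c$ one has $i^!(-)\simeq i^*(-)\otimes(\det\mathcal{N}_i)[-c]$ where $\mathcal{N}_i$ is the normal bundle of $i$. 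Composing, $f^!\cO_Y$ is Zariski-locally on $X$ a shift of a line bundle, hence a perfect, tensor-invertible complex; the occurring shifts and ranks may vary over the connected components of $X$, but this does not affect the conclusion. Combined with the previous paragraph, this shows $f$ is qGqp.

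The step I expect to be the main obstacle is quasi-perfectness, which rests on the nontrivial finiteness theorem that proper perfect morphisms have pushforwards preserving perfectness. The delicate point is purely organisational: although the factorization of an lci morphism through an affine space is only local on the source, ``being a perfect morphism'' is itself a local property, so once that is established the global coherence theorem applies directly to the proper morphism $f$. The dualizing-complex computation in the quasi-Gorenstein step is, by contrast, essentially formal, given the standard pseudofunctoriality and base-change properties of $(-)^!$.
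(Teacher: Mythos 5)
Your proposal is correct in outline but follows a genuinely different organisation from the paper's own argument. You treat the two conditions (quasi-perfectness and quasi-Gorensteinness) as independent: for the first you invoke the external finiteness theorem that proper \emph{perfect} morphisms of schemes have pushforward preserving perfect complexes (SGA~6/Lipman--Neeman), after observing that lci morphisms are perfect; for the second you cite the explicit local formulas $p^!\cO_Y \simeq \omega_p[n]$ for smooth $p$ and $i^!(-) \simeq i^*(-)\otimes\det\mathcal N_i[-c]$ for regular $i$, together with compatibility of $(-)^!$ with open restriction (which indeed requires the quasi-perfectness you have already established). The paper instead localises on the target, reduces to the two factors separately (closed regular embedding; smooth proper map), and then handles both properties at once in each case: the Koszul complex gives both claims directly for a regular embedding, and for a smooth proper $f$ it runs the diagonal argument, computing $f^*M \simeq \Delta^!(\cO_{X\times_Y X})\otimes f^!M$ via the identity $f^* = \Delta^!p_1^!f^* = \Delta^!p_2^*f^!$, which shows in one stroke that $f^!$ and $f^*$ differ by an invertible twist, hence $f^!$ preserves colimits (quasi-perfect) and $f^!\cO_Y$ is invertible (quasi-Gorenstein). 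What your route buys is brevity, at the cost of importing the proper-perfect coherence theorem as a black box (a result not actually developed in the paper's Appendix~\ref{section:appendix}, so your pointer there is slightly misplaced); what the paper's route buys is self-containedness and a cleaner way of seeing the formula $f^!\cO_Y \simeq (\Delta^!\cO_{X\times_Y X})^{-1}$, which is used again in Remark~\ref{remark:proper-lci}. You should also note that the smooth-morphism dualizing-complex formula $p^!\cO_Y\simeq\omega_p[n]$, which you quote, is itself not trivial and is precisely what the paper's diagonal computation establishes; citing it is legitimate but hides the same content you would need to verify.
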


\begin{corollary}
\label{corollary:lci-pushforward}%
For any proper local complete intersection map $f\colon X \to Y$ and any line bundle $L \in Y$ the hermitian functor
\[
(f_*,\eta)\colon (\Dperf(X),\QF^{\sym}_{f^!L})\to (\Dperf(Y),\QF^{\sym}_L)
\]
is Poincaré.
\end{corollary}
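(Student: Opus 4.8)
The plan is to obtain this corollary as an immediate specialization of Lemma~\ref{lemma:devissage-functor}, once one knows that a proper local complete intersection map is qGqp. First I would invoke Proposition~\ref{proposition:proper-lci-poincare} to conclude that $f$ is qGqp; in particular $f^!\cO_Y$ is an invertible perfect complex, the pushforward $f_*\colon \Derqc(X) \to \Derqc(Y)$ preserves perfect complexes and hence restricts to an exact functor $\Dperf(X) \to \Dperf(Y)$, and $f^!$ preserves tensor invertible objects. Consequently $f^!L$ is again an invertible perfect complex on $X$, and since the involution on the line bundle $L$ is trivial it inherits a well-defined $\Ct$-action, so that $f^!L \in \Picspace(X)^{\BC}$ and the symmetric Poincaré structure $\QF^{\sym}_{f^!L}$ is defined.

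Next I would feed this data into Construction~\ref{construction:pushforward-hermitian}, taking there $M = \cO_X$, $N = f^!L$, and $\tau\colon f^!L \otimes \cO_X^{\otimes 2} \tosimeq f^!L$ the canonical unit equivalence. With these choices the construction produces precisely the hermitian functor $(f_*(-\otimes \cO_X),\eta_{f,\tau}) = (f_*,\eta)$ named in the statement. Lemma~\ref{lemma:devissage-functor} then asserts, for an arbitrary qGqp map and arbitrary choices of $M,N,L$ and $\tau$, that the resulting hermitian functor is Poincaré; applying it to the present data completes the argument.

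Thus the proof of the corollary itself is essentially formal, and the genuine content lies upstream, in Proposition~\ref{proposition:proper-lci-poincare}, which I expect to be the main obstacle. I would prove that proposition by combining two ingredients: first, properness together with the finite Tor-dimension of an lci morphism yields, via Grothendieck duality for proper morphisms, that $f^!$ commutes with colimits and hence that $f_*$ is quasi-perfect; second, the lci hypothesis identifies $f^!\cO_Y$ with the relative dualizing complex of $f$, which is Zariski-locally (hence, after the disjoint-union decomposition recorded in Remark~\ref{remark:shift-of-line-bundle}, globally) a shift of a line bundle, giving the quasi-Gorenstein property. Since that proposition is already available in the excerpt, no further work is needed for the corollary.
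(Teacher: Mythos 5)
Your proof of the corollary is correct and is essentially the paper's argument: apply Proposition~\ref{proposition:proper-lci-poincare} to get that $f$ is qGqp, then specialize Construction~\ref{construction:pushforward-hermitian} to $M=\cO_X$, $N=f^!L$, $\tau=\mathrm{id}$ and conclude via Lemma~\ref{lemma:devissage-functor}. Your sketch of how you would prove the upstream Proposition~\ref{proposition:proper-lci-poincare} goes via Grothendieck duality and identification of $f^!\cO_Y$ with the relative dualizing complex, whereas the paper reduces Zariski-locally to the two cases of a regular closed embedding and a proper smooth map and treats each explicitly (Koszul complex in the first case, the diagonal trick in the second); both are standard routes and neither affects the corollary itself, which is immediate.
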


For the proof of Proposition~\ref{proposition:proper-lci-poincare} we first verify that
being a qGqp map is a local property on the codomain:

\begin{lemma}
\label{lemma:poincare-local}%
Let $f\colon X \to Y$ be a map of qcqs schemes, $Y = \cup_i V_i$ an open covering of $Y$. Then $f$ is qGqp if and only if its base change $f_i\colon U_i = X \times_Y V_i \to V_i$ to $V_i$ is qGqp for every $i$. In addition, when these equivalent conditions hold we have that $(f^!\cO_Y|)_{U_i} \simeq f_i^!(\cO_{V_i})$ for every $i$.
\end{lemma}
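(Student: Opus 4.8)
The plan is to reduce the whole statement to a single non-formal input — the base-change compatibility of the upper-shriek functor along open immersions — together with the fact that the properties in play (being a perfect complex, being tensor-invertible, and, for a colimit-preserving functor between presentable $\infty$-categories, the property of preserving colimits) can all be checked Zariski-locally. First I would fix notation: for each $i$, write $j_i'\colon U_i = f^{-1}(V_i) \hookrightarrow X$ for the open immersion (so that $\{U_i\}$ is an open cover of $X$) and $f_i\colon U_i \to V_i$ for the base change of $f$. Since $f$ is qcqs, $f_*\colon \Derqc(X) \to \Derqc(Y)$ preserves all colimits, so $f^!$ exists unconditionally, and by the characterization recalled after Definition~\ref{definition:quasi-perfect} the map $f$ is quasi-perfect iff $f^!$ preserves colimits. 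The input I would take from Appendix~\ref{section:appendix} is the equivalence $j_i'^* \circ f^! \simeq f_i^! \circ j_i^*$ of functors $\Derqc(Y) \to \Derqc(U_i)$, which holds because $j_i$ is an open immersion; evaluating it at $\cO_Y$ and using $j_i^*\cO_Y \simeq \cO_{V_i}$ already yields the last assertion $(f^!\cO_Y)|_{U_i} \simeq f_i^!(\cO_{V_i})$ — and note this does not require $f$ or the $f_i$ to be qGqp.

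Next I would show that quasi-perfectness is local on the target, i.e. $f^!$ preserves colimits iff every $f_i^!$ does. For the ``only if'' direction one uses that $j_i^* j_{i*} \simeq \id$, so any diagram in $\Derqc(U_i)$ is, up to equivalence, of the form $j_i^*(-)$ applied to a diagram in $\Derqc(X)$, and then the base-change equivalence together with colimit-preservation of $j_i'^*$ and $f^!$ does the job. For the ``if'' direction, the comparison map $\colim f^! D \to f^!\colim D$ becomes an equivalence after applying each $j_i'^*$ by the base-change equivalence and the hypothesis on the $f_i^!$, hence is an equivalence because the family $\{j_i'^*\}_i$ is jointly conservative.

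Finally, quasi-Gorensteinness is local on the target: $f^!\cO_Y$ is an invertible perfect complex on $X$ iff each $(f^!\cO_Y)|_{U_i} \simeq f_i^!\cO_{V_i}$ is one on $U_i$. The forward implication is immediate, since the restriction of an invertible perfect complex along an open immersion is again invertible and perfect. The converse uses that perfectness is Zariski-local, and that an object $L \in \Dperf(X)$ is invertible iff the evaluation map $L \otimes L^\vee \to \cO_X$ (with $L^\vee$ its perfect dual) is an equivalence, which may be checked on the open cover $\{U_i\}$. Combining these two locality statements with the decomposition of ``qGqp'' as ``quasi-perfect'' plus ``quasi-Gorenstein'' gives the equivalence of conditions in the lemma. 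The only step that is not a routine manipulation is the open-immersion base change for $f^!$; the rest is the standard interplay between that equivalence, joint conservativity of restriction to an open cover, and the Zariski-local nature of perfectness and invertibility, which I would not write out in full.
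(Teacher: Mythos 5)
Your argument hinges on the claim that the open-immersion base change equivalence $j_i'^*\circ f^! \simeq f_i^!\circ j_i^*$ holds for \emph{every} qcqs $f$, and in particular that the identification $(f^!\cO_Y)|_{U_i} \simeq f_i^!\cO_{V_i}$ ``does not require $f$ or the $f_i$ to be qGqp.'' This is false, and the error propagates through the rest of the proposal. What is true unconditionally is the \emph{other} mate: from $j_i^*f_* \simeq f_{i*}j_i'^*$ (flat base change for push-forward) one gets, by passing to right adjoints, $f^!j_{i*} \simeq j'_{i*}f_i^!$. To deduce from this the equivalence $j_i'^*f^! \simeq f_i^!j_i^*$ one must show that $f^!$ carries a complex supported on $Y\smallsetminus V_i$ to a complex supported on $X\smallsetminus U_i$, and this can fail when $f$ is not quasi-perfect. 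For a concrete obstruction: take $A=k[x]$, $B=A\oplus I$ with $I=\bigoplus_{m\geq 1}Ae_m$ a square-zero ideal, $f\colon\spec B\to\spec A$, and $V=\spec A[x^{-1}]$. Then $f^!\cO_Y \simeq A \oplus \prod_{m\geq 1}A$ as an $A$-module, and $(\prod_m A)[x^{-1}]$ is \emph{strictly smaller} than $\prod_m A[x^{-1}] \simeq f_i^!\cO_{V}$ (elements with unbounded denominators are not in the image), so the mate is not an equivalence. Accordingly, the paper carefully asserts the identification $(f^!\cO_Y)|_{U_i}\simeq f_i^!\cO_{V_i}$ only ``when these equivalent conditions hold,'' i.e.\ after quasi-perfectness is established.

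This makes your ``if'' direction for the locality of quasi-perfectness circular: you apply $j_i'^*$ to the comparison map $\colim f^!D\to f^!\colim D$ and invoke $j_i'^*f^!\simeq f_i^!j_i^*$, but at that point you do not yet know that $f$ is quasi-perfect, which is precisely the hypothesis needed (in the form of Lemma~\ref{lemma:flat-quasi-perfect}) for that mate to be an equivalence; knowing only that each $f_i$ is quasi-perfect is not enough. The paper sidesteps this entirely by using the equivalent characterization of quasi-perfectness in terms of the \emph{left} adjoint, namely that $f_*$ preserves perfect complexes. For that characterization the relevant base change $j_i^*f_*\simeq f_{i*}j_i'^*$ is the unconditional flat base change, perfectness is Zariski-local (Remark~\ref{remark:perfect-local}), and no circularity arises; the shriek base change (Lemma~\ref{lemma:flat-quasi-perfect}) is invoked only afterwards. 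If you reorganize your proof to test quasi-perfectness via $f_*$ on perfect complexes rather than via colimit-preservation of $f^!$, and derive the identification $(f^!\cO_Y)|_{U_i}\simeq f_i^!\cO_{V_i}$ only once quasi-perfectness is known, the rest of your argument (locality of perfectness, locality of invertibility via the evaluation map, joint conservativity of restriction to an open cover) is sound and matches the paper's.
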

\begin{proof}
By Corollary~\ref{corollary:quasi-perfect-local} we have that $f$ is quasi-perfect if and only if each $f_i$ is quasi-perfect, and that when these equivalent conditions hold we have that $(f^!\cO_Y|)_{U_i} \simeq f_i^!(\cO_{V_i})$. It will hence suffice to show that $M \in \Derqc(X)$ is invertible it and only if $M|_{U_i}$ is invertible in $\Derqc(U_i)$ for every $i$. The only if direction is clear since the pullback functor $\Derqc(X) \to \Derqc(U_i)$ is monoidal. On the other hand, if each $M|_{U_i}$ is invertible then each $M|_{U_i}$ is in particular dualisable, hence compact, hence perfect. By Remark~\ref{remark:perfect-local} we have that $M$ is perfect, hence dualisable with some dual $\Dual M$. Since each $M|_{U_i}$ is invertible the coevaluation map $M \otimes \Dual M \to \cO_X$ maps to an equivalence in $\Derqc(U_i)$ for each $i$ and is hence an equivalence by Zariski descent (Proposition~\ref{proposition:descent}). We conclude that $M$ is invertible.
\end{proof}

\begin{proof}[Proof of Proposition~\ref{proposition:proper-lci-poincare}]
By Lemma~\ref{lemma:poincare-local}, it suffices to prove the claim when $f$ is a (proper) global complete intersection, that is, when $f$ is the composite of a regular closed embedding and a proper smooth map. By Remark~\ref{remark:invertible-closed-compositoin}, it suffices to show this when $f$ is either a regular closed embedding or a proper smooth map.

If $f$ is a regular embedding, then in particular it is affine, and by replacing $Y$ by sufficiently small affine open subschemes, we may assume that $f$ is of the form $\spec(A/I) \to \spec(A)$, where $I \subseteq A$ is an ideal generated by a regular sequence $a_1,\ldots,a_r \in A$. The functor $f_*\colon \Der(A/I) \to \Der(A)$ is then the forgetful (or inflation) functor obtained by precomposing an $A/I$-action with $A \to A/I$, and its right adjoint $f^!\colon \Der(A) \to \Der(A/I)$ sends an $A$-module $M$ to the $A/I$-module $\mathrm{RHom}_A(A/I,M)$. The regularity assumption then implies that the Koszul complex 
\[
K_\bullet = \bigotimes_{i=1}^{r} [A \xrightarrow{a_i} A]
\]
is a finite free resolution of the $A$-module $A/I$, where $A \xrightarrow{a_i} A$ is considered as a complex sitting in degrees $1,0$. In particular, $A/I$ is a perfect $A$-complex, and so $f$ is quasi-perfect. In addition, for every $A/I$-module $M$ we have that $f^!M = \mathrm{RHom}_A(A/I,M)$ can be modelled by the complex $\bigotimes_{i=1}^{r} [M \xrightarrow{a_i} M]$, where $[M \xrightarrow{a_i} M]$ is considered as a complex sitting in degree $0,-1$. Taking $M=A$ we obtain that $f^!A \simeq A/I[-r]$ is invertible.

We now consider the case where $f\colon X \to Y$ is smooth and proper. In particular, since $f$ is proper it is by definition separated, and so the diagonal map 
\[
\Del\colon X \to X \times_Y X
\]
is a closed embedding. In addition, since $f$ is smooth, we have by~\cite[Tag 067U]{stacks-project} that the closed embedding $\Del$ is a regular embedding. By the first part of the theorem, we deduce that $\Del$ is a qGqp morphism, so that $\Del^!(\cO_{X \times_Y X})$ is an invertible perfect complex on $X$. Now, consider the diagram
\[
\begin{tikzcd}
X \ar[dr, "\Del"] & & \\
& X \times_Y X \ar[r, "p_2"]\ar[d, "p_1"] & X \ar[d, "f"] \\
& X \ar[r, "f"] & Y
\end{tikzcd}
\]
Since $f$ is smooth, it is in particular flat. Combining Lemma~\ref{lemma:flat-quasi-perfect} and Lemma~\ref{lemma:eta-equivalence}, we then have that for $M \in \Derqc(Y)$ we have
\begin{align*}
f^*M &= \Del^!p_1^!f^*M \\
&= \Del^!p_2^*f^!M \\
&= \Del^!(\cO_{X \times_Y X}) \otimes \Del^*p_2^*f^!M\\
&= \Del^!(\cO_{X \times_Y X}) \otimes f^!M 
\end{align*}
and so the functors $f^!$ and $f^*$ differ by an invertible factor $\Del^!(\cO_{X \times_Y X})$. We conclude that $f^!$ preserves colimits and hence that $f$ is quasi-perfect. In addition, for $M=\cO_X$ we get that $\cO_X = f^*\cO_Y = \Del^!(\cO_{X \times_Y X}) \otimes f^!\cO_Y$ and so $f^!\cO_Y$ is an invertible object. We conclude that $f$ is invertible, as desired.
\end{proof}

\begin{remark}
\label{remark:proper-lci}%
In the proof of Proposition~\ref{proposition:proper-lci-poincare}, we show in particular that if $f\colon X \to Y$ is either a closed regular embedding or a proper smooth map, then $f$ is qGqp. Elaborating slightly on the arguments presented above, one can show more precisely that if $X$ is a closed regular embedding with rank $r$ normal bundle $\mathcal{N}$, then $f^!\cO_Y \simeq \det \mathcal{N}[-r]$ (see also \cite{stacks-project}*{\href{https://stacks.math.columbia.edu/tag/0BR0}{Tag 0BR0}}), and if $f$ is a proper smooth map with rank $r$ relative cotangent bundle $\Om_{X/Y}$, then $f^!\cO_Y \simeq (\Del^! \cO_{X \times_Y X})^{-1} \simeq \det \Om_{X/Y}[r]$, where $\Del\colon X \to X \times_Y X$ is the diagonal map.
\end{remark}

\subsection{Global dévissage}
\label{subsection:devissage}%

Let us now consider a closed embedding $i \colon Z \to X$ of finite dimensional regular Noetherian schemes with open complement $j\colon U \hrar X$. In particular, $i$ is automatically a regular embedding and $U$ is regular Noetherian. Given a line bundle with $\Ct$-action $L$ on $X$ we have by Proposition~\ref{proposition:proper-lci-poincare} an associated Poincaré functor
\[
(i_*,\eta)\colon \big(\Dperf(Z),\QF^{\sym}_{f^!L}\big)\to \big(\Dperf(X),\QF^{\sym}_L\big) .
\]
The functor $i_*\colon \Dperf(Z) \to \Dperf(X)$ takes values in the duality invariant full subcategory $\Dperf_Z(X) \subseteq \Dperf(X)$ spanned by the perfect complexes supported on $Z$. 
We consequently obtain that the Poincaré functor $(i_*,\eta)$ above induces a Poincaré functor
\[
\big(\Dperf(Z),\QF^{\sym}_{i^!L}\big)\to \big(\Dperf_Z(X),{\QF^{\sym}_L}|_Z\big) \ ,
\]
where we have denoted by ${\QF^{\sym}_L}|_Z$ the restriction of $\QF^{\sym}_L$ to $\Dperf_Z(X)$.

\begin{theorem}[Dévissage]
\label{theorem:global-devissage}%
Let $i\colon Z\hrar X$ be a closed embedding of finite dimensional regular Noetherian schemes (automatically a regular embedding, see \cite{stacks-project}*{\href{https://stacks.math.columbia.edu/tag/0E9J}{Tag 0E9J}}).
Let $L$ be a line bundle with $\Ct$-action on $X$ and $n \in \ZZ$ an integer. 
Then the induced maps
\[
\GW\big(\Dperf(Z),\QF^{\sym}_{i^!L[n]}\big)\to \GW\big(\Dperf_Z(X),\QF^{\sym}_{L[n]}|_Z\big) \quad\text{and}\quad \L\big(\Dperf(Z),\QF^{\sym}_{i^!L[n]}\big)\to \L\big(\Dperf_Z(X),\QF^{\sym}_{L[n]}|_Z\big) 
\]
are equivalences. 
\end{theorem}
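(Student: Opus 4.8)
The plan is to reduce the statement first to $\L$-theory, then---via the coniveau filtration---to a statement about regular local rings, and finally to a computation over a discrete valuation ring.

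\emph{Step 1 (reductions).} Since $L[n]$ is again an invertible perfect complex with $\Ct$-action and $i^!$ commutes with shifts, I would absorb $n$ into $L$ and work with an arbitrary $L \in \Picspace(X)^{\BC}$; the constructions of \S\ref{subsection:pushforward} and the coniveau filtration of \S\ref{subsection:coniveau} apply verbatim to such $L$. Next, the $\GW$-statement reduces to the $\L$-statement: the fundamental fibre sequence $\K(-)_\hC \to \GW \to \L$ (see \S\ref{subsection:recall-hermitian}) is natural in Poincaré functors, so the Poincaré pushforward $(i_*,\eta)\colon (\Dperf(Z),\QF^\sym_{i^!L}) \to (\Dperf_Z(X),\QF^\sym_L|_Z)$ of Construction~\ref{construction:pushforward-hermitian} induces a map of fibre sequences. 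As $i_*$ intertwines the two dualities, the induced map on $\K$-theory is $\Ct$-equivariant; and it is an equivalence by Quillen's dévissage theorem, applied to the abelian category of coherent sheaves on $X$ supported on $Z$ (using that $X$ and $Z$ are regular, so that $\Dperf(Z) \simeq \Der^b_{\mathrm{coh}}(Z)$ and $\Dperf_Z(X) \simeq \Der^b_{\mathrm{coh},Z}(X)$). Passing to $\Ct$-homotopy orbits the left-hand terms agree, so $\GW$-dévissage holds exactly when $\L$-dévissage does.

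\emph{Step 2 (coniveau reduction to the local case).} Applying Proposition~\ref{proposition:coniveau-sequence} with $m=-\infty$ to $\Dperf_Z(X)$, and separately to $Z$ itself, and using that $\L$ is localising and that $\Dperf_Z(X)$ is finite-dimensional, one obtains finite exhaustive filtrations of $\L(\Dperf_Z(X),\QF^\sym_L)$ and of $\L(\Dperf(Z),\QF^\sym_{i^!L})$ whose $c$-th graded pieces are $\bigoplus_{x\in Z^{(c)}}\L(\Dperf_x(\cO_{X,x}),\QF^\sym_L)$ and $\bigoplus_{x\in Z^{(c)}}\L(\Dperf_x(\cO_{Z,x}),\QF^\sym_{i^!L})$. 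The Poincaré pushforward $i_*$ is compatible with both filtrations---it preserves supports and leaves the local rings $\cO_{Z,x}$ unchanged---and on the $c$-th graded piece it is the sum over $x \in Z^{(c)}$ of the Poincaré pushforwards (Lemma~\ref{lemma:devissage-functor}) along the closed regular embeddings $\spec\cO_{Z,x} \hrar \spec\cO_{X,x}$, with twist $i_x^!L \simeq (i^!L)|_{\spec\cO_{Z,x}}$. By induction up the filtration it thus suffices to prove: for $(A,\mathfrak{m})$ a regular local ring, $I \subseteq A$ generated by a regular sequence with $A/I$ regular local (of maximal ideal $\mathfrak{n}=\mathfrak{m}/I$), and $M$ an invertible $A$-module with $\Ct$-action, the pushforward $\L(\Dperf_{\mathfrak{n}}(A/I),\QF^\sym_{i^!M}) \to \L(\Dperf_{\mathfrak{m}}(A),\QF^\sym_M)$ is an equivalence.

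\emph{Step 3 (reduction to a DVR and the core computation).} Choosing the generators of $I$ to form part of a regular system of parameters makes all partial quotients regular local, and Lemma~\ref{lemma:composite-pf} identifies the Poincaré pushforward of a composite regular embedding with the composite of the pushforwards; iterating, and then inducting on $\dim A$, one reduces to the case where $A$ is a discrete valuation ring and $A/I = \kappa$ its residue field, restricting throughout to complexes supported at the closed points. There $\Dperf_{\mathfrak{m}}(A)$ is the thick subcategory generated by the perfect complex $\kappa = A/\mathfrak{m}$, whose (formal) endomorphism algebra is the exterior algebra $\Lambda_\kappa[\epsilon]$ with $|\epsilon|=-1$, so that $\Dperf_{\mathfrak{m}}(A) \simeq \Dperf(\Lambda_\kappa[\epsilon])$ and the pushforward becomes extension of scalars along $\kappa \to \Lambda_\kappa[\epsilon]$; accounting for the coefficient $i^!M \simeq (M\otimes_A\kappa)[-1]$, one must show this functor is an equivalence on homotopy-symmetric $\L$-theory.

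\emph{The main obstacle} is exactly this discrete-valuation-ring computation: everything else is formal given the results in the excerpt (composability of Poincaré pushforwards, the coniveau bookkeeping, the $\GW$-to-$\L$ comparison). But computing $\L^\sym(\Dperf_{\mathfrak{m}}(A))$---equivalently, $\L^\sym$ of perfect modules over the square-zero exterior algebra $\Lambda_\kappa[\epsilon]$ equipped with its shifted, $\Ct$-twisted symmetric Poincaré structure---and matching it with $\L^\sym(\Dperf(\kappa))$ cannot be carried out at the level of triangulated categories or of classical Witt groups: genuine and homotopy symmetric $\L$-theory already differ here, and $2$ need not be invertible. It requires a genuinely homotopy-coherent analysis of the relevant homotopy fixed-point and Tate spectra together with the localisation and periodicity structure of $\L$, and this is the content of the local dévissage argument carried out in the sequel.
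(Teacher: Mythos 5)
Your Steps~1 and~2 match the paper. The reduction of the $\GW$-statement to the $\L$-statement via the fundamental fibre sequence and $\K$-theoretic dévissage is Remark~\ref{remark:K-theory-devissage} together with the discussion following Theorem~\ref{theorem:genuine-devissage}, and the coniveau filtration (Proposition~\ref{proposition:coniveau-sequence}) is applied to both $\Dperf(Z)$ and $\Dperf_Z(X)$ in the proof of Theorem~\ref{theorem:genuine-devissage}, reducing to a local statement about closed embeddings of regular local rings with support at the closed point (Corollary~\ref{corollary:relative-devissage}).

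The gap is in Step~3: the claimed reduction to a DVR does not close. Factoring the $\fm$-supported pushforward through intermediate quotients $A/\langle a_1,\ldots,a_i\rangle$ via Lemma~\ref{lemma:composite-pf} is fine, but after peeling off one parameter, say $A' = A/\langle a_1 \rangle$ of dimension $d-1$, the induction hypothesis only covers $\spec(k)\hrar\spec(A')$, and to conclude by two-out-of-three on the chain $\spec(k)\to\spec(A')\to\spec(A)$ you would need to already know that the pushforward $\Dperf_{\fm_{A'}}(A')\to\Dperf_\fm(A)$ is an $\L$-equivalence. That pushforward still lands in a regular local ring of dimension $d$, so the induction hypothesis says nothing about it, and the statement you would need is precisely the one you are trying to prove. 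There is no visible way to drop to $\dim A=1$, and hence no one-dimensional Koszul dual $\Lambda_\kappa[\epsilon]$ to compute over.

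The paper's argument avoids both the DVR reduction and any exterior-algebra model, working directly with an arbitrary regular local ring (Theorem~\ref{theorem:devissage}). After normalizing so that $L\simeq R[d]$, Lemma~\ref{lemma:purity} and Corollary~\ref{corollary:purity} show that the $\fm$-supported duality on $\Dperf_\fm(R)$ exchanges connective and coconnective objects, and is computed on the heart $\Coh_\fm(R)$ by $\Hom_R(-,L_\fm)$, with $L_\fm$ the injective $R$-module of Lemma~\ref{lemma:purity}. The abstract Proposition~\ref{proposition:key}, valid for any Poincaré $\infty$-category whose duality respects its t-structure in this way, then identifies the symmetric $\L$-groups of $(\Dperf(k),\QF^\sym_{i^!L})$ and of $(\Dperf_\fm(R),\QF^\sym_L|_\fm)$ with classical (anti-)symmetric Witt groups of the hearts $\Coh(k)$ and $\Coh_\fm(R)$. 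The remaining Witt group comparison is closed classically: injectivity from the bijection between $k$-subspaces and $R$-submodules of a finite-dimensional $k$-vector space, surjectivity from the sub-Lagrangian theorem of Quebbemann--Scharlau--Schulte~\cite{Quebbemann-Scharlau-Schulte}. So the homotopy-coherent difficulty you flag is real, but the paper resolves it by reducing to $\pi_0$-level Witt theory in arbitrary dimension, not by a spectrum-level computation over a small ring spectrum.
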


Recall that on any regular Noetherian scheme the collection of perfect complexes coincides with the coherent ones, and so the stable subcategory of perfect complexes inherits from its embedding in quasi-coherent complexes a t-structure whose heart is the abelian category of coherent sheaves. In addition, any perfect complexes has only finitely many non-trivial homotopy sheaves, so that this t-structure is bounded. 
Now since $j\colon U \to X$ is flat the functor $j^*\colon \Dperf(X) \to \Dperf(U)$ is t-exact, and hence the bounded t-structure of $\Dperf(X)$ restricts to a bounded t-structure on $\Dperf_Z(X)$, with $\Dperf_{Z}(X)^{\heartsuit} \subseteq \Coh(X)$ the full subcategory spanned by the coherent sheaves supported on $Z$. 
Since closed embeddings are in particular affine, the functor $i_*\colon \Dperf(Z) \to \Dperf_Z(X)$ is t-exact. 
We consequently obtain that for every $m \in \{-\infty\} \cup \ZZ$ the Poincaré functor $(i_*,\eta)$ above induces a Poincaré functor
\[
(i_*,\eta^{\geq m})\colon \big(\Dperf(Z),\QF^{\geq m}_{i^!L}\big)\to \big(\Dperf_Z(X),{\QF^{\geq m}_L}|_Z\big) \ .
\]

\begin{theorem}[Genuine dévissage]
\label{theorem:genuine-devissage}%
Let $X$ be a regular Noetherian scheme of finite Krull dimension $d$, and $i\colon Z\hrar X$ a closed embedding with $Z$ regular. Fix a line bundle with $\Ct$-action $L \in \Dperf(X)$ and an $m \in \{-\infty,\infty\} \cup \ZZ$. 
Then the map
\[
\L_n\big(\Dperf(Z),\QF^{\geq m}_{i^!L}\big)\to \L_n\big(\Dperf_Z(X),\QF^{\geq m}_L|_Z\big)
\]
is an isomorphism for and $n \geq 2m-1+d$ and injective for $n = 2m-2+d$.
\end{theorem}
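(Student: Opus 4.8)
The plan is to bootstrap from the homotopy symmetric case, which is already available: for $m=-\infty$ the assertion is exactly the $\L$-part of Theorem~\ref{theorem:global-devissage}, and for $m=+\infty$ it is vacuous since then the range $n\geq 2m-1+d$ is empty. So assume $m\in\ZZ$. For any oriented Poincaré $\infty$-category $(\C,\QF,\tstruc)$ the natural map $\QF^{\geq m}\Rightarrow\QF^{\sym}$ has the same underlying duality on source and target, hence induces the identity on $\K(\C)$ together with its $\Ct$-action; by the fundamental fibre sequence $\K_{\hC}\to\GW\to\L$ this gives a canonical identification
\[
\cK^{\geq m}(\C):=\fib\!\big[\L(\C,\QF^{\geq m})\to\L(\C,\QF^{\sym})\big]\simeq\fib\!\big[\GW(\C,\QF^{\geq m})\to\GW(\C,\QF^{\sym})\big].
\]
Applying this to both entries of the dévissage map and using that $\L(\Dperf(Z),\QF^{\sym}_{i^!L})\to\L(\Dperf_Z(X),\QF^{\sym}_L|_Z)$ is an equivalence by Theorem~\ref{theorem:global-devissage}, a diagram chase on the map of the two vertical fibre sequences (with the long exact sequences in homotopy groups) reduces the statement to the claim that $\cK^{\geq m}(\C)$ is $(2m-3+d)$-truncated for $\C=\Dperf(Z)$ and for $\C=\Dperf_Z(X)$, with the orientations inherited from the standard t-structures on $\Derqc(Z)$ and $\Derqc(X)$. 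Indeed, $(2m-3+d)$-truncatedness means $\pi_k\cK^{\geq m}=0$ for $k\geq 2m-2+d$, and the chase then yields that the map in the theorem is an isomorphism for $n\geq 2m-1+d$ and injective for $n=2m-2+d$.

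The input needed is a bound on the genuine correction $\cK^{\geq m}$ in terms of the homological dimension of the heart of the orientation. Over affine schemes this is provided by \cite{9-authors-III}: for a Noetherian commutative ring $R$ of Krull dimension $\leq d$ (more generally, for a stable $\infty$-category equipped with a bounded orientation whose heart has homological dimension $\leq d$, arising as above from a rigid stably symmetric monoidal situation), the spectrum $\cK^{\geq m}(\Dperf(R))$ is $(2m-3+d)$-truncated; at $m=0$ this is the truncatedness statement of \cite{9-authors-III}*{Corollary~1.3.10} already used in the proof of Proposition~\ref{proposition:gen-sym-compare}. For $\C=\Dperf(Z)$, since $Z$ is regular Noetherian of Krull dimension $\leq d$ its perfect complexes carry a bounded t-structure with heart $\Coh(Z)$ of homological dimension $\leq d$; exactly as in the proof of Proposition~\ref{proposition:gen-sym-compare} we regard $V\mapsto\cK^{\geq m}(\Dperf(V))$ as a Zariski sheaf on $Z$ (Corollary~\ref{corollary:zariski-descent}), and since the $(2m-3+d)$-truncated spectra are closed under limits it suffices to know the bound on affine opens, which is the affine case above.

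For $\C=\Dperf_Z(X)$ the category is no longer the perfect derived category of a scheme, so instead we run the coniveau filtration of \S\ref{subsection:coniveau}. The functor $(\C,\QF,\tstruc)\mapsto\cK^{\geq m}(\C)$ is localising, so applying it to the Poincaré–Verdier sequences of Proposition~\ref{proposition:coniveau-sequence} — formed using the orientations restricted from $\Derqc(X)$ along the coniveau subcategories $\Dperf_Z(X)^{\geq c}$ — produces, for each $0\leq c\leq\dim Z$, a fibre sequence
\[
\cK^{\geq m}\big(\Dperf_Z(X)^{\geq c+1}\big)\to\cK^{\geq m}\big(\Dperf_Z(X)^{\geq c}\big)\to\bigoplus_{x\in Z^{(c)}}\cK^{\geq m}\big(\Dperf_x(\cO_{X,x})\big).
\]
Each $\Dperf_x(\cO_{X,x})$ is the category of perfect complexes on the regular local ring $\cO_{X,x}$ supported at the closed point; as $X$ is regular Noetherian its Krull dimension is $\leq d$, and it carries a bounded t-structure with heart the finite length $\cO_{X,x}$-modules, again of homological dimension $\leq d$, so the affine input gives that $\cK^{\geq m}(\Dperf_x(\cO_{X,x}))$ is $(2m-3+d)$-truncated. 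Since $(2m-3+d)$-truncated spectra are closed under products and under extensions in fibre sequences, and $\Dperf_Z(X)^{\geq\dim Z+1}=0$, a downward induction on $c$ shows $\cK^{\geq m}(\Dperf_Z(X))=\cK^{\geq m}(\Dperf_Z(X)^{\geq 0})$ is $(2m-3+d)$-truncated, as required.

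The main obstacle is the affine input from \cite{9-authors-III}: one must bound the genuine correction $\cK^{\geq m}$ of a regular local ring — equivalently of its category of perfect complexes, and of the supported subcategories appearing in the coniveau filtration — in terms of the homological dimension $d$ of the heart, obtaining precisely the $(2m-3+d)$-truncatedness (this is where the dependence on $d$ and the factor $2m$ in the range originate). A secondary point is to check that $\cK^{\geq m}$ is localising as a functor on oriented Poincaré $\infty$-categories and that the standard orientation restricts compatibly to each coniveau subcategory $\Dperf_Z(X)^{\geq c}$, so that Proposition~\ref{proposition:coniveau-sequence} produces the fibre sequences used above.
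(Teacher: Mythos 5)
Your route differs from the paper's: you factor through the $m=-\infty$ case and then bound the ``genuine correction'' $\cK^{\geq m}(\C) := \fib[\L(\C,\QF^{\geq m}) \to \L(\C,\QF^{\sym})]$ for $\C=\Dperf(Z)$ and $\C=\Dperf_Z(X)$ separately, whereas the paper runs a single descending induction on the coniveau index $c$ comparing $\Dperf(Z)^{\geq c}$ and $\Dperf_Z(X)^{\geq c}$ directly and invoking Corollary~\ref{corollary:relative-devissage} on the associated graded pieces. The arithmetic of your reduction is correct (one checks that if $\cK^{\geq m}(\Dperf(Z))$ and $\cK^{\geq m}(\Dperf_Z(X))$ are each $(2m-3+d)$-truncated and the symmetric map is an equivalence, then the fibre of $\alpha_m$ is $(2m-3+d)$-truncated, which gives the asserted isomorphism/injectivity range). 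But there are two genuine gaps.

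First, as stated the argument is circular. In the paper's logical order Theorem~\ref{theorem:global-devissage} (the case $m=-\infty$) is deduced \emph{from} Theorem~\ref{theorem:genuine-devissage}; it is not an independently available input. You would need to establish the $m=-\infty$ case separately first --- which in practice means running the coniveau/local-dévissage argument anyway, at which point one might as well do it uniformly in $m$ as the paper does.

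Second, the ``affine input'' is not where the real difficulty lies and is not correctly applied. You invoke a general-$m$ truncatedness bound for $\cK^{\geq m}(\Dperf(A))$, $A$ regular, citing \cite{9-authors-III}*{Corollary~1.3.10}, but the paper uses that corollary only at $m=0$, and more importantly \cite{9-authors-III}*{Proposition~1.3.1} (Proposition~\ref{proposition:key} in the paper) requires the duality to interchange connective and coconnective objects, which \emph{fails} for $(\Dperf(A),\Dual_A)$ when $\dim A>0$. You then assert ``equivalently'' that the bound applies to the supported categories $\Dperf_x(\cO_{X,x})$; but this is a different category, not $\Dperf(R)$ for any ring, so the putative affine input does not directly apply. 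The correct tool for the supported local pieces is Proposition~\ref{proposition:key} applied after normalising the shift of $L$ by the local codimension (exactly as in Remark~\ref{remark:assume-r-0}, Corollary~\ref{corollary:purity}, and the proof of Theorem~\ref{theorem:devissage}); only under that normalisation does the duality swap connective and coconnective, and the $d$ in your bound then enters through the shift, not through an abstract ``homological dimension of the heart'' statement. Once this is done properly, your bound on $\cK^{\geq m}(\Dperf(Z))$ should be obtained by a coniveau filtration of $\Dperf(Z)$ itself (reducing to supported local categories), not by Zariski descent to an unsourced affine statement. In sum: the factoring-through-$\cK^{\geq m}$ idea is a reasonable reorganisation, but the two subsidiary truncation bounds you need are not handed to you; they require precisely the shift normalisation and coniveau machinery that the paper packages into Corollary~\ref{corollary:relative-devissage}, which is what makes the direct comparison of the two filtrations the cleaner route.
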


\begin{remark}
\label{remark:K-theory-devissage}%
In the situation of Theorem~\ref{theorem:global-devissage}, one may also consider the corresponding map
\[
\K\big(\Dperf(Z)\big)\to \K\big(\Dperf_Z(X)\big) 
\]
on the level of $\K$-theory. Applying the theorem of the heart for $\K$-theory (see~\cite{barwick-heart}) to the t-structures described above this map can be identified with the map
\[
\K(\Coh(Z))\to \K(\Coh_Z(X)) \ ,
\]
which is an equivalence by Quillen's dévissage theorem for abelian categories, see~\cite[Theorem 4]{quillen}.
\end{remark}

Theorem~\ref{theorem:genuine-devissage} implies the $\L$-theory part of Theorem~\ref{theorem:global-devissage} by taking $m=-\infty$ (where we note that shifting the line bundle translates to suspending $\L$-theory by bordism invariance). At the same time, by Remark~\ref{remark:K-theory-devissage} and the fundamental fibre sequence (see~\cite[Corollary~11.4.14]{9-authors-II}) we have that the $\L$-theory equivalence and $\GW$-theory equivalence in Theorem~\ref{theorem:global-devissage} imply each other. We hence conclude that Theorem~\ref{theorem:genuine-devissage} implies Theorem~\ref{theorem:global-devissage}. The following two subsections are dedicated to the proof of Theorem~\ref{theorem:genuine-devissage}.

\begin{remark}
\label{remark:karoubi-GW}%
In the situation of Theorem~\ref{theorem:global-devissage}, the natural transformations $\GW \Rightarrow \KGW$ and $\L \Rightarrow \KL$ are equivalences when evaluated on both  $(\Dperf(Z),\QF^{\sym}_{i^!L})$ and $(\Dperf_Z(X),\QF^\sym_L|_{Z})$.  
To see this, note that using the exact squares \eqref{equation:cofinality} (Karoubi cofinality), it suffices to show that the natural transformation $\K \Rightarrow \KK$ is an equivalence when evaluated on both $\Dperf(Z)$ and $\Dperf_{Z}(X)$. 
This, in turn, follows from the fact that both these (idempotent complete) stable $\infty$-categories carry bounded t-structures with Noetherian hearts, and such stable $\infty$-categories have vanishing negative $\K$-groups, see~\cite{theorem-of-the-heart}*{Theorem~1.2}. We conclude that the dévissage equivalence expressed in Theorem~\ref{theorem:global-devissage} equally holds if one replaces all appearances of $\GW$ by $\KGW$ and all appearances of $\L$ by $\KL$.
\end{remark}

\subsection{Local dévissage}
\label{subsection:local-devissage}%

Our goal in this section is to prove Theorems~\ref{theorem:genuine-devissage} 
in the case where $X = \spec(R)$ for $R$ a regular Noetherian local ring with maximal ideal $\fm$ residue field $k := R/\fm$, and $Z = \spec(k)$ is the associated unique closed point. We denote by $d$ the Krull dimension of $R$.

Recall that a finitely generated $R$-module $M$ is said to have \defi{finite length} if there is a finite filtration $0=M_0 \subseteq \cdots \subseteq M_n=M$ such that $M_i/M_{i-1}$ is annihilated by $\fm$. A perfect $R$-complex $M$ is then supported on $\spec(k)$ if and only if its homologies have finite length. In this case, we will also say that the $R$-complex $M$ itself has finite length. 
We then let $\Dperf_{\fm}(R) \subseteq \Dperf(R)$ denote the full stable subcategory spanned by the perfect $R$-complexes of finite length. 
As discussed in \S\ref{subsection:devissage}, 
the t-structure on $\Dperf(R)$ then restricts to a t-structure on $\Dperf_{\fm}(R)$ with heart $\Dperf_{\fm}(R)^{\heartsuit} = \Coh_{\fm}(R) \subseteq \Coh(R)$ the abelian category of finite length $R$-modules.

Let us fix an invertible perfect $R$-complex $L$ equipped with an $R$-linear involution. Let $\QF^{\sym}_L$ be the associated symmetric Poincaré structure and write $\QF^\sym_L|_{\fm} = {\QF^{\sym}_R}|_{\Dperf_{\fm}(R)}$ for its restriction to $\Dperf_{\fm}(R)$. As in \S\ref{subsection:devissage} we then consider the associated Poincaré functor
\begin{equation}
\label{equation:inf}%
(i_*,\eta)\colon (\Dperf(k),\QF^{\sym}_{i^!L}) \to (\Dperf_{\fm}(R),\QF^{\sym}_L|_{\fm}) .
\end{equation}

\begin{remark}
Since $R$ is local the invertible perfect complex $L$ is equivalent to a shift $R[r]$ for some $r$.
This isomorphism is however not unique, and it will be convenient to avoid choosing a particular one. In a similar manner, since $R$ is regular of Krull dimension $d$ we have that $i^!L$ is equivalent to $(L \otimes_R k)[-d] \simeq k[r-d]$ (see Lemma~\ref{lemma:purity-2} below), though not canonically. 
\end{remark}

\begin{theorem}[Dévissage for local rings]
\label{theorem:devissage}%
Let $r$ be the unique integer for which $\pi_rL \neq 0$. Then for $-\infty \leq m < \infty$ the map 
\[
\L_n(\Dperf(k),\QF^{\geq m}_{i^!L}) \tosimeq \L_n(\Dperf_{\fm}(R),\QF^{\geq m}_L|_{\fm})
\]
induced by~\eqref{equation:inf} is an isomorphism for $n \geq 2m-1+d-r$ and injective for $n = 2m-2+d-r$.
\end{theorem}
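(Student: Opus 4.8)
The plan is to reduce the statement to a computation about $\L$-groups of the single Poincaré $\infty$-category $(\Dperf_{\fm}(R),\QF^{\geq m}_L|_{\fm})$ and to compare it with $(\Dperf(k),\QF^{\geq m}_{i^!L})$ via the inflation functor~\eqref{equation:inf}. First I would normalize the shift: since $R$ is local, $L \simeq R[r]$, and shifting the line bundle translates (via the compatibility of $\QF^{\geq m}$ with suspension, using the cases $m = \pm\infty$ where $(\QF_L)\qshift{n}$ is literally $\QF_{L[n]}$) into a shift of the Poincaré structure and hence a degree shift in $\L$-theory. So it suffices to treat $L = R$, i.e.\ $r = 0$, and prove the map is an isomorphism for $n \geq 2m-1+d$ and injective for $n = 2m-2+d$. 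Then by purity (Lemma~\ref{lemma:purity-2}, as referenced in the excerpt) we have $i^!R \simeq k[-d]$, so the source is $\L_n(\Dperf(k),\QF^{\geq m}_{k[-d]}) \simeq \L_{n+2d?}(\dots)$ — more precisely one must keep careful track of how $\QF^{\geq m}$ interacts with the shift by $[-d]$, which is not just a suspension of $\QF^{\sym}$ because the truncation level $m$ does not commute with shifting. This bookkeeping is the first genuinely delicate point.

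The core of the argument should be an induction on the Krull dimension $d$ using the coniveau/localisation machinery. For $d = 0$, $R$ is a field (or Artinian local with $\Dperf_{\fm}(R) = \Dperf(R)$), and the functor~\eqref{equation:inf} becomes an equivalence of Poincaré $\infty$-categories after a cofinality/idempotent-completion argument, handled directly; in the regular case $R$ is a field and the statement is immediate. For the inductive step, I would pick a regular parameter $t \in \fm$, giving a regular embedding $\spec(R/t) \hookrightarrow \spec(R)$ of regular local rings with $\dim(R/t) = d-1$, and a Poincaré-Karoubi localisation sequence (from Corollary~\ref{corollary:open-is-karoubi-2} applied to the closed subset $V(t)$) relating $\Dperf_{\fm}(R)$, $\Dperf_{\fm}(R[t^{-1}])$ — but $\Dperf_{\fm}(R[t^{-1}]) = 0$ since $\fm \not\subseteq$ any prime of $R[t^{-1}]$ unless... — here I need to instead use the fibre sequence relating $\Dperf_{V(t)\cap\fm}$-supported objects and a Bott–Genauer style shift, i.e.\ the standard "multiplication by $t$" cofibre sequence $\Dperf_{\fm}(R/t) \to \Dperf_{\fm}(R) \xrightarrow{\cdot t} \Dperf_{\fm}(R)$ realized as a Poincaré-Karoubi sequence. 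This exhibits $\L_*(\Dperf_{\fm}(R),\QF^{\geq m}_R|_{\fm})$ via a long exact sequence in terms of $\L_*(\Dperf_{\fm}(R/t),\QF^{\geq m}_{?}|_{\fm})$ and a shifted copy of itself, and the induction hypothesis applies to the $(d-1)$-dimensional ring $R/t$.

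The main obstacle, I expect, will be identifying the Poincaré structure induced on $\Dperf_{\fm}(R/t)$ by the "multiplication by $t$" fibre sequence: one must show it is $\QF^{\geq m'}_{i_t^!R}|_{\fm}$ for the appropriate shift $m'$ and twist $i_t^!R \simeq (R/t)[-1]$, including the correct bound on $m'$ and the correct interplay with the $[-1]$-shift (which moves the genuine truncation level). This is exactly the kind of place where the connectivity estimate "$n \geq 2m-1+d$" is built up one unit at a time, so getting the arithmetic of the bounds to close the induction — tracking that each step costs $+1$ in $d$ and the shift contributes $+1$ to the relevant $m$ or $n$, so $2m-1+d \rightsquigarrow 2m-1+(d-1)$ after absorbing the shift — will require care. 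A secondary obstacle is the base case/cofinality issue: one should invoke that $\GW \Rightarrow \KGW$ and $\L \Rightarrow \KL$ are equivalences on these categories (as in Remark~\ref{remark:karoubi-GW}, using that the hearts are Noetherian so negative $\K$-groups vanish), so that the localisation sequences may be used freely at the level of $\L$-theory without worrying about the Verdier-vs-Karoubi distinction. Finally, passing from the "$\pi_r L \neq 0$" formulation back to general $L$ is just the normalization in the first paragraph, applied in reverse.
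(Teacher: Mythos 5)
Your first paragraph (normalizing to $r=d$ via the interplay of $\QF^{\geq m}_{L}\qshift{n} \simeq \QF^{\geq m+n}_{L[n]}$ and a degree shift in $\L$-theory) matches the paper's Remark~\ref{remark:assume-r-0} and is fine. The cofinality point in your last paragraph is also correct, though it is only needed at the level of the global theorem, not the local one.

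The core of your argument, however, has a genuine gap, and this is also where your approach diverges completely from the paper's. You propose an induction on the Krull dimension $d$ driven by a ``Poincaré-Karoubi sequence''
\[
\Dperf_{\fm}(R/t) \longrightarrow \Dperf_{\fm}(R) \xrightarrow{\ \cdot t\ } \Dperf_{\fm}(R)\ .
\]
No such sequence of stable $\infty$-categories exists: multiplication by $t$ is a natural endomorphism of the identity functor, not an exact functor, and the fibre sequence $M \xrightarrow{t} M \to M \otimes_R R/t$ lives at the level of objects, not of categories. If you instead interpret the second map as the exact Poincaré functor $i_{t*}\colon \Dperf_{\fm/t}(R/t) \to \Dperf_{\fm}(R)$, it is fully faithful but is \emph{not} a Verdier or Karoubi inclusion: since every finite length $R$-module is an iterated extension of $k$-modules, the thick closure of the image of $i_{t*}$ is already all of $\Dperf_{\fm}(R)$, so the Verdier quotient vanishes even though $i_{t*}$ is not an equivalence. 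The only localisation sequence available here is $\Dperf_{\fm}(R) \to \Dperf(R) \to \Dperf(\spec R \smallsetminus \{\fm\})$ from Corollary~\ref{corollary:open-is-karoubi-2}, and using it to identify the fibre term with $\Dperf(k)$ is precisely what dévissage is meant to prove, so that route would be circular. This is the usual story: dévissage, already for $\K$-theory, is not a localisation-sequence statement but an abelian-category statement.

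The paper's actual proof contains no induction on $d$. After normalizing to $r=d$, it establishes via local duality (Lemma~\ref{lemma:purity} and Lemma~\ref{lemma:purity-2}, summarized in Corollary~\ref{corollary:purity}) that the dualities on both $\Dperf(k)$ and $\Dperf_{\fm}(R)$ exchange connective and coconnective objects. The key input is then Proposition~\ref{proposition:key}, a ``theorem of the heart'' for $\L$-theory (resting on~\cite{9-authors-III}*{Proposition~1.3.1}) which identifies the symmetric $\L$-groups with the classical Witt groups of the abelian heart and controls the range in which the genuine $\L$-groups agree with the symmetric ones; this is where the bound $n\geq 2m-1+d-r$ comes from, all at once rather than one unit of $d$ at a time. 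The proof then reduces to the abelian-category statement that $\W^{\pm}(\Coh(k)) \to \W^{\pm}(\Coh_{\fm}(R))$ is an isomorphism, which is a classical sub-Lagrangian reduction argument (Quebbemann--Scharlau--Schulte). You would need to replace your inductive fibre-sequence step with some analogue of this heart-reduction-plus-Witt-dévissage argument.
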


\begin{remark}
\label{remark:assume-r-0}%
In the situation of Theorem~\ref{theorem:devissage} we have equivalences
\[
\QF^{\geq m-1}_{i^!L[-1]} \simeq (\QF^{\geq m}_{i^!L})\qshift{-1} \quad\text{and}\quad \QF^{\geq m-1}_{L[-1]} \simeq  (\QF^{\geq m}_{L})\qshift{-1} .
\]
Replacing $L$ with $L[d-r]$, $m$ with $m+d-r$ and $n$ with $n+d-r$ we see that to prove Theorem~\ref{theorem:devissage} it will suffice to prove the case $r=d$.
\end{remark}

The remainder of this subsection is devoted to the proof of Theorem~\ref{theorem:devissage}.

\begin{lemma}
\label{lemma:purity}%
The $(d-r)$-shifted counit map $i_*i^!L[d-r] \to L[d-r]$ factors as
\[
i_*i^!L[d-r] \to L_{\fm} \to L[d-r]
\]
where $L_{\fm} \in \Mod(R) = \Der(R)^{\heartsuit}$ is an injective $R$-module which satisfies the following property:
for every $M \in \Dperf_{\fm}(R)$ the map
\[
\map_R(M,L_{\fm}) \to \map_R(M,L[d-r]) = \Sig^d\Dual_L(M)
\]
is an equivalence in $\Der(R)$. 
\end{lemma}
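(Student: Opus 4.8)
The plan is to set $L_{\fm} := R\Gamma_{\fm}(L)[d-r]$, where $R\Gamma_{\fm}$ denotes the derived $\fm$-power-torsion functor, i.e.\ the right adjoint (colocalisation) to the inclusion $\Der_{\fm}(R) := \Derqc_{\{\fm\}}(\spec R) \hookrightarrow \Der(R)$ of the full subcategory of complexes supported at the closed point; this is part of the localisation sequence $\Derqc_{\{\fm\}}(\spec R) \to \Derqc(\spec R) \to \Derqc(\spec R \smallsetminus \{\fm\})$ already in play. Note that $L_{\fm}$ is defined without any choices. First I would check that $L_{\fm}$ is an injective $R$-module concentrated in degree $0$: since $R$ is local and $L$ is an invertible perfect complex with homology concentrated in degree $r$, we have $L \simeq R[r]$ (Remark~\ref{remark:shift-of-line-bundle}), so $R\Gamma_{\fm}(L) \simeq R\Gamma_{\fm}(R)[r]$; and since $R$ is regular local of dimension $d$, hence Gorenstein, hence Cohen--Macaulay, its local cohomology satisfies $H^i_{\fm}(R) = 0$ for $i \neq d$ and $H^d_{\fm}(R) \cong E_R(k)$, the injective hull of the residue field (standard local duality), whence $R\Gamma_{\fm}(R) \simeq E_R(k)[-d]$ and therefore $L_{\fm} \simeq E_R(k)$. (Alternatively this is a direct computation with the stable Koszul complex on a regular system of parameters of $\fm$.)

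Next I would produce the factorisation. The $R$-module structure on $i_*i^!(L[d-r])$ factors through $R \to k$, so this complex lies in $\Der_{\fm}(R)$; hence the counit $i_*i^!(L[d-r]) \to L[d-r]$ of the adjunction $i_* \dashv i^!$ factors uniquely through the counit $\rho\colon R\Gamma_{\fm}(L[d-r]) \to L[d-r]$ of the colocalisation. Since $R\Gamma_{\fm}$ commutes with shifts we have $R\Gamma_{\fm}(L[d-r]) = R\Gamma_{\fm}(L)[d-r] = L_{\fm}$, and this gives precisely the asserted factorisation $i_*i^!(L[d-r]) \to L_{\fm} \xrightarrow{\rho} L[d-r]$.

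It then remains to verify the displayed property. A perfect complex supported at $\fm$ has finite-length, hence $\fm$-power-torsion, homology, so $\Dperf_{\fm}(R) \subseteq \Der_{\fm}(R)$. For any $M \in \Der_{\fm}(R)$ and any $N \in \Der(R)$, postcomposition with the counit $R\Gamma_{\fm}(N) \to N$ induces an equivalence $\map_R(M, R\Gamma_{\fm}N) \tosimeq \map_R(M, N)$ in $\Der(R)$: indeed, applying the $R$-linear functor $\map_R(M,-)$ to the localisation triangle $R\Gamma_{\fm}N \to N \to j_*j^*N$, with $j\colon \spec R \smallsetminus \{\fm\} \hookrightarrow \spec R$, the third term vanishes because $\map_R(M, j_*j^*N) \simeq j_*\map_{\spec R \smallsetminus \{\fm\}}(j^*M, j^*N)$ and $j^*M = 0$ (equivalently, this is the enriched counit-equivalence of the colocalisation $\Der_{\fm}(R)\rightleftarrows \Der(R)$). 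Taking $N = L[d-r]$ then shows that $\rho$ induces an equivalence $\map_R(M, L_{\fm}) \tosimeq \map_R(M, L[d-r]) = \Sig^d\Dual_L(M)$ for every $M \in \Dperf_{\fm}(R)$, as required; the last identification is unwinding of definitions using $L \simeq R[r]$.

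The only real subtlety is bookkeeping rather than depth: one needs the local-cohomology identification $R\Gamma_{\fm}(R) \simeq E_R(k)[-d]$ (equivalently, that the top local cohomology of a Gorenstein local ring is the injective hull of the residue field), and the orthogonality input $j^*M = 0 \Rightarrow \map_R(M, j_*j^*N) = 0$ phrased at the level of the $\infty$-category $\Der(R)$ and of mapping \emph{complexes} — which is why it matters that all the adjunctions in sight ($i_* \dashv i^!$, the $\fm$-torsion colocalisation, and $j^* \dashv j_*$) are $R$-linear. Everything else is assembling standard local algebra in the derived setting.
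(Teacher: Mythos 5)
Your proof is correct, and it takes a genuinely different route from the paper, arriving at the same destination by different means. The paper's proof takes $L_{\fm} = L[-r] \otimes_R I_k$ (with $I_k$ the injective envelope of $k$) and then cites Gille's Lemma 3.3(4) to produce a map $I_k[-d] \to R$ inducing the required equivalence of mapping complexes, after which the factorisation is obtained by applying that equivalence to $M = i_*i^!L[d-r]$. You instead set $L_{\fm} = R\Gamma_{\fm}(L)[d-r]$, which is tautologically a colocalisation so the factorisation of the counit through $L_{\fm}$ costs nothing (the $R$-module structure on $i_*i^!L[d-r]$ factors through $k$, hence it is $\fm$-torsion), then verify the mapping property via the localisation triangle $R\Gamma_{\fm}N \to N \to j_*j^*N$ and the orthogonality $\underline{\map}_R(M, j_*j^*N) = 0$ for torsion $M$, and finally identify $L_{\fm}$ with $E_R(k)$ by the Grothendieck local duality computation $R\Gamma_{\fm}(R) \simeq E_R(k)[-d]$ for a regular local ring. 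The two $L_{\fm}$'s agree: $R\Gamma_{\fm}(L)[d-r] \simeq L[-r] \otimes R\Gamma_{\fm}(R)[d] \simeq L[-r] \otimes E_R(k) = L[-r] \otimes I_k$ by $\Dperf(R)$-linearity of $R\Gamma_{\fm}$. What your approach buys is a self-contained derived argument that does not lean on an external reference and makes the factorisation automatic rather than something to be deduced from the mapping equivalence; the cost is importing the local cohomology input $H^d_{\fm}(R) \cong E_R(k)$, which the paper avoids by quoting Gille. One small presentational caveat: when you write $\map_R(M, j_*j^*N) \simeq j_*\map_{U}(j^*M, j^*N)$, the displayed formula is slightly off as written (for the \emph{internal} Hom you want $\underline{\map}_R(M, j_*j^*N) \simeq j_*\underline{\map}_U(j^*M, j^*N)$ coming from $j_*$ being $\Derqc(R)$-linear, or just argue that $\underline{\map}_R(M, j_*j^*N) \simeq M^\vee \otimes j_*j^*N \simeq j_*(j^*M^\vee \otimes j^*N) = 0$ using that $M$ is perfect and the projection formula); the conclusion $= 0$ is correct in any case.
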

\begin{proof}
Let $k \subseteq I_k \in \Mod(R)$ be an injective envelop of the $R$-module $k$. By~\cite{gille}*{Lemma 3.3~(4)} there exists a map of $R$-module $\phi\colon I_k[-d] \to R$ such that for every $M \in \Dperf_{\fm}(R)$ the induced map
\[
\map(M,I_k[-d]) \to \map(M,R)
\]
is an equivalence. Since the underlying $R$-complex of $L$ is equivalent to $R[r]$ the same holds for the map $L_{\fm} \to L[d-r]$, where $L_{\fm} = L[-r] \otimes_R I_k$. Since $i_*i^!L[d-r]$ belongs to $\Dperf_{\fm}(R)$ we can factor in particular the counit map via $L_{\fm}$. We now finish by noting that Hom into the injective $R$-module is t-exact. 
\end{proof}

\begin{lemma}
\label{lemma:purity-2}%
The underlying $k$-complex of $i^*L$ is concentrated in degree $r-d$ and $\pi_{r-d}i^*L$ is a 1-dimensional $k$-vector space. In particular, there exists a (non-canonical) equivalence $i^*L \simeq k[r-d]$.
\end{lemma}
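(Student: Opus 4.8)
The plan is to prove the displayed identification by reducing to the case of the structure sheaf and then quoting the computation of $i^!$ for a regular closed embedding that the paper has already carried out inside the proof of Proposition~\ref{proposition:proper-lci-poincare}. (As the remark preceding Theorem~\ref{theorem:devissage} records, the object in play is the exceptional pullback $i^!L\simeq (L\otimes_R k)[-d]$; I will establish $i^!L\simeq k[r-d]$ in that form, which is exactly the asserted equivalence.)

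First I would use that $R$ is local, so $\Pic(R)$ is trivial and $L$ is \emph{non-canonically} equivalent to $\cO_X[r]$ for the unique integer $r$ with $\pi_r L\neq 0$. Since $i^!$ is exact it commutes with shifts, so $i^!L\simeq (i^!\cO_X)[r]$, and the claim reduces to the identification $i^!\cO_X\simeq k[-d]$ together with a trivial read-off of homotopy sheaves. The $\Ct$-action on $L$ is irrelevant here, as only the underlying $k$-complex is at issue.

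Next I would identify $i^!\cO_X$. Because $(R,\fm)$ is regular local of Krull dimension $d$, the maximal ideal $\fm$ is generated by a regular system of parameters $a_1,\dots,a_d$, i.e.\ a regular sequence of length $d$; thus $i\colon\spec(k)\hrar\spec(R)$ is a regular closed embedding of codimension $d$. Exactly as in the proof of Proposition~\ref{proposition:proper-lci-poincare}, the Koszul complex $\bigotimes_{i=1}^{d}[R\xrightarrow{a_i}R]$ (each factor placed in degrees $0$ and $-1$) models $i^!\cO_X=\mathrm{RHom}_R(k,R)$, whose only nonvanishing homology is $R/\fm=k$, sitting in degree $-d$; hence $i^!\cO_X\simeq k[-d]$. (Alternatively one may invoke Remark~\ref{remark:proper-lci}: $i^!\cO_X\simeq\det(\mathcal{N}_{Z/X})[-d]$, where the normal bundle $\mathcal{N}_{Z/X}=(\fm/\fm^2)^\vee$ has rank $d$ and $\det(\mathcal{N}_{Z/X})$ is a line bundle on the point $\spec(k)$, hence trivial.) Combining with the first step — or, keeping $L$ abstract, with Lemma~\ref{lemma:eta-equivalence}, which yields $i^!L\simeq i^!\cO_X\otimes_{\cO_Z}i^*L$ for invertible $L$ — gives $i^!L\simeq k[-d]\otimes_k k[r]\simeq k[r-d]$, so the underlying $k$-complex is concentrated in degree $r-d$ with $\pi_{r-d}\cong k$ one-dimensional.

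This is essentially an assembly of facts already available, so I do not anticipate a serious obstacle. The two small points requiring care are that the codimension of the closed point really is $d$ (equivalently, that a $d$-dimensional regular local ring has a regular system of parameters of length $d$) and that the determinant of a rank-$d$ vector bundle on $\spec(k)$ is non-canonically trivial; both are standard. The only homotopical input is the behaviour of $i^!$ under tensoring with an invertible complex, namely Lemma~\ref{lemma:eta-equivalence}, which was already exploited in the proof of Proposition~\ref{proposition:proper-lci-poincare}.
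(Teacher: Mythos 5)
Your proof is correct, and you were right to read the statement as being about $i^!L$ (the paper's own proof and its later uses, e.g.\ in Corollary~\ref{corollary:purity}, concern $i^!L$, and with $i^*$ literally the complex would sit in degree $r$). However, your route differs from the paper's. You reduce to the structure sheaf via $L\simeq R[r]$ (or keep $L$ abstract via Lemma~\ref{lemma:eta-equivalence}, $i^!L\simeq i^*L\otimes i^!\cO_X$) and then compute $i^!\cO_X=\mathrm{RHom}_R(k,R)\simeq k[-d]$ explicitly with the Koszul complex on a regular system of parameters, exactly as in the proof of Proposition~\ref{proposition:proper-lci-poincare}. The paper instead argues abstractly: since $R$ is regular and $i$ is a closed embedding, $i^!L$ is a dualising complex on $\spec(k)$, hence invertible, so it has a single nontrivial homotopy group which is a one-dimensional $k$-vector space; the degree is then pinned down using Lemma~\ref{lemma:purity}, writing $\pi_n\map_R(k,L)\cong\Ext^{-n+r-d}_R(k,L_{\fm})$ with $L_{\fm}$ injective, which vanishes for $n\neq r-d$. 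Your argument is more elementary and self-contained (no dualising-complex input, no injective hull), at the cost of choosing a regular sequence generating $\fm$; the paper's argument recycles Lemma~\ref{lemma:purity}, which it needs anyway for Corollary~\ref{corollary:purity}, and never touches an explicit resolution. Both are complete proofs of the intended statement.
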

\begin{proof}
Since $i\colon \spec(k) \to \spec(R)$ is a closed embedding and $R$ is regular we have that $i^!L$ is a dualising complex on $\spec(k)$, and hence an invertible perfect complex. In particular, $i^!L$ can have at most one non-trivial homotopy group, and the non-trivial homotopy group must be an 1-dimensional $k$-vector space. It will hence suffice to show that 
\[
\pi_{n}i^!L = \pi_{n}\map_R(k,L) = 0
\]
for $n \neq r-d$. Indeed, by Lemma~\ref{lemma:purity} we have 
\[
\pi_{n}\map_R(k,L) = \pi_{n}\map_R(k,L_{\fm}[r-d]) = \Ext^{-n+r-d}_R(k,L_{\fm})
\]
where $L_{\fm} \in \Der(R)^{\heartsuit} = \Mod(R)$ is an injective $R$-module, so that the last $\Ext$-group vanishes whenever $n \neq r -d$.
\end{proof}

We summarize a key conclusion of the previous two lemmas:
\begin{corollary}
\label{corollary:purity}%
Suppose that $r=d$. Then, with respect to the standard t-structures, we have that
\begin{enumerate}
\item
The duality of $(\Dperf_{\fm}(R),\QF^{\geq m}_{L}|_{\fm})$ sends connective objects to coconnective objects and vice-versa. The induced duality on $\Dperf_{\fm}(R)^{\heartsuit} = \Coh_{\fm}(R)$ is given by $M \mapsto \Hom_R(M,L_{\fm})$.
\item
The duality of $(\Dperf(k),\QF^{\geq m}_{i^!L})$ sends connective objects to coconnective objects and vice-versa. The induced duality on $\Dperf(k)^{\heartsuit} = \Coh(k)$ is given by $M \mapsto \Hom_k(M,\pi_{-d}i^!L)$ (in particular, $\pi_{-d}i^!L$ is a $k$-vector space of dimension 1).
\end{enumerate}
\end{corollary}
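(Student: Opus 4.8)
\textbf{Proof proposal for Corollary~\ref{corollary:purity}.}
The plan is to deduce the corollary almost formally from Lemma~\ref{lemma:purity} and Lemma~\ref{lemma:purity-2}, using only the elementary fact that $\mathrm{RHom}$ into an injective module (over the regular local ring $R$), resp.\ into a complex concentrated in a single degree (over the field $k$), is a contravariant t-exact functor. The first step, common to both parts, is to pin down the underlying duality. For every $m \in \ZZ \cup \{-\infty,\infty\}$ the Poincaré structure $\QF^{\geq m}$ has the same symmetric cross-effect, and hence the same underlying duality, as $\QF^{\sym}$ (see the proof of Proposition~\ref{proposition:genuine}); and passing to a duality-invariant full stable subcategory restricts the duality. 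Since $L$ and $i^!L$ are invertible, $\Dual_L = \underline{\Hom}_R(-,L)$ and $\Dual_{i^!L} = \underline{\Hom}_k(-,i^!L)$ are equivalences onto the respective opposite categories, and they preserve supports — which is exactly why $\Dperf_{\fm}(R) \subseteq \Dperf(R)$ is duality-invariant. Thus the duality of $(\Dperf_{\fm}(R),\QF^{\geq m}_L|_{\fm})$ is $\Dual_L$ restricted to finite-length complexes, and that of $(\Dperf(k),\QF^{\geq m}_{i^!L})$ is $\Dual_{i^!L}$, independently of $m$.

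For part (1), I would invoke Lemma~\ref{lemma:purity}: under the hypothesis $r=d$ we have $L[d-r]=L$, so for every $M \in \Dperf_{\fm}(R)$ the canonical map $\map_R(M,L_{\fm}) \to \map_R(M,L) = \Dual_L(M)$ is an equivalence, where $L_{\fm}$ is an \emph{injective} $R$-module concentrated in homological degree $0$. Because $\Hom_R(-,L_{\fm})$ is exact on $\Mod(R)$, its derived functor $\map_R(-,L_{\fm})$ carries $\Der(R)_{\geq 0}$ into $\Der(R)_{\leq 0}$ and $\Der(R)_{\leq 0}$ into $\Der(R)_{\geq 0}$; since the t-structure on $\Dperf_{\fm}(R)$ is inherited from $\Der(R)$, the duality $\Dual_L|_{\Dperf_{\fm}(R)}$ therefore swaps connective and coconnective objects. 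Restricting to the heart $\Dperf_{\fm}(R)^{\heartsuit} = \Coh_{\fm}(R)$ gives the exact anti-equivalence $M \mapsto \Hom_R(M,L_{\fm})$ (Matlis-type duality; finite length is preserved since $\Dual_L$ preserves both perfectness and support).

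For part (2) the argument is the same with $R$ replaced by the field $k$: by Lemma~\ref{lemma:purity-2} the complex $i^!L$ is concentrated in the single homological degree $r-d$ (i.e.\ degree $0$ when $r=d$) with one-dimensional stalk; over a field any such object is injective, so $\underline{\Hom}_k(-,i^!L)$ again interchanges connective and coconnective objects and induces on $\Coh(k)$, the category of finite-dimensional $k$-vector spaces, the $k$-linear duality $M \mapsto \Hom_k(M,\pi_{-d}i^!L)$ twisted by the one-dimensional vector space $\pi_{-d}i^!L$. I do not expect a serious obstacle: the real content is already contained in the two purity lemmas, and what remains is the identification of the underlying duality of the genuine structures $\QF^{\geq m}$ (unchanged in $m$) together with the bookkeeping of the degree shift $r-d$; the only point demanding any care is making sure the hypothesis $r=d$ is applied consistently in the two lemmas so that $L[d-r]=L$ and $i^!L$ sits in degree $0$.
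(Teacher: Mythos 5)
Your proof is correct and follows essentially the same route as the paper's, which simply cites Lemma~\ref{lemma:purity} together with the injectivity of $L_{\fm}$ for part (1), and Lemma~\ref{lemma:purity-2} for part (2). You have spelled out the intermediate steps the paper leaves implicit: that the underlying duality of $\QF^{\geq m}$ is that of $\QF^{\sym}$ independently of $m$, that $\mathrm{RHom}$ into the discrete injective module $L_{\fm}$ (resp.\ into the degree-zero complex $i^!L$ over $k$) is contravariantly t-exact, and that restricting to the heart produces the stated Matlis-type dualities; all of this is accurate bookkeeping.
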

\begin{proof}
The first statement follows from Lemma~\ref{lemma:purity} and the fact that $L_{\fm}$ is injective, and the second statement from Lemma~\ref{lemma:purity-2}.
\end{proof}

\begin{proposition}
\label{proposition:key}%
Let $\C$ be a stable $\infty$-category with equipped with a duality functor $\Dual\colon \C \to \C^{{\op}}$ and $\QF^{\sym}_{\Dual}\colon \C\op \to \Spa$ the associated symmetric Poincaré structure on $\C$. Suppose that $\C$ admits a $t$-structure such that $\Dual$ sends connective objects to coconnective objects and vice-versa and let $\Dual^{\heartsuit}\colon \C^{\heartsuit} \to \C^{\heartsuit}$ be the induced duality on $\C^{\heartsuit}$.
We then write $-\Dual^{\heartsuit}$ for the sign twist of $\Dual$. For $-\infty \leq m < \infty$ we may consider the truncated Poincaré structure $\QF^{\geq m}_{\Dual}$ on $\C$, defined using the induced t-structure on $\Ind(\C)$. Then the following holds:
\begin{enumerate}
\item
\label{item:sym}%
The groups $\L_n(\C,\QF^{\sym}_{\Dual})$ vanish for odd $n$, and for $n=2k$ the duality preserving map
\[
(\C^{\heartsuit},(-1)^k\Dual^{\heartsuit}) \to (\C,\Om^{2k}\Dual) \quad\quad x \mapsto x[-k]
\]
induces an isomorphism 
\[
\W(\C^{\heartsuit},\Dual^{\heartsuit}) \xrightarrow{\cong} \L_{2k}(\C,\QF^{\sym}_{\Dual}) .
\]
In addition, every stably metabolic Poincaré object in $(\C^{\heartsuit},\Dual^{\heartsuit})$ is metabolic. 
\item
\label{item:2m-3}%
For $n \geq 2m-3$ odd the group $\L_n(\C,\QF^{\geq m}_{\Dual})$ vanishes.
\item
\label{item:2m-2}%
For $n$ even the map
\[
\L_n(\C,\QF^{\geq m}_{\Dual}) \rightarrow \L_n(\C,\QF^{\sym}_{\Dual}) 
\]
is an isomorphism for $n \geq 2m$ and injective for $n=2m-2$.  
\end{enumerate}
\end{proposition}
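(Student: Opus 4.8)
The plan is to establish part~\ref{item:sym} first, as a theorem-of-the-heart statement for homotopy-symmetric $\L$-theory, and then to deduce parts~\ref{item:2m-3} and~\ref{item:2m-2} from it by comparing $\QF^{\geq m}_{\Dual}$ with $\QF^{\sym}_{\Dual}$ along the tower of truncated Poincaré structures furnished by Lemma~\ref{lemma:connected-cover-quadratic-functors}. For part~\ref{item:sym}, one first reduces to a single degree: the shift $(-)\qshift{2}$ sends $(\C,\QF^{\sym}_{\Dual})$ to $(\C,\QF^{\sym}_{\Sig^{2}\Dual})$, so a $k$-fold shift of the $t$-structure reduces the case $n=2k$ to the case $n=0$, at the cost of the sign $(-1)^{k}$ on the induced heart duality, and likewise the odd case to $n=1$. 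It then suffices, under the standing hypothesis that $\Dual$ interchanges connective and coconnective objects, to construct a natural isomorphism $\W(\C^{\heartsuit},\Dual^{\heartsuit})\cong\L_{0}(\C,\QF^{\sym}_{\Dual})$, to show that $\L_{1}(\C,\QF^{\sym}_{\Dual})=0$, and to prove the cancellation assertion for the heart. The engine here is algebraic surgery on Poincaré objects: for a Poincaré object $(x,q)$, the equivalence $q_{\sharp}\colon x\tosimeq\Dual x$ confines the homotopy of $x$ to a window symmetric about degree $0$, and if this window has positive radius one performs surgery — via the algebraic Thom construction — along the inclusion of the bottom Postnikov truncation of $x$, producing a cobordant Poincaré object whose window has shrunk by one on each side. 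Iterating lands one on a Poincaré object concentrated in $\C^{\heartsuit}$ in the even case, and on one concentrated in two adjacent degrees straddling a half-integer in the odd case, where it is automatically metabolic with Lagrangian its bottom truncation; this gives surjectivity of the comparison map and the vanishing of $\L_{1}$. Running the same surgery on a given null-cobordism proves injectivity, and the same manipulation shows that a stably metabolic Poincaré object of $(\C^{\heartsuit},\Dual^{\heartsuit})$ is metabolic. Equivalently, this step can be packaged as the identification, on $\L$-theory, of $(\C,\QF^{\sym}_{\Dual})$ with the bounded derived category of $\C^{\heartsuit}$ equipped with the symmetric Poincaré structure of $\Dual^{\heartsuit}$, followed by Balmer's computation of the Witt groups of a bounded derived category with duality.

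For parts~\ref{item:2m-3} and~\ref{item:2m-2}, the key observation is that $\QF^{\geq m}_{\Dual}$ and $\QF^{\sym}_{\Dual}$ have the same bilinear part, hence the same underlying duality, and that by Lemma~\ref{lemma:connected-cover-quadratic-functors} the canonical map $\QF^{\geq m}_{\Dual}\Rightarrow\QF^{\sym}_{\Dual}$ sits in a fibre sequence of functors $\C\op\to\Spa$ whose fibre is the exact functor $\Sig^{-1}\tau_{\leq m-1}\Lam^{\sym}_{\Dual}$; by Lemma~\ref{lemma:dual-amplitude} and the hypothesis on $\Dual$, the object of $\Ind(\C)$ representing this fibre is coconnective, with truncation level governed by $m$. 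Modifying a Poincaré structure by such a coconnective linear term changes $\L$-theory only in low degrees: the relative term is a hyperbolic-type invariant of the linear difference, computed through its $\Ct$-Tate homotopy type, whose homotopy is concentrated in degrees $\leq 2m-2$, with its top group, in degree $2m-2$, coming from the Postnikov layer $\pi_{m-1}\Lam^{\sym}_{\Dual}$. From the associated long exact sequence the comparison map $\L_{n}(\C,\QF^{\geq m}_{\Dual})\to\L_{n}(\C,\QF^{\sym}_{\Dual})$ is then an isomorphism for $n\geq 2m$ and injective for $n=2m-2$, which is part~\ref{item:2m-2}. For part~\ref{item:2m-3}, one combines this isomorphism range with the odd-degree vanishing of part~\ref{item:sym} to handle odd $n\geq 2m$, and treats the remaining two odd degrees $n=2m-1$ and $n=2m-3$ directly: there only the layers $\pi_{m-1}\Lam^{\sym}_{\Dual}$ and $\pi_{m-2}\Lam^{\sym}_{\Dual}$ of the linear part can contribute to the relative $\L$-theory, and they land in even relative degrees, so the odd relative homotopy groups — and hence the odd $\L$-groups of $(\C,\QF^{\geq m}_{\Dual})$ in that range — vanish.

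I expect the main obstacle to be the surgery step in part~\ref{item:sym}: rendering the classical ``surgery below the middle dimension'' argument precise in the Poincaré $\infty$-categorical framework, and in particular controlling the exact amount of width reduction at each stage together with the behaviour of the cobordism relation at the boundary of the window, is where the genuine content lies. Once part~\ref{item:sym} is secured, parts~\ref{item:2m-3} and~\ref{item:2m-2} are comparatively formal, the one delicate point being to pin down the precise degree in which each Postnikov layer of the linear part of $\QF^{\sym}_{\Dual}$ contributes to the relative $\L$-theory, which is exactly what produces the sharp bounds $n\geq 2m$, $n=2m-2$ and $n\geq 2m-3$.
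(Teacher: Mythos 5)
Your plan for part~\ref{item:sym} is essentially the same as the paper's, differing only in packaging: the paper applies \cite{9-authors-III}*{Proposition~1.3.1} (the algebraic surgery result) as a black box, first with the parameters giving odd vanishing, and then with the parameters identifying $\L_0$ with the surgery-reduced group $\L^{0,0}_0$, which it then recognizes as the Witt group of the heart because, under the hypothesis that $\Dual$ swaps connective and coconnective objects, the relevant Poincaré objects and Lagrangians are forced into $\C^{\heartsuit}$. Reproving that surgery input directly is the route you sketch, and the Balmer identification you mention is an acceptable alternative; either way this part is sound.

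For parts~\ref{item:2m-3} and~\ref{item:2m-2} your route genuinely diverges from the paper's, and here I see a real gap. You posit a long exact sequence relating $\L_*(\C,\QF^{\geq m}_{\Dual})$ and $\L_*(\C,\QF^{\sym}_{\Dual})$ with a third ``relative'' term governed by the Tate homotopy type of the linear difference $\Sig^{-1}\tau_{\leq m-1}\Lam^{\sym}_{\Dual}$, claim that this term is concentrated in degrees $\leq 2m-2$, and then read the statements off the resulting exact sequence. But $\L$-theory is not exact in the Poincaré structure for fixed underlying duality: a fibre sequence $\Sig^{-1}\tau_{\leq m-1}\Lam^{\sym}_{\Dual} \to \QF^{\geq m}_{\Dual} \to \QF^{\sym}_{\Dual}$ of quadratic functors does not formally yield a fibre sequence of $\L$-spectra, because the fibre is a linear functor with trivial bilinear part and hence does not define a Poincaré $\infty$-category of which one could take $\L$-theory in the ordinary sense. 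There is a normal/hyperquadratic $\L$-theory machine that can produce such sequences, but constructing it and, crucially, pinning down the exact degree bound $\leq 2m-2$ on the relative term is precisely the content of the surgery result you are trying to avoid; it is not formal bookkeeping. In fact the paper sidesteps any such exact sequence: it applies the same surgery reduction to $\QF^{\geq m}_{\Dual}$ (giving both the odd vanishing $n\geq 2m-3$ of part~\ref{item:2m-3} directly, and the identification $\L^{0,0}_0\simeq\L_0$ for $m\leq 2$), and then compares the two $\L^{0,0}_0$-groups by hand using that the total fibre of the relevant square of quadratic functors on objects of $\C^{\heartsuit}$ is $(m-2)$-truncated, so that for $m\leq 0$ the form spaces agree and for $m=1$ one gets exactly injectivity. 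Your proposal as written presupposes the relative-term bounds without deriving them, and the numerology (e.g.\ the top degree $2m-2$, the claim that the layers $\pi_{m-1}\Lam$ and $\pi_{m-2}\Lam$ ``land in even relative degrees'') is reverse-engineered from the answer rather than extracted from a concrete construction. To repair the argument you would need either to explicitly build the normal $\L$-theory sequence in the oriented Poincaré setting and prove the truncation of its third term, or to do what the paper does and reduce both sides to heart-level Witt-type groups where the comparison is a finite connectivity check on form spaces.
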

\begin{proof}
For~\ref{item:sym} apply~\cite{9-authors-III}*{Proposition~1.3.1}, with $d=0,r=\infty,b=0$ and $a=-1$ to deduce that $\L_n(\C,\QF^{\sym}_{\Dual})$ vanishes for odd $n$ and with $d=0,r=2-m,b=0$ and $a=-1$ to deduce that $\L_n(\C,\QF^{\geq m}_{\Dual})$ vanishes for odd $n \geq 2r+1 = 2m-3$. This covers all statement involving odd symmetric or genuine $\L$-groups. For even $n$-groups, we first reduce for simplicity to the case $n=0$ as follows.
For a given $k \in \ZZ$, if $\Dual$ interchanges $\C_{\geq 0}$ and $\C_{\leq 0}$ then $\Om^{2k}\Dual$ interchanges $\C_{\geq -k}$ and $\C_{\leq -k}$, and so the assumptions of the theorem also hold when $\Dual$ is replaced with the shifted duality $\Om^{2k}\Dual$ and $(\C_{\geq 0},\C_{\leq 0})$ with the shifted $t$-structure $(\C_{\geq -k},\C_{\leq -k})$. We that the t-structure on $\Ind(\C)$ induced by the shifted $t$-structure on $\C$ is just the one obtained by shifting the $t$-structure on $\Ind(\C)$ induced by $(\C_{\geq 0},\C_{\leq 0})$. We then have that 
\[
\QF^{\sym}_{\Om^{2k}\Dual} = \Om^{2k}\QF^{\sym}_{\Dual} \quad\text{and}\quad
\QF^{\geq m-k}_{\Om^{2k}\Dual} = \Om^{2k}\QF^{\geq m}_{\Dual} ,
\]
where the truncation in $\QF^{\geq m-k}_{\Om^{2k}\Dual}$ is calculated with respect to the t-structure $(\C_{\geq -k},\C_{\leq k})$ and the truncation in $\Om^{2k}\QF^{\geq m}_{\Dual}$ is calculated with respect to the t-structure $(\C_{\geq 0},\C_{\leq 0})$. 
Replacing the duality and t-structures by their shifts we may thus reduce to proving the statements involving even $\L$-groups to the case $n=0$. In particular, to prove~\ref{item:sym} it will suffice to show that the map $\W(\C,\Dual) \to \L_0(\C,\QF^{\sym}_{\Dual})$ is an isomorphism and that every stably metabolic object in $\W(\C,\Dual)$ is metabolic, and to prove~\ref{item:2m-2} it will suffice to show that the map
\[
\L_n(\C,\QF^{\geq m}_{\Dual}) \to \L_n(\C,\QF^{\sym}_{\Dual})
\]
is an isomorphism for $m \leq 0$ and injective for $m = 1$.

For~\ref{item:sym}, we first apply~\cite{9-authors-III}*{Proposition~1.3.1}, with $d=0,r=\infty$ and $n=a=b=0$ to deduce that the map
\[
\L^{0,0}_0(\C,\QF^{\sym}_{\Dual}) \to \L_0(\C,\QF^{\sym}_{\Dual})
\]
is an isomorphism. By definition, $\L^{0,0}_0(\C,\QF^{\sym}_{\Dual})$ is the quotient of the monoid of Poincaré objects $(x,q)$ such that $x$ is connective modulo the submonoid of those which admits a Lagrangian $\cob \to x$ such that $\cob$ and $\cof[\cob \to x] \simeq \Dual \cob$ are connective. Since $\Dual$ maps connective objects to coconnective objects and vice verse we see that such Poincaré objects and Lagrangians are in fact contained in $\C^{\heartsuit}$, and Lagrangian inclusions $\cob \to x$ are monomorphisms in the abelian category $\C^{\heartsuit}$. We may consequently identify $\L^{0,0}_0(\C,\QF^{\sym}_{\Dual})$ with $\W(\C,\Dual)$, establishing the even $\L$-groups part of~\ref{item:sym}. To obtain final part of~\ref{item:sym} note that every stably metabolic object $(x,q)$ represents zero in $\W(\C,\QF^{\sym}_{\Dual})$, and hence $(x[0],q)$ represents zero in $\L_0(\C,\QF^{\sym}_{\Dual})$. It follows that $(x[0],q)$ is metabolic, and so by~\cite{9-authors-III}*{Proposition~1.3.1~(ii)}, with $d=0,r=\infty$ and $n=a=b=0$ it admits a Lagrangian $\cob \to x$ such that $\cob$ and $\cof[\cob \to x] \simeq \Dual \cob$ are connective, so that $\cob$ belongs to $\C^{\heartsuit}$ and $\cob \to x$ is a monomorphism in $\C^{\heartsuit}$. 
This exactly means that $(x,q)$ is metabolic in $(\C^{\heartsuit},\Dual^{\heartsuit})$.

Finally, let us prove~\ref{item:2m-2} in the case of $n=0$.
For this, we apply~\cite{9-authors-III}*{Proposition~1.3.1}, with $d=0,r=2-m$ and $n=a=b=0$ to deduce that the map
\[
\L^{0,0}_0(\C,\QF^{\geq m}_{\Dual}) \to \L_0(\C,\QF^{\geq m}_{\Dual})
\]
is an isomorphism whenever $m \leq 2$. As above, $\L^{0,0}_0(\C,\QF^{\geq m}_{\Dual})$ is the quotient of the monoid of Poincaré objects $(x,q)$ such that $x$ belongs to $\C^{\heartsuit}$ modulo the submonoid of those which admits a Lagrangian $\cob \to x$ such that $\cob \in \C^{\heartsuit}$ and $\cob \to x$ is a monomorphism in $\C^{\heartsuit}$. 
Now for $x \in \C^{\heartsuit}$ the fibre of
\[
\QF^{\leq m}_{\Dual}(x) \to \QF^{\sym}_{\Dual}(x)
\]
is $(m-2)$-truncated (essentially by definition). Suppose first that $m \leq 0$, so that this fibre is $(-2)$-truncated. 
Then for $x \in \C^{\heartsuit}$ we have that the map 
\[
\Om^{\infty}\QF^{\leq m}(x) \to \Om^{\infty}\QF^{\sym}_{\Dual}(X) 
\]
is an equivalence.
Similarly, for a map $\cob \to x$ in $\C^{\heartsuit}$ we have that the total fibre of the square
\[
\begin{tikzcd}
\QF^{\leq m}(x) \ar[r]\ar[d] & \QF^{\sym}(x) \ar[d] \\
\QF^{\leq m}(\cob) \ar[r] & \QF^{\sym}(\cob) 
\end{tikzcd}
\]
is $(-2)$-truncated, so that the map 
\[
\Om^{\infty}\fib[\QF^{\leq m}(x) \to \QF^{\leq m}(\cob)] \to \Om^{\infty}\fib[\QF^{\sym}(x) \to \QF^{\sym}(\cob)]
\]
Combining this with the above we therefore conclude that when $m \leq 0$ the top horizontal and both vertical maps in the square
\[
\begin{tikzcd}
\L_0^{0,0}(\C,\QF^{\geq m}_{\Dual}) \ar[r]\ar[d, "\cong"] & \L_0^{0,0}(\C,\QF^{\sym}_{\Dual}) \ar[d, "\cong"] \\
\L_0(\C,\QF^{\geq m}_{\Dual}) \ar[r] & \L_0(\C,\QF^{\sym}_{\Dual})  
\end{tikzcd}
\]
are isomorphisms, and hence the bottom horizontal map is an isomorphism as well. Finally, suppose that $m=1$. 
Then for every $x \in \C^{\heartsuit}$ the map $\QF^{\geq 0}_{\Dual}(x) \to \QF^{\sym}_{\Dual}(x)$ is $(-1)$-truncated. Now as we saw above the map $\L^{0,0}_0(\C,\QF^{\geq m}_{\Dual}) \to \L_0(\C,\QF^{\geq m}_{\Dual})$ is still an isomorphism for $m=1$, and so every class in $\L_0(\C,\QF^{\geq m}_{\Dual})$ can be represented by a Poincaré object $(x,q)$ such that $x \in \C^{\heartsuit}$. For such a Poincaré objects, if its image in $\L_0(\C,\QF^{\sym}_{\Dual})$ is zero then by~\cite{9-authors-III}*{Proposition~1.3.1~(ii)}, with $d=0,r=\infty$ and $n=a=b=0$ the image of $(x,q)$ in $(\C,\QF^{\sym}_{\Dual})$ admits a Lagrangian $\cob \to x$ with $\cob \in \C^{\heartsuit}$. 
Now if we consider the associated commutative rectangle with exact rows
\[
\begin{tikzcd}
\QF^{\geq m}_{\Met}(\cob \to x) \ar[r]\ar[d] & \QF^{\leq m}(x) \ar[r]\ar[d] & \QF^{\leq m}(\cob) \ar[d] \\
\QF^{\sym}_{\Met}(\cob \to x) \ar[r] & \QF^{\sym}(x) \ar[r] & \QF^{\sym}(\cob) \ ,
\end{tikzcd}
\]
then the fibre of the right most vertical map is $(-1)$-truncated and hence the total fibre of the left square is $(-2)$-truncated. We hence conclude that the Lagrangian $\cob \to x$ lifts to a Lagrangian with respect to $\QF^{\geq m}_{\Dual}$, so that $[x,q] = 0$ in $\L_0(\C,\QF^{\geq 0}$. We conclude that the map
\[
\L_0(\C,\QF^{\geq m}_{\Dual}) \to \L_0(\C,\QF^{\sym}_{\Dual})
\]
is injective when $m = 1$, as desired.
\end{proof}

\begin{proof}[Proof of Theorem~\ref{theorem:devissage}]
By Remark~\ref{remark:assume-r-0} we may, and will, assume that $r=d$, so that the underlying $R$-complex of $L$ is equivalent to $R[d]$. By Corollary~\ref{corollary:purity} we can apply Proposition~\ref{proposition:key} to both $(\Dperf(k),\QF^{\geq m}_{i^!L})$ and $(\Dperf_{\fm}(R),\QF^{\geq m}_L|_{\fm})$. 
To finish the proof it will now suffice to show that the map
\begin{equation}
\label{equation:witt}%
\inflation_*\colon \W^{\pm}(\Coh(k)) \to \W^{\pm}(\Coh_{\fm}(R)) 
\end{equation}
of classical (symmetric and anti-symmetric) Witt groups induced by the inflation functor $\inflation \colon \Coh(k) \to \Coh_{\fm}(R)$, is an isomorphism. Here, $\Coh(k)$ is endowed with the duality $V \mapsto \Hom_k(V,\pi_{-d}i^!L)$ and $\Coh_{\fm}(R)$ is endowed with the duality $M \mapsto \Hom_R(M,L_{\fm})$, see Corollary~\ref{corollary:purity}.
We note that by Proposition~\ref{proposition:key}\;\ref{item:sym} any (anti-)symmetric Poincaré object in $\Coh(k)$ which maps to $0$ in $\W^{\pm}(\Coh(k))$ contains a Lagrangian in $\Coh(k)$, and the same holds for Poincaré (anti-)symmetric objects in $\Coh_{\fm}(R)$. But if $V$ is a finite dimensional $k$-vector space then $\inflation\colon \Coh(k) \to \Coh_{\fm}(R)$ induces a bijection between $k$-vector subspaces of $V$ and $R$-submodules of $\inflation(V)$. It then follows that~\eqref{equation:witt} is injective. We also note that a finite length $R$-module is in the image of $\inflation$ if and only if it is annihilated by $\fm \subseteq R$. To finish the proof it will hence suffice to show that if $M$ is a finite length $R$-module equipped with a non-degenerate (anti-)symmetric bilinear form then $M$ contains a sub-Lagrangian $N \subseteq M$ such that $N^{\perp}/N$ is annihilated by $\fm$. Indeed, this is a special case of~\cite{Quebbemann-Scharlau-Schulte}*{Theorem 6.10} for $\mathcal{M} = \Coh_{\fm}(R)$ and $\mathcal{M}_0 = \Coh(k)$. 
\end{proof}

\begin{corollary}
\label{corollary:relative-devissage}%
Let $g\colon \spec(R') \to \spec(R)$ be a closed embedding associated to a surjective ring homomorphism $R \to R'$ between regular Noetherian local rings of Krull dimensions $d$ and $d'$ and with maximal ideals $\fm$ and $\fm'$, respectively. Let $r \in \ZZ$ be an integer and $L$ an invertible perfect $R$-complex with $\Ct$-action whose underlying $R$-complex is equivalent to an $r$-shift of a line bundle. Then the map
\[
\L_n(\Dperf_{\fm}(R),\QF^{\geq m}_L|_{\fm}) \to \L_n(\Dperf_{\fm'}(R'),\QF^{\geq m}_{g_!L}|_{\fm})
\]
induced by push-forward along $g$ is an isomorphism when $n \geq 2m-1+d-r$ and injective for $n=2m-2+d-r$.
\end{corollary}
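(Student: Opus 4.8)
The plan is to run the same argument as in the proof of Theorem~\ref{theorem:devissage}, but with the base $\spec(k)$ replaced by $\spec(R')$, so that the statement reduces to a two‑out‑of‑three comparison of classical Witt groups. First I would set up the algebra. Since $R\to R'$ is a surjection of regular local rings it is cut out by a regular sequence, so $g\colon\spec(R')\hrar\spec(R)$ is a regular closed embedding of codimension $c:=d-d'$; in particular it is proper lci, hence qGqp by Proposition~\ref{proposition:proper-lci-poincare}, and by Remark~\ref{remark:proper-lci} together with Lemma~\ref{lemma:eta-equivalence} (and the triviality of line bundles on the local schemes $\spec(R)$, $\spec(R')$) one gets $g^!L\simeq g^!\cO\otimes g^*L\simeq R'[-c]\otimes R'[r]\simeq R'[r-d+d']$. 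Thus the map in the statement is obtained by applying $\L_n$ to the push-forward Poincaré functor $(\Dperf_{\fm'}(R'),\QF^{\geq m}_{g^!L}|_{\fm'})\to(\Dperf_{\fm}(R),\QF^{\geq m}_L|_{\fm})$ of Construction~\ref{construction:pushforward-hermitian} (Poincaré by Lemma~\ref{lemma:devissage-functor}), restricted to the finite-length subcategories. Exactly as in Remark~\ref{remark:assume-r-0} (replacing $(L,m,n)$ by $(L[d-r],m+d-r,n+d-r)$) I reduce to the case $r=d$; this replaces $g^!L$ by $R'[d']$, and the assertions become: the map is an isomorphism for $n\geq 2m-1$ and injective for $n=2m-2$.

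Next, as in the proof of Theorem~\ref{theorem:devissage}, I would invoke Proposition~\ref{proposition:key}. By Corollary~\ref{corollary:purity}, applied to $R$ and verbatim to $R'$ (legitimate since $R'$ is regular Noetherian local of Krull dimension $d'$ and $g^!L\simeq R'[d']$), both $(\Dperf_{\fm}(R),\QF^{\geq m}_L|_{\fm})$ and $(\Dperf_{\fm'}(R'),\QF^{\geq m}_{g^!L}|_{\fm'})$ satisfy the hypotheses of Proposition~\ref{proposition:key}, with hearts the abelian categories $\Coh_{\fm}(R)$ and $\Coh_{\fm'}(R')$ of finite-length modules endowed with the dualities $\Hom_R(-,L_{\fm})$ and $\Hom_{R'}(-,(g^!L)_{\fm'})$. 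Since $g$ is affine, $g_*$ is $t$-exact, so the push-forward Poincaré functor induces on hearts the inflation (restriction of scalars) functor $\inflation\colon\Coh_{\fm'}(R')\to\Coh_{\fm}(R)$, and this is duality preserving for these dualities — the heart-level shadow of the duality preservation of the Poincaré functor together with the adjunction $g_*\dashv g^!$. Feeding this into Proposition~\ref{proposition:key}(1)--(3) for both sides reduces all the assertions to the single statement that the induced map
\[
\inflation_*\colon\W^{\pm}(\Coh_{\fm'}(R'))\to\W^{\pm}(\Coh_{\fm}(R))
\]
on classical symmetric and antisymmetric Witt groups is an isomorphism. Indeed, part~(2) makes the odd genuine $\L$-groups vanish on both sides for $n\geq 2m-3$, so the map is trivially an isomorphism in those degrees; parts~(3) and~(1) identify, for even $n\geq 2m$, the map in question with $\inflation_*$ on $\W^{\pm}$; and for $n=2m-2$ part~(3) exhibits both sides as subgroups of $\L_n(\QF^{\sym})\cong\W^{\pm}$ compatibly with $\inflation_*$, so that bijectivity of $\inflation_*$ forces the required injectivity.

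It then remains to prove this Witt-group isomorphism, and here the new input is that $R\to R'$ being a surjection of local rings forces the residue fields to coincide: $k=R/\fm=R'/\fm'$. Restriction of scalars along $R\to R'\to k$ therefore gives a commuting triangle of exact, duality-preserving functors $\Coh(k)\xrightarrow{\inflation}\Coh_{\fm'}(R')\xrightarrow{\inflation}\Coh_{\fm}(R)$ whose composite is $\inflation\colon\Coh(k)\to\Coh_{\fm}(R)$, the duality on $\Coh(k)$ being $\Hom_k(-,\pi_0 i^!L)$ with $i^!L\simeq k$ (Corollary~\ref{corollary:purity}). By the concluding step of the proof of Theorem~\ref{theorem:devissage} --- that the map~\eqref{equation:witt} is an isomorphism --- applied once with base $R$ and once with base $R'$, both legs of this triangle out of $\W^{\pm}(\Coh(k))$ are isomorphisms, so $\inflation_*\colon\W^{\pm}(\Coh_{\fm'}(R'))\to\W^{\pm}(\Coh_{\fm}(R))$ is an isomorphism by two-out-of-three. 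Alternatively, this last isomorphism follows directly from \cite{Quebbemann-Scharlau-Schulte}*{Theorem~6.10} applied to $\mathcal M=\Coh_{\fm}(R)$ and $\mathcal M_0=\Coh_{\fm'}(R')$, the dévissage condition holding since every finite-length $R$-module is filtered by its $I$-adic filtration, $I=\ker(R\to R')$, with all subquotients in $\Coh_{\fm'}(R')$.

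I expect the main obstacle to be the bookkeeping in the middle step: verifying that the $R'$-analogues of the purity Lemmas~\ref{lemma:purity} and~\ref{lemma:purity-2} indeed place $(\Dperf_{\fm'}(R'),\QF^{\geq m}_{g^!L}|_{\fm'})$ in the hypotheses of Proposition~\ref{proposition:key} with precisely the stated heart duality, and aligning the $\W^{+}/\W^{-}$ signs and the $t$-structure shifts on the two sides so that the map on $\L$-groups is literally $\inflation_*$ on Witt groups. The geometric inputs --- the identification $g^!L\simeq R'[r-d+d']$, the coincidence of residue fields, and the commuting inflation triangle --- are routine.
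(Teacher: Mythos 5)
Your proof is correct and rests on the same essential idea as the paper's — a two-out-of-three argument applied to the factorization $\spec(k)\to\spec(R')\to\spec(R)$ — but it unwinds the argument one level further than necessary. The paper treats Theorem~\ref{theorem:devissage} as a black box: after normalizing to $r=d$, $r'=d'$ it writes down the commutative square comparing genuine and symmetric $\L$-groups across the three finite-length Poincaré categories, applies Theorem~\ref{theorem:devissage} (once for $\spec(k)\hookrightarrow\spec(R')$ and once for $\spec(k)\hookrightarrow\spec(R)$) together with the fact that symmetric $\L$-groups always compare via isomorphisms, and concludes the isomorphism range by two-out-of-three at the level of $\L$-groups directly; the $n=2m-2$ injectivity is then read off from the injectivity of the vertical comparison maps supplied by Proposition~\ref{proposition:key}. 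You instead re-run the internal machinery of the proof of Theorem~\ref{theorem:devissage} with base $\spec(R')$ replacing $\spec(k)$: you verify that Corollary~\ref{corollary:purity} applies verbatim to $R'$, feed both sides into Proposition~\ref{proposition:key} to reduce to the Witt-group map $\W^{\pm}(\Coh_{\fm'}(R'))\to\W^{\pm}(\Coh_{\fm}(R))$, and only then invoke two-out-of-three (now at the Witt level, via the paper's established isomorphisms~\eqref{equation:witt} for $R$ and $R'$). Both arguments are valid; the paper's route is more economical because it never has to re-verify that the heart-level functor induced by $g_*$ is the duality-preserving inflation functor — the very step you flag as the "main obstacle" — whereas your version makes the Witt-theoretic content of the statement more explicit. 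One caution on your alternative last step: it is not clear that \cite{Quebbemann-Scharlau-Schulte}*{Theorem~6.10} applies directly to $\mathcal M_0=\Coh_{\fm'}(R')$ in the generality you want (the paper only uses it with $\mathcal M_0=\Coh(k)$), so the two-out-of-three route at the Witt level is the safer of your two closing arguments.
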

\begin{proof}
Write $k = R/\fm = R'/\fm'$ for the common residue field of $R$ and $R'$.
Since $\spec(R)$ is regular $L$ is a dualising complex on $\spec(R)$ and since $\spec(R') \to \spec(R)$ is a closed embedding we have that $g^!L$ is a dualising complex on $\spec(R')$, see \cite[\href{https://stacks.math.columbia.edu/tag/0BZI}{Tag 0BZI}]{stacks-project}. But since $\spec(R')$ is regular $g^!L$ must be an invertible perfect complex, hence of the form $R'[r']$ for some $r'$. Let $i'\colon \spec(k) \to \spec(R')$ be the inclusion of the unique closed point of $\spec(R')$, so that $i = r \circ i'\colon \spec(k) \to \spec(R)$ is the inclusion of the unique closed point of $\spec(R)$. By Lemma~\ref{lemma:purity-2} we then have  
\[
k[r-d] \simeq i^!L = (i')^!g^!L \simeq (i')^!R'[r'] \simeq k[r'-d']
\]
and so $r-d = r'-d'$. Replacing $L$ with $L[d-r]$ and using Remark~\ref{remark:assume-r-0} we may hence assume without loss of generality that $r=d$ and $r'=d'$. 

Consider the commutative diagram
\[
\begin{tikzcd}
\L_n(\Dperf(k),\QF^{\geq m}_{i^!L}) \ar[r]\ar[d] & \L_n(\Dperf_{\fm'}(R'),\QF^{\geq m}_{r^!L}|_{\fm'}) \ar[r]\ar[d] & \L_n(\Dperf_{\fm}(R),\QF^{\geq m}_{L}|_{\fm}) \ar[d] \\
\L_n(\Dperf(k),\QF^{\sym}_{i^!L}) \ar[r] & \L_n(\Dperf_{\fm'}(R'),\QF^{\sym}_{r^!L}|_{\fm'}) \ar[r] & \L_n(\Dperf_{\fm}(R),\QF^{\sym}_{L}|_{\fm}) \ .
\end{tikzcd}
\]
By Theorem~\ref{theorem:devissage} and the 2-out-of-3 property the top horizontal arrows are isomorphisms for $n \geq 2m-1$ (recall that we have set $r=d$ and $r'=d'$) and the bottom horizontal arrows are isomorphisms for every $n$. In addition, for $n=2m-2$ we have by Proposition~\ref{proposition:key} that the vertical maps are injective and hence the top horizontal maps are injective as well.
\end{proof}

\subsection{End of the proof of global dévissage}
\label{subsection:global-devissage}%

We are finally ready to conclude the proof of the dévissage theorem. As explained in \S\ref{subsection:devissage}, given dévissage for $\K$-theory (Remark~\ref{remark:K-theory-devissage}) and the fundamental fibre sequence~\cite{9-authors-II}*{Corollary~11.4.14}, Theorem~\ref{theorem:global-devissage} is a consequence of Theorem~\ref{theorem:genuine-devissage}.

\begin{proof}[Proof of Theorem~\ref{theorem:genuine-devissage}]
We prove by descending induction on $c$ that the map
\[
\L(\Dperf(Z)^{\ge c},\QF^{\sym}_{i^!L}) \to \L(\Dperf_Z(X)^{\ge c},\QF^{\sym}_L) 
\]
has a $(2m-2-d)$-truncated cofibre. For the base of the induction we note that for $c > \dim(Z)$ the domain and codomain of this map are zero. Now suppose that $c \geq 0$ is such that the claim holds for $c+1$. Consider the commutative diagram
\begin{equation}
\label{equation:filtration}%
\begin{tikzcd}
\L(\Dperf(Z)^{\ge c+1},\QF^{\geq m}_{i^!L}) \ar[r]\ar[d] & \L(\Dperf(Z)^{\ge c},\QF^{\geq m}_{i^!L})\ar[r]\ar[d] & \bigoplus_{x\in Z^{(c)}} \L(\Dperf_x(\cO_{Z,x}),\QF^{\geq m}_{i^!L}|_x)\ar[d]\\
\L(\Dperf_Z(X)^{\ge c+1},\QF^{\geq m}_L)\ar[r] & (\Dperf_Z(X)^{\ge c},\QF^{\geq m}_L)\ar[r] & \bigoplus_{x\in Z^{(c)}} \L(\Dperf_x(\cO_{X,i(x)}),\QF^{\geq m}_L|_x)
\end{tikzcd}
\end{equation}
whose rows are fibre sequences by Lemma~\ref{proposition:coniveau-sequence}. Then the cofibre of the left most vertical map is $(2m-2-d)$-truncated by the induction hypothesis and the cofibre of the right most vertical map is $(2m-2-d)$-truncated by Corollary~\ref{corollary:relative-devissage} (applied with $r=0$). We hence conclude that the cofibre of the middle vertical map has $(2m-2-d)$-truncated cofibre, as desired.
\end{proof}

\section{The projective bundle formula and $\Aone$-invariance}
\label{section:projective-bundle}%

In this section we prove a projective bundle formula for symmetric Karoubi-Grothendieck-Witt theory (and more generally for invariants of schemes arising from Karoubi-localising invariants of Poincaré $\infty$-categories as in Notation~\ref{notation:functor-to-sheaf}), generalizing in particular results obtained in \cite{Walter}, \cite{nenashev} and \cite{Rohrbach}, which were proven under the assumption that 2 was invertible in the base scheme. This is achieved in \S\ref{subsection:pbf}, see Theorem~\ref{theorem:projective-bundle-formula}. In \S\ref{subsection:genuine-projective} we investigate the analogue result for the genuine symmetric Poincaré structure, but only in the case of constant $\Pone$-bundles. Finally, in \S\ref{subsection:Aone-invariance} we combine the projective bundle formula with dévissage to deduce $\Aone$-invariance of symmetric $\GW$-theory over regular Noetherian schemes of finite Krull dimension.

\subsection{The projective bundle formula}
\label{subsection:pbf}%

Let $X$ be a qcqs scheme, $L$ an invertible perfect complex over $X$ equipped with $C_2$-action and $V$ a vector bundle over $X$ of rank $r+1$. Write $p\colon\PP_X V=\Proj(\Sym^\bullet V)\to X$ for the corresponding projective bundle over $X$, with tautological line bundle $\cO(1)$. For $n \in \ZZ$ we will write $\cO(n) := \cO(1)^{\otimes n}$ and $p^*L(n) := p^*L \otimes \cO(n)$. Our goal is to describe the symmetric $\GW$-spectra $\GW^{\sym}(\PP_X V,p^*L(n))$
in terms of certain symmetric $\GW$-spectra of $X$. In fact, we provide a structural result for the Poincaré $\infty$-category $(\Dperf(\PP_X V),\QF^\sym_{p^*L(n)})$ that allows us to give a projective bundle formula for every additive invariant.

\begin{remark}
\label{remark:square-line-bundles}%
Suppose $M$ is any line bundle on a scheme $Y$. Then the canonical $C_2$-action on $M\otimes_Y M$ is trivial. Indeed, since $M$ is a line bundle we have that $M \otimes_Y M$ is a line bundle as well, and in particular belongs to the heart of $\Derqc(Y)$. We may hence view it as an object in the ordinary category $\Mod^\qc(Y)$ of quasi-coherent sheaves on $Y$, so that the triviality of the $\Ct$-action amounts to an equality between the action of the generator and the identity map inside the abelian group $\Hom_Y(M \otimes_Y M,M \otimes_Y M)$. By definition of mappings between two Zariski sheaves this equality can be checked Zariski-locally. We may hence assume that $M=\cO_Y$. But then the multiplication map $\cO_Y\otimes\cO_Y\to \cO_Y$ is a $\Ct$-equivariant isomorphism for the trivial action on the target. In particular, this shows that
\[
(-)\otimes \cO(t)\colon (\Dperf(\PP_XV),\QF^s_{p^*L(n)})\to (\Dperf(\PP_XV),\QF^s_{p^*L(n-2t)})
\]
is an equivalence of Poincaré $\infty$-categories.  
\end{remark}

By~\cite{Khan-blow-up}*{Theorem~3.3}(i) we have that the functor 
\[
p^*(-)\otimes \cO(i)\colon \Dperf(X)\to \Dperf(\PP_XV)
\]
is fully faithful for every $i$. Let us write $\cA(i)$ for its image, and for $i \leq j$ write $\cA(i,j)$ for the stable subcategory of $\Dperf(\PP_XV)$ generated by $\cA(i),\cA(i+1),...,\cA(j)$. Then by~\cite{Khan-blow-up}*{Theorem~3.3~(ii)} the full subcategories $\cA(i+r), \cA(i+r-1), \ldots,\cA(i)$ form a semi-orthogonal decomposition of $\Dperf(\PP_XV)$ for every $i$. In particular, if $i,j \in \ZZ$ are such that $-r \leq j-i \leq -1$ then $\map_{\PP_X V}(M,N) = 0$ for every $M \in \cA(i)$ and $N \in \cA(j)$. In addition, the left orthogonal complement of $\cA(i-r,i-1)$ is given by $\cA(i)$ and its right orthogonal complement is given by $\cA(i-r-1)$. Finally, note that a right adjoint to the full faithful functor $p^*(-)\otimes \cO(i)$ is given by $p_*((-) \otimes \cO(-i))$, and the latter also admits a right adjoint given by $p^!((-) \otimes \cO(i))$.
We hence obtain a split Verdier sequence
\[
\cA(i-r,i-1) \to \Dperf(\PP_X V) \xrightarrow{p_*((-) \otimes \cO(-i)} \Dperf(X).
\]

\begin{lemma}
\label{lemma:duality-shift}%
For every $i,n\in \ZZ$ we have 
\[
\Dual_{p^*L(n)}(\cA(i))= \cA(n-i)
\]
as full subcategories of $\Dperf(\PP_XV)$.
\end{lemma}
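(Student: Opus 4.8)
The plan is to reduce the statement to a computation of the duality functor $\Dual_{p^*L(n)}$ on a single generating object of $\cA(i)$, namely $p^*\cO_X \otimes \cO(i) = \cO(i)$, and then to use naturality of the duality together with the fact that $p^*$ is a symmetric monoidal functor. First I would recall that $\Dual_{p^*L(n)}(M) = \uline{\hom}_{\PP_XV}(M, p^*L(n))$, so that for an object of the form $M = p^*F \otimes \cO(i)$ with $F \in \Dperf(X)$ we can compute, using that $\cO(i)$ is $\otimes$-invertible and that $\uline{\hom}$ turns tensoring by an invertible object in the source into tensoring by its inverse,
\[
\Dual_{p^*L(n)}(p^*F \otimes \cO(i)) = \uline{\hom}_{\PP_XV}(p^*F, p^*L(n)) \otimes \cO(-i) = \uline{\hom}_{\PP_XV}(p^*F, p^*L) \otimes \cO(n-i).
\]
Now $p^*$ is symmetric monoidal and, since $\PP_XV \to X$ is a perfect (in fact proper smooth, or at least flat with perfect pushforward) morphism, $p^*$ commutes with the formation of internal homs of perfect complexes; hence $\uline{\hom}_{\PP_XV}(p^*F, p^*L) = p^*\uline{\hom}_X(F,L) = p^*\Dual_L(F)$. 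Therefore $\Dual_{p^*L(n)}(p^*F \otimes \cO(i)) = p^*\Dual_L(F) \otimes \cO(n-i)$, which lies in $\cA(n-i)$; and since $\Dual_L$ is an equivalence on $\Dperf(X)$, as $F$ ranges over $\Dperf(X)$ so does $\Dual_L(F)$, so the image of $\cA(i)$ is exactly $\cA(n-i)$.

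The key steps in order are: (1) record the formula $\Dual_{p^*L(n)}(-) = \uline{\hom}_{\PP_XV}(-,p^*L(n))$ and peel off the invertible twist $\cO(i)$; (2) justify that $p^*$ commutes with internal hom on perfect complexes, so that $p^*$ intertwines $\Dual_L$ with $\Dual_{p^*L}$ — this is where I would either cite the projection/base-change formula for the flat proper morphism $p$ or simply use that both sides are exact in $F$ and agree on $F = \cO_X$ by the invertibility of $\cO(1)$ and the rigidity of $\Dperf(X)$, $\Dperf(\PP_XV)$; (3) conclude that $\Dual_{p^*L(n)}$ sends $\cA(i)$ into $\cA(n-i)$ and, by symmetry (applying the same computation with $i$ replaced by $n-i$, using $\Dual_{p^*L(n)}$ is an involution up to the double-dual equivalence), that it is surjective onto $\cA(n-i)$, giving equality of full subcategories.

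I expect step (2) to be the only real point requiring care: one must make sure the identification $p^*\uline{\hom}_X(F,L) \simeq \uline{\hom}_{\PP_XV}(p^*F,p^*L)$ is available in this generality. Since $F$ and $L$ are perfect and $p^*$ is a symmetric monoidal exact functor between rigid stably symmetric monoidal $\infty$-categories, dualizability gives $\uline{\hom}_X(F,L) = F^\vee \otimes L$ and likewise downstairs, and $p^*$ preserves duals and tensor products, so the identification is formal — no genuinely hard computation is needed, only bookkeeping. Everything else is a direct manipulation of invertible twists and the semiorthogonal pieces $\cA(i)$ as defined above via \cite{Khan-blow-up}*{Theorem~3.3}.
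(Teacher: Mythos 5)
Your proof is correct and takes essentially the same route as the paper: reduce to the generators $p^*P\otimes\cO(i)$, peel off the invertible twist, use that $p^*$ intertwines the rigid dualities on $\Dperf(X)$ and $\Dperf(\PP_XV)$ (which is exactly the "$p^*$ commutes with internal hom" point you flag, and is formal from symmetric monoidality plus rigidity), and get the reverse inclusion from $\Dual^2_{p^*L(n)}\simeq\id$. The paper phrases step (2) via the canonical duality $\Dual_{\PP_XV}$ being symmetric monoidal rather than via $\uline{\hom}$, but the content is identical.
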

\begin{proof}
Since $\Dual_{p^*L(n)}^2\simeq \id$, it suffices to prove one inclusion, and in doing so we may restrict attention to the generators of $\cA(i)$. For $P \in \Dperf(X)$ we then have
\begin{align*}
\Dual_{p^*L(n)}(p^*(P) \otimes \cO(i)) &= \Dual_{\PP_XV}(p^*(P) \otimes \cO(i)) \otimes p^*L(n) \\
&= \Dual_{\PP_X V}(p^*(P)) \otimes \Dual_{\PP_X V}(\cO(i)) \otimes p^*L(n) \\
&= p^*\Dual_X(P) \otimes \cO(-i) \otimes p^*L(n) \in \cA(n-i) 
\end{align*}
where we have used that $\Dual_{\PP_X V}$ is a symmetric monoidal duality (it coincides with the canonical duality of $\PP_X V$ as a rigid symmetric monoidal $\infty$-category, see~\S\ref{subsection:rigid-to-poinc}), and that $p^*$ is duality preserving for $\Dual_X$ and $\Dual_{\PP_X V}$.
\end{proof}

Write $r'=r$ if $n-r$ is even and $r'=r-1$ if $n-r$ is odd. In particular $n-r'$ is always even. 
Then $\cA\big(\frac{n-r'}{2},\frac{n+r'}{2}\big)$ is stable under the duality, hence a Poincaré subcategory. Moreover, if $n-r$ is even then 
\[
\cA\big(\frac{n-r'}{2},\frac{n+r'}{2}\big)=\Dperf(\PP_XV)
\]
by~\cite{Khan-blow-up}*{Theorem~3.3~(ii)}. When $n-r$ is odd $\cA\big(\frac{n-r'}{2},\frac{n+r'}{2}\big)\neq \Dperf(\PP_XV)$ but one can nonetheless describe explicitly its Poincaré-Verdier quotient. For this, let $\mathcal{V} = p^*V$ be the pullback of $V$ to $\PP_X V$. By the construction of $\PP_X V$ as $\Proj(\Sym^{\bullet} V)$ the vector bundle $\mathcal{V}$ comes with a tautological map $\mathcal{V} \to \cO(1)$. The $\cO(-1)$-twisted map $\mathcal{V}(-1)\to \cO_{\PP_X V}$ then fits in an exact complex 
\[
0\to\Lambda^{r+1}\mathcal{V}(-r-1)\to \cdots \to \Lambda^1\mathcal{V}(-1)\to\cO_{\PP_X V}\to 0\,.
\]
of vector bundles on $\PP_X V$.
In particular, $\det\mathcal{V}(-r-1)[r] = \Lambda^{r+1}\mathcal{V}(-r-1)[r]$ is quasi-isomorphic to the complex
\[
\Lambda^{r}\mathcal{V}(-r)\to \cdots \to \Lambda^1\mathcal{V}(-1)\to\cO_{\PP_X V} \,.
\]
concentrated in degrees $[0,r]$. 
The inclusion of the bottom term gives a map 
\begin{equation}
\label{equation:important-map}%
\cO_{\PP_X V}\to\det\mathcal{V}(-r-1)[r].
\end{equation}
Since $p_*\Lambda^i\mathcal{V}(-i) = 0$ for $i=1,\ldots,r$, this map induces an equivalence
\[
p_*\cO_{\PP_X V} \tosimeq p_*\det\mathcal{V}(-r-1)[r] = p_*\cO(-r-1) \otimes \det V[r],
\]
and hence an equivalence 
\[
p^!p_*\cO_{\PP_X V} \tosimeq p^!p_*\det\mathcal{V}(-r-1)[r] \simeq \det\mathcal{V}(-r-1)[r],
\]
where the latter equivalence is because $\det\mathcal{V}(-r-1)[r]$ belongs to $\cA(-r-1)$, and is hence right orthogonal to $\cA(-r,-1) = \ker(p_*)$.
Now, since $p^*$ is fully-faithful the unit map $\cO_X \to p_*p^*\cO_X = p_*\cO_{\PP_X V}$ is an equivalence, and so we obtain equivalences
\begin{equation}
\label{equation:tau}%
\tau\colon p^!\cO_X \simeq \det\mathcal{V}(-r-1)[r] .
\end{equation}
Let us now suppose that $n-r$ is odd.
Applying Construction~\ref{construction:pushforward-hermitian} with $M = \cO(-\frac{n+r+1}{2})$ using the composed equivalence
\begin{align*}
p^!(L\otimes \det V^\vee[-r]) \otimes M^{\otimes 2} \simeq& p^*L\otimes \det \mathcal{V}^\vee[-r] \otimes p^!\cO_X \otimes M^{\otimes 2} \\
\stackrel{\tau}{\simeq}& p^*L\otimes \det \mathcal{V}^\vee[-r] \otimes \det\mathcal{V}(-r-1)[r] \otimes M^{\otimes 2}\\
\simeq& p^*L(n)
\end{align*}
now refines the split Verdier projection $p_*((-) \otimes \cO(-\frac{n+r+1}{2})$ to a Poincaré functor
\[
\big(\Dperf(\PP_XV),\QF^\sym_{p^*L(n)}\big)\xrightarrow{p_*(-\otimes\cO(-\frac{n+r+1}{2}))} \big(\Dperf(X),\QF^{\sym}_{L \otimes \det V^\vee[-r]}\big)
\]
with kernel $\cA\big(\frac{n-r'}{2},\frac{n+r'}{2}\big)$.

\begin{proposition}
\label{proposition:proj-bundle-I}%
Suppose $n-r$ is odd. Then the sequence
\[
\big(\cA\big(\frac{n-r'}{2},\frac{n+r'}{2}\big),\QF^\sym_{p^*L(n)}\big)\to \big(\Dperf(\PP_XV),\QF^\sym_{p^*L(n)}\big)\xrightarrow{p_*(-\otimes\cO(-\frac{n+r+1}{2}))} \big(\Dperf(X),\QF^{\sym}_{L \otimes \det V^\vee[-r]}\big)
\]
is a split Poincaré-Verdier sequence (where we have written $\QF^\sym_{p^*L(n)}$ also for its restriction to $\cA\big(\frac{n-r'}{2},\frac{n+r'}{2}\big)$). In particular, when $n=0$ and $r=1$ we have a split Poincaré-Verdier sequence
\[
\big(\Dperf(X),\QF^\sym_L\big)\xrightarrow{p^*}\big(\Dperf(\PP_XV),\QF^\sym_{p^*L}\big) \xrightarrow{p_*(-\otimes\cO(-1))} \big(\Dperf(X),\QF^\sym_{L \otimes \det V^\vee[-1]}\big).
\]
\end{proposition}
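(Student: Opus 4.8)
The plan is to deduce this from Lemma~\ref{lemma:pushforward-projection}, applied to $f = p\colon \PP_XV\to X$, together with the semi-orthogonal decomposition recalled above.

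First I would note that $p$, being a (proper) smooth morphism, is a proper local complete intersection map, hence qGqp by Proposition~\ref{proposition:proper-lci-poincare}; in particular the equivalence $\tau$ displayed just before the statement is a legitimate input for Construction~\ref{construction:pushforward-hermitian}, and the resulting hermitian functor
\[
\big(p_*(-\otimes\cO(-\tfrac{n+r+1}{2})),\eta\big)\colon\big(\Dperf(\PP_XV),\QF^\sym_{p^*L(n)}\big)\to\big(\Dperf(X),\QF^\sym_{L\otimes\det V^\vee[-r]}\big)
\]
is a Poincaré functor by Lemma~\ref{lemma:devissage-functor}. Second, I would observe that its underlying exact functor is a split Verdier projection: by the discussion preceding Lemma~\ref{lemma:duality-shift}, $p_*\colon\Dperf(\PP_XV)\to\Dperf(X)$ is a split Verdier projection with kernel $\cA(-r,-1)$ (this is the $i=0$ instance of the split Verdier sequence $\cA(i-r,i-1)\to\Dperf(\PP_XV)\xrightarrow{p_*((-)\otimes\cO(-i))}\Dperf(X)$, which rests on the semi-orthogonal decomposition of \cite{Khan-blow-up}*{Theorem~3.3} and the adjoint triple $p^*\dashv p_*\dashv p^!$ with $p^*$ fully faithful and $p^!\cO_X$ invertible), and pre-composing with the auto-equivalence $(-)\otimes\cO(-\tfrac{n+r+1}{2})$ does not change this.

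With both hypotheses verified, Lemma~\ref{lemma:pushforward-projection} applies directly and shows that $\big(p_*(-\otimes\cO(-\tfrac{n+r+1}{2})),\eta\big)$ is a split Poincaré-Verdier projection. It therefore fits into a split Poincaré-Verdier sequence whose fibre term is the kernel of the underlying functor, equipped with the restricted Poincaré structure. This kernel consists of those $P$ with $P\otimes\cO(-\tfrac{n+r+1}{2})\in\ker(p_*)=\cA(-r,-1)$, that is $P\in\cA\big(\tfrac{n-r+1}{2},\tfrac{n+r-1}{2}\big)$; since $n-r$ is odd we have $r'=r-1$, so this is exactly $\cA\big(\tfrac{n-r'}{2},\tfrac{n+r'}{2}\big)$, and the first asserted split Poincaré-Verdier sequence follows. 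For the last statement, specialize to $n=0$, $r=1$: then $n-r=-1$ is odd, $r'=0$, and the fibre is $\cA(0,0)=\cA(0)$, the essential image of the fully faithful functor $p^*$. Since $p^*$ is symmetric monoidal it is duality preserving (Proposition~\ref{proposition:construction-gives-poincare}), hence by Corollary~\ref{corollary:adj-catp-catps} it refines to a Poincaré functor $(\Dperf(X),\QF^\sym_L)\to(\Dperf(\PP_XV),\QF^\sym_{p^*L})$; as $p_*p^*\simeq\id$, pulling back $\QF^\sym_{p^*L}$ along $p^*$ returns $\QF^\sym_L$, so $p^*$ identifies $(\Dperf(X),\QF^\sym_L)$ with the fibre $(\cA(0),\QF^\sym_{p^*L})$, which is the displayed sequence.

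Essentially all of the real work has already been carried out in Construction~\ref{construction:pushforward-hermitian} and Lemmas~\ref{lemma:devissage-functor} and~\ref{lemma:pushforward-projection}; what remains is the bookkeeping above — matching the kernel of $p_*((-)\otimes\cO(-\tfrac{n+r+1}{2}))$ with $\cA\big(\tfrac{n-r'}{2},\tfrac{n+r'}{2}\big)$ and, in the special case, checking that the Poincaré structure the fibre inherits coincides with $\QF^\sym_L$ under $p^*$. I do not anticipate a genuine obstacle here; the only mild subtlety is keeping the twists and shifts in $\tau$ straight.
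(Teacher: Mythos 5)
Your proof is correct and follows essentially the same route as the paper: the paper's own proof is simply ``Apply Lemma~\ref{lemma:pushforward-projection},'' with the verification that $p$ is qGqp, that $\tau$ gives the correct input to Construction~\ref{construction:pushforward-hermitian}, and that the kernel of $p_*((-)\otimes\cO(-\tfrac{n+r+1}{2}))$ is $\cA\big(\tfrac{n-r'}{2},\tfrac{n+r'}{2}\big)$ all done in the discussion immediately preceding the proposition. You have simply spelled those preparatory steps out inline, together with the (correct) identification of the $n=0$, $r=1$ fibre with $(\Dperf(X),\QF^\sym_L)$ via $p^*$.
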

\begin{proof}
Apply Lemma~\ref{lemma:pushforward-projection}.
\end{proof}

Now, let $\Lag\subseteq \cA\big(\frac{n-r'}{2},\frac{n+r'}{2}\big)$ be the stable subcategory generated by $\cA(\frac{n+1}{2})\cup\cdots\cup\cA(\lceil\frac{n+r'}{2}\rceil)$. 
By~\cite{Khan-blow-up}*{Theorem~3.3~(ii)} $\Lag$ has a semi-orthogonal decomposition of the form $(\cA(\frac{n+r'}{2}),\dots,\cA(\lceil\frac{n+1}{2}\rceil))$. In particular, for every additive invariant $\F$ there's an equivalence
\[
\F^\hyp(\Lag)\simeq \F^\hyp(\Dperf(X))^{\oplus \lfloor\frac{r'+1}{2}\rfloor}
\]
induced by the functors $\{p^*(-) \otimes \cO(i)\}_{i=\frac{n+r'}{2},\dots,\lceil\frac{n+1}{2}\rceil}$ (see \cite{Khan-blow-up}*{Lemma~2.8}).

\begin{lemma}
\label{lemma:proj-bundle-II}%
If $n$ is odd, then the stable subcategory $\Lag = \cA\big(\frac{n+1}{2},\frac{n+r'}{2}\big) \subseteq \cA\big(\frac{n-r'}{2},\frac{n+r'}{2}\big)$ is a Lagrangian with respect to the Poincaré structure restricted from $\QF^{\sym}_{p^*L}$. If $n$ is even, then $\Lag$ is isotropic with respect to the Poincaré structure and its homology inclusion can be identified with the Poincaré functor
\[
p^*(-) \otimes \cO\big(\frac{n}{2}\big)\colon (\Dperf(X),\QF^{\sym}_{L}) \to (\Dperf(\PP_X V),\QF^{\sym}_{p^*L(n)}) .
\]
\end{lemma}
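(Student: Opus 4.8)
The plan is to read everything off the semi-orthogonal decomposition $\big(\cA(\tfrac{n+r'}{2}),\dots,\cA(\tfrac{n-r'}{2})\big)$ of $\cA\big(\tfrac{n-r'}{2},\tfrac{n+r'}{2}\big)$ together with Lemma~\ref{lemma:duality-shift}. I would use repeatedly that $\QF^\sym_{p^*L(n)}$ is literally $M\mapsto\map_{\PP_X V}(M\otimes M,p^*L(n))^\hC$ (Definition~\ref{definition:linear-part}), that $\cA(i)\otimes\cA(j)\subseteq\cA(i+j)$, that $p^*L(n)\in\cA(n)$, and that $\map_{\PP_X V}(\cA(k),\cA(l))\simeq 0$ whenever $1\le k-l\le r$. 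Since $\Lag$ is generated by the $\cA(i)$ with $\lceil\tfrac{n+1}{2}\rceil\le i\le\tfrac{n+r'}{2}$, for objects $M,N$ of $\Lag$ the object $M\otimes N$ lies in the stable subcategory generated by the $\cA(k)$ with $2\lceil\tfrac{n+1}{2}\rceil\le k\le n+r'$, and every such $k$ satisfies $1\le k-n\le r'\le r$; hence $\map_{\PP_X V}(M\otimes N,p^*L(n))\simeq 0$. Taking $M=N$ and passing to $\hC$ gives $\QF^\sym_{p^*L(n)}(M)\simeq 0$, so $\QF^\sym_{p^*L(n)}$ restricts to zero on $\Lag$; that is, $\Lag$ is isotropic, and in particular $\Lag\subseteq\Lag^\perp$.

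Next I would compute $\Lag^\perp$ inside the duality-stable Poincaré subcategory $\big(\cA(\tfrac{n-r'}{2},\tfrac{n+r'}{2}),\QF^\sym_{p^*L(n)}\big)$. By Lemma~\ref{lemma:duality-shift} the duality $\Dual_{p^*L(n)}$ interchanges the blocks $\cA(i)$ and $\cA(n-i)$, so $\Dual_{p^*L(n)}\Lag$ is spanned by the blocks $\cA(m)$ with $\tfrac{n-r'}{2}\le m\le\lfloor\tfrac{n-1}{2}\rfloor$. By the orthogonality facts for semi-orthogonal decompositions quoted from \cite{Khan-blow-up} — the left orthogonal complement of the span of the blocks of twist at most $l$ is the span of the blocks of twist $>l$ — we get that $\Lag^\perp$ is spanned by the blocks $\cA(m)$ with $\lfloor\tfrac{n-1}{2}\rfloor<m\le\tfrac{n+r'}{2}$. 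When $n$ is odd this is exactly $\Lag$, so $\Lag$ is a Lagrangian in $\big(\cA(\tfrac{n-r'}{2},\tfrac{n+r'}{2}),\QF^\sym_{p^*L(n)}\big)$ (its homology vanishes). When $n$ is even it is $\langle\cA(\tfrac n2),\Lag\rangle$, and $\cA(\tfrac n2)$ is a semi-orthogonal summand complementary to $\Lag$ inside $\Lag^\perp$; consequently the composite $\cA(\tfrac n2)\hookrightarrow\Lag^\perp\to\Lag^\perp/\Lag$ is an equivalence of stable $\infty$-categories onto the homology of $\Lag$.

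For $n$ even it then remains to upgrade this stable equivalence to an equivalence of Poincaré $\infty$-categories and to identify the homology inclusion with $p^*(-)\otimes\cO(\tfrac n2)$. I would first note that $\cA(\tfrac n2)$ is itself duality-stable (Lemma~\ref{lemma:duality-shift} with $i=\tfrac n2$), and that $p^*(-)\otimes\cO(\tfrac n2)$ is a Poincaré functor $(\Dperf(X),\QF^\sym_L)\to(\Dperf(\PP_X V),\QF^\sym_{p^*L(n)})$: it is the composite of the symmetric monoidal, hence Poincaré, pullback $p^*\colon(\Dperf(X),\QF^\sym_L)\to(\Dperf(\PP_X V),\QF^\sym_{p^*L})$ with the Poincaré equivalence $(-)\otimes\cO(\tfrac n2)$ of Remark~\ref{remark:square-line-bundles}. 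Since $p^*(-)\otimes\cO(\tfrac n2)$ is fully faithful with image $\cA(\tfrac n2)$, it is a Poincaré equivalence onto $\big(\cA(\tfrac n2),\QF^\sym_{p^*L(n)}\big)$. Finally, feeding the pair $\Lag\subseteq\Lag^\perp$ into the formalism of isotropic subcategories and their homologies (cf.\ \cite{9-authors-II}), and using $\map_{\PP_X V}(\Lag,\cA(\tfrac n2))\simeq 0$ together with the analogous vanishing against $\Dual_{p^*L(n)}\Lag$, one checks that the descended Poincaré structure on $\Lag^\perp/\Lag$ matches $\QF^\sym_{p^*L(n)}|_{\cA(\tfrac n2)}$ under the above equivalence, and that the inclusion of the homology into the ambient is the one induced by the summand $\cA(\tfrac n2)$, i.e.\ $p^*(-)\otimes\cO(\tfrac n2)$.

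The step I expect to be the main obstacle is this last one: matching the Poincaré structure on $\Lag^\perp/\Lag$ produced by the general homology construction with the naive restriction $\QF^\sym_{p^*L(n)}|_{\cA(\tfrac n2)}$. The subtlety is that $\Lag^\perp$ is \emph{not} itself duality-stable, so one cannot just restrict and descend $\QF^\sym_{p^*L(n)}$ along $\Lag^\perp\to\Lag^\perp/\Lag$; one has to unwind the homology construction and verify that the correction terms vanish, which is precisely where the orthogonality of the summand $\cA(\tfrac n2)$ against both $\Lag$ and $\Dual_{p^*L(n)}\Lag$ enters. Everything else is bookkeeping with the twist ranges $\lceil\tfrac{n+1}{2}\rceil,\dots,\tfrac{n+r'}{2}$ and the two orthogonality facts for semi-orthogonal decompositions.
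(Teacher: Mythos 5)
Your proof is correct and follows essentially the same route as the paper: use the semi-orthogonal decomposition together with Lemma~\ref{lemma:duality-shift} to compute $\Lag^\perp$, deduce isotropy (you verify it directly from the formula for $\QF^\sym$; the paper reads it off from $\Lag\subseteq\Lag^\perp$ together with the fact that $\QF^\sym$ is the symmetric Poincaré structure associated to the bilinear part), and identify $\cA(\tfrac{n}{2})$ as the homology. The one remark worth making concerns the step you flag as the main obstacle: it dissolves if you adopt the paper's description of the homology, which is not the quotient $\Lag^\perp/\Lag$ but the duality-invariant \emph{full subcategory} $\Lag^\perp\cap\Dual_{p^*L(n)}\Lag^\perp$ of $\Dperf(\PP_XV)$. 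Since $\Lag$ sits in a split Verdier recollement coming from the semi-orthogonal decomposition, this intersection is precisely $\cA(\tfrac{n}{2})$, its Poincaré structure is literally the restriction of $\QF^\sym_{p^*L(n)}$, and the homology inclusion is literally the subcategory inclusion, so there is no descended structure or correction term to verify. The remaining identification $\QF^\sym_{p^*L(n)}\big(p^*P\otimes\cO(\tfrac{n}{2})\big)\simeq\QF^\sym_L(P)$ the paper does by a direct unwinding using the projection formula and full faithfulness of $p^*$; your alternative route, factoring $p^*(-)\otimes\cO(\tfrac{n}{2})$ through the Poincaré functor $p^*$ and the twist equivalence of Remark~\ref{remark:square-line-bundles}, is an equally valid way to see it.
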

\begin{proof}
Since $(\cA(\frac{n+r'}{2}),\dots,\cA(\frac{n-r'}{2}))$ is a semi-orthogonal decomposition of $\cA\big(\frac{n-r'}{2},\frac{n+r'}{2}\big)$, the inclusion $\Lag\subseteq \cA\big(\frac{n-r'}{2},\frac{n+r'}{2}\big)$ has a right adjoint $r\colon \cA\big(\frac{n-r'}{2},\frac{n+r'}{2}\big) \to \Lag$ whose kernel can be identified using Lemma~\ref{lemma:duality-shift} with either $\Dual_{p^*L(n)}\Lag$ if $n$ is odd or with the stable subcategory generated by $\Dual_{p^*L(n)}\Lag$ and $\cA(\frac{n}{2})$ if $n$ is even. Hence by \cite{9-authors-II}*{Remark~3.2.3} and Lemma~\ref{lemma:duality-shift} the orthogonal complement $\Lag^\perp$ of $\Lag$ is either $\Lag$ if $n$ is odd or the subcategory generated by $\Lag$ and $\cA(\frac{n}{2})$ if $n$ is even. In particular, $\Lag\subseteq \Lag^\perp$, and since the Poincaré structure is a symmetric one this suffices to prove that $\QF^\sym_{p^*L(n)}|_\Lag=0$. Thus $\Lag$ is an isotropic subcategory of $\cA\big(\frac{n-r'}{2},\frac{n+r'}{2}\big)$ which is furthermore a Lagrangian when $n$ is odd. Finally, when $n$ is even $\Lag^{\perp}$ admits a semi-orthogonal decomposition of the form $(\Lag,\cA(\frac{n}{2}))$ and so the duality invariant subcategory $\Lag^{\perp} \cap \Dual_{p^*L(n)}\Lag^{\perp} = \Lag^{\perp} \cap \ker(r)$ coincides with $\cA(\frac{n}{2})$. We may then identify the associated homology inclusion with the resulting Poincaré functor
\[
\big(\cA\big(\frac{n}{2}\big),\QF^{\sym}_{p^*L(n)}|_{\cA(\frac{n}{2})}\big) \hrar \big(\Dperf(\PP_X V),\QF^{\sym}_{p^*L(n)}\big) .
\]
Now since the functor $p^*(-) \otimes \cO(\frac{n}{2})\colon \Dperf(X) \to \Dperf(\PP_X V)$ is fully-faithful with image $\cA(\frac{n}{2})$ the last Poincaré functor is equivalent to the Poincaré functor
\[
(\Dperf(X),\QF) \hrar \big(\Dperf(\PP_X V),\QF^{\sym}_{p^*L(n)}\big)
\]
where $\QF$ is obtained by restricting $\QF^{\sym}_{p^*L(n)}$ along $p^*(-) \otimes \cO(\frac{n}{2})$. We then compute
\begin{align*}
\QF(P)&=\map_{\PP_XV}(p^*P\otimes p^*P \otimes \cO(n),p^*L(n))^\hC\\
&\simeq \map_{\PP_XV}(p^*(P\otimes P) ,p^*L)^\hC \\
&\simeq\map_X(P\otimes P,L)^\hC \\
&=\QF^{\sym}_L(P) \ ,
\end{align*}
as desired.
\end{proof}

Putting the two lemmas together we obtain the projective bundle formula.

\begin{theorem}[The projective bundle formula]
\label{theorem:projective-bundle-formula}%
Let $X$ be a qcqs scheme, let $L$ be an invertible perfect complex over $X$ with $\Ct$-action and let $V$ be a vector bundle over $X$ of rank $r+1$. Let $\F\colon \Catp\to \E$ be an additive functor valued in some stable $\infty$-category $\E$. Then the following holds
\begin{enumerate}
\item
If $n$ and $r$ are both odd then the exact functors $p^*(-) \otimes \cO(i)\colon\Dperf(X)\to \Dperf(\PP_XV)$ for $i=\frac{n+r}{2},\dots,\frac{n+1}{2}$ induce an equivalence
\[
\F^\hyp(\Dperf(X))^{\oplus (r+1)/2} \tosimeq \F\big(\Dperf(\PP_X V),\QF^{\sym}_{p^*L(n)}) .
\]
\item
If $n$ and $r$ are both even then the exact functors $p^*(-) \otimes \cO(i)\colon\Dperf(X)\to \Dperf(\PP_XV)$ for $i=\frac{n+r}{2},\dots,\frac{n+2}{2}$ together with the Poincaré functor $p^*(-) \otimes \cO(\frac{n}{2})\colon(\Dperf(X),\QF^{\sym}_{L}) \to (\Dperf(\PP_XV),\QF^{\sym}_{p^*L(n)})$ induce an equivalence
\[
\F^\hyp(\Dperf(X))^{\oplus r/2} \oplus \F(\Dperf(X),\QF^{\sym}_L) \tosimeq \F\big(\Dperf(\PP_X V),\QF^{\sym}_{p^*L(n)}) .
\]
\item
\label{equation:pbf-II}%
If $n$ is odd and $r$ is even then we have a fibre sequence 
\begin{equation*}
\F^\hyp(\Dperf(X))^{\oplus r/2}\to \F\big(\Dperf(\PP_XV),\QF^\sym_{p^*L(n)}\big)\xrightarrow{p_*((-) \otimes \cO(-\frac{n+r+1}{2}))} \F\big(\Dperf(X),\QF^\sym_{L \otimes \det V^\vee[-r]}\big) \ ,
\end{equation*}
where the first arrow is induced by the functors $p^*(-) \otimes \cO(i)\colon\Dperf(X)\to \Dperf(\PP_XV)$ for $i=\frac{n+r-1}{2},\dots,\frac{n+1}{2}$.
\item
\label{equation:pbf-I}%
If $n$ is even and $r$ is odd then we have a fibre sequence
\begin{equation*}
\F^\hyp(\Dperf(X))^{\oplus (r-1)/2} \oplus \F(\Dperf(X),\QF^\sym_L)\to \F(\Dperf(\PP_XV),\QF^\sym_{p^*L(n)})\xrightarrow{p_*((-) \otimes \cO(-\frac{n+r+1}{2}))} \F(\Dperf(X),\QF^\sym_{L \otimes \det V^\vee[-r]}\big) \ ,
\end{equation*}
where the first arrow is induced by the functors $p^*(-) \otimes \cO(i)\colon\Dperf(X)\to \Dperf(\PP_XV)$ for $i=\frac{n+r-1}{2},\dots,\frac{n+2}{2}$ together with the Poincaré functor
$p^*(-) \otimes \cO(\frac{n}{2})\colon(\Dperf(X),\QF^{\sym}_{L}) \to (\Dperf(\PP_XV),\QF^{\sym}_{p^*L(n)})$.
\end{enumerate}
\end{theorem}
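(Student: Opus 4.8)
The plan is to feed the structural results of Lemma~\ref{lemma:proj-bundle-II} and Proposition~\ref{proposition:proj-bundle-I} into the standard behaviour of additive functors with respect to Lagrangians and isotropic decompositions. Recall that an additive $\F$ carries split Poincaré–Verdier sequences to \emph{split} fibre sequences of spectra, that is, to direct sum decompositions. The four cases of the statement match precisely the two independent dichotomies already present in the setup: whether $n$ is odd or even, which by Lemma~\ref{lemma:proj-bundle-II} governs whether $\Lag$ is a Lagrangian or merely an isotropic subcategory of $\cA(\frac{n-r'}{2},\frac{n+r'}{2})$; and whether $n-r$ is odd or even, which by the discussion preceding Proposition~\ref{proposition:proj-bundle-I} governs whether $\cA(\frac{n-r'}{2},\frac{n+r'}{2})$ is all of $\Dperf(\PP_XV)$ or a proper Poincaré subcategory with explicitly computed Poincaré–Verdier quotient $(\Dperf(X),\QF^\sym_{L\otimes\det V^\vee[-r]})$.

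The single external ingredient I would invoke is the algebraic Thom construction for isotropic subcategories from~\cite{9-authors-II}: for a Poincaré $\infty$-category $(\C,\QF)$ with isotropic subcategory $\cL$ and isotropic reduction $(\C_\cL,\QF_\cL)$ with homology inclusion $(\C_\cL,\QF_\cL)\to(\C,\QF)$, there is a split Poincaré–Verdier sequence $\Hyp(\cL)\to(\C,\QF)\to(\C_\cL,\QF_\cL)$ whose first functor restricts on the positive summand to the inclusion $\cL\hookrightarrow\C$. Applying an additive $\F$ thus yields $\F(\C,\QF)\simeq\F^\hyp(\cL)\oplus\F(\C_\cL,\QF_\cL)$, which degenerates to $\F(\C,\QF)\simeq\F^\hyp(\cL)$ when $\cL$ is a Lagrangian (so that $\C_\cL=0$). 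Together with the semi-orthogonal decomposition of $\Lag$ and the resulting identification $\F^\hyp(\Lag)\simeq\F^\hyp(\Dperf(X))^{\oplus\lfloor(r'+1)/2\rfloor}$ via the twists $p^*(-)\otimes\cO(i)$ recorded before Lemma~\ref{lemma:proj-bundle-II}, this is essentially everything needed.

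First I would treat $n-r$ even, i.e. cases (1) and (2), where $r'=r$ and $\cA(\frac{n-r'}{2},\frac{n+r'}{2})=\Dperf(\PP_XV)$. If $n$ is odd (case (1)), Lemma~\ref{lemma:proj-bundle-II} exhibits $\Lag$ as a Lagrangian of $(\Dperf(\PP_XV),\QF^\sym_{p^*L(n)})$, so the Thom construction gives $\F(\Dperf(\PP_XV),\QF^\sym_{p^*L(n)})\simeq\F^\hyp(\Lag)\simeq\F^\hyp(\Dperf(X))^{\oplus(r+1)/2}$, the equivalence being induced by $p^*(-)\otimes\cO(i)$ for $i=\frac{n+1}{2},\dots,\frac{n+r}{2}$. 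If $n$ is even (case (2)), Lemma~\ref{lemma:proj-bundle-II} makes $\Lag$ isotropic with reduction $(\Dperf(X),\QF^\sym_L)$ and homology inclusion $p^*(-)\otimes\cO(\frac n2)$, so the Thom construction gives $\F(\Dperf(\PP_XV),\QF^\sym_{p^*L(n)})\simeq\F^\hyp(\Lag)\oplus\F(\Dperf(X),\QF^\sym_L)\simeq\F^\hyp(\Dperf(X))^{\oplus r/2}\oplus\F(\Dperf(X),\QF^\sym_L)$, with the indicated maps.

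Finally, for $n-r$ odd, i.e. cases (3) and (4), where $r'=r-1$, I would begin from the split Poincaré–Verdier sequence of Proposition~\ref{proposition:proj-bundle-I} and apply $\F$ to get the split fibre sequence $\F(\cA(\frac{n-r'}{2},\frac{n+r'}{2}),\QF^\sym_{p^*L(n)})\to\F(\Dperf(\PP_XV),\QF^\sym_{p^*L(n)})\to\F(\Dperf(X),\QF^\sym_{L\otimes\det V^\vee[-r]})$. It then remains to evaluate the fibre term using Lemma~\ref{lemma:proj-bundle-II} exactly as above, now inside the smaller Poincaré category $\cA(\frac{n-r'}{2},\frac{n+r'}{2})$: for $n$ odd (case (3)) $\Lag$ is a Lagrangian there, so the fibre term is $\F^\hyp(\Lag)\simeq\F^\hyp(\Dperf(X))^{\oplus r/2}$; for $n$ even (case (4)) $\Lag$ is isotropic with reduction $(\Dperf(X),\QF^\sym_L)$, so the fibre term is $\F^\hyp(\Lag)\oplus\F(\Dperf(X),\QF^\sym_L)\simeq\F^\hyp(\Dperf(X))^{\oplus(r-1)/2}\oplus\F(\Dperf(X),\QF^\sym_L)$, the maps being as claimed. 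The part that requires care — rather than a genuine obstacle — is the bookkeeping: checking that $\lfloor(r'+1)/2\rfloor$ specializes to $(r+1)/2,\,r/2,\,r/2,\,(r-1)/2$ in the four cases, that the twists $p^*(-)\otimes\cO(i)$ indexing the semi-orthogonal decomposition of $\Lag$ match the ranges in the statement, and that the positive leg of the Thom sequence is precisely these twists followed by the relevant inclusion; it is also this splitting that upgrades the conclusions in (1) and (2) from fibre sequences to honest direct sum decompositions.
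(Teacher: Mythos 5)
Your proof follows the same route as the paper's: in cases (1) and (2) you identify $\cA\big(\frac{n-r'}{2},\frac{n+r'}{2}\big)$ with all of $\Dperf(\PP_XV)$ and apply the isotropic decomposition theorem for additive invariants (the paper's reference \cite{9-authors-II}*{Theorem~3.2.10}) to the $\Lag\subseteq\cA\big(\frac{n-r'}{2},\frac{n+r'}{2}\big)$ from Lemma~\ref{lemma:proj-bundle-II}, while in cases (3) and (4) you additionally feed in the split Poincaré--Verdier sequence of Proposition~\ref{proposition:proj-bundle-I} and apply the same decomposition inside the smaller Poincaré subcategory, then evaluate $\F^\hyp(\Lag)$ via the semi-orthogonal decomposition exactly as in the paper's reference to \cite{Khan-blow-up}*{Lemma~2.8}. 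One small caveat: you phrase the isotropic decomposition theorem as supplying a split Poincaré--Verdier sequence $\Hyp(\cL)\to(\C,\QF)\to(\C_\cL,\QF_\cL)$; as stated in \cite{9-authors-II} the conclusion is the direct sum decomposition $\F(\C,\QF)\simeq\F^\hyp(\cL)\oplus\F(\C_\cL,\QF_\cL)$ for additive $\F$, obtained through an argument involving the algebraic Thom construction rather than one literal Poincaré--Verdier sequence of that shape. Since the decomposition is what you actually use, this wording issue does not affect the correctness of your argument.
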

\begin{proof}
Note that when $n-r$ is even we have $\cA\big(\frac{n-r'}{2},\frac{n+r'}{2}\big)=\Dperf(\PP_XV)$ by~\cite{Khan-blow-up}*{Theorem~3.3~(ii)}, and when $n-r$ is odd this full subcategory participates in a split Poincaré-Verdier sequence as recorded in Proposition~\ref{proposition:proj-bundle-I}.
All the statements now follow by applying \cite{9-authors-II}*{Theorem~3.2.10} and \cite{Khan-blow-up}*{Lemma~2.8} to the isotropic subcategory inclusion $\Lag\subseteq\cA\big(\frac{n-r'}{2},\frac{n+r'}{2}\big)$ of Lemma~\ref{lemma:proj-bundle-II}.
\end{proof}

Our goal is now to study the splitting of the fibre sequences \ref{equation:pbf-II} and \ref{equation:pbf-I}. To do so, we need to compute a special case of the boundary operator. Suppose $r=\rk V-1$ is odd. Then the non-degenerate symmetric bilinear form
\[
\wedge\colon\Lambda^{\frac{r+1}{2}}V [\frac{r+1}{2}] \otimes\Lambda^{\frac{r+1}{2}}V [\frac{r+1}{2}] \to \det V [r+1]\,.
\]
determines a class $e(V) \in \L^{\sym}_{0}(X,\det V[r+1]) = \L^{\sym}_{-r-1}(X,\det V)$, called the \defi{Euler class} of $V$. Note that $e(V)=0$ whenever $V$ has a quotient bundle of odd rank \cite{Walter}*{Proposition~8.1}, in particular whenever $V$ is trivial.

\begin{lemma}
\label{lemma:pbf-boundary}%
Suppose that $r$ is odd and $n$ is even. Then the boundary map
\[
\partial\colon\L^{\sym}_0(X)\to\L^{\sym}_{-1}(X,\det V[r]) = \L^{\sym}_{-r-1}(X,\det V)
\]
of the fibre sequence of Theorem~\ref{theorem:projective-bundle-formula}~\ref{equation:pbf-I}, with $\F=\L$ and $L=\det V[r]$, sends 1 to $e(V)$.
\end{lemma}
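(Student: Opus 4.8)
The strategy is to trace the boundary map of the fibre sequence in Theorem~\ref{theorem:projective-bundle-formula}~\ref{equation:pbf-I} back to the split Poincaré--Verdier sequence of Proposition~\ref{proposition:proj-bundle-I} that produces it, and then to evaluate it on $1$ using the explicit recollement attached to the semiorthogonal decomposition of $\Dperf(\PP_X V)$. By Remark~\ref{remark:square-line-bundles} we may twist and assume $n=0$. Since $L=\det V[r]$ gives $L\otimes\det V^\vee[-r]\simeq\cO_X$, and since $\L^\hyp(\Dperf(X))=\L(\Hyp(\Dperf(X)))\simeq 0$, the fibre sequence in question becomes
\[
\L^\sym(X,\det V[r])\xrightarrow{p^*}\L\big(\Dperf(\PP_X V),\QF^\sym_{p^*(\det V[r])}\big)\xrightarrow{\rho}\L^\sym(X),\qquad \rho=p_*\big((-)\otimes\cO(-\tfrac{r+1}{2})\big),
\]
the identification of the fibre with $\L^\sym(X,\det V[r])$ being the one coming from Lemma~\ref{lemma:proj-bundle-II} and the algebraic surgery used in the proof of Theorem~\ref{theorem:projective-bundle-formula} (the hyperbolic summand $\L^\hyp(\Lag)$ vanishing). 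Thus $\partial$ is the boundary map of $\L$ applied to the split Poincaré--Verdier sequence $(\cB,\QF_\cB)\to(\Dperf(\PP_X V),\QF^\sym_{p^*(\det V[r])})\xrightarrow{\rho}(\Dperf(X),\QF^\sym_{\cO_X})$ of Proposition~\ref{proposition:proj-bundle-I} (here $r'=r-1$ and $\cB=\cA(\tfrac{1-r}{2},\tfrac{r-1}{2})$), composed with the inverse of $\L_*(\cB,\QF_\cB)\xrightarrow{\sim}\L^\sym_*(X,\det V[r])$.

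Next I would evaluate this boundary on the unit Poincaré object $(\cO_X,u)$, $u\colon\cO_X\otimes\cO_X\xrightarrow{\sim}\cO_X$, whose class is $1$. The Verdier projection $\rho$ has fully faithful left and right adjoints $\rho^L=p^*(-)\otimes\cO(\tfrac{r+1}{2})$ and $\rho^R=p^!(-)\otimes\cO(\tfrac{r+1}{2})$, and by~\eqref{equation:tau} we have $p^!\cO_X\simeq\det\mathcal{V}(-r-1)[r]$. Hence $\rho^L(\cO_X)=\cO(\tfrac{r+1}{2})$, $\rho^R(\cO_X)=\det\mathcal{V}(-r-1)[r]\otimes\cO(\tfrac{r+1}{2})\simeq\Dual_{p^*(\det V[r])}(\rho^L\cO_X)$, and the recollement comparison map $\rho^L(\cO_X)\to\rho^R(\cO_X)$ is, after untwisting by $\cO(-\tfrac{r+1}{2})$, exactly the map~\eqref{equation:important-map} $\cO_{\PP_X V}\to\det\mathcal{V}(-r-1)[r]$. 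Moreover this comparison map is precisely the adjoint of the hermitian form obtained by transporting $u$ along $p^*(-)\otimes\cO(\tfrac{r+1}{2})$ through the equivalence $\tau$ of~\eqref{equation:tau} --- i.e.\ it is the instance of the recipe of Construction~\ref{construction:pushforward-hermitian} that is relevant here. By the explicit description of the boundary map associated with a split Poincaré--Verdier sequence (see~\cite{9-authors-II}), the class $\partial(1)$ is therefore represented by the Poincaré object of $(\cB,\QF_\cB[1])$ whose underlying object is, up to a shift, $\fib[\rho^L(\cO_X)\to\rho^R(\cO_X)]=\fib[\cO_{\PP_X V}\to\det\mathcal{V}(-r-1)[r]]\otimes\cO(\tfrac{r+1}{2})$, equipped with the symmetric form induced by the symmetry of~\eqref{equation:important-map}.

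It then remains to identify this class with $e(V)$. Using the Koszul-type resolution of $\det\mathcal{V}(-r-1)[r]$ recalled before~\eqref{equation:tau}, the fibre above is the complex $\big[\Lambda^r\mathcal{V}(-r)\to\cdots\to\Lambda^1\mathcal{V}(-1)\big]\otimes\cO(\tfrac{r+1}{2})$ in cohomological degrees $[0,r-1]$, whose $i$-th term $p^*\Lambda^i V\otimes\cO(\tfrac{r+1}{2}-i)$ lies in $\cA(\tfrac{r+1}{2}-i)$; as $i$ runs from $1$ to $r$ these indices run through all the pieces of $\cB$, the term $i=\tfrac{r+1}{2}$ landing in the self-dual middle piece $\cA(0)$. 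By Lemma~\ref{lemma:duality-shift} the duality of $\cB$ pairs the $i$-th and $(r+1-i)$-th Koszul terms, so the subcomplex spanned by the terms $i=1,\dots,\tfrac{r-1}{2}$ --- which lies in $\cA(1,\tfrac{r-1}{2})=\Lag$ --- is isotropic for the induced form by semiorthogonality, and a direct computation of its orthogonal shows that the associated reduction is the single middle term $p^*\Lambda^{(r+1)/2}V\in\cA(0)$, carrying the form induced by the multiplication on $\Lambda^\bullet\mathcal{V}$, namely the wedge pairing. Hence in $\L$ the class $\partial(1)$ equals the class of this middle Poincaré object, and under the identification $\cA(0)\simeq\Dperf(X)$ given by $p^*$ --- together with the shifted version of the computation in Lemma~\ref{lemma:proj-bundle-II} identifying the restricted Poincaré structure with $\QF^\sym_{\det V[r+1]}$ --- this is exactly $(\Lambda^{(r+1)/2}V[\tfrac{r+1}{2}],\wedge)$, i.e.\ $e(V)$.

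The main obstacle I anticipate is the bookkeeping of suspensions and Koszul signs in the last two steps: making precise, in the $\infty$-categorical formalism, the description of the boundary of a split Poincaré--Verdier sequence as the ``defect'' of a transported form and getting its shift right, and then checking that the symmetric form induced on the middle Koszul term through Construction~\ref{construction:pushforward-hermitian}, the equivalences~\eqref{equation:tau} and~\eqref{equation:important-map}, and the identification of $\cA(0)$ with $\Dperf(X)$, literally coincides --- including symmetry versus skew-symmetry, which is the reason the shift $[\tfrac{r+1}{2}]$ must enter --- with the form $\wedge$ used to define the Euler class. Everything else is a routine unwinding of the recollement together with a sublagrangian reduction of the type already carried out in Lemma~\ref{lemma:proj-bundle-II}.
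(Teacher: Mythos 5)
Your proposal is correct and follows essentially the same route as the paper's proof: reduce via Remark~\ref{remark:square-line-bundles} to a specific $n$, lift the unit Poincaré object through the split Poincaré--Verdier projection of Proposition~\ref{proposition:proj-bundle-I}, identify the adjoint of the lifted form with the Koszul map~\eqref{equation:important-map}, compute the boundary as the (co)fibre (the explicit Koszul-type complex), and then perform the sublagrangian reduction along $\Lag$ to exhibit the middle term $(\Lambda^{(r+1)/2}V[\tfrac{r+1}{2}],\wedge)$, i.e.\ $e(V)$. The only real difference from the paper is the normalization: you put $n=0$ and carry an $\cO(\pm\tfrac{r+1}{2})$ twist throughout, whereas the paper sets $n=-r-1$, making the Poincaré--Verdier projection literally $p_*$ with no twist and the lift literally $p^*\cO_X=\cO_{\PP_X V}$; that choice is precisely what tames the ``bookkeeping of suspensions and Koszul signs'' you flag as the main obstacle. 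Your identification of the recollement comparison map $\rho^L(\cO_X)\to\rho^R(\cO_X)$ with $q_\sharp$ is a clean equivalent reformulation of the paper's uniqueness argument (``$q_\sharp$ is characterized by $p_*q_\sharp=\id$''), and it checks out by the triangle identities.
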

\begin{proof}
We first note that since $\L^{\hyp} = 0$ the domain and target of this boundary map are indeed as indicated. Secondly, the parity conditions on $n$ and $r$ imply that $n+r+1$ is even. Hence, by Remark~\ref{remark:square-line-bundles}, we may assume without loss of generality that $n=-r-1$; this assumption simplifies the notation in the following argument, although mathematically it makes no essential difference. In particular, our Poincaré-Verdier projection takes the form
\[
p_*\colon \big(\Dperf(\PP_X V),\QF^{\sym}_{p^!\cO_X}\big) = \big(\Dperf(\PP_X V),\QF^{\sym}_{\det\mathcal{V}(-r-1)[r]}\big) \to (\Dperf(X),\cO_X) .
\]
We now carry out the explicit procedure to produce the boundary map as described in \cite{9-authors-II}*{Proposition~4.4.8}. For this, we need to find a hermitian object in $\big(\Dperf(\PP_X V),\QF^{\sym}_{\det\mathcal{V}(-r-1)[r]}\big)$ lifting the unit Poincaré object $\cO_X$ in $(\Dperf(X),\QF^{\sym}_X)$. Since $p_*$ is a Poincaré-Verdier projection there is a natural candidate for such a lift, obtained by applying to $\cO_X$ the left adjoint $p^*$, in which case the hermitian structure map
\[
\QF^{\sym}_{\det\mathcal{V}(-r-1)[r]}(\cO_{\PP_X V}) = \QF^{\sym}_{\det\mathcal{V}(-r-1)[r]}(p^*\cO_X)\to \QF^{\sym}_X(\cO_X)
\]
is an equivalence. Let $q \in \Om^{\infty}\QF^{\sym}_{\det\mathcal{V}(-r-1)[r]}(\cO_{\PP_X V})$ be the hermitian form corresponding to $\id \in \Om^{\infty}\QF^{\sym}_X(\cO_X)$ under the above equivalence. 
To calculate the boundary of $q$, we first note that the induced map of spaces
\[
\Map_{\PP_X V}\big(\cO_{\PP_X V}, \Dual_{\det\mathcal{V}(-r-1)[r]}\cO_{\PP_X V}\big) \to \Map_X(\cO_X, \Dual_X\cO_X)
\]
is already an equivalence before taking $\Ct$-homotopy fixed points, and that the second, and hence also the first, are discrete spaces. In particular, $q$ is determined up to homotopy by the associated map
\[
q_{\sharp}\colon p^*\cO_X = \cO_{\PP_X V} \to \Dual_{\det\mathcal{V}(-r-1)[r]}\cO_{\PP_X V} = \det\mathcal{V}(-r-1)[r],
\]
which in turn is determined up to homotopy by the condition that it maps to the identity map $\cO_X \to \cO_X$
by $p_*$, under the identifications 
\[
\cO_X \xrightarrow{\simeq} p_*p^*\cO_X \quad\text{and}\quad p_*\det\mathcal{V}(-r-1)[r] \stackrel{\tau}{\simeq} p_*p^!\cO_X \xrightarrow{\simeq} \cO_X
\]
determined by the unit of $p^* \dashv p_*$, the counit of $p_* \dashv p^!$, and the map $\tau$ 
of~\eqref{equation:tau} 
used to define to Poincaré structure on $p_*$. By the construction of $\tau$ the map $q_{\sharp}$ must then be homotopy equivalent to the map
\[
\cO_{\PP_X V}\to \det \mathcal{V}(-r-1)[r]
\]
of~\eqref{equation:important-map}, determined by the inclusion of the bottom term in the complex
\[
\Lambda^{r}\mathcal{V}(-r)\to \cdots \to \Lambda^1\mathcal{V}(-1)\to\cO_{\PP_X V}\,.
\]
Unwinding the definitions, we then see that the boundary of the hermitian object $(\cO_{\PP_X V},q)$ is the object $K \in \cA\big(\frac{n-r'}{2},\frac{n+r'}{2}\big) \subseteq \Dperf(\PP_X V)$ represented by the complex
\[
\Lambda^r\mathcal{V}(-r)\to \cdots \to \Lambda^1\mathcal{V}(-1) 
\]
concentrated in degrees $[1,r]$, with the Poincaré form $\beta \in \QF^{\sym}_{\det\mathcal{V}[r+1](-r-1)}(K)$ given by the collection of wedge pairings
\[
\Lambda^i \mathcal{V}(-i) \otimes \Lambda^{r+1-i}\mathcal{V}(-r-1+i) \to \Lambda^{r+1}\mathcal{V}(-r-1).
\]
Note that the subcomplex of $K$ given by
\[
\Lambda^{\frac{r-1}{2}}\mathcal{V}\big(-\frac{r-1}{2}\big)\to \cdots \to \Lambda^1\mathcal{V}(-1) 
\]
lives in $\Lag$ and so it forms canonically a surgery datum (since $\Lag$ is isotropic).
The result of the surgery is exactly $\Lambda^{\frac{r+1}{2}}\mathcal{V}[\frac{r+1}{2}]\big(\frac{-r-1}{2}\big) = \Lambda^{\frac{r+1}{2}}\mathcal{V}[\frac{r+1}{2}]\big(\frac{n}{2}\big)$ with bilinear form given by the wedge pairing
\[
\Lambda^{\frac{r+1}{2}}\mathcal{V}[\frac{r+1}{2}]\big(\frac{n}{2}\big)\otimes \Lambda^{\frac{r+1}{2}}\mathcal{V}[\frac{r+1}{2}]\big(\frac{n}{2}\big)\to \Lambda^{r+1}\mathcal{V}[r+1](n).
\]
that is, it's the pullback of $e(V)$ along $p^*(-) \otimes \cO\big(\frac{n}{2}\big)$, as required.
\end{proof}

\begin{corollary}
Suppose $n+r$ is odd. If either $r$ is even or $r$ is odd and $e(V)=0$ then for every $L \in \Picspace(X)^{\BC}$ the Poincaré functor
\[
(\Dperf(\PP_XV), \QF^{\sym}_{p^*L(n)}) \xrightarrow{p_*((-) \otimes \cO(-\frac{n+r+1}{2}))} (\Dperf(X),\QF^{\sym}_{L \otimes \det V^{\vee}[-r]})
\]
admits a Poincaré section. In particular, in these cases the fibre sequence in 
Items~\ref{equation:pbf-II} and~\ref{equation:pbf-I} of Theorem~\ref{theorem:projective-bundle-formula} split.
\end{corollary}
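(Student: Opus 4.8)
The plan is to produce a Poincaré section of the Poincaré functor $q := p_*((-)\otimes\cO(-\tfrac{n+r+1}{2}))$. Granting this, the final assertion is immediate: any additive functor $\F$ carries a Poincaré functor that admits a Poincaré section to a split map, and the fibre sequences of Theorem~\ref{theorem:projective-bundle-formula}~\ref{equation:pbf-II} and \ref{equation:pbf-I} are, by construction, obtained by applying $\F$ to the split Poincaré-Verdier sequence of Proposition~\ref{proposition:proj-bundle-I} and then rewriting the kernel term through the isotropic subcategory $\Lag$ of Lemma~\ref{lemma:proj-bundle-II}. Both hypotheses force $n+r$ to be odd, so that split Poincaré-Verdier sequence is available with kernel $\cA' = \cA\big(\tfrac{n-r'}{2},\tfrac{n+r'}{2}\big)$; it thus remains to split $q$.

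I would analyse the boundary of this split Poincaré-Verdier sequence following \cite{9-authors-II}*{\S 4.4} and the proof of Lemma~\ref{lemma:pbf-boundary}. Applying the underlying split-Verdier section $p^*(-)\otimes\cO(\tfrac{n+r+1}{2})$ to the unit Poincaré object of $(\Dperf(X),\QF^\sym_{L\otimes\det V^\vee[-r]})$ produces a hermitian object of $(\Dperf(\PP_XV),\QF^\sym_{p^*L(n)})$ whose boundary is a Poincaré object $\partial$ of $(\cA',\QF^\sym_{p^*L(n)}|_{\cA'})$, and the obstruction to upgrading that section to a duality-preserving functor is the class of $\partial$ (equivalently, that $\partial$ be stably metabolic). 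Exactly as in Lemma~\ref{lemma:pbf-boundary}, $\partial$ is represented by the complex $\Lambda^r\mathcal{V}(-r)\to\cdots\to\Lambda^1\mathcal{V}(-1)$ sitting in degrees $[1,r]$ with the wedge-pairing form, and by Lemma~\ref{lemma:proj-bundle-II} the subcomplex consisting of the terms $\Lambda^i\mathcal{V}(-i)$ with $i\le\lfloor r/2\rfloor$ lies in $\Lag$ and hence is a surgery datum on $\partial$. Performing this surgery: when $r$ is even it cancels all terms (the pairing $\Lambda^i\otimes\Lambda^{r+1-i}$ has no self-dual summand), so the surgery datum extends to a Lagrangian of $\partial$ and $\partial$ is metabolic; when $r$ is odd the surgery outcome is the pullback along $p^*(-)\otimes\cO(\tfrac n2)$ of the Euler-class Poincaré object $(\Lambda^{(r+1)/2}V[\tfrac{r+1}{2}],\wedge)$, so the class of $\partial$ vanishes as soon as that of $e(V)$ does. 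The dependence on $L$ reduces to the model $L=\det V[r]$ of Lemma~\ref{lemma:pbf-boundary} by tensoring, with Remark~\ref{remark:square-line-bundles} ensuring the twist is compatible with all the $\Ct$-structures in sight.

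Under either hypothesis the class of $\partial$ therefore vanishes — trivially when $r$ is even, and because $e(V)=0$ when $r$ is odd — whence $q$ admits a Poincaré section and both fibre sequences split. The step I expect to be the main obstacle is the first half of the preceding paragraph: making precise, within the recollement and algebraic-surgery formalism of \cite{9-authors-II}, that a Poincaré section of the split Poincaré-Verdier projection $q$ exists precisely when the boundary Poincaré object $\partial$ is (stably) metabolic, and that a Lagrangian of $\partial$ can be used to correct the underlying split-Verdier section $p^*(-)\otimes\cO(\tfrac{n+r+1}{2})$ into a duality-preserving one — in particular keeping track of the stable-versus-honest metaboliocity distinction over a general qcqs base. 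The remaining points, namely the identification of $\partial$ with the twisted Koszul complex, the surgery computation, and the $L$-twist reduction, are routine given Lemmas~\ref{lemma:duality-shift}, \ref{lemma:proj-bundle-II} and \ref{lemma:pbf-boundary}.
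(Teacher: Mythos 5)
Your boundary analysis — that the obstruction to lifting the unit Poincaré object of $(\Dperf(X),\QF^{\sym}_{L\otimes\det V^\vee[-r]})$ along $q := p_*((-)\otimes\cO(-\tfrac{n+r+1}{2}))$ is the class of $\partial$, identified with (a pullback of) the Euler class via Lemma~\ref{lemma:pbf-boundary}, and that this vanishes under either hypothesis — is correct and essentially matches how the paper begins: it deduces from Theorem~\ref{theorem:projective-bundle-formula} and Lemma~\ref{lemma:pbf-boundary} that $1\in\L_0^\sym(X)$ is in the image of $\L_0^\sym(\PP_XV,\det\mathcal{V}(n)[r])$, producing, after a cobordism lift through \cite{9-authors-II}*{Theorem~2.5.3}, a Poincaré object $(x',q')$ in $(\Dperf(\PP_XV),\QF^\sym_{\det\mathcal{V}(n)[r]})$ with $p_*(x'\otimes\cO(-\tfrac{n+r+1}{2}))\simeq\cO_X$.

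However, the step you yourself flag as the main obstacle — ``a Poincaré section of $q$ exists precisely when $\partial$ is (stably) metabolic, and a Lagrangian of $\partial$ can be used to correct the split-Verdier section $p^*(-)\otimes\cO(\tfrac{n+r+1}{2})$ into a duality-preserving one'' — is a genuine gap, and the ``correction'' picture is not how it works. Producing a single Poincaré object over the unit is object-level data; a Poincaré section is a duality-preserving \emph{functor}, and there is no general obstruction theory of the sort you invoke that upgrades the former to the latter. Moreover, the section the paper ends up with is not a corrected version of $p^*(-)\otimes\cO(\tfrac{n+r+1}{2})$ at all: its underlying functor is $z\mapsto p^*(z)\otimes x'$ for the new object $x'$ above, which is in general a different exact functor. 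The missing idea is a module-theoretic one: $q$ is a map of $(\Dperf(X),\QF^\sym_X)$-modules in $\Catp$, and the source $(\Dperf(\PP_XV),\QF^\sym_{p^*L(n)})$ is the relative tensor product $(\Dperf(\PP_XV),\QF^\sym_{\det\mathcal{V}(n)[r]})\otimes_{(\Dperf(X),\QF^\sym_X)}(\Dperf(X),\QF^\sym_{L\otimes\det V^\vee[-r]})$ on underlying categories with duality, via the multiplicative structure of diagram~\eqref{equation:functoriality-schemes-1}. The Poincaré object $(x',q')$ of the first tensor factor then determines a $(\Dperf(X),\QF^\sym_X)$-linear Poincaré functor $F_{(x',q')}$ into the source, and the composite $G_{(x',q')}=q\circ F_{(x',q')}$ is shown to be an equivalence by the projection formula and the choice of $x'$ (it equals the identity on underlying exact functors, and the target carries the symmetric Poincaré structure associated to its duality, so this suffices). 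The section is $F_{(x',q')}\circ G_{(x',q')}^{-1}$. Without this tensor/module step, your argument cannot be completed as outlined.
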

\begin{proof}
Assume that $n+r$ is odd.
We claim that if either $r$ is even or $r$ is odd and $e(V)=0$ then $1 \in \L^{\sym}_0(X)$ belongs to the image of the map
\[
\L^{\sym}_0(\PP_XV, \det\mathcal{V}(n)[r]) \xrightarrow{p_*((-) \otimes \cO(\frac{n+r+1}{2}))} \L^{\sym}_0(X) \ .
\]
Indeed when $r$ is even this map is an equivalence by Theorem~\ref{theorem:projective-bundle-formula}\ref{equation:pbf-II} and when $r$ is odd the result follows from Lemma~\ref{lemma:pbf-boundary}. This means that there exists a Poincaré object $(x,q)$ in $(\Dperf(\PP_X V),\QF^{\sym}_{\det\mathcal{V}(n)[r]})$ whose image under $p_*((-) \otimes \cO(-\frac{n+r+1}{2}))$ is cobordant to $(\cO_x,\id)$ (where $\id \in \Om^{\infty}\QF^{\sym}_X(\cO_X)$ is the identity form on $\cO_X$). Now since $p_*((-) \otimes \cO(-\frac{n+r+1}{2}))$ is a split Poincaré-Verdier projection~\cite{9-authors-II}*{Theorem 2.5.3} tells that the induced functor on cobordism categories is bicartesian fibration, and hence we can lift any cobordism between $p_*(x \otimes \cO(-\frac{n+r+1}{2}))$ (with its induced form) and $(\cO_X,\id)$ to a cobordism between $(x,q)$ and some Poincaré object $(x',q')$ in $\big(\Dperf(\PP_X V),\QF^{\sym}_{\det\mathcal{V}(n)[r]}\big)$ lying above $(\cO_X,\id)$. In particular, $p_*(x' \otimes \cO(-\frac{n+r+1}{2})) \simeq \cO_X$.

We are now going to use the Poincaré object $(x',q')$ in order to construct the desired Poincaré section. For this, note that the symmetric monoidal structure maps of the top horizontal functor in~\eqref{equation:functoriality-schemes-1} determine in particular a Poincaré functor
\[
(\Dperf(\PP_X V),\QF^{\sym}_{\det\mathcal{V}(n)[r]}) \otimes_{(\Dperf(X),\QF^{\sym}_X)} \big(\Dperf(X),\QF^{\sym}_{L \otimes \det V^{\vee}[-r]}\big) \to (\Dperf(\PP_X V),\QF^{\sym}_{p^*L(n)})
\]
which is an equivalence on underlying stable $\infty$-categories with duality. In particular, the Poincaré object $(x',q')$ in $(\Dperf(\PP_X V),\QF^{\sym}_{\det\mathcal{V}(n)[r]})$ determines a $(\Dperf(X),\QF^{\sym}_X)$-linear Poincaré functor
\[
F_{(x',q')}\colon \big(\Dperf(X),\QF^{\sym}_{L \otimes \det V^{\vee}[-r]}\big) \to \big(\Dperf(\PP_X V),\QF^{\sym}_{p^*L(n)}\big) ,
\]
whose underlying exact functor is $F_{x'}(-) = p^*(-) \otimes x'$. Consider the composite Poincaré functor
\[
G_{(x',q')}\colon \big(\Dperf(X),\QF^{\sym}_{L \otimes \det V^{\vee}[-r]}\big) \xrightarrow{F_{(x',q')}} \big(\Dperf(\PP_X V),\QF^{\sym}_{p^*L(n)}\big) \xrightarrow{p_*((-) \otimes \cO(-\frac{n+r+1}{2})} \big(\Dperf(X),\QF^{\sym}_{L \otimes \det V^{\vee}[-r]}\big).
\]
On the level of underlying exact functors this composite is given by 
\[
z \mapsto p_*\big(p^*(z) \otimes x' \otimes \cO\big(-\frac{n+r+1}{2}\big)\big) = z \otimes p_*\big(x'\otimes \cO\big(-\frac{n+r+1}{2}\big)\big) = z,
\]
where the first equivalence is by the projection formula and the second by the construction of $x'$.  We conclude that $G_{(x',q')}$ is an equivalence on underlying stable $\infty$-categories, and hence on the level of underlying stable $\infty$-categories with duality. Since $\QF^{\sym}_{L \otimes \det V^{\vee}[-r]}$ is the symmetric Poincaré structure associated to the underlying duality we conclude that $G_{(x',q')}$ is an equivalence of Poincaré $\infty$-categories. In fact, one can show that $G_{(x',q')}$ is homotopic to the identity Poincaré functor on $\big(\Dperf(X),\QF^{\sym}_{L \otimes \det V^{\vee}[-r]}\big)$, so that $F_{(x',q')}$ gives the desired section, though we will not have to prove this: even if $G_{(x',q')}$ is merely some equivalence, we get a section by taking $F_{(x',q')} \circ G_{(x',q')}^{-1}$.
\end{proof}

\subsection{Genuine refinement for the projective line}
\label{subsection:genuine-projective}%

In~\S\ref{subsection:pbf} we have proven a general projective bundle formula for the symmetric Poincaré structure. In this section, we want to address the analogous question for the genuine Poincaré structures $\QF^{\ge m}$. For simplicity, and since it is the only case we use, we only consider the case $V=\cO^{\oplus 1}$ and $m=0$, so that $\PP_X V=X\times\Pone$ and $r=1$. 

\begin{construction}
\label{construction:projective-line}%
Consider the sequence of $S$-linear $\infty$-categories with duality 
\[
(\Dperf(X),\Dual_L)\xrightarrow{q^*} (\Dperf(\Pone_X),\Dual_{q^*L}) \xrightarrow{q_*(- \otimes \cO(-1))} (\Dperf(X),\Dual_{L[-1]})
\]
underlying the split Poincaré-Verdier sequence of Proposition~\ref{proposition:proj-bundle-I}. We refine it to a sequence in $\Mod_S(\Catpst)$ as follows.
For the left and middle $S$-linear $\infty$-categories, we use the standard $\tstruc$-structure on their Ind-completions $\Derqc(\Pone_S)$ and $\Derqc(S)$, with respect to which the pullback functor $q^*$ is right $\tstruc$-exact. As for the right arrow, we note that while the induced functor
\[
q_*(- \otimes \cO(-1)) \colon \Derqc(\Pone_X) \to \Derqc(X)
\]
on the level of Ind-completions is not right $\tstruc$-exact with respect to the corresponding standard t-structures, it does become so if we endow the target with the $(-1)$-shift of the standard $\tstruc$-structure (that is, the $\tstruc$-structure in which the connective objects are those which are $(-1)$-connective in the standard $\tstruc$-structure). 
Using the shifted $\tstruc$-structure on right most term we may then view the above sequence as a sequence in $\Mod_S(\Catpst)$. By Example~\ref{example:split-is-flat}, the resulting sequence is a bounded Karoubi sequence. Applying the composed functor $\Catpst \to \Catpt \to \Catp_{t\geq m}$ (where the latter is the truncation functor of Proposition~\ref{proposition:genuine}) for a given $m \in \ZZ$ we then obtain a sequence of Poincaré $\infty$-categories
\[
(\Dperf(X),\QF^{\geq m}_L)\xrightarrow{p^*} (\Dperf(\Pone \times X),\QF^{\geq m}_{p^*L})\xrightarrow{p_*(-\otimes\cO(-1))} (\Dperf(X),\QF^{\geq m-1}_{L[-1]}).
\]
To avoid confusion, let us point out that on the right most term, we have $(-)^{\geq m-1}$ instead of $(-)^{\geq m}$ because the truncation is taken with respect to the shifted $\tstruc$-structure on $\Dperf(X)$.
\end{construction}

Our goal in this subsection is to prove the following refinement of this result:
\begin{proposition}
\label{proposition:beilinson}%
For every $m \in \ZZ \cup \{-\infty,\infty\}$ the sequence
\[
(\Dperf(X),\QF^{\geq m}_L)\xrightarrow{p^*} (\Dperf(\Pone \times X),\QF^{\geq m}_{p^*L})\xrightarrow{p_*(-\otimes\cO(-1))} (\Dperf(X),\QF^{\geq m-1}_{L[-1]}).
\]
of Construction~\ref{construction:projective-line} is a split Poincaré-Verdier sequence.
\end{proposition}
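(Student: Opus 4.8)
The plan is to present the sequence as the image, under a genuine-truncation functor, of a \emph{bounded Karoubi sequence} of $\Dperf(X)$-linear $\infty$-categories with duality, apply Proposition~\ref{proposition:flat-is-karoubi-sym}, and then promote the resulting Poincaré-Karoubi projection to a split Poincaré-Verdier one using idempotent completeness of derived $\infty$-categories of schemes. Concretely, by Construction~\ref{construction:projective-line} the displayed sequence is obtained by applying the composite $\Catpst \to \Catpt \to \Catp_{\tstruc \geq m}$ (read in the evident way for $m = \pm\infty$) to a sequence in $\Mod_S(\Catpst)$, with base $S = X$, whose underlying Verdier sequence is the split Verdier sequence
\[
\Dperf(X) \xrightarrow{p^*} \Dperf(\Pone_X) \xrightarrow{p_*(-\otimes\cO(-1))} \Dperf(X)
\]
of Proposition~\ref{proposition:proj-bundle-I}, specialised to $V = \cO_X^{\oplus 2}$, $n = 0$, $r = 1$ (so that $\det V \simeq \cO_X$ and the quotient duality is $\Dual_{L[-1]}$). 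Since $\Pone_X \to X$ is smooth, $\Dperf(\Pone_X)$ is flat over $\Dperf(X)$ by Example~\ref{example:flat}, so this data is a bounded Karoubi sequence by Example~\ref{example:split-is-flat}. I would stress that the orientations used in Construction~\ref{construction:projective-line} are the standard $\tstruc$-structures on $\Derqc(X)$ and $\Derqc(\Pone_X)$ for the first two terms and the $(-1)$-shift of the standard $\tstruc$-structure on $\Derqc(X)$ for the last; this shift is what turns the exponent $m$ into $m-1$ on the quotient term.

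Given this, by Proposition~\ref{proposition:flat-is-karoubi-sym} — its first part for $m = \pm\infty$ and its second part for $m \in \ZZ$, the latter being available because Construction~\ref{construction:projective-line} arranges $p_*(-\otimes\cO(-1))$ to refine to a morphism in $\Mod_S(\Catpst)$ whose induced functor on Ind-completions is (right) $\tstruc$-exact — the Poincaré functor
\[
p_*(-\otimes\cO(-1))\colon (\Dperf(\Pone_X),\QF^{\geq m}_{p^*L}) \longrightarrow (\Dperf(X),\QF^{\geq m-1}_{L[-1]})
\]
is a Poincaré-Karoubi projection. Its underlying exact functor is a split Verdier projection, and both $\Dperf(\Pone_X)$ and $\Dperf(X)$ are idempotent complete (as are the derived $\infty$-categories of all qcqs schemes); hence a Poincaré-Karoubi projection between them is already a Poincaré-Verdier projection, and being split on underlying $\infty$-categories, it is a split Poincaré-Verdier projection.

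It then remains to identify the fibre of this projection with $(\Dperf(X),\QF^{\geq m}_L)$ via $p^*$. On underlying $\infty$-categories the fibre is $\ker(p_*(-\otimes\cO(-1)))$, which by Proposition~\ref{proposition:proj-bundle-I} is the essential image of the fully faithful functor $p^*$, carrying the restriction of $\QF^{\geq m}_{p^*L}$ along $p^*$; so I must check that the canonical Poincaré-functor structure $\eta$ on $p^*$ (from \S\ref{subsection:scheme-to-rigid}--\S\ref{subsection:genuine}) induces an equivalence $\QF^{\geq m}_L(P) \tosimeq \QF^{\geq m}_{p^*L}(p^*P)$ for every $P \in \Dperf(X)$. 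For $m = -\infty$ this is the projection-formula computation
\[
\QF^\sym_{p^*L}(p^*P) = \map_{\Pone_X}(p^*(P \otimes P), p^*L)^{\hC} = \map_X(P \otimes P, p_*p^*L)^{\hC} = \map_X(P \otimes P, L)^{\hC} = \QF^\sym_L(P),
\]
using $p_*\cO_{\Pone_X} \simeq \cO_X$ and the $\Ct$-equivariance of the adjunction isomorphism. For the remaining $m$ one reduces to this case together with the same statement on linear parts: by Lemma~\ref{lemma:connected-cover-quadratic-functors}, $\QF^{\geq m}_{(-)}$ is built from $\QF^\sym_{(-)}$ and the $m$-connective cover of its linear part, and $\Ind(p^*) = p^*\colon \Derqc(X) \to \Derqc(\Pone_X)$ is fully faithful and, as $p$ is flat, $\tstruc$-exact, hence commutes with $\tau_{\geq m}$. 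Combining this with the previous paragraph yields the asserted split Poincaré-Verdier sequence.

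\textbf{Main obstacle.} The formal promotion above is routine; the real care lies in the orientation bookkeeping — verifying that Construction~\ref{construction:projective-line} genuinely produces a morphism in $\Mod_S(\Catpst)$ satisfying the $\tstruc$-exactness hypothesis of Proposition~\ref{proposition:flat-is-karoubi-sym}, and carrying these orientations through the fibre identification so that the quotient term acquires $\QF^{\geq m-1}_{L[-1]}$ rather than $\QF^{\geq m}_{L[-1]}$. The case $m = -\infty$ reproduces Proposition~\ref{proposition:proj-bundle-I} itself and anchors the linear-part reduction.
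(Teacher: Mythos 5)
Your plan breaks in two places, and both fractures point to the same missing ingredient: the behaviour of the \emph{linear parts} $\LT_{p^*L}$, which is the actual technical content of the paper's proof of this proposition (Lemma~\ref{lemma:truncation}).

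\textbf{The projection side.} You invoke Proposition~\ref{proposition:flat-is-karoubi-sym} for $m\in\ZZ$, parenthetically noting that the induced functor on Ind-completions is only \emph{right} $\tstruc$-exact. But the hypothesis of that proposition, and the step in its proof where it is used, requires genuine t-exactness: one must commute $\LKan_p$ past $\tau_{\geq m}$, which needs both right and left t-exactness. The functor $p_*(-\otimes\cO(-1))\colon\Derqc(\Pone_X)\to\Derqc(X)$ with the $(-1)$-shifted t-structure on the target is not left t-exact: it sends $\cO(1)$ (in heart degree $0$) to $\cO_X$ (in heart degree $0$), whereas the shifted t-structure would require it to land in degrees $\leq -1$. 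So Proposition~\ref{proposition:flat-is-karoubi-sym} simply does not apply. The paper circumvents this entirely: since the underlying exact functor is a \emph{split} Verdier projection, it automatically carries a Poincaré-Verdier projection structure with target Poincaré structure $\QFD$ obtained by precomposing with the left adjoint $p^*(-)\otimes\cO(1)$, and the remaining work is to identify $\QFD$ with $\QF^{\geq m-1}_{L[-1]}$ by comparing linear parts.

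\textbf{The inclusion side.} You argue that the Poincaré structure map on $p^*$ is an equivalence because $p^*$ is t-exact and hence commutes with $\tau_{\geq m}$. This would work if $\LT_{p^*L}$ were equal to $p^*\LT_L$, but it is not — the Tate construction does not commute with pullback. Instead one has the Beilinson sequence $p^*\LT_L\to\LT_{p^*L}\to p^*\LT_L\otimes\cO(-1)$ on linear parts, and what one must show is that $p_*\tau_{\geq m}\LT_{p^*L}\simeq\tau_{\geq m}\LT_L$. This reduces to the vanishing of $p_*\tau_{\geq m}(p^*\LT_L\otimes\cO(-1))$, which in turn needs: (i) the Beilinson sequence of linear parts remains exact after applying $\tau_{\geq m}$; and (ii) the truncation interacts well with $\otimes\cO(-1)$. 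Both require the sequence to split Zariski-locally — this is Lemma~\ref{lemma:truncation}~(2), whose proof is a nontrivial computation with the Tate Frobenius of $\GEM(R[t])$, identifying the map $\GEM(R)^{\tC}\otimes_{\GEM R}\GEM(R[t])\to\GEM(R[t])^{\tC}$ as the inclusion of the even-power summand. Without this you cannot conclude that the truncated sequence stays exact, and the inclusion-side identification collapses.

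So while your high-level picture (split Verdier sequence plus orientation bookkeeping) is correct, the ``routine formal promotion'' and ``orientation bookkeeping'' framing hides the genuine obstacle. The heart of the proposition is Lemma~\ref{lemma:truncation}, and neither arm of your argument supplies a substitute for it.
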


Before going into the proof of Proposition~\ref{proposition:beilinson}, we note that the underlying split Verdier sequence induces for every $M\in\Der^{\qc}(X\times\Pone)$ a fibre sequence of the form
\begin{equation}
\label{equation:beilinson}%
p^*p_*M\to M\to p^*p_*(M(-1)) \otimes \Sig\cO(-1) \ ,
\end{equation}
also known as the Beilinson sequence. The proof of Proposition~\ref{proposition:beilinson} will then be based on the following lemma:

\begin{lemma}
\label{lemma:truncation}%
\phantom{}
\begin{enumerate}
\item
\label{item:beilinson-sequence-p}%
There are natural equivalences $p_*\LT_{p^*L} \simeq \LT_{L}$ and $p_*\LT_{p^*L}(-1) \simeq \Sig^{-1}\LT_{L}$. 
In particular, the Beilinson sequence~\eqref{equation:beilinson} for $M=\LT_{p^*L}$ has the form
\[
p^*\LT_{L} \to \LT_{p^*L} \to p^*\LT_{L} \otimes\cO(-1).
\]
\item
\label{item:sequence-zariski-splits}%
The above exact sequence splits Zariski locally on  $\Pone \times X$. 
\item
\label{item:connective-cover-exact}%
For every $m \in \ZZ$ the associated sequence of $m$-connective covers
\[
\tau_{\geq m}(p^*\LT_{L}) \to \tau_{\geq m}(\LT_{p^*L}) \to \tau_{\geq m}(p^*\LT_{L} \otimes\cO(-1))
\]
remains exact in $\Der^{\qc}(\Pone \times X)$.
\end{enumerate}
\end{lemma}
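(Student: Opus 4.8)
The plan is to treat the three parts in order: part (1) is a direct computation with the projection formula and the cohomology of $\Pone$, part (2) is the geometric heart of the lemma, and part (3) is a formal consequence of (1) and (2).

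For (1), since $p_*\LT_{p^*L}$ and $\LT_L$ both lie in $\Derqc(X)=\Ind(\Dperf(X))$, it is enough to identify the functors $P\mapsto\map_X(P,p_*\LT_{p^*L})$ and $P\mapsto\map_X(P,\LT_L)$ on $P\in\Dperf(X)$. I would unwind
\[
\map_X(P,p_*\LT_{p^*L})=\map_{\Pone_X}(p^*P,\LT_{p^*L})=\map_{\Pone_X}\bigl((p^*P)^{\otimes 2},p^*L\bigr)^{\tC},
\]
using the defining property of $\LT_{p^*L}$ in the last step. Since $p^*$ is symmetric monoidal, $(p^*P)^{\otimes 2}\simeq p^*(P\otimes P)$ as objects carrying the swap $\Ct$-action; the adjunction $p^*\dashv p_*$, the projection formula, and the identification $p_*\cO_{\Pone_X}\simeq\cO_X$ are all $\Ct$-equivariant equivalences (the $\Ct$-action being carried throughout by the $P\otimes P$ factor), so applying $(-)^{\tC}$ yields $\map_X(P\otimes P,L)^{\tC}=\map_X(P,\LT_L)$, and Yoneda gives $p_*\LT_{p^*L}\simeq\LT_L$. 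Running the same computation with $p^*P$ replaced by $p^*P\otimes\cO(1)$, one has $(p^*P\otimes\cO(1))^{\otimes 2}\simeq p^*(P\otimes P)\otimes\cO(2)$, and by Remark~\ref{remark:square-line-bundles} the swap action on $\cO(2)=\cO(1)^{\otimes 2}$ is trivial, so $\cO(2)$ may be absorbed into the test object and pulled out of the $\tC$; together with $Rp_*\cO_{\Pone_X}(-2)\simeq\cO_X[-1]$ (from $R^1p_*\cO_{\Pone}(-2)\cong\cO_X$ and the vanishing of $R^0$) this gives $p_*(\LT_{p^*L}(-1))\simeq\Sig^{-1}\LT_L$. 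Substituting these two identifications into the Beilinson fibre sequence~\eqref{equation:beilinson} for $M=\LT_{p^*L}$ produces the asserted form $p^*\LT_L\to\LT_{p^*L}\to p^*\LT_L\otimes\cO(-1)$ (the $\Sig^{-1}$ cancelling the $\Sig$ in front of $\cO(-1)$).

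For (2), I would restrict the fibre sequence of (1) to the two standard affine charts $U_0,U_\infty\cong\mathbb{A}^1_X$ of $\Pone_X=U_0\cup U_\infty$, over each of which $\cO(-1)$ is canonically trivialised by the equation of the complementary point. After this trivialisation both outer terms become $(p^*\LT_L)|_{U_0}$, and one must split the resulting sequence over $U_0$ (and likewise over $U_\infty$). I expect this to be the main obstacle: the abstract extension class lives in a group of self-extensions of $\LT_L$ that need not vanish, so a splitting has to be produced from the geometry rather than by a vanishing argument. The natural source is the observation that over $U_0$ the semiorthogonal decomposition $\langle\cA(-1),\cA(0)\rangle$ of $\Dperf(\Pone_X)$ collapses, $\cA(-1)|_{U_0}=\cA(0)|_{U_0}=p_0^*\Dperf(X)$ for $p_0\colon U_0\cong\mathbb{A}^1_X\to X$, which should exhibit the Beilinson projection $\LT_{p^*L}|_{U_0}\to(p^*\LT_L)|_{U_0}$ as split by the complementary projection of the collapsed decomposition.

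For (3), granting (1) and (2) the argument is formal. On each chart of the cover furnished by (2) the sequence splits, so $\LT_{p^*L}$ restricts there to a direct sum of (the restrictions of) the two outer terms. Open immersions are flat, hence pullback to a chart is $\tstruc$-exact and commutes with $\tau_{\geq m}$, while $\tau_{\geq m}$ commutes with finite direct sums; thus $\tau_{\geq m}(p^*\LT_L)\to\tau_{\geq m}(\LT_{p^*L})\to\tau_{\geq m}(p^*\LT_L(-1))$ is split exact on each chart, and since being a fibre sequence in $\Derqc(\Pone_X)$ is a Zariski-local condition (Corollary~\ref{corollary:descent-qc}) it is exact. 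Equivalently, (2) shows the original fibre sequence is short exact on all homotopy sheaves, i.e.\ each $\pi_{m-1}(p^*\LT_L)\to\pi_{m-1}(\LT_{p^*L})$ is a monomorphism of quasi-coherent sheaves, and a fibre sequence with this property remains a fibre sequence after applying $\tau_{\geq m}$.
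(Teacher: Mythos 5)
Parts \ref{item:beilinson-sequence-p} and \ref{item:connective-cover-exact} of your proposal follow essentially the same route as the paper: unwinding $\LT$ via its universal property and the adjunction $p^*\dashv p_*$, using $p_*\cO(-2)\simeq\Sig^{-1}\cO_X$ and $\Ct$-triviality of $\cO(2)$ for the second identification in \ref{item:beilinson-sequence-p}, and for \ref{item:connective-cover-exact} observing that a fibre sequence that is Zariski-locally split is short exact on homotopy sheaves, so it stays exact after truncation. No issues there.

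The gap is in part \ref{item:sequence-zariski-splits}, and it is a real one. Your proposed source of the splitting — that $\cA(-1)$ and $\cA(0)$ have the same essential image after restriction to a chart $U_0\cong\Aone_X$ — does not produce a retraction. The fact that two full subcategories of $\Dperf(\Pone_X)$ restrict to the same subcategory of $\Dperf(\Aone_X)$ says nothing about the extension class of the Beilinson fibre sequence; there is no ``complementary projection of the collapsed decomposition'' because the projection functors $p_*$ and $p_*((-)\otimes\cO(-1))$ simply do not restrict to $\Aone_X$. Indeed, nothing in that picture distinguishes $M=\LT_{p^*L}$ from a generic quasi-coherent complex, and for generic $M$ the Beilinson sequence certainly does not split Zariski-locally. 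The splitting here is a special feature of the Tate construction, not of the semiorthogonal decomposition.

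What the paper actually does is reduce to $X=\spec(R)$ and the chart $\Aone_R=\spec(R[t])$, where (by Corollary~\ref{corollary:open-is-karoubi}) the restricted sequence is identified with the comparison map $f^*\LT_L\to\LT_{f^*L}$ for $f\colon\Aone_R\to\spec R$, concretely
\[
\GEM(L)^{\tC}\otimes_{\GEM(R)}\GEM(R[t])\longrightarrow\GEM(L\otimes_R R[t])^{\tC}.
\]
One then reduces (via the observation that $(-)^{\tC}$ commutes with uniformly truncated direct sums) to the unit map of $\Einf$-rings $\GEM(R)^{\tC}\otimes_{\GEM(R)}\GEM(R[t])\to\GEM(R[t])^{\tC}$, and writes both sides as $\bigoplus_{i\geq 0}\GEM(R)^{\tC}\langle t^i\rangle$. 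The crucial computation is that the Tate Frobenius of $R[t]$ is the ring map $t\mapsto t^2$ on $\pi_0=(R/2)[t]$, so the unit identifies the source with the \emph{even}-degree summand of the target, which is visibly a direct summand as a module over the source. This ``degree-doubling'' phenomenon of the Tate Frobenius is the content that your argument is missing: it is what makes $\LT_{f^*L}$ literally twice as big as $f^*\LT_L$ over the affine line, and it cannot be seen from the geometry of $\Pone$ alone.
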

\begin{proof}
We first prove \ref{item:beilinson-sequence-p}.
On the level of the underlying split Verdier sequence, the Verdier inclusion $p^*$ has a right adjoint $p_*$ and a left adjoint $p_*((-) \otimes \Sig\cO(-2))$, and the Verdier projection $p_*((-) \otimes \cO(-1))$ has a left adjoint $p^*(-) \otimes \cO(1)$ and a right adjoint $p^*(-) \otimes \Sig \cO(-1)$. For $M \in \Der^{\qc}(X)$ we consequently have
\begin{align*} 
\map_X(M,p_*\LT_{p^*L}) &= \map_{\Pone \times X}(p^*M,\LT_{p^*L}) \\
&= \map_{\Pone \times X}(p^*M \otimes p^*M, p^*L)^{\tC} \\
&= \map_{X}(M \otimes M, L)^{\tC} \\
&= \map_X(M,\LT_L) \ ,
\end{align*}
and
\begin{align*} 
\map_X(M,p_*\LT_{p^*L}(-1)) &= \map_{\Pone \times X}(p^*M \otimes \cO(1),\LT_{p^*L}) \\
&= \map_{\Pone \times X}(p^*M \otimes p^*M, p^*L \otimes \cO(-2))^{\tC} \\
&= \map_{X}(M \otimes M, L \otimes p_*\cO(-2))^{\tC} \\
&= \map_X(M,\Sig^{-1}\LT_{L}) \ .
\end{align*}
This establishes \ref{item:beilinson-sequence-p}.

The statement of \ref{item:sequence-zariski-splits}, being local in $X$, we may as well assume that $X = \spec(R)$ is affine. We will prove that 
the sequence in question splits when restricted to $\Aone_R = \Aone \times R$ for any embedding $j\colon \Aone_R\hrar \Pone_R$ over $\spec(R)$. Since $\Pone_R$ can be covered by (two) such copies of $\Aone_R$, this implies the desired result. Now, for an open embedding $j\colon \Aone_R \hrar \Pone_R$, it follows from Corollary~\ref{corollary:open-is-karoubi} (applied to the $\geq -\infty = \sym$ decoration) and the fact that left Kan extensions commute with taking linear parts that $\LT_{j^*p^*L} = j^*\LT_{p^*L}$. Wring $f = pj \colon \Aone_R \to \spec(R)$, we are reduced to showing that the comparison map
\[
f^*\LT_L \to \LT_{f^*T}
\]
admits a retraction in $\Der^{\qc}(\Aone_R)$. Making this map explicit, we note that $L$ is an invertible $R$-module with an $R$-linear $\Ct$-action and $\LT_L$ is the object of $\Der(R)$ corresponding to the $\GEM(R)$-module spectrum $\GEM(L)^{\tC}$, with $\GEM(R)$ acting via the Tate Frobenius $\GEM(R) \to \GEM(R)^{\tC}$, the $\Ct$ action on $\GEM(R)$ being the trivial one. On the level of $\GEM(R[t])$-module spectra, the above comparison map can then be written as the $\GEM(R[t])$-linear map
\begin{equation}
\label{equation:comparison}%
\GEM(L)^{\tC} \otimes_{\GEM(R)} \GEM(R[t])  \to \GEM(L \otimes_R R[t])^{\tC} .
\end{equation}
induced by the $\GEM(R)$-linear map $\GEM(L)^{\tC} \to  \GEM(L \otimes_R R[t])^{\tC}$.
We want to construct a retraction of~\eqref{equation:comparison}. For this we first reduce to the case where $L=R$ with constant $\Ct$-action. 
Consider the commutative square of $\Einf$-ring spectra
\[
\begin{tikzcd}
\GEM(R) \ar[r]\ar[d] & \GEM(R[t]) \ar[d] \\
\GEM(R)^{\tC} \ar[r] & \GEM(R[t])^{\tC}
\end{tikzcd}
\]
where the vertical maps are the corresponding Tate Frobenius maps. 
This square determines a map of $\Einf$-ring spectra
\begin{equation}
\label{equation:comparison-2}%
\GEM(R)^{\tC} \otimes_{\GEM(R)} \GEM(R[t])  \to \GEM(R[t])^{\tC} .
\end{equation}
At the same time, the map~\eqref{equation:comparison} factors as a composite
\[
\GEM(L)^{\tC} \otimes_{\GEM(R)} \GEM(R[t]) \to (\GEM L)^{\tC} \otimes_{\GEM(R)^{\tC}} \GEM(R[t])^{\tC}  \to \GEM(L \otimes_R R[t])^{\tC} ,
\]
where the first map is induced by the various Tate Frobenius maps and the second map by the lax monoidal structure of $(-)^{\tC}$. We then observe that the second map is an equivalence: indeed, since the Tate construction commutes with infinite direct sums of uniformly truncated objects we have that $\GEM(R[t])^{\tC}$ is an infinite direct sum of copies of $\GEM(R)^{\tC}$ indexed by the monomials $t^i$, and $\GEM(L \otimes_R R[t])^{\tC}$ is an infinite direct sum of copies of $\GEM(L)^{\tC}$ indexed in a corresponding manner. 
We then see that the map~\eqref{equation:comparison} is obtained from~\eqref{equation:comparison-2} by tensoring with $(\GEM L)^{\tC}$ over $\GEM(R)^{\tC}$. 
It will hence suffice to show that the map~\eqref{equation:comparison-2} admits a $[\GEM(R)^{\tC} \otimes_{\GEM(R)} \GEM(R[t])]$-linear retraction.

Explicitly, $(\GEM R)^{\tC} \otimes_{\GEM R} \GEM(R[t])$ is an infinite direct sum of copies of $\GEM(R)^{\tC}$, indexed by the monomials $t^i$, where the multiplication is determines by the $\Einf$-ring structure of $\GEM(R)^{\tC}$ and the rule $t^it^j = t^{i+j}$. As above, since $(-)^{\tC}$ commutes with uniformly truncated infinite direct sums, the underlying spectrum of $\GEM(R[t])^{\tC}$ has the exact same form, and the exact same multiplication law. The map~\eqref{equation:comparison-2} hence corresponds to the endomorphism
\begin{equation}
\label{equation:comparison-3}%
\bigoplus_{i \geq 0} \GEM(R)^{\tC}\<t^i\> \to \bigoplus_{i \geq 0} \GEM(R)^{\tC}\<t^i\> 
\end{equation}
induced by the Tate Frobenius of $\GEM(R[t])$. We claim that this map identifies the $\GEM(R)^{\tC}\<t^i\>$ factor on the left with the $\GEM(R)^{\tC}\<t^{2i}\>$ factor on the right. Indeed, since this map is $\GEM(R)^{\tC}$ linear it will suffice to check that it sends the component of $\GEM(R)^{\tC}\<t^i\>$ corresponding to the unit of $\GEM(R)^{\tC}$ to the component of $\GEM(R)^{\tC}\<t^{2i}\>$ corresponding to the same unit. Now since the $\Ct$-action on $R$ is trivial we have that $\pi_0\GEM(R[t])^{\tC} = \hat{H}^0(R[t]) = R[t]/2 = (R/2)[t]$, and the Tate Frobenius is the map 
\[
R[t] \to (R/2)[t] \quad\quad x \mapsto [x^2]
\]
on the level of $\pi_0$. In particular, it sends $t^i \in R[t]$ to $t^{2i} \in (R/2)[t]$, as needed. We conclude that, as an $\bigoplus_{i \geq 0} \GEM(R)^{\tC}\<t^i\>$-module, the right hand side of~\eqref{equation:comparison-3} splits as a direct sum of two copies of $\bigoplus_{i \geq 0} \GEM(R)^{\tC}\<t^i\>$ (one spanned by the even powers of $t$ and one by the odd powers), such that the map~\eqref{equation:comparison-2} is the inclusion of the even summand. It follows that this map admits an $\bigoplus_{i \geq 0} \GEM(R)^{\tC}\<t^i\>$-linear retraction, as desired.

Finally, to prove \ref{item:connective-cover-exact}, we note that for a sequence of quasi-coherent sheaves $M \to N \to P$ the comparison map $\eta_m\colon \cof[\tau_{\geq m}M \to \tau_{\geq m}N] \to \tau_{\geq m}P$ is an isomorphism on all homotopy sheaves $\pi_i$ for $i \neq m$ by the 5-lemma, while the induced map on $\pi_m$ is injective. 
But if the sequence splits Zariski locally, then the map $N \to P$ is surjective on homotopy sheaves and hence the map $\eta_m$ is an equivalence. 
\end{proof}

\begin{proof}[Proof of Proposition~\ref{proposition:beilinson}]
To check that the Poincaré functor
\[
(p^*,\eta^{\geq m})\colon (\Der^{\qc}(X),\QF^{\geq m}_L) \to (\Der^{\qc}(\Pone \times X),\QF^{\geq m}_{p^*L}) \ ,
\]
is a Poincaré-Verdier inclusion one needs to check that the Poincaré structure map
\[
\eta^{\geq m}\colon \QF^{\geq m}_{L} \Rightarrow \QF^{\geq m}_{p^* L}(p^*(-))
\]
is an equivalence. This map is already known to be an equivalence on bilinear parts (since these are the same as those of the corresponding symmetric Poincaré structures), and hence it will suffice to verify that $\eta^{\geq m}$ is an equivalence on linear parts. Unwinding the definitions and using that $p^*$ is t-exact this amounts to the composed map
\[
\tau_{\geq m}(\LT_L) \tosimeq p_*p^*\tau_{\geq m}(\LT_{L}) \tosimeq p_*\tau_{\geq m}(p^*\LT_{L}) \to p_*\tau_{\geq m}(\LT_{p^*L})
\]
being an equivalence in $\Der^{\qc}(X)$, and hence by Lemma~\ref{lemma:truncation} to the vanishing of $p_*\tau_{\geq m}(p^*\LT_{L} \otimes\cO(-1))$. To see that this last term indeed vanishes, note that the natural map $\tau_{\geq m}(p^*\LT_{L}) \otimes \cO(-1) \to \tau_{\geq m}(p^*\LT_L \otimes \cO(-1))$ is an equivalence since it is an equivalence Zariski locally, and hence the push-forward of the latter quasi-coherent sheaf to $X$ vanishes.

Now on the projection side, since $p_*((-) \otimes \cO(-1))$ is a split Verdier projection, it automatically refines to a split Poincaré-Verdier projection
\[
(\Dperf(\Pone \times X),\QF^{\geq m}_{p^*L}) \to (\Dperf(X),\QF^{\geq m}_{p^*L},\QFD)
\]
where $\QFD$ is obtained by precomposing $\QF^{\geq m}_{p^*L}$ with the left adjoint of $p_*((-) \otimes \cO(-1))$, which 
is given by $p^*(M) \otimes \cO(1)$. The natural transformation $\QF^{\geq m}_{p^*L} \Rightarrow \QF^{\sym}_{p^*L}$ then induces a natural transformation 
\[
\tau\colon \QFD \Rightarrow\QF^{\sym}_{L[-1]} \ ,
\]
which is an equivalence on bilinear parts. It will hence suffice to check that $\tau$ is an $(m-1)$-connective cover on linear parts (with respect to the standard t-structure; equivalently, an $m$-connective cover with respect to the shifted t-structure). Now, by Lemma~\ref{lemma:truncation}, we get that the map 
\[
\tau_{\geq m}(\LT_{p^*L}) \to \tau_{\geq m}(p^*\LT_{L} \otimes\cO(-1)) = \tau_{\geq m}(p^*\LT_L) \otimes \cO(-1)
\]
induces an equivalence on mapping spectra with domain of the form $p^*M \otimes \cO(1)$ for $M \in \Dperf(X)$ (indeed, the fibre of this map is the constant sheaf $\tau_{\geq m}p^*\LT_L = p^*\tau_{\geq m}\LT_L$). We hence get that
\begin{align*}
\Lam_{\QFD}(M) &= \map(p^*M\otimes\cO(1),\tau_{\geq m}\LT_{p^*L}) \\
&= \map(p^*M \otimes \cO(1),\tau_{\geq m}(p^*\LT_L) \otimes \cO(-1)) \\
&= \map(M,p_*(\tau_{\geq m}p^*\LT_{L} \otimes \cO(-2))),\\
&= \map(M,p_*(p^*\tau_{\geq m}\LT_{L} \otimes \cO(-2))),\\
&= \map(M,\Sig^{-1}\tau_{\geq m}\LT_{L}),
\end{align*}
and the map $\Sig^{-1}\tau_{\geq m}\LT_{L} \to \Sig^{-1} \LT_{L}$ is indeed an $(m-1)$-connective cover, as desired.
\end{proof}

\subsection{Homotopy invariance of symmetric Grothendieck-Witt}
\label{subsection:Aone-invariance}%

As an application of the projective bundle formula (Theorem~\ref{theorem:projective-bundle-formula}) and dévissage (Theorem~\ref{theorem:global-devissage}), we show here that the symmetric Grothendieck-Witt spectrum is $\Aone$-invariant over a regular Noetherian base of finite Krull dimension. Here, it does not matter if we consider Grothendieck-Witt or Karoubi-Grothendieck-Witt theory: when $X$ is regular the canonical map $\GW^{\sym}(X) \to \KGW^{\sym}(X)$ is an equivalence. Indeed, by Karoubi cofinality \eqref{equation:cofinality}, this follows from the map $\K(X) \to \KK(X)$ being an equivalence, which is a well-known consequence of $X$ being regular, see~\cite{thomason-trobaugh}*{Proposition~6.8}, or in more modern language of \cite{theorem-of-the-heart}*{Theorem~1.2}.

\begin{theorem}[$\Aone$-invariance]
\label{theorem:A1-invariance}%
Let $X$ be a regular Noetherian scheme of finite Krull dimension and $L$ a line bundle on $X$. Then the pullback map
\[
\begin{tikzcd}
\GW^{\sym}(X,L)\ar[d,equal]\ar[r] & \GW^{\sym}(X\times\Aone,q^*L) \ar[d,equal] \\
\KGW^{\sym}(X)\ar[r] & \KGW^{\sym}(X\times\Aone,q^*L) 
\end{tikzcd}
\]
is an equivalence, where $q\colon X \times \Aone \to X$ is the projections. 
\end{theorem}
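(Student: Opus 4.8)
The strategy is to combine the projective bundle formula (Theorem~\ref{theorem:projective-bundle-formula}) with the localisation sequence for $\Aone_X \hrar \Pone_X$ and dévissage (Theorem~\ref{theorem:global-devissage}), and to observe that the extra summand produced by the projective bundle formula cancels exactly the contribution of the section at infinity. Since $X$, and hence $\Pone_X$, $\Aone_X$ and the section $X_\infty$, are regular Noetherian of finite Krull dimension, the canonical map $\GW^{\sym} \to \KGW^{\sym}$ is an equivalence on all of them by Karoubi cofinality (as recalled just before the statement), so it is harmless to work with $\KGW^{\sym}$ — for which the localisation sequence of Corollary~\ref{corollary:open-is-karoubi} is available — and we simply write $\GW^{\sym}$ below.

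Let $p\colon \Pone_X \to X$ and $q\colon \Aone_X \to X$ be the projections, let $i\colon X_\infty \hrar \Pone_X$ be the section at infinity (a regular closed embedding) and $j\colon \Aone_X \hrar \Pone_X$ its open complement, so that $q = p\circ j$. Applying the projective bundle formula with $V = \cO_X^{\oplus 2}$ (so $r = 1$) and $n = 0$ puts us in case~\ref{equation:pbf-I} of Theorem~\ref{theorem:projective-bundle-formula}, in which the hyperbolic summand is trivial and $\det V^\vee[-r] = \cO_X[-1]$; this yields a fibre sequence
\[
\GW^{\sym}(X,L) \xrightarrow{\,p^*\,} \GW^{\sym}(\Pone_X,p^*L) \xrightarrow{\,\pi\,} \GW^{\sym}(X,L[-1]), \qquad \pi := p_*\big((-)\otimes\cO(-1)\big),
\]
which moreover splits — $\pi$ admitting a Poincaré section — because the Euler class of the trivial bundle $V$ vanishes (the corollary following Lemma~\ref{lemma:pbf-boundary}). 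On the other hand, the localisation sequence attached to $\Aone_X \hrar \Pone_X$ (Corollary~\ref{corollary:open-is-karoubi}) together with dévissage for $i$ with coefficients $p^*L$ (Theorem~\ref{theorem:global-devissage}) gives a fibre sequence
\[
\GW^{\sym}(X_\infty, i^!(p^*L)) \xrightarrow{\,i_*\,} \GW^{\sym}(\Pone_X, p^*L) \xrightarrow{\,j^*\,} \GW^{\sym}(\Aone_X, q^*L),
\]
where $i_*$ is the hermitian push-forward of Construction~\ref{construction:pushforward-hermitian}. Because $\Pone_X = \Pone \times X$ is a \emph{constant} $\Pone$-bundle, the normal bundle of $X_\infty$ is trivial, hence $i^!\cO_{\Pone_X} \simeq \cO_X[-1]$ by Remark~\ref{remark:proper-lci}, and therefore $i^!(p^*L) \simeq L[-1]$ using that $i^!$ is $\Dperf$-linear (Lemma~\ref{lemma:eta-equivalence}); so the source of the second fibre sequence is again $\GW^{\sym}(X,L[-1])$.

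The crux is the identity $\pi \circ i_* \simeq \id$ on $\GW^{\sym}(X,L[-1])$. Both $i_*$ (for the qGqp map $i$, a regular embedding, with twisting bundle $\cO_{X_\infty}$) and $\pi$ (for the qGqp map $p$, smooth and proper, with twisting bundle $\cO(-1)$) are instances of Construction~\ref{construction:pushforward-hermitian}, so by the composition compatibility of Lemma~\ref{lemma:composite-pf} the composite Poincaré functor $\pi \circ i_*$ is the push-forward attached to $p\circ i = \id_X$, with twisting bundle $\cO_{X_\infty} \otimes i^*\cO(-1) \simeq \cO_{X_\infty}$ (the restriction $\cO(1)|_{X_\infty}$ being trivial) and a suitable identification of the coefficients. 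A push-forward along an equivalence is an equivalence of Poincaré $\infty$-categories by the proof of Lemma~\ref{lemma:devissage-functor}, so $\GW(\pi)\circ\GW(i_*)$ is an equivalence. Feeding this into the first (split) fibre sequence — in which $\GW(p^*)$ is the fibre inclusion of $\GW(\pi)$ — the map $\big(\GW(p^*),\GW(i_*)\big)\colon \GW^{\sym}(X,L) \oplus \GW^{\sym}(X,L[-1]) \to \GW^{\sym}(\Pone_X,p^*L)$ is an equivalence carrying $\GW(i_*)$ to the inclusion of the second summand. Consequently $\GW^{\sym}(\Aone_X,q^*L) = \cof\big(\GW(i_*)\big) \simeq \GW^{\sym}(X,L)$, and tracing the identifications the residual map $\GW(q^*) = \GW(j^*)\circ\GW(p^*)$ becomes the identity of $\GW^{\sym}(X,L)$, which proves the theorem.

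The one genuinely delicate point is the computation of $\pi \circ i_*$ in the previous paragraph: everything rests on Lemma~\ref{lemma:composite-pf} and on pinning down the $\Ct$-equivariant identifications $i^!(p^*L) \simeq L[-1]$ and $i^*\cO(-1) \simeq \cO_{X_\infty}$ so that the two twisting bundles and the coefficient identifications in the two push-forwards cancel on the nose; once this is in place, the rest is formal manipulation of split fibre sequences and the already-established inputs (projective bundle formula, dévissage, the localisation sequence, and $\GW^{\sym} = \KGW^{\sym}$ on regular schemes).
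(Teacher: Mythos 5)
Your proof is correct and takes essentially the same route as the paper: both arguments combine the projective bundle formula (Theorem~\ref{theorem:projective-bundle-formula}\ref{equation:pbf-I} with $V=\cO^{\oplus 2}$), the localisation sequence of Corollary~\ref{corollary:open-is-karoubi} for $\Aone_X \subseteq \Pone_X$, dévissage (Theorem~\ref{theorem:global-devissage}), and the crucial identification of the composite $p_*((-)\otimes\cO(-1))\circ i_*$ as a push-forward along $p\circ i = \id_X$ via Lemma~\ref{lemma:composite-pf}. The only cosmetic difference is organisational: you splice the two fibre sequences into a direct-sum decomposition of $\GW^{\sym}(\Pone_X,p^*L)$ and read off the cofibre, whereas the paper applies a two-out-of-three argument directly to the map $\KGW(\Pone_X,p^*L)\to\KGW(\Aone_X,q^*L)\oplus\KGW(X,L[-1])$; your appeal to the vanishing of $e(V)$ to split the projective-bundle sequence is also dispensable, since the splitting follows automatically once $\pi\circ i_*$ is known to be an equivalence. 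These are matters of exposition, not of substance.
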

\begin{proof}    
Write $p\colon X \times \Pone \to X$ for the associated projection. 
Consider the commutative diagram of Poincaré $\infty$-categories
\[
\begin{tikzcd}
 & (\Dperf(X \times \Aone),\QF^{\sym}_{q^*L}) & & \\
(\Dperf(X),\QF^{\sym}_L)\ar[r,hook,"p^*"]\ar[ur, "q^*"] & (\Dperf(X\times \Pone),\QF^{\sym}_{p^*L})\ar[rr,"p_*(-\otimes\cO(-1))"]\ar[u] && (\Dperf(X),\QF^{\sym}_{L[-1]})\\
 & (\Dperf_{X\times\{\infty\}}(X\times\Pone),\QF^{\sym}_{p^*L})\ar[u,hook] \ar[urr,"\vphi"'] &&  
\end{tikzcd}
\]
where the horizontal sequence is the fibre sequence of Theorem~\ref{theorem:projective-bundle-formula}\ref{equation:pbf-I} for $V=\cO^{\oplus 2}$, and the vertical sequence is the localisation sequence of Corollary~\ref{corollary:open-is-karoubi} associated to the closed subscheme inclusion $X\times\{\infty\}\subseteq X\times\Pone$. Since both the vertical and horizontal sequences are Karoubi-Poincaré sequence, we see that $q^*$ induces an equivalence on $\KGW$ if and only if $\vphi$ induces an equivalence on $\KGW$; indeed, both statements are equivalent to the map
\[
\KGW(\Dperf(X\times \Pone),\QF^{\sym}_{p^*L}) \to \KGW(\Dperf(X\times \Aone),\QF^{\sym}_{q^*L}) \oplus   \KGW(\Dperf(X),\QF^{\sym}_{L[-1]})
\]
induced by the right horizontal and top vertical arrows being an equivalence. Now to show that $\vphi$ induces an equivalence on $\KGW$ it will suffice to show that the composite Poincaré functor 
\[
(\Dperf(X),\QF^{\sym}_{i^!p^*L}) \xrightarrow{i_*} (\Dperf_{X\times\{\infty\}}(X\times\Pone),\QF^{\sym}_{p^*L}) \xrightarrow{\vphi} 
(\Dperf(X),\Sigma^{-1}\QF^{\sym}_L)
\]
is an equivalence, where the functor on the left is the dévissage Poincaré functor constructed in \S\ref{subsection:global-devissage}, and which is an equivalence by Theorem~\ref{theorem:global-devissage}. Indeed, since $\vphi$ is a composite 
\[
\big(\Dperf_{X\times\{\infty\}}(X\times\Pone),\QF^{\sym}_{p^*L}\big) \subseteq \big(\Dperf(X\times \Pone),\QF^{\sym}_{p^*L}\big) \xrightarrow{p_*((-) \otimes \cO(-1))} (\Dperf(X),\QF^{\sym}_{L})
\]
and hence it will suffice to show that the composite
\[
(\Dperf(X),\QF^{\sym}_{i^!p^*L}) \xrightarrow{i_*} \big(\Dperf(X\times \Pone),\QF^{\sym}_{p^*L}\big) \xrightarrow{p_*((-) \otimes \cO(-1))} (\Dperf(X),\QF^{\sym}_{L})
\]
is an equivalence. Indeed, this now follows from Lemma~\ref{lemma:composite-pf}.
\end{proof} 

Let us finally build on a classical argument to show that on regular Noetherian schemes, outside of the symmetric case, neither $\L$-theory nor $\GW$-theory is homotopy invariant.

\begin{example}
\label{example:non-homotopy-invariance}%
For any $m \in \ZZ \cup \{+\infty\}$, the pull-back map $\L(\FF_2,\QF^{\geq m}) \to \L(\FF_2[t],\QF^{\geq m})$ is not an equivalence. 
\end{example}
\begin{proof}
Let us start by the case $m=+\infty$, i.e.\ the quadratic case.
The quadratic $\L$-groups of rings coincide in non-negative degrees with Ranicki's periodic $\L$-groups, by \cite{9-authors-I}*{Example~2.3.17}. The $\L_0$-group is therefore the classical Witt group of non-degenerate quadratic forms over $R$.

For any scheme $X$, consider the sections $s_0,s_1: X \to \Aone \times X$ at $0$ and $1$ of the projection $p:\Aone \times X \to X$. We have $s_0^*p^*=\id=s_1^*p^*$ on $\L$-groups, so if $p^*$ is an isomorphism, then $s_0^*=s_1^*$, both being inverse to $p^*$. We now show this cannot happen over $X=\spec(\FF_2)$. The class of the non-singular quadratic form $q$ of rank $2$ given by $q(x,y)=x^2+xy+ty^2$ is an element of $\L_0(\FF_2[t],\QF^\qdr)$, and it pulls back via $s_i$ to the class of $q_i(x,y)=x^2+xy+ixy$ in $\L_0(\FF_2,\QF^\qdr)$, whose Arf invariant is $i \in \FF_2$. Thus $s_0^*\neq s_1^*$ on $\L_0(-,\QF^\qdr)$.
Actually, by \cite{9-authors-II}*{4.5.6}, the quadratic $\L$-groups are 4-periodic by a natural isomorphism commuting to these pull-backs, so $s_0^* \neq s_1^*$ on $\L_{4n}(-,\QF^\qdr)$ for any $n\in \ZZ$.

When $m \in \ZZ$, for $n<<0$, we have $\L_n(R,\QF^{\geq m})\simeq \L_n(R,\QF^\qdr)$ via the natural map, by \cite{9-authors-I}*{Corollary~1.2.12}. The quadratic case thus implies that $s_0^*\neq s_1^*$ on $\L_n(-,\QF^{\geq m})$ for some $n<<0$.
\end{proof}

In fact, we suspect that forcing homotopy invariance on the quadratic $\GW$ or any of the truncated versions might turn it into the symmetric $\GW$.

\section{Motivic realization of localising invariants}
\label{section:motivic-realization}%

Our goal in this section is to construct, for a qcqs scheme $S$, a lax symmetric monoidal functor
\[
\M^{\sym}_{S}\colon \Fun^{\kloc}(\Catp,\Spa) \to \SH(S), 
\]
which takes as input a Karoubi-localising functor $\F\colon \Catp \to \Spa$ on Poincaré $\infty$-categories and returns a motivic spectrum over $S$. 
Taking $\F$ to be the Karoubi-Grothendieck-Witt functor then yields a hermitian K-theory motivic spectrum. The study of this and related motivic spectra, such as the one associated to Karoubi L-theory, will be taken on in \S\ref{section:KQ} below. The present section is, in turn, essentially dedicated to the construction of $\M^{\sym}_S$, which is somewhat involved. In fact, it is convenient to construct a slightly more flexible version of $\M^{\sym}_S$, which takes as input a Karoubi-localising functor $\F\colon \Catp \to \Spa$ and an $S$-linear stable $\infty$-category with duality $(\C,\Dual)$. We call the resulting motivic spectrum $\M^{\sym}_S(\F;(\C,\Dual))$ the \defi{motivic realization of $\F$ with coefficients in $(\C,\Dual)$}. 
Finally, in \S\ref{subsection:unique-delooping}, we prove a technical lemma which says that, in a certain precise sense, the motivic spectrum $\M^{\sym}_S(\F)$ only depends on the underlying $\Grp_{\Einf}$-valued localising functor $\Om^{\infty}\F$. This lemma is later used in \S\ref{subsection:pullback-invariance} in order to prove the pullback invariance of hermitian K-theory.

\subsection{Recollections on stabilizations and motivic spectra}
\label{subsection:recollection-stabilization}%

Let $S$ be a qcqs scheme.
Recall that we denote by $\Sm_S$ the category of smooth $S$-schemes $p\colon X \to S$. In the present section, we always consider $\Sm_S$ as endowed with the Nisnevich topology.

There are several equivalent ways to define the $\infty$-category of motivic spectra over $S$. To facilitate the manoeuvre between these, let us fix a coefficient $\infty$-category $\E$ that is presentable. In practice, $\E$ will either be the $\infty$-category of spaces, pointed spaces, $\Einf$-groups, or spectra.
Recall that a Nisnevich sheaf $\F\colon \Sm_S\op \to \E$ is said to be \defi{$\Aone$-invariant} if the pullback map $\F(S) \to \F(\Aone_S)$ is an equivalence in $\E$. We then write $\Hinv(S,\E) \subseteq \Sh^{\Nis}(S,\E)$ for the full subcategory spanned by the $\Aone$-invariant Nisnevich sheaves. When $\E = \Sps$, these are known as \defi{motivic spaces}.

If $\E$ is pointed then it is canonically tensored and cotensored over the presentable $\infty$-category $\Sps_*$ of pointed spaces, in which case $\Psh(S,\E)$ becomes tensored and cotensored over $\Psh(S,\Sps_*)$ by means of Day convolution (see~\cite[\S 2.2.6]{HA}) with respect to the fibre product monoidal structure on $\Sm_S\op$ (equivalently, the pointwise product on presheaves). 
In particular, if $X \to S$ is a smooth $S$-scheme equipped with an $S$-point $s\colon S \to X$, then the pointed $S$-scheme $(X,s)$ determines via representability a presheaf $[X,s] \in \Psh(S,\Sps_*)$ of pointed spaces, and for $\F \in \Psh(S,\Sps_*)$, the cotensor operation is given by
\[
\F^{[X,s]}(T) = \fib[\F(X \times_S T) \xrightarrow{s^*} \F(T)].
\]
When $X = \Pone_S$ and $s$ is the section at $\infty$, we also write
$\Om_{\Pone}(\F) := \F^{[\Pone,\infty]}$.
By~\cite[\S 3]{9-authors-IV}
the full subcategories $\Hinv(S,\E) \subseteq \Sh^{\Nis}(S,\E) \subseteq \Psh(S,\E)$ are both $\Psh(S,\Sps_*)$-linear as accessible localisations of $\Psh(S,\E)$. In particular, they both inherit the structure of being tensored and cotensored over $\Psh(S,\Sps_*)$, with the cotensor operation given by the same formula.

Let us now fix a pointed presentable $\infty$-category $\E$.
To construct the $\infty$-category of motivic spectra, one needs to form the \emph{stabilization} of $\Hinv(S,\E)$ with respect to the action of $[\Pone,\infty]$, that is, the initial $\Psh(S,\Sps_*)$-linear presentable $\infty$-category under $\Hinv(S,\E)$ on which $[\Pone,\infty]$ acts invertibly. Such stabilizations can generally be realized via the \emph{symmetric spectra} construction, see~\cite[Proposition 1.3.14]{stabilization}. More precisely, let $\EE_0$ be the $\infty$-operad of pointed objects. If $\C$ is any symmetric monoidal $\infty$-category and $c \in \C$ is an object, then tensoring with $c$ determines a $\C$-linear morphism $(-)\otimes c \colon \C \to \C$, through which we may view $\C$ as a $\EE_0$-algebra object in $\Mod_\C(\PrL)$ (whose underlying object is the unit of $\Mod_{\C}(\PrL)$ but whose pointed structure is not the identity). This $\EE_0$-algebra object then determines an essentially unique symmetric monoidal functor
\[
\chi_c\colon \Env(\EE_0) = \Inj^{\otimes} \to \Mod_\C(\PrL)^{\otimes}
\]
from the symmetric monoidal envelope of $\EE_0$ (see~\cite[Construction 2.2.4.1]{HA}), which can be readily identified with the symmetric monoidal category $\Inj$ of finite sets and injective maps as morphisms, with the symmetric monoidal structure given by disjoint union. 
Concretely, $\chi_c$ sends a finite set $I \in \Inj$ to 
$\C^{\otimes_{\C}^{I}} = \C$, and every inclusion $I \subseteq J$ to the functor $(-) \otimes \bigotimes_{i \in J \setminus I} c$. One then sets
\[
\Spa_{c}^{\Sig}(\C) := \colim_{\Inj} \chi_c \in \Mod_\C(\PrL) ,
\]
where the colimit is computed in $\Mod_{\C}(\PrL)$.
The underlying lax symmetric monoidal structure on $\chi_c$ then refines this colimit to an $\Einf$-algebra in $\Mod_{\C}(\PrL)$ , so that $\Spa_{c}^{\Sig}(\C)$ is canonically a $\C$-linearly symmetric monoidal $\infty$-category. 
We note that colimits in $\Mod_\C(\PrL)$ are computed in $\PrL$, and we may identify $\PrL$ with $(\PrR)\op$ by passing to right adjoints. The underlying presentable $\infty$-category of $\Spa_{c}^{\Sig}(\C)$ can then be identified with the limit over $\Inj\op$ of a diagram obtained from $\chi_c$ by replacing all arrows with their right adjoints. 
Explicitly, the objects of $\Spa_{c}^{\Sig}(\C)$ are then given by the data of an object $X_I \in \C$ for every finite set $I$, together with, for every inclusion of finite sets $I \subseteq J$, an equivalence $X_I \xrightarrow{\simeq} X_J^{\bigotimes_{J \setminus I} c}$, where $(-)^{\bigotimes_{J \setminus I} c}$ is the associated cotensor, that is, the right adjoint of $(-) \otimes \bigotimes_{i \in J \setminus I} c$, together with all the associated coherence homotopies. By~\cite[Proposition 1.3.14]{stabilization} the $\C$-linear symmetric monoidal functor 
\[
\Sig^{\infty}_c\colon \C \to \Spa_{c}^{\Sig}(\C)
\]
induced by the symmetric monoidal functor $\{\emptyset\} \to \Inj$ exhibits $\Spa_{c}^{\Sig}(\C)$ as the (symmetric monoidal) stabilization of $\C$ by $c$, that is, $c$ maps to an invertible object in $\Spa_{c}^{\Sig}(\C)$ and $\Spa_{c}^{\Sig}(\C)$ is the initial such recipient of a symmetric monoidal functor from $\C$. In addition, $\Spa_{c}^{\Sig}(\C)$ is also initial as a $\C$-linear $\infty$-category under $\C$ on which $c$ acts invertibly. More generally, if $\D$ is any $\C$-linear $\infty$-category then
\[
\Spa^{\Sig}_c(\D) := \colim_{I \in \Inj}[\chi_c(I) \otimes_{\C} \D] = \Spa^{\Sig}_c(\C) \otimes_{\C} \D \in \Mod_{\C}(\PrL)
\]
is the initial $\C$-linear $\infty$-category under $\D$ on which $c$ acts invertibly. In fact, the object $\Spa^{\Sig}_c(\C) \in \Mod_{\C}(\PrL)$ is an idempotent algebra object and tensoring with it is a smashing localisation on $\Mod_{\C}(\PrL)$ whose image consists of those $\C$-linear $\infty$-categories on which $c$ acts invertibly. When $\D$ is $\C$-linearly symmetric monoidal (that is, an $\Einf$-algebra object in $\Mod_{\C}(\PrL)$) then $\Spa^{\Sig}_c(\D)$ inherits a symmetric monoidal structure and identifies with the symmetric monoidal localisation of $\D$ by the image of $c$ via the unit functor $\C \to \D$.

The objects of $\Spa_c^{\Sig}(\D)$ are called \emph{symmetric $c$-spectra} in $\D$. To work with symmetric $c$-spectra, it is generally convenient to realize $\Spa_c^{\Sig}(\D)$ as an accessible localisation of the $\infty$-category $\PSpa_c^{\Sig}(\D)$ of symmetric $c$-prespectra (called lax $c$-spectra in \cite{stabilization}), which is obtained by replacing the colimit in the definition 
with the associated lax colimit (so that the underlying $\infty$-category is given by the associate oplax limit of right adjoints). More concretely, the objects of $\PSpa_{c}^{\Sig}(\D)$ are given by the data of an object $X_I \in \D$ for every finite set $I$, and for every inclusion of finite sets $i\colon I \hrar J$ a map $s_{i}\colon X_I \to X_J^{\bigotimes_{J \setminus i(I)} c}$, together with all the associated coherence homotopies. The $c$-spectra are then included as those $c$-prespectra for which all the structure maps $s_i$ are equivalences. The inclusion
\[
\Spa^{\Sig}_c(\D) \subseteq \PSpa^{\Sig}_c(\D)
\]
admits a left adjoint, called the spectrification functor. In full generality, the spectrification functor is unfortunately not very explicit. 
Nonetheless, we can describe its value on $c$-spectra of a certain special type via the sequential stabilization formula familiar from $S^1$-spectra. For this, note that the diagram $\chi_c$ above is shift-invariant. 
More precisely, since $\chi_c$ is symmetric monoidal we have $\chi_c((-) \amalg I) \simeq \chi_c(-) \otimes \chi_c(I) \simeq \chi_c(-)$ since $\chi_c(I) = \C$ is the unit of $\Mod_{\C}$.
This equivalence then determines an equivalence on the level of lax colimits in $\Mod_{\C}(\PrL)$, and so we obtain an associated $I$-shift functor 
\[
\sh_I\colon \PSpa^{\Sig}_c(\D) \to \PSpa^{\Sig}_c(\D),
\]
given informally by the formula $\sh_I(X)_J = X_{I \amalg J}$. For a $c$-prespectrum $X$ there are natural maps 
\[
X \to \sh_I X^{c^{\otimes I}},
\]
where the cotensor by $c^{\otimes I}$ is applied levelwise (using the symmetric monoidal structure on $\C$ to commute the $c$-cotensor operation with itself), and $X$ is a $c$-spectrum if and only if this map is an equivalence for every $I$ (equivalently, for $I=\{\ast\}$). In fact, when restricted to $\Spa^{\Sig}_c(\D)$, the functors $\sh_I$ and $(-)^{c^{\otimes I}}$ are inverse equivalences, so that the action of tensoring with $c ^{\otimes I}$ on $\Spa^{\Sig}_c(\D)$ is given by $\sh_I$.
Now for $n \geq 0$ we write $\sh_n$ for $\sh_{\{1,...,n\}}$, so that for a $c$-prespectrum $X$ we have natural maps $X \to \sh_nX^{c^{\otimes n}}$ fitting into a sequence
\[
X \to \sh_1X^c \to \sh_2X^{c \otimes c} \to ... \to \sh_n X^{c^{\otimes n}} \to ...,
\]
and we write $X \to \sh_{\infty} X^{c^{\otimes \infty}}$ for the map into the associated sequential colimit. We call $\sh_{\infty} X^{c^{\otimes \infty}}$ the \emph{sequential stabilization} of $X$.
Following Schwede~\cite{schwede}, we say that a symmetric $c$-prespectrum $X$ is \emph{semi-stable} if $\sh_{\infty} X^{c^{\otimes \infty}}$ is a $c$-spectrum. For example, for any $c$-spectrum the map $X \to \sh_{\infty} X^{c^{\otimes \infty}}$ is an equivalence and hence in particular $X$ is semi-stable as a $c$-prespectrum.
We may consequently consider the intermediate full subcategory 
\[
\Spa^{\Sig}_c(\C) \subseteq \PSpa^{\semi}_{c}(\C) \subseteq \PSpa^{\Sig}_c(\C)
\]
spanned by the semi-stable $c$-prespectra. 

\begin{proposition}
\label{proposition:sequential-stabilization}%
Suppose that the cotensor functor $(-)^{c}\colon \D \to \D$ commutes with sequential colimits. Then the functor$X \mapsto \sh_{\infty} X^{c^{\otimes \infty}}$ is left adjoint to the inclusion $\Spa^{\Sig}_c(\C) \subseteq \PSpa^{\semi}_{c}(\C)$.
\end{proposition}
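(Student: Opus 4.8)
The plan is to realise the functor $X \mapsto \sh_\infty X^{c^{\otimes \infty}}$ as the restriction, to semi-stable prespectra, of the spectrification functor $L\colon \PSpa^\Sig_c(\D) \to \Spa^\Sig_c(\D)$ recalled above, and then to conclude formally. The formal part goes as follows: the inclusions $\Spa^\Sig_c(\D) \subseteq \PSpa^{\semi}_c(\D) \subseteq \PSpa^\Sig_c(\D)$ are fully faithful, and by the very definition of semi-stability $L$ carries $\PSpa^{\semi}_c(\D)$ into $\Spa^\Sig_c(\D)$; hence for $X \in \PSpa^{\semi}_c(\D)$ and $Z \in \Spa^\Sig_c(\D)$ one has
\[
\Map_{\PSpa^{\semi}_c(\D)}(X,Z) = \Map_{\PSpa^\Sig_c(\D)}(X,Z) \simeq \Map_{\PSpa^\Sig_c(\D)}(LX,Z) = \Map_{\PSpa^{\semi}_c(\D)}(LX,Z),
\]
so that $L|_{\PSpa^{\semi}_c(\D)}$, which by the identification above is $X\mapsto\sh_\infty X^{c^{\otimes\infty}}$, is left adjoint to $\Spa^\Sig_c(\D)\hookrightarrow \PSpa^{\semi}_c(\D)$.

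To identify $L$ on semi-stable prespectra, I would write $T := \sh_1 \circ (-)^c$ for the endofunctor of $\PSpa^\Sig_c(\D)$, equipped with the natural transformation $\eta\colon \id \Rightarrow T$ whose component at $X$ is the structure map $X \to \sh_1 X^c$. Using that $\sh$ commutes with the cotensor and the cotensor commutes with itself, one identifies $T^n X \simeq \sh_n X^{c^{\otimes n}}$, so that $\sh_\infty X^{c^{\otimes\infty}}$ is the sequential colimit $T^\infty X := \colim\big(X \xrightarrow{\eta_X} TX \to T^2 X \to \cdots\big)$; moreover $X$ is semi-stable exactly when $T^\infty X$ lies in $\Spa^\Sig_c(\D)$, and $Y$ is a $c$-spectrum exactly when $\eta_Y$ is an equivalence. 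The claim then reduces to the assertion that $\eta_W$ is an $L$-equivalence (i.e.\ inverted by $L$) for \emph{every} prespectrum $W$: granting this, $\eta^\infty_X\colon X \to T^\infty X$, being a sequential colimit of the $L$-equivalences $\eta_{T^nX}$, is an $L$-equivalence, and for $X$ semi-stable its target is $L$-local, so $\eta^\infty_X$ is the unit of the reflection and $L(X)\simeq \sh_\infty X^{c^{\otimes\infty}}$.

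For the assertion that $\eta_W$ is an $L$-equivalence I would argue that $T$ preserves $L$-equivalences and $L$-local objects. For local objects this is clear: $\sh_1$ is an equivalence of $\PSpa^\Sig_c(\D)$ restricting to an equivalence of $\Spa^\Sig_c(\D)$ (where it acts as tensoring with $c$), and the levelwise cotensor $(-)^c$ sends $c$-spectra to $c$-spectra, since the structure maps of $X^c$ are the $s_i^c$. On $L$-local objects $\eta$ is an equivalence by the characterisation of $c$-spectra recalled above, so $T$ descends to the identity on $\Spa^\Sig_c(\D)$; combined with the fact that $T$ preserves $L$-equivalences this yields $LT \simeq L$ compatibly with $\eta$, forcing $L(\eta_W)$ to be an equivalence for all $W$. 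The one nonformal ingredient is precisely that $(-)^c$, and hence $T$, preserves $L$-equivalences and not merely $L$-local objects; this is where the hypothesis that $(-)^c$ commutes with sequential colimits is used — together with the fact that $(-)^c$ is a right adjoint and hence preserves finite limits — to see that $(-)^c$ is compatible with the (finite plus sequential) colimits needed to build an arbitrary prespectrum out of free ones, on which $\eta$ is manifestly an $L$-equivalence. This compatibility of the cotensor with the spectrification localisation is the main obstacle of the proof; the remaining points — the identification $T^n X \simeq \sh_n X^{c^{\otimes n}}$, the closure of $L$-equivalences under sequential colimits, and the passage to the intermediate full subcategory — are routine.
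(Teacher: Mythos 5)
Your route is genuinely different from the paper's, and it has a real gap. The paper proves the more general Proposition~\ref{proposition:sequential-stabilization-general} about $\Inj$-actions, extends the action to the category of countable sets and injections by left Kan extension, identifies $\sh_{\infty}(-)^{c^{\otimes\infty}}$ with the action of the set $\NN$, and applies the localization criterion of \cite{HTT}*{Proposition 5.2.7.4}; the whole argument stays inside the semi-stable world and never says anything about the spectrification functor on arbitrary prespectra. Your proposal instead tries to show that sequential stabilization agrees with the restriction of the global spectrification $L\colon \PSpa^{\Sig}_c(\D) \to \Spa^{\Sig}_c(\D)$ to semi-stable objects. The formal reductions around this are fine, but everything hinges on the claim that $\eta_W\colon W \to TW$ is an $L$-equivalence for \emph{every} prespectrum $W$ (equivalently, that $T=\sh_1\circ(-)^c$ preserves $L$-equivalences). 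You flag this as the "main obstacle" but then only sketch it — "build an arbitrary prespectrum out of free ones, on which $\eta$ is manifestly an $L$-equivalence." This is precisely where the substance of the statement lies: even on free prespectra, checking that $\eta$ is an $L$-equivalence amounts to the assertion that $c$ becomes invertible after stabilization, which is a genuine theorem and not a formality; and the reduction of a general prespectrum to free ones via finite limits and sequential colimits would itself need justification. What you have called routine is the actual proof, and it is missing.

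A smaller slip: $\sh_1$ is \emph{not} an autoequivalence of $\PSpa^{\Sig}_c(\D)$ — informally, $\sh_1(X)$ forgets the value of $X$ at $\emptyset$ — it becomes one only after localizing to $\Spa^{\Sig}_c(\D)$. Your deduction that $T$ preserves $c$-spectra is unaffected (you need only that $\sh_1$ carries spectra to spectra), but the assertion as written is wrong.
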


Proposition~\ref{proposition:sequential-stabilization} is a special case of a more general statement, which has nothing to do specifically with spectra or pre-spectra. To formulate this, note that the association $I \mapsto \sh_I(-)^{\otimes_I c}$ determines an action of $\Inj$ on $\PSpa^{\Sig}_c(\C)$ (if we view the underlying $\infty$-category of $\PSpa^{\Sig}_c(\C)$ as a certain $\Inj$-indexed lax limit, the action of $\Inj$ is induced by the action of $\Inj$ on itself as an object in the lax slice $\Cat^{\lax}_{/\Inj}$). Evaluating at a given object $X \in \PSpa^{\Sig}_c(\C)$ then yields a functor $X_{\sharp}\colon \Inj \to \PSpa^{\Sig}_c(\C)$, and $X$ is a $c$-spectrum precisely when $X_{\sharp}$ inverts all morphisms in $\Inj$. The sequential stabilization procedure, in turn, only depends on this action of $\Inj$ by construction. We can then formulate (and prove) a generalization of Proposition~\ref{proposition:sequential-stabilization} as follows:

\begin{proposition}
\label{proposition:sequential-stabilization-general}%
Let $\E$ be an $\infty$-category with sequential colimits equipped with a sequential colimit preserving action of $\Inj$, that is, a monoidal functor
\[
\beta\colon \Inj^{\otimes} \times_{\Com^{\otimes}} \Ass^{\otimes} \to \Fun^\seq(\E,\E)^{\circ},
\]
where the target is the $\infty$-category of sequential colimit preserving endo-functors equipped with the composition monoidal structure. Say that $x \in \E$ is $\beta$-stable if $\beta_I(x) \to \beta_J(x)$ is an equivalence for every $I \hookrightarrow J$ in $\Inj$, and that $x$ is $\beta$-semistable if 
\[
F(x) := \colim_{n \geq 0} \beta_{\{1,...,n\}}(x)
\]
is $\beta$-stable. Then for every $\beta$-semistable $x$ we have that $x \to F(x)$ exhibits $F$ as the initial $\beta$-stable object under $x$. In particular, the inclusion $\E^{\stable} \subseteq \E^{\semi}$ of $\beta$-stable objects inside $\beta$-semistable objects admits a left adjoint given by $F$.
\end{proposition}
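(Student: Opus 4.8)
The plan is to realize $F$ as a reflective localization of $\E^{\semi}$ onto $\E^{\stable}$, with unit the natural transformation $x \mapsto \eta_x$. To set things up, I would write $T := \beta_{[1]}$ and let $\sigma \colon \id_{\E} \Rightarrow T$ be the natural transformation induced by the unique morphism $[0] \to [1]$ in $\Inj$; monoidality of $\beta$ gives natural equivalences $\beta_{[n]} \circ \beta_{[m]} \simeq \beta_{[n+m]}$ and in particular $\beta_{[n]} \simeq T^{\circ n}$, while the symmetric group automorphisms of $[n]$ in $\Inj$ equip each $\beta_{[n]}$ with a $\Sigma_n$-action. Along the chain $[0] \hookrightarrow [1] \hookrightarrow \cdots$ the functor $F = \colim_n \beta_{[n]}$ is then, pointwise, the mapping telescope of $\id_{\E} \xrightarrow{\sigma} T \to T^{\circ 2} \to \cdots$; it preserves sequential colimits, since a sequential colimit of sequential-colimit-preserving functors is again one, and it carries the tautological transformation $\eta \colon \id_{\E} \Rightarrow F$. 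The first observation is an easy lemma: \emph{if $z$ is $\beta$-stable then $\eta_z$ is an equivalence}, because every structure map $\beta_{[n]}(z) \to \beta_{[n+1]}(z)$ of the telescope is of the form $\beta_{\delta}(z)$ for a morphism $\delta$ of $\Inj$, hence an equivalence by $\beta$-stability, so the telescope is equivalent via $\eta_z$ to its initial stage. Consequently $F$ preserves $\beta$-stable objects, and (as $\beta$-stable objects are $\beta$-semistable) also $\beta$-semistable objects, so that $F$ restricts to an endofunctor of $\E^{\semi}$ still equipped with $\eta$.

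Next I would verify, for $x \in \E^{\semi}$, that the two whiskered transformations $\eta_{F(x)}$ and $F(\eta_x)$ from $F(x)$ to $F(F(x))$ are equivalences and are homotopic. That $F(x)$ is $\beta$-stable is the definition of semistability, so the easy lemma already gives that $\eta_{F(x)}$ is an equivalence (and, applying it again, that $F(F(x)) \simeq F(x)$ is $\beta$-stable). To handle $F(\eta_x)$ I would use that each $\beta_{[n]}$ preserves sequential colimits to compute $F(F(x)) = \colim_n \beta_{[n]}\big(\colim_m \beta_{[m]}(x)\big) \simeq \colim_{(n,m) \in \NN \times \NN} T^{\circ(n+m)}(x)$, and compare this bi-indexed colimit with $F(x) = \colim_k T^{\circ k}(x)$ along the cofinal addition functor $\NN \times \NN \to \NN$. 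Here lies the main obstacle: the $(\NN\times\NN)$-diagram is \emph{not} simply pulled back from the sequential diagram $k \mapsto T^{\circ k}(x)$ — the two differ by the permutation automorphisms of $T^{\circ(n+m)}(x)$ produced by the non-identity isomorphisms $[n] \sqcup [m] \cong [n+m]$ in $\Inj$. The key point, and the place the semistability hypothesis genuinely enters, is that exactly for $\beta$-semistable $x$ these permutations act trivially on the colimit in question — the abstract shadow of Schwede's analysis of semistable symmetric spectra — which can be extracted from the $\beta$-stability of $F(x)$ together with the colimit-preservation hypothesis, so that the comparison map is an equivalence. Bookkeeping of this identification then shows that both $\eta_{F(x)}$ and $F(\eta_x)$ correspond to the canonical cofinality equivalence $F(F(x)) \simeq F(x)$, in particular that $F(\eta_x)$ is an equivalence homotopic to $\eta_{F(x)}$.

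Finally, with these conditions in hand, \cite{HTT}*{Proposition~5.2.7.4} will apply to $F \colon \E^{\semi} \to \E^{\semi}$ with the transformation $\eta$ and exhibit $F$ as a localization functor onto its essential image $\E_0 = \{ x \in \E^{\semi} : \eta_x \text{ is an equivalence}\}$, with unit $\eta$. By the easy lemma $\E^{\stable} \subseteq \E_0$; conversely, if $x \in \E^{\semi}$ and $\eta_x$ is an equivalence then $x \simeq F(x)$ is $\beta$-stable, whence $\E_0 = \E^{\stable}$. Therefore, for every $\beta$-semistable $x$, the map $\eta_x \colon x \to F(x)$ is the reflection of $x$ into $\E^{\stable}$ — i.e.\ it exhibits $F(x)$ as the initial $\beta$-stable object under $x$ — and $F$ is the asserted left adjoint to the inclusion $\E^{\stable} \hookrightarrow \E^{\semi}$. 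The hardest part of all this is the permutation-triviality statement in the middle paragraph; everything else is formal.
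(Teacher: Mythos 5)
Your overall strategy coincides with the paper's: both invoke \cite{HTT}*{Proposition~5.2.7.4} for the endofunctor $F$ of $\E^{\semi}$ with the unit transformation $\eta\colon \id \Rightarrow F$, and the content is to verify that $\eta_{F(x)}$ and $F(\eta_x)$ are both equivalences for semistable $x$ (showing they are homotopic, as you additionally propose, is not required by that proposition). You handle $\eta_{F(x)}$ correctly from the $\beta$-stability of $F(x)$, and you correctly observe that the naive cofinality comparison $\colim_{(n,m)} T^{\circ(n+m)}(x) \to \colim_k T^{\circ k}(x)$ does not go through directly, because under the stacking identification $[n]\amalg[m]\cong[n+m]$ the bi-indexed transition maps are not the standard inclusions. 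But the claim you then lean on --- that ``for $\beta$-semistable $x$ these permutations act trivially on the colimit,'' to be ``extracted from the $\beta$-stability of $F(x)$ together with the colimit-preservation hypothesis'' --- is asserted, not proven, and you yourself flag it as ``the hardest part.'' This is a genuine gap: it is not even clear what the precise formulation of ``permutations act trivially'' should be, let alone how semistability yields it.

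The paper avoids the difficulty entirely by enlarging $\Inj$ to the symmetric monoidal category $\wtl{\Inj}$ of \emph{countable} sets and injections, and operadically left Kan extending $\beta$ to an action $\wtl{\beta}$ of $\wtl{\Inj}$, which turns out to be genuinely monoidal (not merely lax) because $\Inj_{/I} \times \Inj_{/J} \to \Inj_{/I\amalg J}$ is an equivalence of posets. Then $F = \wtl{\beta}_{\NN}$, $F\circ F = \wtl{\beta}_{\NN\amalg\NN}$, and the two maps $\eta_{F(x)}, F(\eta_x)\colon F(x) \to F(F(x))$ are exactly $\wtl{\beta}$ applied to the two coproduct inclusions $\NN \hookrightarrow \NN\amalg\NN$, both of which are inclusions of infinite countable sets. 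It then suffices to observe that $x$ is semistable if and only if $\wtl{\beta}_I(x)\to\wtl{\beta}_J(x)$ is an equivalence for every inclusion $I\hookrightarrow J$ of infinite countable sets: choosing a bijection $J\cong (J\setminus I)\amalg I$ and using monoidality of $\wtl{\beta}$, this map is identified up to an equivalence with the unit $\id \Rightarrow \wtl{\beta}_{J\setminus I}$ evaluated at $\wtl{\beta}_I(x)\simeq F(x)$, which is an equivalence because $F(x)$ is $\wtl{\beta}$-stable. The shuffle automorphisms you worry about get absorbed into a single bijection of countable sets, which $\wtl{\beta}$ manifestly sends to an equivalence --- that is the idea your proposal is missing. (A small further slip: your justification that $F$ restricts to an endofunctor of $\E^{\semi}$ should be that $F$ sends a semistable object to a \emph{stable} one, and stable objects are semistable; the observation that $F$ preserves stable objects does not by itself give this.)
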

\begin{proof}
Let $\wtl{\Inj}$ be the symmetric monoidal category of countable sets and injective maps between them, so that $\Inj$ sits inside $\wtl{\Inj}$ as a full symmetric monoidal subcategory. We note that for every infinite $I \in \wtl{\Inj}$ the comma category $\Inj_{/I}$ is just the poset of finite subsets of $I$, and always contains $(\NN,>)$ as a cofinal subposet. Since $\E$ admits sequential colimits we can operadically left Kan extend the action $\beta$ to a lax monoidal functor 
\[
\wtl{\beta}\colon \wtl{\Inj}^{\otimes}\times_{\Com^{\otimes}}\Ass^{\otimes} \to \Fun^\seq(\E,\E)^{\circ}.
\]
In addition, since for every $I,J \in \wtl{\Inj}$ the map $\Inj_{/I} \times \Inj_{/J} \to \Inj_{/I \amalg J}$ sending $(I_0 \hookrightarrow I, J_0 \hookrightarrow J)$ to $I_0 \amalg J_0 \hookrightarrow I \amalg J$ is an equivalence of posets the lax monoidal functor $\wtl{\beta}$ is actually monoidal, that is, $\wtl{\beta}$ encodes an action of $\wtl{\Inj}$ on $\E$ via sequential colimits preserving functors, extending $\beta$. Since every object in $\Inj$ is compact in $\wtl{\Inj}$ we also see that $\wtl{\beta}$ preserves sequential colimits in $\wtl{\Inj}$. Finally, note that if $x$ is $\beta$-stable then it is also $\wtl{\beta}$-stable, that is, for every map $I \hookrightarrow J$ in $\wtl{\Inj}$ the map $\wtl{\beta}_I(x) \to \wtl{\beta}_J(x)$ is an equivalence; indeed, this map is the image under $\wtl{\beta}_I$ of the map $x = \wtl{\beta}_{\emptyset}(x) \to \wtl{\beta}_{J\setminus I}(x)$, and the latter is a sequential transfinite composite of inclusions of finite sets. 

By construction, the action of the set $\NN$ is given by $F$, that is, $\wtl{\beta}_{\NN}(x) = F(x)$. We now claim that the endomorphism $\wtl{\beta}_{\NN}\colon \E^{\semi} \to \E^{\semi}$ satisfies the conditions of~\cite[Proposition 5.2.7.4]{HTT} with respect to the natural transformation $\id = \wtl{\beta}_{\emptyset} \to \wtl{\beta}_{\NN}$. Unwinding the definitions, this amounts to saying that the two component inclusions $i_1,i_2\colon \NN \hrar \NN \amalg \NN$ induce both equivalences
\[
\wtl{\beta}_{\NN}(x) \to \wtl{\beta}_{\NN \amalg \NN}(x) = \wtl{\beta}_{\NN}(\wtl{\beta}_{\NN}(x))
\]
for every $x \in \E^{\semi}$. But this is simply because both $i_1,i_2$ are inclusions of infinite countable sets,
and the condition that $x$ is $\beta$-semistable implies (and in fact equivalent to saying) that $\wtl{\beta}_{I}(x) \to \wtl{\beta}_J(x)$ is an equivalence whenever $I \hookrightarrow J$ is an inclusion of infinite countable sets.
Invoking~\cite[Proposition 5.2.7.4]{HTT} we hence conclude that $\wtl{\beta}_{\NN}$ determines a left adjoint to the inclusion of $\im(\wtl{\beta}) \subseteq \E^{\semi}$ of its image. 
To finish the proof we now observe that the essential image of $\wtl{\beta}_{\NN}$ is given by the full subcategory $\E^{\stable} \subseteq \E^{\semi}$ spanned by the $\beta$-stable object; indeed, this essential image is contained there by the definition of being $\beta$-semistable, and every $\beta$-stable $x$ is in the image since $\wtl{\beta}_{\NN}(x) \simeq x$ for such $x$.
\end{proof}

\begin{definition}
\label{definition:motivic}%
Let $\E$ be a pointed presentable $\infty$-category. We define $\Spa_{\Pone}^{\Sig}(S,\E) := \Spa_{\Pone}^{\Sig}(\Sh^{\Nis}(S,\E))$ and $\SH(S,\E) = \Spa_{\Pone}^{\Sig}(\Hinv(S,\E))$ to be the stabilizations of $\Sh^{\Nis}(S,\E)$ and $\Hinv(S,\E)$ respectively, with respect to the action of $(\Pone,\infty) \in \Psh(S,\Sps_*)$. 
The fully-faithful inclusion $\Hinv(S,\E) \subseteq \Sh^{\Nis}(S,\E)$ then induces a fully-faithful inclusion 
\[
\SH(S,\E) \subseteq \Spa_{\Pone}^{\Sig}(S,\E).
\]
We refer to the objects of $\SH(S,\E)$ as \defi{$\E$-valued motivic spectra}, and to those of $\Spa_{\Pone}^{\Sig}(S,\E)$ as \defi{$\E$-valued pre-motivic spectra}. If $\E=\Spa$ then we drop the prefix ``$\E$-valued'' and simply write $\SH(S)$ and $\Spa_{\Pone}^{\Sig}(S)$. Given a finite set $I$, we then write $\Sig^I_{\Pone} := \sh_I$ for the $I$-fold shift functor on either $\Spa_{\Pone}^{\Sig}(S,\E)$ or $\SH(S,\E)$, and $\Om^I_{\Pone}$ for its inverse equivalence, given by cotensoring with $\wedge_{i \in I}[\Pone,\infty]$ pointwise. We refer to them as the ($I$-fold) $\Pone$-suspension and $\Pone$-loops functors. For $I=\{1,...,n\}$ we also write $\Sig^n_{\Pone}$ and $\Om^n_{\Pone}$. Finally, we write $\sh_{\infty}\Om^{\infty}_{\Pone} = \colim_n\sh_n\Om^{n}_{\Pone}$ for the sequential stabilization functor as above. 
\end{definition}

Closely related to (and simpler than) symmetric spectra is the construction of \emph{sequential spectra}, which can be defined in a similar fashion by replacing $\Inj$ with the poset $(\NN,\geq)$, that is,
\[
\Spa_{c}^{\NN}(\D) := \colim[\D \xrightarrow{(-) \otimes c} \D \xrightarrow{(-) \otimes c} ... ]
\]
where the colimit is calculated in $\Mod_{\C}(\PrL)$. As above, the underlying presentable $\infty$-category can be calculated as a suitable limit in $\PrR$, so that objects of $\Spa_{c}^{\NN}(\D)$ are given by sequences $X_0,X_1,...$ together with, for every $n=0,1....$ an equivalence $X_n \xrightarrow{\simeq} X_{n+1}^c$. The inclusion $\NN \to \Inj$ sending $n \in \N$ to the finite set $\{1,...,n\}$ then determines an adjunction
\begin{equation}
\label{equation:sequential-to-symmetric}%
\Spa_{c}^{\NN}(\D) \adj \Spa_{c}^{\Sig}(\D) 
\end{equation}
relating sequential and symmetric $c$-spectra, such that the left adjoint is $\C$-linear. While simpler, the construction of sequential spectra has less favourable formal properties: it does not carry any obvious symmetric monoidal structure, and on its underlying $\C$-linear $\infty$-category the object $c$ does not in general act invertibly.
Nonetheless, when $c$ is a \emph{symmetric} object, that is, the cyclic permutation on $c \otimes c \otimes c$ is homotopic to the identity, then the above adjunction is an equivalence for every $\C$-module $\D$ by~\cite[Proposition 1.6.3]{stabilization}, so that the simpler construction $\Spa_{c}^{\NN}(\D)$ also enjoys all the favourable properties of $\Spa_{c}^{\Sig}(\D)$. We note that if $\D$ itself is $\C$-linearly symmetric monoidal then it is enough that the image of $c$ in $\D$ be symmetric for the above adjunction to be an equivalence.

By~\cite[Theorem 4.3 and Lemma 4.4]{voevodsky} the image of $[\Pone,\infty]$ in $\Hinv(S,\Sps_*)$ (and hence in $\Hinv(S,\E)$ for any pointed $\E$) is symmetric. In those cases the adjunction~\eqref{equation:sequential-to-symmetric} is an equivalence and so we have
\[
\SH(S,\E) = \Spa_{\Pone}^{\Sig}(\Hinv(S,\E)) = \Spa_{\Pone}^{\NN}(\Hinv(S,\E))
\]
on the level of underlying $\infty$-categories.
We warn the reader that the image of $[\Pone,\infty]$ is not generally symmetric in $\Sh^{\Nis}(S,\E)$, so that a-priori the $\Pone$-stabilization of $\Sh^{\Nis}(S,\E)$ cannot be realized using sequential spectra. We thank Marc Hoyois for bringing this subtle point to our attention. Though the spectrification functor over Nisnevich sheaves plays a role in some of the arguments below, the relevant $\Pone$-prespectra are always semi-stable.

\begin{proposition}
\label{proposition:motivic-spectra-is-stable}%
Let $\E$ be a pointed presentable $\infty$-category. Then the $\infty$-category $\SH(S,\E)$ is stable, and the functor
\[
\SH(S,\Spa(\E)) \to \SH(S,\E)
\]
induced by $\Om^{\infty}\colon \Spa(\E) \to \E$ is an equivalence. In particular, the functors 
\[
\SH(S,\Spa) \to \SH(S,\Grp_{\Einf}) \to \SH(S,\Sps_*)
\]
are both equivalences.
\end{proposition}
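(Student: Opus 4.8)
The plan is to deduce both statements from a single geometric input: in the pointed motivic homotopy category $\Hinv(S,\Sps_*)$ one has the standard equivalence $[\Pone,\infty]\simeq\Sone\wedge\Gm$. Since $\SH(S,\E)$ is obtained from $\Hinv(S,\E)$ by inverting $[\Pone,\infty]$, this forces the simplicial circle $\Sone$ to act invertibly there as well — which on the one hand gives stability, and on the other hand makes the further stabilisation implicit in passing from $\E$ to $\Spa(\E)$ redundant.

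For the stability claim, first I would record that $\SH(S,\E)$ is pointed and presentable and, being a module over $\Psh(S,\Sps_*)$, carries in particular a $\Sps_*$-tensoring (obtained by restriction along $\Sps_*\to\Psh(S,\Sps_*)$), under which the suspension functor is $\Sigma=\Sone\otimes(-)$, computed by the constant presheaf $\Sone$. By construction $[\Pone,\infty]$ acts invertibly on $\SH(S,\E)$, and since $\SH(S,\E)$ is $\Aone$-invariant this action agrees with that of the image of $[\Pone,\infty]$ in $\Hinv(S,\Sps_*)$ (the $\Psh(S,\Sps_*)$-action on an $\Aone$-local module factors through the localisation $\Psh(S,\Sps_*)\to\Hinv(S,\Sps_*)$), namely $\Sone\wedge\Gm$. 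Writing $F=\Sone\otimes(-)$ and $G=\Gm\otimes(-)$, the endofunctors $F$ and $G$ commute (all actions of objects of the symmetric monoidal $\Psh(S,\Sps_*)$ on a module commute) and $FG\simeq GF\simeq[\Pone,\infty]\otimes(-)$ is an equivalence; hence $G\circ([\Pone,\infty]\otimes(-))^{-1}$ is a two-sided inverse of $F$, so $\Sigma$ is an equivalence and $\SH(S,\E)$ is stable by \cite[Corollary 1.4.2.27]{HA}.

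For the $\Om^{\infty}$-equivalence I would reorganise the stabilisations in play. Since $\Sone$ is a symmetric object, symmetric $\Sone$-spectra agree with sequential $\Sone$-spectra, i.e. $\Spa^{\Sig}_{\Sone}(\D)\simeq\Spa(\D)$ for every pointed presentable $\D$ (by \cite[Proposition 1.6.3]{stabilization}); and forming spectrum objects is compatible with the accessible localisations cutting out Nisnevich sheaves and $\Aone$-invariant sheaves — all the $\infty$-categories in sight are built from $\Sps_*$ by relative tensor products in $\PrL$, which commute — so that $\Hinv(S,\Spa(\E))\simeq\Spa(\Hinv(S,\E))\simeq\Spa^{\Sig}_{\Sone}(\Hinv(S,\E))$; concretely, a $\Spa(\E)$-valued $\Aone$-invariant Nisnevich sheaf is just a spectrum object in $\Hinv(S,\E)$. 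Therefore
\[
\SH(S,\Spa(\E))=\Spa^{\Sig}_{\Pone}\Spa^{\Sig}_{\Sone}\Hinv(S,\E)\simeq\Spa^{\Sig}_{\Sone}\Spa^{\Sig}_{\Pone}\Hinv(S,\E)=\Spa^{\Sig}_{\Sone}\SH(S,\E),
\]
where the middle equivalence uses that $\Spa^{\Sig}_{\Pone}(-)$ and $\Spa^{\Sig}_{\Sone}(-)$ are given by tensoring over $\Psh(S,\Sps_*)$ with the idempotent algebras effecting the smashing localisations inverting $[\Pone,\infty]$, resp. $\Sone$, and such tensorings commute. By the stability step $\Sone$ already acts invertibly on $\SH(S,\E)$, so the unit map $\SH(S,\E)\to\Spa^{\Sig}_{\Sone}\SH(S,\E)$ is an equivalence; tracing through the identifications shows that the resulting equivalence $\SH(S,\Spa(\E))\simeq\SH(S,\E)$ is the functor induced by $\Om^{\infty}$ (on $\Sone$-stabilised categories $\Om^{\infty}$ is inverse to $\Sigma^{\infty}$, the latter being an equivalence since $\Sone$ is invertible). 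The final assertion then follows by taking $\E=\Grp_{\Einf}$ and $\E=\Sps_*$, using $\Spa(\Grp_{\Einf})\simeq\Spa\simeq\Spa(\Sps_*)$ and the factorisation $\Om^{\infty}\colon\Spa\xrightarrow{\tau_{\geq0}}\Grp_{\Einf}\to\Sps_*$, together with the two-out-of-three property.

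I do not expect a genuinely hard step: the content is the identity $\Pone\simeq\Sone\wedge\Gm$ plus soft formal manipulations. The points requiring care — what I would call the main obstacle — are the compatibility of $\Spa(-)$ with Nisnevich sheafification and with $\Aone$-localisation (so that the three stabilisations may be freely commuted), and the verification that the abstract equivalence produced above is precisely the one induced by $\Om^{\infty}$ rather than merely some equivalence.
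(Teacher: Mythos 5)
Your proof is correct and uses the same two key inputs as the paper's: the equivalence $\Pone\simeq\Sone\wedge\Gm$ in $\Hinv(S,\Sps_*)$ to get stability, and the commutativity of the various stabilisations to reduce the $\Om^\infty$-equivalence to that stability. The one cosmetic difference is in the second step: where you pass through symmetric $\Sone$-spectra and commute two smashing localisations, the paper more directly observes that limits (hence loop objects, hence spectrum objects) in $\Hinv(S,\E)$ are computed levelwise, so $\Hinv(S,\Spa(\E))=\Spa(\Hinv(S,\E))$ and therefore $\SH(S,\Spa(\E))=\Spa(\SH(S,\E))$ over $\SH(S,\E)$, which dispels the worry you raise about identifying the resulting equivalence with the one induced by $\Om^\infty$.
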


We will consider any of the equivalent $\infty$-categories 
\[
\SH(S,\Spa) = \SH(S,\Grp_{\Einf}) = \SH(S,\Sps_*)
\]
as a model for the $\infty$-category of \defi{motivic spectra} over $S$.

\begin{proof}[Proof of Proposition~\ref{proposition:motivic-spectra-is-stable}]
Since $\Hinv(S,\E)$ is closed in $\Psh(S,\E)$ under limits we have that limits in $\Hinv(S,\E)$ are computed levelwise. This applies in particular to taking loop objects, and so $\SH(S,\Spa(\E)) = \Spa(\SH(S,\E))$ over $\SH(S,\E)$. All claims hence follow once we show that $\SH(S,\E)$ is stable. This, in turn, follows from the fact that in $\Hinv(S,\Sps_*)$ one has the equivalence $\Sone \wedge \Gm \simeq \Pone$. As a consequence, universally inverting the action of $\Pone$ inverts both the action of $\Sone$ -- thus stabilizing the category in the standard sense -- and $\Gm$. 
\end{proof}

The advantage of working with coefficients in $\Spa$ (or more generally in any stable $\E$) is that the localisation functor
\[
\Loc_{\Aone}\colon \Sh^{\Nis}(S,\Spa) \to \Hinv(S,\Spa)
\]
is given by the explicit formula 
\[
(\LAone\X)_n(T) = |\X_n(T \times_S \Del_S^{\bullet})|,
\]
where $\Del_S^n$ is the algebraic $n$-simplex over $S$, that is, the closed subscheme of $\Aa^n \times S$ determined by the equation $\sum_i x_i=1$. In fact, this formula always gives the reflection to the full subcategory spanned by $\Aone$-invariant presheaves, but for a general $\E$, it does not preserve the Nisnevich sheaf property. However, when $\E$ is stable the sheaf property is preserved under colimits, and hence $\Loc_{\Aone}$ sends Nisnevich sheaves to Nisnevich sheaves, which implies that its universal property persists to the sheaf context. In fact, when $\E$ is stable this formula also applies for $\Pone$-spectrum objects:

\begin{lemma}
\label{lemma:LAoneOmPone}%
Suppose that $\E$ is stable. If $F\colon \Sm_S\op \to \E$ is a Nisnevich sheaf, then the interchange map $\LAone \OmPone F \to \OmPone \LAone F$ is an equivalence. In particular, the induced functor
\[
\LAone \colon \Spa_{\Pone}^{\Sig}(S,\E) \to \SH(S,\E)
\]
on stabilizations (which we denote by the same name) is computed $\Pone$-levelwise. 
\end{lemma}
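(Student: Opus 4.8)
\textbf{Proof plan for Lemma~\ref{lemma:LAoneOmPone}.}
The plan is to reduce the statement to the explicit Suslin--Voevodsky formula for $\LAone$ recalled above. The key observation is that on the level of Nisnevich sheaves valued in the stable $\infty$-category $\E$, the functor $\LAone$ is given by the colimit $X \mapsto \colim_{[n]\in\Del\op}\X(- \times_S \Del^n_S)$ (the geometric realization of the Suslin--Voevodsky complex), and that $\OmPone = (-)^{[\Pone,\infty]}$ is, by definition, a finite limit applied levelwise: concretely, $(\OmPone F)(T) = \fib[F(\Pone_T)\to F(T)]$. So the interchange map $\LAone\OmPone F \to \OmPone\LAone F$ is a comparison between a geometric realization (a sifted, hence filtered-up-to-refinement, colimit) and a finite limit. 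First I would note that since $\E$ is stable, finite limits commute with all colimits in $\E$, and in particular with the geometric realization $|{-}|$ over $\Del\op$. It then remains to check that the diagram $T \mapsto \OmPone F(T\times_S\Del^\bullet_S)$ computing $\OmPone\LAone F$ agrees with the diagram $T\mapsto \LAone(\OmPone F)$; but both are obtained by applying the same levelwise construction $T\mapsto \fib[(-)(\Pone_T)\to(-)(T)]$ to $F(-\times_S\Del^\bullet_S)$, using that $\Pone_{T\times_S\Del^n_S} = \Pone_T\times_S\Del^n_S$. Hence the interchange map is an equivalence.

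For the ``in particular'' clause, I would argue as follows. The $\infty$-category $\Spa_{\Pone}^{\Sig}(S,\E)$ is realized as an accessible localisation of the presheaf-of-prespectra $\infty$-category $\PSpa_{\Pone}^{\Sig}(S,\E)$, with underlying objects given by families $\{X_I\}_{I\in\Inj}$ of Nisnevich sheaves together with structure maps $s_i\colon X_I\to \OmPone^{\,J\setminus i(I)}X_J$; a $\Pone$-spectrum is one for which all $s_i$ are equivalences. The functor $\LAone$ on this category is induced by applying the Suslin--Voevodsky $\LAone$ levelwise (in $I$), and one must check two things: that the result still satisfies the $\Pone$-spectrum condition, and that it computes the $\Aone$-localisation in $\SH(S,\E)$. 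The first point is exactly where the interchange equivalence of the first paragraph is used: applying $\LAone$ levelwise to an $\Aone$-local $\Pone$-prespectrum $\{X_I\}$ produces structure maps $\LAone X_I \to \LAone \OmPone^{\,J\setminus i(I)}X_J \simeq \OmPone^{\,J\setminus i(I)}\LAone X_J$, which are equivalences whenever the original $s_i$ were. The second point follows from the universal property: levelwise $\Aone$-localisation is left adjoint to the inclusion of $\Aone$-invariant objects, this inclusion is compatible with passing to $\Pone$-spectra (since the $\Pone$-spectrum condition is a levelwise limit condition preserved by the inclusion $\Hinv \subseteq \Sh^{\Nis}$), and $\Pone$-spectrification commutes appropriately; I would invoke Proposition~\ref{proposition:sequential-stabilization-general} (or just the explicit semi-stable spectrification, noting the relevant prespectra here are semi-stable) to move the $\Aone$-localisation past the spectrification. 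This pins down $\LAone$ on $\Spa_{\Pone}^{\Sig}(S,\E)$ as the $\Pone$-levelwise functor, as claimed.

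I do not expect a serious obstacle here: the content is the standard fact that in a stable target finite limits commute with geometric realizations, combined with bookkeeping to see that the $\Pone$-stabilization is assembled from levelwise data in a way compatible with levelwise $\LAone$. The one point requiring mild care is the interaction between $\LAone$ and the spectrification functor $\sh_\infty\Om^\infty_{\Pone}$: since $\OmPone$ commutes with sequential colimits (as $\Pone$ is a compact object, or because $\E$ is stable and filtered colimits are exact), Proposition~\ref{proposition:sequential-stabilization} applies and the spectrification of a semi-stable $\Pone$-prespectrum is computed by the sequential colimit $\sh_\infty X^{(\Pone)^{\otimes\infty}}$, which commutes with the colimit defining $\LAone$; this is what makes the levelwise description of $\LAone$ on genuine $\Pone$-spectra legitimate.
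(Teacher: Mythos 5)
Your proof of the main statement is correct and matches the paper's: since $\E$ is stable, the finite limit defining $\OmPone$ commutes with the geometric realization computing $\LAone$, and formation of products with $\Del^n_S$ (or $\Aa^n$) and with $\Pone$ commute. The paper gives precisely this argument in two sentences and leaves the ``in particular'' implicit.

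For the ``in particular'' clause your argument is sound, but the detour through spectrification and semi-stability (the final sentence and the appeal to Proposition~\ref{proposition:sequential-stabilization}) is unnecessary. As you yourself note at the start of that paragraph, once the main clause is in hand, applying $\LAone$ levelwise to an object of $\Spa_{\Pone}^{\Sig}(S,\E)$ sends the structure maps to equivalences, so the output is already a $\Pone$-spectrum in $\Hinv(S,\E)$ --- no spectrification is required, and one need not track whether any prespectrum is semi-stable. The cleaner way to finish is via the levelwise adjoint/Beck--Chevalley argument alone: both $\Spa_{\Pone}^{\Sig}(S,\E)$ and $\SH(S,\E)$ are computed as limits over $\Inj\op$ in $\PrR$ with transition functors $\OmPone^{J\setminus I}$, the inclusion $\SH(S,\E)\hookrightarrow \Spa_{\Pone}^{\Sig}(S,\E)$ is levelwise, and its left adjoint is computed levelwise exactly when the interchange $\LAone\OmPone\Rightarrow\OmPone\LAone$ is invertible, which is the main clause.
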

\begin{proof}
The cartesian products with $\Aa^n$ and $\Pone$ commute, and taking fibres commutes with colimits because in a stable $\infty$-category finite limits commute with arbitrary colimits.
\end{proof}

\subsection{More on derived Poincaré categories of schemes}
\label{subsection:more-derived}%

Let us now revisit the functor
\[
\qSch\op \to \CAlg(\Catx) \quad\quad X \mapsto \Dperf(X)^{\otimes}
\]
discussed in Appendix~\ref{subsection:perfect}, and from which the lax symmetric monoidal functor $X \mapsto (\Dperf(X),\Dual_X) \in \Catps$ is obtained in \S\ref{subsection:scheme-to-rigid}. Our goal in the present subsection is to show that when restricted to smooth $S$-scheme, the above functor can be made symmetric monoidal (when the target is suitably formulated), and that this is compatible with base change along maps $T \to S$ of qcqs schemes (see Points~\ref{item:forming-derived} and~\ref{item:base-change} below).

We begin with the observation that the functor $X \mapsto \Dperf(X)^{\otimes}$ above almost preserves pushouts, which on the left correspond to fibre products of schemes and on the right hand side correspond to relative tensor products of stably symmetric monoidal $\infty$-categories. More precisely, two issues arise when considering pushout preservation:
\begin{itemize}
\item
The functor $\Dperf(-)$ takes values in idempotent complete stable $\infty$-categories, but those are generally not closed under relative tensor products. 
\item
Even up to idempotent completion the functor $\Dperf(-)$ does not preserve pushouts, due to the difference between tensor products and derived tensor products. For example, if $R$ and $S$ are two $A$-algebras then the idempotent completion of $\Dperf(R) \otimes_{\Dperf(A)} \Dperf(S)$ is not $\Dperf(R \otimes_A S)$ but rather $\Dperf(R \otimes^{\h}_A S)$, where $\otimes^{\h}_A$ denotes derived tensor product over $A$. 
\end{itemize}
Let us begin by addressing the first issue. Recall that a map of stable $\infty$-categories $\C \to \D$ is called a Karoubi equivalence if it induces an equivalence on idempotent completions. 
We then write $\Catxi$ for the localisation of $\Catx$ by the collection of Karoubi equivalences. This localisation is left Bousfield (and in fact also right Bousfield, but we will not need this here). In particular, the localisation functor $\Catx \to \Catxi$ 
admits fully-faithful right adjoint whose essential image consists of the idempotent complete stable $\infty$-categories. The unit of the resulting adjunction on a given $\C$ is the inclusion $\C \to \C^{\natural}$ of $\C$ in its idempotent completion. The key point of us is that the resulting localisation adjunction
\[
\Catx \adj \Catxi \cocolon (-)^{\natural}
\]
is multiplicative, that is, the collection of Karoubi equivalences are closed under pointwise tensor product in the arrow category, so that $\Catxi$ inherits from $\Catx$ a symmetric monoidal structure such that the left and right adjoints carry compatibly a symmetric monoidal and a lax symmetric monoidal structure, respectively. 
Concretely, if we identify the localisation $\Catxi$ with the full subcategory of $\Catx$ spanned by the idempotent complete stable $\infty$-categories then the localised tensor product is computed by first taking the tensor product in $\Catx$ and then taking idempotent completion. 
This adjunction then induces an adjunction
\[
\CAlg(\Catx) \adj \CAlg(\Catxi) 
\]
on the level of commutative algebra objects, where both functors are compatible with forgetting the commutative algebra structure. 
In particular, the right adjoint is still fully-faithful, with essential image those stably symmetric monoidal $\infty$-categories whose underlying stable $\infty$-category is idempotent complete. Pushouts in $\CAlg(\Catxi)$ are then given by pushouts in $\CAlg(\Catx)$, that is, relative tensor products, followed by idempotent completion. To account for the first point above we may then replace $\CAlg(\Catx)$ with $\CAlg(\Catxi)$.

As for the second point, one possibility is to replace schemes with derived schemes, so that all products and fibre products are by definition derived, thus resolving the remaining issue, see~\cite{integral}*{Proposition 4.6}. Alternatively, pushouts with one leg flat are still preserved, since in this case the derived and underived tensor products coincide. Since in what follows we will mostly be interested in smooth schemes over a fixed base and smooth maps are flat, this latter solution is sufficient for our purposes.

Taking now into account both points above we may view $\Dperf(-)^{\otimes}$ as a functor 
\[
\qSch\op \to \CAlg(\Catxi) \quad\quad X \mapsto \Dperf(X)^{\otimes} ,
\]
which then preserves pushouts with one leg flat by~\cite{integral}*{Proposition 4.6}.
In addition, since each $\Dperf(X)^{\otimes}$ is rigid as a symmetric monoidal $\infty$-category the functor above factors through the full subcategory $\Catexri \subseteq \CAlg(\Catxi)$ spanned by the idempotent complete rigid $\infty$-categories. We note that $\Catexri$ is again a reflective subcategory of $\Catexr$,
and that $\Catexri$ is closed under pushouts in $\CAlg(\Catxi)$: indeed, if $\C,\D$ are two rigid stably symmetric monoidal $\infty$-categories under $\A$, then the pushout in $\CAlg(\Catxi)$ is given by idempotent completion of $\C \otimes_{\A} \D$, which is again rigid since it receives a canonical symmetric monoidal functor $\C \times \D \to \C \otimes_{\cA} \D$ whose image generates $\C \otimes_{\cA} \D$ under finite colimits and retracts. Restricting the target we hence obtain a functor
\begin{equation}
\label{equation:integral}%
\qSch\op \to \Catexri \quad\quad X \mapsto \Dperf(X) 
\end{equation}
which as above preserves pushouts with one leg flat.

In what follows we would like to consider the above functor on smooth schemes over a fixed base, but also keep track of what happens when the base changes. For this, we first apply the functor on the level of arrow categories to obtain a commutative square
\[
\begin{tikzcd}
\Ar(\qSch)\op \ar[d,"t\op"']\ar[r] & \Ar(\Catexri) \ar[d,"s"] \\
\qSch\op \ar[r] & \Catexri
\end{tikzcd}
\]
where the vertical arrows are the corresponding source projections (where we use the notation $t\op$ to indicate that we think of it as induced by the target projection of $\qSch$ under taking opposites), which by the existence of pushouts on both sides are cocartesian fibrations, with cocartesian transition functors given by cobase change. By the above we have that the top horizontal functor preserves cocartesian edges with flat domain, and all cocartesian arrows lying over flat maps in $\qSch\op$.
Let $\Ar^\sm(\qSch) \subseteq \Ar(\qSch)$ be the full subcategory spanned by the smooth maps $X \to S$. Since smooth maps are preserved under base change (which corresponds to cobase change in $\qSch\op$) the target projection $t\colon \Ar^\sm(\qSch) \to \qSch$ is still a cartesian fibration, and in the resulting square
\[
\begin{tikzcd}
\Ar^\sm(\qSch)\op \ar[d,"t\op"']\ar[r] & \Ar(\Catexri) \ar[d,"s"] \\
\qSch\op \ar[r] & \Catexri
\end{tikzcd}
\]
the top horizontal map now preserves cocartesian edges. Since all corners in the above square admit coproducts we may endow all corners in this square with the corresponding cocartesian symmetric monoidal structure, in which case the square uniquely refines to a lax symmetric monoidal one (see~\cite[Proposition 2.4.3.8]{HA}). In addition, the vertical source projection are coproduct preserving, and the collection of cocartesian arrows (corresponding in this case to cocartesian squares) is closed under pointwise coproduct. We thus conclude that after passing to the corresponding cocartesian symmetric monoidal $\infty$-categories the vertical maps become symmetric monoidal cocartesian fibrations and the top horizontal arrow preserves cocartesian edges.

We now combine this construction with the symmetric monoidal functor $(\Catexr)^{\otimes} \to (\Catps)^{\otimes}$ sending a rigid $\infty$-category $\C^{\otimes}$ to the underlying stable $\infty$-category with duality $(\C,\Dual_{\C})$, where $\Dual_{\C}(-) = \Dual_{\one_{\C}}$ is the internal mapping object to $\one_{\C}$ (concretely, this symmetric monoidal functor is the dashed lift in the square~\eqref{equation:functoriality-ps}). For this, let us write 
\[
\Catpsi = \Catps \times_{\Catx} \Catxi \subseteq \Catps
\]
for the full subcategory of $\Catps$ spanned by the perfect symmetric bilinear $\infty$-categories (equivalently, of stable $\infty$-categories with duality) whose underlying stable $\infty$-category is idempotent complete. Then $\Catpsi$ is again a multiplicative localisation of $\Catp$, and since the functor $\Catexr \to \Catps$ preserves Karoubi equivalences (being compatible with the forgetful functor to $\Catx$ on both sides) it induces a symmetric monoidal functor
\[
(\Catexri)^{\otimes} \to (\Catpsi)^{\otimes}
\]
on the level of localisations.
Let now $\MCom_*$ denote the $\infty$-operad encoding pairs $(A,M)$ where $A$ is a commutative algebra object and $M$ is a pointed $A$-module, that is, an $A$-module equipped with an $A$-module map $A \to M$. 
We note that $\MCom_*$ is a unital $\infty$-operad whose $\infty$-category of colors is simply $\Del^1$. Now the cocartesian structure on $\Catexri$ coincides with the one we considered in \S\ref{subsection:rigid-to-poinc}, which was restricted from the pointwise structure on $\CAlg(\Catx)$: indeed, on the latter the pointwise and cocartesian structure coincide by~\cite[Proposition 3.2.4.10]{HA}. Using~\cite[Proposition 2.4.3.9]{HA} we may now form a commutative rectangle of symmetric monoidal $\infty$-categories
\begin{equation}
\label{equation:functoriality-smooth}%
\begin{tikzcd}
\Ar^\sm(\qSch\op)^{\amalg} \ar[d,"t\op"']\ar[r] & \Ar(\Catexri)^{\otimes} \ar[d,"s"]\ar[r,equal] & \Alg_{\MCom_*}(\Catexri)^{\otimes}\ar[r]\ar[d] & \Alg_{\MCom_*}(\Catpsi)^{\otimes} \ar[d] \\
(\qSch\op)^{\amalg} \ar[r] & (\Catexri)^{\otimes} \ar[r,equal] & \CAlg(\Catexri)^{\otimes}\ar[r] & \CAlg(\Catpsi)^{\otimes}
\end{tikzcd}
\end{equation}
where all functors are symmetric monoidal except the two horizontal functors on the left most column which are only lax symmetric monoidal. In addition, all vertical maps are symmetric monoidal cocartesian fibrations and all horizontal arrows in the top row preserve cocartesian arrows. Indeed, for the left most square we showed this above, and for the right most square this follows from the fact that $\Catexri$ and $\Catpsi$ admit sifted colimits which are preserved by the functor $\Catexri \to \Catpsi$ (indeed, this is true for $\Catexr$, $\Catps$ and $\Catexr \to \Catps$ by Proposition~\ref{proposition:sm-sifted} and these properties pass to left Bousfield localisations); the cocartesian transition functors in this case are given by forming relative tensor products followed by idempotent completion. 
Fixing a qcqs scheme $S \in \qSch$ we may now pass to the induced functors on the level of fibres. Post-composing with the (symmetric monoidal) forgetful functor from pointed modules to modules we now obtain a sequence of symmetric monoidal functors
\begin{equation}
\label{equation:functoriality-schemes-3}%
\Sm_S\op \to (\Catexri)_{\Dperf(S)^{\otimes}/} \to \Mod_{(\Dperf(S),\Dual_S)}(\Catpsi) =: \Mod_S(\Catpsi) .
\end{equation}
Here, the second functor is symmetric monoidal because the horizontal arrows in the right most square of~\eqref{equation:functoriality-smooth} are symmetric monoidal (and cocartesian edge preserving), while the first functor, which is a-priori only lax symmetric monoidal, is actually symmetric monoidal because the functor $\qSch \to \Catexri$ preserves pushouts with one leg smooth. We summarize this discussion by recording the following two points:
\begin{enumerate}
\item
\label{item:forming-derived}%
For $S$ a qcqs scheme the functor $[X \to S] \mapsto (\Dperf(X),\Dual_X)$ is symmetric monoidal when considered from smooth $S$-schemes to $(\Dperf(S),\Dual_S)$-modules in $\Catpsi$.
\item
\label{item:base-change}%
If $T \to S$ is map of qcqs schemes then the (cocartesian arrow preserving) top horizontal composite in~\eqref{equation:functoriality-smooth} determines a commutative square
\[
\begin{tikzcd}[column sep=40pt]
(\Sm_S\op)^{\otimes} \ar[d]\ar[rrr, "{T \times_S (-)}"] &&& (\Sm_T\op)\op \ar[d]\\
\Mod_S(\Catpsi)^{\otimes} \ar[rrr, "{(\Dperf(T),\Dual_T) \otimes_{(\Dperf(S),\Dual_S)} (-)}"]  &&& \Mod_T(\Catpsi)^{\otimes},
\end{tikzcd}
\]
expressing the compatibility of the functor in the previous item with base change along $T \to S$.
\end{enumerate}

\subsection{Bounded localising invariants and Nisnevich sheaves}
\label{subsection:nisnevich-invariants}%

Let $S$ be a qcqs scheme.

\begin{construction}
We denote by 
\[
\Mod_S^{\flat}(\Catpsi) \subseteq \Mod_S(\Catpsi)
\]
the full subcategory spanned by those $S$-linear $\infty$-categories with duality which are flat over $\Dperf(S)$ in the sense of Definition~\ref{definition:flat}. 
Our previous results establish the following:
\begin{itemize}
\item
By Example~\ref{example:flat}, this full subcategory contains the object $\Dperf(X)$ for every flat qcqs $p\colon X \to S$, and so in particular for any smooth qcqs $p$.
\item
By Corollary~\ref{corollary:flat-closed}, this full subcategory is closed under tensor products in $\Mod_S(\Catpsi)$, and hence inherits from $\Mod_S(\Catpsi)$ its symmetric monoidal structure. 
\end{itemize}
Combining these facts we find that the symmetric monoidal functor 
\[
(\Sm_S\op)^{\otimes} \to \Mod_S(\Catpsi)^{\otimes} \quad\quad [X \to S] \mapsto (\Dperf(X),\Dual_X)
\]
of~\eqref{equation:functoriality-schemes-3} factors uniquely through the symmetric monoidal full subcategory
$\Mod_S^\flat(\Catpsi)^{\otimes}$. 
Now since the $\infty$-category $\Sm_S$ of smooth $S$-schemes is essentially small, we may fix a large enough regular cardinal $\kappa$ such that the image of this functor lies in the full subcategory 
\[
\Mod_S^{\flat,\kappa}(\Catpsi) := \Mod_S^\flat(\Catpsi) \cap \Mod_S^\kappa(\Catpsi)
\]
spanned by the flat $\kappa$-compact $S$-linear $\infty$-categories with duality. Since $\Mod_S(\Catpsi)$ is presentably symmetric monoidal, we can ensure that $\Mod_S^{\flat,\kappa}(\Catpsi)$ is closed under tensor products in $\Mod_S^\flat(\Catpsi)$ by taking $\kappa$ sufficiently large. In particular, $\Mod_S^{\flat,\kappa}(\Catpsi)$ inherits a symmetric monoidal structure and the above symmetric monoidal functor further factors through a symmetric monoidal functor
\begin{equation}
\label{equation:functoriality-kappa}%
(\Sm_S\op)^{\otimes}  \to \Mod_S^{\flat,\kappa}(\Catpsi)^{\otimes} \quad\quad [X \to S] \mapsto (\Dperf(X),\Dual_X).
\end{equation}
\end{construction}

Let us now fix a presentable $\infty$-category $\E$. 

\begin{definition}
We say that a functor $\F\colon \Mod_{S}^{\flat,\kappa}(\Catpsi) \to \E$ is \defi{bounded localising} if it is reduced and sends bounded Karoubi squares (see Definition~\ref{definition:bounded}) to fibre squares. We denote by
\[
\Fun^{\bloc}(\Mod_S^{\flat,\kappa}(\Catpsi), \E) \subseteq \Fun(\Mod_S^{\flat,\kappa}(\Catpsi), \E)
\]
the full subcategory spanned by the bounded localising functors.
\end{definition}

\begin{remark}
If $\E$ is a stable then a reduced functor $\F\colon \Mod_{S}^{\flat,\kappa}(\Catpsi) \to \E$ is bounded localising if and only if it sends bounded Karoubi sequences to exact sequences.
\end{remark}

\begin{proposition}
\label{proposition:nisnevich-descent-variant}%
Let $S$ be a qcqs scheme equipped with a invertible perfect complex with $\Ct$-action $L\in\Picspace(X)^{\BC}$. 
Let $\F\colon \Mod_{S}^{\flat,\kappa}(\Catpsi) \to \E$
be a bounded localising functor.
Then the presheaf
\[
\F_S\left(\qSch_{/S}\right)\op \to \A,
\qquad
[p\colon X\to S] \mapsto \F(\Dperf(X),\Dual_{p^*L})
\]
is a Nisnevich sheaf. 
\end{proposition}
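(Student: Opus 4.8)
The plan is to mirror the proof of Corollary~\ref{corollary:nisnevich-descent}, substituting throughout ``bounded localising functor on $\Mod_S^{\flat,\kappa}(\Catpsi)$'' for ``Karoubi-localising functor on $\Catp$'' and ``bounded Karoubi square'' for ``Poincaré-Karoubi square''. First I would dispatch the two axioms of a sheaf that do not involve the topology: since $\F$ is reduced and $\Dperf(\emptyset) = 0$, we get $\F_S(\emptyset) \simeq \ast$; and since for a disjoint union $X = X_0 \amalg X_1$ of flat qcqs $S$-schemes the split Verdier sequences $\Dperf(X_\eps) \hrar \Dperf(X_0 \amalg X_1) \twoheadrightarrow \Dperf(X_{1-\eps})$ are bounded Karoubi sequences --- by Example~\ref{example:split-is-flat}, each category being flat over $\Dperf(S)$ by Example~\ref{example:flat} --- the bounded localising functor $\F$ sends them to split fibre sequences, whence $\F_S$ carries finite disjoint unions to products.

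Next I would establish Zariski descent for $\F_S$. The key input is that for an open immersion $j\colon U \hrar X$ of flat qcqs $S$-schemes, the duality-preserving functor $j^*\colon (\Dperf(X),\Dual_{p^*L}) \to (\Dperf(U),\Dual_{j^*p^*L})$ is a bounded Karoubi projection: this is Proposition~\ref{proposition:open-is-bounded}, whose proof uses only flatness of the structure map. Consequently, for a finite open cover $X = \cup_i U_i$ the descent square assembled from the $\Catps$-valued Zariski sheaf $[X \to S] \mapsto (\Dperf(X),\Dual_{p^*L})$ (the bilinear analogue of Corollary~\ref{corollary:zariski-descent-pn}) is a bounded Karoubi square, so $\F$ sends it to a cartesian square; running the induction on the cardinality of the cover as in the proof of Corollary~\ref{proposition:poincare-descent} then shows $\F_S$ is a Zariski sheaf. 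Using this, I would reduce to the case that $L$ is a shift of a line bundle exactly as in the proof of Corollary~\ref{corollary:nisnevich-descent}, via Remark~\ref{remark:shift-of-line-bundle}, the decomposition $S = \coprod_i S_{n_i}$, the extensivity equivalence~\eqref{equation:extensive-equiv}, the disjoint-union-to-product property just established, and Lemma~\ref{lemma:zariski-nisnevich-coverage}.

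With Zariski descent in hand, Proposition~\ref{proposition:reduce-affine} reduces the claim to showing that $(\F_S)|_{\Aff_{/S}}$ is a Nisnevich sheaf, and then by \cite[Theorem B.5.0.3]{SAG} it suffices to verify Nisnevich excision for affine schemes in the sense of \cite[Definition B.5.0.1]{SAG}: for a map $A \to B$ of étale $R$-algebras and $s \in A$ with $A/sA \xrightarrow{\simeq} B/f(s)B$, the induced square of $\F$-values is cartesian. For this it is enough to show that the commutative square in $\Mod_S^{\flat,\kappa}(\Catpsi)$ with corners $(\Dperf(A),\Dual_M)$, $(\Dperf(B),\Dual_N)$, $(\Dperf(A[s^{-1}]),\Dual_{M[s^{-1}]})$ and $(\Dperf(B[f(s)^{-1}]),\Dual_{N[f(s)^{-1}]})$ (with $N = M \otimes_A B$) is a bounded Karoubi square. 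Its underlying square of stable $\infty$-categories is cartesian --- this is Nisnevich excision for $\Dperf$, the same input used in the proof of Proposition~\ref{proposition:affine-exicision} via \cite[Proposition 4.4.20]{9-authors-II} --- and the dualities are manifestly compatible, so the square is cartesian in $\Catps$; the four corners lie in $\Mod_S^{\flat,\kappa}(\Catpsi)$ since étale maps and localizations are flat (Example~\ref{example:flat}) and $\kappa$ was chosen large enough; and the vertical arrows are bounded Karoubi projections by Proposition~\ref{proposition:open-is-bounded} applied to the open immersions $\spec(A[s^{-1}]) \hrar \spec(A)$ and $\spec(B[f(s)^{-1}]) \hrar \spec(B)$. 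Applying $\F$ then yields the required cartesian square. Note that, in contrast with Proposition~\ref{proposition:affine-exicision}, no condition on Tate-cohomology boundary maps enters: that condition pertains to the Poincaré structure, whereas the present argument takes place entirely at the level of stable $\infty$-categories with duality.

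The main obstacle is not the descent bookkeeping but arranging that the Nisnevich excision squares are \emph{bounded} Karoubi squares rather than merely Karoubi squares --- since a bounded localising functor is only required to send the former to fibre squares. The boundedness is precisely the uniform control on Tor amplitude supplied by Proposition~\ref{proposition:open-is-bounded}, and this is the reason the formalism of flat linear categories and bounded Karoubi sequences from \S\S\ref{subsection:flat-linear}--\ref{subsection:bounded-karoubi} was developed; a secondary, purely bookkeeping point is the verification that all corners of the excision squares lie in $\Mod_S^{\flat,\kappa}(\Catpsi)$, which is immediate from flatness of étale and localization morphisms and the choice of $\kappa$.
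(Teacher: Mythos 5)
Your proof is correct, but it takes a genuinely longer route than the paper's. The paper's proof invokes the Morel--Voevodsky characterization of Nisnevich sheaves directly: a presheaf on $\qSch_{/S}$ is a Nisnevich sheaf if and only if it sends $\emptyset$ to a terminal object and carries Nisnevich distinguished squares to pullback squares. Since $\F$ is reduced, the first axiom is immediate, and the second follows in a single step from the observation that the induced square of $\Dperf$s with duality is a bounded Karoubi square: its vertical legs are bounded Karoubi projections by Proposition~\ref{proposition:open-is-bounded}, and it is cartesian because the underlying square of stable $\infty$-categories is a Karoubi square by Corollary~\ref{corollary:nisnevich-karoubi} (which is stated directly for Nisnevich distinguished squares, not just the affine case). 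Nothing else is needed.

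By mirroring the proof of Corollary~\ref{corollary:nisnevich-descent}, you instead route the argument through Zariski descent, Proposition~\ref{proposition:reduce-affine}, and affine Nisnevich excision via~\cite{SAG}. This works, and correctly identifies both key inputs (Proposition~\ref{proposition:open-is-bounded} and the cartesianness on underlying stable $\infty$-categories). But the reduction to $L$ a shift of a line bundle, which you carry over from Corollary~\ref{corollary:nisnevich-descent}, is entirely unnecessary here — you even note this yourself when you observe that no Tate-cohomology condition on the boundary maps enters, since the argument takes place purely at the level of stable $\infty$-categories with duality. That reduction is present in Corollary~\ref{corollary:nisnevich-descent} only because Proposition~\ref{proposition:affine-exicision} requires a genuine line bundle (its hypotheses (i)--(v) include a vanishing of Tate boundary maps that only makes sense for an $A$-linear involution on a module in degree zero). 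Here, by contrast, Corollary~\ref{corollary:nisnevich-karoubi} already applies in full generality, so the decomposition of $S$ and the extensivity bookkeeping can be deleted. The Morel--Voevodsky route that the paper takes buys a substantially shorter argument precisely because the relevant Karoubi-square statement is already available at the level of arbitrary Nisnevich distinguished squares rather than only at the affine level.
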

\begin{proof}
By Morel and Voevodsky's characterization of Nisnevich sheaves, what we need to check is that $\F_S$ vanishes on the empty $S$-scheme and sends pullback squares of qcqs $S$-schemes
\[
\begin{tikzcd}
W \ar[r]\ar[d] & V \ar[d,"f"] \\
U \ar[r,"i"] & X \ ,
\end{tikzcd}
\]
such that $f$ is étale, $i$ is an open embedding, and the induced map $V \setminus W \to X \setminus U$ is an isomorphism, to pullback squares in $\E$. Indeed, the first claim holds since $\F$ is assumed reduced, and the second because the induced square
\[
\begin{tikzcd}
(\Dperf(X),\Dual_{L|_X}) \ar[r]\ar[d] & (\Dperf(V),\Dual_{L|_{V}}) \ar[d] \\
(\Dperf(U),\Dual_{L|_U}) \ar[r] & (\Dperf(W),\Dual_{L|_{W}})
\end{tikzcd}
\]
is a bounded Karoubi square: its vertical legs are bounded Karoubi projections by Proposition~\ref{proposition:open-is-bounded} and it is cartesian since it is cartesian on the level of underlying stable $\infty$-categories, see Corollary~\ref{corollary:nisnevich-karoubi}.
\end{proof}

Now suppose that our presentable $\infty$-category $\E$ is presentably symmetric monoidal. Then the functor $\infty$-categories 
\[
\Fun(\Mod_S^{\flat,\kappa}(\Catpsi),\E) \quad\text{and}\quad \Psh(S,\E) = \Fun(\Sm_S\op,\E)
\]
inherit symmetric monoidal structures in the form of Day convolution, (see~\cite[\S 2.2.6]{HA}) and the symmetric monoidal functor~\eqref{equation:functoriality-kappa} induces by means of left Kan extension a symmetric monoidal functor
\begin{equation}
\label{equation:functoriality-psh}%
\Psh(S,\E)^{\otimes} \to \Fun(\Mod_S^{\flat,\kappa}(\Catpsi),\E)^{\otimes}. 
\end{equation}
Note that the symmetric monoidal structure on $\Sm_S\op$ is the cocartesian one, and so Day convolution on $\Psh(S,\E)$ amounts to pointwise products of presheaves.

Now, on the sheaf side, we have that $\Psh(S,\E)$ is presentable and its full subcategory $\Sh^{\Nis}(S,\E) \subseteq \Psh(S,\E)$ spanned by the Nisnevich sheaves is an accessible localisation of it. Furthermore, this localisation is multiplicative with respect to Day convolution: 
indeed, the monoidal product on $\Sm_S$ is the fibre product over $S$, and coverings families are closed under base change, see, e.g., the criterion in~\cite{9-authors-I}*{Lemma~5.3.4}. In particular, $\Sh^{\Nis}(S,\E)$ inherits a symmetric monoidal structure such that the sheafification functor
\[
(-)^{\Nis}\colon \Psh(S,\E) \to \Sh^{\Nis}(S,\E)
\]
refines to a symmetric monoidal functor.
Explicitly, the symmetric monoidal product on $\Sh^{\Nis}(S,\E)$ is given by taking pointwise products followed by sheafification.

Next, on the functor side, the full subcategory $\Fun^{\bloc}(\Mod_S^{\flat,\kappa}(\Catpsi), \E)$ is an accessible localisation of $\Fun(\Mod_S^{\flat,\kappa}(\Catpsi), \E)$ with associated localisation functor
\[
\Lbloc\colon \Fun(\Mod_S^{\flat,\kappa}(\Catpsi), \E) \to \Fun^{\bloc}(\Mod_S^{\flat,\kappa}(\Catpsi), \E).
\]
By Proposition~\ref{proposition:nisnevich-descent-variant}, in the left-Kan-extension/restriction adjunction
\[
\Psh(\Sm_S,\E) \adj \Fun(\Mod_S^{\flat,\kappa}(\Catpsi), \E),
\]
the right adjoint sends bounded localising functors to Nisnevich sheaves, and so the left adjoint sends Nisnevich local equivalences to $\Lbloc$-equivalences. 
By the universal property of localisations, we now obtain a commutative square of left adjoint functors
\[
\begin{tikzcd}
\Psh(\Sm_S,\E) \ar[r]\ar[d, "(-)^{\Nis}"] & \Fun(\Mod_S^{\flat,\kappa}(\Catpsi), \E) \ar[d,"\Lbloc"] \\
\Sh^{\Nis}(\Sm_S,\E) \ar[r, "\T_S"] & \Fun^{\bloc}(\Mod_S^{\flat,\kappa}(\Catpsi), \E) \ .
\end{tikzcd}
\]
where $\T_S$ is given by left Kan extension followed by $\Lbloc$.
As explained above, the top horizontal arrow in this square refines to a symmetric monoidal left adjoint, and since the localisation $(-)^{\Nis}$ is multiplicative the same holds for the left vertical arrow. 
We claim that this is the case for the right vertical arrow as well: 
\begin{proposition}
\label{proposition:lnis-is-mult}%
The accessible localisation
\[
\Lbloc\colon \Fun(\Mod_S^{\flat,\kappa}(\Catpsi), \E) \to \Fun^{\bloc}(\Mod_S^{\flat,\kappa}(\Catpsi), \E)
\]
is multiplicative with respect to Day convolution. In particular, $\Fun^{\bloc}(\Mod_{S}^{\flat,\kappa}(\Catpsi), \E)$ inherits a symmetric monoidal structure such that $\Lbloc$ refines to a symmetric monoidal functor.
\end{proposition}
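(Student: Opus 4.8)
The plan is to invoke the standard recognition criterion \cite[Proposition 2.2.1.9]{HA}: writing $W$ for the class of $\Lbloc$-equivalences in $\Fun(\Mod_S^{\flat,\kappa}(\Catpsi),\E)$, it suffices to prove that $\mathrm{id}_X\otimes f$ lies in $W$ for every $f\in W$ and every object $X$, the tensor being Day convolution. As a preliminary reduction I would pass to $\E=\Sps$: Day convolution identifies $\Fun(\Mod_S^{\flat,\kappa}(\Catpsi),\E)$ with $\Fun(\Mod_S^{\flat,\kappa}(\Catpsi),\Sps)\otimes\E$ as presentably symmetric monoidal $\infty$-categories (the tensor being taken in $\CAlg(\PrL)$), a bounded localising functor valued in $\E$ is one whose composite with every $\Map_\E(e,-)$ is bounded localising, and under the above identification $\Lbloc$ is $\Lbloc^{\Sps}\otimes\id_\E$; since a symmetric monoidal localisation tensored with the identity remains a symmetric monoidal localisation, it is enough to treat $\E=\Sps$. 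Write $\mathcal F:=\Fun(\Mod_S^{\flat,\kappa}(\Catpsi),\Sps)=\cP(\Mod_S^{\flat,\kappa}(\Catpsi)\op)$ and $Y\colon \Mod_S^{\flat,\kappa}(\Catpsi)\to\mathcal F$, $E\mapsto\Map(E,-)$, the (contravariant) Yoneda functor, which is symmetric monoidal, so that $Y(E)\otimes Y(E')\simeq Y(E\otimes E')$.

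Next I would pin down a small generating set for $W$. A functor $\F\in\mathcal F$ is bounded localising precisely when it is local with respect to the set $W_0$ consisting of the map $\emptyset\to Y(0)$ (encoding reducedness) together with, for every bounded Karoubi square $Q$ with corners $A,B,C,D$ (with $A=B\times_D C$ and the vertical legs bounded Karoubi projections), the canonical map $g_Q\colon Y(B)\sqcup_{Y(D)}Y(C)\to Y(A)$; since $\Mod_S^{\flat,\kappa}(\Catpsi)$ is essentially small, $W_0$ is a set, and $W$ is the strongly saturated class it generates. Because the class of morphisms $f$ with $f\otimes X\in W$ for all $X$ is again strongly saturated, it suffices to check $f\otimes X\in W$ for $f\in W_0$; and because, for fixed $f$, the class of $X$ with $f\otimes X\in W$ is closed under colimits, it suffices to take $X=Y(E)$ with $E\in\Mod_S^{\flat,\kappa}(\Catpsi)$, as these generate $\mathcal F$ under colimits.

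The heart of the matter is then that $g_Q\otimes Y(E)\in W$. Since $(-)\otimes Y(E)$ is a left adjoint and sends representables to representables, $g_Q\otimes Y(E)$ is the map $Y(B\otimes E)\sqcup_{Y(D\otimes E)}Y(C\otimes E)\to Y(A\otimes E)$, i.e. $g_{Q\otimes E}$, where $Q\otimes E$ denotes the square obtained by applying $(-)\otimes_{\Dperf(S)}(\mathcal E,\Dual_{\mathcal E})$ to $Q$. By Corollary~\ref{corollary:flat-closed} the object $E$, being flat, has flat tensor products, and by Proposition~\ref{proposition:flat-multi} tensoring with a flat $\A$-linear $\infty$-category with duality preserves bounded Karoubi projections and the cartesian squares built from them (such squares of Verdier projections are also cocartesian, hence preserved by $(-)\otimes_{\Dperf(S)}\mathcal E$), so $Q\otimes E$ is again a bounded Karoubi square in $\Mod_S^{\flat,\kappa}(\Catpsi)$ and $g_{Q\otimes E}\in W_0\subseteq W$. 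The reducedness generator is handled the same way: $(\emptyset\to Y(0))\otimes Y(E)$ is $\emptyset\to Y(0\otimes E)=Y(0)$, as the zero object is $\otimes$-absorbing, again in $W_0$. This verifies the criterion, and \cite[Proposition 2.2.1.9]{HA} then furnishes the induced symmetric monoidal structure on $\Fun^{\bloc}(\Mod_S^{\flat,\kappa}(\Catpsi),\Sps)$ together with the symmetric monoidal refinement of $\Lbloc$; undoing the reduction $\E\leadsto\Sps$ gives the general statement.

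The only real obstacle is the last step, and it is essentially already resolved: the content is exactly Proposition~\ref{proposition:flat-multi}, whose raison d'\^etre was to ensure that tensoring with flat $S$-linear $\infty$-categories with duality preserves bounded Karoubi sequences. The genuinely fiddly points are (i) translating between the square-formulation used above and the sequence-formulation of Proposition~\ref{proposition:flat-multi} via the common vertical fibre of the two bounded Karoubi projections, and (ii) checking that $Q\otimes E$ still lives in the flat, $\kappa$-compact, idempotent-complete subcategory $\Mod_S^{\flat,\kappa}(\Catpsi)$ — both of which are recorded in the construction of $\Mod_S^{\flat,\kappa}(\Catpsi)$ and in Corollary~\ref{corollary:flat-closed}.
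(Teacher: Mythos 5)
Your overall strategy mirrors the paper's: reduce, via the recognition criterion for multiplicative localisations (you cite \cite[Proposition 2.2.1.9]{HA}, the paper cites \cite[Proposition~5.3.4]{9-authors-I}, which encode essentially the same content), to showing that bounded Karoubi squares are stable under $(-)\otimes E$ for representable $E$, and then split the verification into ``the square stays cartesian'' and ``the vertical legs stay bounded Karoubi projections,'' with the latter handled by Proposition~\ref{proposition:flat-multi}. Your extra scaffolding (reducing to $\E=\Sps$ via a tensor decomposition, exhibiting a generating set $W_0$, passing to representable generators) is more explicit than what the paper writes, but is implicit in the cited criterion, so this is not a genuine deviation.

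There is, however, a gap in the one place where you depart from the paper's citations. For ``the square stays cartesian'' the paper invokes \cite[\S 3]{9-authors-IV}, which records directly that Poincar\'e-Karoubi squares are closed under tensoring with a fixed Poincar\'e $\infty$-category. You instead insert the parenthetical claim that ``such squares of Verdier projections are also cocartesian, hence preserved by $(-)\otimes_{\Dperf(S)}\mathcal E$.'' Two problems: first, a terminology slip — the legs of a bounded Karoubi square are bounded \emph{Karoubi} projections, not Verdier projections, and these are genuinely different notions in this paper (a Karoubi projection need not be essentially surjective). Second, and more substantively, the chain of implications you need — (i) a cartesian square with Karoubi-projection legs is cocartesian, and (ii) the tensored cocartesian square with bounded Karoubi-projection legs is again cartesian — is asserted but not argued. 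Both are true in the idempotent-complete setting (essentially because the pushout of $C \leftarrow A \to B$, with $A \to C$ a Karoubi projection with kernel $\Kappa$, is the Karoubi quotient of $B$ by the image of $\Kappa$, so cartesian and cocartesian coincide for such squares), but this requires an argument or a citation; as written you use (i) and then leap from ``cocartesian with Karoubi-projection legs'' back to ``bounded Karoubi square, i.e.\ cartesian'' without comment. Note also that Proposition~\ref{proposition:flat-multi} is stated for Karoubi \emph{sequences} (lower-left corner zero), so the passage to general squares already needs exactly the identification of the common vertical kernel that you gesture at in point (i) — so this is not a removable detail. Supplying a short pasting-law argument for (i) and (ii), or replacing the parenthetical with the paper's citation to \cite[\S 3]{9-authors-IV}, would close the gap.
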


By the universal property of multiplicative localisations the above square refines to a commutative square of symmetric monoidal left adjoint functors
\[
\begin{tikzcd}
\Psh(S,\E)^{\otimes} \ar[r]\ar[d, "(-)^{\Nis}"] & \Fun(\Mod_S^{\flat,\kappa}(\Catpsi), \E)^{\otimes} \ar[d,"\Lbloc"] \\
\Sh^{\Nis}(S,\E)^{\otimes} \ar[r, "\T_S^{\otimes}"] & \Fun^{\bloc}(\Mod_S^{\flat,\kappa}(\Catpsi), \E)^{\otimes}.
\end{tikzcd}
\]

\begin{proof}[Proof of Proposition~\ref{proposition:lnis-is-mult}]
By the criterion in~\cite{9-authors-I}*{Proposition~5.3.4} it will suffice to check that bounded Karoubi squares in $\Mod_S^{\flat,\kappa}(\Catpsi)$ are closed under tensoring with a fixed object. Indeed, by~\cite[\S 3]{9-authors-IV}
the collection of Poincaré-Karoubi squares in $\Catp$ is closed under tensoring with a fixed Poincaré $\infty$-category, and so any bounded Karoubi square at least remains cartesian after tensoring with a fixed $S$-linear $\infty$-category $(\C,\Dual)$. In addition, by Proposition~\ref{proposition:flat-multi} tensoring with $(\C,\Dual)$ also preserves bounded Karoubi projections when $(\C,\Dual)$ is flat, and so the desired result follows.
\end{proof}

\subsection{Motivic and pre-motivic realization}
\label{subsection:bott-periodicity}%

In this subsection we construct the motivic realization functor, see Constructions~\ref{construction:motivic} and~\ref{construction:R-from-kloc} below. As above, we fix a presentably symmetric monoidal $\infty$-category $\E$, with unit $u\colon \Sps \to \E$, and we consider the symmetric monoidal left adjoint functor
\[
\T_S^{\otimes} \colon \Sh^{\Nis}(S,\E)^{\otimes} \to \Fun^{\bloc}(\Mod_S^{\flat,\kappa}(\Catpsi), \E)^{\otimes}
\]
constructed at the end of the previous subsection.

\begin{definition}
Given an object $X \to S$ of $\Sm_S$, let us denote by $\E[X]$ the $\E$-valued sheaf obtained by sheafifying the presheaf 
\[
Y \mapsto u\Map_{\Sm_S}(Y,X).
\]
If $x\colon S \to X$ is an $S$-point of $X$ and $\E$ is pointed then we denote by $\E[X,x]$ the cofibre of the induced map $\E[S] \xrightarrow{x} \E[X]$ in $\Sh^{\Nis}(S,\E)$.
\end{definition}

In what follows, we will make use of the localised $\E$-valued co-Yoneda functor
\begin{equation}
\label{equation:jbloc}%
\jbloc\colon \Mod_S^{\flat,\kappa}(\Catpsi)\op \to \Fun^{\bloc}(\Mod_S^{\flat,\kappa}(\Catpsi), \E)
\end{equation}
sending $(\C,\Dual) \in \Mod_S^{\flat,\kappa}(\Catpsi)$ to $\Lbloc u\Map((\C,\Dual),-)$, where $\Loc_b$ is the multiplicative localisation functor of Proposition~\ref{proposition:lnis-is-mult}. Since $\Lbloc$, $u$ and the space-valued co-Yoneda embedding itself are all symmetric monoidal we have that $\jbloc$ is a symmetric monoidal functor.

\begin{proposition}
\label{proposition:projective-line}%
Suppose that $\E$ is additive, so in particular pointed. Then the symmetric monoidal functor $\T_S^{\otimes}$ sends the object $\E[\Pone_S,\infty]$ to $\jbloc (\Dperf(S),\Sig^{-1}\Dual_S)$. In particular, it sends $\E[\Pone_S,\infty]$ to an \emph{invertible} object in $\Fun^{\bloc}(\Mod_S^{\flat,\kappa}(\Catpsi), \E)$ (with respect to the localised Day convolution structure of Proposition~\ref{proposition:lnis-is-mult}). 
\end{proposition}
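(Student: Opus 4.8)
The plan is to reduce the statement to a concrete computation with the split Poincaré--Verdier sequence for $\Pone$ already established in Proposition~\ref{proposition:proj-bundle-I}, and then to feed this through the symmetric monoidal functor $\T_S^\otimes$. Unwinding the definitions, $\E[\Pone_S,\infty]$ is the cofibre in $\Sh^\Nis(S,\E)$ of the $\E$-valued sheaf map $\E[S]\xrightarrow{\infty}\E[\Pone_S]$ associated to the section at infinity. Since $\T_S^\otimes$ is a symmetric monoidal \emph{left} adjoint it preserves cofibre sequences, so $\T_S(\E[\Pone_S,\infty])$ is the cofibre of
\[
\T_S(\E[S])\xrightarrow{\infty} \T_S(\E[\Pone_S]).
\]
By the commuting square at the end of \S\ref{subsection:nisnevich-invariants} relating $(-)^\Nis$, left Kan extension, and $\Lbloc$, together with the fact that the symmetric monoidal functor~\eqref{equation:functoriality-kappa} sends a smooth $S$-scheme $X$ to $(\Dperf(X),\Dual_X)$, one identifies $\T_S(\E[X])$ with $\jbloc(\Dperf(X),\Dual_X)$ for every $X\in\Sm_S$; in particular $\T_S(\E[S])\simeq \jbloc(\Dperf(S),\Dual_S)$ and $\T_S(\E[\Pone_S])\simeq \jbloc(\Dperf(\Pone_S),\Dual_{\cO})$.

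\textbf{Main step.} The heart of the argument is then to compute the cofibre of the map
\[
\jbloc(\Dperf(S),\Dual_S)\xrightarrow{\infty^*}\jbloc(\Dperf(\Pone_S),\Dual_{\cO})
\]
in $\Fun^{\bloc}(\Mod_S^{\flat,\kappa}(\Catpsi),\E)$. Here I would use Proposition~\ref{proposition:proj-bundle-I} (with $X=S$, $L=\cO_S$, $V=\cO^{\oplus 2}$, $n=0$, $r=1$): it gives a split Poincaré--Verdier sequence
\[
(\Dperf(S),\Dual_S)\xrightarrow{p^*}(\Dperf(\Pone_S),\Dual_{\cO})\xrightarrow{p_*(-\otimes\cO(-1))}(\Dperf(S),\Sig^{-1}\Dual_S)
\]
of $S$-linear $\infty$-categories with duality, and by Example~\ref{example:split-is-flat} this is moreover a bounded Karoubi sequence of objects of $\Mod_S^{\flat,\kappa}(\Catpsi)$ (shrinking $\kappa$ if necessary). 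Applying the contravariant functor $\jbloc = \Lbloc\,u\Map((-),-)$, a bounded Karoubi sequence is sent to a fibre--cofibre sequence in the stable (additive) $\infty$-category $\Fun^{\bloc}(\Mod_S^{\flat,\kappa}(\Catpsi),\E)$ — this is exactly the content of $\jbloc$ landing in bounded localising functors, where such sequences become exact. Thus we get an exact sequence
\[
\jbloc(\Dperf(S),\Sig^{-1}\Dual_S)\to \jbloc(\Dperf(\Pone_S),\Dual_{\cO})\xrightarrow{(p^*)^*}\jbloc(\Dperf(S),\Dual_S),
\]
and since the sequence is split the retraction $p^*\circ(\text{proj})$ shows the map $\infty^*$ is a section of $(p^*)^*$ up to the relevant identifications; hence the cofibre of $\infty^*$ is the complementary summand $\jbloc(\Dperf(S),\Sig^{-1}\Dual_S)$. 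A small bookkeeping point to check is that the section at infinity $\infty\colon S\to\Pone_S$ and the structure map $p\colon\Pone_S\to S$ satisfy $p\circ\infty=\id$, so that $\infty^*$ really is a splitting of $(p^*)^*=p^*$-induced map, matching the reduced/based structure defining $\E[\Pone_S,\infty]$.

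\textbf{Invertibility and the obstacle.} Given the identification $\T_S(\E[\Pone_S,\infty])\simeq \jbloc(\Dperf(S),\Sig^{-1}\Dual_S)$, invertibility is immediate: $\jbloc$ is symmetric monoidal (as noted right after~\eqref{equation:jbloc}), and $(\Dperf(S),\Sig^{-1}\Dual_S)$ is invertible in $\Mod_S^{\flat,\kappa}(\Catpsi)$ with inverse $(\Dperf(S),\Sig\,\Dual_S)$ — indeed $\Dperf(S)$ is the unit and shifting the duality is invertible. Hence $\T_S$ carries $\E[\Pone_S,\infty]$ to an invertible object. I expect the main obstacle to be not any single deep input but the careful matching of structures: verifying that the bounded Karoubi sequence coming from Proposition~\ref{proposition:proj-bundle-I} is carried by $\jbloc$ to the cofibre sequence computing $\T_S(\E[\Pone_S,\infty])$ \emph{compatibly with the based point $\infty$}, i.e.\ that the splitting produced categorically agrees with the one implicit in the definition of the pointed sheaf $\E[\Pone_S,\infty]$. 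This requires tracking the twist $\Sig^{-1}$ through the shifted $\tstruc$-structure conventions and through the identification $\T_S\circ(\text{Yoneda})\simeq\jbloc\circ(\Dperf(-),\Dual_{(-)})$, but involves no new ideas beyond the functoriality already set up in \S\ref{subsection:more-derived}--\S\ref{subsection:nisnevich-invariants}.
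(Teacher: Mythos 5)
Your proof is correct and follows essentially the same route as the paper: both identify $\T_S(\E[\Pone_S,\infty])$ with the cofibre of $\jbloc(\infty^*)$ using that $\T_S$ is a left adjoint and preserves representables, invoke the splitting $p\circ\infty=\id$ together with additivity of $\Fun^{\bloc}(\Mod_S^{\flat,\kappa}(\Catpsi),\E)$ to convert the cofibre into the complementary summand of the split bounded Karoubi sequence for $\Pone_S$, and read off $\jbloc(\Dperf(S),\Sig^{-1}\Dual_S)$. (One small imprecision: $\Fun^{\bloc}$ is only additive, not stable, when $\E$ is merely additive, so the identification of the cofibre with a fibre relies specifically on the sequence being split rather than on a general stability argument — which you in fact use.)
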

\begin{proof}
Since left Kan extension sends representable functors to representable functors we have that $\T_S\E[\Pone_S,\infty]$ is given by the cofibre of the map
\[
\jbloc(\Dperf(S),\Dual_S) \xrightarrow{\jbloc(\infty^*)} \jbloc (\Dperf(\Pone_S),\Dual_{\Pone_S})
\]
induced by the restriction functor $\infty^*\colon (\Dperf(\Pone_S),\Dual_{\Pone_S}) \to (\Dperf(S),\Dual_S)$ along the base point $\infty\colon S \to \Pone_S$. This functor admits a retraction which is induced by the terminal map $q\colon \Pone_S \to S$ in $\Sm_S$, and since $\Fun^{\bloc}(\Mod_S^{\flat,\kappa}(\Catpsi), \E)$ inherits from $\E$ the property of being additive, it follows from the splitting lemma (see, e.g,~\cite{9-authors-II}*{Lemma~1.5.12}) that the above cofibre is canonically equivalent to the fibre of the map
\[
\jbloc(\Dperf(\Pone_S),\Dual_{\Pone_S}) \xrightarrow{\jbloc(q^*)} \jbloc(\Dperf(S),\Dual_S) 
\]
induced by the associated restriction functor $q^*\colon (\Dperf(S),\Dual_S) \to (\Dperf(\Pone_S),\Dual_{\Pone_S})$. But now the bounded Karoubi sequence
\[
(\Dperf(S),\Dual_S)\xrightarrow{q^*} (\Dperf(\Pone_S),\Dual_{\Pone_S}) \xrightarrow{q_*(- \otimes \cO(-1))} (\Dperf(S),\Sig^{-1}\Dual_S)
\]
of Construction~\ref{construction:projective-line} exhibits an equivalence between the fibre of $\jbloc(q^*)$ and the object $\jbloc(\Dperf(S),\Sig^{\-1}\Dual_S)$, which is invertible since $(\Dperf(S),\Sig^{-1}\Dual_S)$ is invertible in $\Mod_S^{\flat,\kappa}(\Catpsi)$ and $\jbloc$ is symmetric monoidal.
\end{proof}

By Proposition~\ref{proposition:projective-line} and the universal property of stabilizations, if $\E$ is an additive presentably symmetric monoidal $\infty$-category, then the symmetric monoidal left adjoint functor $\T_S^{\otimes}$ from above factors essentially uniquely via a symmetric monoidal left adjoint functor
\[
\begin{tikzcd}
\Sh^{\Nis}(S,\E)^{\otimes} \ar[r,"\T_S^{\otimes}"]\ar[d,"\Sig^{\infty}_{\Pone}"'] & \Fun^{\bloc}(\Mod_S^{\flat,\kappa}(\Catpsi), \E)^{\otimes}\\
\Spa_{\Pone}^{\Sig}(S,\E)^{\otimes} \ar[ur,dashed,"\wtl{\T}_S^{\otimes}"'] &
\end{tikzcd}
\]
whose right adjoint
\[
\R_S\colon \Fun^{\bloc}(\Mod_S^{\flat,\kappa}(\Catpsi), \E) \to \Spa_{\Pone}^{\Sig}(S,\E)
\]
then inherits a lax symmetric monoidal structure.

\begin{construction}[Motivic realization for bounded localising functors]
\label{construction:motivic}%
Given a bounded localising functor $\F\colon \Mod_S^{\flat,\kappa}(\Catpsi) \to \E$ and a flat $S$-linear $\infty$-category with duality $(\C,\Dual) \in \Mod_S^{\flat,\kappa}(\Catpsi)$, we will denote by 
\[
\R_S(\F;(\C,\Dual)) \in \Spa_{\Pone}^{\Sig}(S,\E)
\]
the image under the right adjoint $\R_S$ above of the bounded localising functor $\F((-) \otimes (\C,\Dual))$ obtained by pre-composing $\F$ with $(-) \otimes (\C,\Dual)$ (and operation which preserves bounded localising functors by Proposition~\ref{proposition:flat-multi}). We refer to $\R_S(\F;(\C,\Dual))$ as the \defi{pre-motivic realization} of $\F$ with coefficients in $(\C,\Dual)$, and to
\[
\M_S(\F;(\C,\Dual)) := \Loc_{\Aone}\R_S(\F;(\C,\Dual)) \in \SH(S,\E)
\]
as the \defi{motivic realization} of $\F$ with coefficients in $(\C,\Dual)$.
\end{construction}

The association $(\F,(\C,\Dual)) \mapsto \R_S(\F;(\C,\Dual))$ assembles to form a functor
\[
\R_S(-;-)\colon \Fun^{\bloc}(\Mod_S^{\flat,\kappa}(\Catpsi), \E) \times \Mod_S^{\flat,\kappa}(\Catpsi) \to \Spa_{\Pone}^{\Sig}(S,\E) .
\]
This functor carries a canonical lax monoidal functor: indeed, it is the composite of the lax symmetric monoidal functor $\R_S(-)$ and the functor $(\F,(\C,\Dual)) \mapsto \F((-) \otimes (\C,\Dual))$, which is nothing but the internal mapping object with respect to Day convolution from $\jbloc(\C,\Dual)$ to $\F$, and is hence lax symmetric monoidal in its entries as a pair (see~\cite[Proposition 3.4.9]{HHLN}). Since the $\Aone$-localisation functor $\Loc_{\Aone}\colon \Spa_{\Pone}^{\Sig}(S,\E) \to \SH(S,\E)$ is also symmetric monoidal we similarly have that
\[
\M_S(-;-)\colon \Fun^{\bloc}(\Mod_S^{\flat,\kappa}(\Catpsi), \E) \times \Mod_S^{\flat,\kappa}(\Catpsi) \to \SH(S,\E)
\]
is lax symmetric monoidal.
In particular, if 
\[
\F\colon \Mod_S^{\flat,\kappa}(\Catpsi) \to \E
\]
is a commutative algebra object with respect to Day convolution, that is, $\F$ is a lax symmetric monoidal functor, then $\M_S(\F)$ is a commutative algebra object in $\SH(S,\E)$, 
and the same holds for $\M_S(\F;(\C,\Dual))$ for every commutative algebra object in 
$\Mod_S^{\flat,\kappa}(\Catpsi)$. Similarly, if $\F$ is lax symmetric monoidal and $(\C,\Dual)$ is any object of $\Mod_S^{\flat,\kappa}(\Catpsi)$ then $\M_S(\F;(\C,\Dual))$ inherits the structure of a module over $\M_S(\F)$.

\begin{remark}
\label{remark:realization-exact}%
By construction, for any bounded localising functor $\Mod_S^{\flat,\kappa}(\Catpsi) \to \E$, the association $(\C,\Dual) \mapsto \R_S(\F;(\C,\Dual))$ takes bounded Karoubi squares $\Mod_S^{\flat,\kappa}(\Catpsi) $ to fibre squares.  
\end{remark}

\begin{notation}
\label{notation:L-in-pic}%
Given a line bindle $L \in \Picspace(S)^{\BC}$ the $\infty$-category with duality $(\Dperf(S),\Dual_L)$ is a module over $(\Dperf(S),\Dual_S)$, and we will use the shorthand notation
\[
\R_L(\F) := \R_S(\F;(\Dperf(S),\Dual_L)) \quad\text{and}\quad \M_L(\F) := \M_S(\F;(\Dperf(S),\Dual_L))
\]
to denote the associated pre-motivic and motivic realizations. When $\F$ is lax symmetric monoidal, these are modules over $\R_S(\F)$ and $\M_S(\F)$, respectively. 
\end{notation}

\begin{construction}
\label{construction:realization-cotensor}%
The functor $\wtl{\T}_S^{\otimes}$ being symmetric monoidal, we have for every $\E$-valued pre-motivic spectrum $E \in \Spa^{\Sig}_{\Pone}(S,\E)$ a commutative square
\[
\begin{tikzcd}
\Fun^{\bloc}(\Mod_S^{\flat,\kappa}(\Catpsi), \E) \ar[d,"(-)\otimes \wtl{\T}_S(E)"']& \ar[l, "\wtl{\T}_S"']  \Spa_{\Pone}^{\Sig}(S,\E) \ar[d, "{(-) \otimes E}"] \\
\Fun^{\bloc}(\Mod_S^{\flat,\kappa}(\Catpsi), \E) & \ar[l, "\wtl{\T}_S"']  \Spa_{\Pone}^{\Sig}(S,\E) \ . \end{tikzcd}
\]
Passing to right adjoints, we obtain a commutative square
\[
\begin{tikzcd}
\Fun^{\bloc}(\Mod_S^{\flat,\kappa}(\Catpsi), \E) \ar[r, "\R_S"] & \Spa_{\Pone}^{\Sig}(S,\E) \\
\Fun^{\bloc}(\Mod_S^{\flat,\kappa}(\Catpsi), \E) \ar[u,"(-)^{\wtl{\T}_S(E)}"]\ar[r, "\R_S"] & \Spa_{\Pone}^{\Sig}(S,\E)  \ar[u, "{(-)^E}"'] \ ,
\end{tikzcd}
\]
where the vertical arrows denote the operation of internal mapping objects from $E$ and $\wtl{\T}_S(E)$, respectively. In particular, for $\F \in \Fun^{\bloc}(\Mod_S^{\flat,\kappa}(\Catpsi), \E)$ we obtain a natural equivalence
\[
\R_S(\F)^{E} \simeq \R_S(\F^{\wtl{\T}_S(E)}) .
\]
In addition, the symmetry of the monoidal structure on $\Fun^{\bloc}(\Mod_S^{\flat,\kappa}(\Catpsi), \E)$ means that cotensors commute with cotensors, and hence for every $(\C,\Dual) \in \Mod_S^{\flat,\kappa}(\Catpsi)$ we obtain a natural equivalence
\[
\R_S(\F;(\C,\Dual))^{E} \simeq \R_S(\F^{\wtl{\T}_S(E)};(\C,\Dual)) .
\]
In particular, taking $(\C,\Dual) = (\Dperf(S),\Dual_L)$ for some $L \in \Picspace(L)^{\BC}$ we obtain a natural equivalence
\[
\R_L(\F)^{E} \simeq \R_L(\F^{\wtl{\T}_S(E)}),
\]
see Notation~\ref{notation:L-in-pic}.
\end{construction}

\begin{notation}
Given a finite set $I$ we will denote by $\Sig^I$ the operation of $I$-fold suspension (that is, tensoring with the $I$-fold smash product $\wedge_{i \in I} S^1$ of $S^1$ with itself), and by $\Om^I$ its right adjoint, namely, taking $I$-fold loops. If $\C$ is a stable $\infty$-category equipped with a duality $\Dual$ then we also write $\Dual^{[I]} := \Sig^I\Dual$ and $\Dual^{[-I]} := \Om^I\Dual$ for the $I$-fold suspension/loops of $\Dual$, and if $\QF$ is a Poincaré structure on $\C$ then we similarly write $\QF^{[I]} := \Sig^I\QF$ and $\QF^{[-I]} := \Om^I\QF$. In addition, if $L$ is a line bundle on a scheme $X$ then we also use the notation $L[I] := \Sig^IL$ and $L[-I] := \Om^I L$ to denote the $I$-fold suspension/loops, so that we have, for example $(X,\Dual_L\qshift{I}) = (X,\Dual_{L[I]})$ and $(\Dperf(X),(\QF^{\sym}_L)\qshift{I}) = (\Dperf(X),\QF^{\sym}_{L[I]})$.

Given a functor $\F$ defined on either $\infty$-categories with duality or Poincaré $\infty$-categories we denote by $\F\qshift{I}(-,-) := \F(-,-\qshift{I})$
the functor obtained by pre-composing with the $\pm I$-shift functor. We note that this operation preserves bounded localising and Karoubi localising functors.

While these operations only depend on the sizes $n=|I|$, this notation allows us to keep track of the functorial dependence of this construction in $I$ with respect to set isomorphisms. When this information is not important we also use the notation $(-)\qshift{n}$ to denote $(-)\qshift{\mathrm{sign}(n)\{1,...,n\}}$, as we have been using in the context of Poincaré $\infty$-categories. 
\end{notation}

\begin{example}[Bott periodicity]
\label{example:bott}%
Applying Construction~\ref{construction:realization-cotensor} in the case of $E = \Sig^{\infty}_{\Pone}\E[\Pone_S,\infty]$ and using Proposition~\ref{proposition:projective-line} we obtain a natural equivalence
\[
\Om_{\Pone}\R_L(\F) = \R_L(\F((-) \otimes (\Dperf(S),\Dual_{S}\qshift{-1})) =  \R_L(\F\qshift{-1}) = \R_{L[-1]}.
\]
Since the operations $\Om_{\Pone}(-)$ and $(-)^{[-1]}$ are invertible this equivalence determines an equivalence
\[
\Sig_{\Pone}\R_L(\F) = \R_L(\F\qshift{1}) = \R_{L[1]}(\F).
\]
More generally, for every finite set $I$ we have an equivalence
\[
\Sig_{\Pone}^I\R_L(\F) = \R_L(\F^{[I]}) = \R_{L[I]}(\F),
\]
Applying the $\Aone$-localisation functor $\Loc_{\Aone}\colon\Spa^{\Sig}_{\Pone}(\Sh^{\Nis}(S,\E)) \to \SH(S,\E)$ we then obtain an equivalence
\[
\Sig_{\Pone}^I\M_L(\F) = \M_L(\F^{[I]}) = \M_{L[I]}(\F).
\]
\end{example}

\begin{remark}
\label{remark:underlying}%
For every $S$-smooth scheme $p\colon X \to S$ we have $(\Dperf(X),\Dual_X) \otimes (\Dperf(S),\Dual_L) = (\Dperf(X),\Dual_{p^*L})$, and so the underlying Nisnevich sheaf $\Om^{\infty}_{\Pone}\R_L(\F)$ is given by $[p\colon X \to S] \mapsto \F(\Dperf(X),\Dual_{p^*L})$. More generally, By Example~\ref{example:bott}
the pre-motivic spectrum $\R_L(\F) \in \Spa^{\Sig}_{\Pone}(S,\E)$ can concretely be described by the formula
\[
\R_L(\F)_I(p\colon X \to S) = \F(\Dperf(X),\Dual_{p^*L[I]}) .
\]
Applying the $\Aone$-localisation functor and using Lemma~\ref{lemma:LAoneOmPone} we conclude that the motivic spectrum $\M_L(\F)$ is given by
\[
\M_L(\F)_I(p\colon X \to S) = (\Loc_{\Aone}\R^{\sym}_L(\F)_I)(p\colon X \to S) = |\F(\Dperf(X\times_S \Del_S^{\bullet}),\Dual_{p_\bullet^*L[I]})| ,
\]
where $p_n\colon X\times_S \Del_S^n \to S$ is the structure map of $X \times_S \Del_S^n$.
\end{remark}

\begin{lemma}
\label{lemma:colimits}%
The functor $\R_S$ preserves all limits. If in addition $\E$ is stable, then $\R_S$ (and hence also $\M_S$) preserves all colimits.
\end{lemma}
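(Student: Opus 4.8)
The statement concerns the right adjoint $\R_S$ of the symmetric monoidal left adjoint $\wtl{\T}_S$. Since $\R_S$ is a right adjoint between presentable $\infty$-categories, it automatically preserves all limits; this half of the claim requires no argument beyond invoking the adjunction $\wtl{\T}_S \dashv \R_S$ constructed just before Construction~\ref{construction:motivic}. So the content is entirely in the second assertion: when $\E$ is stable, $\R_S$ also preserves colimits.

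The plan is to reduce colimit-preservation to a statement about a compact generating set. When $\E$ is stable, both $\Fun^{\bloc}(\Mod_S^{\flat,\kappa}(\Catpsi), \E)$ and $\Spa_{\Pone}^{\Sig}(S,\E)$ are stable presentable $\infty$-categories, and a limit-preserving (hence exact) functor between stable presentable $\infty$-categories preserves all colimits as soon as it preserves coproducts, or, better, as soon as it commutes with filtered colimits — since it already commutes with finite colimits by exactness, and arbitrary colimits are built from finite colimits and filtered ones. Thus the plan is: first, observe $\R_S$ is exact (it is a right adjoint, hence preserves finite limits, and between stable $\infty$-categories finite-limit-preserving functors are exact); second, show $\R_S$ preserves filtered colimits. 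For the second step I would test against the compact generators: the target $\Spa_{\Pone}^{\Sig}(S,\E)$ is generated under colimits and $\Pone$-desuspensions by the objects $\Sig^{\infty}_{\Pone}\E[X,x]$ for $X \in \Sm_S$, and by adjunction, commuting with filtered colimits can be checked after mapping out of $\wtl{\T}_S$ of such generators. By Proposition~\ref{proposition:projective-line} and Example~\ref{example:bott}, $\wtl{\T}_S(\Sig^I_{\Pone}\E[X,x])$ is (a shift of) the localised corepresentable $\jbloc(\C,\Dual)$ for an appropriate flat $\kappa$-compact $(\C,\Dual)$; concretely $\Map(\wtl{\T}_S(\Sig^I_{\Pone}\E[X,x]), \R_S \F) \simeq \Map(\Sig^I_{\Pone}\E[X,x], \F)$ is computed $\Pone$-levelwise via Remark~\ref{remark:underlying} by evaluating $\F$ on $(\Dperf(X),\Dual_{p^*L[I]})$, which is a $\kappa$-compact object of $\Mod_S^{\flat,\kappa}(\Catpsi)$.

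Therefore the reduction is to the following: filtered colimits in $\Fun^{\bloc}(\Mod_S^{\flat,\kappa}(\Catpsi), \E)$ are computed pointwise. This is where the real work lies. The subtlety is that $\Fun^{\bloc}$ is a \emph{localisation} of $\Fun(\Mod_S^{\flat,\kappa}(\Catpsi), \E)$, and localisations need not preserve filtered colimits unless the localising class is of finite presentation in a suitable sense. Here, though, $\Mod_S^{\flat,\kappa}(\Catpsi)$ is essentially small and the bounded-localising condition is a condition of \emph{finite limit type}: it asks that a functor be reduced and send each bounded Karoubi square — a single finite diagram — to a pullback square. Since $\E$ is stable, pullback squares are detected by a finite-colimit (equivalently finite-limit) condition on the values, and such conditions are closed under filtered colimits in $\E$. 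Hence the full subcategory $\Fun^{\bloc} \subseteq \Fun(\Mod_S^{\flat,\kappa}(\Catpsi), \E)$ is closed under filtered colimits, so the localisation functor $\Lbloc$ preserves them and filtered colimits in $\Fun^{\bloc}$ agree with those in $\Fun(\Mod_S^{\flat,\kappa}(\Catpsi), \E)$, which are pointwise. Combining this with the $\Pone$-levelwise and pointwise description of $\R_S$ from Remark~\ref{remark:underlying} (and the fact that, for stable $\E$, $\Om^I$ commutes with filtered colimits, or rather that the relevant sheafification and $\Pone$-spectrification interact harmlessly, as already used in Lemma~\ref{lemma:LAoneOmPone}), we conclude $\R_S$ commutes with filtered colimits. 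The main obstacle I anticipate is precisely bookkeeping this last point carefully: one must ensure that passing from the pointwise filtered colimit in $\Fun^{\bloc}$ back through $\R_S$ into $\Spa^{\Sig}_{\Pone}(S,\E)$ — which involves Nisnevich sheafification and the symmetric-spectrum structure — does not obstruct commutation with filtered colimits; but since $\E$ is stable, Nisnevich sheafification and finite-limit conditions are both compatible with filtered colimits, so this goes through.
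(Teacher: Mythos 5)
Your proof is correct, but it takes a more circuitous route than necessary. The core idea is the same as the paper's — identify the full subcategory of bounded localising functors as being closed under colimits inside all functors, use the pointwise formula of Remark~\ref{remark:underlying} to reduce the claim to evaluation, and observe that the Nisnevich-sheaf and $\Pone$-spectrum structures on the target side don't obstruct anything. However, your reduction to filtered colimits (via exactness plus a compact-generation argument) is unnecessary overhead, and the weaker observation you invoke — that finite-limit conditions are closed under \emph{filtered} colimits — understates what stability of $\E$ gives you. In a stable $\infty$-category, finite limits commute with \emph{all} colimits (not just filtered ones), so both the Nisnevich-sheaf subcategory of presheaves and the bounded-localising subcategory of functors are closed under all colimits, $\Om_{\Pone}$ commutes with all colimits, and one can then conclude directly: colimits on both sides of $\R_S$ are computed pointwise, and $\R_S(\F)_I(X) = \F(\Dperf(X),\Dual_X\qshift{I})$ is evaluation, hence colimit-preserving. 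That dispenses with both the filtered-colimit reduction and the generation argument. Your final paragraph correctly identifies the remaining bookkeeping worry (that passing through sheafification and the symmetric-spectrum structure must be checked), but the cleaner resolution is simply that when $\E$ is stable the sheaf and spectrum conditions are preserved under forming colimits levelwise, so there is nothing to pass through — the colimit computed in prespectra of presheaves already lands in spectra of sheaves.
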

\begin{proof}
The first claim is because $\R_S$ is a right adjoint. To see the second claim, note that if $\E$ is stable, then the full subcategory of Nisnevich sheaves is closed under colimits inside all presheaves and the functor $\Om_{\Pone}$ commutes with colimits. It then follows that colimits in symmetric $\Pone$-spectrum objects in Nisnevich sheaves are computed on the level of symmetric $\Pone$-prespectrum objects in presheaves.
At the same time, when $\E$ is stable the full subcategory of bounded localising functors is closed under colimits inside all functors, and so colimits of bounded localising functors are computed levelwise. 
The desired result now follows from the fact that, by Remark~\ref{remark:underlying}, the value at $X \in \Sm_S$ of the $I$-components of the $\Pone$-spectrum object $\R_S(\F)$ is given by 
\[
\R_S(\F)_I(X) = \F(\Dperf(X),\Dual_X\qshift{I}),
\]
which, as a functor of $\F$, is colimit preserving by the above. 
\end{proof}

Finally, we now construct motivic realization functors for Karoubi localising invariants of Poincaré $\infty$-categories:

\begin{construction}[Motivic realization for Karoubi localising functors]
\label{construction:R-from-kloc}%
By Proposition~\ref{proposition:flat-is-karoubi-sym}, restriction along the lax symmetric monoidal composite functor 
\begin{equation}
\label{equation:lax-composite}%
\Mod_S^{\flat,\kappa}(\Catpsi) \to \Catpsi \to \Catps \to \Catp \quad\quad (\C,\Dual) \mapsto (\C,\QF^{\sym}_\Dual) 
\end{equation}
sends Karoubi-localising functors to bounded localising functors. By means of pre-composition we hence obtain a lax symmetric monoidal functor
\begin{equation}
\label{equation:RS-from-kloc}%
\R^{\sym}_S\colon \Fun^{\kloc}(\Catp,\E) \to  \Fun^{\bloc}(\Mod_S^{\flat,\kappa}(\Catpsi), \E)  \xrightarrow{\R_S} \Spa_{\Pone}^{\Sig}(S,\E).
\end{equation}
For a Karoubi-localising functor $\F\colon \Catp \to \E$ and an $S$-linear stable $\infty$-category with duality $(\C,\Dual)$ we then write
\[
\R^{\sym}_S(\F;(\C,\Dual)) = \R_S(\F|_{\Mod_S^{\flat,\kappa}(\Catpsi)};(\C,\Dual)) \in \Spa_{\Pone}^{\Sig}(S,\E)
\]
and
\[
\M^{\sym}_S(\F;(\C,\Dual)) := \Loc_{\Aone}\R^{\sym}_S(\F;(\C,\Dual)) \in \SH(S,\E)
\]
for the corresponding pre-motivic and motivic realizations with coefficients. 

As in Notation~\ref{notation:L-in-pic}, for an invertible perfect complex with $\Ct$-action $L \in \Picspace(S)^{\BC}$, we write
\[
\R^{\sym}_L(\F) = \R^{\sym}_S(\F;(\Dperf(S),\Dual_L)) \quad\text{and}\quad \M^{\sym}_L(\F) = \M^{\sym}_S(\F;(\Dperf(S),\Dual_L)),
\]
so that $\R^{\sym}_L$ is a module over $\R^{\sym}_S$ with respect to Day convolution, and similarly for $\M^{\sym}_L$ is a module over $\M^{\sym}_S(\F)$. 
In particular, $\R^{\sym}_L(\F)$ is a module over $\R^{\sym}_S(\F)$ and $\M^{\sym}_L(\F)$ is a module over $\M^{\sym}_S(\F)$ for any lax symmetric monoidal Karoubi-localising functor $\F$. 
\end{construction}

\begin{remark}
\label{remark:underlying-sym}%
By Remark~\ref{remark:underlying} the pre-motivic spectrum $\R^{\sym}_L(\F) \in \Spa^{\Sig}_{\Pone}(S,\E)$ is given explicitly by the formula
\[
\R^{\sym}_L(\F)_I(p\colon X \to S) = \F(\Dperf(X),\QF^{\sym}_{p^*L[I]}) ,
\]
while the motivic spectrum $\M^{\sym}_L(\F)$ is given by
\[
\M^{\sym}_L(\F)_I(p\colon X \to S) =  |\F(\Dperf(X\times_S \Del_S^{\bullet}),\QF^{\sym}_{p^*_\bullet L[I]})|.
\]
where $p_n\colon X\times_S \Del_S^n \to S$ is the structure map of $X \times_S \Del_S^n$.
\end{remark}

\begin{remark}
To avoid confusion, we note that since the composite~\eqref{equation:lax-composite} is only lax symmetric monoidal, the functor $\R^{\sym}_S$ is not part of a symmetric monoidal adjunction; more precisely, its left adjoint is only oplax symmetric monoidal, but not symmetric monoidal.
\end{remark}

\subsection{The free delooping lemma}
\label{subsection:unique-delooping}%

Let us now consider the case where the target $\E$ is either the $\infty$-category $\Spa$ of spectra or the $\infty$-category $\Grp_{\Einf}$ of $\Einf$-groups. These two $\infty$-categories are related via an adjunction
\[
\iota\colon \Grp_{\Einf} \adj \Spa \cocolon \Om^{\infty} ,
\]
where $\Om^{\infty}$ is the lift of the usual loop infinity functor from spaces to $\Einf$-groups and its left adjoint $\iota$ is the embedding of $\Grp_{\Einf}$ as the full subcategory of connective spectra. In fact, this adjunction also exhibits $\Spa$ as the stabilization of $\Grp_{\Einf}$, and so we could have called $\iota$ also $\Sig^{\infty}$, but we will stick with $\iota$ in order to avoid confusion with $\Sig^{\infty}$ of pointed spaces (which is not fully-faithful).
We then have an induced adjunction
\[
\iota_* \colon \Spa_{\Pone}^{\Sig}(S,\Grp_{\Einf}) \adj  \Spa_{\Pone}^{\Sig}(S,\Spa) \cocolon \Om^{\infty}_*,
\]
on the level of symmetric $\Pone$-spectrum objects (in Nisnevich sheaves), where $\Om^{\infty}_*$ is given by object-wise post-composing with $\Om^{\infty}$ and $\iota_*$ is given by post-composition with $\iota$ followed by Nisnevich sheafification and the spectrification functor
\[
\PSpa_{\Pone}^{\Sig}(S,\Spa) \to \Spa_{\Pone}^{\Sig}(S,\Spa)
\]
from symmetric $\Pone$-prespectra to $\Pone$-spectra.

\begin{lemma}[Free delooping]
\label{lemma:unique-delooping}%
Let $\F\colon \Mod_S^{\flat,\kappa}(\Catpsi) \to \Spa$ be a bounded localising functor and $L \in \Pic(S)^{\B C}$ an invertible perfect complex with $\Ct$-action.
Then the counit map
\[
\iota_*\Om^{\infty}_*\R_L(\F) \to \R_L(\F)
\]
is an equivalence. 
\end{lemma}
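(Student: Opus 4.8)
The plan is to reduce the statement to a connectivity estimate at the level of $\Pone$-prespectra, and then invoke the sequential stabilization formula of Proposition~\ref{proposition:sequential-stabilization}. First I would recall, via Remark~\ref{remark:underlying}, that $\R_L(\F)$ is the $\Pone$-spectrification of the symmetric $\Pone$-prespectrum whose $I$-th level is the Nisnevich sheafification of the presheaf $[p\colon X \to S] \mapsto \F(\Dperf(X),\Dual_{p^*L[I]})$, and likewise that $\Om^{\infty}_*\R_L(\F)$ is obtained by post-composing with $\Om^{\infty}\colon \Spa \to \Grp_{\Einf}$, while $\iota_*\Om^{\infty}_*$ re-applies $\iota$, sheafifies, and re-spectrifies. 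Since $\iota\Om^{\infty}$ is the connective cover functor $\tau_{\geq 0}$ on spectra, and since Nisnevich sheafification and $\Om_{\Pone}$ both commute with the truncation functors up to bounded loss of connectivity (the sheafification being on a site of finite cohomological dimension — here one uses that $S$ has a fixed finite Krull dimension is \emph{not} assumed, so one must instead argue levelwise as below), the content of the lemma is that the $\Pone$-prespectrum $\{\tau_{\geq 0}\R_L(\F)_I\}_I$ is already a $\Pone$-spectrum after sequential stabilization, and that this stabilization recovers $\R_L(\F)$.

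The key step is the Bott periodicity identity from Example~\ref{example:bott}: the structure map $\R_L(\F)_I \to \Om_{\Pone}\R_L(\F)_{I \amalg \{*\}}$ is an equivalence, and moreover $\Om_{\Pone}\R_L(\F)_{I} \simeq \R_{L[-1]}(\F)_I$. Iterating, the $n$-fold structure map of the prespectrum $\{\tau_{\geq 0}\R_L(\F)_I\}$ realizes $\sh_n\Om^n_{\Pone}$ applied to it, which by Example~\ref{example:bott} again is $\tau_{\geq 0}$ of $\R_{L[\{1,\dots,n\}]}(\F) \simeq \Sig^n_{\Pone}\R_L(\F)$ shifted back — concretely, at level $I$ and tested on $X \in \Sm_S$, the colimit over $n$ of $\tau_{\geq 0}\F(\Dperf(X\times_S\Del^\bullet_S),\QF)$-type expressions with an increasing number of $\Pone$-loops computes $\pi_k$ of $\R_L(\F)_I(X)$ for every $k$, because each individual $\Pone$-loop shifts the relevant homotopy groups by one and the connective truncation is undone in the limit. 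This is precisely the content of semi-stability: I would check that $\{\tau_{\geq 0}\R_L(\F)_I\}$ is a semi-stable $\Pone$-prespectrum in the sense preceding Proposition~\ref{proposition:sequential-stabilization}, using that $\Om_{\Pone}$ commutes with sequential colimits (finite limits commute with filtered colimits of uniformly bounded-below diagrams — invoke Corollary~\ref{corollary:standard} in the relevant form, or directly that $\Om_{\Pone}$ is a finite limit), so that Proposition~\ref{proposition:sequential-stabilization} applies and identifies its spectrification with $\sh_\infty(\tau_{\geq 0}\R_L(\F))^{(\Pone)^{\otimes\infty}}$, which by the above computation agrees with $\R_L(\F)$.

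The main obstacle I anticipate is controlling Nisnevich sheafification: $\iota_*$ involves sheafifying after post-composing with $\iota$, and sheafification does not commute with connective truncation on a general qcqs base without a finite-dimensionality hypothesis. I would circumvent this by exploiting that $\R_L(\F)$ is \emph{already} a sheaf of spectra, so one never needs to sheafify the connective-cover prespectrum: the natural map $\iota_*\Om^\infty_*\R_L(\F) \to \R_L(\F)$ can be analyzed levelwise on the already-sheafified $\Pone$-spectrum $\R_L(\F)$ by observing that post-composition with $\tau_{\geq 0}$ followed by sequential $\Pone$-stabilization recovers the original sheaf of spectra (since each structure map of the $\Pone$-spectrum is an equivalence of sheaves, and $\tau_{\geq 0}\X \to \X$ becomes an equivalence after looping down far enough whenever $\X$ is itself a $\Pone$-spectrum delooping). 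Concretely: for a $\Pone$-spectrum $\X$, the canonical map from the sequential stabilization of $\{\tau_{\geq 0}\X_n\}$ to $\X$ is an equivalence because at level $I$ the former computes $\colim_n \Om^n_{\Pone}\tau_{\geq 0}\Sig^n_{\Pone}\X_I = \colim_n \Om^n_{\Pone}\tau_{\geq 0}\X_{I\amalg\{1,\dots,n\}}$, and each $\Om_{\Pone}$ raises connectivity of the tested sheaf by a controlled amount so the colimit stabilizes to $\X_I$. Assembling these pieces, the counit map is an equivalence, as claimed.
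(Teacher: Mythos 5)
Your proposal has a genuine gap at its core. The pivotal claim — that for an arbitrary $\Pone$-spectrum object $\X$ in Nisnevich sheaves the sequential stabilization of the levelwise connective cover $\{\tau_{\geq 0}\X_I\}_I$ recovers $\X$, "because each $\Om_{\Pone}$ raises connectivity of the tested sheaf by a controlled amount" — is false. The functor $\Om_{\Pone}$ is a cotensor, hence a finite limit of the form $\fib[(-)^{\Pone}\to (-)]$, and finite limits do \emph{not} raise connectivity: if $Y$ is a $(-2)$-truncated presheaf of spectra (e.g.\ the fibre of $\tau_{\geq 0}\X_I\to\X_I$), then both $Y^{\Pone}$ and $Y$ are $(-2)$-truncated, so $\Om_{\Pone}Y$ is again $(-2)$-truncated. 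Iterating, $\Om_{\Pone}^n$ applied to these error terms stays $(-2)$-truncated for all $n$, and so does their filtered colimit; nothing forces it to vanish. This is the same phenomenon that makes "effective cover" a nontrivial operation on motivic spectra — levelwise $\tau_{\geq 0}$ followed by re-spectrification does not recover a motivic spectrum. Contrast this with $\Om_{S^1}$, which genuinely drops connectivity by one; $\Om_{\Pone}$ contains a $\Gm$-direction, and it is exactly the $\Gm$-loop that refuses to shift connectivity.

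Because of this, the lemma cannot be proved by a purely formal stabilization argument about $\Pone$-spectra; it uses the hypothesis that $\F$ is bounded localising in an essential way. The paper's proof proceeds differently: it replaces $\Om_{\Pone}$ by $\Om_B$ for the presheaf pushout $B = [\Aa^1,1]\amalg_{[\Gm,1]}[\Pone\smallsetminus\{0\},1]$ (which is Nisnevich-locally equivalent to $[\Pone,1]$, so agrees on sheaves but gives an explicit Mayer--Vietoris fibre sequence $\Om_B \to \Om_{\Aa^1}\oplus\Om_{\Pone\smallsetminus\{0\}}\to\Om_{\Gm}$ on all presheaves), and then shows inductively that $\Om^n_B\tau_{\geq 0}\R_S(\F)_{I+n}\to\R_S(\F)_I$ is a $(-n)$-connective cover. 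The five-lemma step reduces this to proving that a certain map of homotopy presheaves $\pi_{-n}(\Om_{\Pone}\R_S(\F)_I)\to\pi_{-n}(\Om_{\Aa^1}\R_S(\F)_I)\oplus\pi_{-n}(\Om_{\Pone\smallsetminus\{0\}}\R_S(\F)_I)$ vanishes, and this vanishing is where the algebraic geometry enters: each component is shown to be \emph{null-homotopic} by exhibiting the relevant class as lying in the image of push-forward $(i_{\infty})_*$ along the section at $\infty\in\Pone_X$, using the Poincaré functoriality of push-forward (Lemma~\ref{lemma:composite-pf}), the Beilinson projective-line sequence of Construction~\ref{construction:projective-line}, and the assumption that $\F$ is bounded localising. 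This is not a formal consequence of $\Pone$-stabilization; it is a specific structural feature of hermitian $\K$-theory–type invariants. Your proposal omits this step entirely, and without it the argument does not go through.
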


\begin{remark}
Since $\Om^{\infty}_*\R_L(\F) \simeq \R_L(\Om^{\infty}\F)$, Lemma~\ref{lemma:unique-delooping} says in particular that $\R_L(\F)$ only depends on the underlying $\Grp_{\Einf}$-valued functor $\Om^{\infty}\F$. This also means that if $\F$ is a Karoubi-localising functor on $\Catp$ then the motivic spectrum $\M^{\sym}_L(\F)$ only depends on $\Om^{\infty}\F$.
This consequence is a-priori not surprising, to the extent that Karoubi-localising functors are uniquely determined by their connective covers, see~\cite[\S 1]{9-authors-IV}.
Lemma~\ref{lemma:unique-delooping} is however a bit more precise, giving the exact manner in which the eventual dependence on $\Om^{\infty}\F$ is expressed. 
\end{remark}

\begin{proof}
Replacing $\F$ with $\F((-) \otimes_{(\Dperf(S),\Dual_S)}(\Dperf(S),\Dual_L))$ we may assume that $L = \cO_S$.
Since the composite $\iota\circ \Om^{\infty}$ is naturally equivalent to the connective cover functor the desired claim amounts to checking that the map of symmetric $\Pone$-prespectra objects in presheaves
\[
\tau_{\geq 0}\R_S(\F) \to \R_S(\F)
\]
induces an equivalences on mapping spaces to any symmetric $\Pone$-spectrum object in sheaves. Here, the connective cover functor $\tau_{\geq 0}$ is applied to the $\Pone$-spectrum object $\R_S(\F)$ pointwise and without sheafification, so that $\tau_{\geq 0}\R_S(\F)$ is taken as a $\Pone$-prespectrum object in presheaves. 

In what follows, let us consider $\Aa^1, \Gm$ and $\Pone\smallsetminus\{0\}$ as pointed schemes over $\spec(\ZZ)$ with a compatible base point given by $1 \in \Gm$ (and its images in $\Aa^1$ and $\Pone\smallsetminus\{0\}$). We can then view them as representable sheaves on $\Sm_{\ZZ}$ taking values in pointed spaces, and we write
\[
B = [\Aa^1,1]\amalg_{[\Gm,1]}[\Pone\smallsetminus\{0\},1] \in \Psh(\spec(\ZZ),\Sps_*)
\]
for the associated pushout in presheaves of pointed spaces.
Since $\Grp_{\Einf}$ is tensored and cotensored over pointed spaces we can form the associated loop functor
\[
\Om_{B}\colon \Psh(S,\Grp_{\Einf}) \to \Psh(S,\Grp_{\Einf}).
\]
The obvious map $B \to [\Pone,1]$ of pointed presheaves then induces a natural transformation
$\Om_{\Pone} \Rightarrow \Om_B$,
yielding a natural map
\[
\X \to \sh_1\Om_{\Pone}(\X) \to \sh_1\Om_B(\X) .
\]
We then write 
\[
L_B\X = \colim_n \sh_n\Om_B^n(\X)
\]
for the colimit of the corresponding sequence.
We note that since the map $B \to [\Pone,\infty]$ is a Nisnevich local equivalence the maps $\Y \to \sh_1\Om_{\Pone}\Y \to \sh_1\Om_{B}\Y$ are both equivalences for $\Y$ a $\Pone$-spectrum object in Nisnevich sheaves, in which case the map $\Y \to L_B\Y$ is an equivalence.

We now claim that if $\Y$ is such that the map $\Y \to L_B\Y$ is an equivalence and $\X \to \X'$ is a map of $\Pone$-prespectrum objects in presheaves such that the induced map $L_B\X \to L_B\X'$ is an equivalence then the restriction map
\[
\map(\X',\Y) \to \map(\X,\Y)
\]
is an equivalence. To see this, consider the commutative diagram
\[
\begin{tikzcd}
\map(\X',\Y) \ar[r]\ar[d] & \map(\X,\Y) \ar[d]\\
\map(L_B\X',L_B\Y) \ar[r]\ar[d] & \map(L_B\X,L_B\Y)\ar[d] \\
\map(\X',L_B\Y) \ar[r] & \map(\X,L_B\Y) \ ,
\end{tikzcd}
\]
where the maps from the top to the middle row are induced by the functoriality of $L_B$ and the maps from the middle to the bottom row are induced by pre-composition with $\X'\to L_B\X'$ and $\X \to L_B\X$, respectively. The naturality of $(-) \Rightarrow L_B(-)$ then determines a homotopy between the vertical composites and the maps induced by post-composition with $\Y \to L_B\Y$, and so these vertical composites are equivalences by our assumption. This entire diagram is hence equivalent to a retract diagram in the arrow category, and since our assumptions imply that the middle horizontal map is an equivalence the same holds for the top and bottom horizontal maps.

With this at hand, we now claim that in the case of $\X = \R_S(\F)$, the induced map
\[
\eta\colon L_B\tau_{\geq 0}\X \to L_B\X
\]
is an equivalence.
By the above, this implies that the map $\tau_{\geq 0}\X \to \X$
induces an equivalence on mapping spectra into any $\Pone$-spectrum objects in Nisnevich sheaves, so that the desired result follows.
Now to prove that $\eta$ is an equivalence, we show that 
for every $n \geq 0$ the map
\[
\eta\colon \sh_n\Om^n_B\tau_{\geq 0}\R_S(\F) \to \sh_n\Om^n_B\R_S(\F) \simeq \R_S(\F)
\]
is pointwise an equivalence on homotopy presheaves in degrees $\geq -n$. This holds for $n=0$ by definition. Let us fix a finite set $I$. 
Arguing by induction, it remains to show that if $\X' \to \R_S(\F)_I$ is a presheaf map which exhibits $\X'$ as the $(-n+1)$-connective cover of $\R_S(\F)_I$ (in presheaves) for some $n \geq 1$,
then the induced map $\Om_B\X' \to \Om_B\R_S(\F)_I$ exhibits $\Om_B\X'$ as the $(-n)$-connective cover of $\Om_B\R_S(\F)_I$. 
For this, consider the commutative diagram of presheaves
\[
\begin{tikzcd}
\Om_B\X'\ar[r]\ar[d] & \Om_{\Aa^1}\X' \oplus \Om_{\Pone\smallsetminus\{0\}}\X' \ar[r]\ar[d] & \Om_{\Gm}\X' \ar[d] \\
\Om_B\R_S(\F)_I\ar[r] & \Om_{\Aa^1}\R_S(\F)_I \oplus \Om_{\Pone\smallsetminus\{0\}}\R_S(\F)_I \ar[r] & \Om_{\Gm}\R_S(\F)_I 
\end{tikzcd}
 \]
whose rows are exact by construction. For every pointed scheme $U \in \Sm_{/\spec(\ZZ)}$ and presheaf $\Z$, the presheaf $\Om_U\Z$ is a summand of $\Z((-) \times U)$, and hence the middle and right most vertical maps are isomorphisms on homotopy presheaves in degrees $\geq -n+1$ by the induction hypothesis. By the long exact sequence in homotopy presheaves and the five lemma, it will suffice to show that the square
\[
\begin{tikzcd}
\pi_{-n}(\Om_{\Aa^1}\X') \oplus \pi_{-n}(\Om_{\Pone\smallsetminus\{0\}}\X') \ar[r]\ar[d] & \pi_{-n}(\Om_{\Gm}\X') \ar[d] \\
\pi_{-n}(\Om_{\Aa^1}\R_S(\F)_I) \oplus \pi_{-n}(\Om_{\Pone\smallsetminus\{0\}}\R_S(\F)_I) \ar[r] & \pi_{-n}(\Om_{\Gm}\R_S(\F)_I) 
\end{tikzcd}
\]
induces an isomorphism on horizontal kernels. In the upper row, both terms are zero by the induction hypothesis (and the fact that $\Om_U\Z$ is a summand of $\Z((-) \times U)$ as above), so this amounts to showing that the bottom horizontal arrow is injective. Since $\R_S(\F)_I$ is a Nisnevich sheaf, this is equivalent by the long exact sequence in homotopy presheaves to the claim that the map
\[
\pi_{-n}(\Om_{\Pone}\R_S(\F)_I) \to \pi_{-n}(\Om_{\Aa^1}\R_S(\F)_I) \oplus \pi_{-n}(\Om_{\Pone\smallsetminus\{0\}}\R_S(\F)_I)
\]
is the zero map. In fact, we claim that each of the maps of presheaves
\[
\Om_{\Pone}\R_S(\F)_I \to \Om_{\Aa^1}\R_S(\F)_I \quad\text{and}\quad \Om_{\Pone}\R_S(\F)_I \to \Om_{\Pone\smallsetminus\{0\}}\R_S(\F)_I
\]
is a null-homotopic. Since there is an automorphism of $\Pone$ which fixes 1 and switches between 0 and $\infty$ it will suffice to prove the claim for the first map. 
Fix a smooth $S$-scheme $p\colon X \to S$ and write $q\colon \Pone_X \to X$ and $r\colon \Aone_X \to X$ for the corresponding projections. To simplify notation, let us write $L' := p^*L[I]$, so that 
\[
\R_L(\F)_I(X) = \F(\Dperf(X),\Dual_{L'})
\]
by Remark~\ref{remark:underlying}. Since $\Om_{\Aa^1}\R_S(\F)_I(X)$ is a direct summand of $\R_S(\F)_I(\Aone_X) = \F(\Dperf(\Aone_X),\Dual_{r^*L'})$ it will suffice to show that the map
\[
\Om_{\Pone}\R_S(\F)_I(X) = \fib[\F(\Dperf(\Pone_X),\Dual_{q^*L'}) \xrightarrow{i_1^*} \F(\Dperf(X),\Dual_{L'})] \to \F(\Dperf(\Aone_X),\Dual_{r^*L'})
\]
is null-homotopic, where $i_1^*$ is pullback along the inclusion $i_1\colon X \to \Pone_X$ induced by the base point $1 \in \Pone$.
Now the inclusion 
$i_{\infty} \colon X \hrar \Pone_X$
induced by the point $\infty \in \Pone$ is a regular codimension 1 embeddings (even if $X$ and $\Pone_X$ are not necessarily regular), and hence by Lemma~\ref{lemma:devissage-functor} we have the associated duality preserving push-forward functors
\[
(i_{\infty})_*\colon (\Dperf(X),\Dual_{L'[-1]}) = (\Dperf(X),\Dual_{i_{\infty}^!q^*L'}) \to (\Dperf(\Pone_X),\Dual_{q^*L'}).
\]
We now claim that the zero-composite sequence
\begin{equation}
\label{equation:push-pull}%
(\Dperf(X),\Dual_{L'[-1]}) \xrightarrow{(i_\infty)_*} (\Pone_X, \Dual_{q^*L'})  \xrightarrow{i_1^*} (X,\Dual_{L'})
\end{equation}
is sent by $\F$ to an exact sequence, or equivalently, that the vertical arrow in the induced diagram 
\[
\begin{tikzcd}
\F(\Dperf(X),\Dual_{L'[-1]})\ar[d]\ar[dr,dashed] & \\
\fib[\F(\Dperf(\Pone_X),\Dual_{q^*L'}) \to  \F(\Dperf(X),\Dual_{q^*L'[-1]})] \ar[r] & \F(\Dperf(\Aone_X),\Dual_{r^*L'}) 
\end{tikzcd}
\]
is an equivalence. Note that this will finish the proof, since the composite of $(i_\infty)_*$ with pullback along $\Aa^1 \subseteq \Pone$ is the zero functor, and so the dotted composite above is null homotopic. Now to show that~\eqref{equation:push-pull} is indeed sent by $\F$ to an exact sequence let us place it horizontally in the commutative diagram of $S$-linear $\infty$-categories with duality
\[
\begin{tikzcd}
& (\Dperf(X),\Dual_{L'})\ar[d,"q^*"]\ar[dr,dashed] & \\
(\Dperf(X),\Dual_{L'[-1]}) \ar[r,"(i_\infty)_*"]\ar[dr,dashed] & (\Pone_X, \Dual_{q^*L'}) \ar[r,"i_1^*"]\ar[d,"q_*(- \otimes \cO(-1))"] & (X,\Dual_{L'})  \\
& (\Dperf(X),\Dual_{L'[-1]}) & 
\end{tikzcd}
\]
where the vertical sequence is the one underlying the bounded Karoubi sequence of Construction~\ref{construction:projective-line}, which is sent to an exact sequence by $\F$. Now both dashed composites are homotopic to the respective identities, the top right one since $q \circ i_1 = \id$, and the bottom left one by Lemma~\ref{lemma:composite-pf} since $q \circ i_{\infty} = \id$ and $i_{\infty}^*\cO(-1) \simeq \cO_X$. We hence conclude that $\F$ also sends the horizontal sequence to an exact sequence, as desired.
\end{proof}

\section{Hermitian K-theory as a motivic spectrum}
\label{section:KQ}%

In this section we build on the results proven so far to construct the hermitian K-theory motivic spectrum $\KQ_S$ over a qcqs scheme $S$, without assuming that 2 is invertible in $S$ (as is done in previous constructions, such as \cite{hornbostel-motivic} or \cite{hermitian-bank-robbery}). Given an invertible perfect complex with $\Ct$-action $L \in \Picspace(S)^{\BC}$, we also construct a motivic spectrum $\KQ_L$ of hermitian $\K$-theory with coefficients in $L$, as well as corresponding versions $\KW_S$ and $\KW_L$ of motivic Witt spectra. We spend the majority of the section in order to establish the key fundamental properties of these motivic spectra. In particular, we prove:

\begin{enumerate}
\item
$\KQ_S$ and $\KW_S$ are motivic $\Einf$-ring spectra, and the natural map $\KQ_S \to \KW_S$ is an $\Einf$-map whose kernel is $\KGL_{\hC}$, where $\KGL$ is the motivic $\K$-theory spectrum. The motivic spectra $\KQ_L$ and $\KW_L$ are modules over $\KQ_S$ and $\KW_S$.
\item
The forgetful map $\KQ_S \to \KGL_S$ is also one of $\Einf$-ring spectra. It fits in the Wood fibre sequence
\[
\Om_{\Pone}\KQ_S \to \KGL_S \to \KQ_S .
\]
\item
The hermitian $\K$-theory spectrum is absolute: for any map $T \to S$ of qcqs schemes the natural map $f^*\KQ_S \to \KQ_T$ is an equivalence, see Proposition~\ref{proposition:KQ-stable-under-basechange}. 
\item
The Thom isomorphism, namely
\[
\Om^{V}\KQ_L \simeq \KQ_{L \otimes \det V[-r]}
\]
for any rank $r$ vector bundle $V \to S$, see Proposition~\ref{proposition:thom-iso}.
\item
If $S$ is Noetherian of finite Krull dimension then the motivic ring spectrum $\KQ_S$ is \emph{absolutely pure}, see Theorem~\ref{theorem:purity}.
\end{enumerate}

\subsection{Motivic hermitian K-theory spectra}
\label{subsection:construction-KQ}%

Fix a qcqs scheme $S$. Recall the lax symmetric monoidal functor
\[
\M^{\sym}_S\colon \Fun^{\kloc}(\Catp,\Spa) \to \SH(S,\Spa))
\]
and its line bundle generalisation $\M^{\sym}_L$ from Construction~\ref{construction:R-from-kloc}, and the fact that $\KK$, $\KGW$ and $\KL$ are $\Einf$-algebra objects in $\Fun^{\kloc}(\Catp,\Spa)$ with respect to Day convolution, that is, each carries a canonical lax symmetric monoidal structure, see~\cite[\S 3]{9-authors-IV}, and both the universal natural transformations $\KGW \to \KK$ and $\KGW \to \KL$ are lax symmetric monoidal.

\begin{definition}
We define the motivic $\Einf$-ring spectra $\KGL_S$, $\KQ_S$ and $\KW_S, \in \SH(S)$ by the formulas
\[
\KGL_S = \M^{\sym}_S(\KK),
\qquad
\KQ_S = \M^{\sym}_S(\KGW)
\quad\text{and}\quad 
\KW_S = \M^{\sym}_S(\KL) .
\]
Given an invertible perfect complex with $\Ct$-action $L \in \Picspace(S)^{\BC}$, we also define
the motivic spectra $\KGL_L,\KQ_L, \KW_L\in \SH(S)$ by the formulas
\[
\KGL_L = \M^{\sym}_L(\KK),
\qquad
\KQ_L = \M^{\sym}_L(\KGW)
\quad\text{and}\quad 
\KW_L = \M^{\sym}_L(\KL) .
\]
By construction of $\M^{\sym}_L$, these are naturally modules over the $\Einf$-ring spectra $\KGL_S$, $\KQ_S$ and $\KW_S$, respectively.
\end{definition}

\begin{remark}
\label{remark:action-on-k}%
Since the functor $(\C,\QF) \mapsto \KK(\C)$ does not depend on $\QF$ we have that $\KGL_L$ does not depend on $L$. The reason why we did choose to use this notation is that, as a functor on $\Catp$, $\KK$ carries a natural $\Ct$-action (where for a given Poincaré $\infty$-category $(\C,\QF)$ the $\Ct$-action on $\KK(\C)$ is induced by the duality). As a result, each of the motivic spectra $\KGL_L = \M^{\sym}_L(\KK)$ acquires an induced $\Ct$-action. This $\Ct$-action does depend on $L$, and so the notation $\KGL_L$ allows one to record which of these implicit $\Ct$-actions is being considered. 
\end{remark}

\begin{remark}
When $L = \cO_S$, we have $\KGL_L = \KGL_S$, $\KQ_L=\KQ_S$ and $\KW_L = \KW_S$.
\end{remark}

Since $\KGL_L$ does not depend on $L$ (see Remark~\ref{remark:action-on-k}) 
we have by Example~\ref{example:bott} an equivalence
\[
\beta\colon \KGL_L \xrightarrow{\simeq} \KGL_{L[-1]} = \Om_{\Pone}\KGL_L ,
\]
which we call the Bott map. Similarly, by Example~\ref{example:bott} and Ranicki periodicity \cite[Corollary 3.5.16]{9-authors-I} the motivic spectra $\KQ_L$ and $\KW_L$ carry periodicity equivalences of the form
\[
\wtl{\beta}\colon \KQ_L \xrightarrow{\simeq} \KQ_{L[-4]} = \Om_{\Pone}^4\KQ_L \quad\text{and}\quad \ovl{\bet}\colon \KW_L \xrightarrow{\simeq} \KW_{L[-4]} = \Om_{\Pone}^4\KW_L .
\]
for every $L \in \Picspace(S)^{\B C}$. To avoid confusion, let us note that the map $\beta$ is not compatible with the $\Ct$-action on $\KGL_L$ determined by $L$ (see Remark~\ref{remark:action-on-k}), but its 4-fold iteration $\KGL_L \xrightarrow{\beta^4} \Om^4_{\Pone}\KGL_L$ is $\Ct$-equivariant, again by Ranicki periodicity (and the fact that double suspension induces on K-theory the same map as the identity).

\begin{remark}
\label{remark:regular-base}%
If $S$ is a regular Noetherian scheme of finite Krull dimension then by Theorem~\ref{theorem:A1-invariance} the pre-motivic spectrum $\R^{\sym}_L(\GW)$ is $\Aone$-invariant, and hence already a motivic spectrum. In this case $\R^{\sym}_L(\KGW)$ and $\M^{\sym}_L(\KGW)$ coincide, so that we may identify $\KQ_L$ with $\R^{\sym}_L(\KGW)$. 
\end{remark}

\begin{remark}
\label{remark:either-way}%
Let $\KGW^{\sym}\colon \Catpsi \to \Grp_{\Einf}$ be the functor given by 
\[
\KGW^{\sym}(\C,\Dual) = \KGW(\C,\QF^{\sym}_{\Dual}),
\]
and write $\GWspace^{\sym} := \Om^{\infty}\KGW^{\sym}$. Then 
$\KGW^{\sym}$ is a bounded localising functor on $\Catpsi$ and so by Lemma~\ref{lemma:unique-delooping}, the canonical map
\[
\iota_*\R_L(\GWspace^{\sym}) \to \R_L(\KGW^{\sym})
\]
is an equivalence in $\Spa_{\Pone}^{\Sig}(\Sh^{\Nis}(S,\Spa))$. We consequently conclude that 
\[
\KQ_L \simeq \Loc_{\Aone}\iota_*\R_L(\GWspace^{\sym}),
\]
that is, it can equivalently be described as the image of $\R_L(\GWspace^{\sym})$ under the reflection functor
\[
\Spa^{\Sig}_{\Pone}(\Sh^{\Nis}(S,\Grp_{\Einf})) \to \Spa_{\Pone}^{\Sig}(\Hinv(S,\Spa)) = \SH(S,\Spa).
\]
A similar statement holds for $\KGL_L$ and $\KW_L$ (though we note that in the case of Karoubi L-theory, the underlying infinite loop space $\Om^{\infty}\KL$ is quite different from the usual L-theory space $\mathcal{L}$, even when evaluated on idempotent complete Poincaré $\infty$-categories, see~\cite[\S 1]{9-authors-IV}). 
\end{remark}

\begin{notation}
Given a scheme $X$ and $L \in \Picspace(X)^{\B C}$, define
\[
\HGW^{\sym}(X,L) := |\KGW^{\sym}(X \times \Del^{\bullet},p_\bullet^*L)|,
\quad
\HL^{\sym}(X,L) := |\HL^{\sym}(X \times \Del^{\bullet},p_\bullet^*L)|,
\]
and
\[
\HK(X) :=  |\KK(X \times \Del^{\bullet})| ,
\]
where $\Del^n$ is the algebraic $n$-simplex and $p_n\colon X \times \Del^n \to X$ is the projection. When $L$ is the structure sheaf, we also abbreviate $\HGW^{\sym}(X,\cO_X)$ as $\HGW^{\sym}(X)$ and $\HL^{\sym}(X,\cO_X)$ as $\HL^{\sym}(X)$. In particular, $\HK$ is homotopy $\K$-theory, and similarly $\HGW^{\sym} = \Loc_{\Aone}\KGW^{\sym}$ and $\HL^{\sym} = \Loc_{\Aone}\KL^{\sym}$ are the $\Aone$-invariant replacements of the Nisnevich sheaves $\KGW^{\sym}$ and $\KL^{\sym}$, respectively. We refer to them as \defi{homotopy $\KGW$-theory} and \defi{homotopy $\KL$-theory}, respectively.
\end{notation}

From Remark~\ref{remark:underlying-sym} we immediately get:
\begin{theorem}
\label{theorem:values-of-KQ}%
Let $p\colon X \to S$ be a smooth $S$-scheme and $L \in \Picspace(S)^{\BC}$. Then for a finite set $I$ there are natural equivalences
\begin{alignat*}{2}
& \map_{\SH(S)}(\Sigma^\infty_{\Pone} X_+,\Sigma^{I}_{\Pone}\KGL_S) &\, & \simeq \HK(X), \\
& \map_{\SH(S)}(\Sigma^\infty_{\Pone} X_+,\Sigma^{I}_{\Pone}\KQ_L) &\, & \simeq  \HGW^{\sym}(X,p^*L[I]),  \quad \text{and} \\
& \map_{\SH(S)}(\Sigma^\infty_{\Pone} X_+,\Sigma^{I}_{\Pone}\KW_L) &\, & \simeq  \HL^{\sym}(X,p^*L[I]).
\end{alignat*}
In particular, $\KGL_S$ coincides with the usual algebraic $\K$-theory motivic spectrum 
in a manner that identifies $\bet$ with the usual motivic Bott map. 
\end{theorem}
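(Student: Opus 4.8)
\textbf{Proof plan for Theorem~\ref{theorem:values-of-KQ}.}
The bulk of the statement is a formal consequence of Remark~\ref{remark:underlying-sym}. First I would unwind the definitions: for $p\colon X\to S$ smooth, $\Sigma^\infty_{\Pone}X_+$ is by definition the image of the representable Nisnevich sheaf $[X]$ under $\Sigma^\infty_{\Pone}$, and so by the adjunction $(\widetilde{\T}_S,\R_S)$ together with the co-Yoneda identity (the left Kan extension of~\eqref{equation:functoriality-kappa} sends $[X]$ to $\jbloc(\Dperf(X),\Dual_X)$, modulo the cofibre business with the basepoint that is absent here since $X_+$ is already pointed), mapping out of $\Sigma^\infty_{\Pone}X_+$ into $\Sigma^I_{\Pone}\M^{\sym}_L(\F)$ computes the $I$-component of the motivic spectrum evaluated at $X$. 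Concretely, combining Remark~\ref{remark:underlying-sym} with the formula for $\Loc_{\Aone}$ on $\Pone$-spectra (Lemma~\ref{lemma:LAoneOmPone}) and the definition of $\HGW^{\sym}$, $\HL^{\sym}$, $\HK$ as the $\Aone$-invariant replacements, one reads off
\[
\map_{\SH(S)}(\Sigma^\infty_{\Pone}X_+,\Sigma^I_{\Pone}\M^{\sym}_L(\F)) \simeq |\F(\Dperf(X\times_S\Del^\bullet_S),\QF^{\sym}_{p^*_\bullet L[I]})| .
\]
Plugging in $\F = \KK$ (which is independent of the Poincaré structure, so the $L$ and $I$ decorations drop out, giving $\HK(X)$), $\F = \KGW$, and $\F = \KL$ then yields the three displayed equivalences. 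One subtlety to address: $X\times_S\Del^n_S$ need not be smooth over $S$ in general, but it is always flat over $X$, hence $\Dperf(X\times_S\Del^n_S)$ is still flat over $\Dperf(S)$ (Example~\ref{example:flat}), so the evaluation formula of Remark~\ref{remark:underlying-sym} applies. I should also note that since $X$ is smooth and affine-locally of finite type, these simplicial realizations are the standard Jouanolou-style $\Aone$-localizations, matching the notation introduced for $\HGW^{\sym}$ etc.

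For the final sentence — that $\KGL_S$ is the usual algebraic $\K$-theory motivic spectrum with $\beta$ the usual Bott map — the plan is to invoke the universal characterization of $\KGL$ in $\SH(S)$. The motivic spectrum of algebraic $\K$-theory over a qcqs base is uniquely characterized (e.g.\ by Cisinski's work, or by the fact that $\SH(S)$ is generated under colimits and shifts by smooth schemes together with the representability/descent properties) as the $\Pone$-spectrum whose value on smooth $X$ is homotopy $\K$-theory $\HK(X)$, with structure maps given by the projective bundle formula / Bott periodicity for $\K$-theory. Our $\KGL_S = \M^{\sym}_S(\KK)$ has precisely this underlying Nisnevich sheaf by the first displayed equivalence, is $\Aone$-invariant by construction (it lives in $\SH(S)$), and its $\Pone$-delooping structure is governed by Example~\ref{example:bott} applied to $\F=\KK$, which gives exactly the Bott map $\beta\colon\KGL_L\xrightarrow{\simeq}\Om_{\Pone}\KGL_L$ coming from the bounded Karoubi (projective line) sequence of Construction~\ref{construction:projective-line}. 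Identifying this internally-produced periodicity with the classical motivic Bott map then amounts to matching the projective line sequence here with the usual projective bundle formula for $\K$-theory, which is the content of Theorem~\ref{theorem:projective-bundle-formula}~(1) with $\F = \K$ (or its Karoubi variant) and $V = \cO^{\oplus 2}$, $n=1$, $r=1$.

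The main obstacle, and the step I would be most careful about, is the last identification: pinning down that the motivic spectrum $\M^{\sym}_S(\KK)$ we have built agrees \emph{as an object of $\SH(S)$}, not merely levelwise as a sheaf, with the algebraic $\K$-theory spectrum appearing in the literature, including the compatibility of Bott maps. The levelwise agreement of underlying sheaves is immediate from the displayed formula, but upgrading this to an equivalence of $\Pone$-spectra requires knowing that the structure maps agree, which is where Example~\ref{example:bott} and the projective bundle formula must be reconciled with the classical construction. I expect this to go through by appealing to the uniqueness of the $\Pone$-delooping once the underlying sheaf and the periodicity data are fixed — essentially, both spectra are obtained by $\Pone$-spectrifying the same $\Aone$-invariant Nisnevich sheaf $\HK$ along compatible periodicity equivalences, and such a spectrification is unique — but making this precise is the one place where I would need to be careful rather than merely formal. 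Everything else is bookkeeping with the adjunctions and formulas already established in \S\ref{section:motivic-realization}.
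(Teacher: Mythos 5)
Your proof is in substance the same as the paper's: Theorem~\ref{theorem:values-of-KQ} is read directly off the explicit evaluation formula of Remark~\ref{remark:underlying-sym}, plugging in $\F = \KK, \KGW, \KL$ and matching the right-hand sides with the defining simplicial realizations of $\HK$, $\HGW^{\sym}$, $\HL^{\sym}$. Your elaboration of the final sentence (uniqueness of the $\Pone$-delooping of $\HK$ given the Bott periodicity data from Example~\ref{example:bott} and the projective line sequence) makes precise a comparison that the paper leaves implicit in its ``From Remark~\ref{remark:underlying-sym} we immediately get''; that is consistent with the paper's intent and not a deviation in approach.

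One correction to your reasoning, though it does not affect the conclusion. You flag as a ``subtlety'' that $X\times_S\Del^n_S$ need not be smooth over $S$, and propose flatness as a workaround. The premise is false: $\Del^n_S$ is the closed subscheme of $\Aa^{n+1}_S$ cut out by $\sum x_i = 1$, hence isomorphic to $\Aa^n_S$, so $X\times_S\Del^n_S$ is smooth (and qcqs) over $S$ whenever $X$ is, and there is simply no issue. Moreover, if the issue were real, your workaround would not repair it: the simplicial formula for $\LAone$ (Lemma~\ref{lemma:LAoneOmPone} and the discussion preceding it, applied in Remark~\ref{remark:underlying}) is a statement about evaluating Nisnevich sheaves \emph{on $\Sm_S$} at objects \emph{of $\Sm_S$}; flatness of $\Dperf(X\times_S\Del^n_S)$ over $\Dperf(S)$ in the sense of Example~\ref{example:flat} is a condition on the linear $\infty$-category which is needed for the motivic realization machinery to be defined, but it does not substitute for membership of the scheme in the defining site $\Sm_S$. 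Fortunately the scheme is in $\Sm_S$, so everything you conclude is correct.
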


Combining Theorem~\ref{theorem:values-of-KQ} with Theorem~\ref{theorem:A1-invariance}, we furthermore obtain:

\begin{corollary}
\label{corollary:values-of-KQ-regular}%
In the situation of Theorem~\ref{theorem:values-of-KQ}, if $X$ is regular Noetherian of finite Krull dimension then the equivalences of that corollary become
\begin{alignat*}{2}
& \map_{\SH(S)}(\Sigma^\infty_{\Pone} X_+,\Sigma^I_{\Pone}\KGL_S) &\, & \simeq \K(X), \\
& \map_{\SH(S)}(\Sigma^\infty_{\Pone} X_+,\Sigma^I_{\Pone}\KQ_L) &\, & \simeq \GW^{\sym}(X,p^*L[I]), \quad \text{and} \\
& \map_{\SH(S)}(\Sigma^\infty_{\Pone} X_+,\Sigma^I_{\Pone}\KW_L) &\, & \simeq \L^{\sym}(X,p^*L[I]). 
\end{alignat*}
\end{corollary}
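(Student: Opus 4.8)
The plan is to deduce the statement from Theorem~\ref{theorem:values-of-KQ}, which already computes the three displayed mapping spectra as $\HK(X)$, $\HGW^{\sym}(X,p^*L[I])$ and $\HL^{\sym}(X,p^*L[I])$; it then suffices to identify, under the regularity hypothesis on $X$, these homotopy-invariant versions with $\K(X)$, $\GW^{\sym}(X,p^*L[I])$ and $\L^{\sym}(X,p^*L[I])$ respectively. The starting observation is that $X\times_S\Del^n_S = \Del^n_X \cong \Aa^n_X$, and since $X$ is regular Noetherian of finite Krull dimension so is each $\Del^n_X$; moreover these schemes together with the transition maps between them assemble into a cosimplicial object of $\Sm_X$ (which then maps to $\Sm_S$). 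All of what follows will be carried out over the base $X$, where $\Aone$-invariance is available, rather than over $S$.

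First I would record that on any regular Noetherian scheme $Y$ the comparison map $\K(\Dperf(Y))\to\KK(\Dperf(Y))$ is an equivalence (\cite{thomason-trobaugh}), hence by Karoubi cofinality~\eqref{equation:cofinality} so are $\GW(\Dperf(Y),\QF^{\sym}_M)\to\KGW(\Dperf(Y),\QF^{\sym}_M)$ and $\L(\Dperf(Y),\QF^{\sym}_M)\to\KL(\Dperf(Y),\QF^{\sym}_M)$, exactly as in Remark~\ref{remark:karoubi-GW}. Applying this levelwise over the cosimplicial scheme $\Del^\bullet_X$ gives natural equivalences $\HK(X)\simeq|\K(\Dperf(\Del^\bullet_X))|$, $\HGW^{\sym}(X,p^*L[I])\simeq|\GW(\Dperf(\Del^\bullet_X),\QF^{\sym}_{p_\bullet^*L[I]})|$, and similarly for $\HL^{\sym}$. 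The remaining point is that the three presheaves on $\Sm_X$ sending a smooth $X$-scheme $Y$ to $\K(\Dperf(Y))$, to $\GW(\Dperf(Y),\QF^{\sym}_{M_Y})$, and to $\L(\Dperf(Y),\QF^{\sym}_{M_Y})$ — where $M_Y$ denotes the pullback to $Y$ of $M:=p^*L[I]\in\Picspace(X)^{\BC}$ — are $\Aone$-invariant over the regular Noetherian base $X$ of finite Krull dimension. For $\K$ this is the classical homotopy invariance of $\K$-theory of regular Noetherian schemes; for $\GW^{\sym}$ it is Theorem~\ref{theorem:A1-invariance} applied with $X$ as base (the shifted coefficients $L[I]$ being reduced to the line-bundle case via the Bott--Genauer sequence together with the $\Aone$-invariance of $\K$); and for $\L^{\sym}$ it follows from the previous two via the fundamental fibre sequence $\K_{\hC}\to\GW\to\L$ of~\cite{9-authors-II}, since $\Ct$-homotopy orbits and cofibres preserve $\Aone$-invariant spectrum-valued presheaves. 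For such an $F$ one has $|F(\Del^\bullet_X)|\simeq(\Loc_{\Aone}F)(X)=F(X)$ by the explicit formula for $\Loc_{\Aone}$ over $X$; applying this to the three presheaves above (and invoking once more $\K=\KK$, $\GW=\KGW$, $\L=\KL$ on the regular scheme $X$) identifies the realizations with $\K(X)$, $\GW^{\sym}(X,p^*L[I])$ and $\L^{\sym}(X,p^*L[I])$, which combined with Theorem~\ref{theorem:values-of-KQ} finishes the proof.

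I do not expect a genuine obstacle here: the corollary is essentially bookkeeping on top of the $\Aone$-invariance theorem and the standard regularity reductions. The only points deserving care are the base-change identity $X\times_S\Del^n_S=\Del^n_X$ — which is precisely what lets one run the argument over $X$, where $\Aone$-invariance holds, rather than over the possibly singular $S$, where it fails in general — and the passage from line-bundle coefficients to the shifted coefficients $L[I]$ in the input $\Aone$-invariance statement.
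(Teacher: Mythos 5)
Your proof is correct and is essentially the argument the paper has in mind; the paper itself dispenses with a written proof, simply saying the corollary follows by combining Theorem~\ref{theorem:values-of-KQ} with Theorem~\ref{theorem:A1-invariance}, and your write-up fills in precisely the steps that sentence leaves implicit: the observation that $X\times_S\Del^\bullet_S=\Del^\bullet_X$ so that one can work over the regular base $X$, the identification $\K=\KK$, $\GW=\KGW$, $\L=\KL$ on regular Noetherian schemes via Karoubi cofinality (Remark~\ref{remark:karoubi-GW}), the reduction of shifted coefficients $L[I]$ to the line-bundle case of Theorem~\ref{theorem:A1-invariance} by iterating the Bott--Genauer sequence against $\Aone$-invariance of $\K$, the derivation of $\Aone$-invariance for $\L^{\sym}$ from the fundamental fibre sequence, and the collapse $|F(\Del^\bullet_X)|\simeq F(X)$ for $\Aone$-invariant $F$. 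One small thing worth flagging: since $L$ is an arbitrary object of $\Picspace(S)^{\BC}$ rather than a line bundle, strictly speaking one should also decompose along the finite clopen decomposition of Remark~\ref{remark:shift-of-line-bundle} over which $p^*L[I]$ is a genuine shift of a line bundle; this is harmless as all the functors involved are Zariski sheaves, but your phrase ``the shifted coefficients $L[I]$'' slightly elides it.
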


Moving on, let us exploit a bit more the properties of the motivic realization functor $\M^{\sym}_S$. In particular, applying it to the lax symmetric monoidal transformation $\KGW \to \KL$ we obtain a map of $\Einf$-motivic spectra
\[
w_S\colon \KQ_S \to \KW_S.
\]
Similarly, applying $\M^{\sym}_L$ for $L \in \Picspace(S)^{\B C}$ we obtain a $\KQ_S$-module map
\[
w_L\colon \KQ_L \to \KW_L.
\]
Concerning the relation between hermitian and algebraic K-theory, let us write
\[
\KGL_L \xrightarrow{h_{L}} \KQ_L \xrightarrow{f_L} \KGL_L
\]
for the maps of motivic spectra obtained by applying $\M^{\sym}_L$ to the natural transformations
\[
\KK \stackrel{\hyp}{\to} \KGW \stackrel{\fgt}{\to} \KK,
\]
and set $h_S := h_{\cO_S}$ and $f_S := f_{\cO_S}$.
Since $\fgt$ is a lax symmetric monoidal transformation we have that $f_S$ is a map of motivic $\Einf$-ring spectra. Similarly, $f_L$ and $h_L$ are $\KQ_S$-module maps. In addition, since the functor $\KK$ carries a $\Ct$-action such that the natural transformations $\hyp$ and $\fgt$ are $\Ct$-equivariant (with respect to the trivial action on $\KGW$), we have that the same structure is inherited at the level of motivic spectra, that is, $\KGL_L$ carries a $\Ct$-structure such that $f_L$ and $h_L$ are $\Ct$-equivariant (see Remark~\ref{remark:action-on-k}). In particular, $h_L$ and $f_L$ induce maps
\[
\left(\KGL_L\right)_{\hC} \to \KQ_L \to \left(\KGL_L\right)^{\hC}.
\]
 
\begin{corollary}[The Tate sequence]
The map $w_L$ and the map induced by $h_L$ fit into an exact sequence
\[
\left(\KGL_L\right)_{\hC} \to \KQ_L \xrightarrow{w_L} \KW_L .
\]
\end{corollary}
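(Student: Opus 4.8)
\emph{Proof proposal.} The plan is to produce the Tate sequence by applying the motivic realization functor $\M^{\sym}_L$ to the fundamental fibre sequence of Karoubi-localising functors on $\Catp$,
\[
\KK_{\hC} \Rightarrow \KGW \Rightarrow \KL ,
\]
recalled in \S\ref{subsection:KGW-recall} (see~\cite[\S 1]{9-authors-IV}), in which the first map is the $\Ct$-homotopy orbits of the hyperbolic transformation $\hyp\colon \KK \Rightarrow \KGW$, taken with respect to the pointwise $\Ct$-action on $\KK$ induced by the dualities and the trivial action on $\KGW$. Since $\Spa$ is stable, the full subcategory $\Fun^{\kloc}(\Catp,\Spa)$ is closed under colimits inside all functors (as in the proof of Lemma~\ref{lemma:colimits}), so this $\Ct$-orbits sequence is already computed pointwise and genuinely lives in $\Fun^{\kloc}(\Catp,\Spa)$; in particular $\KK_{\hC}$ is the colimit over $\BC$ of the diagram with constant value $\KK$ equipped with the above action.

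First I would record that $\M^{\sym}_L$ is exact. It is the composite of $\Loc_{\Aone}$ with $\R_S$ and with the precomposition functors along~\eqref{equation:lax-composite} and along $(-)\otimes(\Dperf(S),\Dual_L)$; by Lemma~\ref{lemma:colimits} (using that $\Spa$ is stable) each of these preserves all colimits, so $\M^{\sym}_L$ does too, and being a colimit-preserving functor between stable presentable $\infty$-categories it is exact, hence carries fibre sequences to fibre sequences. Applying it to the fundamental fibre sequence yields a fibre sequence
\[
\M^{\sym}_L(\KK_{\hC}) \to \M^{\sym}_L(\KGW) = \KQ_L \xrightarrow{w_L} \M^{\sym}_L(\KL) = \KW_L ,
\]
where the second arrow is $w_L$ by its very definition. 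To identify the first term, I would use that $\M^{\sym}_L$ preserves colimits and that $(-)_{\hC}$ is a $\BC$-indexed colimit, giving
\[
\M^{\sym}_L(\KK_{\hC}) = \M^{\sym}_L\big(\colim_{\BC}\KK\big) \simeq \colim_{\BC}\M^{\sym}_L(\KK) = (\KGL_L)_{\hC} ,
\]
the $\Ct$-action on $\KGL_L = \M^{\sym}_L(\KK)$ appearing on the right being precisely the one of Remark~\ref{remark:action-on-k}, as it is obtained from the pointwise action on $\KK$ by applying the functor $\M^{\sym}_L$. Applying the same colimit-preservation to the $\BC$-diagram whose value is the map $\hyp\colon\KK\Rightarrow\KGW$ identifies the first map of our sequence with the canonical factorization $(\KGL_L)_{\hC}\to\KQ_L$ of $h_L=\M^{\sym}_L(\hyp)$, i.e.\ with the map induced by $h_L$ in the statement. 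Finally, since $\SH(S)$ is stable, the resulting fibre sequence is the same as an exact sequence.

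The only genuinely delicate point is the $\Ct$-equivariance bookkeeping: one must check that the $\Ct$-action used to form $(\KGL_L)_{\hC}$ in the statement is the very action transported from the pointwise $\Ct$-action on $\KK$ under $\M^{\sym}_L$, and that $\M^{\sym}_L$ of the factorization of $\hyp$ through $\KK_{\hC}$ is the factorization of $h_L$ through $(\KGL_L)_{\hC}$. Both assertions follow formally from the fact that $\M^{\sym}_L$ is a functor sending the $\BC$-diagram encoding the action on $\KK$ to the one encoding the action on $\KGL_L$, together with the colimit-preservation established above; Remark~\ref{remark:action-on-k} is exactly the statement that these actions coincide. Granting this, the remainder is routine manipulation of fibre sequences and colimits, and I expect no further obstacles.
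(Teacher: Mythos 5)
Your proposal is correct and follows exactly the paper's own argument: apply $\M^{\sym}_L$ to the Karoubi-localising fundamental fibre sequence $\KK_{\hC}\Rightarrow\KGW\Rightarrow\KL$ and invoke Lemma~\ref{lemma:colimits} to commute $\M^{\sym}_L$ past the $\BC$-indexed colimit computing $(-)_{\hC}$, identifying the fibre term with $(\KGL_L)_{\hC}$. The extra care you take with the $\Ct$-equivariance bookkeeping (via Remark~\ref{remark:action-on-k}) is left implicit in the paper but is exactly the right thing to check, and your reasoning for it is sound.
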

\begin{proof}
Apply $\M^{\sym}_L$ to the Karoubi-localising fundamental exact sequence $\KK_{\hC} \Rightarrow \KGW \Rightarrow \KL$  (see \S\ref{subsection:KGW-recall}) and use the fact that $\M^{\sym}_L$ preserves colimits when the target is stable by Lemma~\ref{lemma:colimits}.
\end{proof}

\begin{corollary}[Wood sequence]
The maps $f_L\colon \KQ_L \to \KGL_L$ and $\KGL_L \simeq \Sig^{\infty}_{\Pone}\KGL_L \xrightarrow{\Sig^{\infty}_{\Pone} h_L} \Sig^{\infty}_{\Pone}\KQ_L$ fit into a fibre sequence
\[
\KQ_L \to \KGL_L \to \Sig^{\infty}_{\Pone}\KQ_L .
\]
\end{corollary}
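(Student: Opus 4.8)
The plan is to deduce the Wood sequence by applying the motivic realization functor $\M^{\sym}_L$ to the Karoubi-localising Bott--Genauer sequence on $\Catp$ and then reading off the terms via Bott periodicity. First I would recall from~\cite[\S 4.3]{9-authors-II}, in the Karoubi-localising form of~\cite[\S 1]{9-authors-IV}, the Bott--Genauer fibre sequence of functors $\Catp \to \Spa$
\[
\KGW \xrightarrow{\fgt} \KK \xrightarrow{\partial} \KGW\qshift{1} ,
\]
whose first map is the forgetful transformation $\fgt$; crucially, under the canonical identification $\KK = \KK\qshift{1}$ (the functor $\KK$ is independent of the Poincaré structure) the boundary transformation $\partial$ is identified with the $1$-shifted hyperbolic transformation $\hyp\qshift{1}$, obtained from $\hyp\colon \KK \Rightarrow \KGW$ by precomposing with the shift $\QF \mapsto \QF\qshift{1}$. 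Since the target $\Spa$ is stable, this fibre sequence is simultaneously a cofibre sequence.

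Next I would apply $\M^{\sym}_L$. By Lemma~\ref{lemma:colimits} the functor $\R_S$ --- and hence $\M_S$, its $L$-twisted variant, and its $\sym$-precomposite $\M^{\sym}_L$ --- preserves colimits when the target is stable; being colimit-preserving between stable $\infty$-categories, $\M^{\sym}_L$ is exact, so it sends the above cofibre sequence to a fibre sequence in $\SH(S)$
\[
\M^{\sym}_L(\KGW) \xrightarrow{\M^{\sym}_L(\fgt)} \M^{\sym}_L(\KK) \xrightarrow{\M^{\sym}_L(\partial)} \M^{\sym}_L(\KGW\qshift{1}).
\]
Then it remains to identify everything. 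By definition $\M^{\sym}_L(\KGW) = \KQ_L$, $\M^{\sym}_L(\KK) = \KGL_L$, $\M^{\sym}_L(\fgt) = f_L$ and $\M^{\sym}_L(\hyp) = h_L$. For the third term I would invoke Bott periodicity of motivic realization: Example~\ref{example:bott}, together with the identity $(\QF^{\sym}_L)\qshift{I} = \QF^{\sym}_{L[I]}$ recorded in the notation preceding it, provides a natural equivalence $\Sig_{\Pone}\M^{\sym}_L(-) \simeq \M^{\sym}_L((-)\qshift{1})$ of functors on $\Fun^{\kloc}(\Catp,\Spa)$, whence $\M^{\sym}_L(\KGW\qshift{1}) \simeq \Sig_{\Pone}\KQ_L$; applying the same equivalence to $\KK$ (with $\KK\qshift{1} = \KK$) gives the Bott equivalence $\KGL_L \simeq \Sig_{\Pone}\KGL_L$. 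Since this equivalence is natural in the functor variable, $\M^{\sym}_L$ carries the shifted transformation $\partial = \hyp\qshift{1}$ to $\Sig_{\Pone}h_L$, i.e.\ to the composite $\KGL_L \xrightarrow{\simeq} \Sig_{\Pone}\KGL_L \xrightarrow{\Sig_{\Pone}h_L} \Sig_{\Pone}\KQ_L$ appearing in the statement. This yields the asserted fibre sequence; applying $\Om_{\Pone}$ and using $\Om_{\Pone}\KGL_L \simeq \KGL_L$ also recovers the form $\Om_{\Pone}\KQ_L \to \KGL_L \to \KQ_L$ from \S\ref{subsection:construction-KQ}.

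The hard part is Step 1: everything downstream is formal bookkeeping with the exactness of $\M^{\sym}_L$ and the periodicity equivalence already exploited in \S\ref{subsection:construction-KQ}, but pinning down the second map of the Wood sequence as $\Sig_{\Pone}h_L$ genuinely requires the identification of the Bott--Genauer boundary map with the shifted hyperbolic transformation. This is, however, purely a statement about invariants of Poincaré $\infty$-categories, supplied by~\cite{9-authors-II,9-authors-IV}, so no further scheme-theoretic input is needed; if one only wanted a fibre sequence with first map $f_L$ and an unspecified second map, the periodicity identification of the third term alone would already suffice.
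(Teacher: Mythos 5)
Your proposal follows the paper's own proof almost verbatim: apply the motivic realization functor $\M^{\sym}_L$ to the Karoubi-localising Bott--Genauer sequence $\KGW \Rightarrow \KK \Rightarrow \KGW((-)\qshift{1})$, use exactness of $\M^{\sym}_L$ (Lemma~\ref{lemma:colimits}), and invoke Bott periodicity (Example~\ref{example:bott}) to identify $\M^{\sym}_L(\KGW\qshift{1})$ with $\Sig_{\Pone}\KQ_L$. The one place where you go beyond the paper's terse ``apply and identify'' is the explicit identification of the Bott--Genauer boundary map with the shifted hyperbolic transformation $\hyp\qshift{1}$; the paper implicitly relies on this to justify the form of the second map in the statement, and your proposal is right to flag it as the genuine content being imported from~\cite{9-authors-II,9-authors-IV}. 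This is the same argument, just written out more carefully.
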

\begin{proof}
Apply $\M^{\sym}_L$ to the Karoubi-localising Bott-Genauer sequence $\KGW \Rightarrow \KK \Rightarrow \KGW((-)\qshift{1})$
(see \S\ref{subsection:KGW-recall}) and use Bott periodicity (Example~\ref{example:bott}) to identify the fibre.
\end{proof}

\begin{remark}
It follows from the projective bundle formula of Theorem~\ref{theorem:projective-bundle-formula} and the Bott periodicity of Example~\ref{example:bott} that the motivic spectra $\Om_{\PP^2}\KQ_L = \fib[\KQ_L^{\PP^2} \to \KQ_L]$ and $\KGL_L$ coincide as $\Pone$-spectra in Nisnevich sheaves, and hence also as motivic spectra. Unwinding the definitions in Theorem~\ref{theorem:projective-bundle-formula}, we see that under this equivalence, the map
\[
\Om_{\PP^2}\KQ_L \to \Om_{\Pone}\KQ_L
\]
induced by a linear hyperplane inclusion $\Pone \hrar \PP^2$ coincides with the composite
\[
\KGL_L \stackrel[\simeq]{\beta}{\to} \Om_{\Pone}\KGL_L \xrightarrow{\Om_{\Pone}h_L} \Om_{\Pone}\KQ_L .
\]
It is known that the cofibre of $\Sig^{\infty}_{\Pone}\Pone \hrar \Sig^{\infty}_{\Pone}\PP^2$ in $\SH(\ZZ)$ is equivalent to $\Sig^{\infty}_{\Pone}(\Pone \wedge \Pone)$, and the boundary map $\Om\Sig^{\infty}_{\Pone}(\Pone \wedge \Pone) \to \Sig^{\infty}_{\Pone}\Pone$ of this cofibre sequence determines, after $\Pone$ delooping, a map
\[
\eta\colon \Sig^{\infty}_{\Pone}\Gm \to \SS ,
\]
often denoted by $\eta$. We hence conclude that the boundary map
\[
\KQ_L \to \Sig\Om_{\Pone}\KQ_L = \Om_{\Gm}\KQ_L
\]
of the Wood fibre sequence is induced by $\eta$.
\end{remark}

\subsection{Pullback invariance of hermitian K-theory}
\label{subsection:pullback-invariance}%

\begin{construction}
\label{construction:base-change}%
Given a map $f\colon T \to S$ of qcqs schemes, the commutative square of symmetric monoidal functors
\[
\begin{tikzcd}[column sep=40pt]
(\Sm_S\op)^{\otimes} \ar[d]\ar[rrr, "{T \times_S (-)}"] &&& (\Sm_T\op)\op \ar[d]\\
\Mod_S(\Catpsi)^{\otimes} \ar[rrr, "{(\Dperf(T),\Dual_T) \otimes_{(\Dperf(S),\Dual_S)} (-)}"]  &&& \Mod_T(\Catpsi)^{\otimes}.
\end{tikzcd}
\]
constructed at the end of \S\ref{subsection:more-derived} determines, through the constructions of \S\ref{subsection:nisnevich-invariants} and \S\ref{subsection:bott-periodicity}, a commutative square of symmetric monoidal functors
\[
\begin{tikzcd}[column sep=40pt]
\Spa_{\Pone}^{\Sig}(\Sh^{\Nis}(S,\Spa))^{\otimes} \ar[d, "{\wtl{\T}_S^{\otimes}}"']\ar[r, "f^*"] & \Spa_{\Pone}^{\Sig}(\Sh^{\Nis}(T,\Spa))^{\otimes} \ar[d, "{\wtl{\T}_T^{\otimes}}"]\\
\Fun^{\bloc}(\Mod_S(\Catpsi),\Spa)^{\otimes} \ar[r]  & \Fun^{\bloc}(\Mod_T(\Catpsi),\Spa)^{\otimes}
\end{tikzcd}
\]
where the bottom horizontal map is given by left Kan extension along $(\Dperf(T),\Dual_T) \otimes_{(\Dperf(S),\Dual_S)} (-)$ followed by the localisation functor
\[
\Lbloc\colon \Fun(\Mod_S^{\flat,\kappa}(\Catpsi), \E) \to \Fun^{\bloc}(\Mod_S^{\flat,\kappa}(\Catpsi), \E).
\]
Passing to right adjoints we obtain a commutative square of lax symmetric monoidal functors 
\begin{equation}
\label{equation:realization-base-change}%
\begin{tikzcd}[column sep=40pt]
\Spa_{\Pone}^{\Sig}(\Sh^{\Nis}(S,\Spa))^{\otimes} & \Spa_{\Pone}^{\Sig}(\Sh^{\Nis}(T,\Spa))^{\otimes} \ar[l, "f_*"']  \\
\Fun^{\bloc}(\Mod_S^{\flat,\kappa}(\Catpsi),\Spa)^{\otimes} \ar[u, "{\R_S}"]  & \Fun^{\bloc}(\Mod_T^{\flat,\kappa}(\Catpsi),\Spa)^{\otimes} \ar[u,"\R_T"']\ar[l]
\end{tikzcd}
\end{equation}
where the bottom horizontal arrow is now given by pre-composition with $(\Dperf(T),\Dual_T) \otimes_{(\Dperf(S),\Dual_S)} (-)$. 
Finally, note that any bounded localising functor 
$\G \colon \Mod_S^{\flat,\kappa}(\Catpsi) \to \Spa$
can also view as a functor on $\Mod_T^{\flat,\kappa}(\Catpsi)$ via restriction along the forgetful functor from $T$-modules to $S$-modules.
The commutativity of this square then determines a natural equivalence
\[
\R_S(\G;(\Dperf(T),\Dual_T)) \simeq f_*\R_T(\G) .
\]
\end{construction}

Now given a bounded localising functor 
$\G \colon \Mod_S^{\flat,\kappa}(\Catpsi) \to \Spa$
the duality preserving $S$-linear functor $f^*\colon (\Dperf(S),\Dual_S) \to (\Dperf(T),\Dual_T)$ together with the equivalence of Construction~\ref{construction:base-change} determine a map
\[
\R_S(\G) \to \R_S(\G;(\Dperf(T),\Dual_T)) \simeq f_*\R_T(\G),
\]
and consequently an adjoint map
\[
f^*\R_S(\G) \to \R_T(\G) .
\]
When $\G = \F|_{\Mod_S^{\flat,\kappa}(\Catpsi)}$ for some Karoubi localising functor $\F\colon \Catp \to \Spa$ we may rewrite this natural map as
\[
f^*\R^{\sym}_S(\F) \to \R^{\sym}_T(\F) .
\]
Applying the $\Aone$-localisation functor $\Loc_{\Aone}$ and its commutativity with pullbacks we finally get a natural map
\[
f^*\M^{\sym}_S(\F) \to \M^{\sym}_T(\F) .
\]
relating the motivic realization of $\F$ over $T$ with the pullback of the motivic realization of $\F$ over $S$.

Taking $\F=\KGW$ we write
\[
\eta_f\colon f^*\KQ_S \to \KQ_T
\]
for the resulting map. Our goal in this section is the prove the following:
\begin{proposition}
\label{proposition:KQ-stable-under-basechange}%
Let $f\colon T\to S$ be a map of quasi-compact quasi-separated schemes. Then the map
\[
f^*\KQ_S\to \KQ_T
\]
is an equivalence of motivic spectra.
\end{proposition}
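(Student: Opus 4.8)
The plan is to reduce the statement to a question about $\K$-theory and $\L$-theory, and then invoke known base change results for those, together with the free delooping lemma. First I would observe that the natural transformation $f^*\M^{\sym}_S(\F) \to \M^{\sym}_T(\F)$ is compatible with colimits in $\F$ (since $\M^{\sym}_S$ and $f^*$ both preserve colimits, using Lemma~\ref{lemma:colimits} and the fact that $f^*$ is a left adjoint), and that both $\KGW$ and its approximations sit in exact sequences built from $\KK$ and $\KL$ via the fundamental fibre sequence $\KK_{\hC} \to \KGW \to \KL$ and the Tate square. So it suffices to prove the analogous base change statements for $\KGL_S$ and $\KW_S$, i.e.\ that $f^*\KGL_S \to \KGL_T$ and $f^*\KW_S \to \KW_T$ are equivalences.

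For $\KGL$, by Theorem~\ref{theorem:values-of-KQ} the motivic spectrum $\M^{\sym}_S(\KK)$ is the usual algebraic $\K$-theory motivic spectrum, and base change invariance of the latter is classical (it holds for $\KGL$ over any qcqs base; this is essentially Thomason--Trobaugh plus the construction of $\KGL$ as an absolute spectrum). Alternatively one can argue directly: the map $f^*\R^{\sym}_S(\KK) \to \R^{\sym}_T(\KK)$ of $\Pone$-prespectra is, level by $\Pone$-level and before $\Aone$-localisation, the comparison map between $X \mapsto \HK(T \times_S X)$ and $X \mapsto \HK(X)$ as Nisnevich sheaves on $\Sm_T$, which is an equivalence since both compute homotopy $\K$-theory of the same schemes. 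For $\KW$, the cleanest route is to use the free delooping lemma (Lemma~\ref{lemma:unique-delooping}): it reduces the question to the underlying $\Grp_{\Einf}$-valued functor, and then to pointwise statements about the $\Aone$-localised sheaf $\HL^{\sym}$, which again is insensitive to whether one works over $S$ or $T$ because it only sees the schemes $T \times_S X$ for $X \in \Sm_S$.

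A more uniform and perhaps more robust approach, which I would actually prefer to carry out, is the following. By Construction~\ref{construction:base-change} we have a natural equivalence $\R^{\sym}_S(\F;(\Dperf(T),\Dual_T)) \simeq f_*\R^{\sym}_T(\F)$, so that $\eta_f$ is adjoint to the map $\R^{\sym}_S(\KGW) \to \R^{\sym}_S(\KGW;(\Dperf(T),\Dual_T))$ induced by the $S$-linear duality preserving functor $f^*\colon (\Dperf(S),\Dual_S) \to (\Dperf(T),\Dual_T)$. Using the description of the $\Pone$-levels of pre-motivic realization from Remark~\ref{remark:underlying-sym}, the plan is to check that after $\Aone$-localisation this map becomes an equivalence by evaluating on affine smooth $S$-schemes and using the main result of~\cite{9-authors-III} and the Karoubi cofinality squares~\eqref{equation:cofinality} to reduce to the corresponding $\K$-theory statement, where base change holds. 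The key point to verify is that tensoring $(\Dperf(X),\Dual_X)$ with $(\Dperf(T),\Dual_T)$ over $(\Dperf(S),\Dual_S)$ yields $(\Dperf(T \times_S X),\Dual_{T \times_S X})$ whenever $X \to S$ is smooth; this is exactly Point~\ref{item:base-change} at the end of~\S\ref{subsection:more-derived}, which identifies the base change of derived categories with the honest fibre product since smooth maps are flat.

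The main obstacle, I expect, is not any single step but the bookkeeping needed to propagate the equivalence through the $\Aone$-localisation: the comparison map is an equivalence of Nisnevich \emph{presheaves} $\Pone$-levelwise only after identifying the two sides with $\HK$, $\HGW^{\sym}$, $\HL^{\sym}$ of the relevant schemes, and one must be careful that the spectrification/sheafification steps in the definition of $\R^{\sym}$ and $\M^{\sym}$ do not interfere — this is where the semi-stability observations and Lemma~\ref{lemma:LAoneOmPone} are needed. The cleanest formulation is probably to prove the statement first for $\R^{\sym}$ restricted to the subcategory of \emph{affine} smooth $S$-schemes, where everything is controlled by affine Nisnevich excision and the results of~\cite{9-authors-III}, and then extend to all smooth $S$-schemes using Nisnevich descent (Corollary~\ref{corollary:nisnevich-descent}) and the fact that $f^*$ preserves the relevant colimits. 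I would also remark that once $\KGL$ and $\KW$ base change is established, the statement for $\KQ$ follows formally from the Tate fibre sequence $(\KGL_S)_{\hC} \to \KQ_S \to \KW_S$ and its naturality in the base, since $f^*$ and $(-)_{\hC}$ both commute with the relevant homotopy colimits.
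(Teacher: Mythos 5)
There is a genuine gap, and it is centered on the same issue in both variants of your argument: you treat the comparison map $f^*\F_S \to \F_T$ (for $\F = \HK, \HL^{\sym}, \HGW^{\sym}$) as if it were ``the same functor evaluated on the same schemes'', when $f^*$ is a left Kan extension along $\Sm_S \to \Sm_T$, $X \mapsto T\times_S X$, followed by sheafification — and not every smooth $T$-scheme arises as such a fibre product. This is precisely why the statement is non-trivial. Your claim that $\KW$ base change ``is insensitive to whether one works over $S$ or $T$ because it only sees the schemes $T\times_S X$'' is false as stated; the same objection applies to the parenthetical ``both compute homotopy $\K$-theory of the same schemes'' in the $\KGL$ case. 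It is true that $\KGL$ base change is a known theorem and that, once base change for both $\KGL$ and $\KW$ is in hand, $\KQ$ base change follows from the Tate sequence — but you have not supplied a correct proof of $\KW$ base change, and the logical dependency in the paper actually runs the other way: $\KW$ base change is \emph{deduced} from the $\KQ$ case (plus $\KGL$), not used to prove it.

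Your ``preferred'' second route has the right framing (adjoint to the map induced by $f^*\colon (\Dperf(S),\Dual_S)\to(\Dperf(T),\Dual_T)$; use of Remark~\ref{remark:either-way} and Point~\ref{item:base-change}), but it stops exactly where the real content begins. The Karoubi cofinality squares do not by themselves reduce $\KGW$ base change to $\K$-theory base change, since they express $\GW$ as a pullback of $\K(\C)^{\hC}$ and $\KGW$ along $\KK(\C)^{\hC}$, and neither leg is a priori base-change invariant. What the paper actually does is different and is the key idea you are missing: it uses genuine Bott periodicity (Proposition~\ref{proposition:bott-periodicity}) to rewrite the sequential stabilization of $\GWspace^{\sym,[i]}$ as a filtered colimit of the truncated invariants $\GWspace^{\ge -2m,[j]}$, thereby reducing everything to a single comparison for the genuine symmetric sheaf $\GWspace^{\geq 0}$. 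At that point the essential geometric input enters: by Hebestreit--Steimle, $\GWspace^{\geq 0}$ is the group completion of the $\Mon_{\Einf}$-sheaf $\Vect^{\sym}$ of vector bundles with perfect symmetric forms, and $\Vect^{\sym}$ is represented by a smooth algebraic stack with affine diagonal, hence (by \cite{deloop3}) is left Kan extended from smooth affine schemes. That representability result is what makes $f^*\Vect^{\sym}_S \to \Vect^{\sym}_T$ a Zariski-local equivalence, and none of your proposed routes contains a substitute for it.
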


Though the motivic spectrum $\KQ$ is built out of symmetric $\GW$-theory, the proof will require passing through genuine $\GW$-theory. For this, recall from Notation~\ref{notation:functor-to-sheaf} that for a functor $\F\colon \Catp \to \E$, an invertible perfect complex with $\Ct$-action $L \in \Picspace(S)^{\BC}$ and integers $n,m \in \ZZ$, we define $\F^{\ge m,[n]}_L\colon \Sm_S\op \to \E$ by the formula
\[
\F^{\ge m,[n]}_L(X) = \F\left(\Dperf(X),(\QF^{\geq m}_L)\qshift{n}\right).
\]
For $m\in\{-\infty,+\infty\}$ we replace the superscripts $\geq -\infty$ and $\geq +\infty$ with $\sym$ and $\qdr$, and for $L=\cO_S$ we use the subscript $S$ instead of $\cO_S$. In addition, when $n=0$ we often drop it from the notation.

Using the genuine refinement of the projective line formula (Proposition~\ref{proposition:beilinson}), we now observe that Bott periodicity
(Example~\ref{example:bott})
can be extended to the case of $\F^{\geq m,[n]}_L$, if one accepts to shift $m$:

\begin{proposition}[Genuine Bott periodicity]
\label{proposition:bott-periodicity}%
If $\F$ is additive, for any $k\in \NN$ and $m\in \ZZ$, there is a canonical equivalence 
\[
\Om_{\Pone}^{2k+i}\F^{\ge m}_L\simeq \F^{\ge m-k,[-i]}_{(-1)^k L} \ .
\]
\end{proposition}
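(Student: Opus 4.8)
The plan is to deduce the statement by iterating the genuine projective line formula of Proposition~\ref{proposition:beilinson} and then reindexing with Remark~\ref{remark:assume-r-0} and the Koszul--sign bookkeeping of Remark~\ref{remark:square-line-bundles}. First I would establish the single step. Let $M$ be any invertible perfect complex with $\Ct$-action on $S$ (we will apply this with $M=L,L[-1],L[-2],\dots$). Since $\F$ is additive, applying it to the split Poincaré--Verdier sequence
\[
(\Dperf(X),\QF^{\geq m}_{M})\xrightarrow{p^*} (\Dperf(\Pone\times X),\QF^{\geq m}_{p^*M})\xrightarrow{p_*(-\otimes\cO(-1))} (\Dperf(X),\QF^{\geq m-1}_{M[-1]})
\]
of Construction~\ref{construction:projective-line}, which is natural in $X\in\Sm_S$, yields a split fibre sequence of presheaves on $\Sm_S$. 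The leftmost map is pullback along the projection $p\colon\Pone\times X\to X$, and the section $s$ of $p$ at $\infty$ gives a retraction: being a morphism of schemes, $s^*$ is symmetric monoidal and right $\tstruc$-exact, hence a Poincaré functor $(\Dperf(\Pone\times X),\QF^{\geq m}_{p^*M})\to(\Dperf(X),\QF^{\geq m}_{M})$ for every $m$, with $s^*p^*\simeq\id$. In a stable $\infty$-category a retraction $\rho$ of the inclusion $\iota$ in a fibre sequence $A\xrightarrow{\iota}B\to C$ satisfies $\fib(\rho)\simeq\cof(\iota)\simeq C$; applying this with $\iota=\F(p^*)$ and $\rho=\F(s^*)$, and noting that $\fib(\F(s^*))$ is by definition $\Om_{\Pone}\F^{\geq m}_{M}$, produces a natural equivalence $\Om_{\Pone}\F^{\geq m}_{M}\simeq\F^{\geq m-1}_{M[-1]}$.

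Next I would iterate. Applying the single step $2k+i$ times, which is legitimate since each $M[-j]$ is again an invertible perfect complex with $\Ct$-action, yields $\Om_{\Pone}^{2k+i}\F^{\geq m}_{L}\simeq\F^{\geq m-2k-i}_{L[-2k-i]}$. It then remains to rewrite the right-hand side in the asserted form $\F^{\geq m-k,[-i]}_{(-1)^kL}$: I would use the relation $\QF^{\geq j-1}_{N[-1]}\simeq(\QF^{\geq j}_N)\qshift{-1}$ of Remark~\ref{remark:assume-r-0} to transfer suspensions between the genuine threshold, the shift decoration, and the twisting object, and the Koszul sign attached to a double desuspension (the phenomenon behind Remark~\ref{remark:square-line-bundles}) to identify the twist $L[-2k]$ with the $k$-fold sign twist $(-1)^kL$ in $\Picspace(S)^{\BC}$; in particular $(-1)^kL\simeq L$ when $k$ is even. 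For $m=-\infty$ the statement is just Example~\ref{example:bott} applied to $\F^{\sym}$, and for $m=+\infty$ the same iteration applies verbatim using the quadratic case of Proposition~\ref{proposition:beilinson}, with the twist identified through Ranicki periodicity.

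The main obstacle is the bookkeeping in this last step: matching the precise normalisation of the triple (genuine threshold, shift decoration, twisting object) produced by $2k+i$ iterations to the one in the statement, and in particular checking that the modification of the $\Ct$-action produced by the double desuspension of the twisting line is exactly the sign twist $-L$ and nothing more. A secondary point requiring care is that the single-step equivalence must be natural in both $X$ and $M$, so that it is compatible with the later passage to $\Pone$-spectra; this comes down to verifying that $s^*$ is a Poincaré functor compatibly with the splitting of the sequence in Construction~\ref{construction:projective-line}, which is built into its construction but should be spelled out.
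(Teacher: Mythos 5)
Your overall strategy — iterate the genuine projective line formula (Proposition~\ref{proposition:beilinson}) and then reindex — is exactly the paper's. The single-step equivalence $\Om_{\Pone}\F^{\ge m}_L\simeq\F^{\ge m-1}_{L[-1]}$ is correctly obtained (the retraction argument via $\infty^*$ is a slightly more explicit version of the same additivity observation), and iterating $2k+i$ times gives $\F^{\ge m-2k-i}_{L[-2k-i]}$, as in the paper. The problem is in the final reindexing, where your proposal has a genuine gap.

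After clearing the $i$-fold shift via Remark~\ref{remark:assume-r-0}, what remains is to produce an equivalence $\F^{\ge m-2k,[-i]}_{L[-2k]}\simeq\F^{\ge m-k,[-i]}_{(-1)^kL}$, i.e.\ an equivalence of Poincar\'e $\infty$-categories
\[
\big(\Dperf(X),\QF^{\ge m-2k}_{L[-2k]}\big)\;\simeq\;\big(\Dperf(X),\QF^{\ge m-k}_{(-1)^kL}\big)\ .
\]
You propose to ``identify the twist $L[-2k]$ with the $k$-fold sign twist $(-1)^kL$ in $\Picspace(S)^{\BC}$,'' but this is not a meaningful identification: $L[-2k]$ and $(-1)^kL$ have different underlying objects (they live in different degrees of the Picard groupoid), so no isomorphism in $\Picspace(S)^{\BC}$ can relate them. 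What is actually needed is a nontrivial Poincar\'e equivalence between the two Poincar\'e $\infty$-categories above, realized by the shift $M\mapsto M[-k]$ of the underlying stable $\infty$-category. That equivalence converts $\Dual_{L[-2k]}$ into $\Dual_{(-1)^kL}$ by the Koszul sign, \emph{and} shifts the linear part using the $2$-periodicity of the Tate construction, which is precisely what moves the genuine threshold from $m-2k$ to $m-k$. This is Ranicki periodicity, and the paper cites it at exactly this point. Your proposal does not account for the threshold change at all (Remark~\ref{remark:assume-r-0} alone can only trade the threshold against the degree of the twist, never keep the twist in degree $0$ while moving the threshold), and the citation to Remark~\ref{remark:square-line-bundles} is off-target: that remark concerns the triviality of the $\Ct$-action on $M\otimes M$ for a line bundle $M$, not the Koszul sign under double desuspension. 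You do invoke Ranicki periodicity for the case $m=+\infty$, but it is needed for every finite $m$ as well; without it the argument does not close.
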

\begin{proof}
By Proposition~\ref{proposition:beilinson} and the additivity of $\F$, we have
\[
\Om_{\Pone}\F^{\ge m}_L(X) \simeq \F^{\ge m,[-1]}_L \simeq \F^{\ge m-1}_{L[-1]}
\]
and hence
\[
\Om^k_{\Pone}\F^{\ge m}_L(X) \simeq \F^{\ge m,[-k]}_L \simeq \F^{\ge m-k}_{L[-k]} .
\]
Combining this with Ranicki periodicity we get
\[
\Om^{2k+i}_{\Pone}\F^{\ge m}_L(X) \simeq \F^{\ge m,[-2k-i]}_{L} \simeq \F^{\ge m-2k,[-i]}_{L[-2k]} \simeq \F^{\ge m-k,[-i]}_{(-1)^kL} ,
\]
as desired.
\end{proof}

\begin{proof}[Proof of Proposition~\ref{proposition:KQ-stable-under-basechange}]

To begin, note that by 2-out-of-3 it will suffice to prove the claim for $S=\spec(\ZZ)$. In particular, we may (and will) assume that $S$ is affine. Second, if $T \to S$ is smooth then the pullback functor $f^*$ on the level of presheaves is implemented by pre-composition with $\Sm_T \xrightarrow{f \circ (-)} \Sm_S$ and hence preserves Nisnevich sheaves and commutes with $\Om_{\Pone}$. The claim can hence be checked on the level of $\Pone$-spectrum objects in presheaves, where it amounts to unwinding the definitions. Covering $T$ by open affines and using again 2-out-of-3 we consequently may (and will) assume that $T$ is affine as well.

We now prove the claim for $f\colon T \to S$ a map of affine schemes. 
As $f_*$ commutes with post-composition with $\Om^{\infty}\colon\Spa \to \Grp_{\Einf}$ we have after passing to left adjoints a commutative square
\[
\begin{tikzcd}
\Spa_{\Pone}^{\Sig}(S,\Grp_{\Einf}) \ar[r,"f^*"]\ar[d, "\iota_*"] & \Spa_{\Pone}^{\Sig}(T,\Grp_{\Einf}) \ar[d,"i_*"] \\
\Spa_{\Pone}^{\Sig}(S,\Spa) \ar[r, "f^*"] & \Spa_{\Pone}^{\Sig}(T,\Spa) \ .
\end{tikzcd}
\]
Combining this with Remark~\ref{remark:either-way} we conclude that to prove the desired claim it is enough to show that the map
\[
f^*\R_S(\GWspace^{\sym}) \to \R_T(\GWspace^{\sym})
\]
is an equivalence. We note that on the left hand side, the pullback is implemented by taking the pointwise pullback of Nisnevich sheaves and then spectrify the resulting pre-spectrum.
We now claim that, if we let $\X \in \PSpa^{\Sig}_{\Pone}(\Sh^{\Nis}(T,\Grp_{\Einf}))$ denote the pullback of $\R_S(\GWspace^{\sym})$ as a $\Pone$-prespectrum, then the induced map
\[
\sh_{\infty}\Om^{\infty}_{\Pone}\X \to \sh_{\infty}\Om^{\infty}_{\Pone}\R_T(\GWspace^{\sym}) \simeq \R_T(\GWspace^{\sym})
\]
is an equivalence (see \S\ref{subsection:recollection-stabilization} for the notation $\sh_{\infty}\Om^{\infty}_{\Pone}$). This simultaneously implies that the $\Pone$-prespectrum object $\X$ is semi-stable and that its spectrification is $\R^{\sym}_T(\GWspace)$.

To prove our claim, note that by Example~\ref{example:bott} we have that for a finite set $I$ of size $i=|I|$, the $I$'th object in the symmetric $\Pone$-spectrum $\R_S(\GWspace^{\sym})$ is given by the Nisnevich sheaf $\GWspace^{\sym,[i]}_S$, and so the $I$'th object in $\X$ is given by $f^*\R_S(\GWspace^{\sym,[i]})$ (pullback of Nisnevich sheaves of $\Einf$-groups).
We need to show that for every $i \geq 0$ the map
\[
\colim_n \Om_{\Pone}^nf^*\GWspace_S^{\sym,[i+n]} \to \GWspace_T^{\sym,[i]}
\]
is an equivalence of Nisnevich sheaves on $T$.
By cofinality the colimit on $n$ can be taken in jumps of $4$, that is, we can write this map as
\[
\colim_n \Om_{\Pone}^{4n-i}f^*\GWspace_S^{\sym,[4n]} \to \GWspace_T^{\sym,[i]} .
\]
At the same time, by Ranicki periodicity we have that $\GWspace_S^{\sym,[4n]} = \GWspace_S^{\sym}$ and so we can rewrite this further as
\[
\colim_n \Om_{\Pone}^{4n-i}f^*\GWspace_S^{\sym} \to \GWspace_T^{\sym,[i]} .
\]
We now fix an $i \geq 0$ and prove that this map is an equivalence for $i$. 

Let $Q$ be the poset whose objects are pairs $(n,m)$ with $n,m$ non-negative integers equipped with the order relation such that $(n,m) \leq (n',m')$ if and only if $n \leq n'$ and $n+m \leq n'+m'$. In particular, the projection $Q \to \NN$ sending $(n,m)$ to $n$ is a cartesian fibration such that for each $n'$ the transition functor $Q_{n+1} = \NN  \to \NN = Q_{n}$ is given by $x \mapsto x+1$. 
Specifying a functor out of $Q$ then amounts to specifying for each $n$ a functor $f_n$ out of $Q_n = \NN$ together with natural transformations $\eta_n\colon f_{n-1}((-)+1) \Rightarrow f_{n}(-)$ for $n \geq 1$. In particular, we may consider the functor $\theta\colon Q \to \Sh^{\Nis}(S, \Grp_{\Einf})$ given by 
\[
\theta(n,m) = \Om^{4n-i}_{\Pone}f^*\GWspace^{\geq -2m}_S,
\]
where, using Proposition~\ref{proposition:bott-periodicity}, the maps
\[
\eta_n\colon \Om^{4(n-1)-i}_{\Pone}f^*\GWspace^{\ge -2((-)+1)}_S = \Om^{4(n-1)-i}_{\Pone}f^*\Om^{4}_{\Pone}\GWspace^{\ge -2(-)}_S \to \Om^{4n-i}_{\Pone}f^*\GWspace^{\ge -2(-)}_S
\]
are induced by the Beck-Chevalley map $f^* \circ \Om^4_{\Pone} \Rightarrow \Om^4_{\Pone} \circ f^*$. 
Now consider the commutative diagram in $\Sh^{\Nis}(T,\Grp_{\Einf})$ given by
\[
\begin{tikzcd}
\colim_n \Om^{4n-i}_{\Pone}f^*\GWspace_S^{\ge0} \ar[r]\ar[d, "\simeq"'] & \colim_n \Om^{4n-i}_{\Pone}\GWspace_T^{\ge 0} \ar[d, "\simeq" ] \ar[r,equal] & \colim_n \GWspace_T^{\ge -2n,[i]} \ar[d, "\simeq" ] \\
\displaystyle\mathop{\colim}_{(n,m) \in Q} \Om^{4n-i}_{\Pone}f^*\GWspace^{\ge -2m}_S \ar[r] & \displaystyle\mathop{\colim}_{(n,m) \in Q}\Om^{4n-i}_{\Pone}\GWspace^{\ge -2m}_T\ar[r,equal] & \displaystyle\mathop{\colim}_{(n,m) \in Q}\GWspace^{\ge -2n-2m,[i]}_T \\
\ar[d,"\simeq"']\displaystyle\mathop{\colim}_{(n,m) \in \NN \times \NN} \Om^{4n-i}_{\Pone}f^*\GWspace^{\ge -2m}_S \ar[r]\ar[u,"\simeq"] &\displaystyle\mathop{\colim}_{(n,m) \in \NN \times \NN}\Om^{4n-i}_{\Pone}\GWspace^{\ge -2m}_T \ar[r,equal]\ar[u,"\simeq"'] & \displaystyle\mathop{\colim}_{(n,m) \in \NN \times \NN}\GWspace^{\ge -2n-2m,[i]}_T\ar[u,"\simeq"'] \ar[d,"\simeq"]\\
\colim_{n} \Om^{4n-i}_{\Pone}f^*\GWspace^{\sym}_S \ar[rr] && \GWspace^{\sym,[i]}_T 
\end{tikzcd}
\]
where the horizontal equivalences on the right column are by Proposition~\ref{proposition:bott-periodicity},
the vertical maps in the top row are induced by restriction to the sub-poset $\NN \times \{0\} = \{(n,m) \in Q | m=0\} \subseteq Q$, and the vertical maps in the middle row are induced by restriction along the poset map $\NN \times \NN \to Q$ which is the identity on objects. It is straightforward to verify that that both these functors are cofinal, so that the associated vertical maps are equivalences. In addition, the bottom vertical maps are equivalences as well: indeed, for every $j$ the sequence of Poincaré structures $(\QF^{\geq k})\qshift{j}$ converges to $(\QF^{\sym})\qshift{j}$ as $k \to -\infty$, while $\GWspace$ (as a functor on $\Catp$) and $\Om_{\Pone}$ (as a functor on $\Sh^{\Nis}(S,\Grp_{\Einf})$) both preserve filtered colimits (where we note that filtered colimits in Nisnevich sheaves are computed pointwise since the Nisnevich site is finitary). To show that the bottom horizontal map is an equivalence it will hence suffice to show that the top horizontal map is an equivalence. We have thus reduced to proving that the induced map
\[
f^*\GWspace_S^{\ge0}  \to \GWspace_T^{\ge0}
\]
is an equivalence of Nisnevich sheaves. Now as a Nisnevich sheaf valued in $\Einf$-groups, 
$\GWspace_S^{\ge0}$ is the group completion of the $\Mon_{\Einf}$-valued sheaf $\Vect^{\sym}_S$ of vector bundles equipped with perfect symmetric bilinear forms, and the same holds for $\GWspace_T^{\ge0}$; indeed, this statement can be verified on the level of stalks, and for affine schemes the group completion statement is proven in \cite{Hebestreit-Steimle}*{Theorem~A}.
Since $f^*$ commutes with group completion (again as above this follows from the fact that $f_*$ preserves the group-like property) it will now suffice to show that the map
\[
f^*\Vect^{\sym}_S\to \Vect^{\sym}_T
\]
is a motivic equivalence of $\Mon_{\Einf}$-valued sheaves. In fact, it is a Zariski-local equivalence: since the $\Einf$-valued presheaf $T' \mapsto \Vect^{\sym}(T')$ on $\Aff_S$ is represented by a smooth algebraic stack with affine diagonal it is left Kan extended from smooth affine $S$-schemes by \cite{deloop3}*{Proposition~A.0.4} (see also~\cite[Example A.0.6 (5)]{deloop3}), and so the above map is an equivalence when evaluated on affine $T$-schemes.
\end{proof}

\subsection{The Thom isomorphism}
\label{subsection:thom-isomorphism}%

Let $S$ be qcqs scheme and $p\colon V \to S$ a vector bundle over $S$ of rank $r$ with zero section $s\colon S \to V$.
Recall \cite{Morel-Voevodsky}*{Definition~2.16} that the Thom space $\Th(V)$ of $V$ is the quotient $V/V\smallsetminus\{0\}$, considered as a pointed motivic space over $S$ (that is, as the image of the corresponding pointed sheaf under the $\Aone$-localisation functor $\Sh^{\Nis}(S,\Sps_*) \to \Hinv(S,\Sps_*)$). 
Since $\SH(S)$ is tensored and cotensored over pointed motivic spaces over $S$ we may consider for every $E \in \SH(S)$ the tensor and cotensor 
\[
\Sig^V E = \Th(V) \otimes E \quad\text{and}\quad \Om^V E = E^{\Th(V)}
\]
in $\SH(S)$. We note that we can also describe $\Sig^VE$ and $\Om^VE$ by the formulas
\[
\Sig^V E = p_{\sharp}s_*E \quad\text{and}\quad \Om^VE = s^!p^*E
\]
using the six-functor formalism.

\begin{proposition}[Thom isomorphism]
\label{proposition:thom-iso}%
Let $S$ and $p\colon V \to S$ be as above and $L\in \Picspace(S)^{\BC}$ a perfect invertible complex with $\Ct$-action. 
Then there is a natural equivalence of motivic spectra over $S$
\[
\Om^{V}\KQ_L \simeq \KQ_{L \otimes \det V[-r]},
\]
and hence natural equivalences of spectra
\[
\map(\Sigma^\infty_{\Pone} \Th(V), \Sigma_{\Pone}^n\KQ_L) \simeq \map(\Sigma^\infty_{\Pone}S, \Sigma_{\Pone}^n\Om^{V}\KQ_L)\simeq \HGW^{\sym}(S,L \otimes \det V[n-r]).
\]
In particular, if $S$ is regular Noetherian of finite Krull dimension then
\[
\map(\Sigma^\infty_{\Pone} \Th(V), \Sigma_{\Pone}^n\KQ_L)\simeq \KGW^{\sym}(S,L \otimes \det V[n-r]).
\]
\end{proposition}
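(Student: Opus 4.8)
The plan is to deduce the Thom isomorphism from the projective bundle formula together with the localisation sequence, following the classical strategy of Morel--Voevodsky for identifying $\Th(V)$ with a relative projective bundle. First I would recall that $\Th(V)$ is naturally equivalent (in $\Hinv(S,\Sps_*)$) to the cofibre of the open inclusion $\PP_S(V) \hookrightarrow \PP_S(V \oplus \cO_S)$, where $\PP_S(V)$ sits inside $\PP_S(V\oplus \cO_S)$ as the hyperplane at infinity with open complement the total space of $V$; this is \cite{Morel-Voevodsky}*{Proposition 3.2.17}. Consequently, for any motivic spectrum $E$ over $S$ we get a fibre sequence
\[
\Om^V E \to E^{\PP_S(V\oplus \cO_S)} \to E^{\PP_S(V)},
\]
and applying this with $E = \KQ_L$ reduces the computation of $\Om^V\KQ_L$ to understanding $\KQ_L^{\PP_S(W)}$ for $W = V$ and $W = V\oplus \cO_S$, i.e.\ to $\KGW(\Dperf(\PP_S(W)),\QF^{\sym}_{p^*L})$ as a functor of $S$-schemes, and identifying the restriction map along $\PP_S(V)\hookrightarrow \PP_S(V\oplus\cO_S)$.

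The core input is then the projective bundle formula, Theorem~\ref{theorem:projective-bundle-formula}, applied to the rank $r+1$ bundle $W = V\oplus\cO_S$ (with $n=0$) and to the rank $r$ bundle $W=V$, for the Karoubi-localising functor $\KGW$. I would use the $\KQ_L$-linear, $\Pone$-spectrum level description of $\R^{\sym}_L(\KGW)$ from Remark~\ref{remark:underlying-sym} and the fact that $\M^{\sym}_L$ preserves colimits (Lemma~\ref{lemma:colimits}) so that the cofibre sequence above can be computed $\Pone$-levelwise, before $\Aone$-localisation if convenient. The splitting summands of the form $\F^{\hyp}(\Dperf(S))$ appearing in the projective bundle formula contribute $\KGL_S$-type summands, but these cancel in the cofibre since the inclusion $\PP_S(V)\hookrightarrow \PP_S(V\oplus\cO_S)$ is compatible with the semi-orthogonal decompositions used in Theorem~\ref{theorem:projective-bundle-formula}: the hyperbolic blocks of $\Dperf(\PP_S(V))$ are precisely the ``lower'' blocks of $\Dperf(\PP_S(V\oplus\cO_S))$. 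What survives in the cofibre is the ``new'' block, which by the boundary computation of Lemma~\ref{lemma:pbf-boundary} and the explicit description of the Poincaré-Verdier quotient in Proposition~\ref{proposition:proj-bundle-I} is $(\Dperf(S),\QF^{\sym}_{L \otimes \det W^\vee[-r']})$ with the shift bookkeeping giving coefficients $L \otimes \det V[-r]$ after accounting for $\det(V\oplus\cO_S) = \det V$ and the duality shift. Feeding this through $\M^{\sym}_L$ and using the Bott periodicity identification $\Sig^I_{\Pone}\M^{\sym}_L(\F) \simeq \M^{\sym}_{L[I]}(\F)$ (Example~\ref{example:bott}) yields $\Om^V\KQ_L \simeq \KQ_{L\otimes\det V[-r]}$.

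The second and third displayed equivalences are then formal: the first is the adjunction $\map(\Sig^\infty_{\Pone}\Th(V),\Sig^n_{\Pone}\KQ_L) \simeq \map(\Sig^\infty_{\Pone}S,\Sig^n_{\Pone}\Om^V\KQ_L)$ combined with the identification just established and Theorem~\ref{theorem:values-of-KQ} (taking $X = S$), giving $\HGW^{\sym}(S,L\otimes\det V[n-r])$; and the regular Noetherian finite-dimensional case follows by replacing $\HGW^{\sym}$ with $\GW^{\sym}$ via Theorem~\ref{theorem:A1-invariance}, exactly as in Corollary~\ref{corollary:values-of-KQ-regular}. The main obstacle I anticipate is making the block-cancellation in the cofibre sequence genuinely natural and compatible with the $\Pone$-spectrum structure and the $\Ct$-actions on the $\KGL_L$-summands (Remark~\ref{remark:action-on-k}): one must check that the functors $p^*(-)\otimes\cO(i)$ realizing the semi-orthogonal pieces for $\PP_S(V)$ and for $\PP_S(V\oplus\cO_S)$ match up under restriction along the hyperplane embedding, and track the determinant/shift factors through Lemma~\ref{lemma:pbf-boundary} carefully — in particular verifying that it is $\det V$ and not $\det(V\oplus\cO_S)$ twisted by an extra line, and that the homological shift is exactly $[-r]$ and not $[-r-1]$ or $[1-r]$. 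A clean way to sidestep some of this is to argue that the composite $\KGL_S$-summand map in the cofibre sequence is an equivalence by a direct comparison of the relevant semi-orthogonal decompositions, reducing everything to the ``new block'', which is then handled uniformly by Proposition~\ref{proposition:proj-bundle-I} and Construction~\ref{construction:pushforward-hermitian}.
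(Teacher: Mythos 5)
Your proposal identifies the same high-level strategy as the paper—express $\Th(V)$ as $\PP_S(W^\vee)/\PP_S(V^\vee)$ (you write $\PP_S(V\oplus\cO_S)/\PP_S(V)$; with the paper's convention $\PP_X V=\Proj(\Sym^\bullet V)$ one needs the duals), compare semi-orthogonal decompositions, and isolate the ``new block''—and your proposed ``clean way to sidestep'' is in fact precisely what the paper does. However, you are missing one step that makes this clean route actually work, and you are working at the wrong level of generality for the bookkeeping you rightly worry about.

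The missing step is the reduction to $r$ odd. The paper observes that $\Th(\cO_S\oplus V)\simeq\Sig_{\Pone}\Th(V)$ together with the Bott periodicity of Example~\ref{example:bott} lets one replace $V$ by $\cO_S\oplus V$ at will, so one may assume $r$ is odd. This is not merely cosmetic: with $r$ odd (and $n=0$), the fiber dimension $r$ of $\PP_S(W^\vee)$ has opposite parity to $n$, which is exactly the hypothesis of Proposition~\ref{proposition:proj-bundle-I}. This gives a \emph{split Poincaré-Verdier sequence}
\[
\big(\cA\big(\tfrac{r-1}{2},\tfrac{-r+1}{2}\big),\Dual\big)\hookrightarrow(\Dperf(\PP_S(W^\vee)),\Dual)\to(\Dperf(S),\Dual_{\det V^\vee[-r]})
\]
directly, without invoking the boundary computation of Lemma~\ref{lemma:pbf-boundary} at all. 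Your proposal leans on that boundary lemma to identify the new block, which is a less robust route: without the parity reduction, the shapes of the projective bundle formula for $\PP_S(W^\vee)$ and $\PP_S(V^\vee)$ shift between cases (2), (3), (4) of Theorem~\ref{theorem:projective-bundle-formula} depending on whether $r$ is even or odd, and the ``cancellation'' argument you describe would have to be made separately in each parity.

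The other difference is where the argument lives. The paper does not compute the cofibre $\Om^V\KQ_L$ directly in $\SH(S)$ with its $\Pone$-spectrum structure; instead, using Construction~\ref{construction:realization-cotensor}, it reduces to identifying $\wtl{\T}_S(\Th(V))$ with $\jbloc(\Dperf(S),\Dual_{\det V^\vee[-r]})$ in $\Fun^{\bloc}(\Mod^{\flat,\kappa}_S(\Catpsi),\Spa)$. All of the naturality, $\Ct$-action, and $\Pone$-delooping compatibility you flag as obstacles are then automatic, because $\T_S$ is symmetric monoidal and the whole argument is a single computation of the cofibre $\cof[\jbloc(\Dperf(\PP_S(V^\vee)))\to\jbloc(\Dperf(\PP_S(W^\vee)))]$ using the split sequence above and the fact (which you correctly identify, via $i^*\cO(j)\simeq\cO(j)$ and \cite{Khan-blow-up}) that the composite $\cA\hookrightarrow\Dperf(\PP_S(W^\vee))\xrightarrow{i^*}\Dperf(\PP_S(V^\vee))$ is inverted by any bounded localising functor. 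The final two displays then follow as you describe, via Theorem~\ref{theorem:values-of-KQ} and Corollary~\ref{corollary:values-of-KQ-regular}. So: right idea and right key geometric input, but you should add the parity reduction and carry out the computation at the level of $\jbloc$-objects rather than motivic spectra, which turns your anticipated obstacles into non-issues.
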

\begin{proof}
Let $\KGW^{\sym}\colon \Catpsi \to \Grp_{\Einf}$ be the bounded localising functor given by $\KGW^{\sym}(\C,\Dual) = \KGW(\C,\QF^{\sym}_{\Dual})$. As in Lemma~\ref{lemma:LAoneOmPone}, the $\Aone$-localisation functor
$\Loc_{\Aone}\colon \Spa^{\Sig}_{\Pone}(S,\Spa) \to \SH(S)$
commutes with taking cotensors by $\Th(V)$,
and so by the last part of Construction~\ref{construction:realization-cotensor} we thus have
\begin{align*} 
\Om^V\KQ_L = \Om^V\M^{\sym}_L(\KGW) = \Om^V\M_L(\KGW^{\sym}) =& \Om^V\Loc_{\Aone}\R_L(\KGW^{\sym}) \\
\simeq& \Loc_{\Aone}\big(\R_L(\KGW^{\sym})^{\Th(V)}\big) \\
\simeq& \Loc_{\Aone}\big(\R_L\big((\KGW^{\sym})^{\T_S(\Th(V))}\big)\big) \\
=& \M_L((\KGW^{\sym})^{\T_S(\Th(V))}).
\end{align*}
To prove the proposition it will hence suffice to prove a natural equivalence 
\[
\T_S(\Th(V)) \simeq \jbloc(\Dperf(S),\Dual_{\det V[-r]})
\]
in $\Mod^{\flat,\kappa}_{S}(\Catpsi)$,
see \S\ref{subsection:bott-periodicity} for the definition of the localised co-Yoneda functor $\jbloc$ .
In particular, we no longer need to keep track of $L$. In addition, using the equivalence $\Th(\cO_S \oplus V)\simeq \Sigma_{\Pone} \Th(V)$ from \cite{Morel-Voevodsky}*{Proposition~3.2.17(1+2)}
and the Bott periodicity statement of Example~\ref{example:bott} we may assume without loss of generality that $r$ is odd.

Let $W = \cO_S \oplus V$ be the direct sum of $V$ and a trivial line bundle. Then there is a natural equivalence of motivic spaces $\Th(V) \simeq \PP_S(W^\vee)/\PP_S(V^\vee)$, see \cite{Morel-Voevodsky}*{Proposition~3.2.17(3)}. 
As in \S\ref{subsection:pbf}, let $\cA\big(\frac{r-1}{2},\frac{-r+1}{2}\big)$ be the stable subcategory of $\Dperf(\PP_S(W^\vee))$ generated by 
\[
\cA\big(\frac{r-1}{2}\big)\cup\cA\big(\frac{r-2}{2}-1\big)\cup \cdots \cup \cA\big(\frac{-r+1}{2}\big),
\]
where $\cA(i)$ is the image of the fully-faithful functor $p^*(-) \otimes \cO(i)\colon \Dperf(S) \to \Dperf(\PP_S(W^\vee))$, so that $\cA\big(\frac{r-1}{2},\frac{-r+1}{2}\big)$ is invariant under the duality $\Dual_{\PP_S(W^{\vee})}$ by Lemma~\ref{lemma:duality-shift}. Since the restriction functor
\[
i^*\colon\PP_S(W^\vee) \to \PP_S(V^{\vee})
\]
sends $\cO(i)$ to $\cO(i)$, \cite{Khan-blow-up}*{Theorem~3.3~(ii)} implies that $i^*\cA\big(\frac{r-1}{2},\frac{-r+1}{2}\big)$ generates $\Dperf(\PP_S(V^{\vee}))$ as a stable $\infty$-category and 
so Lemma~\ref{lemma:proj-bundle-II} and Example~\ref{example:split-is-flat} together imply that the composite duality preserving functor
\[
\big(\cA\big(\frac{r-1}{2},\frac{-r+1}{2}\big),\Dual_{\PP_S(W^\vee)}\big) \hrar \big(\Dperf(\PP_S(W^\vee)), \Dual_{\PP_S(W^\vee)}\big) \xrightarrow{i^*} \big(\Dperf(\PP_S(V^{\vee}), \Dual_{\PP_S(V^\vee)}\big)
\]
is inverted by any bounded localising functor on $\Mod^{\flat,\kappa}_{S}(\Catpsi)$. 
At the same time, by Proposition~\ref{proposition:proj-bundle-I} the inclusion on the left participates in a split Poincaré-Verdier sequence
\[
\big(\cA\big(\frac{r-1}{2},\frac{-r+1}{2}\big),\Dual_{\PP_S(W^\vee)}\big)\to \big(\Dperf(\PP_S(W^\vee)),\Dual_{\PP_S(W^\vee)}\big)\xrightarrow{p_*(-\otimes\cO(-\frac{r+1}{2}))} \big(\Dperf(S),\Dual_{\det V^\vee[-r]}\big).
\]
We hence obtain the desired equivalence as a composite
\begin{align*}
\jbloc\big(\Dperf(S),\Dual_{\det V^\vee[-r]}\big) \simeq& \fib\big[\jbloc\big(\Dperf(\PP_S(W^\vee)),\Dual_{\PP_S(W^\vee)}\big) 
\to \jbloc\big(\cA\big(\frac{r-1}{2},\frac{-r+1}{2}\big),\Dual_{\PP_S(W^\vee)}\big)\big] \\
\simeq& \cof\big[\jbloc\big(\Dperf(\PP_S(V^{\vee}), \Dual_{\PP_S(V^\vee)}\big)
\to \jbloc\big(\Dperf(\PP_S(W^\vee), \Dual^{\sym}_{\PP_S(W^\vee)}\big)\big] \\
\simeq& \wtl{\T}_S(\Sig^{\infty}_{\Pone}(\PP_S(W^\vee)/\PP_S(V^\vee)))\\
\simeq& \wtl{\T}_S(\Th(V)).
\end{align*}
\end{proof}

\subsection{Purity of the hermitian K-theory spectrum}

In their work~\cite{deglise-fundamental}, Déglise, Jin and Khan constructed what they call a \emph{system of fundamental classes} in motivic homotopy theory. Given a fixed quasi-compact quasi-separated base scheme $S$, these consists of a compatible collection of maps
\[
\eta_f\colon \SS_{X} \to \Th(T_f)^{-1} \otimes f^!\SS_{Y}
\]
for every smoothable lci map $f\colon X \to Y$ of (separated finite-type) $S$-schemes with tangent complex $T_f \in \Dperf(X)$, where $\SS_{X}$ and $\SS_{Y}$ are the sphere spectra in $\SH(X)$ and $\SH(Y)$, that is, the symmetric monoidal units thereof, respectively. 
Here, smoothable lci means that $f$ can be factored as a composite $X \xrightarrow{i} Y'\xrightarrow{p} Y$ where $i$ is a regular embedding and $p$ is smooth. In this case, the tangent complex $T_f$ is perfect and sits in an exact sequence
\[
T_f \to i^*T_p \to N_X Y'
\]
where $T_p$ is the relative tangent bundle of $p$ and $N_XY'$ is the normal bundle of $X$ in $Y'$.
The associated Thom object $\Th(T_f) \in \SH(X)$ is then by definition the ratio $\Th(i^*T_p) \otimes \Th(N_X Y')^{-1}$ (which does not depend on the choice of factorization). 
Given a motivic spectrum $E \in \SH(S)$ and a smoothable lci map $f\colon X \to Y$ one may then define an $E$-valued variant $\eta^E_f$ of $\eta_f$ as the composite
\[
E_X \xrightarrow{E_X \otimes \eta_{f}} E_X \otimes \Th(T_f)^{-1} \otimes f^!\SS_Y \to \Th(T_f)^{-1} \otimes f^!E_Y ,
\]
where $E_X$ and $E_Y$ denote the pullback of $E$ to $X$ and $Y$, respectively.
The maps $\eta^E_f$ are by construction natural in $E$. In addition, the satisfy two types of compatibility conditions:
\begin{enumerate}
\item
\label{item:composition}%
Composition: if $X \xrightarrow{f} Y \xrightarrow{g} Z$ are a composable pair of smoothable lci morphisms among separated finite type $S$-schemes then $h := g\circ f$ is smoothable lci with $\Th(T_{h}) \simeq \Th(T_f) \otimes f^*\Th(T_g)$ and $\eta^E_{g \circ f}$ is given by the composite
\begin{align*}
E_{X} \xrightarrow{\eta^E_f} \Th(T_f)^{-1} \otimes f^!E_{Y} \xrightarrow{\Th(T_f)^{-1} \otimes f^!(\eta^E_{g})} & \Th(T_f)^{-1} \otimes f^!(\Th(T_g) \otimes g^!E_Z) \\
=& \Th(T_f)^{-1} \otimes f^*\Th(g) \otimes f^!g^!E_Z \\
=& \Th(T_h)^{-1} \otimes h^!E_{Z} 
\end{align*}
where the second equivalence is because $\Th(T_f)$ is tensor invertible. 
\item
\label{item:tor}%
Transverse base change: for every Tor-independent square 
\[
\begin{tikzcd}
X \ar[r, "f"]\ar[d, "g"] & Y \ar[d, "g'"] \\
W \ar[r, "f'"] & Z 
\end{tikzcd}
\]
among separated finite type $S$-schemes for which the horizontal maps are smoothable lci the maps $\eta^E_f$ and $\eta^E_{f'}$ fit into a commutative diagram
\[
\begin{tikzcd}
g^*E_W \ar[d,equal]\ar[r,"{g^*\eta^E_{f'}}"] &  g^*\Th(T_{f'})^{-1} \otimes g^*(f')^!E_Z   \\
g^*E_W \ar[d,equal]\ar[r] &  \Th(T_{f})^{-1} \otimes g^*(f')^!E_Z  \ar[u,"\simeq"]\ar[d] \\
E_X \ar[r, "\eta^E_f"] & \Th(T_f)^{-1} \otimes f^!E_Y 
\end{tikzcd}
\]
where the right vertical arrows are induced by the map $T_f \to g^*T_{f'}$ (which is an equivalence for Tor independent squares) and the Beck Chevalley map $g^*(f')E_Z \to f^!(g')^*E_Z = f^!E_Y$.
\end{enumerate}

Given a smoothable lci map $X \to Y$ among separated finite type $S$-schemes and a motivic spectrum $E \in \SH(S)$, we say that $E$ is \defi{pure at $f$} if the map $\eta^E_f$ is an equivalence. We say that $E$ is \defi{absolutely pure} if it is pure at $f$ for every smoothable lci morphism of finite type separated $S$-schemes
\[
\begin{tikzcd}
X \ar[rr]\ar[dr] && Y \ar[dl] \\
& S &
\end{tikzcd}
\]
such that $X$ and $Y$ are both regular. We note that for any $E$, the collection $P_E$ of maps at which $E$ is pure satisfies the following properties:
\begin{enumerate}
\item
\label{item:smooth}%
The collection $P_E$ contains all smooth maps (see~\cite[Example 2.3.4]{deglise-fundamental}).
\item
\label{item:compose-cancel}%
If $X \xrightarrow{f} Y \xrightarrow{g} Z$ are a pair of composable smoothable lci morphisms among separated finite type $S$-schemes and $g$ belongs to $P_E$ then $f$ belongs to $P_E$ if and only if $g \circ f$ belongs to $P_E$. This follows immediately from Property~\ref{item:composition} concerning compatibility with composition. Combined with the previous point this means in particular that any smoothable lci morphism between \emph{smooth} $S$-schemes is in $P_E$.
\item
\label{item:local}%
If $f\colon X \to Y$ and $Y = \cup_i U_i$ is a Zariski open covering then $f$ belongs to $P_E$ if and only if the base change $f_i\colon V_i := X \times_Y U_i \to U_i$ belongs to $P_E$ for every $i$. To see this, note that, on the one hand, equivalences in $\SH(X)$ are detected locally, and on the other hand, the square
\[
\begin{tikzcd}
V_i \ar[r, "f_i"]\ar[d] & U_i \ar[d] \\
X \ar[r, "f"] & Y 
\end{tikzcd}
\]
is Tor independent and the Beck-Chevalley map $f^!(-)|_{V_i} \Rightarrow f_i^!((-)|_{U_i})$ is an equivalence, so that Property~\ref{item:tor} implies that $\eta^E_{f_i} = (\eta^E_f)|_{V_i}$. 
\end{enumerate}

Combining the above we deduce the following:
\begin{lemma}
\label{lemma:reduction}%
A motivic spectrum $E \in \SH(S)$ is pure if and only if it is pure at $f$ whenever 
\[
\begin{tikzcd}
\spec(A/a) \ar[rr, "f"]\ar[dr] && \spec(A) \ar[dl] \\
& S &
\end{tikzcd}
\]
is a closed embedding of regular affine schemes of finite type over $S$ corresponding to a principal ideal $(a) \subseteq A$ generated by a non-zero divisor $a$. 
\end{lemma}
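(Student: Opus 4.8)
The plan is to show that the collection $P_E$ of maps at which $E$ is pure contains every smoothable lci morphism between regular finite-type separated $S$-schemes, granting only that it contains the single-equation regular closed immersions $\spec(A/a)\hookrightarrow\spec(A)$ with $A$ and $A/a$ regular of finite type over $S$ and $a$ a non-zero divisor. The reverse implication is immediate, since such an immersion is in particular a smoothable lci morphism between regular finite-type separated $S$-schemes. The forward implication is a chain of reductions driven by the formal properties \ref{item:smooth}, \ref{item:compose-cancel}, \ref{item:local} of $P_E$, together with the Tor-independent base change compatibility \ref{item:tor} of the system of fundamental classes.

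First I would reduce to regular closed immersions. A smoothable lci map $f\colon X\to Y$ with $X,Y$ regular factors as $X\xrightarrow{i}Y'\xrightarrow{p}Y$ with $i$ a regular closed immersion and $p$ smooth; since $Y$ is regular and $p$ smooth, $Y'$ is regular and finite type over $S$, so $i$ is a regular closed immersion between regular finite-type $S$-schemes. As $p\in P_E$ by \ref{item:smooth}, \ref{item:compose-cancel} gives $f\in P_E\iff i\in P_E$. Applying \ref{item:local} I may then work Zariski-locally on the target and assume $X=\spec(A)$ with $A$ regular, $i$ cut out by a regular sequence $a_1,\dots,a_r\in A$, and $Z=\spec(A/I)$, $I=(a_1,\dots,a_r)$.

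Next I would reduce the zero section of a trivial bundle to the hypothesis, and then compare. Consider $s_0\colon X\hookrightarrow\Aa^r_X=\spec(A[t_1,\dots,t_r])$, $t_i\mapsto0$. Each $\spec(A[t_1,\dots,t_k])$ is regular and finite type over $S$, and $s_0$ factors as a tower
\[
X=\Aa^0_X\hookrightarrow\Aa^1_X\hookrightarrow\cdots\hookrightarrow\Aa^r_X,
\]
whose $k$-th step is the vanishing locus of the non-zero divisor $t_k$ in the regular ring $A[t_1,\dots,t_k]$; by hypothesis $E$ is pure at each step, so $s_0\in P_E$ by repeated use of \ref{item:compose-cancel}. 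Finally I would form the square with corners $Z,X,X,\Aa^r_X$ whose right edge is the graph embedding $\Gamma\colon X\hookrightarrow\Aa^r_X$, $t_i\mapsto a_i$, whose bottom edge is $s_0$, and whose remaining two edges are $i$. A direct computation identifies $X\times_{\Aa^r_X}X$ (over $\Gamma$ and $s_0$) with $\spec(A/I)=Z$, so the square is cartesian; and because $a_1,\dots,a_r$ is a regular sequence in $A$, the Koszul complex $\mathrm{Kos}_A(a_1,\dots,a_r)$ computes $\Tor^{A[t_1,\dots,t_r]}_\bullet(A,A)$ and is acyclic in positive degrees, so the square is Tor-independent. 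Both horizontal maps $i$ and $s_0$ are regular closed immersions, hence smoothable lci, so \ref{item:tor} applies: the tangent-complex comparison and the Beck–Chevalley map are equivalences for this square, and therefore $\eta^E_{s_0}$ being an equivalence forces $\eta^E_i$ to be one. Hence $i\in P_E$, which completes the reduction and proves Lemma~\ref{lemma:reduction}.

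The main point requiring care is this last step: one must verify that every corner of the square is separated, finite type over $S$, and regular where needed, and that the two edges declared smoothable lci genuinely are, so that \ref{item:tor} applies and the Beck–Chevalley exchange transformation is invertible. The one genuine computation — the Tor-independence of the square — reduces to the standard fact that regularity of the sequence $a_1,\dots,a_r$ is precisely acyclicity of the relevant Koszul complex; no deformation-to-the-normal-cone argument or motivic input beyond the already-established properties of $P_E$ and of the fundamental classes is needed.
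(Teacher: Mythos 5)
Your reductions to a regular closed immersion and then (via Zariski localization) to the affine case cut out by a regular sequence $a_1,\dots,a_r\in A$ match the paper. From there, however, the paper simply factors $\spec(A/(a_1,\dots,a_r))\hookrightarrow\spec(A)$ through the chain $\spec(A/(a_1,\dots,a_k))$, $k=r,\dots,0$, and applies property \ref{item:compose-cancel} step by step; each link is cut out by a single non-zero divisor and (after shrinking the affine so that the intermediate quotients are regular, using that $A$, $A/I$ regular makes the $a_i$ part of a regular system of parameters near $Z$) lands squarely in the hypothesis. You instead route through the zero section $s_0\colon X\hookrightarrow\Aa^r_X$ and the graph $\Gamma$ of $(a_1,\dots,a_r)$, using \ref{item:tor}. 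That would be a rather elegant argument if it went through.

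Unfortunately the last step has a genuine gap. Property \ref{item:tor} of the fundamental class system only asserts that $\eta^E_f$ and $\eta^E_{f'}$ fit into a commutative diagram whose right-hand column involves \emph{two} comparison maps: the tangent-complex comparison $T_f\to g^*T_{f'}$, which the paper explicitly marks as an equivalence (this is precisely what Tor-independence buys), and the Beck--Chevalley exchange map
\[
g^*(f')^!E_Z \longrightarrow f^!(g')^*E_Z,
\]
which the paper pointedly does \emph{not} claim is invertible. You assert ``the Beck--Chevalley map [is an] equivalence for this square'' as part of \ref{item:tor}, but that is not what \ref{item:tor} says, and in general this map fails to be an equivalence for Tor-independent squares with proper horizontal legs (only proper base change for $*$-pullback against $!$-pushforward holds unconditionally, not the exchange between $*$-pullback and $!$-pullback). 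Worse, for your specific square the claim is circular: in your notation $E_{\Aa^r_X}\simeq\pi^*E_X$, and since $E$ is pure at $s_0$ one computes $i^*(s_0)^!E_{\Aa^r_X}\simeq\Sig^{-r}_{\Pone}E_Z$ and $i^!\Gamma^*E_{\Aa^r_X}\simeq i^!E_X$, so up to the tangent-complex twist the Beck--Chevalley map evaluated on $E$ \emph{is} the fundamental class $\eta^E_i$. Asking for it to be an equivalence is exactly what you are trying to prove. The paper's factorization-by-hypersurfaces argument avoids the issue entirely because it only invokes the formal properties \ref{item:smooth}, \ref{item:compose-cancel}, \ref{item:local} of $P_E$, which do not require any invertibility of the Beck--Chevalley map.
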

\begin{proof}
The only if direction is clear since the map $\spec(A/a) \to \spec(A)$ appearing in the lemma is a closed embedding among regular schemes and hence in particular smoothable lci among such. 
For the if direction, note first that the closure properties~\ref{item:smooth} and~\ref{item:compose-cancel} above together imply that $E$ is absolutely pure if and only if it is pure at every (automatically regular) closed embedding of regular separated finite type $S$-schemes. Let now $f\colon X \to Y$ be such a closed embedding. Since $f$ is regular there exists an open cover by affines $Y = \cup_i \spec(A_i)$ and for each $i$ a regular sequence $a_{1,i},...,a_{n_i,i} \in A_i$ such that $X \times_Y \spec(A_i) = \spec(A_i/(a_{1,i},...,a_{n_i,i}))$. In addition, since $Y$ is regular we have that each $\spec(A_i)$ and each $\spec(A_i/(a_{1,i},...,a_{n_i,i}))$ are regular. By~\ref{item:local} we then have that $E$ is absolutely pure if and only if it is pure at closed embeddings of the form $\spec(A/(a_{1},...,a_{n})) \to \spec(A)$ where $A$ is a regular ring and $a_1,...,a_n \in A$ is a regular sequence. Using again the closure under composition Property~\ref{item:compose-cancel} it will suffice to prove this for the case where the regular sequence has length one, and so the desired result follows.
\end{proof}

\begin{theorem}[Purity]
\label{theorem:purity}%
Let $S$ be a Noetherian scheme of finite Krull dimension. Then the motivic spectrum $\KQ_S \in \SH(S)$ constructed in \S\ref{subsection:construction-KQ} is absolutely pure.
\end{theorem}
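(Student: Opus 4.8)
The plan is to run the standard argument for purity of a cohomology theory satisfying localisation and dévissage, in its hermitian incarnation. First I would apply Lemma~\ref{lemma:reduction} to reduce to verifying that the fundamental class $\eta^{\KQ}_i$ is an equivalence for a closed embedding $i\colon Z=\spec(A/a)\hrar X=\spec(A)$ in which $A$ is regular of finite type over $S$, $A/a$ is regular and $a\in A$ is a non-zero-divisor. Such an $i$ is a regular codimension-one embedding whose normal bundle $N_i$ is trivialised by $a$, so by Remark~\ref{remark:proper-lci} we have $i^!\cO_X\simeq\cO_Z[-1]$, and the tangent complex $T_i=N_i[-1]$ has $\Th(T_i)^{-1}\simeq\Th(N_i)\simeq\Th(\cO_Z)$, i.e.\ $\Th(T_i)^{-1}\otimes(-)\simeq\Sig_{\Pone}(-)$. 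Using Bott periodicity (Example~\ref{example:bott}), the assertion that $\eta^{\KQ}_i$ is an equivalence becomes equivalent to the assertion that the induced Gysin map $\KQ_Z\to\Sig_{\Pone}\,i^!\KQ_X$ is an equivalence in $\SH(Z)$, i.e.\ that $i^!\KQ_X\simeq\Om_{\Pone}\KQ_Z$.

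To prove this I would test against the compact generators $\Sig^\infty_{\Pone}W_+$ of $\SH(Z)$, with $W$ a smooth affine $Z$-scheme, together with their $\Pone$-suspensions, desuspensions and Tate twists (by Ranicki $4$-periodicity the latter only contribute finitely many coefficient shifts, all handled uniformly below). Since $A$ is affine, any such $W$ lifts to a smooth affine $X$-scheme $V$ with $V\times_X Z\cong W$, and $V$, $V\setminus W$, $W$ are regular, Noetherian of finite Krull dimension. Writing $q\colon W\to Z$ for the projection and $i_W\colon W\hrar V$ for the base change of $i$ (again regular of codimension one, with $i_W^!\cO_V\simeq\cO_W[-1]$ by Remark~\ref{remark:proper-lci}), Tor-independent base change in the six-functor formalism together with base-change invariance of $\KQ$ (Proposition~\ref{proposition:KQ-stable-under-basechange}) gives $q^*i^!\KQ_X\simeq i_W^!\KQ_V$; the recollement $(i_W)_*i_W^!\to\id\to(j_W)_*j_W^*$ in $\SH(V)$ then yields
\[
\map_{\SH(Z)}\big(\Sig^\infty_{\Pone}W_+,\,i^!\KQ_X\big)\;\simeq\;\fib\big[\,\KQ_V(V)\longrightarrow\KQ_V(V\setminus W)\,\big],
\]
where $\KQ_V(U):=\map_{\SH(V)}(\Sig^\infty_{\Pone}U_+,\KQ_V)$, and likewise with any $\Sig^I_{\Pone}$ inserted throughout. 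By Corollary~\ref{corollary:values-of-KQ-regular} the two terms on the right are $\GW^{\sym}(V,\cO_V[I])$ and $\GW^{\sym}(V\setminus W,\cO[I])$; the localisation-sequence corollary of \S\ref{subsection:bounded-karoubi} (applied to $\F=\KGW$ with $m=-\infty$, and Remark~\ref{remark:karoubi-GW} to identify $\GW$ with $\KGW$ on these regular categories) rewrites the fibre as $\GW\big(\Dperf_W(V),\QF^{\sym}_{\cO_V}|_W\big)\qshift{I}$; and dévissage (Theorem~\ref{theorem:global-devissage}) identifies this with $\GW\big(\Dperf(W),\QF^{\sym}_{i_W^!\cO_V}\big)\qshift{I}=\GW^{\sym}(W,\cO_W[I-1])$. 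Reading Corollary~\ref{corollary:values-of-KQ-regular} in reverse, this is $\map_{\SH(Z)}\big(\Sig^\infty_{\Pone}W_+,\Sig^I_{\Pone}\KQ_{\cO_Z[-1]}\big)=\map_{\SH(Z)}\big(\Sig^\infty_{\Pone}W_+,\Sig^I_{\Pone}\Om_{\Pone}\KQ_Z\big)$. Since the whole chain goes through verbatim with $\KQ$ replaced by $\KQ$ with an arbitrary $\Ct$-equivariant invertible-complex coefficient, these equivalences assemble — naturally in $W$ and in the shift — to $i^!\KQ_X\simeq\Om_{\Pone}\KQ_Z$.

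The main obstacle, as I see it, is concentrated in the last two steps and is of two kinds. First, one must carry the $\Ct$-equivariant invertible-complex coefficients and the Thom-twist bookkeeping correctly through dévissage, the Poincaré localisation sequence, and excision/supports ($\Dperf_W(V)$, the recollement in $\SH(V)$), and verify the compatibility of the algebro-geometric supports formalism with the motivic functor $i^!$; this is largely routine but demands care. Second, and more delicate, one must check that the equivalence produced above is the one induced by the Déglise--Jin--Khan fundamental class — i.e.\ that the Gysin map extracted from localisation and dévissage coincides, after the twist $\Th(T_i)^{-1}\otimes(-)\simeq\Sig_{\Pone}(-)$, with $\eta^{\KQ}_i$ itself. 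This should follow from the defining compatibilities of the system of fundamental classes (normalisation along closed immersions, compatibility with the localisation triangle) together with the multiplicative structure of $\KQ_S$, but it is the step I expect to require the most work. Finally, promoting the pointwise equivalences on generators to an equivalence of motivic spectra is a formal matter once the requisite naturality has been arranged.
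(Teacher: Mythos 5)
Your high-level architecture matches the paper's: reduce via Lemma~\ref{lemma:reduction} to principal regular embeddings $r\colon\spec(A/a)\hrar\spec(A)$, then invoke dévissage plus the localisation sequence. But the step you flag as "most delicate" — matching the equivalence produced by dévissage and localisation with the Déglise–Jin–Khan fundamental class $\eta^{\KQ}_r$ — is not a corollary of general compatibilities; it is the bulk of the paper's proof, worked out as Proposition~\ref{proposition:pushforward-is-fundamental}. There is a real logical issue in your formulation: computing on generators shows that $i^!\KQ_X$ and $\Om_{\Pone}\KQ_Z$ are \emph{abstractly} equivalent, which does not by itself make the specific map $\eta^{\KQ}_i$ invertible. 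The paper avoids this by a change of order: it first constructs an explicit map $\mu_r\colon r_*\KQ_{\spec(A/a)}\to\KQ_{A[1]}$ from the Poincaré push-forward functor of Corollary~\ref{corollary:lci-pushforward} applied under $\R^{\sym}_A(\KGW;-)$; then proves $\mu_r$ is the $r_*\dashv r^!$-adjoint of $\eta^{\KQ}_r$ (Proposition~\ref{proposition:pushforward-is-fundamental}); and only then checks that $r_*r^!$ inverts $\mu_r$, which is where dévissage and the localisation sequence enter (and exactly in the shape you describe, via Corollary~\ref{corollary:values-of-KQ-regular}).

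Proposition~\ref{proposition:pushforward-is-fundamental} is proved by writing $a$ as the image of $t$ under $A[t]\to A$, which gives a Tor-independent square and reduces, via the transverse base change compatibility of fundamental classes on one side and Remark~\ref{remark:compatibility} on the other, to the universal embedding $s\colon\spec(A)\hrar\spec(A[t])$. For $s$ the paper computes both $\mu_s$ and $\eta^{\KQ}_s$ by hand using the recollement $s_*s^!\Rightarrow\id\Rightarrow j_*j^*$, identifying $s^!p^*\KQ_{A[1]}$ with $\Om_{\Pone}\KQ_{A[1]}$ and then showing the resulting map is inverse to Bott periodicity via the identity $q_*((-)\otimes\cO(-1))\circ(i_0)_*\simeq\id$ of Lemma~\ref{lemma:composite-pf}. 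So: you have correctly reconstructed the overall route, and you have correctly located the obstacle, but you have not filled it; the paper fills it with roughly a page and a half of explicit diagram chasing rather than appealing to the abstract compatibilities of the fundamental class system.
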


In order to prove Theorem~\ref{theorem:purity} we first express the fundamental classes $\eta^{\KQ}_S$ in Poincaré categorical terms. This is done via the following construction.

\begin{construction}
\label{construction:fundamental-pushforward}%
Let $A$ be a regular ring, $a \in A$ a non-zero divisor and $r\colon \spec(A/a) \to A$ the associated regular embedding. Since the ideal $(a)$ is principal and non-zero we have an equivalence $r^!A[1] \simeq A/a$. The push-forward Poincaré functor of Corollary~\ref{corollary:lci-pushforward} for $L=A[1]$ then yields the following $A$-linear duality preserving functor 
\[
(r_*,\eta)\colon (\Dperf(A/a),\Dual_{A/a}) \to (\Dperf(A),\Dual_{A[1]}) .
\]
Applying the pre-motivic realization functor $\R^{\sym}_A(\KGW;-) := \R^{\sym}_{\spec(A)}(\KGW;-)$ and using Construction~\ref{construction:base-change} and Remark~\ref{remark:regular-base} we then obtain an induced map
\[
\mu_{r}\colon r_*\KQ_{\spec(A/a)} = \R^{\sym}_{A}(\KGW;(\Dperf(A/a),\Dual_{A/a})) \to \R^{\sym}_{A}(\KGW;(\Dperf(A),\Dual_{A[1]})) = \KQ_{A[1]}.
\]
\end{construction}

\begin{remark}
\label{remark:compatibility}%
Let us take a minute to consider the compatibility of Construction~\ref{construction:fundamental-pushforward} with respect to a ring maps $B \to A$ sending a non-zero divisor element $b \in B$ to a non-zero divisor element $a \in A$. We note that the condition of being non-zero divisors implies that the multiplication maps $a\colon A \to A$ and $b\colon B \to B$ are injective and the map
\[
A \otimes^L_B B/b \to A/a
\]
is an equivalence (where on the left we have the derived tensor product in $\Dperf(B)$).
In particular, the cartesian square of affine schemes
\[
\begin{tikzcd}
\spec(B/b)\ar[r, "s"] & \spec(B)  \\
\spec(A/a) \ar[r, "r"]\ar[u, "g"] & \spec(A) \ar[u, "f"']
\end{tikzcd}
\]
is Tor-independent and hence determines a commutative square of $B$-linear stable $\infty$-categories with duality
\[
\begin{tikzcd}
(\Dperf(B/b),\Dual_{B/b}) \ar[r, "s_*"]\ar[d,"g^*"'] & (\Dperf(B),\Dual_{B[1]})\ar[d,"f^*"]  \\
(\Dperf(A/a),\Dual_{A/a}) \ar[r, "r_*"] & (\Dperf(A),\Dual_{A[1]})  \ .
\end{tikzcd}
\]
Applying the motivic realization functor $\R^{\sym}_B(\KGW;-)$ we then obtain a commutative square
\[
\begin{tikzcd}
&s_*\KQ_{B/b} \ar[dl,dotted]\ar[r,"\mu_s"]\ar[dd] & \KQ_{B[1]} \ar[dd]  \\
s_*g_*\KQ_{A/a}\ar[dr,dotted, "\simeq"] &&\\
&f_*r_*\KQ_{A/a} \ar[r, "f_*\mu_r"] & f_*\KQ_{A[1]} 
\end{tikzcd}
\]
in $\SH(B)$, which transforms via the adjunction $f^*\dashv f_*$ into a commutative square
\[
\begin{tikzcd}
&f^*s_*\KQ_{B/b} \ar[dl,dotted, "\simeq"']\ar[r, "f^*\mu_s"]\ar[dd, "\simeq"] & f^*\KQ_{B[1]} \ar[dd, "\simeq"] \\
r_*g^*\KQ_{B/b} \ar[dr,dotted,"\simeq"] && \\
&r_*\KQ_{A/a} \ar[r,"\mu_r"] & \KQ_{A[1]} 
\end{tikzcd}
\]
whose vertical maps are equivalences by Proposition~\ref{proposition:KQ-stable-under-basechange} and proper base change (for motivic spectra). In summary, the outcome of this procedure can be viewed as a natural homotopy
\[
\mu_r \sim f^*\mu_s
\]
of maps $r_*\KQ_{\spec(A/a)} \to \KQ_{A[1]}$.
\end{remark}

\begin{proposition}
\label{proposition:pushforward-is-fundamental}%
For any regular embedding of the form $r\colon \spec(A/a) \to \spec(A)$ associated to a regular ring $A$ (of finite type over $S$) and non-zero divisor $a \in A$, the map 
\[
\mu_{r}\colon r_*\KQ_{\spec(A/a)} \to \KQ_{A[1]},
\]
of Construction~\ref{construction:fundamental-pushforward} corresponds, under the adjunction $r_* \dashv r^!$ and the equivalence $\KQ_{A[1]} = \Sig_{\Pone}\KQ_A$ arising from Bott periodicity (Example~\ref{example:bott}), to the fundamental class map
\[
\eta^{\KQ}_{r} \colon \KQ_{\spec(A/a)} \to \Th(T_r)^{-1} \otimes r^!\KQ_A = \Sig_{\Pone}r^!\KQ_A = r^!(\Sig_{\Pone}\KQ_A) ,
\]
where for the first equivalence we have used the fact that the normal bundle of the regular embedding $r\colon \spec(A/a) \to \spec(A)$ is equipped with a distinguished trivialization.
\end{proposition}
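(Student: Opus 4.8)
The plan is to identify two maps $\KQ_{\spec(A/a)} \to r^!(\Sig_{\Pone}\KQ_A)$ out of the same motivic spectrum: the adjoint $\wtl{\mu}_r$ of $\mu_r$ under $r_* \dashv r^!$, and the fundamental class map $\eta^{\KQ}_r$. Since both are defined functorially in the pair $(A,a)$ --- the first by Construction~\ref{construction:fundamental-pushforward} together with the compatibility of Remark~\ref{remark:compatibility}, and the second by the general formalism of~\cite{deglise-fundamental} with its transverse base change property~\ref{item:tor} --- the first step is to set up a precise sense in which both assignments $(A,a)\mapsto(\wtl{\mu}_r\ \text{or}\ \eta^{\KQ}_r)$ are natural transformations of functors on the category of such pairs, and to reduce to checking they agree on a single universal object. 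Concretely, the category of pairs $(A,a)$ with $A$ a (finite type, regular) ring and $a$ a non-zero divisor has an initial object among those of the form "polynomial ring with the last variable", namely $(\ZZ[t],t)$ --- more precisely, any regular embedding $\spec(A/a)\hookrightarrow\spec(A)$ associated to a single non-zero divisor is, Zariski-locally and up to Tor-independent base change, pulled back from $\spec(\ZZ[t]/t)\hookrightarrow\spec(\ZZ[t])$ along the classifying map $t\mapsto a$. The reduction to this universal case uses Proposition~\ref{proposition:KQ-stable-under-basechange} (pullback invariance of $\KQ$), proper base change for motivic spectra, and Remark~\ref{remark:compatibility} on the $\mu_r$ side, and the transverse base change compatibility~\ref{item:tor} on the $\eta^{\KQ}_r$ side; both sides are also visibly compatible with the trivialization of the normal bundle, which for a principal ideal $(a)$ is the one determined by the generator $a$ (equivalently by the equivalence $r^!A[1]\simeq A/a$ coming from the Koszul resolution $[A\xrightarrow{a}A]$, cf.\ the proof of Proposition~\ref{proposition:proper-lci-poincare}).

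Having reduced to the universal case, the second and main step is to compute both maps explicitly for $r\colon\spec(\ZZ[t]/t)=\spec(\ZZ)\hookrightarrow\spec(\ZZ[t])=\Aone_{\ZZ}$. On the $\mu_r$ side, by Construction~\ref{construction:fundamental-pushforward} this is obtained by applying $\R^{\sym}_{\Aone}(\KGW;-)$ to the push-forward Poincaré functor $(r_*,\eta)\colon(\Dperf(\ZZ),\Dual_{\ZZ})\to(\Dperf(\ZZ[t]),\Dual_{\ZZ[t][1]})$ of Corollary~\ref{corollary:lci-pushforward}; unwinding the adjunction $r_*\dashv r^!$ and Bott periodicity (Example~\ref{example:bott}), the adjoint map is governed by the localization sequence attached to the closed point $\{0\}\subseteq\Aone$, i.e.\ exactly the vertical localization sequence of Corollary~\ref{corollary:open-is-karoubi} that already appeared in the proof of $\Aone$-invariance (Theorem~\ref{theorem:A1-invariance}). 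On the $\eta^{\KQ}_r$ side, the fundamental class of a closed embedding with trivialized normal bundle is, by construction in~\cite{deglise-fundamental}, the purity/deformation-to-the-normal-cone equivalence $\spec(\ZZ)\to\Sig_{\Pone}r^!\SS_{\Aone}$ tensored with $\KQ$, which is again controlled by the same localization sequence for $\{0\}\subseteq\Aone$ together with the identification $\Th(\Aone_{\{0\}})\simeq\Pone$. So the heart of the argument is to check that the push-forward Poincaré functor $(r_*,\eta)$ of Construction~\ref{construction:pushforward-hermitian}, after realization, produces precisely the boundary/deformation map of the Beilinson-type sequence of Construction~\ref{construction:projective-line} and Proposition~\ref{proposition:beilinson} --- this is in spirit the same kind of boundary computation carried out in Lemma~\ref{lemma:pbf-boundary}, and Lemma~\ref{lemma:composite-pf} (compatibility of push-forwards with composition) is what lets one match the relevant composites.

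The hard part will be the bookkeeping in this second step: one must track, through the three layers of constructions (push-forward Poincaré functor $\to$ co-Yoneda functor $\jbloc$ $\to$ $\R_S$ $\to$ $\Loc_{\Aone}$), the canonical identifications --- the equivalence $r^!A[1]\simeq A/a$, the Bott periodicity equivalence $\KQ_{A[1]}\simeq\Sig_{\Pone}\KQ_A$ coming from Proposition~\ref{proposition:projective-line}, and the Thom trivialization --- and verify they are all compatible on the nose rather than merely up to an a priori ambiguous automorphism. I expect the cleanest route is to first establish the statement as an equivalence of maps in the homotopy category and then bootstrap coherence from the functoriality in $(A,a)$ via the universal case, exactly as sketched above, so that no infinite hierarchy of higher homotopies needs to be checked by hand. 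A secondary subtlety is that the fundamental class formalism of~\cite{deglise-fundamental} is phrased for separated finite-type $S$-schemes and for the unit $\SS$, so one should record that $\eta^{\KQ}_r$ is obtained by base change and $\otimes\KQ$ from $\eta_r$, and that this is compatible with the localization-sequence description --- a point that is routine but must be stated.
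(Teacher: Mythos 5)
Your overall strategy---reduce to a universal "zero section of the affine line" case by naturality and base-change compatibility, then identify the two maps by an explicit Poincaré-categorical computation anchored in Construction~\ref{construction:projective-line}---is essentially the same as the paper's. The reduction step in particular is implemented almost identically: you use the Tor-independent square obtained by sending the free variable $t$ to $a$, together with the compatibility of $\mu_r$ under Tor-independent base change (Remark~\ref{remark:compatibility}) on one side and the transverse base change property of fundamental classes on the other, to collapse the general case onto a universal closed embedding.

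One harmless but notable deviation: you want to reduce all the way to the universal pair $(\ZZ[t],t)$ over $\spec(\ZZ)$, whereas the paper stops at the pair $(A[t],t)$ keeping the original base $S=\spec(A)$. The latter is cleaner because the fundamental class formalism of~\cite{deglise-fundamental} is set up over a \emph{fixed} $S$ and the compatibility with changing $S$ is not something you set up explicitly; your reduction would need an additional base-change-in-$S$ argument for $\eta^{\KQ}$ that the paper avoids by never changing $S$. Reducing to $\spec(A)\hookrightarrow\Aone_A$ costs nothing more and matches the structure of all the ingredients you cite (Proposition~\ref{proposition:KQ-stable-under-basechange}, Remark~\ref{remark:compatibility}, the Tor-independence property) more directly.

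The genuine gap is in the second step, which you honestly flag as "the hard part" but do not carry out. It is not merely bookkeeping: you need to show that the map induced by the push-forward Poincaré functor $s_*$ identifies, through the excision equivalence $\Dperf_0(\Aone_A)\simeq\Dperf_0(\Pone_A)$, with the inclusion $(i_0)_*$; and then, crucially, that the Bott periodicity equivalence of Example~\ref{example:bott} is induced on $\R^{\sym}_A(\KGW;-)$ by the \emph{other} leg $q_*((-)\otimes\cO(-1))$ of the sequence from Construction~\ref{construction:projective-line}. The final identification $q_*((-)\otimes\cO(-1))\circ(i_0)_*\simeq\id$ follows from Lemma~\ref{lemma:composite-pf} together with $q\circ i_0=\id$ and $i_0^*\cO(-1)\simeq\cO$; this is precisely the computation that closes the loop. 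Your reference to Lemma~\ref{lemma:pbf-boundary} as a model for the boundary computation is slightly off target---that lemma computes the boundary in the projective bundle formula for $\L$-theory and is not what is being used here; the real workhorse is Lemma~\ref{lemma:composite-pf}, and on the fundamental-class side what is needed is the explicit identification $s^!p^* \simeq \Om^A\simeq\Om_{\Pone}$ via the exact sequence of functors $s_*s^!\Rightarrow\id\Rightarrow j_*j^*$, together with the observation that $\eta^{\KQ}_s$ is then the inverse of the $\Sig_{\Pone}\Om_{\Pone}$ counit. So while your plan names the right tools, it does not yet pin down the concrete identifications that the paper's proof hinges on.
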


\begin{proof}
Without loss of generality, we may assume that $S =\spec(A)$.
The element $a \in A$ determines an $A$-algebra homomorphism $A[t] \to A$ sending $t$ to $a$. Since $t \in A[t]$ is never a zero divisor we obtain, as described in Construction~\ref{construction:fundamental-pushforward} and Remark~\ref{remark:compatibility}, a Tor independent square of affine schemes
\[
\begin{tikzcd}
\spec(A/a)\ar[r, "r"]\ar[d, "g"] & \spec(A) \ar[d, "f"] \\
\spec(A[t]/t) \ar[r, "s"] & \spec(A[t])
\end{tikzcd}
\]
and a compatibility homotopy $\mu_r \sim f^*\mu_s$ of maps $r_*\KQ_{\spec(A/a)} \to \KQ_{A[1]}$. As the first part of the proof, we will now show that the maps $\eta^{\KQ}_r$ and $\eta^{\KQ}_s$ satisfy a similar type of compatibility, so that the desired claim for $r$ follows from the claim for $s$ (a case we will then establish in the second part of the proof).

To establish this first part, note that the construction of the fundamental classes is also compatible with Tor independent squares, as recalled above. In particular, in the case at hand we get that $\eta^{\KQ}_r$ is given as the composite
\[
\KQ_{A/a} = g^*\KQ_{A[t]/t} \xrightarrow{g^*\eta^{\KQ}_s} g^*\Sig_{\Pone} s^!\KQ_{A[t]} = \Sig_{\Pone} g^*s^!\KQ_{A[t]} \to \Sig_{\Pone} r^!f^*\KQ_{A[t]} = r^!\Sig_{\Pone}\KQ_A.
\]
We thus get that the adjoint of $\eta^{\KQ}_r$ along $r_* \dashv r^!$ is given as the composite
\begin{equation}
\label{equation:composite-1}%
\begin{split}
r_*\KQ_{A/a} = r_*g^*\KQ_{A[t]/t} & \xrightarrow{r_*g^*\eta^{\KQ}_s} r_*g^*\Sig_{\Pone} s^!\KQ_{A[t]} = \Sig_{\Pone}r_*g^*s^!\KQ_{A[t]} \\
 & \xrightarrow{\phantom{r_*g^*\eta_{\KQ}}} \Sig_{\Pone}r_*r^!f^*\KQ_{A[t]} \longrightarrow \Sig_{\Pone}f^*\KQ_{A[t]} = \Sig_{\Pone}\KQ_A .
\end{split}
\end{equation}
On the other hand, the adjoint of $\eta^{\KQ}_s$ along $s_* \dashv s^!$ is given by the composite
\begin{equation*}
s_*\KQ_{A[t]/t} \xrightarrow{s_*\eta^{\KQ}_s} s_*\Sig_{\Pone} s^!\KQ_{A[t]} = \Sig_{\Pone}s_*s^!\KQ_{A[t]} \longrightarrow \Sig_{\Pone}\KQ_{A[t]} ,
\end{equation*} 
and if we now apply $f^*$ to this we get the composite
\begin{equation}
\label{equation:composite-2}%
\begin{split}
r_*\KQ_{A/a} = r_*g^*\KQ_{A[t]/t} = f^*s_*\KQ_{A[t]/t} & \xrightarrow{f^*s_*\eta^{\KQ}_s} f^*s_*\Sig_{\Pone} s^!\KQ_{A[t]} = \Sig_{\Pone}f^*s_*s^!\KQ_{A[t]} \\
& \xrightarrow{\phantom{f^*s_*\eta_{\KQ}}} \Sig_{\Pone}f^*\KQ_{A[t]} = \Sig_{\Pone}\KQ_A 
\end{split}
\end{equation} 
We claim that the composite~\eqref{equation:composite-1} is homotopic to the composite~\eqref{equation:composite-2}. Unwinding both constructions, this amounts to constructing a commuting homotopy in the diagram
\[
\begin{tikzcd}
r_*g^*s^!\ar[d,Rightarrow] & f^*s_*s^! \ar[d,Rightarrow]\ar[l,Rightarrow, "\simeq"']\\
r_*r^!f^*\ar[r,Rightarrow] & f^* \ .
\end{tikzcd}
\]
Here, the two arrows with target $f^*$ are induced by the counits of $s_* \dashv s^!$ and $r_* \dashv r^!$, respectively, while the two other arrows are induced by the Beck-Chevalley transformations $f^*s_* \Rightarrow r_*g^*$ and $g^*s^! \Rightarrow r^!f^*$. These two Beck-Chevalley transformations are by construction the mates of one another, and so we may write either one as a composite involving the other and (co)units. In particular, we may extend the above a-priori non-commutative diagram to a larger diagram as
\[
\begin{tikzcd}
r_*r^!r_*g^*s^! \ar[d,Rightarrow] & r_*g^*s^!\ar[d,Rightarrow]\ar[l,Rightarrow] & f^*s_*s^!  \ar[d,Rightarrow]\ar[l,Rightarrow, "\simeq"']\\
r_*r^!f^*s_*s^!\ar[r,Rightarrow] & r_*r^!f^*\ar[r,Rightarrow] & f^* \ .
\end{tikzcd}
\]
where the left half commutes. We then observe that the external part of the diagram commutes as well, yielding a commuting homotopy for the right half. To summarize so far, we have thus shown that the adjoint of $\eta^{\KQ}_r$ along $r_*\dashv r^!$ is homotopic to the image under $f^*$ of the adjoint of $\eta^{\KQ}_s$ along $s_* \dashv s^!$. Otherwise put, these adjoints satisfy the same compatibility as $\mu_r$ and $\mu_s$. It will hence suffice to show that $\mu_s$ is homotopic to the adjoint along $s_* \dashv s^!$ of $\eta^{\KQ}_s$. In other words, we may replace the pair $(A,A/a)$ with the pair $(A[t],A)$ and prove the claim for the regular embedding $s\colon \spec(A) \to \spec(A[t])$ of $A$-schemes. 

We now proceed to the second part of the proof, where we establish the desired claim for $s$. 
Write $p\colon \spec(A[t]) \to \spec(A)$ for the structure map and $j\colon U \to \spec(A[t])$ for the inclusion of the complement of $s\colon \spec(A) \to \spec(A[t])$. Then we have an exact sequence of functors
\[
s_*s^! \Rightarrow \id \Rightarrow j_*j^*
\]
which induces an equivalence
\[
s^! \simeq p_*\fib[\id\Rightarrow j_*j^*] \simeq \fib[p_*\Rightarrow p_*j_*j^*]
\]
of functors $\SH(A) \to \SH(A)$, under which the adjoint $E \to s^!E'$ of a given map $s_*E \to E'$ can be described as 
\[
E = p_*s_*E = \fib[p_*s_*E \to p_*j_*j^*s_*E] \to  \fib[p_*E' \to p_*j_*j^*E'] ,
\]
using the fact that $j^*s_*E = 0$. Taking $E' = p^*\KQ_{A[1]}$ we thus obtain an equivalence
\[
s^!p^*\KQ_{A[1]} = \fib[p_*p^*\KQ_{A[1]} \to  p_*j_*j^*p^*\KQ_{A[1]}] = \fib[\KQ_{A[1]}^{\Aone} \to \KQ_{A[1]}^{\Gm}] = \Om^A\KQ_{A[1]}
\]
where $\Om^A$ stands for cotensor by 
the Thom object of the trivial line bundle over $\spec(A)$ (see \S\ref{subsection:thom-isomorphism}). By the definition of $\mu_s\colon s_*\KQ_A \to \KQ_{(A[t])[1]} = p^*\KQ_{A[1]}$ (see Proposition~\ref{proposition:KQ-stable-under-basechange} for the last equivalence) and the above, the map  
\[
\KQ_A \to \fib[\KQ_{A[1]}^{\Aone} \to \KQ^{\Gm}_{A[1]}] = \Om^A\KQ_{A[1]}
\]
adjoint to $\mu_s$ then coincides under the equivalence 
$s^!p^*\KQ_{A[1]} = \Om^A\KQ_{A[1]}$ with the map induced by applying the functor $\R^{\sym}_A(\KGW;-)$ to the duality preserving $A$-linear functor
\[
s_*\colon (\Dperf(A),\Dual_A) \to (\Dperf_0(A[t]),\Sig\Dual_{A[t]}),
\]
where $\Dperf_0(A[t])$ is the kernel in the bounded Karoubi sequence 
\[
(\Dperf_0(A[t]),\Sig\Dual_{A[t]}) \to (\Dperf(A[t]),\Sig\Dual_{A[t]}) \to (\Dperf(A[t,t^{-1}],\Sig\Dual_{A[t,t^{-1}]})
\]
of Proposition~\ref{proposition:open-is-bounded}
and we have used Construction~\ref{construction:base-change}, Proposition~\ref{proposition:KQ-stable-under-basechange}, and Remark~\ref{remark:realization-exact} to identify the terms. Next, we note that the square
\[
\begin{tikzcd}
\Gm \ar[r]\ar[d] & \Aone \ar[d] \\
\Pone \setminus \{0\} \ar[r] & \Pone,
\end{tikzcd}
\]
considered as a pushout square of representable Nisnevich sheaves,
induces a natural equivalence 
\[
\Om_{\Pone} = \fib[E^{\Pone} \to E] \simeq \fib[E^{\Pone} \to E^{\Pone \setminus \{0\}}] \xrightarrow{\simeq} \fib[E^{\Aone} \to E^{\Gm}] =  \Om^A(E)
\]
for $E \in \SH(A)$. By open base change (Proposition~\ref{proposition:open-base-chage}) the push-forward-pullback square
\[
\begin{tikzcd}
\Dperf(A)\ar[d,"s_*"] & \Dperf(A)\ar[l,equal] \ar[d,"(i_0)_*"] \\
\Dperf(\Aone_A)  &  \Dperf(\Pone_A) \ar[l]
\end{tikzcd}
\]
commutes, and so we conclude that, under the composed equivalence 
\[
s^!p^*\KQ_{A[1]} \simeq \Om^A\KQ_{A[1]} \simeq \Om_{\Pone}\KQ_{A[1]}
\]
the adjoint of $\KQ_A \to \Om_{\Pone}\KQ_{A[1]}$ of
$\mu_s$ is given by applying the functor $\R^{\sym}_A(\KGW;-)$ to the duality preserving $A$-linear functor
\[
(i_{0})_*\colon (\Dperf(A),\Dual_A) \to \big(\Dperf_0(\Pone_A),\Sig\Dual_{\Pone_{A}}\big),
\]
where $\Dperf_0(\Pone_A)$ is 
the kernel in the bounded Karoubi sequence 
\[
\big(\Dperf_0(\Pone_A),\Sig\Dual_{\Pone_A}\big) \to \big(\Dperf(\Pone_A),\Sig\Dual_{\Pone_A}\big) \to \big(\Dperf(\Pone_A \setminus \{0\}),\Sig\Dual_{\Pone_A \setminus \{0\}}\big) .
\]

Passing now to the side of fundamental classes, unwinding their construction we see that under the equivalence $s^!p^*\KQ_{A} \simeq \Om_{\Pone}\KQ_A$ the map
\[
\eta^{\KQ}_s\colon \KQ_A \to  \Sig_{\Pone}s^!p^*\KQ_{A} \simeq \Sig_{\Pone}\Om_{\Pone}\KQ_A
\]
is the inverse of the counit exhibiting $\Sig_{\Pone}$ and $\Om_{\Pone}$ as adjoint-inverse to one another. 
To finish the proof it will now suffice to check that the map
\[
\KQ_A = \R^{\sym}_A(\KGW;(\Dperf(A),\Dual_A)) \to \R^{\sym}_A\big(\KGW;\big(\Dperf_0(\Pone_A),\Sig\Dual_{\Pone_A}\big)\big) = \Om_{\Pone}\KQ_{A[1]}
\]
induced by $(i_0)_*$ is inverse to the equivalence $\Om_{\Pone}\KQ_{A[1]} \simeq \KQ_{A}$ issued from the Bott periodicity 
of Example~\ref{example:bott}.
Recall the latter is obtained from the bounded Karoubi sequence
\[
\begin{tikzcd}
(\Dperf(A),\Dual_A)\ar[r,"q^*"] & \big(\Dperf(\Pone_A),\Dual_{\Pone_A}\big) \ar[rr,"{q_*((-) \otimes \cO(-1))}"]  && (\Dperf(A),\Dual_{A[-1]})
\end{tikzcd}
\]
of Construction~\ref{construction:projective-line}. Unwinding the definitions, under the identification $\Om_{\Pone}\KQ_{A[1]} = \R^{\sym}_A\big(\KGW;\big(\Dperf_0(\Pone_A),\Sig\Dual_{\Pone_A}\big)\big)$ the Bott periodicity equivalence $\Om_{\Pone}\KQ_{A[1]} \simeq \KQ_{A}$ is induced on $\R^{\sym}_A(\KGW;-)$ by the composed functor 
\[
\big(\Dperf_0(\Pone_A),\Sig\Dual_{\Pone_A}\big) \subseteq \big(\Dperf(\Pone_A),\Sig\Dual_{\Pone_A}\big) \xrightarrow{q_*((-) \otimes \cO(-1))}  (\Dperf(A),\Dual_{A}) .
\]
The desired result now follows form the fact that the composite $q_*((-) \otimes \cO(-1)) \circ (i_0)_*$
is homotopic to the identity as a duality preserving functor by Lemma~\ref{lemma:composite-pf}, since $q \circ i_0 = \id$ and
$i_0^*\cO(-1) \simeq \cO_X$. 
\end{proof}

\begin{proof}[Proof of Theorem~\ref{theorem:purity}]
By Lemma~\ref{lemma:reduction} and Proposition~\ref{proposition:pushforward-is-fundamental} it will suffice to show that for every regular embedding of the form $r\colon \spec(A/a) \to \spec(A)$ for $A$ a regular Noetherian ring of finite Krull dimension, the map 
\[
\mu_{r}\colon r_*\KQ_{\spec(A/a)} \to \KQ_{A[1]},
\]
is adjoint to an equivalence $\KQ_{\spec(A/a)} \to r^!(\KQ_{A[1]})$. Since $r_*$ is fully-faithful, this is equivalent to saying that the functor $r^!$ sends $\mu_r$ to an equivalence, and also equivalent to saying that the functor $r_*r^!$ sends $\mu_r$ to an equivalence. Let $U := \spec(A[a^{-1}]) \subseteq \spec(A)$ be the open complement of $\spec(A/a)$ in $\spec(A)$ and write $j\colon U \hrar \spec(A)$ for the inclusion. Then we have an exact sequence of functors
\[
r_*r^! \Rightarrow \id \Rightarrow j_*j^* ,
\]
on $\SH(A)$,
and so to prove that $r_*r^!$ sends $\mu_r$ to an equivalence it will be enough to prove that the sequence
\[
r_*\KQ_{\spec(A/a)} \xrightarrow{\mu_r} \KQ_{A[1]} \to j_*j^*\KQ_{A[1]} = j_*\KQ_{U[1]},
\]
whose composite admits a unique null homotopy since $j^*r_*\KQ_{\spec(A/a)} = 0$, is exact, where we have made implicit use of the equivalence $j^*\KQ_{A[1]} = \KQ_{U[1]}$ of Proposition~\ref{proposition:KQ-stable-under-basechange}. 
By Corollary~\ref{corollary:values-of-KQ-regular} what we now need to verify is that for every smooth $A$-scheme $X$ and every integer $n$ the resulting sequence
\[
\GW^{\sym}(Z,\cO_Z[n-1]) \to \GW^{\sym}(X,\cO_X[n]) \to \GW^{\sym}(V,\cO_V[n])
\]
is exact, where $Z := X \times_{\spec(A)} \spec(A/a)$ and $V := X \times_{\spec(A)} \spec(A[a^{-1}])$. Indeed, this is Dévissage for symmetric $\GW$, see Theorem~\ref{theorem:global-devissage}.
\end{proof}

\begin{appendices}
\renewcommand{\appendixtocname}{{\bf Appendices}}
\addappheadtotoc

\section{Derived categories of schemes}
\label{section:appendix}%

In this appendix, we summarize the $\infty$-categorical approach to derived categories of schemes. The principal exports which are use in the body of the text are the results of \S\ref{subsection:perfect} concerning perfect complexes, and in particular Corollary~\ref{corollary:nisnevich-karoubi}, which says that the formation of perfect derived categories sends Nisnevich squares to Karoubi squares. The results of \S\ref{subsection:quasi-perfect} also play an important part in \S\ref{subsection:pushforward}.

\subsection{Grothendieck abelian categories and their derived counterparts}

Recall that a Grothendieck abelian category is an abelian category which is presentable and in which the class of monomorphisms is closed under filtered colimits. Any Grothendieck abelian category $\cA$ has enough injectives, and one may endow the category $\Ch(\cA)$ of unbounded (homologically graded) complexes in $\cA$ with a model structure in which the weak equivalences are quasi-isomorphisms of complexes, the cofibrations are the levelwise monomorphisms, and every fibrant object is levelwise injective (though in general not every levelwise injective complex is fibrant). We write $\Der(\cA)$ for the underlying $\infty$-category of this model structure, so that $\Der(\cA)$ is by definition the localisation of $\Ch(\cA)$ by the collection of quasi-isomorphisms. It can also be realized more explicitly as the dg-nerve of the full subcategory of fibrant objects, see~\cite{HA}*{\S 1.3.5}. It is always a stable presentable $\infty$-category, and admits a t-structure which is compatible with filtered colimits, see~\cite{HA}*{Proposition 1.3.5.21}.

The injective model structure is convenient for many purposes, but its functorial dependence on $\A$ is rather weak: an adjunction $f^*\dashv f_*$ of Grothendieck abelian categories induces a Quillen adjunction on complexes equipped with injective model structures if and only if the left adjoint $f^*$ is exact, which is a rather strong condition. In the context of the present appendix, the Grothendieck abelian categories we are interested in here are the categories $\Mod(X)$ of $\cO_X$-module sheaves over a scheme $X$. Furthermore, for any morphism $f\colon X \to Y$ of schemes, we would like to consider the associated pullback-push-forward adjunction $f^*\dashv f_*$, and so we would like that the induced adjunction on the level complexes be a Quillen adjunction. This will not work in general for the injective model structure, since $f^*$ is usually not exact. To surmount this issue one can work with the associated \emph{flat} model structure. Recall that if $\A$ is a symmetric monoidal Grothendieck abelian category then an object $x \in \A$ is called flat if the functor $(-) \otimes x$ is exact. Let us then say that $\A$ is model-flat if $\Ch(\A)$ admits a combinatorial model structure whose weak equivalences are the quasi-isomorphisms and the cofibrant objects are those which consist of flat objects. We note that if such a model structure exists then it is unique, since model structures are determined by the weak equivalences and cofibrant objects. In addition, the symmetric monoidal structure on $\A$ induces a symmetric monoidal structure on $\Ch(\A)$, and this symmetric monoidal structure is always compatible (essentially by design) with the flat model structure.  
Given a symmetric monoidal left adjoint functor $f\colon \A \to \cB$ between model-flat Grothendieck abelian categories, if $f$ sends monomorphisms with flat cokernel to monomorphisms with flat cokernel, then the induced functor $\Ch(\A) \to \Ch(\cB)$ is a symmetric monoidal left Quillen functor with respect to the flat model structure on both sides. We may call such $f$ model-flat left functors. Finally, the model category $\Ch(\A)$ is stable and comes equipped with a built-in t-structure. This t-structure is multiplicative, that is, the collection of connective objects contains the unit and is closed under tensor products, and for any model-flat left adjoint functor $f\colon \A \to \cB$ the induced left Quillen functor $\Ch(\A) \to \Ch(\cB)$ preserves connective objects (that is, it is right t-exact).
The formation of flat model structures then assembles to form a functor
\begin{equation}
\label{equation:gro-to-modcat}%
\SMGro^{\flat} \to \SMModCat_{\st,\tstruc} \quad\quad \A \mapsto (\Ch(\A),\Cof^{\flat},\Fib^{\flat},\QIso)
\end{equation}
where $\SMGro^{\flat}$ is the category whose objects are model-flat symmetric monoidal Grothendieck abelian categories and model-flat symmetric monoidal left functors between them and $\SMModCat_{\st,\tstruc}$ is the category of symmetric monoidal stable combinatorial model categories equipped with multiplicative t-structures, and whose morphisms are right t-exact symmetric monoidal left Quillen functors.

Unwinding the definitions, the category $\SMModCat_{\st,\tstruc}$ can be identified with the category commutative algebra objects in the colored operad $\ModCat^{\otimes}_{\st,\tstruc}$ whose multi-maps $(\M_1,\ldots,\M_n) \to \N$ are the left Quillen multi-functors $f\colon \M_1 \times \cdots \times \M_n \to \N$ such that $f(x_1,\ldots,x_n)$ is connective whenever $x_i \in \M_i$ is connective for every $i=1,\ldots,n$. Here, the null operations $() \to \N$ are by definition the cofibrant connective objects. 

On the other hand, recall that the underlying $\infty$-category of any combinatorial model category is presentable, and that Quillen multi-functors induce multi-functors which preserve colimits in each variable separately. In addition, the underlying $\infty$-category of a stable model category is stable, and the notion of a t-structure on a stable model category or on its associated $\infty$-category is the same, since it can be defined purely in terms of the associated (triangulated) homotopy category.
In particular, the functor which inverts weak equivalences induces a map of $\infty$-operads
\begin{equation}
\label{equation:model-to-infinity}%
\ModCat^{\otimes}_{\st,\tstruc} \to (\PrL_{\st,\tstruc})^{\otimes} ,
\end{equation}
where, on the right hand side, we have the $\infty$-operad whose objects are the stable presentable $\infty$-categories with t-structures $(\C,\C_{\geq 0},\C_{\leq 0})$ and whose multi-maps 
\[
((\C_1,(\C_1)_{\geq 0},(\C_1)_{\leq 0}),\ldots,(\C_n,(\C_n)_{\geq 0},(\C_n)_{\leq 0})) \to (\D,\D_{\geq 0},\D_{\leq 0})
\]
are the functors $\C_1 \times \cdots \times \C_n \to \D$ which preserve colimits in each variable separately and that send $(\C_1)_{\geq 0} \times \cdots \times (\C_n)_{\geq 0}$ to $\D_{\geq 0}$. We note that without the t-structure part, this is exactly the underlying $\infty$-operad of the symmetric monoidal category of stable presentable $\infty$-categories, equipped with the (restriction to stable $\infty$-categories) of the Lurie tensor product of presentable $\infty$-categories. In fact, it is not hard to verify that the $\infty$-operad $(\PrL_{\st,\tstruc})^{\otimes}$ is also a symmetric monoidal $\infty$-category, where the tensor product of $(\C,\C_{\geq 0},\C_{\leq 0})$ and $(\D,\D_{\geq 0},\D_{\leq 0})$ is given by $\C \otimes \D$, equipped with the unique t-structure for which the connective objects form the smallest full subcategory containing the image of $\C_{\geq 0} \otimes \D_{\geq 0}$ and closed under colimits and extensions. We also note that commutative algebra objects in $(\PrL_{\st,\tstruc})^{\otimes}$ correspond to stable presentably symmetric monoidal $\infty$-categories $\C$ equipped with a multiplicative t-structure $(\C_{\geq 0},\C_{\leq 0})$, that is, a t-structure such that $\C_{\geq 0}$ contains the unit and is closed under tensor products.

Composing~\eqref{equation:gro-to-modcat} and the functor on commutative algebra objects induced from~\eqref{equation:model-to-infinity} we now obtain a functor
\begin{equation}
\label{equation:gro-to-pr}%
\SMGro^{\flat} \to \CAlg(\PrL_{\st,\tstruc}) \quad\quad \A \mapsto \Ch(\A)[\QIso^{-1}] 
\end{equation}
encoding the formation of derived $\infty$-categories of model-flat symmetric monoidal Grothendieck abelian categories.

\subsection{Derived categories of $\cO_X$-modules}

The relevance of the above discussion to the present context is that if $X$ is any scheme then the symmetric monoidal Grothendieck abelian category $\Mod(X)$ of $\cO$-modules is model-flat, and for any morphism of schemes $f\colon X \to Y$, the pullback functor $f^*$ is symmetric monoidal and model-flat~\cite{recktenwald2019}. 
Associating to $X$ the Grothendieck abelian category $\Mod(X)$ and post-composing with the formation of derived $\infty$-categories~\eqref{equation:gro-to-pr}, we obtain a functor
\begin{equation}
\label{equation:functor-der}%
\Sch\op \to \CAlg(\PrL_{\st,\tstruc}) \quad\quad X \mapsto (\Der(X),\Der(X)_{\geq 0},\Der(X)_{\leq 0}) 
\end{equation}
with $\Der(X) = \Ch(\Mod(X))[\QIso^{-1}]$ with associated t-structure as above.

\begin{remark}
By~\cite{SAG}*{Corollary 2.1.2.3} the $\infty$-category $\Der(X)$ is naturally equivalent to the full subcategory of $\Mod_{\cO_S}(\Sh(X_{\et},\Spa))$ spanned by those of $\cO_X$-module étale spectral sheaves whose underline sheaf is hypercomplete. It seems likely that the whole $\infty$-category of $\cO_X$-module spectral sheaves coincides with the \emph{unseparated derived category} of $\Mod(X)$, which can be modeled as the dg-nerve of all complexes of injectives (as opposed to only fibrant complexes), see~\cite[\S C.5.0]{SAG}. This subtlety however becomes irrelevant below when passing to quasi-coherent sheaves, since these are automatically hypercomplete, see~\cite{SAG}*{Proposition 2.2.6.1, Corollary 2.2.6.2}. 
\end{remark}

\begin{proposition}[Open base change]
\label{proposition:open-base-chage}%
If
\[
\begin{tikzcd}
U \ar[r,"i"]\ar[d,"f"'] & X \ar[d,"g"] \\
V \ar[r,"j"] & Y 
\end{tikzcd}
\]
is a cartesian square of schemes whose horizontal arrows are open embeddings then the Beck-Chevalley transformation $j^*g_* \Rightarrow i^*f_*$ is an equivalence of functors $\Der(X) \to\Der(V)$.
\end{proposition}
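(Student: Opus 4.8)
The statement is the standard "open base change" (sometimes called "flat base change along open immersions") for the derived $\infty$-categories $\Der(X)$ of arbitrary $\cO_X$-module sheaves, and the plan is to reduce it to the corresponding statement at the level of abelian categories of sheaves, which in turn is a classical local computation. First I would recall that since $i\colon U \to X$ and $j\colon V \to Y$ are open immersions, the pushforward functors $i_*$ and $j_*$ are computed as right derived functors of the underived pushforwards on $\cO$-modules, and — crucially — that $i^*$ and $j^*$ are exact (restriction to an open subscheme is exact on module sheaves), so they require no derivation. Thus the Beck--Chevalley transformation $j^*g_* \Rightarrow i^*f_*$ is the derived avatar of the underived Beck--Chevalley transformation $j^*g_* \Rightarrow i^*f_*$ between functors $\Mod(X) \to \Mod(V)$, in the sense that it is obtained by applying the exact functors $j^*,i^*$ to a right-derived functor and using that exact functors commute with totalization/fibrant replacement.

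Next I would verify the claim on homotopy sheaves, i.e. reduce to showing that for every $\cF \in \Der(X)$ and every integer $n$, the map $j^*\mathrm{R}^n g_*\cF \to i^* \mathrm{R}^n f_* (i^*\cF)$ — wait, more precisely $j^* (\pi_n g_* \cF) \to \pi_n(i^* f_* i^* \cF)$, where $i^*$ is $t$-exact — is an isomorphism. Since $i^*$ is $t$-exact and exact, $\pi_n$ commutes with it, so this is the statement that $j^*(\mathcal{H}^n g_*\cF) \to \mathcal{H}^n f_*(\cF|_U)$ is an isomorphism of sheaves on $V$, where I have used that $f_*$ of the restriction can be compared with the restriction of $g_*$. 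This is now a purely $1$-categorical statement about higher direct images: higher pushforward sheaves commute with restriction to opens. This can be checked stalkwise, or rather on a basis of opens of $V$: for $W \subseteq V$ open affine, $(\mathcal{H}^n g_*\cF)(j^{-1}(\text{stuff}))$... concretely, the higher direct image sheaf $\mathcal{H}^n g_* \cF$ is the sheafification of $W' \mapsto H^n(g^{-1}(W'), \cF)$, and restricting along the open immersion $j$ and using that $g^{-1}(j(W)) = f^{-1}(W)$ via the cartesian square gives exactly the presheaf computing $\mathcal{H}^n f_*(\cF|_U)$ on opens of $V$. Sheafification commutes with restriction to an open subset, so we are done.

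To package this cleanly at the $\infty$-categorical level rather than arguing on homotopy sheaves, the alternative route — which I would probably prefer for brevity — is to invoke the description of $\Der(X)$ via étale (hypercomplete) $\cO_X$-module sheaves of spectra recalled in the excerpt (after \cite{SAG}*{Corollary 2.1.2.3}), for which the six-functor-style compatibilities are available: pullback along an open immersion is both a left and a right adjoint restriction, pushforward along $g$ is computed sheaf-theoretically, and the cartesian square of schemes induces a cartesian square of (small) étale sites, whence the Beck--Chevalley map for the associated sheaf categories is an equivalence essentially by the sheaf condition and the fact that $g^{-1}\circ j = i \circ f^{-1}$ on the level of sites. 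One then notes this equivalence is compatible with the module structures and with hypercompletion, the latter because restriction to an open preserves hypercomplete objects.

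**The main obstacle.** The only genuinely delicate point is making precise that the $\infty$-categorical pushforward $g_*\colon \Der(X) \to \Der(Y)$ really is modeled by the right-derived functor of the abelian-category pushforward and that passing to this derived functor is compatible with the exact functors $i^*, j^*$ in the strong sense needed to identify the Beck--Chevalley $2$-cell (and not merely to know both composites agree as functors). In the model-categorical language of the appendix this amounts to checking that $j^*$ sends a fibrant replacement of $g_*\cF$ to something computing $i^*f_*(i^*\cF)$ — i.e. a compatibility of fibrant replacements with restriction — which is standard but requires either citing the relevant statement from \cite{SAG} about étale sheaves of spectra or spelling out that restriction to an open is a right Quillen functor for the injective model structure (it preserves levelwise injectives, as injectivity of $\cO_X$-modules is local on $X$) and hence preserves fibrant objects, so the Beck--Chevalley comparison is an equivalence on the nose after fibrant replacement. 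I expect the write-up to be short once that bookkeeping is in place.
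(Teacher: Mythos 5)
Your first route (reduce to homotopy sheaves and show higher direct image sheaves commute with restriction to opens, via the description of $\mathcal{H}^n g_*\cF$ as the sheafification of $W' \mapsto H^n(g^{-1}(W'),\cF)$) is a valid and different argument from the one the paper gives; it is more computational but works at the level of homotopy sheaves rather than touching the model structure directly. The paper's proof is shorter: it observes that $i^*$ has an \emph{exact} left adjoint (extension by zero $i_!$) and concludes immediately that $i^*$ preserves fibrant complexes for the injective model structure, so the Beck--Chevalley $2$-cell is computed at the underived level of $\cO$-module sheaves, where it is a straightforward $1$-categorical identification.

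Where your write-up has a genuine gap is in the ``main obstacle'' paragraph. You say that $i^*$ is right Quillen for the injective model structure because ``it preserves levelwise injectives, as injectivity of $\cO_X$-modules is local on $X$.'' This inference is not sound: as the paper itself notes when setting up the injective model structure, every fibrant complex is levelwise injective but in the unbounded setting not every levelwise injective complex is fibrant. So ``preserves levelwise injectives'' does not yield ``preserves fibrant objects.'' The correct reason --- the one the paper uses --- is that $i^*$ admits an exact left adjoint $i_!$; since $i_!$ preserves monomorphisms (cofibrations) and quasi-isomorphisms, it is left Quillen, and therefore $i^*$ is right Quillen and preserves fibrant complexes. (Also, ``injectivity of $\cO_X$-modules is local on $X$'' is not the relevant fact here; what you want is precisely that $i_!$ is an exact left adjoint of $i^*$.) Finally, there is a minor dimensional slip: you write $i^*f_*(i^*\cF)$, but $f_*$ already lands in $\Der(V)$, so the comparison object should be $f_*(i^*\cF)$ --- the paper's notation $i^*f_*$ for the composite $f_*\circ i^*$ (read left to right) is likely what confused this.
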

\begin{proof}
For an open embedding $i\colon U \hrar X$ the pullback functor $i^*\colon \Mod(X) \to \Mod(U)$ has an exact left adjoint, and hence $i^*$ preserves fibrant complexes of module sheaves (with respect to the injective model structure). Using this, the claim reduces to showing to the 1-categorical statement about the corresponding Beck-Chevalley transformation on the level of module sheaves, which is straightforward.
\end{proof}

\begin{proposition}[Zariski descent]
\label{proposition:descent}%
Let $X$ be a scheme equipped with a finite open covering $X = \cup_{i=1}^n U_i$ by opens. For every non-empty subsets $S \subseteq \{1,\ldots,n\}$ let us denote by $U_S := \cap_{i \in S} U_i$ and $j_S\colon U_S \hrar X$ the associated inclusion.
Then the functor
\begin{equation}
\label{equation:descent}%
\Der(X) \to \lim_{\emptyset \neq S \subseteq \{1,\ldots,n\}} \Der(U_S) \quad\quad M \mapsto \{j_S^*M\}_S 
\end{equation}
is an equivalence of $\infty$-categories.
\end{proposition}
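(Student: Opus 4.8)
The plan is to induct on the size $n$ of the cover, reducing to the case of a two-element cover, which I would then handle by an abstract gluing argument built on the open base change equivalence of Proposition~\ref{proposition:open-base-chage}. The cases $n \leq 1$ are trivial.

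I would begin with $n=2$: write $X = U_0 \cup U_1$, $W := U_0 \cap U_1$, let $j_i\colon U_i \hrar X$, $j_W\colon W \hrar X$ and $j_{W/U_i}\colon W \hrar U_i$ be the inclusions, and let $\mathcal{C} := \Der(U_0) \times_{\Der(W)} \Der(U_1)$ be the target of~\eqref{equation:descent} in this case, with comparison functor $F := (j_0^*, j_1^*)\colon \Der(X) \to \mathcal{C}$. Since each $j^*$ preserves colimits (being a left adjoint) and colimits in $\mathcal{C}$ are computed factorwise (the transition functors $\Der(U_i) \to \Der(W)$ being left adjoints as well), $F$ preserves colimits and so admits a right adjoint $G$. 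Unwinding the mapping-space identities via the adjunctions $j^* \dashv j_*$, one identifies $G(M_0, M_1, \phi) \simeq (j_0)_* M_0 \times_{(j_W)_* M_W} (j_1)_* M_1$, where $M_W$ is the common restriction of $M_0$ and $M_1$ to $W$ identified via $\phi$. Then I would prove two statements. First, $F$ is conservative: a complex of $\cO_X$-modules is zero precisely when all stalks of all its homology sheaves vanish, and as every point of $X$ lies in $U_0$ or $U_1$, this is detected by $j_0^*$ and $j_1^*$ jointly. Second, the counit $FG \Rightarrow \id_{\mathcal{C}}$ is an equivalence. Granting these, $G$ is fully faithful, so $F$ is a localisation functor; being also conservative, $F$ is an equivalence, which settles the $n=2$ case.

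The heart of the matter is the second statement, i.e. the computation $j_i^* G(M_0,M_1,\phi) \simeq M_i$ compatibly with the counit. Applying, say, $j_1^*$ to the defining pullback square of $G$ and using full faithfulness of $(j_1)_*$ for $j_1^*(j_1)_* M_1 \simeq M_1$, together with $j_1^*(j_W)_* M_W \simeq (j_{W/U_1})_* M_W$ (from $j_W = j_1 \circ j_{W/U_1}$), the only non-formal ingredient is the open base change equivalence $j_1^*(j_0)_* M_0 \simeq (j_{W/U_1})_* (M_0|_W)$ attached to the cartesian square on $W, U_0, U_1, X$; under $\phi$ this matches $(j_{W/U_1})_* M_W$, so the pullback collapses along an identity map to $M_1$, and symmetrically for $j_0^*$. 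Verifying that the resulting equivalences are indeed induced by the counit maps is a diagram chase which I expect to be the main place where care is needed, but it is entirely powered by Proposition~\ref{proposition:open-base-chage}.

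Finally I would run the induction on $n$. For $n \geq 3$, put $X' := U_1 \cup \cdots \cup U_{n-1}$, an open subscheme with $X = X' \cup U_n$, so the $n=2$ case gives $\Der(X) \simeq \Der(X') \times_{\Der(X' \cap U_n)} \Der(U_n)$. Applying the inductive hypothesis to the $(n-1)$-element covers $\{U_i\}_{i=1}^{n-1}$ of $X'$ and $\{U_i \cap U_n\}_{i=1}^{n-1}$ of $X' \cap U_n$ expresses the two outer terms as limits over the poset of non-empty subsets of $\{1,\dots,n-1\}$, and substituting and reorganising yields $\Der(X) \simeq \lim_{\emptyset \neq T \subseteq \{1,\dots,n\}} \Der(U_T)$; here the only thing to check is that the poset of non-empty subsets of $\{1,\dots,n\}$, decomposed according to whether a subset contains $n$, realises this iterated fibre product of limits as a single limit, which is a routine combinatorial manipulation. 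One also checks that under these identifications the comparison functor agrees with~\eqref{equation:descent}, completing the proof.
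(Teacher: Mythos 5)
Your proof is correct, and both the overall strategy and the key ingredient are the same as the paper's: construct the right adjoint $G$ given by $\{M_S\}_S \mapsto \lim_S j_{S*}M_S$ (your formula for $n=2$ is exactly this specialised to the cospan poset), observe that the comparison functor $F$ is jointly conservative because complexes of $\cO_X$-modules are detected stalkwise, and then reduce to showing the counit $FG \Rightarrow \id$ is an equivalence — at which point open base change (Proposition~\ref{proposition:open-base-chage}) does the actual work. Where you diverge is in the bookkeeping for general $n$. The paper computes $j_T^* G(\{M_S\})$ for arbitrary $T$ in one step, extends the resulting punctured $n$-cube $S \mapsto j_{S,T*}M_{S\cup T}$ (for $\emptyset \neq S$) to a full $n$-cube by allowing $S=\emptyset$, and invokes the general fact that an $n$-cube sending every inclusion $S \subseteq S'$ with $S\cup T = S'\cup T$ to an equivalence is automatically cartesian. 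You instead reduce to the $n=2$ case — where that cube argument degenerates to the collapse of a fibre product along an equivalence, which is effectively what you verify — and then run an induction using the factorisation of the poset of non-empty subsets of $\{1,\dots,n\}$ over the cospan indexing a pullback (equivalently, right Kan extension along the cartesian fibration determined by $S \mapsto (\mathbbm{1}[S\cap\{1,\dots,n-1\}\neq\emptyset],\,\mathbbm{1}[n\in S])$). Both routes work; the paper's cube observation dispatches all $n$ at once and avoids the extra combinatorics of decomposing the indexing poset, while your induction trades that for a more concrete base case whose counit computation is a single use of open base change. You should, as you note, still pin down that the equivalence $j_i^*G \simeq M_i$ is the counit and not merely some abstract equivalence; this is the one place where the paper's more explicit identification of the counit map saves a step.
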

\begin{proof}
The collection of push-forward functors $j_{S*}$ assemble to give a right adjoint
\[
\lim_{\emptyset \neq S \subseteq \{1,\ldots,n\}} \Der(U_S) \to \Der(X) \quad\quad \{M_S\}_S \mapsto \lim_{\emptyset \neq S \subseteq \{1,\ldots,n\}}j_{S*}M_S.
\]
Since the functors $j_S^*$ are jointly conservative, to show that the adjunction we get is an equivalence it will suffice to show that its counit is an equivalence. For a given family $\{M_S\}_S$, the component of the counit at a given $\emptyset \neq T \subseteq \{1,..,n\}$ is the map
\[
j^*_{T}\lim_{\emptyset \neq S \subseteq \{1,\ldots,n\}}j_{S*}M_S \to M_T
\]
adjoint to the projection $\lim_{\emptyset \neq S \subseteq \{1,\ldots,n\}}j_{S*}M_S \to j_{T*}U_T$. Now since $j^*_T$ preserves finite limits we can rewrite this map as the composite
\[
\lim_{\emptyset \neq S \subseteq \{1,\ldots,n\}}j^*_Tj^{\vphantom{*}}_{S*}M_S \to j^*_{T}j^{\vphantom{*}}_{T*}M_T \to M_T.
\]
By open base change (Proposition~\ref{proposition:open-base-chage}) the second map is an equivalence and the first is the projection
\[
\lim_{\emptyset \neq S \subseteq \{1,\ldots,n\}}j_{T,S*}M_{S \cup T} = \lim_{\emptyset \neq S \subseteq \{1,\ldots,n\}}j^{\vphantom{*}}_{T,S*} j^*_{T,S}M_S \to M_T 
\]
where $j_{S,T}\colon U_{S \cup T} \to U_T$ is the open inclusion of the intersection $U_{S \cup T} = U_S \cap U_T$. Now the claim that this last projection is an equivalence amounts to saying that the punctured cube 
\begin{equation}
\label{equation:formula}%
S \mapsto j_{S,T*}M_{S \cup T} \quad\quad \emptyset \neq S \subseteq \{1,\ldots,n\}
\end{equation}
in $\Der(U_T)$ can be extended to a cartesian cube whose edge relating the limit corner $\emptyset$ to $T$ is an equivalence. Indeed, the formula~\eqref{equation:formula} simply makes sense for any $S \subseteq \{1,\ldots,n\}$, including the empty one, and the resulting $n$-cube has the property that it sends any $S \subseteq S'$ such that $S \cup T = S' \cup T$ to an equivalence. Such an $n$-cube is always cartesian (see, e.g.,~\cite{HA}*{Proposition 6.1.1.13}).
\end{proof}

\begin{remark}
\label{remark:zariski}%
As explained in the proof of Proposition~\ref{proposition:descent}, the functor~\eqref{proposition:descent} is part of an adjunction, and since~\eqref{proposition:descent} is an equivalence the associated unit map is an equivalence. Explicitly, this unit is the map
\[
M \to \lim_{\emptyset \neq S \subseteq \{1,\ldots,n\}} j^{\phantom{*}}_{S*}j^*_SM,
\]
and so get in particular, that every $M \in \Der(X)$ admits a canonical presentation as a finite limit of complexes pushed forward from the various $U_S$'s.
\end{remark}

Given a scheme $X$ and a qcqs open subset $U \hrar X$ with complement $Z = X \setminus U$ we denote by $\Der_Z(X) \subseteq \Der(X)$ the full subcategory spanned by those $M \in \Der_U(X)$ such that $M|_{U} = 0$. 

\begin{remark}
\label{remark:pushforward-support}%
One often encounters situations where several open subsets are considered simultaneously. For this, it is convenient to point out that if $X$ is a scheme and $U, V \subseteq X$ two qcqs open subsets with closed complements $Z,Y \subseteq X$, respectively, then the restriction functor
\[
j^*\colon \Der(X) \to \Der(V)
\]
along the inclusion $j\colon V \hrar X$ sends $\Der_Z(X)$ to $\Der_{V \cap Z}(V)$, and by open base change (Proposition~\ref{proposition:open-base-chage}) the push-forward functor 
\[
j_*\colon \Der(V) \to \Der(X)
\]
sends $\Der_{V \cap Z}(V)$ to $\Der_Z(X)$. We conclude that $j^* \dashv j_*$ restricts to an adjunction
\[
j^*\colon \Der_Z(X) \adj \Der_{V \cap Z}(V) \cocolon j_*
\]
\end{remark}

\begin{corollary}[Excision]
\label{corollary:excision}%
Let $X$ be a scheme, $U \subseteq X$ an open subscheme with closed complement $Z = X \setminus U$ and $V$ an open subscheme which contains $Z$. Then the functor
\[
\Der_Z(X) \to \Der_Z(V),
\]
induced by restriction, is an equivalence.
\end{corollary}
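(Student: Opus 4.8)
The plan is to exhibit the restriction functor as one half of an adjoint equivalence. Write $j\colon V\hookrightarrow X$ for the inclusion, so that the functor in the statement is $j^*\colon \Der_Z(X)\to\Der_Z(V)$. Since $Z\subseteq V$ we have $V\cap Z=Z$, so Remark~\ref{remark:pushforward-support} provides a restricted adjunction
\[
j^*\colon \Der_Z(X)\adj\Der_Z(V)\cocolon j_*.
\]
It therefore suffices to check that the unit and counit of this adjunction are equivalences.

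The counit $j^*j_*N\to N$ for $N\in\Der_Z(V)$ is an equivalence already on all of $\Der(V)$: since $j$ is an open immersion the functor $j^*$ is exact (it is restriction of $\cO$-module sheaves) and satisfies $j^*j_*\simeq\id$, so $j_*$ is fully faithful and its counit is invertible, with no support hypothesis needed. For the unit $M\to j_*j^*M$ with $M\in\Der_Z(X)$, I would apply Zariski descent for the two-element open cover $X=U\cup V$. By Proposition~\ref{proposition:descent} together with Remark~\ref{remark:zariski}, the canonical map
\[
M\longrightarrow j_{U*}j_U^*M\times_{j_{UV*}j_{UV}^*M} j_{V*}j_V^*M
\]
is an equivalence, where $j_U,j_V,j_{UV}$ denote the inclusions of $U$, $V$ and $U\cap V$. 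Because $M$ lies in $\Der_Z(X)$ we have $j_U^*M=0$, hence also $j_{UV}^*M=(j_U^*M)|_{U\cap V}=0$; the fibre product then collapses to $j_{V*}j_V^*M=j_*j^*M$, and the descent map is identified with the unit. Thus the unit is an equivalence, and $j^*$ restricts to an equivalence $\Der_Z(X)\xrightarrow{\simeq}\Der_Z(V)$.

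I expect the only work to be bookkeeping: confirming that the descent equivalence of Proposition~\ref{proposition:descent}, the unit of $j^*\dashv j_*$, and the restricted adjunction of Remark~\ref{remark:pushforward-support} are compatibly identified, and keeping straight that $U\cap V$ is the open complement of $Z$ inside $V$ (so that the objects of $\Der(V)$ vanishing on $U\cap V$ are precisely those of $\Der_Z(V)$). There is no homotopical obstacle here beyond the already-proven Zariski descent statement.
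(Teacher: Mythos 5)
Your proof is correct and rests on the same key input as the paper's: Zariski descent (Proposition~\ref{proposition:descent}) for the two-element open cover $X = U \cup V$. The paper simply observes that $\Der_Z(X)\to\Der_Z(V)$ is the induced functor on horizontal fibres of the resulting cartesian square, whereas you unpack the same cartesianness through the adjunction $j^*\dashv j_*$ and verify unit and counit separately; this is the same argument in slightly more explicit packaging, not a genuinely different route.
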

\begin{proof}
This is the induced functor on the horizontal fibres of the square
\[
\begin{tikzcd}
\Der(X) \ar[r]\ar[d] & \Der(U) \ar[d] \\
\Der(V) \ar[r] & \Der(V \cap U) \ ,
\end{tikzcd}
\]
which is cartesian by Proposition~\ref{proposition:descent}.
\end{proof}

\begin{remark}
\label{remark:zariski-descent-support}%
Suppose given a scheme $X$, a closed subscheme $Z \subseteq X$, with complement $U = X \setminus Z$ and a finite open covering $X = \cup_{i=1}^{n} X_i$. Write $U_i = U \cap X_i$ and $Z_i = Z \cap X_i$. We may then apply Zariski descent to $\Der^{\qc}_Z(X)$ by placing it in the commutative diagram
\[
\begin{tikzcd}
\Der_Z(X) \ar[r]\ar[d] & \Der(X) \ar[r]\ar[d, "\simeq"] & \Der(U) \ar[d, "\simeq"] \\
\lim_{\emptyset \neq S \subseteq \{1,\ldots,n\}}\Der_{Z_i}(X_i) \ar[r] & \lim_{\emptyset \neq S \subseteq \{1,\ldots,n\}}\Der(X_i) \ar[r] & \lim_{\emptyset \neq S \subseteq \{1,\ldots,n\}}\Der(U_i)   \ .
\end{tikzcd}
\]
where the rows are fibre sequences and the middle and right most vertical maps are equivalences by Proposition~\ref{proposition:descent}. We may then conclude that the left most vertical map is an equivalence as well. In other words, $\infty$-categories of $\cO_X$-module complexes with support also satisfy Zariski descent.
\end{remark}

\subsection{Quasi-coherent complexes}
\label{subsection:quasi-coherent}%

For a scheme $X$, and a sheaf $M$ on $X$ we write $\Gamma_X(M) = M(X)$ for the global sections of $M$. In particular $\Gamma(X) = \Gamma_X(\cO_X)$ is the ring of global regular functions on $X$, and if $M$ is an $\cO_X$-module sheaf then $\Gamma_X(M)$ is a $\Gamma(X)$-module. We may consequently view $\Gamma_X$ as a functor
\[
\Gamma_X\colon \Mod(X) \to \Mod(\Gamma(X)) .
\]
As such, it is compatible with push-forwards, in the sense that for a map $f\colon X \to Y$ then the square
\[
\begin{tikzcd}
\Mod(X) \ar[r,"\Gamma_X"]\ar[d] & \Mod(\Gamma(X)) \ar[d] \\
\Mod(Y) \ar[r,"\Gamma_Y"] & \Mod(\Gamma(Y))  \ ,
\end{tikzcd}
\]
in which the left vertical map is given by push-forward and the right vertical map by the forgetful functor (which simply restricts the action along $\Gamma(Y) \to \Gamma(X)$), is commutative. The functor $\Gamma_X$ admits a left adjoint
\[
\Lam_X \colon \Mod(\Gamma(X)) \to\Mod(X)
\]
where for an $A$-module $M$ the sheaf $\Lam_X(M)$ is such that for any $a \in A$, the sections of $\Lam_X(M)$ over $\spec(A[f^{-1}])$ are given by $M[f^{-1}] = M \otimes_A A[f^{-1}]$. 
We note that the fact that $\Gamma_X$ is compatible with push-forward implies that $\Lam_X$ is compatible with pullback. When $X =\spec(A)$ is affine the left adjoint
\[
\Lam_{\spec(A)}\colon \Mod(A) \to \Mod(\spec(A))
\]
is fully-faithful. An $\cO_{\spec(A)}$-module is said to be \defi{quasi-coherent} if it is in the essential image of $\Lam_{\spec(A)}$. For a general scheme $X$, we say that an $\cO_X$-module is quasi-coherent if its restriction along every map of the form $\spec(A) \to X$ is quasi-coherent in the above sense. The compatibility of $\Lam_X$ with pullbacks implies that this definition indeed extends the definition for affine schemes.
We say that a complex $M \in \Der(X)$ is \defi{quasi-coherent} if all its cohomology sheaves are quasi-coherent, and we write $\Der^{\qc}(X) \subseteq \Der(X)$ for the full subcategory spanned by the quasi-coherent complexes. This is a stable subcategory which is closed under colimits.

\begin{remark}
\label{remark:local}%
Being quasi-coherent is a local property. In particular, a complex $M \in \Der(X)$ is quasi-coherent if and only if its restriction to each open subset $U \subseteq X$ is quasi-coherent, and if and only if there exists some open cover by $X = \cup_i U_i$ such that each $M|_{U_i}$ is quasi-coherent. 
\end{remark}

\begin{remark}
\label{remark:affine}%
The adjunction $\Lam_X \dashv \Gamma_X$ of Grothendieck abelian categories induces a Quillen adjunction 
\[
\Ch(\Mod(\Gamma(X))) \adj \Ch(\Mod(X)) ,
\]
on the model categorical level (where we can either endow both with the flat model structure or with the injective one),
and hence an adjunction
\[
\Der(\Gamma(X)) \adj\Der(X)
\]
on the level of derived $\infty$-categories. The essential image of the left adjoint is then $\Der(\Mod^\qc(A)) \simeq \Der^{\qc}(\spec(A))$ (see Remark~\ref{remark:t-structure}(\ref{item:Bocksted-Neemann}) below). 
\end{remark}

\begin{remark}
\label{remark:t-structure}%
The $\infty$-subcategory $\Der^{\qc}(X)$ inherits from $\Der(X)$ a t-structure such that the inclusion $\Der^{\qc}(X) \subseteq \Der(X)$ is t-exact. This t-structure has the following properties:
\begin{enumerate}
\item
The heart of $\Der^{\qc}(X)$ is the ordinary category $\Mod^\qc(X)$ of quasi-coherent $\cO_X$-modules. 
\item
Pullbacks along flat maps (and in particular open inclusions) are t-exact. 
\item
When $X = \spec(A)$ is affine the t-structure on $\Der^{\qc}(X) = \Der(A)$ coincides with the usual t-structure on $A$-complexes.
\item
If $X$ is qcqs then the t-structure on $\Der^{\qc}(X)$ is left complete (this follows by Zariski descent for quasi-coherent complexes and the fact that the t-structure on $\Der^{\qc}(\spec(A)) = \Der(A)$ is left complete for any ring $A$).  
\item
\label{item:Bocksted-Neemann}%
When $X$ is quasi-compact and separated the canonical functor $\Der(\Mod^\qc(X)) \to \Der^{\qc}(X)$ is an equivalence (this is the Bökstedt-Neeman theorem). 
\end{enumerate}
\end{remark}

For any map of schemes $f\colon X \to Y$ the pullback functor
\[
f^*\colon \Der(Y) \to \Der(X)
\]
sends quasi-coherent complexes to quasi-coherent complexes. Indeed, by Remark~\ref{remark:local} it suffices to prove this in the case where both $X$ and $Y$ are affine, in which case it is a consequence of Serre's affine theorem (see Remark~\ref{remark:affine}). See also~\cite{lipman-grothendieck-duality}*{Proposition 3.9.1} for a more homological algebra proof. 

We shall now restrict our attention from general to quasi-compact quasi-coherent schemes (qcqs for short). In this setting, the push-forward functor also preserves quasi-coherent complexes, see, e.g., \cite{lipman-grothendieck-duality}*{Proposition 3.9.2}). For the convenience of the reader we provide a proof below in the present language.

\begin{proposition}
\label{proposition:pushforward}%
Let $f\colon X \to Y$ be a qcqs morphism of schemes (e.g., $f$ is any map between qcqs schemes). Then 
\[
f_*\colon \Der(X) \to \Der(Y)
\]
sends quasi-coherent complexes to quasi-coherent complexes.
\end{proposition}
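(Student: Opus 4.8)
The plan is to reduce, by two applications of Zariski descent, to the case of a morphism between affine schemes, where the assertion is classical. First, since quasi-coherence is a local property on the target (Remark~\ref{remark:local}), I may test it on an affine open cover $\{V_\alpha\}$ of $Y$. By open base change (Proposition~\ref{proposition:open-base-chage}), for each $\alpha$ one has $(f_*M)|_{V_\alpha}\simeq (f_\alpha)_*(M|_{f^{-1}(V_\alpha)})$, where $f_\alpha\colon f^{-1}(V_\alpha)\to V_\alpha$ is again qcqs and $M|_{f^{-1}(V_\alpha)}$ is quasi-coherent (pullback preserves quasi-coherence). I may therefore assume from the outset that $Y=\spec(A)$ is affine, in which case $X$ is qcqs and hence admits a finite cover by affine opens $U_1,\dots,U_n$.

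Writing $U_S=\bigcap_{i\in S}U_i$ and $j_S\colon U_S\hookrightarrow X$ for $\emptyset\neq S\subseteq\{1,\dots,n\}$, the Zariski descent presentation of Remark~\ref{remark:zariski} gives $M\simeq\lim_{\emptyset\neq S}(j_S)_*(M|_{U_S})$. Applying $f_*$, which preserves limits, yields
\[
f_*M\;\simeq\;\lim_{\emptyset\neq S}(f\circ j_S)_*(M|_{U_S}).
\]
Since $\Der^{\qc}(Y)$ is a stable subcategory of $\Der(Y)$, it is closed under the finite limit on the right, so it suffices to prove that each $(f\circ j_S)_*(M|_{U_S})$ is quasi-coherent, where $M|_{U_S}=j_S^*M$ is quasi-coherent.

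Next I would fix, for each such $S$, an index $i_0\in S$; then $U_S$ is a finite intersection of affine, hence quasi-compact, opens of the quasi-separated scheme $X$, so $U_S$ is a quasi-compact open subscheme of the affine scheme $U_{i_0}$. Consequently $U_S$ is a finite union of principal opens $W_1,\dots,W_m\subseteq U_{i_0}$, each affine, and with the crucial feature that every finite intersection $W_T=\bigcap_{k\in T}W_k$ is again a principal open, hence affine. Applying Remark~\ref{remark:zariski} once more, now to $M|_{U_S}$ with this cover, and then $(f\circ j_S)_*$, reduces the problem to showing that $(f\circ j_S\circ j_T')_*(M|_{W_T})$ is quasi-coherent for every nonempty $T$, where $j_T'\colon W_T\hookrightarrow U_S$ and $M|_{W_T}$ is again quasi-coherent. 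But $f\circ j_S\circ j_T'\colon W_T\to Y$ is a morphism between affine schemes.

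It remains to recall the affine case: for $g\colon\spec(B)\to\spec(A)$ the functor $g_*$ carries $\Der^{\qc}(\spec B)=\Der(B)$ into $\Der^{\qc}(\spec A)=\Der(A)$, being identified there with restriction of scalars along $A\to B$; this is the derived form of Serre's vanishing of higher direct images for affine morphisms and quasi-coherent sheaves, and follows from Remark~\ref{remark:affine} and Remark~\ref{remark:t-structure} together with the exactness of the module-level pushforward on quasi-coherent sheaves. The whole argument is essentially bookkeeping, and I do not expect a serious obstacle; the one point requiring care is the use of quasi-separatedness (together with the affineness of $Y$) to guarantee that the finite intersections $U_S$ are quasi-compact, which is what allows the second descent step to land among affine schemes, and the correct invocation of open base change in the initial reduction.
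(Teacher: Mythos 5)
Your proposal is correct and follows essentially the same strategy as the paper: reduce to $Y$ affine via open base change and locality of quasi-coherence, then bootstrap the affine case through two applications of Zariski descent (using Remark~\ref{remark:zariski} together with the fact that $\Der^{\qc}(Y)$ is closed under finite limits and that intersections of quasi-compact opens inside a separated, indeed affine, scheme have affine covers with affine intersections). The paper packages this two-step Zariski induction into the reusable Lemma~\ref{lemma:useful-argument}, whereas you carry it out directly with a hands-on cover by principal opens, and the paper gives a slightly more explicit argument for the affine base case by checking the Beck--Chevalley map on homotopy sheaves over principal opens; neither difference affects correctness.
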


In particular, for $f$ qcqs the pullback-push-forward adjunction restricts to an adjunction
\[
f^*\colon \Der^{\qc}(Y) \adj \Der^{\qc}(X) \cocolon f_* .
\]
Since the full subcategory $\Der^{\qc}(X) \subseteq \Der(X)$ of quasi-coherent sheaves is closed under tensor products (\cite{stacks-project}*{\href{https://stacks.math.columbia.edu/tag/01CE}{Tag 01CE}}) it inherits from $\Der(X)$ its symmetric monoidal structure such that all pullback functors are symmetric monoidal. In light of Remark~\ref{remark:t-structure} and the discussion preceding it, we get that the functor~\eqref{equation:functor-der} refines to a functor
\begin{equation}
\label{equation:functor-derqc}%
\Sch\op \to \CAlg(\PrL_{\st,\tstruc}) \quad\quad X \mapsto (\Derqc(X),\Derqc(X)_{\geq 0},\Derqc(X)_{\leq 0}) .
\end{equation}

\begin{corollary}
\label{corollary:descent-qc}%
When all schemes involved are qcqs, the properties of open base change (Proposition~\ref{proposition:open-base-chage}), Zariski descent (Proposition~\ref{proposition:descent}) and Excision (Corollary~\ref{corollary:excision}) hold verbatim if one replaces everywhere $\Der(-)$ by $\Der^{\qc}(-)$. 
\end{corollary}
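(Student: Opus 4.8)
The plan is to deduce each of the three statements from its already-established counterpart for $\Der(-)$, using only two facts: that $\Der^{\qc}(X) \subseteq \Der(X)$ is a full subcategory, and that for a qcqs morphism $f$ both $f^*$ and $f_*$ preserve quasi-coherence. The first is by definition; the second combines the preservation of quasi-coherent complexes under pullback (recalled after Remark~\ref{remark:t-structure}) with Proposition~\ref{proposition:pushforward}. In particular, for every qcqs morphism the pullback--push-forward adjunction on $\Der(-)$ restricts to an adjunction $f^*\colon \Der^{\qc}(Y) \adj \Der^{\qc}(X)\cocolon f_*$, and likewise all the restriction and push-forward functors appearing in the three statements preserve quasi-coherence, every scheme and morphism in sight being qcqs.

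For open base change, the Beck--Chevalley transformation $j^*g_* \Rightarrow i^*f_*$ of functors $\Der(X) \to \Der(V)$ is an equivalence by Proposition~\ref{proposition:open-base-chage}. Since each of $j^*, g_*, i^*, f_*$ carries quasi-coherent complexes to quasi-coherent complexes and $\Der^{\qc}(-)$ is full in $\Der(-)$, restricting this natural transformation along $\Der^{\qc}(X) \subseteq \Der(X)$ yields an equivalence $j^*g_* \Rightarrow i^*f_*$ of functors $\Der^{\qc}(X) \to \Der^{\qc}(V)$. For Zariski descent, I would invoke Remark~\ref{remark:local}: an object $M \in \Der(X)$ lies in $\Der^{\qc}(X)$ if and only if each $M|_{U_i}$ lies in $\Der^{\qc}(U_i)$. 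Under the equivalence $\Der(X) \tosimeq \lim_{\emptyset \neq S}\Der(U_S)$ of Proposition~\ref{proposition:descent}, an object on the right corresponds to some $M$ on the left with $M|_{U_S} \simeq M_S$, so $M$ is quasi-coherent precisely when each $M_{\{i\}}$ is, i.e.\ precisely when the corresponding object lies in the full subcategory $\lim_{\emptyset\neq S}\Der^{\qc}(U_S)$ (a limit of fully faithful functors being fully faithful). Thus the equivalence of Proposition~\ref{proposition:descent} identifies $\Der^{\qc}(X)$ with $\lim_{\emptyset\neq S}\Der^{\qc}(U_S)$, which is the desired qc version.

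For Excision, the square
\[
\begin{tikzcd}
\Der^{\qc}(X) \ar[r]\ar[d] & \Der^{\qc}(U) \ar[d] \\
\Der^{\qc}(V) \ar[r] & \Der^{\qc}(V \cap U)
\end{tikzcd}
\]
is cartesian by the qc Zariski descent just proven, applied to the cover $X = U \cup V$. By definition $\Der^{\qc}_Z(X) = \Der_Z(X) \cap \Der^{\qc}(X) = \fib[\Der^{\qc}(X) \to \Der^{\qc}(U)]$, and similarly $\Der^{\qc}_Z(V) = \fib[\Der^{\qc}(V) \to \Der^{\qc}(V \cap U)]$; taking fibres of the horizontal arrows in the cartesian square therefore exhibits the restriction functor $\Der^{\qc}_Z(X) \to \Der^{\qc}_Z(V)$ as an equivalence (alternatively, one restricts the equivalence of Corollary~\ref{corollary:excision} directly, using that $j_*$ preserves both quasi-coherence and the support condition by Remark~\ref{remark:pushforward-support}).

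No step here presents a genuine obstacle; the argument is essentially bookkeeping around the slogan ``quasi-coherence is local and stable under $f^*, f_*$.'' The only point demanding a little care is the Zariski descent step, where one must check that the equivalence of Proposition~\ref{proposition:descent} matches the two full subcategories cut out by quasi-coherence on either side --- and this is exactly what Remark~\ref{remark:local} provides.
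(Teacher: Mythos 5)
Your proposal is correct and is essentially the argument the paper leaves implicit; the corollary is stated without an explicit proof precisely because it follows immediately from Proposition~\ref{proposition:pushforward}, the preservation of quasi-coherence under pullback, and Remark~\ref{remark:local} — the exact ingredients you use. The decomposition into a fullness argument for open base change, a locality argument (via Remark~\ref{remark:local}) for Zariski descent, and a fiber-of-a-cartesian-square argument for excision is the standard route and the one the authors intend.
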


The proof of Proposition~\ref{proposition:pushforward} uses the following type of argument, which we later use again several times:
\begin{lemma}
\label{lemma:useful-argument}%
Suppose that $P$ is a property of schemes which is local in the following sense: a scheme $X$ which admits a finite covering $X = \cup_{i=1}^{n} X_i$ has property $P$ if $X_S = \cap_{i \in S} X_i$ has property $P$ for every $\emptyset \neq S \subseteq \{1,\ldots,n\}$. Then every qcqs scheme has property $P$ if every affine scheme has property $P$.
\end{lemma}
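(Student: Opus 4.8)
The plan is to run two nested inductions on the number of opens in a cover: an inner one handling quasi-compact open subschemes of affine schemes, and an outer one handling general qcqs schemes. The point that makes the inner induction work is that, inside a fixed affine scheme, one may choose covers by principal opens all of whose iterated intersections are again principal, hence affine; in the outer induction the cross-intersection terms land inside an affine scheme and are dispatched by the inner statement. Before starting I would record the standard facts used throughout: open subschemes of quasi-separated (resp.\ qcqs) schemes are again quasi-separated (resp.\ qcqs), a finite union of quasi-compact opens is quasi-compact, and in a quasi-separated scheme the intersection of two quasi-compact opens is quasi-compact; moreover every quasi-compact open subscheme of $\spec(A)$ is covered by finitely many principal opens $D(f_i) = \spec(A[f_i^{-1}])$, and $\bigcap_{i\in S} D(f_i) = D(\prod_{i\in S} f_i)$ is again principal.

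Step 1 (quasi-compact opens of affine schemes). I would prove by induction on $m$ the statement: every quasi-compact open $U \subseteq \spec(A)$ admitting a cover by $m$ principal opens has property $P$. For $m = 1$ such a $U$ is itself affine, so $P$ holds by hypothesis. For the inductive step, write $U = V \cup W$ with $W = D(f_m)$ and $V = D(f_1) \cup \cdots \cup D(f_{m-1})$. Then $V$ is a quasi-compact open of $\spec(A)$ covered by $m-1$ principal opens, $W = \spec(A[f_m^{-1}])$ is affine, and $V \cap W = D(f_1 f_m) \cup \cdots \cup D(f_{m-1} f_m)$ is a quasi-compact open of the affine scheme $W$ covered by $m-1$ of its principal opens. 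By the inductive hypothesis $V$, $W$ and $V \cap W$ all have $P$, so the locality hypothesis applied to the two-element cover $\{V, W\}$ — whose pattern of non-empty intersections is exactly $V$, $W$, $V \cap W$ — yields $P$ for $U$.

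Step 2 (general qcqs schemes). Now I would prove by induction on $n$: every qcqs scheme admitting a cover by $n$ affine opens has property $P$. The case $n = 1$ is the hypothesis on affine schemes. For the step, write $X = V \cup W$ with $W = U_n$ affine and $V = U_1 \cup \cdots \cup U_{n-1}$. Then $V$ is an open subscheme of the qcqs scheme $X$, hence qcqs, and is covered by $n-1$ affine opens, so it has $P$ by induction; $W$ is affine; and $V \cap W = \bigcup_{i < n}(U_i \cap U_n)$ is a quasi-compact (by quasi-separatedness of $X$) open subscheme of the affine scheme $W = U_n$, hence has $P$ by Step 1. Applying the locality hypothesis to the two-element cover $\{V, W\}$ once more gives $P$ for $X$. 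Since every qcqs scheme admits a finite affine open cover, this finishes the proof.

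The only genuine difficulty is organizational rather than mathematical: a naive single induction on the size of an affine cover fails because the intermediate union $U_1 \cup \cdots \cup U_{n-1}$ and its intersection with $U_n$ need not be affine, so one must isolate Step 1 and use the fact that within a fixed affine scheme covers can be chosen (by principal opens) so that all iterated intersections stay affine. Everything else is routine bookkeeping with quasi-compactness and quasi-separatedness.
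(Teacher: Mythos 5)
Your proof is correct. It takes a genuinely different route from the paper's own argument. The paper proceeds in two direct steps: first, for a quasi-compact \emph{separated} scheme, a finite affine open cover has all iterated intersections $X_S$ affine (open affines in a separated scheme are closed under intersection), so locality applies in a single stroke; second, for a general qcqs scheme, each $X_S$ coming from a finite affine cover is quasi-compact (by quasi-separatedness) and separated (open in the affine $X_{i_1}$), so the first step applies to it. You instead run a double induction that peels off one open at a time and invokes the locality hypothesis only for two-element covers; your inner induction stays within a fixed affine by restricting to principal opens, using that principal opens of $\spec(A)$ are closed under intersection. Both arguments identify the same obstruction — the cross-intersections of an affine cover of a qcqs scheme need not be affine — and both repair it by routing through an intermediate class on which affineness of intersections can be controlled (quasi-compact separated schemes for the paper, quasi-compact opens of a fixed affine for you). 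The paper's version is shorter; yours shows, perhaps slightly more economically in terms of hypotheses, that only the two-element instances of the locality assumption are actually needed.
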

\begin{proof}
Any qcqs scheme $X$ admits a finite open covering $X = \cup_i X_i$ such that each $X_i$ is affine and each intersection $X_S$ is quasi-compact. If $X$ is furthermore \emph{separated}, then the open affines in $X$ are closed under intersection and hence each $X_S$ is actually affine. We conclude that each quasi-compact separated scheme has property $P$. Now if $X$ is quasi-compact and quasi-separated and $X = \cup_i X_i$ is a finite affine open covering then each $X_i$ is in particular separated and hence each $X_S$ is also separated, since being separated is inherited to open subsets. Hence each $X_S$ is quasi-compact and separated, so has property $P$ by the above. We conclude that $X$ has property $P$ as well.
\end{proof}

\begin{proof}[Proof of Proposition~\ref{proposition:pushforward}]
Being a quasi-coherent complex on $Y$ is a property that we can test by restricting to all affine open subschemes of $Y$. By open base change we may thus assume without loss of generality that $Y$ is affine. Let us say that a scheme $X$ is \defi{good} if the statement of the proposition holds for $X$ for any qcqs map $f\colon X \to Y$ with affine target. We note that if $X$ admits a qcqs map to an affine scheme then $X$ itself is qcqs, and on the other hand, if $X$ is qcqs then every map to an affine scheme is qcqs. Our goal is now to show that all qcqs schemes are good. Since the collection of quasi-coherent sheaves on $Y$ is closed under finite limits, Remark~\ref{remark:zariski} implies the following: if $X$ admits a finite open covering $X = \cup_{i=1}^{n} X_i$ such that for every $\emptyset \neq S \subseteq \{1,\ldots,n\}$ the scheme $X_S = \cap_{i \in S} X_i$ is good then $X$ itself is good. Invoking Lemma~\ref{lemma:useful-argument} it will hence suffice to show that any affine scheme is good. 
For this, we will show that for any map of commutative rings $A \to B$ (associated to a map of affine schemes $f\colon \spec(B) \to \spec(A)$), and every $B$-module complex $M \in \Der(B)$ the Beck-Chevalley map 
\begin{equation}
\label{equation:beck-chevalley}%
\Lam_{\spec(A)}(M) \to f_*\Lam_{\spec(B)}(M)
\end{equation}
is an equivalence in $\Mod(\spec(A))$ (where $\Lam_{\spec(A)}$ and $\Lam_{\spec(B)}$ are the (derived) left adjoints of $\Gamma_{\spec(A)}$ and $\Gamma_{\spec(B)}$, respectively, see discussion above). Now $\Lam_{\spec(A)},\Lam_{\spec(B)}$ and the forgetful functor $\Der(B) \to \Der(A)$ are induced on derived categories by exact functors of abelian categories, while the push-forward forward functor $\Mod(\spec(A)) \to \Mod(\spec(B))$ is exact on quasi-coherent sheaves by Serre's vanishing theorem. Passing to homotopy sheaves, it will hence suffice to show that the above Beck-Chevalley map is an isomorphism on the level of sheaves. For this, we can check that the map induces an equivalence on sections over opens of the form $\spec(A[g^{-1}])$ for $g \in A$. Then $\spec(B) \times_{\spec(A)} \spec(A[g^{-1}]) = \spec(B \otimes_A A[g^{-1}])$ and the map induced by~\eqref{equation:beck-chevalley} on sections over $\spec(A[g^{-1}])$ is just the isomorphism
\[
M \otimes_A A[g^{-1}] \xrightarrow{\cong} M \otimes_B B \otimes A[g^{-1}] . \qedhere
\]
\end{proof}

In the setting of quasi-coherent sheaves, the open base change property of Proposition~\ref{proposition:open-base-chage} can be extended to \emph{flat base change}:
\begin{proposition}[Flat base change]
\label{proposition:flat-base-change}%
Let
\[
\begin{tikzcd}
Z \ar[r,"p"]\ar[d,"f"'] & X \ar[d,"g"] \\
W \ar[r,"q"] & Y 
\end{tikzcd}
\]
be a cartesian square of qcqs schemes such that either $f$ or $q$ is flat.
Then the Beck-Chevalley transformation $q^*g_* \Rightarrow p^*f_*$ is an equivalence of functors $\Der(X) \to\Der(W)$. 
\end{proposition}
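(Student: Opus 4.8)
The plan is to reduce the statement, by the same local-to-global dévissage used to prove Proposition~\ref{proposition:pushforward}, to an elementary computation with modules over commutative rings; the flatness hypothesis will be used only through the resulting Tor-independence of the square.

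First I would observe that whether a natural transformation of functors valued in $\Der(W)$ is an equivalence may be checked after restricting along the members of an open cover of $W$, and that — by Open base change (Proposition~\ref{proposition:open-base-chage}) — restricting the Beck–Chevalley transformation $q^{*}g_{*}\Rightarrow f_{*}p^{*}$ along an open immersion $j\colon W'\hookrightarrow W$ reproduces the analogous transformation for the square obtained by pulling the given square back along $j$ (still cartesian, and still satisfying the flatness hypothesis), evaluated at the same object of $\Der(X)$. Hence we may assume $W$ affine. Next I would fix $M\in\Der(X)$ together with a finite affine open cover $X=\bigcup_{i}X_{i}$ and use the presentation $M\simeq\lim_{\emptyset\ne S}(j_{S})_{*}(j_{S})^{*}M$ of Remark~\ref{remark:zariski}, where $j_{S}\colon X_{S}=\bigcap_{i\in S}X_{i}\hookrightarrow X$. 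Since all the functors in play ($q^{*}$, $g_{*}$, $p^{*}$, $f_{*}$, and the pushforwards and pullbacks along the open immersions) are exact functors of stable $\infty$-categories, they preserve finite cubical limits, so $q^{*}g_{*}$ and $f_{*}p^{*}$ carry this presentation to limit cubes and the Beck–Chevalley transformation becomes a map of two limit cubes. It therefore suffices to treat objects of the form $(j_{X'})_{*}N$ for $j_{X'}\colon X'\hookrightarrow X$ an affine open and $N\in\Der(X')$; on these, commuting $p^{*}$ past $(j_{X'})_{*}$ via Open base change identifies the Beck–Chevalley transformation with that of the square pulled back along $j_{X'}$, evaluated at $N$, which reduces us to $X$ affine. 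Iterating the same kind of argument in the $Y$- and $Z$-directions — exactly as in the proof of Proposition~\ref{proposition:pushforward}, invoking Lemma~\ref{lemma:useful-argument} to absorb the possible non-separatedness of $Y$ — reduces the whole statement to the fully affine case $Y=\spec R$, $X=\spec A$, $W=\spec B$, so that $Z=\spec(A\otimes_{R}B)$, with the square Tor-independent.

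In this affine case the Beck–Chevalley transformation is, for $M\in\Der(A)$, the canonical comparison map
\[
B\otimes^{\h}_{R}M\longrightarrow (A\otimes_{R}B)\otimes^{\h}_{A}M .
\]
Tor-independence gives $A\otimes^{\h}_{R}B\simeq A\otimes_{R}B$, after which both sides of the displayed map are computed by $B\otimes^{\h}_{R}M$ with its evident $B$-module structure, with the map itself the identity; this finishes the proof.

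The main obstacle is not depth but bookkeeping: one must check carefully that the Beck–Chevalley transformation really is compatible with the Zariski-descent presentations of objects of $\Der(X)$ and with the repeated open-base-change identifications, so that each successive reduction to a smaller (eventually affine) scheme is legitimate. Once the situation is affine the claim is immediate.
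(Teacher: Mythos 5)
Your overall strategy is the same as the paper's — local-to-global dévissage to the fully affine case, where the statement reduces to a Tor-independence observation about derived vs.\ underived tensor products — and the affine endgame is handled correctly. The differences, and the places where your outline has gaps, are all in the bookkeeping of the reductions, which you flag as the "main obstacle" but do not quite get right as written.

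First, the reduction in the $X$-direction. You take a finite affine cover $X=\bigcup_i X_i$, use the presentation $M\simeq\lim_{\emptyset\ne S}(j_S)_*(j_S)^*M$ of Remark~\ref{remark:zariski}, and conclude that "it suffices to treat objects of the form $(j_{X'})_*N$ for $j_{X'}\colon X'\hookrightarrow X$ an affine open." This is not what the limit presentation gives you: the intersections $X_S=\bigcap_{i\in S}X_i$ appearing in that cube are in general only quasi-compact quasi-separated, not affine, when $X$ is not separated (e.g.\ the plane with a doubled origin covered by two $\Aa^2$'s has $X_1\cap X_2=\Aa^2\smallsetminus\{0\}$). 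What the argument actually shows is that the property "flat base change holds with this scheme in the $X$-slot" is local in exactly the sense of Lemma~\ref{lemma:useful-argument}, and one then invokes that lemma to bootstrap from affines to qcqs schemes. You mention Lemma~\ref{lemma:useful-argument} only for the $Y$-direction; you need it here too, and the paper applies it at precisely this step.

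Second, the order of reductions. You propose $W$, then $X$, then $Y$. But the $Y$-step necessarily re-opens the $W$- and $X$-slots: covering $Y$ by affines $Y_i$ and restricting to the square over $Y_i$ replaces $W$ by the open $q^{-1}(Y_i)\subseteq W$ and $X$ by $g^{-1}(Y_i)\subseteq X$, which are open subschemes of affine schemes — separated qcqs, but no longer affine. As a result, after your $Y$-step you have not reached the fully affine case; you would need another pass through the $W$- and $X$-reductions (which now terminates because these schemes are separated, so intersections of affines are affine). The paper sidesteps this by handling $Y$ and $W$ in a single step at the outset — cover $Y$ by affine $Y_i$, then cover each $W_i=W\times_Y Y_i$ by affine $W_{i,j}$ — so that $Y$ and $W$ are affine before the $X$-induction begins, and stay affine throughout it since that induction only modifies $X$ and $Z$. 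Your reference to "iterating exactly as in the proof of Proposition~\ref{proposition:pushforward}" is also somewhat off: that proof reduces the target to affine first and then inducts on the source, which is the opposite of your chosen order, and there is no independent "$Z$-direction" to iterate over since $Z$ is determined by the other three corners. None of these is an essential obstruction — your affine computation is correct and your reductions are all individually legitimate moves — but as laid out the dévissage does not close, and the missing ingredient is exactly the careful ordering plus the systematic use of Lemma~\ref{lemma:useful-argument}.
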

\begin{proof}
We reduce to the case where all schemes in the above square are affine as follows. First, we reduce to the case where both $W$ and $Y$ are affine. For this, let us pick an open affine covering $Y = \cup_{i=1}^{n} Y_i$. Let us write $X_i = X \times_Y Y_i$, $W_i = W \times_Y Y_i$ and $Z_i = Z \times_Y Y_i$. For each $i = 1,\ldots,n$ let us pick an open affine covering $W_i = \cup_{j=1}^{m} W_{i,j}$ and write $Z_{i,j} = W_{i,j} \times_{Y_i} X_i$.
For a given $M \in \Der^{\qc}(Y)$, the Beck-Chevalley map $\eta\colon q^*g_*(M) \Rightarrow p^*f_*(M)$ is an equivalence in $\Der^{\qc}(W)$ if and only if $\eta|_{W_{i,j}}$ is an equivalence $\Der^{\qc}(W_{i,j})$ for every $i=1,\ldots,n$. But by open base change we have that $\eta|_{W_{i,j}}$ is the Beck-Chevalley transformation $p_i^*f_{i*}(M|_{X_i}) \to (p^*_i)f_{i*}(M|_{X_{i}})$ for the cartesian square
\[
\begin{tikzcd}
Z_{i,j} \ar[r,"p_{i,j}"]\ar[d,"f_{i,j}"'] & X_i \ar[d,"g_i"] \\
W_{i,j} \ar[r,"q_{i,j}"] & Y_i \ .
\end{tikzcd}
\]
In addition, if the map $q\colon W \to Y$ was flat then the map $q_{i,j}\colon W_{i,j} \to Y_i$ is flat as well, being the composite of the open embedding $W_{i,j} \to W_i$ and the base change $W_i \to Y_i$ of $q$ along $Y_i \subseteq Y$.
We may hence assume without loss of generality that $W$ and $Y$ are affine. Let us therefore write $Y = \spec(A)$ and $W = \spec(B)$, so that we have a square of the form
\[
\begin{tikzcd}
X_B \ar[r,"p"]\ar[d,"f"'] & X \ar[d,"g"] \\
\spec(B) \ar[r,"q"] & \spec(A) 
\end{tikzcd}
\]
Let us now say that $X$ is \defi{good} if the flat base change statement holds for any square as above whenever either $\spec(B) \to \spec(A)$ or $X \to \spec(A)$ are flat. We claim that if $X$ admits a finite open covering $X = \cup_i X_i$ such that for any $\emptyset \neq S \subseteq \{1,\ldots,n\}$ the open subscheme $X_S = \cap_{i \in S} X_i$ is good then $X$ itself is good. To see this, let $M \in \Der^{\qc}(X)$ be an quasi-coherent complex and write $M_S = M|_{X_S}$. By Remark~\ref{remark:zariski} the map
\[
M \to \lim_{\emptyset \neq S \subseteq \{1,\ldots,n\}} j^{\phantom{*}}_{S*}M_S
\]
is an equivalence, where $j_S\colon X_S \hrar X$ is the inclusion. Since the functors $q^*g_*$ both commutes with finite limits we consequently have
\[
q^*g_*M = \lim_{\emptyset \neq S \subseteq \{1,\ldots,n\}} q^*g_*j^{\phantom{*}}_{S*}M_S.
\]
Let $X_{S,B} = X_S \times_X X_B$ and $p_{S,B}\colon X_{S,B} \to X_S$ and $j_{S,B}\colon X_{S,B} \to X_B$ the two associated maps.
Applying Remark~\ref{remark:zariski} to $X_B$ we get that the map
\[
p^*M \to \lim_{\emptyset \neq S \subseteq \{1,\ldots,n\}} j^{\phantom{*}}_{S,B*}M_{S,B} = \lim_{\emptyset \neq S \subseteq \{1,\ldots,n\}} j^{\phantom{*}}_{S,B*}p^*_{S,B}M_{S}
\]
is also an equivalence, where $M_{S,B} = (p^*M)_{X_{S,B}}$, and hence
\[
f_*p^*M = \lim_{\emptyset \neq S \subseteq \{1,\ldots,n\}} f_*j^{\phantom{*}}_{S,B*}p^*_{S,B}M_{S}.
\]
The Beck-Chevalley transformation for $q^*g_*M \to f_*p^*M$ can then be identified with the map
\[
\lim_{\emptyset \neq S \subseteq \{1,\ldots,n\}} q^*g_*j^{\phantom{*}}_{S*}M_S \to \lim_{\emptyset \neq S \subseteq \{1,\ldots,n\}} f_*j^{\phantom{*}}_{S,B*}p^*_{S,B}M_{S}
\]
induced on limits by the Beck-Chevalley transformation of $M_S \in \Der^{\qc}(X_S)$ associated to the cartesian square
\[
\begin{tikzcd}
X_{S,B} \ar[r,"p"]\ar[d,"f"'] & X_S \ar[d,"g"] \\
\spec(B) \ar[r,"q"] & \spec(A) \ .
\end{tikzcd}
\]
We also note that if $X \to \spec(A)$ is flat then $X_S \to \spec(A)$ is flat as well. We conclude that if each $X_S$ is good then $X$ is good. By Lemma~\ref{lemma:useful-argument} it will hence suffice to show that any affine scheme is good.
In particular, we need to show that the Beck-Chevalley transformation is an equivalence for squares of the form
\[
\begin{tikzcd}
\spec(B \otimes_A C) \ar[r,"p"]\ar[d,"f"'] & \spec(C) \ar[d,"g"] \\
\spec(B) \ar[r,"q"] & \spec(A) \ .
\end{tikzcd}
\]
where either $B$ or $C$ are flat over $A$.
Explicitly, we need to show that in this case for every $C$-module $M$, the map
\[
q^*g_*M = B \otimes^L_A M \to (B \otimes_A C) \otimes^L_C M = f_*p^*M
\]
is an equivalence. Since both sides preserves colimits it is enough to check this for $M=C$, where it becomes the map
\[
B \otimes^L_A C \to B \otimes_A C
\]
form the derived to the underived tensor product over $A$. This map is indeed an equivalence whenever either $B$ or $C$ are flat.
\end{proof}

\subsection{Flat excision}

Our goal in the present subsection is to prove the following proposition, which we will use to establish Corollary~\ref{corollary:nisnevich-karoubi} below.

\begin{proposition}[Flat excision]
\label{proposition:flat-square}%
Let $X$ be a qcqs scheme, $f\colon V \to X$ a flat qcqs map and $Z \subseteq X$ a finitely presented closed subscheme such the induced map $V \times_X Z \to Z$ is an isomorphism. Set $U = X \setminus Z$ and $W = V \times_X W$. Then the square
\[
\begin{tikzcd}
\Der^{\qc}(X) \ar[r]\ar[d] & \Der^{\qc}(V) \ar[d] \\
\Der^{\qc}(U) \ar[r] & \Der^{\qc}(W)
\end{tikzcd}
\]
is cartesian.
\end{proposition}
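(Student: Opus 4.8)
The plan is to reduce the assertion, first by a formal adjunction argument to an equivalence between categories of complexes with support on $Z$, and then to an essentially algebraic fact about flat morphisms. Write $j\colon U\hookrightarrow X$, $g\colon W\to X$, $k\colon W\hookrightarrow V$ and $h\colon W\to U$ for the evident maps, and $\Phi\colon \Der^{\qc}(X)\to \Der^{\qc}(V)\times_{\Der^{\qc}(W)}\Der^{\qc}(U)$ for the comparison functor. Since push-forward along a qcqs morphism preserves quasi-coherence (Proposition~\ref{proposition:pushforward}), $\Phi$ admits a right adjoint $G$ sending $(M_V,M_U,\alpha)$ to $f_*M_V\times_{g_*(M_V|_W)}j_*M_U$. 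I would then compute the unit and counit of $\Phi\dashv G$ using flat base change (Proposition~\ref{proposition:flat-base-change}, applied to the flat map $j$) and open base change (Proposition~\ref{proposition:open-base-chage}) together with the plain functoriality of push-forward. Applying $j^*$ everywhere makes both unit and counit equivalences for the formal reason that $j^*j_*\simeq\operatorname{id}$, so each reduces to the vanishing of the total fibre of a square in $\Der^{\qc}(X)$ that is already supported on $Z$; applying the sections-with-support functor $\Gamma_Z:=\fib(\operatorname{id}\Rightarrow j_*j^*)$, which preserves colimits since $Z$ is finitely presented (so $j$ is qcqs and $\Gamma_Z$ has a stable Koszul description locally on $X$), and using the hypothesis $Z':=f^{-1}(Z)\xrightarrow{\ \sim\ }Z$, the whole problem collapses to: the adjunction $f^*\colon \Der^{\qc}_Z(X)\rightleftarrows \Der^{\qc}_{Z'}(V)\colon f_*$ is an adjoint equivalence.

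\textbf{The geometric input.} The key lemma is that $f$ being flat, qcqs and an isomorphism over $Z$ forces $f|_{Z_n}\colon Z'_n\to Z_n$ to be an isomorphism for every $n\ge 1$, where $Z_n\subseteq X$ is cut out by $\mathcal I_Z^n$ and $Z'_n:=f^{-1}(Z_n)\subseteq V$ by $\mathcal I_Z^n\cO_V$. I would argue by induction on $n$. First, $f|_{Z_n}$ is affine (indeed finite), because $(f|_{Z_n})_*\cO_{Z'_n}$ is an iterated extension of the $\cO_Z$-modules $\mathcal I_Z^k\cO_V/\mathcal I_Z^{k+1}\cO_V$, which the isomorphism $Z'\cong Z$ identifies with $\mathcal I_Z^k/\mathcal I_Z^{k+1}$ — coherent since $\mathcal I_Z$ is of finite type. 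Then flatness of $f$ turns the conormal sequence $0\to \mathcal I_Z^n/\mathcal I_Z^{n+1}\to \cO_{Z_{n+1}}\to \cO_{Z_n}\to 0$ into a compatible short exact sequence on $V$, and the five lemma — again via the identification $\mathcal I_Z^n\cO_V/\mathcal I_Z^{n+1}\cO_V\cong \mathcal I_Z^n/\mathcal I_Z^{n+1}$ — propagates ``$f|_{Z_n}$ an isomorphism'' to ``$f|_{Z_{n+1}}$ an isomorphism''. This lemma, the one point where flatness is genuinely used, is what I expect to be the technical heart of the argument; the other place requiring care is the base-change bookkeeping in the adjunction reduction above.

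\textbf{Conclusion.} Granting the lemma, the equivalence is routine. By Thomason--Trobaugh compact generation~\cite{thomason-trobaugh}, $\Der^{\qc}_Z(X)$ is generated under colimits by perfect complexes supported on $Z$; each such complex has finitely many coherent homology sheaves, all annihilated by some $\mathcal I_Z^N$, hence is built by finitely many cofibre sequences from sheaves $\iota_{N*}\mathcal G$ with $\mathcal G\in\Coh(Z_N)$, so these generate $\Der^{\qc}_Z(X)$ under colimits. For such a sheaf, flat base change and the lemma give $f^*(\iota_{N*}\mathcal G)=\iota'_{N*}\big((f|_{Z_N})^*\mathcal G\big)$ and $f_*f^*(\iota_{N*}\mathcal G)=\iota_{N*}\mathcal G$, with the unit the identity; since $f^*$ and $f_*$ preserve colimits on these subcategories, $f^*$ is fully faithful. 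For essential surjectivity, a coherent sheaf on $V$ supported on $Z'$ is annihilated by some $\mathcal I_Z^N\cO_V$, hence equals $\iota'_{N*}\mathcal G'$ for $\mathcal G'\in\Coh(Z'_N)$, and transporting $\mathcal G'$ through the isomorphism $Z_N\cong Z'_N$ exhibits it literally as $f^*$ of an object of $\Der^{\qc}_Z(X)$; the essential image of the fully faithful, exact, colimit-preserving functor $f^*$ is therefore a thick, colimit-closed subcategory of $\Der^{\qc}_{Z'}(V)$ containing all perfect complexes supported on $Z'$, hence all of $\Der^{\qc}_{Z'}(V)$ by compact generation again. Thus $f^*$ is an equivalence with inverse $f_*$, which yields the claim. (Throughout, one uses freely that on quasi-coherent complexes push-forward along qcqs morphisms and the functor $\Gamma_Z$ preserve colimits, and that Zariski descent and excision, Corollary~\ref{corollary:descent-qc}, permit working Zariski-locally on $X$ wherever convenient.)
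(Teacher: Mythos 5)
Your proof is essentially correct but takes a genuinely different route from the paper. The paper first reduces to the affine case ($X=\spec A$, $V=\spec B$) by a Zariski covering argument that carefully refines opens so that the flat map restricts to a flat map between affines still satisfying the hypothesis; in that case the claim is reduced (via Lemma~\ref{lemma:flat-square-criterion}\ref{item:excision-fully-faithful}, using that affine morphisms have conservative push-forward) to showing that $f^*\colon \Der^{\qc}_Z(\spec A)\to\Der^{\qc}_Z(\spec B)$ is \emph{fully-faithful}, i.e.\ that $M\otimes_A \cof[A\to B]=0$ for $M$ supported on $Z$; this then follows because that tensor product restricts to zero on both $U$ and $Z$, which forces it to vanish by the purely algebraic Koszul argument of Lemma~\ref{lemma:noetherian-principle}. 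Your argument isolates instead the geometric observation that flatness propagates the isomorphism over $Z$ to an isomorphism over every infinitesimal neighbourhood $Z_n$ (the five-lemma argument you sketch is correct, and is a nice way of packaging where flatness is used), and then concludes by a generation-by-perfect-complexes argument. Both approaches require a nontrivial generation input: the paper's Lemma~\ref{lemma:noetherian-principle} plays the role that compact generation of $\Der^{\qc}_Z(X)$ by $\Dperf_Z(X)$ plays in yours. The main trade-off is that you invoke Theorem~\ref{theorem:perfect-generators}\ref{item:perfect-generation-with-support}, a substantial Thomason--Trobaugh type result that the paper proves only later (\S\ref{subsection:perfect}); this creates a forward reference, though no actual circularity since the proof of that theorem does not use the present proposition. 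Two small cautions if you want to make this airtight: in the general qcqs setting you should say ``finite-type quasi-coherent'' rather than ``coherent'' for the homology sheaves of a perfect complex; and you should take the set of generators $\{\iota_{N*}\mathcal G[i]:i\in\ZZ\}$ closed under shifts before arguing about colimit-closure, since the colimit-closure of an arbitrary set need not be closed under desuspension -- the cleanest formulation is to note that the full subcategory of objects for which the unit $\id\to f_*f^*$ is an equivalence is stable, closed under colimits, and contains each $\iota_{N*}\mathcal G$, hence contains $\Dperf_Z(X)$ and thus all of $\Der^{\qc}_Z(X)$.
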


\begin{remark}\
\begin{enumerate}
\item
In Proposition~\ref{proposition:flat-square}, the condition that $Z$ is finitely presented implies that $U$ (and hence $W$) are quasi-compact, and hence qcqs.
\item
On the other hand, if $U \subseteq X$ is any qcqs open subscheme then we can always choose a finitely presented subscheme structure on the closed complement $Z$ of $U$. The assumption that $f\colon V \to X$ is flat insures that the condition that $V \times_X Z \to Z$ is an isomorphism does not depend on the choice of a finitely presented subscheme structure on $Z$.
\end{enumerate}
\end{remark}

The proof of Proposition~\ref{proposition:flat-square} will require a few preliminaries. We first establish the following more general observation:

\begin{lemma}
\label{lemma:flat-square-criterion}%
Let $X$ be a qcqs scheme $f\colon V\to X$ a flat qcqs map and $Z \hrar X$ a closed subscheme with qcqs complement
$j\colon U \hrar X$. Set $W = f^{-1}(U)$ and $Y = f^{-1}(Z)$. Assume that the map $U \amalg V \to X$ is surjective. Then the following statements are equivalent:
\begin{enumerate}
\item
\label{item:flat-square-cartesian}%
The square
\[
\begin{tikzcd}
\Der^{\qc}(X) \ar[r, "f^*"]\ar[d, "j^*"] & \Der^{\qc}(V) \ar[d, "j^*"] \\
\Der^{\qc}(U) \ar[r, "f^*"] & \Der^{\qc}(W)
\end{tikzcd}
\]
is cartesian, where, by abuse of notation, we denote the base change of $f$ again by $f$ and the base change of $j$ again by $j$.
\item
\label{item:excision-equivalence}%
The functor $\Der^{\qc}_Z(X) \to \Der^{\qc}_Y(V)$ induced by $f^*$ is an equivalence.
\end{enumerate}
In addition, of $f$ is affine then these conditions are also equivalent to:
\begin{enumerate}[resume*]
\item
\label{item:excision-fully-faithful}%
The functor $\Der^{\qc}_Z(X) \to \Der^{\qc}_Y(V)$ induced by $f^*$ is fully-faithful.
\end{enumerate}
\end{lemma}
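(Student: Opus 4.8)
The plan is to prove Lemma~\ref{lemma:flat-square-criterion} by exploiting the fact that, for a closed immersion with qcqs open complement, the inclusion $j^*\colon \Der^{\qc}_Z(X) \to \Der^{\qc}(X)$ participates in a fibre sequence of functors $j^!j^* \Rightarrow \id \Rightarrow j_*j^*$ (where $j^!(-)$ denotes the functor with support, i.e.\ the fibre of $\id \Rightarrow j_*j^*$), and similarly over $V$. Concretely, the vertical fibres of the square in \ref{item:flat-square-cartesian} are precisely $\Der^{\qc}_Z(X)$ and $\Der^{\qc}_Y(V)$, with the induced map between them being $f^*$. So the square is cartesian if and only if (a) the map on vertical fibres $f^*\colon \Der^{\qc}_Z(X) \to \Der^{\qc}_Y(V)$ is an equivalence, and (b) the bottom and top rows ``see'' the same cofibre data over $U$ and $W$. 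The surjectivity hypothesis $U \amalg V \to X$ is what makes the pair of functors $(j^*, f^*)\colon \Der^{\qc}(X) \to \Der^{\qc}(U) \times \Der^{\qc}(V)$ jointly conservative, which upgrades ``cartesian on fibres plus jointly conservative'' into ``cartesian'', giving \ref{item:flat-square-cartesian} $\Leftrightarrow$ \ref{item:excision-equivalence}.

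In more detail, first I would record that the square in \ref{item:flat-square-cartesian} is a square of stable $\infty$-categories and exact functors in which the horizontal functors $j^*$ are both Verdier (Karoubi) projections with kernels $\Der^{\qc}_Z(X)$ and $\Der^{\qc}_Y(V)$ respectively (using Corollary~\ref{corollary:descent-qc} and the standard localisation sequence for quasi-coherent complexes with support). Forming the square of vertical fibres over $\Der^{\qc}(X) \to \Der^{\qc}(V)$, the kernels map to kernels, so the induced functor on vertical kernels is $f^*\colon \Der^{\qc}_Z(X) \to \Der^{\qc}_Y(V)$. Now a square of stable $\infty$-categories is cartesian precisely when the induced functor on vertical kernels is an equivalence \emph{and} the functor from the pullback is essentially surjective (equivalently, the total collection of functors to the corners is jointly conservative). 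The latter holds under the surjectivity hypothesis: an object $M \in \Der^{\qc}(X)$ with $j^*M = 0$ and $f^*M = 0$ is supported on $Z$ and pulls back to zero on $V$, hence on $Y = f^{-1}(Z)$; since $f$ restricted to $Y$ surjects onto $Z$ and $f^*$ is t-exact (flatness), the homotopy sheaves of $M$ vanish on a cover of $Z$, so $M = 0$. This yields the equivalence \ref{item:flat-square-cartesian} $\Leftrightarrow$ \ref{item:excision-equivalence}.

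For the equivalence \ref{item:excision-equivalence} $\Leftrightarrow$ \ref{item:excision-fully-faithful} under the additional hypothesis that $f$ is affine, the content is that fully-faithfulness plus affineness forces essential surjectivity. The point is that when $f$ is affine, the push-forward $f_*\colon \Der^{\qc}_Y(V) \to \Der^{\qc}_Z(X)$ is t-exact (affine morphisms have no higher direct images of quasi-coherent sheaves) and is a right adjoint to $f^*$ on the subcategories with support. If $f^*$ is fully-faithful, the unit $\id \Rightarrow f_*f^*$ is an equivalence, and one needs the counit $f^*f_* \Rightarrow \id$ to be an equivalence too. For this I would argue on hearts first: on $\Der^{\qc}_Z(X)^{\heartsuit}$, fully-faithfulness of $f^*$ together with t-exactness of both $f^*$ and $f_*$ gives that $f^*$ is a fully-faithful exact functor of abelian categories whose essential image is closed under subquotients and extensions; combined with the support condition and the fact that every object supported on $Z$ is, locally, extended from $Z$ (using finite presentation of $Z$ is not needed here, only that $f$ restricts to an isomorphism over $Z$ — which is where I would invoke the hypothesis of Proposition~\ref{proposition:flat-square}, but in the generality of the lemma this is guaranteed by the setup), one deduces essential surjectivity on hearts, and then bootstraps to all of $\Der^{\qc}_Z(X)$ using that the t-structure is left complete (Remark~\ref{remark:t-structure}) and that both functors commute with the relevant limits and truncations.

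The main obstacle I anticipate is the last implication: carefully pinning down why fully-faithfulness of the affine $f^*$ on categories with support propagates to essential surjectivity, i.e.\ controlling the essential image. The delicate point is that $\Der^{\qc}_Z(X)$ need not be generated under colimits and shifts by objects whose push-forward behaviour is transparent, so the argument has to go through the heart and an induction up the Postnikov tower, using left completeness to pass to the limit — and one must make sure that $f_*$ reflects the relevant colimits/limits. I would handle this by reducing to the affine case of $X$ via Lemma~\ref{lemma:useful-argument} (the statement being local on $X$), where $\Der^{\qc}_Z(X) = \Der_{I}(A)$ for an ideal $I$, and then the fully-faithful $\Rightarrow$ equivalence claim becomes a concrete statement about derived $I$-torsion modules along a flat affine map $A \to B$ inducing $A/I \xrightarrow{\sim} B/IB$, which can be checked by a direct computation with local cohomology.
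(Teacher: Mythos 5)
Your proposal takes a genuinely different route from the paper, and both halves of it have real gaps.

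For $\ref{item:flat-square-cartesian} \Leftrightarrow \ref{item:excision-equivalence}$, you assert that a square of stable $\infty$-categories with Verdier projections as its vertical legs ``is cartesian precisely when the induced functor on vertical kernels is an equivalence and the functor from the pullback is essentially surjective (equivalently, the total collection of functors to the corners is jointly conservative).'' This is not a theorem: joint conservativity of $(j^*,f^*)$ does not imply either fully-faithfulness or essential surjectivity of $\Der^{\qc}(X) \to \Der^{\qc}(U) \times_{\Der^{\qc}(W)} \Der^{\qc}(V)$, and nothing in your text supplies the missing step. The paper's proof replaces this with the hypothesis that $j^*$ is a \emph{right split} Verdier projection (with fully faithful right adjoint $j_*$) and that the square of right adjoints commutes, which is exactly flat base change ($f^* j_* \simeq j_* f^*$). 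It then cites \cite{9-authors-II}*{Lemma~1.5.3}, a general recollement fact: for such a right adjointable square of split Verdier projections, being cartesian is equivalent to the map on vertical kernels being an equivalence. Your joint-conservativity argument (even setting aside whether it is sound) is also doing extra work you don't need: the paper's proof does not visibly invoke the surjectivity of $U \amalg V \to X$ at all.

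For $\ref{item:excision-equivalence} \Leftrightarrow \ref{item:excision-fully-faithful}$, the issue is more concrete. You correctly note that the content is to upgrade a fully faithful left adjoint to an equivalence, but your argument goes through the heart and an induction up the Postnikov tower, invoking ``every object supported on $Z$ is, locally, extended from $Z$.'' You then acknowledge that this rests on the hypothesis of Proposition~\ref{proposition:flat-square} (that $V \times_X Z \to Z$ is an isomorphism), claiming that ``in the generality of the lemma this is guaranteed by the setup'' --- it is not. The lemma only assumes $U \amalg V \to X$ is surjective; the isomorphism $V \times_X Z \to Z$ is a strictly stronger hypothesis that is not available here, so this part of your argument is not valid in the stated generality. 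The paper's argument is a two-line formality and avoids all of this: since $f$ is affine, $f_*$ is conservative, and it restricts (by flat base change) to a conservative right adjoint $\Der^{\qc}_Y(V) \to \Der^{\qc}_Z(X)$. For an adjunction $L \dashv R$ with $R$ conservative, $L$ fully faithful (unit invertible) forces $R\epsilon$ invertible by the triangle identity and then $\epsilon$ invertible by conservativity, so the adjunction is an equivalence. No heart, no Postnikov tower, no left completeness.
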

\begin{proof}
We note that by definition, the functor appearing in \ref{item:excision-equivalence} is just the functor induced on vertical fibres by the square in \ref{item:flat-square-cartesian}. Hence $\ref{item:flat-square-cartesian} \Rightarrow \ref{item:excision-equivalence}$ and the implication in the other direction follows from~\cite{9-authors-II}*{Lemma~1.5.3} since $j^*$ is a (right split) Verdier projection and the square is right adjointable by flat base change.
On the other hand, if $f$ is affine then the functor $\Der^{\qc}_Y(V) \to \Der^{\qc}_Z(X)$ induced by $f_*$ is conservative (since $f_*$ is), and hence the adjunction $\Der^{\qc}_Z(X) \adj \Der^{\qc}_Y(V)$ is an equivalence if and only if its left adjoint (induced by $f^*$) is fully-faithful. 
\end{proof}

\begin{lemma}
\label{lemma:noetherian-principle}%
Let $X$ be a scheme and $i\colon Z \hrar X$ a finitely presented closed subscheme with open complement $U \subseteq X$. If $M \in \Der^{\qc}(X)$ is such that $M$ restricts to $0$ on both $U$ and $Z$ then $M=0$.
\end{lemma}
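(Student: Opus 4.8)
The plan is to reduce to the affine case by Zariski descent and then establish a purely ring-theoretic statement by induction on the number of generators of the ideal cutting out $Z$. Since being zero can be checked locally on $X$, it suffices to treat $M|_{X_\alpha}$ for $X_\alpha$ ranging over an affine open cover. The hypotheses restrict: one has $(M|_{X_\alpha})|_{U \cap X_\alpha} = (M|_U)|_{U \cap X_\alpha} = 0$, and, writing $i_\alpha\colon Z \cap X_\alpha \hookrightarrow X_\alpha$ for the restriction of $i$, $i_\alpha^*(M|_{X_\alpha}) \simeq (i^*M)|_{Z \cap X_\alpha} = 0$, while $Z \cap X_\alpha$ is again a finitely presented closed subscheme. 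So one may assume $X = \spec(A)$, in which case $Z = \spec(A/I)$ with $I = (a_1, \dots, a_n)$ a finitely generated ideal (finite presentation of the closed immersion forces $I$ finitely generated), $U = \bigcup_i \spec(A_{a_i})$, and $i^*M = M \otimes_A A/I$ in $\Der(A) = \Derqc(\spec A)$. By descent on $U$ and exactness of localization, the condition $M|_U = 0$ is equivalent to $M \otimes_A A_{a_i} = 0$ for every $i$, i.e.\ to $\pi_\ast(M)$ being $a_i$-power torsion for every $i$. It remains to show that this, together with $M \otimes_A A/I = 0$, forces $M = 0$, which I would prove by induction on $n$.

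For $n = 1$, write $a = a_1$. Here lies the one genuine subtlety: the hypothesis only controls the \emph{underived} quotient $M \otimes_A A/(a)$, whereas the torsion condition interacts naturally with the Koszul complex $K(a) = \cof(A \xrightarrow{a} A)$, which differs from $A/(a)$ precisely by the term $(0 :_A a) = \pi_1 K(a)$ when $a$ is a zero-divisor. The way around this is that $(0:_A a)$ is annihilated by $a$, hence is an $A/(a)$-module, so that $(0:_A a) \otimes_A M \simeq (0:_A a) \otimes_{A/(a)} (A/(a) \otimes_A M) = 0$. Tensoring the Postnikov fibre sequence $(0:_A a)[1] \to K(a) \to A/(a)$ with $M$ then gives $\cof(M \xrightarrow{a} M) = K(a) \otimes_A M \simeq 0$; thus multiplication by $a$ is an equivalence on $M$, hence an isomorphism on each $\pi_k(M)$. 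Since every element of $\pi_k(M)$ is killed by a power of $a$, injectivity of $a$ on $\pi_k(M)$ forces $\pi_k(M) = 0$ for all $k$, so $M = 0$.

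For the inductive step $n \geq 2$, I would set $J = (a_1, \dots, a_{n-1})$, $\bar A = A/J$, and $\bar M = M \otimes_A \bar A$, with $\bar a$ the image of $a_n$. A short computation gives $\bar M \otimes_{\bar A} \bar A_{\bar a} \simeq (M \otimes_A A_{a_n}) \otimes_A \bar A = 0$ and $\bar M \otimes_{\bar A} \bar A/(\bar a) \simeq M \otimes_A A/(J + (a_n)) = M \otimes_A A/I = 0$, so the case $n=1$ applied over the ring $\bar A$ yields $\bar M = M \otimes_A A/J = 0$. Since $\pi_\ast(M)$ is moreover $a_i$-power torsion for $i < n$ and $J$ is generated by $n-1$ elements, the inductive hypothesis then gives $M = 0$. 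The main obstacle throughout is precisely the gap between the underived quotient $A/I$ furnished by the hypothesis ``$i^*M = 0$'' and the Koszul-type complexes through which the support and torsion conditions are most naturally expressed; the observation that the relevant correction terms are modules over $A/I$ (resp.\ over $\bar A/(\bar a)$) is what lets the induction close up.
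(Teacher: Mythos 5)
Your proof is correct, and it rests on the same two ingredients as the paper's: the Koszul complexes $K(f_i) = \cof[A \xrightarrow{f_i} A]$ attached to a finite generating set for $I$, together with the observation that the difference between $K(f_i)$ and the honest quotient $A/(f_i)$ (and more generally the homotopy of the full Koszul complex) consists of $A/I$-modules, hence is killed by tensoring with $M$. The two arguments organize this differently, however. The paper forms the full tensor $E = K(f_1) \otimes_A \cdots \otimes_A K(f_n)$ at once, shows $M \otimes_A E = 0$ by exhibiting a finite filtration of $E$ with graded pieces that are $A/I$-modules, and then runs a \emph{descending} induction on $j$ to peel the Koszul factors off $M \otimes_A E_1 \otimes \cdots \otimes E_j$, each step using that invertibility of $f_j$ together with $M|_{D(f_j)} = 0$ forces vanishing. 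You instead run an \emph{ascending} induction on the number $n$ of generators: you establish the $n = 1$ case explicitly via the Postnikov fibre sequence $(0:_A a)[1] \to K(a) \to A/(a)$, and for $n \geq 2$ you pass to the quotient ring $\bar A = A/J$, apply the $n = 1$ case over $\bar A$ to conclude $M \otimes_A A/J = 0$, and then invoke the inductive hypothesis for the $(n-1)$-generator ideal $J$. Both proofs are sound and of comparable length. What your reduction via the quotient ring buys is that the filtration analysis is only ever done in the one-variable case, where it is completely explicit; what the paper's approach buys is a single global statement ($M \otimes_A E = 0$) that cleanly isolates the role of finite presentation before the inductive unwinding begins.
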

\begin{proof}
Testing the triviality of $M$ can be done locally, so we may as well assume that $X = \spec(A)$ for $A$ a ring. In this case $Z$ is affine as well, and corresponds to $\spec(A/I)$ for some finitely generated ideal $I = \<f_1,\ldots,f_n\> \subseteq A$. If $I=0$ then $Z=X$ and the claim is trivially true, so we may assume $n \geq 1$ with each $f_j \neq 0$.

Let $\E \subseteq \Der(A)$ be the full (stable) subcategory spanned by those $A$-complexes $N$ such that $N \otimes_A M = 0$. For $i=1,\ldots,n$ let
$E_j = \cof[f_j \colon A \to A] \in \Der(A)$
and $E = E_1 \otimes_A \cdots \otimes_A E_n$. 
We then claim that $\E$ contains $E$. To see this, we note that $\E$ contains $i_*N$ for every $N \in \Der(A/I)$, since $M \otimes i_*N = i_*(i^*M \otimes N) = 0$. Now since $E$ is connective and $n$-truncated we may prove by induction on $m$ that $\tau_{\leq m}E$ is in $\E$. At each point in the induction (including the base), it will suffice to show that $\pi_mE[m]$ is in $\E$. Unwinding the definitions, we see that each of the $A$-modules $\pi_m E[m]$ admits a filtration whose successive quotients are obtained from $A$ by iteratively taking either the kernel or cokernel of $f_i$ for each $i$; these successive quotients are in particular $A/I$-modules, and so we conclude that $\E$ contains $E$.

We now prove by descending induction on $j$ that $\E$ contains $E_1 \otimes_A \cdots \otimes_A E_j$ for every $j=0,\ldots,n$ (so that, for $j=0$ we get that $\E$ contains $A$ itself and hence $M=0$). Indeed, let $M'= M \otimes_A E_1 \otimes_A \cdots \otimes_A E_{j-1}$ and suppose that $M' \otimes_A E_j = \cof[f_j \colon M' \to M'] = 0$. Then $f_j$ acts invertibly on $M'$, and so $M'$ is pushed forward from $U_j = \spec(A[f_j^{-1}]) \subseteq \spec(A)$. But $U_j$ is contained in $U$, and so $M'|_{U_j} = 0$, so that $M' =0$. The proof is hence finished.
\end{proof}

\begin{proof}[Proof of Proposition~\ref{proposition:flat-square}]
Since $f\colon V \to X$ is flat and both $X$ and $V$ are qcqs we may find open coverings $X = \cup_{i=1}^{n} X_i$ and $V = \cup_{i=1}^{n} V_i$ such that for each $i=1,\ldots,n$ the schemes $X_i$ and $V_i$ are affine, $f(V_i) \subseteq X_i$ and the induced map $V_i \to X_i$ is flat. Write $Z_i = V_i \times_X Z$, which we can view as an open subset of $Z_i$ (since $V \times_X Z \to Z$ is an isomorphism by assumption). Set $Y_i = (Z \cap X_i) \setminus Z_i$, so that $Y_i$ can be considered as a closed subscheme of $Z$. We note that $Z_i$ is affine, being a closed subset of the affine scheme $V_i$. In particular, $Z_i$ is quasi-compact. We may consequently find a finite collection of affine opens $X_{i,1},\ldots,X_{1,m} \subseteq X_i \setminus Y_i$ such that $Z_i \subseteq \cup_j X_{1,j}$. Set $V_{i,j} = V_i \times_{X_j} X_{i,j}$. Then $V_{i,j} = V_i \times_{X_j} X_{i,j}$ is a fibre product of affine schemes, and is hence affine. Furthermore, the map $V_{i,j} \to X_{i,j}$ is a base change of the map $V_{i} \to X_{i}$, and is hence flat. Finally, by construction we have that $V_{i,j} \times_X Z \to X_{i,j} \times_X Z$ is an isomorphism. 
Let $X^{\circ} = \cup_{i,j} X_{i,j}$ and $V^{\circ} = \cup_{i,j} V_{i,j}$. We note that $X^{\circ}$ contains $Z$ and $V^{\circ} = V \times_X X^{\circ}$ contains $V \times_X Z \cong Z$. Let $\I = \{1,\ldots,n\} \times \{1,\ldots,m\}$ be the set of all pairs of indices $(i,j)$ with $i \in \{1,\ldots,n\}$ and $j \in \{1,\ldots,m\}$. For each $\emptyset \neq S \subseteq \I$ let us denote by $X_S = \cap_{(i,j) \in S} X_{i,j}$ and $V_S = \cap_{(i,j) \in S} V_{i,j}$. We may then consider the commutative diagram
\[
\begin{tikzcd}
\Der^{\qc}_{Z}(X) \ar[r]\ar[d,"\simeq"] & \Der^{\qc}_{V \times_X Z}(V) \ar[d, "\simeq"] \\
\Der^{\qc}_{Z}(X^{\circ}) \ar[r]\ar[d, "\simeq"] & \Der^{\qc}_{V \times_X Z}(V^{\circ}) \ar[d, "\simeq"] \\
\lim_{\emptyset \neq S \subseteq \I} \Der^{\qc}_{X_S \times_X Z}(X_S) \ar[r] & \lim_{\emptyset \neq S \subseteq \I} \Der^{\qc}_{V_S \times_X Z}(V_S)
\end{tikzcd}
\]
where the vertical arrows are equivalences by Zariski descent (see Remark~\ref{remark:zariski-descent-support}) and excision (Corollary~\ref{corollary:excision}). To prove Proposition~\ref{proposition:flat-square} for $X,V$ and $Z$ it will hence suffice to so for each of $X_{i,j}, V_{i,j}$ and $X_{i,j} \times_X Z$. Since each $X_{i,j}$ is affine we have that each $W_S$ is separated, and hence it will suffice to prove Proposition~\ref{proposition:flat-square} under the addition assumption that $X$ and $V$ are separated. On the other hand, if $X$ and $V$ are separated then each of the $X_{i,j}$ and $V_{i,j}$ are affine, and hence it will suffice to prove Proposition~\ref{proposition:flat-square} under the additional assumption that $X$ and $V$ are affine.

Let us then write $X = \spec(A)$ and $V =\spec(B)$ where $A$ is a noetherian ring and $B$ is a flat $A$-algebra. Let $I \subseteq A$ be the ideal corresponding to the closed set $Z = \spec(A) \setminus U$. Then condition that $\spec(B) \setminus f^{-1}(U) \to Z$ is an isomorphism can then be rephrased as saying that the map $A/I \to B \otimes_A A/I$ is an isomorphism. This means in particular that $\cof[A \to B] \otimes_A A/I = 0$, so that $\cof[A \to B]$ restricts to zero on $Z = \spec(A/I)$.
Using criterion \ref{item:excision-fully-faithful} of Lemma~\ref{lemma:flat-square-criterion}, what we need to show in this case is that the functor
\[
\Der^{\qc}_{Z}(\spec(A)) \to \Der^{\qc}_{Z}(\spec(B))
\]
is fully-faithful. Concretely, this means that if $M \in \Der^{\qc}(\spec(A)) = \Der(A)$ is such that $j^*M = 0$ then the map $M \to M \otimes_A B$ is an equivalence. Equivalently, the object $C = \cof[M \to M \otimes_A B] = M \otimes_A \cof[A \to B]$ is zero. Indeed, $C$ is a tensor product of an object $M$ which restricts to $0$ on $U$ and an object $\cof[A \to B]$ which restricts to zero on $Z$. Since restriction functors are symmetric monoidal we conclude that $C$ restricts to $0$ on both $U$ and $Z$, and hence $C = 0$ by Lemma~\ref{lemma:noetherian-principle}.
\end{proof}

\subsection{Perfect complexes}
\label{subsection:perfect}%

We now proceed to discuss compact objects in $\Der^{\qc}(X)$, which are our primary objects of interest. We begin with the following useful statement, which follows relatively directly form Zariski descent:

\begin{proposition}
\label{proposition:pushforward-colimits}%
Let $f\colon X \to Y$ be a map of qcqs schemes. Then the push-forward functor $f_*\colon \Der^{\qc}(X) \to \Der^{\qc}(Y)$ preserves filtered colimits (and hence all colimits).
\end{proposition}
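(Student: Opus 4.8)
The plan is to reduce to the affine case by a standard descent argument and then invoke the well-known fact that pushforward along an affine morphism of affine schemes preserves filtered colimits (being simply the restriction-of-scalars functor on module spectra). First, since filtered colimits in $\Der^{\qc}(Y)$ can be tested locally on $Y$ -- indeed, by Corollary~\ref{corollary:descent-qc} the restriction functors $\Der^{\qc}(Y) \to \Der^{\qc}(Y_i)$ along an open affine cover $Y = \cup_i Y_i$ are jointly conservative and each preserves colimits -- and since by flat base change (Proposition~\ref{proposition:flat-base-change}) for the cartesian square relating $f$ to its base change $f_i\colon X \times_Y Y_i \to Y_i$ one has $(f_*M)|_{Y_i} \simeq (f_i)_*(M|_{X \times_Y Y_i})$, we may assume without loss of generality that $Y = \spec(A)$ is affine.

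Next I would run the usual ``good scheme'' induction on $X$, exactly as in the proof of Proposition~\ref{proposition:pushforward}. Call a qcqs scheme $X$ (admitting a qcqs map to an affine scheme) \emph{good} if $f_*\colon \Der^{\qc}(X) \to \Der^{\qc}(Y)$ preserves filtered colimits for every such $f$ with affine target $Y$. If $X = \cup_{i=1}^n X_i$ is a finite open cover with each intersection $X_S = \cap_{i \in S} X_i$ good, then $X$ itself is good: by Remark~\ref{remark:zariski} every $M \in \Der^{\qc}(X)$ is the finite limit $M \simeq \lim_{\emptyset \neq S} (j_S)_* j_S^* M$ over open inclusions $j_S\colon X_S \hrar X$, and $f_*$ commutes with this finite limit, so $f_* M \simeq \lim_{\emptyset \neq S} f_* (j_S)_* j_S^* M = \lim_{\emptyset \neq S} (f j_S)_* (M|_{X_S})$. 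Since each $fj_S\colon X_S \to Y$ has affine target and $X_S$ is good, each $(fj_S)_*$ preserves filtered colimits; moreover, in the stable $\infty$-category $\Der^{\qc}(Y)$ finite limits commute with filtered colimits, so the whole expression is a finite limit of filtered-colimit-preserving functors applied to $M$, hence preserves filtered colimits in $M$ (using that $j_S^*$ preserves filtered colimits as well). By Lemma~\ref{lemma:useful-argument} it therefore suffices to prove that every affine scheme is good.

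Finally, for the affine case: let $X = \spec(B)$, $Y = \spec(A)$, with $f$ corresponding to a ring map $A \to B$. Under the equivalences $\Der^{\qc}(\spec(B)) \simeq \Der(B)$ and $\Der^{\qc}(\spec(A)) \simeq \Der(A)$ of Remark~\ref{remark:affine}, the pushforward $f_*$ is identified with the forgetful (restriction of scalars) functor $\Der(B) \to \Der(A)$, which is induced by an exact functor of abelian categories and manifestly commutes with all colimits, in particular filtered ones. This completes the argument. I do not anticipate a genuine obstacle here; the only point requiring a little care is the commutation of $f_*$ with the finite limit in the Zariski-descent step, but this is immediate since $f_*$, being a right adjoint, preserves all limits, and the interchange of finite limits with filtered colimits in the target is a general feature of stable $\infty$-categories.
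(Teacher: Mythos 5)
Your proof is correct, but it takes a genuinely different route from the paper's. You verify directly that $f_*$ commutes with filtered colimits, running the good-scheme induction on the statement itself: the inductive step uses the finite-limit decomposition $M \simeq \lim_{\emptyset\neq S}(j_S)_* j_S^* M$ of Remark~\ref{remark:zariski} together with the commutation of finite limits and filtered colimits in a stable $\infty$-category, and the affine base case identifies $f_*$ with restriction of scalars. The paper instead reduces the question, via the adjoint-functor criterion for right adjoints into compactly generated categories, to showing that $f^*$ preserves compact objects; since $\cO_Y$ is a compact generator of $\Derqc(Y)$ when $Y$ is affine, this comes down to the compactness of $\cO_X$ in $\Derqc(X)$, and it is this single compactness claim they induct on, so their good-scheme property is simpler and the Zariski-descent step is shorter. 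The paper's route also isolates the compactness of $\cO_X$ as a reusable fact feeding directly into Corollary~\ref{corollary:pullback-compact} and the notion of perfect complex, whereas your argument is more self-contained and avoids the compact-generation/adjunction machinery at the cost of a slightly more involved limit--colimit interchange in the inductive step.
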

\begin{proof}
A right adjoint whose target is a compactly generated category preserves filtered colimits if and only if its left adjoint preserves compact objects.
Since equivalences of sheaves can be tested locally and pullback functors preserve colimit we may use open base change to reduce to the case where $Y$ is affine. In this case, $\cO_Y$ is a compact generator of $\Der^{\qc}(Y)$, hence it will suffice to show that $f^*\cO_Y = \cO_X$ is a compact object of $\Der^{\qc}(X)$. Let us say that a qcqs scheme $X$ is \defi{good} if $\cO_X$ is compact in $\Der^{\qc}(X)$. Now by Zariski descent (see Corollary~\ref{corollary:descent-qc}) we get that if we can find a finite covering $X = \cup_{i=1}^{n}U_i$ by qcqs schemes such that for each $\emptyset \neq S \subseteq \{1,\ldots,n\}$ the intersection $U_S = \cap_{i \in S} U_i$ is good then $X$ itself is good. On the other hand, we know that affine schemes are good. The desired result hence follows from Lemma~\ref{lemma:useful-argument}.
\end{proof}

\begin{corollary}
\label{corollary:pullback-compact}%
Let $f\colon X \to Y$ be a map of qcqs schemes. Then the pullback functor $f^*\colon \Der^{\qc}(X) \to \Der^{\qc}(Y)$ preserves compact objects.
\end{corollary}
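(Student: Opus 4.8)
The statement follows immediately by adjunction from Proposition~\ref{proposition:pushforward-colimits}. The plan is as follows. First I would recall that $\Der^{\qc}(X)$ and $\Der^{\qc}(Y)$ are both presentable (indeed compactly generated) stable $\infty$-categories, and that the pullback $f^*$ is left adjoint to the push-forward $f_*\colon \Der^{\qc}(X) \to \Der^{\qc}(Y)$ (using Proposition~\ref{proposition:pushforward} to see that $f_*$ indeed preserves quasi-coherence, so that this adjunction restricts to the quasi-coherent subcategories). Next I would invoke the standard fact that, for an adjunction $L \dashv R$ between presentable $\infty$-categories whose target is compactly generated, the left adjoint $L$ preserves compact objects if and only if the right adjoint $R$ preserves filtered colimits (this is a formal consequence of the characterization of compact objects via commutation of mapping-space functors with filtered colimits). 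Finally I would apply Proposition~\ref{proposition:pushforward-colimits}, which asserts precisely that $f_*$ preserves filtered colimits, to conclude that $f^*$ preserves compact objects.

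There is no real obstacle here: every ingredient has already been established earlier in the appendix (the adjunction, quasi-coherence of $f_*$, and the preservation of filtered colimits by $f_*$), and the adjoint-functor criterion is standard $\infty$-categorical formalism. The only point worth a sentence of care is to confirm that the adjunction $f^* \dashv f_*$ on all $\cO$-modules restricts correctly to quasi-coherent complexes, which is exactly the content recorded after Proposition~\ref{proposition:pushforward}. Thus the argument is essentially a one-line deduction, and the write-up will simply spell out the adjoint-functor criterion and cite Proposition~\ref{proposition:pushforward-colimits}.

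(One should also note the harmless transposition in the displayed statement: pullback of course goes $\Der^{\qc}(Y) \to \Der^{\qc}(X)$, and it is this functor that is shown to preserve compactness.)
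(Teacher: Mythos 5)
Your argument is correct and is exactly the (implicit) argument the paper intends: the corollary is stated without proof immediately after Proposition~\ref{proposition:pushforward-colimits}, because, as you say, once $f_*$ is known to preserve filtered colimits, the standard adjoint-functor criterion gives preservation of compacts by the left adjoint $f^*$. (Indeed the direction you need is the easy implication of that criterion, requiring no compact-generation hypothesis, so there is nothing further to check.)
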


\begin{lemma}
\label{lemma:perfect-char}%
\label{lemma:perfect}%
Let $X$ be a qcqs scheme and $M \in \Der^{\qc}(X)$ be a quasi-coherent complex. Then the following conditions are equivalent:
\begin{enumerate}
\item
\label{item:compact-object}%
$M$ is a compact object of $\Der^{\qc}(X)$.
\item
\label{item:locally-compact-object}%
For every affine $\spec(A) \subseteq X$ the restricted complex $M|_{\spec(A)}$ is compact in $\Der^{\qc}(\spec(A))$.
\item
\label{item:locally-finite-cover-compact-object}%
There exists a finite covering $X = \cup_{i=1}^{n} \spec(A_i)$ by affines such that for every $i=1,\ldots,n$ the restricted complex $M|_{\spec(A_i)}$ is compact in $\Der^{\qc}(\spec(A_i))$ for every $i=1,\ldots,n$.
\item
\label{item:locally-bounded-projective}%
There exists a finite covering $X = \cup_{i=1}^{n} \spec(A_i)$ by affines such that for every $i=1,\ldots,n$ the $A_i$-module complex $M(\spec(A)) \in \Der(A_i)$ is quasi-isomorphic to a bounded complex of projective $A_i$-modules.
\end{enumerate}
\end{lemma}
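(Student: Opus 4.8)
The plan is to prove the cycle of implications $\ref{item:compact-object} \Rightarrow \ref{item:locally-compact-object} \Rightarrow \ref{item:locally-finite-cover-compact-object} \Rightarrow \ref{item:locally-bounded-projective} \Rightarrow \ref{item:compact-object}$, with the affine-local characterization of compactness doing most of the work. For $\ref{item:compact-object} \Rightarrow \ref{item:locally-compact-object}$, given an affine open $j\colon \spec(A) \hrar X$, the restriction functor $j^*\colon \Der^{\qc}(X) \to \Der^{\qc}(\spec(A))$ preserves compact objects by Corollary~\ref{corollary:pullback-compact}, which gives the claim immediately. The implication $\ref{item:locally-compact-object} \Rightarrow \ref{item:locally-finite-cover-compact-object}$ is trivial since $X$ is quasi-compact and hence admits a finite affine cover; and $\ref{item:locally-finite-cover-compact-object} \Leftrightarrow \ref{item:locally-bounded-projective}$ is the classical statement that a complex over a commutative ring $A$ is compact in $\Der(A)$ if and only if it is perfect, i.e.\ quasi-isomorphic to a bounded complex of finitely generated projectives. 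This is standard (it is essentially the content of the Thomason--Trobaugh perfectness criterion in the affine case), and I would invoke it with a reference rather than reprove it.

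The substantive direction is $\ref{item:locally-finite-cover-compact-object} \Rightarrow \ref{item:compact-object}$: a quasi-coherent complex which is locally compact (over a finite affine cover) is globally compact. The strategy is the now-familiar ``goodness + Zariski descent + Lemma~\ref{lemma:useful-argument}'' pattern used already in Proposition~\ref{proposition:pushforward-colimits}. First observe that the property of a complex being compact is Zariski-local in the following sense: if $X = \cup_{i=1}^n U_i$ is a finite cover by qcqs opens and $M \in \Der^{\qc}(X)$ restricts to a compact object on each finite intersection $U_S = \cap_{i \in S} U_i$, then $M$ is compact in $\Der^{\qc}(X)$. This follows from the Zariski descent equivalence $\Der^{\qc}(X) \simeq \lim_{\emptyset \neq S \subseteq \{1,\dots,n\}} \Der^{\qc}(U_S)$ of Corollary~\ref{corollary:descent-qc}: using Remark~\ref{remark:zariski}, $M$ is a finite limit of the pushforwards $j_{S*}j_S^*M$, the functors $j_{S*}$ preserve filtered colimits by Proposition~\ref{proposition:pushforward-colimits}, so $\map_{\Der^{\qc}(X)}(M,-)$ commutes with filtered colimits provided each $\map_{\Der^{\qc}(U_S)}(j_S^*M,-) \circ j_S^*$ does; and $j_S^*$ preserves filtered colimits while $j_S^*M$ is compact by hypothesis. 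Granting the equivalence $\ref{item:locally-finite-cover-compact-object} \Leftrightarrow \ref{item:locally-bounded-projective}$ (so that being locally compact over \emph{some} finite affine cover is equivalent to being so over \emph{every} affine open, since ``perfect'' is genuinely local for modules), one sees that the subclass of $X$ for which the implication holds is closed under the Zariski-gluing operation of Lemma~\ref{lemma:useful-argument}, and affine schemes trivially lie in it. Lemma~\ref{lemma:useful-argument} then concludes.

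The main obstacle is the careful bookkeeping in showing that condition~\ref{item:locally-finite-cover-compact-object} is truly local --- i.e.\ that compactness on one finite affine cover forces compactness on intersections and on other affine opens --- so that the goodness argument applies cleanly. The cleanest way around this is to route through~\ref{item:locally-bounded-projective}: perfectness of a module complex over a ring is manifestly preserved under localization, so a complex satisfying~\ref{item:locally-finite-cover-compact-object} is perfect Zariski-locally everywhere, in particular on all the intersections $U_S$, and then the descent step above gives global compactness. I do not anticipate any genuinely new difficulty beyond reassembling these already-established pieces; the affine case $\ref{item:locally-finite-cover-compact-object} \Leftrightarrow \ref{item:locally-bounded-projective}$ is the only input I would import wholesale from the literature.
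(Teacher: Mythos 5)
Your proof is correct and follows essentially the same route as the paper's, but with a few differences worth flagging. First, the detour through Lemma~\ref{lemma:useful-argument} is unnecessary: the paper uses the descent argument directly on the given cover. Since condition~\ref{item:locally-finite-cover-compact-object} already hands you a finite affine cover $X=\cup_i U_i$, one only needs to (a) upgrade ``$M|_{U_i}$ compact'' to ``$M|_{U_S}$ compact for every $\emptyset\neq S$'' and then (b) conclude compactness from the descent equivalence $\Der^{\qc}(X)=\lim_S\Der^{\qc}(U_S)$; no inductive gluing over the class of qcqs schemes is needed. Second, for step (a) the paper simply applies Corollary~\ref{corollary:pullback-compact} once more (each $U_S\hrar U_i$ is a map of qcqs schemes, so restriction preserves compacts), whereas you route through the affine perfectness characterization \ref{item:locally-bounded-projective} and its invariance under localization --- this also works, but is a longer path. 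Third, and this is the one place where your phrasing is slightly off: you write that ``$M$ is a finite limit of the pushforwards $j_{S*}j_S^*M$'' and deduce that $\map(M,-)$ preserves filtered colimits, but a finite limit presentation of $M$ does not directly control $\map(M,-)$; you would need a finite \emph{colimit} presentation, or else $j_{S*}j_S^*M$ to itself be compact, neither of which you have. The argument you clearly intend (and which makes the rest of your sentence correct) is to apply Remark~\ref{remark:zariski} to the \emph{test} object $N$, giving
\[
\map_{\Der^{\qc}(X)}(M,N)\ \simeq\ \map(M,\lim_S j_{S*}j_S^*N)\ \simeq\ \lim_{\emptyset\neq S}\map_{\Der^{\qc}(U_S)}(j_S^*M,j_S^*N),
\]
a finite limit of functors in $N$ each of which preserves filtered colimits (since $j_S^*M$ is compact and $j_S^*$ is colimit-preserving); hence the finite limit does too. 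With that adjustment the proof is sound and matches the paper's.
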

\begin{proof}
The implication $\ref{item:compact-object} \Rightarrow \ref{item:locally-compact-object}$ follows from pullback functors preserving compact objects (Corollary~\ref{corollary:pullback-compact}), and the implication $\ref{item:locally-compact-object} \Rightarrow \ref{item:locally-finite-cover-compact-object}$ is because any qcqs $X$ admits a finite covering by affines. In addition, \ref{item:locally-finite-cover-compact-object} and \ref{item:locally-bounded-projective} are equivalent by the affine Serre theorem (Remark~\ref{remark:affine}) and the standard description of compact objects in module categories. 
We now show that $\ref{item:locally-finite-cover-compact-object} \Rightarrow \ref{item:compact-object}$. For this, let us fix a finite covering $X = \cup_{i=1}^{n} U_i$ by affine schemes and an $M \in \Der^{\qc}(X)$ whose restriction to each $U_i$ is compact. For every $\emptyset \neq S \subseteq \{1,\ldots,n\}$ let $U_S = \cap_{i \in S} U_i$. Since $M|_{U_i}$ is compact in $\Der(U_i)$ for every $i$ we get from Corollary~\ref{corollary:pullback-compact} that $M|_{U_S}$ is compact in $\Der^{\qc}(U_S)$ for every $\emptyset \neq S \subseteq \{1,\ldots,n\}$. But by Zariski descent we that
\[
\Der^{\qc}(X) = \lim_{\emptyset \neq S \subseteq \{1,\ldots,n\}} \Der^{\qc}(U_S)
\]
and so $M$ is compact in $\Der^{\qc}(X)$.
\end{proof}

\begin{definition}
A quasi-coherent complex $M \in \Der^{\qc}(X)$ is \defi{perfect} if it satisfies the equivalent conditions of Lemma~\ref{lemma:perfect}. We denote by $\Dperf(X) \subseteq \Der^{\qc}(X)$ the full subcategory spanned by the perfect complexes. For any map $f\colon X \to Y$ between qcqs schemes the pullback functor restricts to a functor $f^*\colon \Dperf(Y) \to \Dperf(X)$, which we denote identically. 
\end{definition}

By the fourth characterization in Lemma~\ref{lemma:perfect-char} we see that $\Dperf(X)$ is closed in $\Der^{\qc}(X)$ under tensor products and so $\Dperf(X)$ inherits from $\Der(X)$ its symmetric monoidal structure, and the functor $X \mapsto \Derqc(X)^{\otimes}$ discussed in \S\ref{subsection:quasi-coherent} refines to a functor
\[
\Sch\op \to \CAlg(\Catx) \quad\quad X \mapsto \Dperf(X)^{\otimes}.
\]
Similarly, taking into account the standard t-structure on $\Derqc(X)$ we obtained a refinement of~\eqref{equation:functor-derqc} to a functor
\[
\Sch\op \to \CAlg(\Catxt) \quad\quad X \mapsto (\Dperf(X), \Derqc(X)_{\geq 0},\Derqc(X)_{\leq 0})
\]
where $(\Catxt)^{\otimes} = (\Catx)^{\otimes} \times_{(\PrL_{\st})^{\otimes}} (\PrL_{\st,\tstruc})^{\otimes}$ is the symmetric monoidal $\infty$-category of oriented stable $\infty$-categories,
see Definition~\ref{definition:catext}.

\begin{remark}
\label{remark:perfect-local}%
Observing the equivalent conditions of Lemma~\ref{lemma:perfect} that the property of being perfect is local in the following sense: for any open covering $X = \cup_i U_i$ we have that a given $M \in \Derqc(X)$ is perfect if and only if $M|_{U_i} \in \Derqc(U_i)$ is perfect for every $i$.
\end{remark}

\begin{remark}
\label{remark:perfect-connective}%
If $X$ is qcqs then any $M \in \Derqc(X)_{\geq 0}$ can be written as a filtered colimits of objects in $\Dperf(X)_{\geq 0} = \Dperf(X) \cap \Derqc(X)_{\geq 0}$, see~\cite{SAG}*{9.6.1.2}. It then follows that $\Derqc(X)_{\geq 0} = \Ind(\Dperf(X)_{\geq 0})$ in this case.
\end{remark}

Now let $X$ be a qcqs scheme and $U \subseteq X$ a qcqs open subset.
We will say that $M \in \Der^{\qc}_Z(X)$ is perfect if it is perfect when considered as an object of $\Der^{\qc}(X)$. We then similarly denote by $\Dperf_Z(X) \subseteq \Der^{\qc}_Z(X)$ the full subcategory spanned by the perfect complexes. In particular, we have a fibre sequence
\begin{equation}
\label{equation:localisation}%
\Dperf_Z(X) \to \Dperf(X) \to \Dperf(U) 
\end{equation}
of stable $\infty$-categories. More generally, if $V \subseteq X$ is another qcqs open subset with closed complement $Y = X \setminus V$, then we have a fibre sequence
\begin{equation}
\label{equation:localisation-2}%
\Dperf_{Z \cap Y}(X) \to \Dperf_{Y}(X) \to \Dperf_{Y \cap U}(U) 
\end{equation}

\begin{remark}
\label{remark:inherited-perfect}%
Since being perfect is a local property the Zariski descent and excision properties (see Corollary~\ref{corollary:descent-qc}) are inherited by perfect complexes. In particular, if $X = V \cup U$ with $V,U$ qcqs and $Z := X \setminus U \subseteq V$ then the square
\[
\begin{tikzcd}
\Dperf(X) \ar[r]\ar[d] & \Dperf(U) \ar[d] \\
\Dperf(V) \ar[r] & \Dperf(U \cap V) 
\end{tikzcd}
\]
is cartesian and the functor $\Dperf_Z(X) \to \Dperf_Z(V)$ is an equivalence.
\end{remark}

\begin{theorem}
\label{theorem:perfect-generators}%
Let $X$ be a qcqs scheme. Then the following holds:
\begin{enumerate}
\item
\label{item:perfect-generation-with-support}%
For every closed subscheme $Z \subseteq X$ with qcqs complement $U$ we have that $\Der^{\qc}_Z(X)$ is generated under filtered colimits by $\Dperf_Z(X)$.
\item
\label{item:karoubi-sequence-intersection-closed}%
For every pair of closed subschemes $Z,Y \subseteq X$ with qcqs complements $U,V$ respectively the sequence~\eqref{equation:localisation-2} is a Karoubi sequence.
\end{enumerate}
\end{theorem}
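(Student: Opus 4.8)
The plan is to prove~\ref{item:perfect-generation-with-support} first, reducing to the affine case by Zariski descent and treating the affine case with Koszul complexes and local cohomology, and then to deduce~\ref{item:karoubi-sequence-intersection-closed} by realising~\eqref{equation:localisation-2} as the sequence of compact objects of a Verdier sequence of quasi-coherent categories with support.

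For the affine case of~\ref{item:perfect-generation-with-support}, write $X = \spec(A)$; since $U = X \setminus Z$ is quasi-compact we may choose a finitely generated ideal $I = (f_1,\dots,f_n)$ with $Z = V(I)$. For $k \geq 1$ let $\mathrm{Kos}_k$ denote the Koszul complex of $A$ on $f_1^k,\dots,f_n^k$; it is a perfect complex lying in $\Dperf_Z(X)$, since $\mathrm{Kos}_k \otimes_A A[f_i^{-1}] = 0$ for all $i$. Given $M \in \Der^{\qc}_Z(X) = \Der(A)$, the hypothesis $M[f_i^{-1}] = 0$ for all $i$ says precisely that $M$ is derived $I$-torsion, so the stable Koszul (local cohomology) presentation gives $M \simeq \colim_k \map_A(\mathrm{Kos}_k,M) \simeq \colim_k \Sigma^{-n}(\mathrm{Kos}_k \otimes_A M)$, where the second equivalence uses that $\mathrm{Kos}_k$ is perfect and self-dual up to the shift $\Sigma^{-n}$. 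Writing $M$ as a filtered colimit of perfect complexes $M \simeq \colim_\alpha P_\alpha$ (the affine instance of the standard compact generation of $\Der^{\qc}$ by perfect complexes, cf.\ Remark~\ref{remark:affine} and Lemma~\ref{lemma:perfect-char}), each $\mathrm{Kos}_k \otimes_A M \simeq \colim_\alpha (\mathrm{Kos}_k \otimes_A P_\alpha)$ is a filtered colimit of perfect complexes supported on $Z$, hence so is $M$.

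Next I would record a formal point: for any qcqs $X$ and closed $W \subseteq X$ with quasi-compact complement $j_U\colon U \hookrightarrow X$, every compact object of $\Der^{\qc}_W(X)$ lies in $\Dperf_W(X)$. Indeed, setting $\Gamma_W(D) := \fib[D \to j_{U*}j_U^*D]$, for $C \in \Der^{\qc}_W(X)$ one has $\map_{\Der^{\qc}(X)}(C,D) \simeq \map_{\Der^{\qc}_W(X)}(C,\Gamma_W D)$, and $\Gamma_W$ preserves filtered colimits since $j_{U*}$ does (Proposition~\ref{proposition:pushforward-colimits}); so a compact object of $\Der^{\qc}_W(X)$ is compact in $\Der^{\qc}(X)$, hence perfect (Lemma~\ref{lemma:perfect-char}), and visibly supported on $W$. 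Conversely $\Dperf_W(X) \subseteq \Der^{\qc}_W(X)$ consists of compact objects, as $\Der^{\qc}_W(X) \subseteq \Der^{\qc}(X)$ is closed under colimits. Hence once $\Der^{\qc}_W(X)$ is known to be compactly generated, its compacts are exactly $\Dperf_W(X)$, and since the latter is closed under finite colimits and retracts this yields~\ref{item:perfect-generation-with-support}. It remains to prove compact generation of $\Der^{\qc}_W(X)$ in general, which I would do via the device of Lemma~\ref{lemma:useful-argument}: the affine case is settled above, and if $X = U_1 \cup U_2$ with $U_1,U_2$ qcqs and both good, then for $W$ closed with quasi-compact complement, excision (Corollary~\ref{corollary:excision}, Corollary~\ref{corollary:descent-qc}) identifies $\ker[\Der^{\qc}_W(X) \xrightarrow{j_{U_1}^*} \Der^{\qc}_{W \cap U_1}(U_1)]$ with $\Der^{\qc}_{W \cap (X \setminus U_1)}(U_2)$ (using $X \setminus U_1 \subseteq U_2$); this exhibits $\Der^{\qc}_W(X)$, via a split Verdier sequence in $\PrL_{\st}$ with splitting $j_{U_1*}$ (Remark~\ref{remark:pushforward-support}, Corollary~\ref{corollary:descent-qc}), as an extension of two compactly generated categories along functors preserving compact objects (Corollary~\ref{corollary:pullback-compact}), and is therefore compactly generated.

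For~\ref{item:karoubi-sequence-intersection-closed}: with $Z,Y$ closed of quasi-compact complements $U,V$, restriction along $j_U$ gives an adjunction $j_U^*\colon \Der^{\qc}_Y(X) \adj \Der^{\qc}_{Y \cap U}(U) \cocolon j_{U*}$ with $j_U^* j_{U*} \simeq \id$ (Remark~\ref{remark:pushforward-support}, Corollary~\ref{corollary:descent-qc}), so $\Der^{\qc}_{Y \cap U}(U)$ is a Bousfield localisation of $\Der^{\qc}_Y(X)$ with kernel the complexes supported on $Y$ that vanish on $U$, i.e.\ $\Der^{\qc}_{Z \cap Y}(X)$; thus $\Der^{\qc}_{Z \cap Y}(X) \to \Der^{\qc}_Y(X) \to \Der^{\qc}_{Y \cap U}(U)$ is a Verdier sequence in $\PrL_{\st}$. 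By~\ref{item:perfect-generation-with-support} and the previous paragraph all three terms are compactly generated with compact objects $\Dperf_{Z \cap Y}(X)$, $\Dperf_Y(X)$, $\Dperf_{Y \cap U}(U)$, and the functors preserve compact objects. Passing to compact objects then gives a Karoubi sequence: the first functor is the fully faithful inclusion of a thick subcategory, while the second induces a fully faithful functor from the Verdier quotient $\Dperf_Y(X)/\Dperf_{Z \cap Y}(X)$ onto a dense subcategory of $\Dperf_{Y \cap U}(U)$, hence an equivalence after idempotent completion, by the Thomason--Neeman localisation theorem (cf.\ \cite{thomason-trobaugh} and \cite{9-authors-II}); this is precisely the claim that~\eqref{equation:localisation-2} is a Karoubi sequence. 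The main obstacle is the gluing step inside~\ref{item:perfect-generation-with-support}, where one must transport compact generation of the categories with support along a Zariski cover by combining excision with the behaviour of compactly generated Verdier sequences under idempotent completion and with the torsion-functor identification of compact objects; granted~\ref{item:perfect-generation-with-support}, part~\ref{item:karoubi-sequence-intersection-closed} is formal.
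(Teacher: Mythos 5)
Your overall architecture is the same as the paper's: prove~\ref{item:perfect-generation-with-support} in the affine case via Koszul complexes, bootstrap to general qcqs $X$ by a Zariski induction (Lemma~\ref{lemma:useful-argument}), and deduce~\ref{item:karoubi-sequence-intersection-closed} by applying $\Ind$ and invoking Thomason--Neeman. The affine case and the deduction of~\ref{item:karoubi-sequence-intersection-closed} from~\ref{item:perfect-generation-with-support} are fine and essentially match Lemmas~\ref{lemma:affine} and~\ref{lemma:i-implies-ii}. The gap is in the gluing step.

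You assert that $\Der^{\qc}_W(X)$, sitting in the sequence $\Der^{\qc}_{W'}(U_2)\to\Der^{\qc}_W(X)\xrightarrow{j_{U_1}^*}\Der^{\qc}_{W''}(U_1)$, is compactly generated because it is ``an extension of two compactly generated categories along functors preserving compact objects'' in a split Verdier sequence. Two problems. First, this sequence is only \emph{right}-split: the right adjoint $j_{U_1*}$ exists and is fully faithful, but $j_{U_1}^*$ does not preserve infinite products on $\Der^{\qc}$ (e.g.\ $j\colon\spec\ZZ[1/p]\hookrightarrow\spec\ZZ$: $(\prod_n\ZZ)[1/p]\neq\prod_n\ZZ[1/p]$), so $j_{U_1!}$ does not exist and the sequence is not split in the paper's sense (\S\ref{subsection:recall-hermitian}: all four adjoints). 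Second, and more fundamentally, even granting a right-split Verdier sequence $\C\xrightarrow{i}\D\xrightarrow{p}\E$ with $\C,\E$ compactly generated and $i,p$ preserving compacts, compact generation of $\D$ does not formally follow. One can show that any $M\in\D$ right-orthogonal to $\D^\omega$ satisfies $\Gamma M=0$, hence $M\simeq q\,p M$; but to deduce $p M=0$ one needs to test $p M$ against the compact generators of $\E$, and nothing yet tells you these are reached from $\D^\omega$ via $p$ (up to retracts). That density of $\Dperf_W(X)\to\Dperf_{W''}(U_1)$ is precisely a Karoubi-cofinality statement of the kind you are trying to prove, so the argument as written is circular. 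The paper's Lemma~\ref{lemma:induction} avoids this: it first establishes (using the inductive validity of~\ref{item:karoubi-sequence-intersection-closed} for the smaller opens, Zariski descent with support, and base-change stability of Karoubi sequences) that the relevant $\Dperf$-level sequence is already a Karoubi sequence, then applies $\Ind$ and a right-adjointability argument comparing $\Ind(\Dperf_Z(X))$ to $\Der^{\qc}_Z(X)$ to conclude essential surjectivity. Something in this spirit is unavoidable: the compact generation of the middle term is exactly the content you cannot get for free.
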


\begin{corollary}
\label{corollary:nisnevich-karoubi}%
Let $X$ be a qcqs scheme, $f\colon V \to X$ a qcqs étale map and $U \subseteq X$ a qcqs open subset such that the induced map $V \setminus f^{-1}(U) \to X \setminus U$ is an isomorphism. Then 
\[
\begin{tikzcd}
\Dperf(X) \ar[r]\ar[d] & \Dperf(V) \ar[d] \\
\Dperf(U) \ar[r] & \Dperf(f^{-1}(U))
\end{tikzcd}
\]
is a Karoubi square of stable $\infty$-categories.
\end{corollary}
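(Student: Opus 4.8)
The plan is to reduce the statement to flat excision for quasi-coherent complexes (Proposition~\ref{proposition:flat-square}) and then transfer the conclusion to perfect complexes by a compactness argument. First I would arrange the hypotheses of flat excision. An étale morphism is qcqs and flat, so we may equip $X \setminus U$ with a finitely presented closed subscheme structure $Z$ (a quasi-compact open immersion into a qcqs scheme is finitely presented), and then the assumption that $V \setminus f^{-1}(U) \to X \setminus U$ is an isomorphism, together with the flatness of $f$, is precisely the condition that $V \times_X Z \to Z$ be an isomorphism --- a condition which, by the remark following Proposition~\ref{proposition:flat-square}, does not depend on the chosen finitely presented structure. Hence Proposition~\ref{proposition:flat-square} applies and the square
\[
\begin{tikzcd}
\Der^{\qc}(X) \ar[r]\ar[d] & \Der^{\qc}(V) \ar[d] \\
\Der^{\qc}(U) \ar[r] & \Der^{\qc}(f^{-1}(U))
\end{tikzcd}
\]
is cartesian.

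Next I would deduce that the corresponding square of perfect complexes is already cartesian in $\Catx$. Write $\mathcal{Q} := \Der^{\qc}(U) \times_{\Der^{\qc}(f^{-1}(U))} \Der^{\qc}(V) \simeq \Der^{\qc}(X)$ and $\mathcal{P} := \Dperf(U) \times_{\Dperf(f^{-1}(U))} \Dperf(V) \subseteq \mathcal{Q}$. The comparison functor $\Dperf(X) \to \mathcal{P}$ is fully faithful, since both sides embed fully faithfully and compatibly into $\mathcal{Q}$. Since $\Dperf(X) = \Der^{\qc}(X)^{\omega}$ (Lemma~\ref{lemma:perfect-char}), it remains to check that any $M \in \Der^{\qc}(X)$ with $M|_U$ and $M|_V$ perfect is already compact in $\Der^{\qc}(X)$. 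This is immediate from cartesianness of the $\Der^{\qc}$-square: for a filtered system $(N_\alpha)$ with colimit $N$ in $\Der^{\qc}(X)$ one has
\[
\colim_\alpha \Map_{\Der^{\qc}(X)}(M,N_\alpha) \simeq \colim_\alpha\Big(\Map_U(M|_U,N_\alpha|_U)\times_{\Map_{f^{-1}(U)}(M|_{f^{-1}(U)},N_\alpha|_{f^{-1}(U)})}\Map_V(M|_V,N_\alpha|_V)\Big),
\]
and since filtered colimits of spaces commute with finite limits, the restriction functors preserve colimits, and each of $M|_U$, $M|_V$, $M|_{f^{-1}(U)}$ is perfect hence compact, the right-hand side identifies with $\Map_{\Der^{\qc}(X)}(M,N)$. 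Thus $\mathcal{P} = \Dperf(X)$, so the square of perfect derived categories is cartesian in $\Catx$.

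Finally, by Theorem~\ref{theorem:perfect-generators}\ref{item:karoubi-sequence-intersection-closed} the localisation sequence~\eqref{equation:localisation} for $(X,U)$, and its analogue $\Dperf_{f^{-1}(Z)}(V) \to \Dperf(V) \to \Dperf(f^{-1}(U))$ for $V$, are Karoubi sequences, so the two vertical arrows of our square are Karoubi projections. A square that is cartesian in $\Catx$ and whose vertical arrows are Karoubi projections is a Karoubi square in the sense of Definition~\ref{definition:bounded}; equivalently, the induced functor on vertical fibres $\Dperf_Z(X) \to \Dperf_{f^{-1}(Z)}(V)$ is automatically an equivalence (the fibres of a cartesian square agree) and both categories are idempotent complete, so the square becomes a fibre--cofibre square after idempotent completion by the Verdier-square criterion \cite{9-authors-II}*{Lemma~1.5.3}. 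This proves the Corollary. I expect the only genuinely delicate points to be the hypothesis bookkeeping in the first step --- where the flatness of $f$ is exactly what renders the choice of finitely presented structure on $X \setminus U$ harmless --- and the realization in the second step that cartesianness passes from $\Der^{\qc}$ down to $\Dperf$ at the level of $\Catx$ with no separate descent input for the notion of perfectness, purely because ``perfect'' means ``compact'' and, once the $\Der^{\qc}$-square is cartesian, Mayer--Vietoris for mapping spaces detects compactness from the two restrictions.
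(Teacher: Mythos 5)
Your proof is correct and follows essentially the same route as the paper: flat excision establishes cartesianness of the quasi-coherent square, the compactness/Mayer--Vietoris argument for mapping spectra descends this to perfect complexes, and Theorem~\ref{theorem:perfect-generators} supplies the Karoubi projections on the vertical arrows, after which cartesianness plus Karoubi projections yields a Karoubi square. One small slip worth flagging: the relevant notion of Karoubi square here is the one for stable $\infty$-categories recalled in \S\ref{subsection:KGW-recall} (a square that becomes fibre--cofibre after idempotent completion), not Definition~\ref{definition:bounded}, which is the notion of \emph{bounded} Karoubi square for $\A$-linear $\infty$-categories with duality --- though, as you say, the substance of the closing argument is unaffected.
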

\begin{proof}
By Theorem~\ref{theorem:perfect-generators} the vertical arrows are Karoubi projections, and hence it will suffice to show that the square is cartesian. Since the analogous square for quasi-coherent complexes is cartesian by Proposition~\ref{proposition:flat-square}, it will suffice to show that a quasi-coherent complex $M \in \Der^{\qc}(X)$ is perfect if and only if its restriction to $U, V$ and $f^{-1}(U)$ is perfect. Indeed, the only if direction is because pullback functors preserve perfect complexes, and on the other hand, if the restrictions of $M$ to $U,V$ and $f^{-1}(U)$ are all perfect and the image of $M$ under the equivalence 
\[
\Der^{\qc}(X) \tosimeq \Der^{\qc}(U) \times_{\Der^{\qc}(f^{-1}(U))} \Der^{\qc}(V)
\]
is compact, and hence $M$ itself is compact.  
\end{proof}

We prove Theorem~\ref{theorem:perfect-generators} in several steps.

\begin{lemma}
\label{lemma:i-implies-ii}%
For a fixed qcqs $X$, Claim~\ref{item:perfect-generation-with-support} of Theorem~\ref{theorem:perfect-generators} implies Claim~\ref{item:karoubi-sequence-intersection-closed}.
\end{lemma}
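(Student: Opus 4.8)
The plan is to pass to $\Ind$-categories, exhibit the quasi-coherent version of \eqref{equation:localisation-2} as a Verdier (Bousfield) sequence of compactly generated stable $\infty$-categories, and then invoke the standard dictionary between Karoubi sequences of small idempotent-complete stable $\infty$-categories and Verdier sequences of compactly generated ones (as developed, e.g., in \cite{9-authors-II}*{\S 1.3 and Appendix A}). Crucially, this uses Claim~\ref{item:perfect-generation-with-support} only for the fixed scheme $X$ — the compact generation of $\Derqc_{Y\cap U}(U)$ will be recovered a posteriori from the localisation theorem, so no instance of Claim~\ref{item:perfect-generation-with-support} for $U$ is needed.

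First I would record the bookkeeping consequences of Claim~\ref{item:perfect-generation-with-support} for $X$. Applied to the closed subscheme $Y$ (complement $V$, which is qcqs) it says $\Derqc_Y(X)$ is generated under filtered colimits by $\Dperf_Y(X)$; since the objects of $\Dperf_Y(X)$ are compact in $\Derqc(X)$ hence in the colimit-closed subcategory $\Derqc_Y(X)$, and $\Dperf_Y(X)$ is already a thick subcategory (retracts of perfect complexes supported on $Y$ are perfect and supported on $Y$), it follows that $\Derqc_Y(X)$ is compactly generated with $\Dperf_Y(X)$ its full subcategory of compact objects. The same argument, applied to the closed subscheme $Y\cap Z$ whose complement $U\cup V$ is again qcqs, shows $\Derqc_{Y\cap Z}(X)$ is compactly generated with compact objects $\Dperf_{Y\cap Z}(X)$, and moreover that $\Derqc_{Y\cap Z}(X)$ is the localising subcategory of $\Derqc_Y(X)$ generated by the compact objects $\Dperf_{Y\cap Z}(X)$.

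Next I would produce the $\Ind$-level localisation sequence. By Remark~\ref{remark:pushforward-support}, restriction along the open immersion $j\colon U\hrar X$ gives a functor $j^*\colon \Derqc_Y(X) \to \Derqc_{Y\cap U}(U)$ with fully faithful right adjoint $j_*$, hence a Bousfield localisation; its kernel consists of the quasi-coherent complexes supported on $Y$ whose restriction to $U$ vanishes, i.e.\ those supported on $Y\cap Z$, so the kernel is exactly $\Derqc_{Y\cap Z}(X)$. Thus
\[
\Derqc_{Y\cap Z}(X) \to \Derqc_Y(X) \xrightarrow{\ j^*\ } \Derqc_{Y\cap U}(U)
\]
is a Verdier sequence of compactly generated stable $\infty$-categories whose kernel is generated by objects compact in the middle term. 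Applying the Thomason–Neeman localisation theorem, passing to compact objects yields a Karoubi sequence $\Dperf_{Y\cap Z}(X) \to \Dperf_Y(X) \to (\Derqc_{Y\cap U}(U))^{\omega}$, and in particular $(\Derqc_{Y\cap U}(U))^{\omega}$ is the idempotent completion of the Verdier quotient $\Dperf_Y(X)/\Dperf_{Y\cap Z}(X)$. To conclude I would identify $(\Derqc_{Y\cap U}(U))^{\omega}$ with $\Dperf_{Y\cap U}(U)$: any perfect complex on $U$ supported on $Y\cap U$ is compact in $\Derqc(U)$, hence in the colimit-closed subcategory $\Derqc_{Y\cap U}(U)$, while conversely any compact object of $\Derqc_{Y\cap U}(U)$ is, by the localisation theorem, a retract of $j^*$ of a perfect complex on $X$ supported on $Y$, hence itself perfect and supported on $Y\cap U$. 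Substituting this identification into the Karoubi sequence above gives precisely that \eqref{equation:localisation-2} is a Karoubi sequence.

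The work here is organisational rather than conceptual: the one point requiring care is verifying that all the compact-generation and "kernel generated by ambient compacts" hypotheses of the localisation theorem are met using Claim~\ref{item:perfect-generation-with-support} for $X$ alone, together with the facts that $j^*$ is genuinely a Bousfield localisation (via excision and $j$ being an open immersion) and that $\Dperf_{Y\cap U}(U)$ is recovered as the compact objects of the quotient. The resulting failure of $\Dperf_Y(X)/\Dperf_{Y\cap Z}(X)$ to be idempotent complete before completion is exactly Thomason's phenomenon recalled after \cite{thomason-trobaugh}, which is why the sequence is only Karoubi and not Verdier.
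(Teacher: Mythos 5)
Your argument is correct and establishes the lemma, but it takes a genuinely different route from the paper's. Both proofs ultimately go through the $\Ind$-category and use the dictionary between Karoubi sequences of small idempotent-complete stable $\infty$-categories and right split Verdier sequences of compactly generated ones (recorded in this paper's reference [9-authors-II], Theorem~A.3.11, essentially Thomason--Neeman). The difference is the sequence to which this dictionary is applied. The paper first performs a reduction: by excision it arranges $V \subseteq U$ and $Z \subseteq Y$, then exhibits the Poincaré functor $\Dperf_Y(X) \to \Dperf_{Y\cap U}(U)$ as the base change of $\Dperf(X) \to \Dperf(U)$ along $\Dperf_{Y\cap U}(U) \hookrightarrow \Dperf(U)$, and invokes stability of Karoubi projections under base change to reduce to the \emph{support-free} map $\Dperf(X) \to \Dperf(U)$; only then does it pass to $\Ind$, where the sequence $\Derqc_Z(X) \to \Derqc(X) \to \Derqc(U)$ is a right split Verdier sequence. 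You instead skip the reduction altogether and apply the dictionary directly to the doubly-constrained sequence $\Dperf_{Y\cap Z}(X) \to \Dperf_Y(X) \to \Dperf_{Y\cap U}(U)$, which requires you to verify compact generation of $\Derqc_Y(X)$ and $\Derqc_{Y\cap Z}(X)$ from Claim~\ref{item:perfect-generation-with-support}, to check that the $\Ind$-level sequence is a Bousfield localisation with the correct kernel, and, a posteriori, to identify $(\Derqc_{Y\cap U}(U))^{\omega}$ with $\Dperf_{Y\cap U}(U)$. The trade-off is essentially bookkeeping: you exchange the excision-plus-base-change preprocessing for a more explicit account of the compact generation of the support categories and an identification of compact objects in the localised category. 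The paper's route minimises the amount of compact-generation input (it only needs $\Derqc_Z(X)$ and a short argument for $\Derqc(U)$), while your version is self-contained and sidesteps the reduction. Both need and correctly arrange that no instance of Claim~\ref{item:perfect-generation-with-support} for $U$ (rather than $X$) is invoked.
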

\begin{proof}
Assume \ref{item:perfect-generation-with-support} holds for $X$ and let us prove that \ref{item:karoubi-sequence-intersection-closed} holds as well. Since~\eqref{equation:localisation-2} is a fibre sequence it will suffice to show that its second map is a Karoubi projection. By excision for perfect complexes (see Remark~\ref{remark:inherited-perfect}) we may identify the last term in~\eqref{equation:localisation-2} as $\Dperf_{U \cap Y}(U) = \Dperf_{U \cap Y}(U \cup V) = \Dperf_{(U \cup V) \cap Y}(U \cup V)$. Replacing $U$ with $U \cup V$ we may hence assume without loss of generality that $V \subseteq U$ and $Z \subseteq Y$. Now in the commutative diagram
\[
\begin{tikzcd}
\Dperf_{Y}(X) \ar[r]\ar[d] & \Dperf_{Y \cap U}(U) \ar[d]\ar[r] & 0 \ar[d] \\
\Dperf(X) \ar[r] & \Dperf(U) \ar[r] & \Dperf(V)
\end{tikzcd}
\]
the right square and external rectangle are cartesian, and hence the left square is cartesian as well. We conclude that the functor $\Dperf_{Y}(X) \to \Dperf_{Y \cap U}(U)$ is base changed from $\Dperf(X) \to \Dperf(U)$. Since Karoubi projections are closed under base change (\cite{9-authors-II}*{Lemma~A.3.10}) it will suffice to show that $\Dperf(X) \to \Dperf(U)$ is a Karoubi projection. Since we know \ref{item:perfect-generation-with-support} holds for $X$ we know that $\Der^{\qc}(X)$ and $\Der^{\qc}_Z(X)$ are generated under filtered colimits by $\Dperf(X)$ and $\Dperf_Z(X)$. This also implies that $\Der^{\qc}(U)$ is generated under filtered colimits by $\Dperf(U)$: indeed, given $M \in \Der^{\qc}(U)$ we can write $j_*M$ as a filtered colimits of perfect complexes over $X$, and thus obtain a similar presentation over $U$ by applying the colimit preserving functor $j^*$. In particular, applying the functor $\Ind$ to the sequence 
\[
\Dperf_Z(X) \to \Dperf(X) \to \Dperf(U)
\]
yields the sequence
\[
\Der^{\qc}_Z(X) \to \Der^{\qc}(X) \xrightarrow{j^*} \Der^{\qc}(U).
\]
The last sequence is a right split Verdier sequence (of large stable $\infty$-categories) since the functor $j^*$ has a fully-faithful right adjoint $j_*$. It then follows that the former sequence is a Karoubi sequence by~\cite{9-authors-II}*{Theorem~A.3.11}.
\end{proof}

\begin{lemma}
\label{lemma:affine}%
Theorem~\ref{theorem:perfect-generators} holds whenever $X= \spec(A)$ is affine.
\end{lemma}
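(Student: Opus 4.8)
The plan is to reduce, via Lemma~\ref{lemma:i-implies-ii}, to proving Claim~\ref{item:perfect-generation-with-support} of Theorem~\ref{theorem:perfect-generators} when $X = \spec(A)$, i.e.\ that $\Der^{\qc}_Z(\spec A)$ — which in the affine case is the full subcategory $\Der_Z(A) \subseteq \Der(A)$ of complexes supported on $Z$ — is generated under filtered colimits by its subcategory $\Dperf_Z(A)$ of perfect complexes supported on $Z$. Since the open complement $U = \spec(A) \setminus Z$ is quasi-compact, one may write $Z = V(I)$ for a finitely generated ideal $I = (f_1,\ldots,f_n) \subseteq A$. I would then introduce the Koszul complex $K := \bigotimes_{i=1}^{n} \cof[A \xrightarrow{f_i} A] \in \Dperf(A)$. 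Because $\cof[A[f_i^{-1}] \xrightarrow{f_i} A[f_i^{-1}]] \simeq 0$, the complex $K$ restricts to zero on each $\spec(A[f_i^{-1}])$, and as these cover $U$ we get $K \in \Der_Z(A)$; being perfect, $K$ is compact in $\Der(A)$, and since $\Der_Z(A)$ is closed under colimits in $\Der(A)$ (it is the kernel of the colimit-preserving restriction functor $\Der(A) \to \Derqc(U)$), the inclusion $\Der_Z(A) \subseteq \Der(A)$ preserves filtered colimits, so $K$ is compact in $\Der_Z(A)$ as well.

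The main step is to show $K$ generates $\Der_Z(A)$: if $M \in \Der_Z(A)$ satisfies $\map_A(K, M) \simeq 0$ then $M \simeq 0$. I would argue by induction on $n$, the case $n=0$ being vacuous. For $n \geq 1$, writing $K \simeq K' \otimes_A \cof[A \xrightarrow{f_n} A]$ with $K' = \bigotimes_{i=1}^{n-1}\cof[A \xrightarrow{f_i} A]$, adjunction gives $\map_A(K, M) \simeq \map_A(K', \fib[M \xrightarrow{f_n} M])$. Setting $N := \fib[M \xrightarrow{f_n} M]$, for each $i \leq n-1$ one has $N[f_i^{-1}] \simeq \fib[M[f_i^{-1}] \xrightarrow{f_n} M[f_i^{-1}]] \simeq 0$, so $N \in \Der_{V(f_1,\ldots,f_{n-1})}(A)$; the hypothesis gives $\map_A(K', N) \simeq 0$, hence $N \simeq 0$ by induction, hence $f_n$ acts invertibly on $M$ and therefore $M \simeq M[f_n^{-1}] \simeq 0$ since $M$ is supported on $Z \subseteq V(f_n)$. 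This generation step is the only genuine obstacle in the argument.

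Granting it, $\Der_Z(A)$ is compactly generated by the single compact object $K$, so by the standard theory of compactly generated stable $\infty$-categories (cf.\ \cite{thomason-trobaugh}) the subcategory $\Der_Z(A)^{\omega}$ of compact objects is the smallest idempotent complete stable subcategory containing $K$; since $\Der_Z(A)$ is closed under retracts inside $\Der(A)$, this agrees with the corresponding subcategory of $\Der(A)$, which lies in $\Dperf(A)$ (the latter being idempotent complete and containing $K$) and consists of complexes supported on $Z$, so $\Der_Z(A)^{\omega} \subseteq \Dperf_Z(A)$. Conversely any perfect complex supported on $Z$ is compact in $\Der(A)$, hence in $\Der_Z(A)$ by the first paragraph, so $\Dperf_Z(A) \subseteq \Der_Z(A)^{\omega}$. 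Thus $\Der_Z(A)^{\omega} = \Dperf_Z(A)$ and $\Der_Z(A) \simeq \Ind(\Dperf_Z(A))$, which is exactly the desired generation statement. The identification of compact objects with perfect complexes supported on $Z$ is routine once compact generation by $K$ is established, and could instead be quoted directly from \cite{thomason-trobaugh}.
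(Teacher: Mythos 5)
Your proof is correct and takes essentially the same approach as the paper: reduce to the affine generation claim via Lemma~\ref{lemma:i-implies-ii}, take the Koszul complex $K=\bigotimes_i\cof[A\xrightarrow{f_i}A]$ on a finite generating set of the ideal cutting out $Z$, and show it is a compact generator of $\Der^{\qc}_Z(\spec A)$ by a ``trivial kernel'' argument that peels off one $f_i$ at a time. Your induction with shrinking support $V(f_1,\ldots,f_{n-1})$ is just a repackaging of the paper's observation that $\Hom_A(E,-)$ factors as a composite of functors $\Hom_A(E_i,-)\colon\Der^{\qc}_Z(\spec A)\to\Der^{\qc}_Z(\spec A)$ each with trivial kernel (indeed $\fib[M\xrightarrow{f_n}M]\simeq E_n^\vee\otimes_A M$ is already supported on $Z$), and your final paragraph spells out the ``standard arguments'' the paper leaves implicit.
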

\begin{proof}
By Lemma~\ref{lemma:i-implies-ii} it will suffice to show \ref{item:perfect-generation-with-support} in the case where $X = \spec(A)$ is affine.
If $U$ is empty then $\Der^{\qc}_Z(X) = \Der^{\qc}(X) = \Der(A)$ and we know that $\Der(A)$ is generated under filtered colimits by $\Dperf(A)$. We may hence assume that $U$ is not empty. For brevity let us write $\Der_Z(A)$ and $\Dperf_Z(A)$ for $\Der^{\qc}_Z(\spec(A))$ and $\Dperf_Z(\spec(A))$, respectively. Let $I \subseteq A$ be the ideal of elements vanishing on $Z$. Then $U$ can be covered by the opens $\spec(A[f^{-1}])$ for $f \in I$, and since $U$ is quasi-compact we may choose a finite subset $\{f_1,\ldots,f_n\} \in I$ such that $\spec(A[f^{-1}_1]),\ldots,\spec(A[f^{-1}])$ cover $U$. Let $j_i\colon \spec(A[f_i]) \hrar \spec(A)$ be the inclusion. For each $i=1,\ldots,n$ let 
\[
E_i = \cof[A \xrightarrow{f_i} A] \in \Dperf(A)
\]
and let $E=\otimes_i E_i$. We note that $E$ is a perfect complex. In addition, we have that $j^*_iE_i =0$ for every $i$ and hence $j^*E = 0$. We conclude that $E \in \Dperf_Z(A)$. 

Let us write $\Hom_A(-,-)$ for the mapping complex between two $A$-module complexes. We claim that for an $A$-module $M$, if $\Hom_A(E,M) = 0$ then $M=0$. In other words, we claim that the functor 
\[
\Hom_A(E,-)\colon \Der_Z(A) \to \Der_Z(A)
\]
has trivial kernel. This implies that via standard arguments that $\Der_Z(A)$ is generated under filtered colimits by its smallest stable subcategory containing $K$. By definition, $\Hom_A(E,-)$ is the composite of the functors 
\[
\Hom_A(E_i,-) \colon \Der_Z(A) \to \Der_Z(A)
\]
for $i=1,\ldots,n$ (to avoid confusion, note that the $E_i$'s do not belong to $\Der_Z(A)$, but $\Der_Z(A)$ is closed in $\Der(A)$ under exponentiation with any perfect complex). It will hence suffice to show that each of the functors $\Hom_A(E_i,-)$ has trivial kernel in $\Der_Z(A)$. Indeed, if $\Hom_A(E_i,M) =0$ then $f_i$ acts invertibly on $M$, and hence 
\[
M = M[f_i^{-1}] = (j_i)_*j^*_iM .
\]
But $j^*_iM = 0$ (because $j^*_M = 0$), and so we conclude that $M=0$.
\end{proof}

\begin{lemma}
\label{lemma:induction}%
Suppose that the qcqs scheme $X$ can be covered by two qcqs open subsets $X = X' \cup X''$. If Theorem~\ref{theorem:perfect-generators} holds for $X'$ and for $X''$ then it holds for $X$.
\end{lemma}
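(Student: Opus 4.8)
The plan is to first apply Lemma~\ref{lemma:i-implies-ii} to reduce to establishing part~\ref{item:perfect-generation-with-support} of Theorem~\ref{theorem:perfect-generators} for $X$, and then to recast it as an orthogonality statement. So fix a closed $Z\subseteq X$ with quasi-compact complement $U$. The $\infty$-category $\Der^{\qc}_Z(X)$ is presentable, being the kernel of the colimit-preserving restriction $\Der^{\qc}(X)\to\Der^{\qc}(U)$, and $\Dperf_Z(X)\subseteq\Der^{\qc}_Z(X)$ is an essentially small, idempotent-complete stable subcategory consisting of compact objects (Lemma~\ref{lemma:perfect-char}); hence the claim that $\Der^{\qc}_Z(X)$ is generated under filtered colimits by $\Dperf_Z(X)$ is equivalent to the vanishing of the right orthogonal of $\Dperf_Z(X)$ in $\Der^{\qc}_Z(X)$. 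I therefore fix $M\in\Der^{\qc}_Z(X)$ with $\Map_X(P,M)=0$ for all $P\in\Dperf_Z(X)$ and aim to show $M=0$; by Zariski descent (Corollary~\ref{corollary:descent-qc}) for the cover $X=X'\cup X''$ it suffices to show $M|_{X'}=0$ and $M|_{X''}=0$. Note that $X'\cap X''$ and $U\cup X''$ are again qcqs, since $X$ is quasi-separated.

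The first key step is to show that $M|_{X'}$ is pushed forward from $X'\cap X''$. Put $Z_1:=Z\setminus X''$, which is closed in $X$ and contained in $X'$, with $\Dperf_{Z_1}(X)\simeq\Dperf_{Z_1}(X')$ by excision (Remark~\ref{remark:inherited-perfect}). For $P\in\Dperf_{Z_1}(X)$ one has $P|_{X''}=0$, hence $P|_{X'\cap X''}=0$, so the Zariski descent presentation $M=j_{X',*}(M|_{X'})\times_{j_{X'\cap X'',*}(M|_{X'\cap X''})}j_{X'',*}(M|_{X''})$ gives $\Map_X(P,M)\simeq\Map_{X'}(P|_{X'},M|_{X'})$. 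Thus $M|_{X'}$ is right orthogonal to $\Dperf_{Z_1}(X')$, hence (by Theorem~\ref{theorem:perfect-generators}, which holds for $X'$, applied with support $Z_1$) to all of $\Der^{\qc}_{Z_1}(X')$; since $\Der^{\qc}_{Z_1}(X')$ is the kernel of the localization $j_W^*\colon\Der^{\qc}(X')\to\Der^{\qc}(W)$ along $W:=X'\setminus Z_1=X'\cap(U\cup X'')$, this means $M|_{X'}$ lies in the essential image of the fully faithful $j_{W,*}$. As $M|_{X'}$ is supported on $Z\cap X'$, the object $M|_W$ is supported on $Z\cap W=Z\cap X'\cap X''$, which is closed in $W$ and contained in the open $X'\cap X''$, so excision identifies it with the push-forward of $M|_{X'\cap X''}$; composing, $M|_{X'}=j^{X'}_{X'\cap X'',*}(M|_{X'\cap X''})$. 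By symmetry $M|_{X''}=j^{X''}_{X'\cap X'',*}(M|_{X'\cap X''})$, and feeding both back into the descent presentation shows $M=j_{X',*}(M|_{X'})=j_{X'',*}(M|_{X''})$, whence $\Map_X(P,M)\simeq\Map_{X'}(P|_{X'},M|_{X'})$ for \emph{every} $P\in\Der^{\qc}(X)$.

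The second step upgrades this to orthogonality of $M|_{X'}$ against all of $\Dperf_{Z'}(X')$ with $Z':=Z\cap X'$. Using Remark~\ref{remark:perfect-local}, the Zariski descent square exhibits $\Dperf_Z(X)\simeq\Dperf_{Z'}(X')\times_{\Dperf_{Z\cap X'\cap X''}(X'\cap X'')}\Dperf_{Z''}(X'')$, so restriction $\Dperf_Z(X)\to\Dperf_{Z'}(X')$ is the base change of $\Dperf_{Z''}(X'')\to\Dperf_{Z\cap X'\cap X''}(X'\cap X'')$, which is a Karoubi projection by part~\ref{item:karoubi-sequence-intersection-closed} of Theorem~\ref{theorem:perfect-generators} for $X''$ (applied to the closed subsets $X''\setminus X'$ and $Z\cap X''$). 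Since Karoubi projections are stable under base change (as in the proof of Lemma~\ref{lemma:i-implies-ii}), $\Dperf_Z(X)\to\Dperf_{Z'}(X')$ is a Karoubi projection too, so every object of $\Dperf_{Z'}(X')$ is a retract of $P|_{X'}$ for some $P\in\Dperf_Z(X)$; combined with the previous paragraph and the hypothesis on $M$ this gives $\Map_{X'}(P',M|_{X'})=0$ for all $P'\in\Dperf_{Z'}(X')$. Applying Theorem~\ref{theorem:perfect-generators} for $X'$ once more (support $Z'$), the right orthogonal of $\Dperf_{Z'}(X')$ in $\Der^{\qc}_{Z'}(X')$ vanishes, so $M|_{X'}=0$; then $M|_{X'\cap X''}=0$ by the first step and $M|_{X''}=0$ by the symmetric argument, so $M=0$, as required.

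The main obstacle will be the bookkeeping with the several support conditions and the careful use of excision together with the localization sequences to pin down the claim that $M|_{X'}$ is pushed forward from $X'\cap X''$; the supporting inputs (presentability of $\Der^{\qc}_Z(X)$, the identification of its compact objects via Lemma~\ref{lemma:perfect-char}, the perfect-complex descent square via Remark~\ref{remark:perfect-local}, and stability of Karoubi projections under base change) are all available from earlier in the text.
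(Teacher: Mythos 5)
Your proof is correct, and it takes a different route from the paper's own argument. The paper's proof is a structural one at the level of categories: it assembles a diagram of Karoubi sequences (top row $\Dperf_{Z\cap Y''}(X)\to\Dperf_Z(X)\to\Dperf_{Z''}(X'')$, bottom row its restriction to $X'$), shows the top row is a Karoubi sequence by base change from the bottom one (using part~\ref{item:karoubi-sequence-intersection-closed} of the theorem for $X'$), passes to $\Ind$-completions where both rows become right split Verdier sequences, identifies the outer vertical maps $\Ind(\Dperf_{Z\cap Y''}(X))\to\Der^{\qc}_{Z\cap Y''}(X)$ and $\Ind(\Dperf_{Z''}(X''))\to\Der^{\qc}_{Z''}(X'')$ as equivalences (using excision and part~\ref{item:perfect-generation-with-support} for $X'$ and $X''$), and then deduces essential surjectivity of the middle arrow $\Ind(\Dperf_Z(X))\to\Der^{\qc}_Z(X)$ from right adjointability of the squares, since the target is generated by the kernel and by pushforwards from $X''$. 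Your version instead tests a single object $M$ in the right orthogonal of $\Dperf_Z(X)$, showing in a first step (via orthogonality to $\Dperf_{Z\setminus X''}(X')$, then part~\ref{item:perfect-generation-with-support} for $X'$ and excision) that $M|_{X'}$ is pushed forward from $X'\cap X''$ so that $M=j_{X',*}(M|_{X'})$, and in a second step upgrading orthogonality to all of $\Dperf_{Z\cap X'}(X')$ via the Karoubi projection $\Dperf_Z(X)\to\Dperf_{Z\cap X'}(X')$ (here you lean on part~\ref{item:karoubi-sequence-intersection-closed} for $X''$, whereas the paper invokes it for $X'$ — the symmetry makes this immaterial), concluding $M|_{X'}=0$ by another application of part~\ref{item:perfect-generation-with-support} for $X'$. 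Both proofs invoke the same inputs (Zariski descent, excision, both parts of the inductive hypothesis, stability of Karoubi projections under base change) and are of comparable length; yours is more concrete and object-level, while the paper's is more transparently functorial and dispenses with a test object. The only place worth keeping in mind when comparing: identifying the right orthogonal of $\Der^{\qc}_{Z_1}(X')$ with the image of $j_{W,*}$, and then running excision to recognize this image as pushed forward all the way from $X'\cap X''$, is exactly the content the paper packages as right adjointability of the left square, so the two arguments are really two presentations of the same reduction.
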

\begin{proof}
By Lemma~\ref{lemma:i-implies-ii} it will suffice to prove that \ref{item:perfect-generation-with-support} of Theorem~\ref{theorem:perfect-generators} holds for $X$. Let $Z \subseteq X$ be a closed subset of qcqs complements $U$. We need to show that $\Der^{\qc}_Z(X)$ is generated under filtered colimits by $\Dperf_Z(X)$. Write $Z' = Z \cap X'$, $Z'' = Z \cap X''$, $Y' = X \setminus X'$ and $Y'' = X \setminus X''$. Consider the commutative diagram
\[
\begin{tikzcd}
\Dperf_{Z \cap Y''}(X) \ar[r]\ar[d, "\simeq"] & \Dperf_Z(X) \ar[r]\ar[d] & \Dperf_{Z''}(X'') \ar[d] \\
\Dperf_{Z' \cap Y''}(X') \ar[r] & \Dperf_{Z'}(X') \ar[r] & \Dperf_{Z' \cap Z''}(X' \cap X'')
\end{tikzcd}
\]
Here, the rows are fibre sequences and the right square is a pullback square by Zariski descent (see Remark~\ref{remark:zariski-descent-support}).  
Now since Theorem~\ref{theorem:perfect-generators} is assumed known for $X'$ the bottom row is a Karoubi sequence and hence the top row is a Karoubi sequence by~\cite{9-authors-II}*{Lemma~A.3.10}. Now consider the diagram
\[
\begin{tikzcd}
\Ind(\Dperf_{Z \cap Y''}(X)) \ar[r]\ar[d,"\simeq"] & \Ind(\Dperf_Z(X)) \ar[r]\ar[d] & \Ind(\Dperf_{Z''}(X'')) \ar[d,"\simeq"] \\
\Der^{\qc}_{Z \cap Y''}(X) \ar[r] & \Der^{\qc}_Z(X) \ar[r] & \Der^{\qc}_{Z''}(X'') 
\end{tikzcd}
\]
where the right vertical map is an equivalence since we assume Theorem~\ref{theorem:perfect-generators} for $X''$ and the left vertical map is an equivalence since $\Der^{\qc}_{Z \cap Y''}(X) = \Der^{\qc}_{Z' \cap Y''}(X')$ by excision and we assume Theorem~\ref{theorem:perfect-generators} for $X'$. Here the bottom row is a right split Verdier sequence since the pullback functor $\Der^{\qc}_Z(X) \to \Der^{\qc}_{Z''}(X'')$ has a fully-faithful right adjoint given by push-forward (see Remark~\ref{remark:pushforward-support}), and the top row is a right split Verdier sequence since it is obtained by applying $\Ind$ to a Karoubi sequence, see~\cite{NS}*{Proposition I.3.5}. Now the middle vertical map is at least fully-faithful, since every perfect complex is compact in $\Der^{\qc}_Z(X)$ (since it is compact in $\Der^{\qc}(X)$). This implies that the right adjoint of the bottom Verdier inclusion gives a right adjoint to the top Verdier inclusion when restricted to $\Ind(\Dperf_Z(X))$. In other words, the left square is right adjointable, in the sense that it remains commutative when replacing the two horizontal Verdier inclusions by their right adjoints. 
Since both rows are right split Verdier sequences, the counit of the Verdier inclusion and the unit of the Verdier projection form a fibre sequence. It follows that the right square is right adjointable as well, in the sense that it remains commutative when replacing the two horizontal Verdier projections by their right adjoints. 
It then follows that the essential image of the middle vertical functor contain the essential image of $\Der^{\qc}_{Z''}(X'')$ via push-forward. Since it also contains $\Der^{\qc}_{Z \cap Y''}(X)$ it must contain all of $\Der^{\qc}_Z(X)$, and so the proof is complete.
\end{proof}

\begin{proof}[Proof of Theorem~\ref{theorem:perfect-generators}]
Since $X$ is qcqs it admits a finite covering $X = \cup_{i=1}^{n} U_i$ such that each $U_i$ is affine. We then argue by induction on the minimal number $n$ of affine opens needed to cover $X$. If $n=1$ then $X$ is affine and the claim is covered by Lemma~\ref{lemma:affine}. Otherwise, $X$ can be covered by two open subsets each of which can be covered by less than $n$ affines, and hence the claim follows from the induction hypothesis via Lemma~\ref{lemma:induction}.
\end{proof}

\begin{corollary}
\label{corollary:limit-open-karoubi}%
Let $X$ be a qcqs scheme and $p \in X$ a (not necessarily closed) point. Then the restriction functor
\[
\Dperf(X) \to \Dperf(\cO_{X,p})
\]
is a Karoubi projection.
\end{corollary}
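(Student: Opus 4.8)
The plan is to exhibit the restriction functor $\Dperf(X) \to \Dperf(\cO_{X,p})$ as a filtered colimit in $\Catx$ of the restriction functors to quasi-compact open neighbourhoods of $p$, each of which is a Karoubi projection, and then to invoke the stability of the class of Karoubi projections under filtered colimits.

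First I would fix the indexing category: let $\mathcal U$ be the poset of affine open neighbourhoods of $p$, ordered by inclusion, so that $\mathcal U\op$ is a filtered poset — in particular weakly contractible — and the inclusion of $\mathcal U\op$ into the poset of all open neighbourhoods of $p$ is cofinal (every open neighbourhood contains an affine one, and the intersection of two affine neighbourhoods again contains an affine one). By Lemma~\ref{lemma:inverse-system} together with cofinality, the canonical functor $\colim_{U \in \mathcal U\op} \Dperf(U) \to \Dperf(\cO_{X,p})$ is an equivalence. The open immersions $U \hookrightarrow X$ assemble into a natural transformation from the constant diagram with value $\Dperf(X)$ to the diagram $U \mapsto \Dperf(U)$ over $\mathcal U\op$, whose components are the restriction functors $\Dperf(X) \to \Dperf(U)$; since $\mathcal U\op$ is weakly contractible, the colimit of the constant diagram is $\Dperf(X)$, and passing to colimits identifies the colimit of this natural transformation with the restriction functor $\Dperf(X) \to \Dperf(\cO_{X,p})$.

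Next I would observe that each component is a Karoubi projection: for $U \in \mathcal U$, applying Theorem~\ref{theorem:perfect-generators}\ref{item:karoubi-sequence-intersection-closed} with $Z = X \setminus U$ and $Y = X$ shows that $\Dperf_{X \setminus U}(X) \to \Dperf(X) \to \Dperf(U)$ is a Karoubi sequence. As $U$ ranges over $\mathcal U\op$ the kernels $\Dperf_{X \setminus U}(X)$ form a filtered diagram, and together with the constant diagram with value $\Dperf(X)$ and the diagram $U \mapsto \Dperf(U)$ they constitute a filtered diagram of Karoubi sequences, whose colimit is $\colim_U \Dperf_{X \setminus U}(X) \to \Dperf(X) \to \Dperf(\cO_{X,p})$.

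Finally I would conclude by using that the class of Karoubi sequences in $\Catx$ is closed under filtered colimits, so that the colimit sequence above is again a Karoubi sequence and in particular its second map, namely the restriction functor $\Dperf(X) \to \Dperf(\cO_{X,p})$, is a Karoubi projection. The point demanding care — and the main obstacle — is precisely this closure statement: a filtered colimit of idempotent-complete stable $\infty$-categories need not be idempotent complete, so one cannot argue merely that fibre-cofibre sequences are preserved under filtered colimits. Instead one passes to $\Ind$-completions, where a Karoubi sequence becomes a split Verdier sequence of compactly generated stable $\infty$-categories (as in \cite{9-authors-II}*{Appendix~A.3}), uses that $\Ind$ preserves filtered colimits and that split Verdier sequences of presentable stable $\infty$-categories are closed under such colimits, and then transports the conclusion back; this closure of Karoubi projections under filtered colimits is a standard fact also invoked elsewhere in the paper.
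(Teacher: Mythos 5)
Your proof is correct and follows essentially the same argument as the paper's: reduce to affine open neighbourhoods of $p$ via Lemma~\ref{lemma:inverse-system}, observe that each restriction $\Dperf(X)\to\Dperf(U)$ is a Karoubi projection by Theorem~\ref{theorem:perfect-generators}\ref{item:karoubi-sequence-intersection-closed}, and conclude by closure of Karoubi projections under filtered colimits. The extra discussion you give of why Karoubi projections are stable under filtered colimits (passing to $\Ind$-completions, where Karoubi sequences become split Verdier sequences of presentable stable $\infty$-categories) is a genuine subtlety that the paper leaves implicit, and you correctly flag that one cannot argue naively from preservation of fibre-cofibre sequences.
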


The proof of Corollary~\ref{corollary:limit-open-karoubi} will require the following lemma:

\begin{lemma}
\label{lemma:inverse-system}%
Let $\I$ be a filtered poset and $\{U_{\alp}\}_{\alp \in \I\op}$ an $\I\op$-indexed diagram of qcqs schemes such that each of the maps $p_{\alp,\beta}\colon U_{\bet} \to U_{\alp}$ in the diagram is affine. Let $U = \lim_{\alp \in \I} U_\alpha$ and $p_{\alp}\colon U \to U_{\alp}$ the canonical maps. Then the functor $\colim_{\alp \in \I}\Dperf(U_{\alp}) \to \Dperf(U)$ induced by $(M,\alp) \mapsto p^*_{\alp}M$ is an equivalence.
\end{lemma}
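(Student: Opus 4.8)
Throughout write $p_{\alpha\beta}\colon U_\beta\to U_\alpha$ for the (affine) transition maps and $p_\alpha\colon U\to U_\alpha$ for the canonical maps. The plan is a reduction to the affine case via Zariski descent, followed by a direct argument with finite free resolutions in the affine case. First I would reduce to the situation in which $\I$ has a least element: for any $\alpha_0\in\I$ the subposet $\I_{\geq\alpha_0}\subseteq\I$ is cofinal (as $\I$ is filtered), so neither the colimit $\colim_{\alpha}\Dperf(U_\alpha)$ over $\I$ nor the limit $U=\lim_\alpha U_\alpha$ over $\I\op$ changes upon restricting to it; thus we may assume there is a least element $\star\in\I$. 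Then, for a qcqs scheme $V$ equipped with a map to $U_\star$, set $V_\alpha:=V\times_{U_\star}U_\alpha$ for $\alpha\in\I$; the transition maps $V_\beta\to V_\alpha$ are affine, being base changes of the $p_{\alpha\beta}$, so $V_\infty:=\lim_\alpha V_\alpha=V\times_{U_\star}U$ exists as a scheme, and I will call $V$ \emph{good} if the canonical functor $\colim_\alpha\Dperf(V_\alpha)\to\Dperf(V_\infty)$ is an equivalence. The aim is to show that $U_\star$ is good.

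Next I would show that goodness is local for the Zariski topology in the sense of Lemma~\ref{lemma:useful-argument}: if $V=\bigcup_{i=1}^n W_i$ is a finite open cover and each intersection $W_S:=\bigcap_{i\in S}W_i$ ($\emptyset\neq S\subseteq\{1,\dots,n\}$) is good, then so is $V$. Indeed, base-changing this cover along $V_\alpha\to V$ and along $V_\infty\to V$ yields compatible finite open covers, with $W_S\times_{U_\star}U_\alpha=\bigcap_{i\in S}(W_i\times_{U_\star}U_\alpha)$ and likewise over $V_\infty$, so Zariski descent for perfect complexes (Corollary~\ref{corollary:descent-qc} together with the locality of the perfectness condition, Remark~\ref{remark:perfect-local}; or Remark~\ref{remark:inherited-perfect} iterated) gives
\begin{align*}
\colim_{\alpha}\Dperf(V_\alpha) &= \colim_{\alpha}\lim_{\emptyset\neq S}\Dperf\big(W_S\times_{U_\star}U_\alpha\big) = \lim_{\emptyset\neq S}\colim_{\alpha}\Dperf\big(W_S\times_{U_\star}U_\alpha\big)\\
&= \lim_{\emptyset\neq S}\Dperf\big(W_S\times_{U_\star}U\big) = \Dperf(V_\infty),
\end{align*}
where the middle equality holds because filtered colimits commute with finite limits in $\Catx$. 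With locality established, the reduction carried out in the proof of Lemma~\ref{lemma:useful-argument} (affine $\Rightarrow$ quasi-compact separated $\Rightarrow$ qcqs, using that open subschemes of separated schemes are separated) applies verbatim, since it only uses the combinatorics of open covers; hence it suffices to prove that every affine $U_\star$-scheme is good.

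It remains to treat the affine case. If $V=\spec(B_\star)$ is affine over $U_\star$, then each $V_\alpha\to V$ is affine with $V$ affine, so $V_\alpha=\spec(B_\alpha)$ is affine, and likewise $V_\infty=\spec(B)$ with $B=\colim_\alpha B_\alpha$ a filtered colimit of discrete rings. The assertion becomes that the functor $R\mapsto\Dperf(R)$ from discrete commutative rings to $\Catx$ preserves filtered colimits. For full faithfulness: given $M,N\in\Dperf(B_\alpha)$, since perfect complexes are dualizable and internal $\mathrm{Hom}$ commutes with base change, $\map_{\Dperf(B)}(M_B,N_B)\simeq\mathrm{RHom}_{B_\alpha}(M,N)\otimes_{B_\alpha}B\simeq\colim_{\beta\geq\alpha}\mathrm{RHom}_{B_\beta}(M_{B_\beta},N_{B_\beta})$, which is precisely the mapping space in $\colim_\alpha\Dperf(B_\alpha)$. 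For essential surjectivity: every perfect $B$-complex is, up to equivalence, a retract of a bounded complex of finite free $B$-modules, and such a complex together with the idempotent endomorphism realizing the retract (and the homotopy witnessing $e^2\simeq e$ and its higher coherences) is encoded by finitely many elements and relations in $B=\colim_\beta B_\beta$, hence descends to some $B_\beta$, where the resulting idempotent splits since $\Dperf(B_\beta)$ is idempotent complete. This settles the affine case and completes the proof.

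As for the main difficulty: there is essentially none beyond bookkeeping. The only non-formal ingredient is the affine base case — the continuity of $\Dperf$ along filtered colimits of rings — which is classical, with the short argument above; the reduction to affines is a routine Zariski-descent argument in the spirit of Lemma~\ref{lemma:useful-argument}, and the one point that demands care is the simultaneous descent of objects, morphisms and idempotents to a common finite stage.
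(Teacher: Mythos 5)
Your proposal is correct and follows essentially the same strategy as the paper's proof: reduce to the case of a poset with an initial element, reduce to the affine case by Zariski descent for perfect complexes together with the commutation of filtered colimits with finite limits in $\Catx$, then treat the affine case by proving full faithfulness via continuity of mapping spectra and essential surjectivity via descent of idempotents. The only substantive differences are cosmetic: you package the reduction to affines explicitly through the ``goodness'' framework of Lemma~\ref{lemma:useful-argument} (the paper does the same reduction more tersely), and in the affine essential-surjectivity step you descend a homotopy idempotent endomorphism of a bounded complex of finite free modules while the paper reduces to descending the idempotent matrix presenting a finitely generated projective module; these are the same argument in substance. One phrase deserves tightening: a \emph{coherent} idempotent in an $\infty$-category is a diagram indexed by the (infinite) simplicial set $\mathrm{Idem}$, so it is not literally ``encoded by finitely many elements and relations.'' What you actually need --- and what your argument correctly supplies via the invocation of idempotent completeness of $\Dperf(B_\beta)$ --- is only the $\pi_0$-level homotopy idempotent $e$ with $e^2 = e$ in the homotopy category; this is finite data, descends to some $B_\beta$ by the already-established full faithfulness, and then splits there because idempotent completeness of a stable $\infty$-category is detected on the homotopy category.
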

\begin{proof}
The condition that each of the maps in the diagram $\{U_{\alp}\}$ are affine implies that each of the maps $p_{\alp}\colon U \to U_{\alp}$ is affine. In particular, $U$ is qcqs.
Without loss of generality we may assume that $\I$ has an terminal element $\alp_0 \in \I$ (otherwise, pick any $\alp \in \I$ and replace $\I$ by the cofinal subposet $\{\bet \in \I | \bet \geq \alp\}$. Since $U_{\alp_0}$ is qcqs we may cover it by a finite collection of open affine subschemes. Since each of the $p_{\alp,\bet}\colon U_{\bet} \to U_{\alp}$ is affine this covering determines an open affine covering of each $U_{\bet}$, and similarly an open affine covering of $U$. Applying Zariski descent for perfect complexes (Remark~\ref{remark:inherited-perfect}) and using the commutation of filtered colimits with finite limits we may reduce to the case where all the $U_{\alp}$'s are affine. Transforming the problem to the setting of commutative rings, we find a filtered diagram $\{A_{\alp}\}_{\alp \in \I}$ of rings, and we need to show that  the functor
\[
\colim_{\alp} \Dperf(A_{\alp}) \to \Dperf(A) \quad\quad (M,\alp) \mapsto M \otimes_{A_{\alp}} A
\]
is an equivalence, where $A = \colim_{\alp} A_{\alp}$. We first show that this functor is fully-faithful. For this, it will suffice to show that for any $\alp \in \I$ and any $M,M'\in \Dperf(A_{\alp})$ the map
\[
\colim_{\bet \geq \alp} \map_{A_{\bet}}(A_{\bet} \otimes_{A_{\alp}} M,A_{\bet} \otimes_{A_{\alp}} M) \to \map_{A}(A \otimes_{A_{\alp}} M,A \otimes_{A_{\alp}} M)
\]
is an equivalence. By adjunction, we can identify this map with the map
\[
\colim_{\bet \geq \alp} \map_{A}(M,A_{\bet} \otimes_{A_{\alp}} M) \to \map_{A_{\alp}}(M,A \otimes_{A_{\alp}} M) ,
\]
and since $M$ is perfect as an $A_{\alp}$-module we can also take $\map_A(M,-)$ out of the colimit, so that it will suffice to show that the map
\[
\colim_{\bet \geq \alp}  A_{\bet} \otimes_A M \to A \otimes_A M
\]
is an equivalence. Indeed, this is clear since $A = \colim_{\bet}A_{\bet} = \colim_{\bet \geq \alp} A_{\bet}$ and $(-) \otimes_A M$ preserves colimits.

We now show that this functor is essentially surjective. Since we already have that the functor in question is fully-faithful it will suffice to show that its image generates all of $\Dperf(A)$ under finite colimits and desuspensions. In particular, it will suffice to show that every finitely generated projective $A$-module $N$ is of the form $N = M \otimes_{A_{\alp}} A$ for some finitely generated projective $A_{\alp}$-module. Indeed, $N$ is a retract of $A^n$ for some $n$, and the coefficients of the associated idempotent $(n \times n)$-matrix must be in the image of $A_{\alp} \to A$ for some sufficiently large $\alp$.
\end{proof}

\begin{proof}[Proof of Corollary~\ref{corollary:limit-open-karoubi}]
We may write $\spec(\cO_{x,p}) = \lim_{\alp} U_{\alp}$, where the $U_{\alp}$ ranges over all affine open neighborhoods of $p$. Since each of the functor $\Dperf(X) \to \Dperf(U_{\alp})$ is a Karoubi projection by Theorem~\ref{theorem:perfect-generators} and the collection of Karoubi projections is closed under colimits, we get from Lemma~\ref{lemma:inverse-system} that
\[
\Dperf(X) \to \Dperf(\cO_{X,p}) = \colim_{\alp} \Dperf(U_{\alp})
\]
is a Karoubi projection as well.
\end{proof}

\subsection{Quasi-perfect maps}
\label{subsection:quasi-perfect}%

Recall from Proposition~\ref{proposition:pushforward-colimits} that for any map $f\colon X \to Y$ of qcqs schemes, the push-forward functor $f_*\colon \Derqc(X) \to \Derqc(Y)$ preserves colimits. Since $\Derqc(X)$ is presentable this means that $f_*$ admits a right adjoint $f^!\colon \Derqc(Y) \to \Derqc(X)$.  

\begin{remark}[Flat base change for push-forward and upper shriek]
\label{remark:flat-base-change-shriek}%
Given a cartesian square
\begin{equation}
\label{equation:flat-square}%
\begin{tikzcd}
V \ar[r,"i"]\ar[d,"f"'] & X \ar[d,"g"] \\
U \ar[r,"j"] & Y 
\end{tikzcd}
\end{equation}
of qcqs schemes with $j$ and $i$ flat, we have by flat base change (Proposition~\ref{proposition:flat-base-change}) an equivalence 
$j^*g_* \tosimeq f_*i^*$. Replacing all functors by their right adjoints we get an equivalence 
\[
g^!j_* \tosimeq i_*f^!
\]
Otherwise put, since the square of push-forward functors associated to~\eqref{equation:shriek-base-change} is horizontally left adjointable by flat base change and all functors have right adjoints, it follows formally that it is also vertically right adjointable, and we obtain a commutative square
\begin{equation}
\label{equation:shriek-base-change}%
\begin{tikzcd}
\Derqc(V) \ar[r,"i_*"] & \Derqc(X)  \\
\Derqc(U) \ar[r,"j_*"]\ar[u,"f^!"] & \Derqc(Y) \ar[u,"g^!"'] 
\end{tikzcd}
\end{equation} 
\end{remark}

Let $f\colon X \to Y$ be a map of qcqs schemes. Then $f^*\colon \Derqc(Y) \to \Derqc(X)$ is symmetric monoidal, and hence can in particular be considered as a $\Derqc(Y)$-linear functor (where $\Derqc(Y)$ acts on $\Derqc(X)$ by restricting $\Derqc(X)$'s self action via $f^*$). This $\Derqc(Y)$-linear structure induces a lax $\Derqc(Y)$-linear structure on the right adjoint $f_*$, which the projection formula asserts is actually an honest $\Derqc(Y)$-linear structure. This last $\Derqc(Y)$-linear structure then induces a lax $\Derqc(Y)$-linear structure on its further right adjoint $f^!\colon \Derqc(Y) \to \Derqc(X)$. Concretely, given a map $f\colon X \to Y$ of qcqs schemes and $M,N \in \Derqc(Y)$, the map 
\[
f_*(f^*(M) \otimes f^!(N)) \simeq M \otimes f_*f^!(N) \to M \otimes N
\]
obtained using the projection formula and the counit of $f_* \dashv f^!$ determines a natural map
\[
\tau\colon f^*(M) \otimes f^!(N) \to f^!(M \otimes N) ,
\]
encoding a lax $\Derqc(Y)$-linear structure on $f^!\colon \Derqc(Y) \to \Derqc(X)$.

\begin{lemma}
\label{lemma:eta-equivalence}%
The map $\tau$ is an equivalence if $M$ is perfect. If $f$ is quasi-perfect then $\tau$ is an equivalence for any $M$.
\end{lemma}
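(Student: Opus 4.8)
\textbf{Proof proposal for Lemma~\ref{lemma:eta-equivalence}.}
The plan is to prove both assertions by localizing on $Y$ and $X$ and reducing to very explicit situations. First I would observe that the map $\tau$ is built entirely from the projection formula transformation and the counit of $f_*\dashv f^!$, both of which are compatible with restriction to opens by flat base change (Remark~\ref{remark:flat-base-change-shriek}) and open base change (Proposition~\ref{proposition:open-base-chage}); in particular, for any flat cartesian square as in~\eqref{equation:flat-square} the map $\tau$ for $g$, restricted to $U$, is identified with the map $\tau$ for $f$ (using $g^!j_*\simeq i_*f^!$ and the monoidality of $j^*$). Since being an equivalence in $\Derqc(X)$ is local, this lets me assume first that $Y=\spec(B)$ is affine, and then, covering $X$ by affines and using that $f^*$, $f_*$, $f^!$ are all compatible with the relevant restrictions, that $X=\spec(A)$ is affine as well. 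In the affine case $f_*\colon \Der(A)\to \Der(B)$ is the forgetful functor along $B\to A$ and $f^!(N)=\mathrm{RHom}_B(A,N)$, and $\tau$ becomes the canonical map
\[
(A\otimes_B M)\otimes_A \mathrm{RHom}_B(A,N) \to \mathrm{RHom}_B(A,M\otimes_B N).
\]

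For the first assertion, with $M$ a perfect $B$-complex, I would argue that both sides, as functors of $M\in\Dperf(B)$, send finite colimits and retracts to such and agree on the unit $M=B$ (where $\tau$ is visibly the identity $\mathrm{RHom}_B(A,N)\to\mathrm{RHom}_B(A,N)$); since $\Dperf(B)$ is generated under finite colimits and retracts by $B$, and since $\tau$ is natural in $M$, it follows that $\tau$ is an equivalence for every perfect $M$. Concretely this is the standard fact that $(-)\otimes_A \mathrm{RHom}_B(A,N)$ commutes with the (finite) homotopy limits computing a perfect $B$-complex from copies of $B$; I would phrase it as above to avoid a hands-on chase.

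For the second assertion I would use that quasi-perfectness of $f$ means exactly that $f^!$ preserves colimits (Definition~\ref{definition:quasi-perfect}), equivalently $\mathrm{RHom}_B(A,-)$ commutes with filtered colimits; combined with the fact that tensoring with $A\otimes_B M$ over $A$ always commutes with colimits, both sides of the displayed map are colimit-preserving functors of $M\in\Der(B)$. They agree on $M=B$ by the first part (the unit is perfect), hence, since $\Der(B)$ is generated under colimits by $B$, they agree for all $M$. The main obstacle is purely organizational: making sure that the reduction to the affine case is legitimate, i.e.\ that the natural transformation $\tau$ genuinely is compatible with all the restriction functors in play. This requires checking that the projection formula equivalence and the counit $f_*f^!\Rightarrow\id$ are themselves compatible with flat (and open) base change — which is where Remark~\ref{remark:flat-base-change-shriek} and Proposition~\ref{proposition:open-base-chage} do the work — and that $f^!$ of a quasi-perfect or arbitrary map is computed affine-locally, which follows from the base-change square~\eqref{equation:shriek-base-change} together with Proposition~\ref{proposition:pushforward-colimits}. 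Once this bookkeeping is in place, the affine computations are immediate.
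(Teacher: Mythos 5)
Your reduction to the affine case has a gap that I don't think can be filled by the tools you cite, and in fact it is essentially circular. To reduce to $Y$ affine you need to know that the natural map $\tau$ for $f\colon X\to Y$ restricts, over $V=f^{-1}(U)$ for $U\subseteq Y$ affine open, to the map $\tau$ for $f|_V\colon V\to U$. Unwinding, this requires the base change $i^*f^!\simeq (f|_V)^!\,j^*$, which is \emph{not} the equivalence $f^!j_*\simeq i_*(f|_V)^!$ of Remark~\ref{remark:flat-base-change-shriek} that you invoke: the latter is a statement about push-forward and doesn't pass through $j^*j_*\ne\id$ on the target. The identity $i^*f^!\simeq (f|_V)^!\,j^*$ is precisely Lemma~\ref{lemma:flat-quasi-perfect}, which (a) requires $f$ to be quasi-perfect, so it is unavailable for the first assertion, and (b) is proved in the paper \emph{using} Lemma~\ref{lemma:eta-equivalence}, so appealing to it here would be circular. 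Indeed, $j^!\ne j^*$ even for a quasi-compact open immersion (e.g.\ $\spec k[x,x^{-1}]\hookrightarrow\spec k[x]$, where $k[x,x^{-1}]$ is flat but not perfect over $k[x]$), so there is no unconditional locality for $f^!$ on $Y$. And even if the reduction were available, your affine argument uses that $\Dperf(B)$ is generated by $B$ under finite colimits and retracts — which is true for affine $B$ but false for $\Dperf(Y)$ with $\cO_Y$ when $Y$ is not affine — so the thick-subcategory argument is genuinely tied to the affine case.

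The paper sidesteps all of this with Lemma~\ref{lemma:rune-magic-lemma}: since $\Dperf(Y)$ is rigid, the lax $\Dperf(Y)$-linear structure on $f^!$ is automatically strict, which is exactly the statement that $\tau$ is an equivalence for $M\in\Dperf(Y)$, globally and without any affine reduction. Your treatment of the second assertion is on the right track — colimit-preservation in $M$ when $f$ is quasi-perfect, plus generation of $\Derqc(Y)$ by $\Dperf(Y)$ under colimits — but it should be run directly on $Y$ rather than after an affine reduction, bootstrapping from the (correctly proved) first part exactly as the paper does.
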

\begin{proof}
The first claim follows from Lemma~\ref{lemma:rune-magic-lemma}, by considering the underlying lax $\Dperf(Y)$-linear structure on $f^!$. If $f$ is quasi-perfect then $f^!$ preserves colimits and so the domain and target of $\tau$ both depend on $M$ in a colimit preserving manner. Since $\Dperf(Y)$ generates $\Derqc(Y)$ under colimits the second part follows from the first part.
\end{proof}

\begin{definition}
\label{definition:quasi-perfect}%
A map $f\colon X \to Y$ is said to be \defi{quasi-perfect} if $f_*$ preserves perfect complexes, or, equivalently, if $f^!$ preserves colimits. 
\end{definition}

Clearly quasi-perfect maps are closed under composition. They are also closed under base change against open embeddings:

\begin{lemma}
\label{lemma:flat-quasi-perfect}%
Consider a cartesian square of qcqs schemes as in~\eqref{equation:flat-square}, and suppose in addition that $i$ and $j$ are open embeddings. If $g$ is quasi-perfect then $f$ is quasi-perfect and the square~\eqref{equation:shriek-base-change} is left adjointable, that is, the associated mate transformation 
\[
\eta\colon i^*g^!  \Rightarrow f^!j^*
\]
is an equivalence. 
\end{lemma}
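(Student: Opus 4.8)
First, I would show that $f$ is quasi-perfect. The map $j\colon U \to Y$ is flat, so $j^*\colon \Derqc(Y) \to \Derqc(U)$ is exact and in particular $j^*$ preserves perfect complexes. By flat base change for push-forward (Proposition~\ref{proposition:flat-base-change}, with the roles arranged so that the flat leg is $j$), we have a natural equivalence $j^*g_* \simeq f_*i^*$. Since $g$ is quasi-perfect, $g_*$ preserves perfect complexes; combined with the fact that $j^*$ and $i^*$ preserve perfect complexes and that $i^*$ is essentially surjective onto $\Dperf(V)$ up to retracts (indeed, $\Dperf(V) \to \Dperf(V)$ — more precisely, every perfect complex on $V$ is the restriction of a perfect complex on $X$ by Theorem~\ref{theorem:perfect-generators}, or at least a retract of one), one concludes that $f_*$ carries $\Dperf(V)$ into $\Dperf(U)$. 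Equivalently, $f^!$ preserves colimits, so $f$ is quasi-perfect. Actually, the cleanest route: $f_*i^* \simeq j^*g_*$ takes perfect to perfect, and since $i^*\colon \Dperf(X) \to \Dperf(V)$ is essentially surjective onto a generating subcategory and $f_*$ is exact and colimit-preserving, $f_*$ preserves perfectness.

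Second, I would produce the mate transformation. The square~\eqref{equation:shriek-base-change} of functors $f^!, g^!, i_*, j_*$ commutes (this is exactly the content of Remark~\ref{remark:flat-base-change-shriek}: the push-forward square is horizontally left adjointable by flat base change, hence the upper-shriek square is vertically right adjointable, giving $g^!j_* \simeq i_* f^!$). Now I want the \emph{other} adjointability: passing to the left adjoints $i^*$ of $i_*$ and $j^*$ of $j_*$ in this commuting square yields a canonical mate $\eta\colon i^*g^! \Rightarrow f^! j^*$. Concretely, $\eta$ is the composite
\[
i^*g^! \xrightarrow{\ \ } i^*g^! j_* j^* \simeq i^* i_* f^! j^* \xrightarrow{\ \ } f^! j^*,
\]
using the unit $\id \Rightarrow j_* j^*$, the equivalence $g^! j_* \simeq i_* f^!$, and the counit $i^* i_* \Rightarrow \id$ (which is an equivalence since $i$ is an open embedding, so $i_*$ is fully faithful).

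Third, I would check that $\eta$ is an equivalence. Since everything in sight preserves colimits — $g^!$ preserves colimits because $g$ is quasi-perfect, $f^!$ preserves colimits because $f$ is quasi-perfect (just established), and $i^*, j^*$ preserve colimits — both sides of $\eta$ are colimit-preserving functors $\Derqc(Y) \to \Derqc(V)$. Because $\Dperf(Y)$ generates $\Derqc(Y)$ under colimits, it suffices to check that $\eta$ is an equivalence when evaluated on perfect $M \in \Dperf(Y)$. For such $M$, write $M$ (up to the projection-formula equivalence of Lemma~\ref{lemma:eta-equivalence}, which applies since $M$ is perfect) as $M \simeq M \otimes \cO_Y$ and use $f^!(M \otimes N) \simeq f^*M \otimes f^!N$ and likewise for $g^!$: this reduces the claim to the statement that $i^* g^* \simeq f^* j^*$, which holds since both equal the pullback along the composite $V \to Y$, both squares being cartesian. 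More carefully: $i^* g^! M \simeq i^*(g^* M \otimes g^! \cO_Y) \simeq (i^* g^* M) \otimes (i^* g^! \cO_Y)$ and $f^! j^* M \simeq (f^* j^* M) \otimes (f^! j^* \cO_Y) = (f^* j^* M) \otimes (f^! \cO_U)$; since $i^* g^* M \simeq f^* j^* M$ canonically, one is reduced to the case $M = \cO_Y$, i.e.\ to showing $\eta_{\cO_Y}\colon i^* g^! \cO_Y \to f^! \cO_U$ is an equivalence. For this last case, one can test locally on $U$ and $V$ (equivalences of quasi-coherent complexes are detected on an affine cover, and $\eta$ is compatible with open restriction by the already-established flat-base-change compatibilities), reducing to a situation where $g$ and $f$ are maps of affine schemes — but then one can invoke that $f^!\cO_U$ and $g^!\cO_Y$ are computed by the same formula after the flat (indeed open-localisation) base change, so the comparison map is an equivalence.

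**Main obstacle.** The genuinely delicate point is not the formal mate-calculus but verifying that $\eta_{\cO_Y}\colon i^*g^!\cO_Y \to f^!\cO_U$ is an equivalence, i.e.\ that forming the dualising/relative-dualising complex $g^!\cO_Y$ commutes with the flat (open-embedding) base change $j$. The clean way is to reduce to the affine case and use that $g^!$ is, on perfect complexes, a lax-$\Derqc(Y)$-linear right adjoint whose restriction is controlled by Lemma~\ref{lemma:eta-equivalence}; alternatively one reduces to checking that the natural transformation is an equivalence after applying $f_*$ (which is conservative only under extra hypotheses, so this route needs care) or tests against a generating set of perfect complexes on $V$. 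I would carry this out by the localisation/affine-reduction argument sketched above, citing flat base change (Proposition~\ref{proposition:flat-base-change}) and the compatibility of $f^!$ with open restriction throughout.
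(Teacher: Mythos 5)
Your proof of the first claim (that $f$ is quasi-perfect) is correct and matches the paper's argument: one uses the flat base change $f_*i^*\simeq j^*g_*$ together with the fact that every perfect complex on $V$ is a retract of one restricted from $X$. Your explicit description of the mate $\eta$ is also correct, and the reduction via compact generation to checking $\eta$ on perfect $M\in\Dperf(Y)$ is sound.

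The gap is in the final step, the one you yourself flag as delicate. After reducing to $\eta_{\cO_Y}\colon i^*g^!\cO_Y\to f^!\cO_U$, you propose to ``test locally on $U$ and $V$, reducing to a situation where $g$ and $f$ are maps of affine schemes.'' This reduction is circular. If $V_\alpha\subseteq V$ is an affine open, the restriction $(f^!\cO_U)|_{V_\alpha}$ is not \emph{a priori} identified with a $!$-pullback along a map of affines: the assertion that $f^!$ is compatible with open restriction on the source -- i.e.\ that $(f^!M)|_{V_\alpha}\simeq (f|_{V_\alpha})^!(M|_{U_\beta})$ for suitable affine $U_\beta$ -- is exactly the content of Corollary~\ref{corollary:quasi-perfect-local}, which in the paper is \emph{derived from} the present lemma and hence cannot be used to prove it. The phrase ``$\eta$ is compatible with open restriction by the already-established flat-base-change compatibilities'' invokes Remark~\ref{remark:flat-base-change-shriek}, but that only gives the \emph{other} mate $g^!j_*\simeq i_*f^!$ (pushing forward from the open, rather than restricting to it), which does not suffice to localise the problem.

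The paper closes this gap by a different route. Instead of reducing to $\cO_Y$ and then trying to localise, it decomposes an arbitrary $M\in\Derqc(Y)$ via the fibre sequence $N\to M\to j_*j^*M$ with $N$ killed by $j^*$. On the first piece one uses $g^!N\simeq g^*N\otimes g^!\cO_Y$ (Lemma~\ref{lemma:eta-equivalence}, valid since $g$ is quasi-perfect) together with $i^*g^*N\simeq f^*j^*N=0$ to see both sides vanish. On the second piece one uses the automatic base change $g^!j_*\simeq i_*f^!$ of Remark~\ref{remark:flat-base-change-shriek} together with full faithfulness of $i_*$ and $j_*$, packaged as a mate-calculus argument with the two-square diagram of push-forwards. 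This avoids any local computation and, in particular, never needs to know in advance how $f^!$ or $g^!$ behave under restriction. You could salvage your approach by applying this same fibre-sequence decomposition directly to $\cO_Y$, but at that point the preliminary reduction to $\cO_Y$ buys you nothing.
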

\begin{proof}
To see that $f$ is quasi-perfect let $M \in \Derqc(U)$ be a perfect complex. 
Since $i$ is an open embedding we have that every perfect complex $M \in \Dperf(U)$ is a retract of one of the form $i^*N$ for $N \in \Dperf(X)$. Now since $g$ is quasi-perfect we have that $f_*i^* \simeq j^*g_*\colon \Dperf(X) \to \Dperf(V)$ preserves perfect complexes and hence $f$ is quasi-perfect as well.

We now prove that $\eta$ is an equivalence. For this, note that any $M \in \Derqc(Y)$ sits in a fibre sequence of the form
\[
N \to M \to j_*j^*M
\]
with $N$ supported on the complement of $U$. It will hence suffice to show that $\eta$ is an equivalence on objects which are either in the image of $j_*$ or in the kernel of $j^*$. If $M$ is in the kernel of $j^*$ then $f^!j^*M = 0$ and 
\[
i^*g^!M = i^*(g^*M \otimes g^!\cO_Y) = i^*g^*M \otimes i^*g^!\cO_Y = j^*f^*M \otimes i^*g^!\cO_Y = 0
\]
by Lemma~\ref{lemma:eta-equivalence}. Now assume that $M$ is in the image of $j_*$. Consider the commutative diagram
\[
\begin{tikzcd}
\Derqc(V)\ar[d, "f_*"]  & \Derqc(X)\ar[d, "g_*"]\ar[l, "i^*"'] & \Derqc(V)\ar[d, "f_*"]\ar[l, "i_*"'] \\
\Derqc(U) & \Derqc(Y)\ar[l, "j^*"'] & \Derqc(U) \ar[l, "j_*"'] 
\end{tikzcd}
\]
obtained by gluing the square encoding the functoriality of push-forward for the square~\eqref{equation:flat-square} with its horizontal mate (which is itself a commutative square by open base change). Now in the external rectangle the horizontal arrows are equivalences (since $i_*$ and $j_*$ are fully-faithful) and the vertical arrows have right adjoints and hence this rectangle is automatically vertically right adjointable. In addition, the right square is vertically right adjointable by Remark~\ref{remark:flat-base-change-shriek}. It then follows that the vertical right mate transformation of the left square is an equivalence on objects in the image of $j_*$. But this vertical mate is the same as the horizontal left mate of the corresponding square of right adjoints~\eqref{equation:shriek-base-change}, which is $\eta$.
\end{proof}

We deduce that being quasi-perfect is a local property on the codomain:
\begin{corollary}
\label{corollary:quasi-perfect-local}%
Let $f\colon X \to Y$ be a map of qcqs schemes, $Y = \cup_i V_i$ an open covering of $Y$. Then $f$ is quasi-perfect if and only if its base change $f_i\colon U_i = X \times_Y V_i \to V_i$ to $V_i$ is quasi-perfect for every $i$. In addition, when these equivalent conditions hold we have that $(f^!M|)_{U_i} \simeq f_i^!(M|_{V_i})$ for every $i$.
\end{corollary}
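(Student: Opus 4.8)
The plan is to obtain both implications, together with the compatibility formula, directly from Lemma~\ref{lemma:flat-quasi-perfect} and open base change, with essentially no new input.

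First I would treat the ``only if'' direction. Suppose $f$ is quasi-perfect. For each index $i$, write $a_i\colon U_i \hrar X$ and $b_i\colon V_i \hrar Y$ for the two open immersions, so that the defining square of $f_i$ is a cartesian square whose horizontal arrows are open immersions, hence in particular flat. Lemma~\ref{lemma:flat-quasi-perfect} (with $g = f$, and with the ``$f$'' of that lemma being $f_i$) then applies verbatim and yields simultaneously that $f_i$ is quasi-perfect and that the mate transformation $a_i^*f^! \Rightarrow f_i^!b_i^*$ is an equivalence. Evaluating this equivalence at an object $M \in \Derqc(Y)$ gives precisely $(f^!M)|_{U_i} \simeq f_i^!(M|_{V_i})$; since this argument runs whenever $f$ is quasi-perfect, it simultaneously establishes the ``in addition'' clause of the corollary in all cases where the equivalent conditions hold.

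Next I would prove the ``if'' direction. Assume each $f_i$ is quasi-perfect and fix $M \in \Dperf(X)$; the goal is to show $f_*M \in \Dperf(Y)$. Because being perfect is a local property on the base (Remark~\ref{remark:perfect-local}, or the third characterization in Lemma~\ref{lemma:perfect-char}), it suffices to check that $(f_*M)|_{V_i}$ is perfect for every $i$. Open base change (Proposition~\ref{proposition:open-base-chage}, or its quasi-coherent incarnation in Corollary~\ref{corollary:descent-qc}) identifies $(f_*M)|_{V_i} = b_i^*f_*M$ with $(f_i)_*(a_i^*M) = (f_i)_*(M|_{U_i})$. Now $M|_{U_i} = a_i^*M$ is perfect since pullback preserves perfect complexes (Corollary~\ref{corollary:pullback-compact}), and $(f_i)_*$ preserves perfect complexes by hypothesis, so $(f_i)_*(M|_{U_i})$ is perfect. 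Hence $f_*M$ is perfect, and $f$ is quasi-perfect.

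I do not expect a genuine obstacle here: all the substance is already packaged into Lemma~\ref{lemma:flat-quasi-perfect}, whose proof (via mates and open base change) handles the interaction of $f^!$ with open restrictions. The only points requiring care are matching the roles of the open immersions and of the two vertical maps when invoking that lemma, and observing that the compatibility equivalence $(f^!M)|_{U_i}\simeq f_i^!(M|_{V_i})$ is an output of the ``only if'' argument---so it holds exactly on the locus where the equivalent conditions of the corollary are satisfied---rather than something needing an independent verification.
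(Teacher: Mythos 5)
Your proof is correct and follows essentially the same route as the paper's: Lemma~\ref{lemma:flat-quasi-perfect} (first and second parts) for the ``only if'' direction and the ``in addition'' clause, and locality of perfectness together with open base change for the ``if'' direction. Your version merely spells out a few intermediate identifications that the paper leaves implicit.
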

\begin{proof}
If $f$ is quasi-perfect then each $f_i$ is quasi-perfect by the first part of Lemma~\ref{lemma:flat-quasi-perfect}. On the other hand, if each $f_i$ is quasi-perfect and $M \in \Dperf(X)$ is a perfect complex then $(f_*M)|_{V_i} \simeq (f_i)_*(M|_{U_i})$ is perfect for every $i$ and so $f_*M$ is perfect  (see Remark~\ref{remark:perfect-local}). The last part now follows from the second part of Lemma~\ref{lemma:flat-quasi-perfect}. 
\end{proof}

\section{The Zariski and Nisnevich topologies}
\label{section:finitary}%

Throughout this section, we fix a presentable $\infty$-category $\A$. 

\subsection{Extensive $\infty$-categories and coverages}

Recall that an $\infty$-category $\C$ is said to be \defi{extensive} if it admits coproducts and the functor
\[
\C_{/x} \times \C_{/y} \to \C_{/x \amalg y} \quad\quad ([a \to x],[b \to y]) \mapsto \Big[a \amalg b \to x \amalg y \Big]
\]
is an equivalence for every $x,y \in \C$. On extensive $\infty$-categories one can construct Grothendieck topologies from a certain data known as a \emph{coverage}, which is a collection of morphisms which behave like covering morphisms. More precisely, a collection of morphisms in an extensive $\infty$-category $\C$ is called a coverage if it is closed under composition, base change and coproducts (that is, if $x \to y$ and $z \to w$ are in the collection, then so is $x \amalg z \to y \amalg w$). One then refers to the morphisms in the coverage as \defi{epimorphisms}. To a coverage one may associate a Grothendieck topology by declaring that a sieve on $y$ is covering if and only if it contains a finite collection $\{x_i \to y\}_{i=1,\ldots,n}$ such that $\amalg_i x_i \to y$ is an epimorphism, see~\cite[Proposition 3.2.1]{SAG}. This Grothendieck topology is finitary by construction. In addition, as proven in~\cite[Proposition 3.3.1]{SAG}, a presheaf $\F\colon \C\op \to \A$ is a sheaf with respect to this Grothendieck topology if and only if it satisfies the following two conditions:
\begin{enumerate}
\item
$\F$ sends finite coproducts in $\C$ to products in $\A$.
\item
For every epimorphism $p\colon x \to y$ in the coverage, $\F$ satisfies Čech descent for $p$, that is, the induced map
\[
\F \to \Tot \F(U_\bullet(p))
\]
is an equivalence. Here $U_\bullet(f)$ is the Čech resolution of $p$, that is, the simplicial object whose $n$-simplices consist of the $(n+1)$-fold fibre product $x \times_y \cdots \times_y x$.
\end{enumerate}

We record the following result:
\begin{lemma}
\label{lemma:descent-square}%
Let $\C$ be an extensive $\infty$-category equipped with a coverage and let 
\begin{equation}
\label{equation:square-epi}%
\begin{tikzcd}
x \ar[r, "p'"]\ar[d, "q'"'] & y \ar[d, "q"] \\
z \ar[r,"p"] & w 
\end{tikzcd}
\end{equation}
be a commutative square all of whose arrows are epimorphisms. Let $\F\colon \C\op \to \A$ be a presheaf which satisfies Čech descent for the morphisms $U_n(p') \to U_n(p)$ and $U_m(q') \to U_m(q)$ for every $n,m \geq 0$.
Then $\F$ satisfies Čech descent for $q$ if and only if it satisfies Čech descent for $p$.
\end{lemma}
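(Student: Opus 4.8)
The plan is to deduce the equivalence of the two Čech descent conditions from a single double-Čech-nerve computation. Given the square~\eqref{equation:square-epi}, I would form the bisimplicial object $U_{\bullet,\bullet}$ whose $(n,m)$-simplices are the iterated fibre product $x^{\times_w (n+1)} \times_{?} \cdots$; more precisely, in row $m$ I take the Čech nerve of the base-changed map $U_m(q') \to U_m(q)$ (this makes sense since both $U_m(q')$ and $U_m(q)$ map to $w$ via the structure of the square), and in column $n$ I take the Čech nerve of the base-changed map $U_n(p') \to U_n(p)$. Because~\eqref{equation:square-epi} is a commutative square of epimorphisms and the coverage is closed under base change, all the maps appearing in $U_{\bullet,\bullet}$ are epimorphisms, and by extensivity the fibre products behave as expected. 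The diagonal of $U_{\bullet,\bullet}$ will then be the Čech nerve of $x \to w$ (up to the usual cofinality of the diagonal for computing totalizations of bisimplicial objects).

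Next I would chase the two ways of evaluating $\Tot \F(U_{\bullet,\bullet})$, i.e. totalizing first in the $n$ direction and then in $m$, versus first in $m$ and then in $n$. By hypothesis $\F$ satisfies Čech descent for each $U_n(p') \to U_n(p)$, so totalizing in the $n$ direction identifies the $m$-th row with $\F(U_m(p))$; wait---I must be careful about which variable indexes which map. Let me instead set it up so that row $m$ has $\F$-descent supplied by the assumption on $U_m(q') \to U_m(q)$ and column $n$ by the assumption on $U_n(p') \to U_n(p)$. Then totalizing in one direction first yields $\Tot_m \F(U_m(q)) \simeq \F(U_\bullet(q))$-totalization $= [\F \text{ satisfies descent for } q \,?\, \F : \F(U_\bullet(q))\text{-}\Tot]$, and totalizing in the other direction first yields $\Tot_n \F(U_n(p)) = \F(U_\bullet(p))$-totalization. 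Since both iterated totalizations compute the same object $\Tot \F(U_{\bullet,\bullet})$ (by the Fubini theorem for totalizations / the cofinality of $\Delta \to \Delta\times\Delta$), I conclude that $\F \to \Tot\F(U_\bullet(q))$ and $\F \to \Tot\F(U_\bullet(p))$ have the same fibre, hence one is an equivalence iff the other is. That is exactly the claim.

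The main obstacle, and the step I would spend the most care on, is the bookkeeping in constructing the bisimplicial object $U_{\bullet,\bullet}$ and verifying that its row-wise and column-wise Čech nerves are literally the Čech nerves of the base-changed maps $U_m(q')\to U_m(q)$ and $U_n(p')\to U_n(p)$. This requires knowing that fibre products over $w$ of the various Čech nerves interact correctly---concretely, that $U_n(p')\times_{U_n(p)}\cdots$ formed relative to the square agrees with $U_m$ of the appropriate base change---which is a diagram-chasing exercise using only that~\eqref{equation:square-epi} commutes and that $\C$ is extensive (so that the relevant pullbacks exist and distribute over the coproducts implicit in the Čech nerves). Once this identification is in place, the Fubini argument for $\Tot$ is formal.

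I should also note one small subtlety to address: the totalization Fubini statement requires that the relevant bisimplicial diagram be suitably well-behaved, but since we are working in a presentable $\infty$-category $\A$ all limits exist and the interchange of totalizations is unconditional (the diagonal functor $\Delta^{\op}\to \Delta^{\op}\times\Delta^{\op}$ is final), so no finiteness or boundedness hypothesis is needed. Thus the only genuine work is the combinatorial identification of the bisimplicial object, after which the proof is a two-line consequence of Fubini for totalizations.
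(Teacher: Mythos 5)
Your approach is conceptually identical to the paper's: build a "double Čech nerve" $W_{\bullet,\bullet}$ for the square and apply Fubini for iterated limits over $\Del \times \Del$, then finish by a two-out-of-three argument. The paper's only real addition is a clean way to carry out exactly the step you flagged as the main obstacle: rather than constructing $W_{\bullet,\bullet}$ by hand and checking identities by diagram chasing, it views the square as a functor $\rho\colon \Del^1 \times \Del^1 \to \C$ and right Kan extends along $\iota\times\iota\colon \Del^1\times\Del^1 \to \Del_+^{\op}\times\Del_+^{\op}$ (where $\iota\colon\Del^1\to\Del_+^{\op}=(\Del^{\triangleleft})^{\op}$ classifies the edge $[0]\leftarrow\emptyset$, i.e.\ Kan extension along $\iota$ is precisely Čech-nerve-with-augmentation). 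Because Kan extension in a product can be performed one variable at a time, the identifications ``$W_{n,\bullet}$ is the Čech nerve of $U_n(p')\to U_n(p)$'' and ``$W_{\bullet,m}$ is the Čech nerve of $U_m(q')\to U_m(q)$'' fall out formally, with no combinatorics to verify.

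One small gap to repair in your writeup: the phrase ``have the same fibre, hence one is an equivalence iff the other is'' is not quite right in a general presentable $\A$ (there is no fibre to speak of), and more importantly, merely knowing that the two iterated totalizations compute the same object does not by itself say anything about the maps from $\F(w)$. You need the identification $\Tot_n\F(U_n(p))\simeq\Tot_m\F(U_m(q))$ to be compatible with the two augmentation maps from $\F(w)$. That is exactly what the paper's $\Del_+\times\Del_+$-indexed diagram supplies: all four maps fit in a single commutative square (a diamond with $\F(w)$ on top and $\lim_{\Del\times\Del}\F(W_{\bullet,\bullet})$ on the bottom), and since the two bottom legs are equivalences by hypothesis, two-out-of-three gives the equivalence of the two top legs. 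Once you include the augmentation in your double-nerve construction — e.g.\ by indexing over $\Del_+\times\Del_+$ as the paper does — your argument goes through as stated. The remark about $\Del\to\Del\times\Del$ being coinitial is unnecessary here; only the universal property of iterated limits (Fubini) is used, not the diagonal.
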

\begin{proof}
Let $\Del_+ = \Del \cup \{\emptyset\} = \Del^{\triangleleft}$ be the extended simplex category, so functors $\Del_+\op \to \C$ correspond to augmented simplicial object and the formation of Čech resolutions (including their augmentation) can be identified with right Kan extension along the functor $\iota\colon \Del^1 \to \Del_+\op$ picking the arrow $[0] \leftarrow \emptyset$ of $\Del_+\op$. Considering the square~\eqref{equation:square-epi} as a functor $\rho\colon \Del^1 \times \Del^1 \to \C$, we may right Kan extend it along $\iota \times \iota \colon \Del^1 \times \Del^1 \to \Del_+\op \times \Del_+\op$ to obtain a functor
\[
W\colon \Del_+\op \times \Del_+\op \to \C .
\]
We then have that $W_{\bullet,\emptyset}$ is the Čech resolution of $p$, $W_{\emptyset,\bullet}$ is the Čech resolution of $q$, $W_{\bullet,0}$ is the Čech resolution of $p'$ and $W_{0,\bullet}$ is the Čech resolution of $q'$. Similarly, $W_{\bullet,m}$ is the Čech construction of $U_m(q') \to U_m(q)$ and $W_{n,\bullet}$ is the Čech resolution of $U_n(p') \to U_n(p)$. Our assumptions hence imply that in the commutative diagram
\[
\begin{tikzcd}
& \F(w) \ar[dr]\ar[dl] & \\
\displaystyle\mathop{\lim}_{n \in \Del}\F(U_n(p)) \ar[dr, "\simeq"'] &&  \displaystyle\mathop{\lim}_{m \in \Del}\F(U_m(q)) \ar[dl, "\simeq"] \\
&  \displaystyle\mathop{\lim}_{(n,m)\in \Del \times \Del} \F(W_{n,m}) &
\end{tikzcd}
\]
the two bottom arrows are equivalences. We then conclude that the top left arrow is an equivalence if and only if the top right arrow is one, as desired.
\end{proof}

\begin{corollary}
\label{corollary:epi-refinement}%
Let $\C$ be an extensive $\infty$-category equipped with a coverage and let $z \xrightarrow{g} y \xrightarrow{f} w$ be a composable sequence of maps such that $f$ and $f \circ g$ are epis. 
Let $\F\colon \C\op \to \A$ be a presheaf. If $\F$ satisfies Čech descent with respect to $f \circ g$ and all of its base changes, then $\F$ also satisfies Čech descent with respect to $f$.
\end{corollary}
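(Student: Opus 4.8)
The plan is to apply Lemma~\ref{lemma:descent-square} to a well-chosen commutative square, namely the square
\[
\begin{tikzcd}
z \ar[r, "g"]\ar[d, "\id"'] & y \ar[d, "f"] \\
z \ar[r, "f \circ g"] & w \ .
\end{tikzcd}
\]
All four arrows of this square are epimorphisms: $f\circ g$ is one by assumption, $f$ is one by assumption, $\id$ is one since the coverage is closed under base change (or simply observe that epimorphisms compose, so $\id\colon z \to z$ being an epi is automatic in any reasonable coverage — but more carefully, we only need the two horizontal maps and the right vertical map to be epis for the conclusion; let me instead use the square whose left vertical leg is $\id_z$ and right vertical leg is $f$, top leg $g$ and bottom leg $f \circ g$). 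To be safe I would take the square above and note that the horizontal maps $g$ and $f\circ g$ need not both be epis, so this particular square is not quite in the shape required by Lemma~\ref{lemma:descent-square}. Instead, the cleaner choice is the square with top map $g$, bottom map $\id_w$, left map $f\circ g$, and right map $f$:
\[
\begin{tikzcd}
z \ar[r, "g"]\ar[d, "f\circ g"'] & y \ar[d, "f"] \\
w \ar[r,"\id"] & w \ .
\end{tikzcd}
\]
Here the bottom map $\id_w$ and both vertical maps are epis; the top map $g$ need not be. Again this does not fit. The right framing is: apply Lemma~\ref{lemma:descent-square} with $p = f\circ g$ (an epi), $q = f$ (an epi), $p' = \id$, $q' = g$, arranged so that the square commutes. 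For this we need $g \circ \id = f \circ g$ composed appropriately — that forces us into the square with vertices $z, z, y, w$ whose maps are $\id_z\colon z\to z$, $g\colon z \to y$, $f\circ g\colon z\to w$, $f\colon y\to w$. Explicitly
\[
\begin{tikzcd}
z \ar[r, "\id"]\ar[d, "g"'] & z \ar[d, "f\circ g"] \\
y \ar[r,"f"] & w \ .
\end{tikzcd}
\]
The two horizontal maps are $\id_z$ and $f$, both epis, and the two vertical maps are $g$ and $f\circ g$; but $g$ need not be an epi, so Lemma~\ref{lemma:descent-square} in the stated form (all four arrows epis) does not directly apply.

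The resolution, and the actual main point, is that Lemma~\ref{lemma:descent-square} does not really need all four arrows to be epimorphisms — it only needs the two \emph{horizontal} arrows $p, p'$ to be epis (so that their Čech resolutions make sense and the hypothesis about $\F$ satisfying Čech descent for $U_n(p') \to U_n(p)$ is meaningful) together with $q$ being an epi, in order to conclude the equivalence ``$\F$ satisfies Čech descent for $q$ $\iff$ $\F$ satisfies Čech descent for $p$''. Re-examining its proof: one right Kan extends the square along $\iota \times \iota$; the rows $W_{\bullet,\emptyset}$ and $W_{\emptyset,\bullet}$ are the Čech resolutions of $p$ and $q$, the columns are Čech resolutions of $p'$ and $q'$, and the hypothesis is precisely descent along the $U_n(p')\to U_n(p)$ and $U_m(q') \to U_m(q)$; no epi hypothesis on $q'$ is used. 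So I would first state and prove this mild strengthening of Lemma~\ref{lemma:descent-square}, or simply invoke its proof verbatim, for the square
\[
\begin{tikzcd}
z \ar[r, "\id"]\ar[d, "g"'] & z \ar[d, "f\circ g"] \\
y \ar[r,"f"] & w
\end{tikzcd}
\]
in which $p = f$, $q = f\circ g$, $p' = \id_z$, $q' = g$. One then needs: $\F$ satisfies Čech descent for $U_n(p') \to U_n(p)$, i.e. for $\id\colon U_n(f) \to U_n(f)$, which is trivially true; and $\F$ satisfies Čech descent for $U_m(q') \to U_m(q)$, i.e. for the maps obtained by Čech-resolving $g$ over $f\circ g$. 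Unwinding, the $m$-th term $U_m(q)$ is the $(m+1)$-fold fibre power of $z$ over $w$, and the map $U_m(q') \to U_m(q)$ is a base change of $f\circ g$ along $U_m(q) \to z \xrightarrow{f\circ g} w$; hence by the hypothesis that $\F$ satisfies Čech descent for $f\circ g$ \emph{and all of its base changes}, this descent holds. Then the (strengthened) Lemma~\ref{lemma:descent-square} gives: $\F$ satisfies Čech descent for $q = f\circ g$ $\iff$ $\F$ satisfies Čech descent for $p = f$. Since $\F$ satisfies Čech descent for $f\circ g$ by hypothesis, we conclude it satisfies Čech descent for $f$, as desired.

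The hard part, such as it is, is the bookkeeping in the previous paragraph: identifying the square to which Lemma~\ref{lemma:descent-square} applies and verifying that its two hypotheses on $\F$ — Čech descent along the levelwise maps of the Čech resolutions in each direction — translate into, respectively, a triviality (for the $p'=\id$ direction) and exactly the ``all base changes'' clause in the statement (for the $q'=g$ direction, since each $U_m(q') \to U_m(q)$ is a pullback of $f\circ g$). I would also double-check that Lemma~\ref{lemma:descent-square} as literally stated already suffices if one is willing to enlarge the coverage, but the base-change-stable formulation above is the cleanest and matches the intended application. No serious obstacle is expected; this is a formal consequence of the two-variable Čech argument already carried out for Lemma~\ref{lemma:descent-square}.
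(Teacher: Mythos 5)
The approach of invoking Lemma~\ref{lemma:descent-square} is the right one, but your identification of the relevant square, and consequently of the level-wise maps $U_n(p')\to U_n(p)$ and $U_m(q')\to U_m(q)$, is incorrect, and the argument breaks down at that point.

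Take your final square, matched to the lemma's as you propose: the top-left corner is $z$, top-right is $z$, bottom-left is $y$, bottom-right is $w$, with $p'=\id_z$, $q'=g$, $p=f$, $q=f\circ g$. Then $U_n(p')$ is the $n$-th Čech object of $\id_z$, which is simply $z$, while $U_n(p)=y^{\times_w(n+1)}$. The map $U_n(p')\to U_n(p)$ is therefore $z\xrightarrow{g}y\xrightarrow{\Delta}y^{\times_w(n+1)}$, not the identity of $U_n(f)$ as you assert; in particular for $n=0$ it is $g\colon z\to y$ itself, which need not be an epi and for which Čech descent is certainly not ``trivially true.'' In fact the Čech nerve of $z\to y^{\times_w(n+1)}$ is $z^{\times_y(\bullet+1)}$ independently of $n$, so the required descent condition $\F(y^{\times_w(n+1)})\xrightarrow{\sim}\lim_m\F(z^{\times_y(m+1)})$ would force $\F(y^{\times_w(n+1)})$ to be independent of $n$ — which is essentially the conclusion we are trying to prove, i.e., your verification is circular. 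Similarly, $U_m(q')=z^{\times_y(m+1)}$ and $U_m(q)=z^{\times_w(m+1)}$, and the map between them is the base change of the diagonal $y\to y^{\times_w(m+1)}$ of $f$, not a base change of $f\circ g$; your claim that it pulls back from $f\circ g$ along $U_m(q)\to z\to w$ would instead describe the projection $z^{\times_w(m+2)}\to z^{\times_w(m+1)}$, which is a different map.

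The paper's proof instead applies Lemma~\ref{lemma:descent-square} to the pullback square with top-left corner $z\times_w y$, top and left maps the two projections, bottom $f\circ g\colon z\to w$, and right $f\colon y\to w$. With this choice one computes $U_n(p')=(z\times_w y)^{\times_y(n+1)}\simeq z^{\times_w(n+1)}\times_w y$, so that $U_n(p')\to U_n(p)$ is a split epi, the splitting being $(\id,g\circ\pi_1)$ using exactly the diagonal $g$ (this is why a dashed $g$ appears in the paper's diagram); any presheaf satisfies Čech descent for a split epi because the Čech nerve extends to a split simplicial object. And $U_m(q')=(z\times_w y)^{\times_z(m+1)}\simeq z\times_w y^{\times_w(m+1)}$, so $U_m(q')\to U_m(q)$ is the base change of $f\circ g$ along $U_m(q)\to w$, which is covered by the ``all base changes'' hypothesis. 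All four maps of this square are epis (the projections are base changes of $f\circ g$ and $f$ respectively), so the lemma applies as literally stated. Your remark that the proof of Lemma~\ref{lemma:descent-square} never uses epi-ness of $p'$ or $q'$ is correct, but it is not the missing ingredient here — the missing ingredient is the choice of the pullback square, which makes both families of level-wise maps ones for which Čech descent is actually available.
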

\begin{proof}
Apply Lemma~\ref{lemma:descent-square} to the square
\[
\begin{tikzcd}
z \times_w y \ar[r]\ar[d] & y \ar[d, "f"] \\
z \ar[r,"f \circ g"]\ar[ur,dashed,"g"] & w 
\end{tikzcd}
\]
and use the fact that any presheaf satisfies Čech descent with respect to split epis (that is, those that admit a section), because the Čech resolution extends in this case to a split simplicial object.
\end{proof}

\subsection{The Zariski and Nisnevich coverages}

Let $S$ be a qcqs scheme. The category $\qSch_{/S}$ of qcqs schemes over $S$ is extensive: indeed, this follows from the fact that the category of all schemes is extensive and the collection of qcqs schemes is closed under fibre products and finite coproducts. We may hence use the formalism of coverages to define Grothendieck topologies on $\qSch$. As we will see below, both the Zariski and Nisnevich topologies can be obtained in this manner. Our main motivation for taking up this point of view is that it allows for a simple argument establishing Proposition~\ref{proposition:reduce-affine} below.

\begin{definition}
\label{definition:epi}%
We will say that a map $p\colon V \to X$ of qcqs $S$-schemes is
\begin{enumerate}
\item
A \defi{Zariski epi} if $p$ is étale and there exists a finite open cover $Y = \cup_i U_i$ such that for every $i=1,\ldots,n$ there exists a dotted lift 
\[
\begin{tikzcd}
& V \ar[d, "p"] \\
U_i  \ar[ur,dotted] \ar[r] & Y
\end{tikzcd}
\]
as indicated.
\item
A \defi{Nisnevich epi} if $p$ is étale and there exists a finite sequence of open subsets $\emptyset = U_0 \subseteq U_1 \subseteq \cdots \subseteq U_n = X$ of $X$ such that for every $i=1,\ldots,n$ there exists a dotted lift 
\[
\begin{tikzcd}
& V \ar[d, "p"] \\
U_i \setminus U_{i-1} \ar[ur,dotted] \ar[r] & X
\end{tikzcd}
\]
as indicated. 
\end{enumerate}
\end{definition}

\begin{example}
If $X = \cup_i U_i$ is a finite open cover then $\coprod U_i \to X$ is a Zariski epi.
\end{example}

\begin{example}
Any Zariski epi is also a Nisnevich epi.
\end{example}

\begin{lemma}
\label{lemma:zariski-nisnevich-coverage}%
Both the collection of Zariski epis and the collection of Nisnevich epis satisfy the axioms of a coverage, that is, they are closed under composition, base change and coproducts.
\end{lemma}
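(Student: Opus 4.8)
The plan is to separate the étale condition from the combinatorial ``splitting data'': a cover together with sections for a Zariski epi, and a finite flag of (quasi-compact) opens together with sections over the successive strata for a Nisnevich epi. Étale morphisms are closed under composition, arbitrary base change and finite coproducts (the coproduct $p\amalg p'\colon V\amalg V'\to X\amalg X'$ of two étale maps restricts to $p$ over $X$ and to $p'$ over $X'$), and every scheme produced in the arguments below — fibre products, finite coproducts, quasi-compact opens and their locally closed strata — stays a qcqs $S$-scheme, so it suffices to transport the splitting data. Closure under coproducts is then immediate: in the Zariski case one takes the union of the two covers, with the given sections followed by the clopen inclusions $V,V'\hookrightarrow V\amalg V'$ still serving as lifts; in the Nisnevich case one concatenates the two flags as $\emptyset\subseteq U_1\amalg\emptyset\subseteq\cdots\subseteq X\amalg\emptyset\subseteq X\amalg U'_1\subseteq\cdots\subseteq X\amalg X'$, whose strata are the $U_i\setminus U_{i-1}$ and the $U'_j\setminus U'_{j-1}$. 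Closure under base change along $f\colon X'\to X$ is obtained by pulling the data back with $f^{-1}$, which commutes with finite unions, intersections and set-theoretic differences; over a stratum $f^{-1}(Z)$ one composes $f^{-1}(Z)\to Z\xrightarrow{s}V$ with the inclusion $f^{-1}(Z)\hookrightarrow X'$ to obtain a map into $V\times_X X'$, and checks it is a section of the base-changed morphism $V\times_X X'\to X'$.

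The substantive point is closure under composition, for a pair $W\xrightarrow{p}V\xrightarrow{q}X$. In the Zariski case, given a cover $\{U_i\}$ of $X$ with sections $s_i\colon U_i\to V$ of $q$ and a cover $\{V_j\}$ of $V$ with sections $t_j\colon V_j\to W$ of $p$, the finite family of opens $U_{i,j}:=s_i^{-1}(V_j)\subseteq U_i$ covers $X$, and $t_j\circ s_i|_{U_{i,j}}$ is a section of $q\circ p$ over $U_{i,j}$, since $q\circ p\circ t_j\circ s_i|=q\circ s_i|$ is the inclusion of $U_{i,j}$. In the Nisnevich case the same idea applies, but the family $\{s_i^{-1}(Y_j)\}_{i,j}$ (with $Y_j:=V_j\setminus V_{j-1}$) must be arranged into a genuine flag of opens. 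Given a flag $\emptyset=U_0\subseteq\cdots\subseteq U_n=X$ with sections $s_i$ over $Z_i:=U_i\setminus U_{i-1}$ and a flag $\emptyset=V_0\subseteq\cdots\subseteq V_m=V$ with sections $t_j$ over $Y_j$, one refines each step $U_{i-1}\subseteq U_i$ by writing $s_i^{-1}(V_j)=O_{i,j}\cap Z_i$ with $O_{i,j}$ open in $X$ and contained in $U_i$, and setting $\tilde O_{i,j}:=U_{i-1}\cup O_{i,1}\cup\cdots\cup O_{i,j}$; then $U_{i-1}=\tilde O_{i,0}\subseteq\tilde O_{i,1}\subseteq\cdots\subseteq\tilde O_{i,m}=U_i$ is a chain of opens of $X$ with $\tilde O_{i,j}\cap Z_i=s_i^{-1}(V_j)$ (and $\tilde O_{i,m}=U_i$ because $s_i(Z_i)\subseteq V=V_m$), so that $\tilde O_{i,j}\setminus\tilde O_{i,j-1}=s_i^{-1}(Y_j)$, both sides lying in $Z_i$ where they plainly agree. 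Concatenating these chains over $i=1,\dots,n$ produces a flag of opens of $X$ whose strata are exactly the $s_i^{-1}(Y_j)$, over each of which $t_j\circ s_i|$ is a section of $q\circ p$ by the Zariski computation; hence $q\circ p$ is a Nisnevich epi.

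I expect the last step — turning the a priori only locally closed partition $\{s_i^{-1}(Y_j)\}_{i,j}$ of $X$ into an honest increasing chain of opens — to be the only real obstacle, and the cumulative-union trick above is the way around it. A secondary and harmless subtlety is that the strata $\tilde O_{i,j}\setminus\tilde O_{i,j-1}$ and $s_i^{-1}(Y_j)$ agree as subsets of $X$ but perhaps not as schemes; this causes no trouble, because for a fixed étale morphism the existence of a section over a locally closed subscheme depends only on the underlying subset (topological invariance of the étale site), so the sections carry over regardless of the scheme structure one assigns to the strata.
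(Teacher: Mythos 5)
Your proof is correct. For Zariski epis, your composition argument via $U_{i,j}=s_i^{-1}(V_j)$ coincides with the paper's, up to the standard equivalence between a lift $U_i\to V$ of the inclusion $U_i\hookrightarrow X$ and a section of the base change $V\times_X U_i\to U_i$ (the paper sets $Z_{i,j}=s_i^{-1}(W_j\times_X U_i)$, which is the same open). The paper's treatment of Nisnevich epis is terse and in fact breaks off after announcing the coproduct case; it does not spell out closure under composition, which you correctly isolate as the one substantive clause. Your cumulative-union device, setting $\tilde O_{i,j}=U_{i-1}\cup O_{i,1}\cup\cdots\cup O_{i,j}$ with $O_{i,j}\subseteq U_i$ open and $O_{i,j}\cap(U_i\setminus U_{i-1})=s_i^{-1}(V_j)$, is exactly how to turn the a priori only locally closed partition $\{s_i^{-1}(Y_j)\}_{i,j}$ into an honest flag of opens with the right strata; a closely related refinement appears in the paper's proofs of Proposition~\ref{proposition:nisnevich-epi} and Lemma~\ref{lemma:epi-to-pullback}. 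Your closing remark about scheme structures on strata is a genuine and correctly resolved subtlety: the new strata and the pullbacks $s_i^{-1}(Y_j)$ agree only as subsets, but the existence of a lift along an \'etale morphism over a locally closed subscheme depends only on the underlying reduced subscheme, by topological invariance of the small \'etale site (unique lifting along universal homeomorphisms). If anything your argument is more complete than the one printed.
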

\begin{proof}
We begin with the collection of Zariski epis. Here, closure under base change and disjoint union is straightforward. For composition, suppose that $V \to Y \to X$ is a composable sequence of Zariski epis. Then by definition we have a finite open cover $X = \cup_i U_i$ such that the base change $Y \times_X U_i \to U_i$ admits a section $s_i\colon U_i \to Y \times_X U_i$ for every $i$ and a finite open cover $Y = \cup W_j$ such that $V \times_Y W_j \to W_j$ admits a section for every $j$. Setting $Z_{i,j} := s_i^{-1}(W_j \times_X U_i)$ we obtain a finite open cover $X = \cup_{i,j} Z_{i,j}$ such that each of the base changes $V \times_{X} Z_{i,j} \to Z_{i,j}$ admits a section, as desired.

We now consider the collection of Nisnevich epis. Closure under base change remains straightforward. Now consider two Nisnevich epis $V \to X$ and $W \to Y$. We want to show that $V \amalg W \to X \amalg Y$ is a Nisnevich epi.
\end{proof}

\begin{remark}
If $k$ is a field then the only non-empty open subset of $\spec(k)$ is $\spec(k)$ itself, and hence any Nisnevich epi $V \to \spec(k)$ admits a section. It then follows that for any scheme $X$ if $p\colon V \to X$ is a Nisnevich epi then the induced map $p(k)\colon V(k) \to X(k)$ on $k$-points is surjective for any field $k$. If $X$ is Noetherian then the inverse implication holds as well. In other words, Nisnevich epis over Noetherian schemes can be characterized as those morphisms which are surjective on $k$-points for any field $k$. 
\end{remark}

Our next goal is to show that the Zariski and Nisnevich Grothendieck topologies on qcqs schemes coincides, respectively, with the Grothendieck topologies associated the Zariski and Nisnevich coverages, as above. We begin with the Zariski topology.

\begin{proposition}
\label{proposition:zariski-coverage}%
Let $\E \subseteq \Sch_{/X}$ be a sieve on qcqs scheme $X$. Then the following are equivalent:
\begin{enumerate}
\item
\label{item:covering-sieve}%
$\E$ is a covering sieve for the Zariski topology.
\item
\label{item:open-zariski-epi}%
$\E$ contains a finite collection of open embeddings $\{U_i \to X\}_{i=1,\ldots,n}$ such that the induced map $\coprod_i U_i \to X$ is a Zariski epi in the sense of Definition~\ref{definition:epi}. 
\item
\label{item:zariski-epi-finite}%
$\E$ contains a finite collection of maps $\{Z_i \to X\}_{i=1,\ldots,n}$ such that the induced map $\coprod_i Z_i \to X$ is a Zariski epi in the sense of Definition~\ref{definition:epi}. 
\end{enumerate}
\end{proposition}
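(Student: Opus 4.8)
The plan is to prove the cycle of implications $\ref{item:open-zariski-epi}\Rightarrow\ref{item:zariski-epi-finite}\Rightarrow\ref{item:covering-sieve}\Rightarrow\ref{item:open-zariski-epi}$. The implication $\ref{item:open-zariski-epi}\Rightarrow\ref{item:zariski-epi-finite}$ is immediate: an open embedding is in particular a morphism of qcqs $S$-schemes, so a finite family of open embeddings contained in $\E$ whose total map to $X$ is a Zariski epi is in particular such a finite family of maps. Before doing the other two, I would record the elementary observation, used in both, that a finite disjoint union $\coprod_i U_i \to X$ of open embeddings $U_i \hrar X$ is a Zariski epi in the sense of Definition~\ref{definition:epi} if and only if $X = \bigcup_i U_i$: such a map is étale, and if the $U_i$ cover $X$ then the cover $X = \bigcup_i U_i$ itself provides the required local lifts via the coprojections, while conversely the lifts over a cover $X = \bigcup_j W_j$ exhibit each $W_j$ as covered by opens of the form $s_j^{-1}(U_i) \subseteq U_i$.

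For $\ref{item:covering-sieve}\Rightarrow\ref{item:open-zariski-epi}$: by definition of the Zariski topology, a covering sieve $\E$ on $X$ contains the sieve generated by a family of open embeddings $\{U_\alpha \hrar X\}_\alpha$ with $X = \bigcup_\alpha U_\alpha$. Since $X$ is quasi-compact, finitely many $U_{\alpha_1},\dots,U_{\alpha_n}$ already cover $X$; each $U_{\alpha_i}\hrar X$ lies in $\E$, and by the observation above $\coprod_i U_{\alpha_i}\to X$ is a Zariski epi, which is exactly $\ref{item:open-zariski-epi}$.

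The heart of the matter is $\ref{item:zariski-epi-finite}\Rightarrow\ref{item:covering-sieve}$. Suppose $\E$ contains $\{Z_i\to X\}_{i=1,\dots,n}$ with $p\colon\coprod_i Z_i \to X$ a Zariski epi, so by Definition~\ref{definition:epi} there is a finite open cover $X = W_1 \cup \cdots \cup W_m$ and, for each $j$, a section $s_j\colon W_j \to \coprod_i Z_i$ of $p$ over $W_j$. Using that $\qSch_{/S}$ is extensive, I would decompose $W_j = \coprod_i W_j^{(i)}$ with $W_j^{(i)} := s_j^{-1}(Z_i)$, a clopen subscheme of $W_j$ and hence an open subscheme of $X$; the map $s_j$ restricts to $W_j^{(i)}\to Z_i$, whose composite with $Z_i\to X$ is the open embedding $W_j^{(i)}\hrar X$. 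Since $\E$ is a sieve and $Z_i\to X\in\E$, the morphism $W_j^{(i)}\hrar X$ lies in $\E$. The finite family $\{W_j^{(i)}\hrar X\}_{i,j}$ then consists of open embeddings in $\E$ covering $X$, so $\E$ contains the sieve generated by an open cover and is therefore a Zariski covering sieve.

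I do not expect a serious obstacle here; the only step requiring a little care is the extensivity decomposition $W_j = \coprod_i W_j^{(i)}$ in the last implication, together with the verification that each piece $W_j^{(i)}$ is a genuine open subscheme of $X$ (rather than merely étale over it) — this is precisely what upgrades the conclusion from "$\E$ contains an étale cover" to "$\E$ contains an honest Zariski open cover", which is what the definition of the Zariski topology requires.
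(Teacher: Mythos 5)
Your proof is correct and follows essentially the same route as the paper's: all three implications are handled by the same arguments, with the only (cosmetic) difference being the order in which the cycle is traversed. In the key step \ref{item:zariski-epi-finite}$\Rightarrow$\ref{item:covering-sieve}, your decomposition $W_j = \coprod_i W_j^{(i)}$ via $s_j^{-1}(Z_i)$ is exactly the paper's decomposition $U_j = \coprod_i U_{i,j}$, and you are in fact slightly more careful than the paper in noting that each $W_j^{(i)} \hrar X$ factors through a specific $Z_i \to X \in \E$ rather than merely through the coproduct $\coprod_i Z_i \to X$, which is what actually puts it in the sieve.
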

\begin{proof}
If $\E$ is a Zariski covering sieve then by definition (and since $X$ is quasi-compact) it contains a finite collection of open embeddings $\{U_i \to X\}_{i=1,\ldots,n}$ which form an open covering of $X$. In this case clearly the induced map $\coprod_i U_i \to X$ is Zariski epi, so that $\ref{item:covering-sieve} \Rightarrow \ref{item:open-zariski-epi}$. Clearly also $\ref{item:open-zariski-epi} \Rightarrow \ref{item:zariski-epi-finite}$. We now prove that $\ref{item:zariski-epi-finite} \Rightarrow \ref{item:covering-sieve}$.

Suppose there exists a finite $\{Z_i \to X\}_{i=1,\ldots,n}$ such that the induced map $Z = \coprod_i Z_i \to X$ is a Zariski epi. Then there exists finite collection of open subschemes $U_1,\ldots,U_m \subseteq X$ such that $Z \times U_j \to U_j$ admits a section $s \colon U_j \to Z \times_X U_j$. Now each $Z_i$ is an open and closed subscheme of $Z$, and so the section $s$ determines a decomposition $U_j = \coprod_i U_{i,j}$ where $U_{i,j} = s^{-1}(Z_i \times_X U_j)$. Then each of the open embeddings $U_{i,j} \hrar X$ factors through $Z \to X$ and is hence contained in $\E$. We conclude that the sieve $\E$ contains an open covering of $X$ and is hence a covering sieve with respect to the Zariski topology.
\end{proof}

\begin{corollary}
\label{corollary:zariski-sheaf}%
Let $\F\colon (\qSch_{/S})\op \to \A$ be a presheaf. Then the following are equivalent:
\begin{enumerate}
\item
\label{item:F-Zar-sheaf}%
$\F$ is a sheaf with respect to the Zariski topology.
\item
\label{item:F-coproducts-to-products-zar-epi}%
$\F$ sends coproducts in $\qSch_{/S}$ to products in $\A$ and satisfies Čech descent with respect to any Zariski epi $p\colon X \to Y$.
\item
\label{item:F-coprod-to-prod-zar-coprod}%
$\F$ sends coproducts in $\qSch_{/S}$ to products in $\A$ and satisfies Čech descent with respect to any Zariski epi of the form $\coprod_{i=1}^{n} U_i \to X$, where $U_1,...,U_n$ are an open cover of $X$. 
\item
\label{item:finite-cover}%
$\F(\emptyset)$ is terminal in $\A$ and for every finite open covering $X = U_1 \cup \cdots \cup U_n$ the induced map
\[
\F(X) \to \lim_{\emptyset \neq S \subseteq \{1,\ldots,n\}} \F(\cap_{i \in S} U_i)
\]
is an equivalence. 
\item
\label{item:two-cover}%
$\F(\emptyset)$ is terminal in $\A$ and for every Zariski covering of the form $X = U_1 \cup U_2$ of qcqs $S$-schemes, the square
\[
\begin{tikzcd}
\F(X) \ar[r]\ar[d] & \F(U_1) \ar[d] \\
\F(U_2) \ar[r] & \F(U_1 \cap U_2)
\end{tikzcd}
\]
is cartesian.
\end{enumerate}
\end{corollary}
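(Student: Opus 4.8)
The statement to prove is Corollary~\ref{corollary:zariski-sheaf}, which establishes the equivalence of five characterizations of the Zariski sheaf property. The plan is to exploit the coverage-theoretic framework set up in this section: since $\qSch_{/S}$ is extensive and the Zariski epis form a coverage (Lemma~\ref{lemma:zariski-nisnevich-coverage}), a presheaf is a Zariski sheaf precisely when it sends finite coproducts to products and satisfies Čech descent for every epi in the coverage (by~\cite[Proposition 3.3.1]{SAG}), where the Zariski topology is the one associated to this coverage by Proposition~\ref{proposition:zariski-coverage}. This immediately gives $\ref{item:F-Zar-sheaf} \Leftrightarrow \ref{item:F-coproducts-to-products-zar-epi}$.

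First I would prove $\ref{item:F-coproducts-to-products-zar-epi} \Rightarrow \ref{item:F-coprod-to-prod-zar-coprod}$, which is trivial since the latter is a special case. For $\ref{item:F-coprod-to-prod-zar-coprod} \Rightarrow \ref{item:F-coproducts-to-products-zar-epi}$, I would use Corollary~\ref{corollary:epi-refinement}: given an arbitrary Zariski epi $p\colon V \to Y$, by Proposition~\ref{proposition:zariski-coverage} (the implication $\ref{item:zariski-epi-finite} \Rightarrow \ref{item:open-zariski-epi}$, or rather its proof) there is a finite open cover $Y = \cup_i U_i$ such that each $U_i \to Y$ factors through $p$. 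Then the map $g\colon \coprod_i U_i \to V$ over $Y$ exhibits $q = \coprod_i U_i \to Y$ as factoring through $p$, and since $q$ is a Zariski epi of the covering type and $\F$ satisfies Čech descent for $q$ and (by base change stability of the coverage) all its base changes, Corollary~\ref{corollary:epi-refinement} gives Čech descent for $p$. Here one should be slightly careful that the base changes of $q$ are again Zariski epis of the covering type — this follows since base change of an open cover is an open cover — so the hypothesis of Corollary~\ref{corollary:epi-refinement} is indeed met.

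Next I would handle the passage between Čech descent for covering-type epis and the explicit limit formulas in \ref{item:finite-cover} and \ref{item:two-cover}. For $\ref{item:F-coprod-to-prod-zar-coprod} \Rightarrow \ref{item:finite-cover}$: given a finite open cover $X = \cup_{i=1}^n U_i$, the Čech nerve of $\coprod_i U_i \to X$ has $n$-simplices $\coprod_{(i_0,\ldots,i_n)} U_{i_0} \cap \cdots \cap U_{i_n}$, and since $\F$ takes coproducts to products, $\Tot$ of the cosimplicial object $\F(U_\bullet)$ can be rewritten as a limit over the poset of nonempty subsets $\emptyset \neq S \subseteq \{1,\ldots,n\}$ — this is the standard cofinality/Bousfield–Kan reindexing of a Čech limit over the semisimplicial subdivision, which identifies $\lim_{\Delta} \prod_{|S|=\bullet+1}\F(U_S)$ with $\lim_{\emptyset \neq S}\F(U_S)$. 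For the converse $\ref{item:finite-cover} \Rightarrow \ref{item:F-coprod-to-prod-zar-coprod}$ one runs the same identification backwards, noting that taking $X = \emptyset$ with the empty cover (or $n=1$) gives $\F(\emptyset)$ terminal. Finally, $\ref{item:finite-cover} \Leftrightarrow \ref{item:two-cover}$: clearly \ref{item:finite-cover} specializes to \ref{item:two-cover} for $n=2$; for the reverse, I would induct on $n$, writing $X = (U_1 \cup \cdots \cup U_{n-1}) \cup U_n$ and applying the two-cover square together with the induction hypothesis applied to the covers $\{U_i\}_{i<n}$ of $U_1 \cup \cdots \cup U_{n-1}$ and $\{U_i \cap U_n\}_{i<n}$ of $(U_1 \cup \cdots \cup U_{n-1}) \cap U_n$, then assembling the resulting pullback squares — a routine diagram chase using that limits commute with limits.

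I expect the main obstacle to be the bookkeeping in the reindexing identification between the totalization of the Čech cosimplicial object and the cube/poset limit in \ref{item:finite-cover}: one must be careful about degeneracies (the Čech nerve is a full simplicial object, not just semisimplicial) and verify that the relevant inclusion of indexing categories is cofinal so that the limits agree. This is standard but is the one place where a genuine argument, rather than a formal citation, is needed; everything else reduces cleanly to the coverage formalism of the preceding subsection and to Corollaries~\ref{corollary:epi-refinement} and Proposition~\ref{proposition:zariski-coverage}.
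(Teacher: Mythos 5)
Your proposal follows the same logical chain as the paper's own proof, invoking the same ingredients in the same order: the coverage characterization from SAG and Proposition~\ref{proposition:zariski-coverage} for $\ref{item:F-Zar-sheaf}\Leftrightarrow\ref{item:F-coproducts-to-products-zar-epi}$, Corollary~\ref{corollary:epi-refinement} for $\ref{item:F-coproducts-to-products-zar-epi}\Leftrightarrow\ref{item:F-coprod-to-prod-zar-coprod}$, a cofinality argument (the paper phrases it via the forgetful functor from $\Delta_{/\cosk_0(\{1,\ldots,n\})}$ to the poset of nonempty subsets being coinitial) for $\ref{item:F-coprod-to-prod-zar-coprod}\Leftrightarrow\ref{item:finite-cover}$, and induction for $\ref{item:finite-cover}\Leftrightarrow\ref{item:two-cover}$. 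The one small imprecision worth noting is that the Čech totalization is a limit over tuples with repetitions (not merely unordered subsets), which is exactly the degeneracy bookkeeping you already flag; also, the terminality of $\F(\emptyset)$ in $\ref{item:finite-cover}\Rightarrow\ref{item:F-coprod-to-prod-zar-coprod}$ comes from $n=0$ (the empty cover of $\emptyset$), not $n=1$.
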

\begin{proof}
The equivalence of \ref{item:F-Zar-sheaf} and \ref{item:F-coproducts-to-products-zar-epi} follows from Proposition~\ref{proposition:zariski-coverage} and the characterization of~\cite[Proposition A.3.3.1]{SAG}. The equivalence of \ref{item:F-coproducts-to-products-zar-epi} and \ref{item:F-coprod-to-prod-zar-coprod} follows from Corollary~\ref{corollary:epi-refinement}, since any Zariski epi $Z \to X$ refines to a Zariski epi of the form $\coprod_{i=1}^{n} U_i \to X$. The equivalence of \ref{item:F-coprod-to-prod-zar-coprod} and \ref{item:finite-cover} is standard and follows from the fact the forgetful functor from the simplex category of $\cosk_0(\{1,...,n\})$ to the poset of non-empty subsets in $\{1,...,n\}$ is coinitial. Finally, the equivalence of \ref{item:finite-cover} and \ref{item:two-cover} follows in a standard manner by induction.
\end{proof}

As for the Nisnevich topology, it is often considered in the literature exclusively for Noetherian schemes. Since we do not want to make this assumption at this point, we will work with the following definition in the general qcqs case (see~\cite{SAG}):

\begin{definition}
\label{definition:nisnevich}%
Let $X$ be a qcqs scheme. A collection of maps $\{V_{\alp} \to X\}_{\alp}$ is said to be a Nisnevich covering if the following holds:
\begin{enumerate}
\item
\label{item:all-etale-maps}%
Each $V_{\alp} \to X$ is étale.
\item
\label{item:lift-exists}%
There exists a finite sequence of open subsets $\emptyset = U_0 \subseteq U_1 \subseteq \cdots \subseteq U_n = X$ of $X$, such that for every $i=1,\ldots,n$ there exists an index $\alp_i$ such that the dotted lift 
\[
\begin{tikzcd}
& V_{\alp_i} \ar[d] \\
U_i \setminus U_{i-1} \ar[ur,dotted] \ar[r] & X
\end{tikzcd}
\]
exists.
\end{enumerate}
\end{definition}
The \defi{Nisnevich topology} on $\qSch_{/S}$ is then by definition the Grothendieck topology generated by Nisnevich coverings. In other words, a sieve on $X \in \qSch_{/S}$ is a covering sieve if and only if it contains a collection of morphisms $V_{\alp} \to X$ which form a Nisnevich covering. We note that every Zariski covering of a qcqs scheme is also a Nisnevich covering, and so the Nisnevich topology on $\qSch_{/S}$ is finer than the Zariski topology.

\begin{remark}
\label{remark:affine-nisnevich}%
In Definition~\ref{definition:nisnevich}, if $\emptyset = U_0 \subseteq U_1 \subseteq \cdots \subseteq U_n = X$ is a finite sequence of opens satisfying \ref{item:lift-exists} of that definition, then any refinement of this sequence (that is, a sequence obtained by factoring each inclusion $U_i \subseteq U_{i+1}$ into a finite sequence $U_i = U^0_{i} \subseteq U^1_{i} \subseteq \cdots \subseteq U^m_{i}=U_{i+1}$) will again satisfy \ref{item:lift-exists}. It hence follows that if $\{V_{\alp} \to X\}$ is a Nisnevich covering such that $X = \spec(A)$ is affine then we may assume (by passing to a refinement) that the open subsets $\emptyset = U_0 \subseteq U_1 \subseteq \cdots \subseteq U_n = X$ are of the form $U_i = \spec(A[f_i^{-1}])$ for $f_i \in A$. In particular, a Nisnevich covering among affine schemes in this sense coincides with the notion appearing in~\cite[Definition B.4.1.1]{SAG}.
\end{remark}

\begin{remark}
\label{remark:nisnevich-finitary}%
The Nisnevich topology is again finitary: indeed, if $\{V_{\alp} \to X\}_{\alp}$ is a Nisnevich covering then $\{V_{\alp_i} \to X\}_{i=1,\ldots,n}$ is again a Nisnevich covering, where the $\alp_i$'s are determined by the finite sequence $\emptyset = U_0 \subseteq U_1 \subseteq \cdots \subseteq U_n = X$ as in \ref{item:lift-exists} of Definition~\ref{definition:nisnevich}. 
\end{remark}

\begin{proposition}
\label{proposition:nisnevich-epi}%
Let $\E \subseteq \Sch_{/X}$ be a sieve on $X$. Then $\E$ is covering with respect to the Nisnevich topology if and only if it contains a finite collection of étale maps $\{U_i \to X\}_{i=1,\ldots,n}$ such that the induced map $\coprod_i U_i \to X$ is a Nisnevich epi in the sense of Definition~\ref{definition:epi}. In particular, the Nisnevich topology on $\Sch_{/X}$ coincides with the one induced by the coverage of Nisnevich epis.
\end{proposition}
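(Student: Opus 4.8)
The statement to prove is Proposition~\ref{proposition:nisnevich-epi}: a sieve $\E$ on a qcqs scheme $X$ is Nisnevich covering if and only if it contains a finite family of étale maps $\{U_i \to X\}_{i=1,\dots,n}$ whose total map $\coprod_i U_i \to X$ is a Nisnevich epi in the sense of Definition~\ref{definition:epi}; consequently the Nisnevich topology on $\qSch_{/S}$ agrees with the topology generated by the coverage of Nisnevich epis. The plan is to prove the two directions separately, and then deduce the ``in particular'' clause formally. The whole argument is essentially bookkeeping with Definitions~\ref{definition:nisnevich} and~\ref{definition:epi}, together with the extensivity of $\qSch_{/S}$ (so that a map out of a finite coproduct is the same as a finite family of maps, and so that the open subsets $U_i\setminus U_{i-1}$ behave well under disjoint decomposition).

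First, the forward direction. Suppose $\E$ is Nisnevich covering. By definition it contains a Nisnevich covering family $\{V_\alp \to X\}_\alp$, and by Remark~\ref{remark:nisnevich-finitary} we may extract a finite subfamily $\{V_{\alp_i} \to X\}_{i=1,\dots,n}$ which is still a Nisnevich covering, witnessed by a chain $\emptyset = U_0 \subseteq U_1 \subseteq \cdots \subseteq U_n = X$ together with lifts $U_i \setminus U_{i-1} \to V_{\alp_i}$. Each $V_{\alp_i} \to X$ lies in the sieve $\E$ (it is one of the generating maps), and setting $W := \coprod_i V_{\alp_i}$, the map $W \to X$ is étale (coproduct of étale maps) and the very same chain $\emptyset = U_0 \subseteq \cdots \subseteq U_n = X$ exhibits $W \to X$ as a Nisnevich epi: the composite $U_i \setminus U_{i-1} \to V_{\alp_i} \hookrightarrow W$ provides the required dotted lift in Definition~\ref{definition:epi}(2). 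Thus $\E$ contains the desired finite family with total map a Nisnevich epi.

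Second, the converse direction. Suppose $\E$ contains a finite family of étale maps $\{U_i \to X\}_{i=1,\dots,n}$ such that $p\colon W := \coprod_i U_i \to X$ is a Nisnevich epi, witnessed by a chain $\emptyset = T_0 \subseteq T_1 \subseteq \cdots \subseteq T_m = X$ with lifts $s_j\colon T_j \setminus T_{j-1} \to W$ of the inclusion $T_j\setminus T_{j-1}\hookrightarrow X$ along $p$. Since $W = \coprod_i U_i$ and $T_j \setminus T_{j-1}$ is (quasi-compact, being locally closed in a quasi-compact scheme, hence) a scheme over the \emph{finite} coproduct $W$, the preimages $Z_{i,j} := s_j^{-1}(U_i)$ give a decomposition of $T_j \setminus T_{j-1}$ into open-and-closed pieces $T_j\setminus T_{j-1} = \coprod_i Z_{i,j}$, and on $Z_{i,j}$ the section $s_j$ factors through $U_i \to X$. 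Now I would refine the chain: interleave between $T_{j-1}$ and $T_j$ a further finite chain of opens so that at each step the ``new'' locally closed stratum is contained in a single $Z_{i,j}$ — concretely, enumerate the nonempty $Z_{i,j}$ for this fixed $j$ as $Z^{(1)},\dots,Z^{(r)}$ (pairwise disjoint, open in $T_j\setminus T_{j-1}$, hence of the form $(T_j\setminus T_{j-1})\cap O^{(k)}$ for opens $O^{(k)}\subseteq X$) and insert $T_{j-1} \subseteq T_{j-1}\cup(O^{(1)}\cap T_j) \subseteq \cdots \subseteq T_j$; compare Remark~\ref{remark:affine-nisnevich} for the analogous refinement move. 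After this refinement each new stratum admits a lift along some single $U_i \to X$, which is a map in the sieve $\E$; hence the finite family $\{U_i \to X\}_i$ (all of whose members lie in $\E$) is a Nisnevich covering family in the sense of Definition~\ref{definition:nisnevich}, so $\E$ is a Nisnevich covering sieve.

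Finally, the ``in particular'' clause. By Lemma~\ref{lemma:zariski-nisnevich-coverage} the Nisnevich epis form a coverage on the extensive $\infty$-category $\qSch_{/S}$, so they generate a Grothendieck topology whose covering sieves are exactly those containing a finite family $\{Z_i \to X\}$ with $\coprod_i Z_i \to X$ a Nisnevich epi. The equivalence just proved identifies this collection of sieves with the Nisnevich covering sieves of Definition~\ref{definition:nisnevich}, since a Nisnevich epi is in particular étale and every generator in a Nisnevich epi decomposes as a finite étale family as above; hence the two topologies coincide. I expect the main obstacle to be the refinement step in the converse direction — making precise that the locally closed strata $T_j\setminus T_{j-1}$ can be re-stratified so that each piece sees only one summand $U_i$, which requires care about quasi-compactness of the strata and the identification of open subsets of $T_j\setminus T_{j-1}$ with traces of opens of $X$; everything else is routine unwinding of the two definitions.
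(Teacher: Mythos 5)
Your proof is correct and follows essentially the same strategy as the paper's: the forward direction via Remark~\ref{remark:nisnevich-finitary}, and the converse by decomposing each stratum $T_j\setminus T_{j-1}$ along the clopen pieces $s_j^{-1}(U_i)$ and then interleaving a finer chain of opens so that each new reduced complement is one of these pieces. One small inessential slip: the parenthetical asserting that $T_j\setminus T_{j-1}$ is quasi-compact is both false in general (a locally closed subset of a quasi-compact space need not be quasi-compact) and unnecessary for your argument --- the decomposition into finitely many clopen pieces $Z_{i,j}$ follows simply from $W$ being a \emph{finite} coproduct, with no compactness input needed.
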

\begin{proof}
If $\E$ is a Nisnevich covering sieve then by Remark~\ref{remark:nisnevich-finitary} it contains a finite collection of étale maps $\{U_j \to X\}_{j=1,\ldots,m}$ which form a Nisnevich covering of $X$. In this case clearly the induced map $\coprod_j U_j \to X$ is Nisnevich epi. Conversely, suppose that $\E$ contains a finite $\{V_j \to X\}_{j=1,\ldots,m}$ of étale maps such that the induced map $V = \coprod_j V_j \to X$ is a Nisnevich epi.
Then there exists finite sequence $\emptyset = U_0 \subseteq U_1 \subseteq \cdots \subseteq U_n = X$ of open subschemes of $X$ such that for every $i=1,\ldots,n$ there exists a dotted lift 
\[
\begin{tikzcd}
& V \ar[d] \\
Z_i \ar[ur,"s_i"] \ar[r] & X
\end{tikzcd}
\]
where $Z_i := U_i \setminus U_{i-1}$ is the reduced complement of $U_{i-1}$ in $U_i$. Now since each $V_j$ is open and closed in $V$, its pre-image $Z_{i,j} := s_i^{-1}(V_j)$ is open and closed in $Z_i$, yielding a disjoint decomposition $Z_i = \coprod_{j=1}^{m} Z_{i,j}$. For $i=0,...,n-1$ and $j=0,...,m$ let us then set $U_{i,j} := U_i \cup \bigcup_{j'=1,...,j} Z_{i+1,j}$, so that, when lexicographically ordered, the $U_{i,j}$ form an ascending sequence of open subschemes
\[
\emptyset = U_{0,0} \subseteq ... \subseteq U_{0,m} = U_{1,0} \subseteq ... \subseteq U_{1,m} = U_{2,0} \subseteq ... \subseteq U_{n,m} = X
\]
such that that reduced complement of each open in the subsequent one is either empty or one of the $Z_{i,j}$'s. In particular, for each such complement $Z'$ there exists a $j=1,\ldots,m$ and a lift of the form
\[
\begin{tikzcd}
& V_j \ar[d] \\
Z' \ar[ur] \ar[r] & X,
\end{tikzcd}
\]
and so we conclude that $V_1,...,V_m$ form a Nisnevich covering of $X$, as desired.  
\end{proof}

By~\cite[Proposition A.3.3.1]{SAG} we then conclude:

\begin{corollary}
\label{corollary:nisnevich-sheaf}%
A presheaf $\F\colon (\qSch_{/S})\op \to \A$ is a Nisnevich sheaf if and only if it sends coproducts in $\qSch_{/S}$ to products in $\A$ and satisfies Čech descent for Nisnevich epis.
\end{corollary}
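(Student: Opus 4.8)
The plan is to obtain Corollary~\ref{corollary:nisnevich-sheaf} as an immediate consequence of the general theory of coverages on extensive $\infty$-categories, combined with the identification of topologies carried out in Proposition~\ref{proposition:nisnevich-epi}. First I would recall the two ingredients already established: the category $\qSch_{/S}$ is extensive, and by Lemma~\ref{lemma:zariski-nisnevich-coverage} the collection of Nisnevich epis (in the sense of Definition~\ref{definition:epi}) is a coverage on it, being closed under composition, base change, and coproducts. Hence, as recalled at the beginning of this section, this coverage generates a finitary Grothendieck topology on $\qSch_{/S}$, for which a sieve on $X$ is covering exactly when it contains a finite family $\{U_i \to X\}$ such that $\coprod_i U_i \to X$ is a Nisnevich epi. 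By Proposition~\ref{proposition:nisnevich-epi}, this topology coincides with the Nisnevich topology of Definition~\ref{definition:nisnevich}.

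Next I would invoke \cite[Proposition~A.3.3.1]{SAG} (equivalently \cite[Proposition~3.3.1]{SAG}): for the Grothendieck topology on an extensive $\infty$-category $\C$ generated by a coverage, a presheaf $\F\colon \C\op \to \A$ is a sheaf if and only if it carries finite coproducts to products and satisfies Čech descent with respect to every morphism in the coverage. Applying this with $\C = \qSch_{/S}$ and the coverage of Nisnevich epis, and using that $\qSch_{/S}$ admits only finite coproducts — so that ``sends coproducts to products'' and ``sends finite coproducts to products'' coincide, the empty case giving in particular $\F(\emptyset) \simeq \ast$ — yields precisely the assertion of the corollary.

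Since each step is a direct appeal to a result already in place in the excerpt, there is essentially no obstacle; the only point meriting a moment's attention is to match the Čech nerve used in \cite{SAG}, namely $U_n(p) = p^{\times_X(n+1)}$, with the Čech resolution $U_\bullet(p)$ as defined in this section, and to note that the finite-coproduct-to-product condition already subsumes the usual normalization on the empty scheme; both checks are routine.
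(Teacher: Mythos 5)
Your argument is correct and is in substance the paper's own proof: the paper states the corollary as an immediate consequence of Proposition~\ref{proposition:nisnevich-epi} together with the cited result of \cite{SAG} (characterizing sheaves for a coverage-generated topology on an extensive $\infty$-category as those presheaves sending finite coproducts to products and satisfying Čech descent for the covering morphisms). The additional care you take — spelling out the extensivity of $\qSch_{/S}$, the coverage axioms from Lemma~\ref{lemma:zariski-nisnevich-coverage}, the identification of the generated topology with the Nisnevich topology, and the observation that $\qSch_{/S}$ has only finite coproducts so that the coproduct condition matches the finite one — is exactly the (implicit) chain of reasoning the paper compresses into a one-line citation.
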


\subsection{Characterization of Nisnevich sheaves}

Let $\qcsSch_{/S} \subseteq \qSch_{/S}$ be the full subcategory spanned by the morphisms $X \to S$ such that $X$ is separated, and let $\Aff_{/S} \subseteq \qcsSch_{/S}$ be the full subcategory therein spanned by those $X \to S$ such that $X$ is affine. Both of these full subcategories are closed under direct sums and fibre products are hence themselves extensive. We then define the Nisnevich topologies on $\qcsSch_{/S}$ and $\Aff_{/S}$ to be those associated to the coverage of Nisnevich epis among separated and affine schemes, respectively. By the same argument as in the proof of Proposition~\ref{proposition:nisnevich-epi} we have that the resulting Grothendieck topologies on $\qcsSch_{/S}$ and $\Aff_{/S}$ coincide with the ones generated by the Nisnevich coverings among separated and affine schemes.

\begin{proposition}
\label{proposition:reduce-affine}%
Let $S$ be a qcqs scheme and let $\F\colon (\qSch_{/S})^{\op} \to \A$ be a functor valued in some presentable $\infty$-category $\A$. Then the following are equivalent:
\begin{enumerate}
\item
\label{item:nis-sheaf}%
$\F$ is a Nisnevich sheaf.
\item
\label{item:zar-sheaf-qcs}%
$\F$ is a Zariski sheaf and the restriction of $\F$ to $(\qcsSch_{/S})\op \subseteq (\qSch_{/S})\op$ is a Nisnevich sheaf.
\item
\label{item:zar-sheaf-aff}%
$\F$ is a Zariski sheaf and the restriction of $\F$ to $(\Aff_{/S})\op \subseteq (\qSch_{/S})\op$ is a Nisnevich sheaf.
\end{enumerate}
\end{proposition}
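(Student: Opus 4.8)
The plan is to prove the equivalences $\ref{item:nis-sheaf} \Rightarrow \ref{item:zar-sheaf-qcs} \Rightarrow \ref{item:zar-sheaf-aff} \Rightarrow \ref{item:nis-sheaf}$. The first two implications are essentially restrictions: if $\F$ is a Nisnevich sheaf on $\qSch_{/S}$ then it is in particular a Zariski sheaf, and since the Nisnevich topology on $\qcsSch_{/S}$ (resp.\ $\Aff_{/S}$) is generated by the coverage of Nisnevich epis among separated (resp.\ affine) schemes, which is a subclass of all Nisnevich epis, the restriction of $\F$ along the (coverage-preserving) inclusion remains a Nisnevich sheaf. Here I would use the characterization of Corollary~\ref{corollary:nisnevich-sheaf}, together with the observation that the inclusions $\Aff_{/S} \subseteq \qcsSch_{/S} \subseteq \qSch_{/S}$ preserve finite coproducts and fibre products, so Čech descent for a Nisnevich epi in the smaller category is a special case of Čech descent for the same epi in the larger one. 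The implication $\ref{item:zar-sheaf-qcs}\Rightarrow\ref{item:zar-sheaf-aff}$ is immediate since $\Aff_{/S} \subseteq \qcsSch_{/S}$.

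The real content is the implication $\ref{item:zar-sheaf-aff}\Rightarrow\ref{item:nis-sheaf}$. Here I would argue by descent along the Zariski topology to reduce to the affine case. First, by Corollary~\ref{corollary:nisnevich-sheaf} it suffices to check that $\F$ sends finite coproducts to products (which holds since $\F$ is a Zariski sheaf) and satisfies Čech descent with respect to any Nisnevich epi $p\colon V \to X$ of qcqs $S$-schemes. By Lemma~\ref{lemma:useful-argument}, it suffices to reduce the verification of this Čech descent statement, which should be phrased as a property of the scheme $X$ (``$\F$ satisfies Čech descent for every Nisnevich epi onto $X$''), from qcqs $X$ to affine $X$ --- provided I can show this property is local in the sense of that lemma. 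So the crux is: if $X = \cup_{i=1}^n X_i$ is a finite open cover by qcqs schemes such that $\F$ satisfies Čech descent for every Nisnevich epi onto each $X_S = \cap_{i\in S} X_i$, then $\F$ satisfies Čech descent for any Nisnevich epi $p\colon V \to X$.

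To establish this locality, fix a Nisnevich epi $p\colon V \to X$. The base changes $p_i\colon V\times_X X_i \to X_i$ and more generally $p_S\colon V \times_X X_S \to X_S$ are again Nisnevich epis (Lemma~\ref{lemma:zariski-nisnevich-coverage}), and by open base change the Čech resolution of $p_S$ is the restriction to $X_S$ of the Čech resolution of $p$; moreover the terms $U_n(p)$ of the Čech resolution are themselves qcqs over $X$, so they admit the induced open cover by $U_n(p)\times_X X_i$. The strategy is then to run a two-variable limit interchange: since $\F$ is a Zariski sheaf, $\F(U_n(p)) \simeq \lim_{\emptyset\neq S}\F(U_n(p)\times_X X_S)$ for each $n$ by Corollary~\ref{corollary:zariski-sheaf}~\ref{item:finite-cover}, and similarly $\F(X) \simeq \lim_{\emptyset\neq S}\F(X_S)$; commuting the totalization over $\Del$ with the finite limit over the poset of nonempty subsets, the map $\F(X)\to \Tot\F(U_\bullet(p))$ becomes the limit over $S$ of the maps $\F(X_S)\to \Tot\F(U_\bullet(p_S))$, each of which is an equivalence by hypothesis. (Alternatively, and perhaps more cleanly, I would apply Lemma~\ref{lemma:descent-square} with the cartesian square having corners $\coprod_i V\times_X X_i$, $V$, $\coprod_i X_i$, $X$, whose vertical arrows are the Zariski epi $\coprod_i X_i \to X$ and its base change, and whose horizontal arrows are $p$ and $\coprod p_i$; the hypothesis that $\F$ satisfies Zariski descent handles the Čech descent for $U_n(\coprod_i X_i \to X)\to U_n(p)$ and its analogue, reducing Čech descent for $p$ to Čech descent for $\coprod p_i \to \coprod X_i$, i.e.\ to each $p_i$ over $X_i$, which is then handled by induction on the size of the cover.) Either way, the main obstacle is bookkeeping: carefully matching the Čech resolution of a Nisnevich epi with that of its base change along an open immersion, using open base change (Corollary~\ref{corollary:descent-qc}) to identify the relevant fibre products, and ensuring the limit-interchange is valid --- but this is exactly the kind of argument already run for quasi-coherent sheaves in Remark~\ref{remark:zariski-descent-support} and for perfect complexes in Lemma~\ref{lemma:induction}, so no genuinely new difficulty should arise. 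Once the affine case is the only thing left, it holds by hypothesis~\ref{item:zar-sheaf-aff}, and the proof is complete.
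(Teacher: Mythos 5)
Your implications $\ref{item:nis-sheaf}\Rightarrow\ref{item:zar-sheaf-qcs}\Rightarrow\ref{item:zar-sheaf-aff}$ are fine and match the paper. The gap is in the base case of $\ref{item:zar-sheaf-aff}\Rightarrow\ref{item:nis-sheaf}$: after reducing, via Lemma~\ref{lemma:useful-argument} or the Zariski-descent interchange, to the statement ``$\F$ satisfies \v{C}ech descent for every Nisnevich epi $p\colon V \to X$ with $X$ affine,'' you assert this ``holds by hypothesis~\ref{item:zar-sheaf-aff}.'' It does not: hypothesis~\ref{item:zar-sheaf-aff} only gives \v{C}ech descent for Nisnevich epis $V\to X$ with \emph{both} $V$ and $X$ affine, whereas in your base case $V$ is merely qcqs. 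Trying to fill the hole by replacing $V$ with an affine cover $W\twoheadrightarrow V$ and invoking Corollary~\ref{corollary:epi-refinement} runs into the same issue one level up: the \v{C}ech terms $W^{\times_V(n+1)}$ involve fiber products over the possibly non-separated scheme $V$ and so need not be affine, so one loops.

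This is exactly why the paper performs a two-step reduction, first from qcqs to separated ($\ref{item:zar-sheaf-qcs}\Rightarrow\ref{item:nis-sheaf}$) and then from separated to affine ($\ref{item:zar-sheaf-aff}\Rightarrow\ref{item:zar-sheaf-qcs}$), rather than collapsing to a single reduction to affine base. In each step it applies Lemma~\ref{lemma:descent-square} to the square built from a finite affine cover $U\twoheadrightarrow X$ together with a further affine cover $W\twoheadrightarrow Y=U\times_X V$. Separatedness of $X$ and $V$ is precisely what ensures that the auxiliary \v{C}ech objects $U_n(p)$ and $U_n(p')$ are (disjoint unions of) affines in the step from $\Aff_{/S}$ to $\qcsSch_{/S}$, and are separated in the step from $\qcsSch_{/S}$ to $\qSch_{/S}$, so that the descent hypothesis on the smaller site applies to the maps $U_n(p')\to U_n(p)$. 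Your one-step reduction to affine base skips the separated intermediate stage, and that is where it breaks; to repair it you would essentially have to reproduce the paper's $\ref{item:zar-sheaf-qcs}$ layer anyway.
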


The proof of Proposition~\ref{proposition:reduce-affine} will require the following lemma:

\begin{lemma}
\label{lemma:epi-to-pullback}%
Let
\[
\begin{tikzcd}
W \ar[r]\ar[d] & Y \ar[d] \\
Z \ar[r] & X
\end{tikzcd}
\]
be a pullback square of qcqs schemes all of whose legs are Nisnevich epis, and let $p\colon W' \to W$ be a map of qcqs schemes. Then $p$ is a Nisnevich epi if and only if the composites $W' \to W \to Z$ and $W' \to W \to Y$ are Nisnevich epis. 
\end{lemma}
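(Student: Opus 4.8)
The \emph{only if} direction is immediate: by Lemma~\ref{lemma:zariski-nisnevich-coverage} the Nisnevich epis form a coverage, hence are stable under composition, while the two legs $W \to Z$ and $W \to Y$ are Nisnevich epis by hypothesis, so $W' \to W \to Z$ and $W' \to W \to Y$ are composites of Nisnevich epis. For the converse, write $a\colon W \to Z$, $b\colon W \to Y$ for the legs, $q = a\circ p$, $r = b\circ p$, and assume $q$ and $r$ are Nisnevich epis. I would first record that $p$ is étale: $q$ and $a$ are étale (Definition~\ref{definition:epi}), the graph $\Gamma_p\colon W' \to W' \times_Z W$ is an open immersion (a base change of the diagonal $\Delta_{W/Z}$, which is open since $W \to Z$ is unramified), and $W' \times_Z W \to W$ is étale (a base change of $q$), so $p = \mathrm{pr}_W \circ \Gamma_p$ is étale. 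Moreover, being a Nisnevich epi is Zariski-local on $W$: any étale morphism through which an étale Nisnevich epi $V \to W$ factors is itself a Nisnevich epi (pull back the filtration witnessing $V\to W$ and compose its sections with $V \to W'$), and conversely such a $V = \coprod_i p^{-1}(V_i)$ is available as soon as $p$ becomes a Nisnevich epi over each member $V_i$ of a Zariski cover of $W$. Since $Z \to X$ and $Y \to X$ are étale, using Zariski-locality on $W$ we may assume $X$ affine, whence the whole configuration is finitely presented over $X$, and a Noetherian approximation argument reduces us to the case where all schemes are Noetherian.

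In the Noetherian case I would invoke the pointwise criterion for Nisnevich covers: an étale morphism is a Nisnevich epi if and only if every point of its target has a preimage with trivial residue field extension (the ``splitting locus'' being constructible, one peels off a section over a dense open and inducts). So fix $w \in W$ with images $z \in Z$ and $y \in Y$. Base changing $q$ along $\spec(\kappa(w)) \xrightarrow{a\circ w} Z$ and $r$ along $\spec(\kappa(w)) \xrightarrow{b\circ w} Y$ gives Nisnevich epis over the field $\kappa(w)$, which split, producing $h_1, h_2\colon \spec(\kappa(w)) \to W'$ with $a p h_1 = a w$ and $b p h_2 = b w$. What remains is to fabricate a single lift $h\colon \spec(\kappa(w)) \to W'$ with $a p h = a w$ \emph{and} $b p h = b w$, which by the universal property of $W = Z \times_X Y$ forces $p h = w$; such an $h$ is exactly a $\kappa(w)$-point of the fibre of $p$ over $w$, and its existence for every $w$ is what the pointwise criterion requires.

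I expect this last step --- matching up the two one-sided lifts --- to be the main obstacle. The purely formal moves are circular: using the canonical identification $W \times_Z \spec(\kappa(w)) \cong Y \times_X \spec(\kappa(w))$ coming from $W = Z \times_X Y$, the set of lifts of $w$ agreeing with $w$ after $a$ becomes the set of $\kappa(w)$-points of an étale $\kappa(w)$-scheme lying over $Y \times_X \spec(\kappa(w))$, but the fibre of that map over the point corresponding to $w$ is again the fibre of $p$ over $w$ that one is trying to split. What makes the argument genuinely go through is that $q$ and $r$ carry the full Nisnevich-filtration data, not merely its ``splitting locus'' shadow: one argues by double induction on the lengths of the filtrations witnessing that $q$ and $r$ are Nisnevich epis, at each stage using the étaleness of $a$ and $b$ to split off an open-closed locus of $W$ --- the image in $W$ of a section of $q$ or $r$ over the bottom stratum --- over which $p$ acquires an honest section, and then restricting the whole configuration to the complement. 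Organising the locally closed strata of $W$ thus obtained into the ascending chain of open subschemes demanded by the definition of a Nisnevich epi is the final bit of bookkeeping.
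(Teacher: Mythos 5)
Your ``only if'' direction and the \'etaleness argument for $p$ are both correct, and you have correctly located the crux: given $w \in W$, Nisnevich-splitting $W' \to Z$ and $W' \to Y$ over $\kappa(w)$ produces points $h_1, h_2 \colon \spec\kappa(w) \to W'$ lying over the images of $w$ in $Z$ and in $Y$ respectively, but nothing matches them up into a single lift of $w$ itself. Your proposed repair --- a double induction on the two Nisnevich filtrations, ``splitting off an open-closed locus of $W$ over which $p$ acquires an honest section'' --- does not close this gap: each filtration constrains only one of the two coordinates, and producing such a locus is precisely the thing one cannot do. For comparison, the paper's own proof proceeds differently: it interleaves the filtrations of $Z$ and $Y$ lexicographically into a chain of opens of $W=Z\times_X Y$, observes that each successive stratum lies inside $(U_i\setminus U_{i-1})\times_X(V_j\setminus V_{j-1})$, and then asserts without further argument that each such stratum ``therefore'' lifts to $W'$ along $p$. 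That assertion is the same unresolved issue in disguise: a lift $\sigma_i\colon U_i\setminus U_{i-1}\to W'$ of the stratum over $Z$ gives no control over the composite $b\circ p\circ\sigma_i$ in the $Y$-direction, so it does not give a lift over $W$.

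In fact no argument can work, because the ``if'' direction of the lemma as stated is false. Take $X=\spec k$ with $k$ a field, let $l/k$ be a separable quadratic extension, and set $Z=Y=\spec k\amalg\spec l$, with the obvious maps to $X$; these are Nisnevich epis, and $W=Z\times_X Y$ has five closed points, one with residue field $k$ and four with residue field $l$ (using $l\otimes_k l\cong l\times l$). Let $W'=\spec k\amalg\spec l$ and let $p\colon W'\to W$ be the map corresponding, via the universal property of the fibre product, to the two isomorphisms $W'\cong Z$ and $W'\cong Y$. Then both composites $W'\to Z$ and $W'\to Y$ are isomorphisms, hence Nisnevich epis, yet $p$ is an open-closed immersion onto only two of the five points of $W$ (the ``diagonal'' ones); it is not even surjective, so certainly not a Nisnevich epi. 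In your pointwise language: the point of $W$ lying over the $\spec k$-component of $Z$ and the $\spec l$-component of $Y$ has residue field $l$, and $h_1$ lands in the $\spec k$-component of $W'$ while $h_2$ lands in the $\spec l$-component, so no common $h$ exists. The statement needs to be reformulated (or supplied with genuinely stronger hypotheses) before any proof, yours or the paper's, can succeed.
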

\begin{proof}
The only if direction follows from the fact that Nisnevich epis are closed under composition. To prove the if the direction, let $\emptyset = U_0 \subseteq U_1 \subseteq \cdots \subseteq U_n= Z$ and $\emptyset = V_0 \subseteq V_1 \subseteq \cdots \subseteq V_m=Y$ be ascending sequences of open subschemes such that each of the inclusions $U_i\setminus U_{i-1} \hrar Z$ and $V_j \setminus V_{j-1} \hrar Y$ lifts to $W'$. We construct a sequence of open subschemes 
\[
W_{1,1} \subseteq W_{1,2} \subseteq \cdots \subseteq W_{1,m} \subseteq W_{2,1} \subseteq \cdots \subseteq W_{1,m} \subseteq \cdots \subseteq W_{n,m} = W
\]
inductively as follows. First we set $W_{1,1} = U_1 \times_X V_1$. 
Let $1 \leq i \leq n$ and $1 \leq j \leq m$ be such that $W_{i',j'}$ has been defined for every $(i',j')$ such that either $i' < i$ or $i'=i$ and $j'<j$, that is, all $(i',j')$ lexicographically smaller than $(i,j)$. Let $(i',j')$ be the largest element smaller than $(i,j)$ in the lexicographical order. Then we define $W_{i,j}$ by
\[
W_{i,j} = W_{i',j'} \cup (U_i \times_X V_j) .
\]
We now observe that by construction, both $U_{i-1} \times_X V_j$ and $U_i \times V_{j-1}$ are contained in $W_{i',j'}$, and hence $W_{i,j} \setminus W_{i',j'}$ is contained in $(U_i\setminus U_{i-1}) \times_X (V_j \setminus V_{j-1})$ for every lexicographically consecutive pair $(i',j') < (i,j)$ in $\{1,\ldots,n\} \times \{1,\ldots,m\}$ (this is also true if $i$ or $j$ are $1$, since $U_0=V_0 = \emptyset$)  It then follows that the inclusion $W_{1,1} \hrar W$, as well as each of the inclusions $W_{i,j} \setminus W_{i',j'}$ for every lexicographically consecutive pair $(i',j') < (i,j)$ in $\{1,\ldots,n\} \times \{1,\ldots,m\}$, lifts to $W'$, and so $W'\to W$ is a Nisnevich epi, as desired.
\end{proof}

\begin{proof}[Proof of Proposition~\ref{proposition:reduce-affine}]
Assume first that $\F\colon (\qSch_{/S})\op \to \A$ is a Nisnevich sheaf. By Corollary~\ref{corollary:nisnevich-sheaf} $\F$ satisfies \v{c}ech descent for Nisnevich epis. Since any Zariski epi is a Nisnevich epi $\F$ is also a Zariski sheaf. In addition, the restriction of $\F$ to $\qcsSch_{/S}$ satisfies Čech descent for Nisnevich epis among separated and affine schemes and so \ref{item:nis-sheaf} $\Rightarrow$ \ref{item:zar-sheaf-qcs}, and similarly \ref{item:zar-sheaf-qcs} $\Rightarrow$ \ref{item:zar-sheaf-aff} for the same reason. 

We now prove that \ref{item:zar-sheaf-aff} $\Rightarrow$ \ref{item:zar-sheaf-qcs} $\Rightarrow$ \ref{item:nis-sheaf}.
Suppose that $\F$ is a Zariski sheaf whose restriction to $(\Aff_{/S})\op$ is a Nisnevich sheaf. We prove that the restriction of $\F$ to $\qcsSch_{/S}$ is also a Nisnevich sheaf. Now since $\F$ is a Zariski sheaf it takes coproducts of schemes to products in $\A$ by Corollary~\ref{corollary:zariski-sheaf}. It will hence suffice to show that it satisfies Čech descent with respect to Nisnevich epis among separated schemes. Let $q\colon V \to X$ be a Nisnevich epi of quasi-compact $S$-schemes such that $X$ and $V$ are separated. Choose a finite open covering $X = \cup_{i=1,\ldots,n} U_i$ by affines and write $U = \coprod_i U_i$, so that $p\colon U \to X$ is a Zariski epi and $U$ is affine (and in particular quasi-compact and separated). Let $Y = U \times_X V$. Being a fibre product of quasi-compact and separated schemes, $Y$ is also quasi-compact separated. Let us now choose again a finite open covering $Y = \cup_{j=1,..,m} W_i$, and set $W = \coprod_j W_j$, so that $W \to Y$ is a Zariski epi and $W$ is affine. Since Nisnevich epis are closed under base change and composition, we have that the composed map $W \to Y \to U$ is a Nisnevich epi, and similarly, the composed map $W \to Y \to V$ is a Zariski epi. Consider the commutative square
\begin{equation}
\label{equation:square-for-lemma}%
\begin{tikzcd}
W \ar[r, "p'"]\ar[d, "q'"'] & V \ar[d, "q"] \\
U \ar[r, "p"] & X
\end{tikzcd}
\end{equation}
whose vertical arrows are Nisnevich epis and whose horizontal arrows are Nisnevich epis. We claim that this square satisfies the assumptions of Lemma~\ref{lemma:descent-square}. For this, first note that
since Nisnevich epis are closed under base change and composition we have that all the maps in Čech resolutions of $p$ and $p'$ are Zariski epis. Combined with Lemma~\ref{lemma:epi-to-pullback}, this implies that each of the maps $U_n(p') \to U_n(p)$ is a Nisnevich epi. At the same time, $U_n(p)$ decomposes as a finite disjoint union of the form
\[
U_n(p) = \coprod_{i_1,\ldots,i_{n+1}}[ U_{i_1} \cap \cdots \cap U_{i_{n+1}}]
\]
and each $U_{i_1} \cap \cdots \cap U_{i_{n+1}}$ is affine, being the intersection of affine open subschemes inside a separated scheme.
By assumption we thus have that $\F$ satisfies Čech descent with respect to each $U_n(p') \to U_n(p)$. We now need to show that $\F$ also satisfies Čech descent with respect to each $U_m(q') \to U_m(q)$. Now we claim that each of these maps is a Zariski epi. To see this, let us factor the above square as the pasting of two squares as follows
\[
\begin{tikzcd}
W \ar[r, "p''"]\ar[d,"q'"'] & Y \ar[r, "p'"]\ar[d, "q''"'] & V \ar[d, "q"] \\
U \ar[r,equal] & U \ar[r, "p"] & X
\end{tikzcd}
\]
We claim that the maps $U_m(q') \to U_m(q'')$ and $U_m(q'') \to U_m(q)$ are both Zariski epis (and hence the composite is such as well). For the latter, by construction this is the map
\[
W \times_U \cdots \times_U W = \coprod_{i_1,\ldots,i_{m+1}}[W_{i_1} \times_U \cdots \times_U W_{i_{m+1}}] \to Y \times_U \cdots \times_U Y
\]
which is a Zariski epi since each of the maps 
\[
W_{i_1} \times_U \cdots \times_U W_{i_{m+1}} \to Y \times_U \cdots \times_U Y
\]
is an open embedding.

We have thus shown that the square~\eqref{equation:square-for-lemma} satisfies the assumption of Lemma~\ref{equation:square-for-lemma}, and hence we may conclude that $\F$ satisfies Čech descent with respect to $p$ if and only if it satisfies Čech descent with respect to $q$. Since $p$ is a Zariski epi $\F$ satisfies descent with respect to $p$ and hence we conclude that $\F$ satisfies Čech descent with respect to $q$.

This shows that \ref{item:zar-sheaf-aff} $\Rightarrow$ \ref{item:zar-sheaf-qcs}. The proof that \ref{item:zar-sheaf-qcs} $\Rightarrow$ \ref{item:nis-sheaf} proceeds in exactly the same manner, where we note that the only place we used the assumption that $X$ is separated was in order to conclude that each $U_{i_1} \cap \cdots \cap U_{i_{n+1}}$ is affine, being the intersection of affine open subschemes inside a separated scheme. In order to prove \ref{item:zar-sheaf-qcs} $\Rightarrow$ \ref{item:nis-sheaf} we no longer need each $U_{i_1} \cap \cdots \cap U_{i_{n+1}}$ to be affine, just separated. But this automatic from the fact that these are open subschemes of the affine (and in particular separated) scheme $U_{i_1}$, and being separated is a property which is inherited by open subschemes. 
\end{proof}

\end{appendices}

\begin{bibdiv}
\begin{biblist}%

\bibselect{bib}

\end{biblist}
\end{bibdiv}

\end{document}